\newcommand{\C}{\mathbb{C}}
\newcommand{\Z}{\mathbb{Z}}
\newcommand{\N}{\mathbb{N}}
\newcommand{\g}{\mathfrak{g}}
\newcommand{\W}{\mathcal{W}}
\newcommand{\h}{\mathfrak{h}}
\newcommand{\yn}{Y(\mathfrak{sl}_{l+1})}
\newcommand{\yg}{Y(\mathfrak{g})}
\newcommand{\ysl}{Y(\mathfrak{sl}_2)}
\newcommand{\nysl}{\mathfrak{sl}_2}
\newcommand{\nyn}{\mathfrak{sl}_{l+1}}
\newcommand{\nyo}{\mathfrak{so}(2l,\C)}
\newcommand{\yo}{Y\big(\mathfrak{so}(2l,\C)\big)}
\newcommand{\ysp}{Y\big(\mathfrak{sp}(2l,\C)\big)}
\newcommand{\nysp}{\mathfrak{sp}(2l,\C)}
\newcommand{\yso}{Y\big(\mathfrak{so}(2l+1,\C)\big)}
\newcommand{\nyso}{\mathfrak{so}(2l+1,\C)}
\newcommand{\ygg}{Y(\g)}
\numberwithin{equation}{section}
\setlist[enumerate]{itemsep=0mm}
\setlist[itemize]{itemsep=0mm}
\newenvironment{stretchtable}{\begin{minipage}[c]{\textwidth}\centering}{\end{minipage}\vspace*{3pt}}
\begin{document}
%\title{Local Weyl modules of Yangians and the tensor products of finite-dimensional representations}
\title{Finite-dimensional representations of Yangians}
\author{Yilan Tan}
\degree{Doctor of Philosophy}
\specialization{Mathematics}
\department{Department of Mathematical and Statistical Science}
\convocation{Fall 2014}
\titlepage
{\frontmatter
\begin{abstract}
In this thesis, we study local Weyl modules of Yangians and a cyclicity condition for a tensor product of fundamental representations of a Yangian.
%The research addresses the structure of the local Weyl modules of Yangians of classical simple Lie algebras and of the exceptional simple Lie algebras of type $G_2$. We also give a irreducibility criterion for the tensor product of the fundamental representations of the Yangians.
%
%We study the category of finite-dimensional highest weight representations of Yangians $\yg$.

Let $\g$ be a simple Lie algebra over $\C$ with rank $l$ and $\pi$ be a generic $l$-tuple of polynomials in $u$. We show that there exists a universal representation $W(\pi)$ of the Yangian $\yg$, called the local Weyl module associated to $\pi$, such that every finite-dimensional highest-weight representation associated to $\pi$ is a quotient of $W(\pi)$. We prove that the dimension of $W(\pi)$ is bounded by the dimension of some local Weyl module of the current algebra $\g[t]$. Let  $L=V_{a_1}(\omega_{b_1})\otimes V_{a_2}(\omega_{b_2})\otimes\ldots\otimes V_{a_k}(\omega_{b_k})$, where $V_{a_i}(\omega_{b_i})$ is the $b_i$-th fundamental representation of $\yg$. We prove that if $\operatorname{Re}(a_1)\geq\operatorname{Re}(a_2)\geq \ldots \geq \operatorname{Re}(a_k)$, then $L$ is a highest weight representation. By comparing the dimensions of $L$ and the upper bound of $W(\pi)$, we have
$W(\pi)\cong L$.

A cyclicity condition of the tensor product $L$ is also studied: $L$ is a highest weight representation if $a_j-a_i\notin S(b_i, b_j)$ for $1\leq i<j\leq k$ where $S(b_i, b_j)$ is a finite set of positive rational numbers. The cyclicity condition implies an irreducibility criterion for $L$: $L$ is  irreducible if $a_j-a_i\notin S(b_i, b_j)$ for $1\leq i\neq j\leq k$. Especially, when $\g=\nyn$, $L$ is  irreducible if and only if $a_j-a_i\notin S(b_i, b_j)$ for $1\leq i\neq j\leq k$.

\newpage\

\begin{center}
\Large{\textbf{Preface}}
\end{center}
Quantum groups, introduced by V. Drinfeld and M. Jimbo independently, arose in the first half of 1980s. The lecture of V. Drinfeld
at the International Congress of Mathematicians in 1986 brought quantum groups to the attention of mathematicians
worldwide. The theory of quantum groups has applications in many mathematical branches: topology, harmonic analysis, and number theory, to name a few.
Yangians and quantum affine algebras form two of the most important classes of quantum groups. For any finite-dimensional simple Lie algebra $\g$ over $\C$ with rank $l$, V. Drinfeld \cite{Dr1} defined an infinite-dimensional Hopf algebra $\yg$, called the Yangian of $\mathfrak{g}$.
%The Yangian $\yg$ is a canonical deformation of the universal enveloping algebra $U\left(\g\otimes_\C \C[t]\right)$. $\yg$ is given by explicit generators and relations. %Those information can also be written as a specific matrix form, called RTT relation.
A few years later(1988), V. Drinfeld gave another definition \cite{Dr2} which is the one we use in this thesis.

Let $A=\left(a_{ij}\right)_{i,j\in I}$ be the Cartan matrix of $\g$, where $I=\{1,2,\ldots, l\}$. Let $D=\operatorname{diag}\left(d_1,\ldots, d_{l}\right)$, $d_i\in \N$, such that $d_1, d_2,\ldots, d_l$ are co-prime and $DA$ is symmetric. The Yangian $\yg$ is isomorphic to the
associative algebra with generators $x_{i,r}^{{}\pm{}}$,
$h_{i,r}$, $i\in I$, $r\in\Z_{\geq 0}$, and the following
defining relations:
\begin{equation*}\label{}
[h_{i,r},h_{j,s}]=0, \qquad [h_{i,0},\ x_{j,s}^{\pm}]={}\pm
d_ia_{ij}x_{j,s}^{\pm}, \qquad [x_{i,r}^+, x_{j,s}^-]=\delta_{i,j}h_{i,r+s},
\end{equation*}
\begin{equation*}\label{}
[h_{i,r+1}, x_{j,s}^{\pm}]-[h_{i,r}, x_{j,s+1}^{\pm}]=
\pm\frac{1}{2}d_i
a_{ij}\left(h_{i,r}x_{j,s}^{\pm}+x_{j,s}^{\pm}h_{i,r}\right),
\end{equation*}
\begin{equation*}\label{}
[x_{i,r+1}^{\pm}, x_{j,s}^{\pm}]-
[x_{i,r}^{\pm}, x_{j,s+1}^{\pm}]=\pm\frac12
d_ia_{ij}\left(x_{i,r}^{\pm}x_{j,s}^{\pm}
+x_{j,s}^{\pm}x_{i,r}^{\pm}\right),
\end{equation*}
\begin{equation*}\label{}
\sum_\pi
[x_{i,r_{\pi\left(1\right)}}^{\pm},
[x_{i,r_{\pi\left(2\right)}}^{\pm}, \ldots,
[x_{i,r_{\pi\left(m\right)}}^{\pm},
x_{j,s}^{\pm}]\cdots]]=0, i\neq j,
\end{equation*}
for all sequences of non-negative integers $r_1,\ldots,r_m$, where
$m=1-a_{ij}$ and the sum is over all permutations $\pi$ of $\{1,\dots,m\}$.

%The representation theory of quantum groups is an enchanting area. Since the similarities between the representation theory of the Yangians and of the quamtum affine algebras, we use the representation theory of Yangians to illustrate.

The above definition of $\yg$ allows us to define highest weight representations of $\yg$. %, analogues to classical semisimple Lie algebras.
Let $Y^{\pm}$ be the subalgebras of $\yg$ generated by the generators $x_{i, k}^{\pm}$. Set $N^{\pm}=\sum\limits_{i,k} x_{i,k}^{\pm}Y^{\pm}.$
Let $\mu=\Big(\mu_1\left(u\right),\mu_2\left(u\right),\ldots,\mu_l(u)\Big)$, where $\mu_i\left(u\right)=1+\mu_{i,0}u^{-1}+\mu_{i,1}u^{-2}+\ldots$ is a formal series in $u^{-1}$ for $i\in I$.
A representation $V(\mu)$ of $\yg$ is said to be highest weight if it is generated by a vector $v^{+}$ such that
%\begin{center}                                                                                                                     % \nonumber to remove numbering \left(before each equation\right)
$ x_{i,k}^{+}v^{+}=0$ and $ h_{i,k}v^{+}= \mu_{i,k} v^{+}.$                                                                                                               %\end{center}
The \textbf{Verma module} $M\left(\mu\right)$ is defined to be the quotient of $\yg$ by the left ideal generated by $N^{+}$ and the elements $h_{i,k}-\mu_{i,k}1$.
$\yg$ acts on $M\left(\mu\right)$ by left multiplication.  A highest weight vector of $M\left(\mu\right)$ is $1_{\mu}$ which is the image of the element $1\in \yg$ in the quotient. The Verma module $M(\mu)$ is a universal highest weight representation in the sense that: If $V(\mu)$ is another highest weight representation with a highest weight vector $v$, then the mapping $1_{\mu}\mapsto v$ defines a surjective $\yg$-module homomorphism $M\left(\mu\right)\rightarrow V\left(\mu\right)$.
Pulling back by the imbedding $U(\g)\hookrightarrow \yg$, $V(\mu)$ is a $\g$-module. The weight subspace $V_{\mu^{\left(0\right)}}$ with
$\mu^{\left(0\right)}=\left(\mu_{1,0},\dots,\mu_{l,0}\right)$ is one-dimensional and spanned
by the highest weight vector of $V(\mu)$. All other nonzero weight subspaces
correspond to the weights $\eta$ of the form
$\eta=\mu^{\left(0\right)}-k_1\alpha_1-\ldots-k_l\alpha_l,$
where all $k_i$ are nonnegative integers, not all of them are zero.
A standard argument shows that $M\left(\mu\right)$ has a unique irreducible quotient $L\left(\mu\right)$.

V. Drinfeld described in \cite{Dr2} the finite-dimensional irreducible representation of $\yg$: an irreducible representation $L(\mu)$
is finite-dimensional if and only if there exists an $l$-tuple of polynomials $\pi=\big(\pi_1(u),\ldots,\pi_l(u)\big)$ such that
$\mu_i\left(u\right)=\frac{\pi_i\left(u+d_i\right)}{\pi_i\left(u\right)}$, for $i\in I$. A standard argument shows that $\pi$ is unique if we impose the
conditions that $\pi_i$ is monic. An irreducible representation is called fundamental if there is an $i\in I$ such that $\pi_i(u)=u-a$ and $\pi_{j}(u)=1$, where $j\neq i$ and $a\in \C$. In this case, the fundamental representation is denoted by $V_{a}(\omega_i)$. If $L(\mu)$ is finite-dimensional, $\pi$
is called Drinfeld polynomials of $V$. Let $V(\mu)$ be a finite-dimensional highest weight representation of $\yg$, there exists an $l$-tuple of polynomials $\pi$ associated to its minimal quotient since this quotient of $V(\mu)$ is irreducible and finite-dimensional. We call
$\pi$ the associated polynomials of $V(\mu)$. We will write $V\left(\pi\right)$ instead of
$V(\mu)$ if the latter is finite-dimensional.
V. Chari and A. Pressley shown in \cite{ChPr4} that every finite-dimensional irreducible representation of $\yg$ is a subquotient of a tensor product of fundamental representations.
An explicit realization, however, of these modules is still unknown in general but when $\g=\nysl$. %In \cite{ChPr8}, sufficient and necessary conditions are provided on the cyclicity and irreducibility of the tensor product of two fundamental representations of Yangians of simple Lie algebras of types $A$ and $D$.  In this thesis, we will give a sufficient condition on the cyclicity of the tensor product of fundamental representations of $\yg$, where $\g$ is either a simple Lie algebra of classical type or of type $G_2$. The number of tensor factors of the tensor product can be greater than two. The cyclicity condition will implies an irreducibility criterion for the tensor product. Especially when $\g$ is of type A, a sufficient and necessary condition are obtained. %Similar situation occurs in the side of representation theory of quantum affine algebras.

The finite-dimensional representation theory of the quantum affine algebra $U_q(\hat{\g})$ over $\C(q)$ is an analogue of the one of $\yg$, where $q$ is an indeterminate and $\C(q)$ is the field of rational functions in $q$ with complex coefficients. Every finite-dimensional irreducible representation $V$ of the quantum affine algebra $U_q(\hat{\g})$  is parameterized by an $l$-tuple of polynomials $P=\big(P_1(u),\ldots, P_l(u)\big)$, where $P_i(u)\in \C(q)[u]$.  The fundamental representations are defined similarly. Denote by $p_{i,j}$ the roots of $P_{i}(u)$ and the fundamental representation associated to $\omega_i$ by $L_{p}(\omega_i)$. It is well known that the finite-dimensional irreducible representation associated to $P$ is a subquotient of $\widetilde{W}=\bigotimes\limits_{ij}L_{p_{i,j}}(\omega_{i})$. In \cite{AkKa}, the authors conjectured that if for any ${p_{i,j}}$ and ${p_{s,t}}$, $\frac{p_{i,j}}{p_{s,t}}$ does not have a pole at $q=0$, then $\widetilde{W}$ is irreducible, and proved this conjecture in the case of type $A_n^{(1)}$ and $C_n^{(1)}$. This conjecture was also proved by E. Frenkel and E. Mukhin \cite{FrMu} using the $q$-characters method, by M. Varagnolo and Vasserot \cite{VaVa} via the quiver varieties method when $\g$ is simply-laced, and by M. Kashiwara in \cite{Ka} through the crystal basis method. This result was generalized in \cite{Ch3} by V. Chari who gave a sufficient condition for the cyclicity of the tensor product of irreducible finite-dimensional representations associated with $P$ such that the roots of $P_i(u)$ form a `q-string' and $P_j(u)=1$, for all $j\neq i$. This condition is obtained by considering a braid group action on the imaginary root vectors. However, there is no braid group action available for the Yangian $\yg$.

In order to have a better understanding of the category of finite-dimensional highest weight representations of quantum affine algebras associated to $P$, the local Weyl module $W(P)$ was introduced in \cite{ChPr1}. The module $W(P)$ has a nice universal property:  any finite-dimensional highest weight representations of $U_q(\hat{\g})$ associated to $P$ is a quotient of $W(P)$. It is known that $W(P)$ is isomorphic to an ordered tensor product of fundamental representations of $U_q(\hat{\g})$. A proof of this fact can be found in \cite{ChMo2}. The notion of a local Weyl module has been extended to the finite-dimensional representations of current algebras \cite{ChLo, FoLi, Na}, twisted loop algebras \cite{ChFoSe}, and current Lie algebras on affine varieties \cite{FeLo1}. We are about to extend the notion to the finite-dimensional representations of Yangians, which are quantization of the enveloping algebras of the current algebras.

%The imbedding of $U\left(\g\right)\hookrightarrow \yg$ equips $M\left(\mu\right)$ with a $\g$-module structure. The weight space $V_{\mu^{\left(0\right)}}$ with weight $\mu^{\left(0\right)}=\left(\mu_{1,0},\ldots,\mu_{l,0}\right)$ is one dimensional and spanned by the highest weight vector of $M\left(\mu\right)$. All the other weight spaces correspond to the weights $\eta$ of the form $$\eta=\mu^{\left(0\right)}-k_1\alpha_1-\ldots-k_l\alpha_l,$$ where all the $k_i$ are nonnegative integers and not all of them are zero.

Inspired by the notion of the local Weyl modules of quantum affine algebras $U_q(\hat{\g})$, it is reasonable to ask: does there exist a finite-dimensional highest weight representation $W(\pi)$ of $\yg$ such that every finite-dimensional highest weight representation $V\left(\pi\right)$ is a quotient of $W(\pi)$? If so, is $W(\pi)$ finite-dimensional and isomorphic to an ordered tensor product of fundamental representations of $\yg$? In this thesis, we will answer these questions when $\g$ is either a finite-dimensional classical simple Lie algebra or $\g=G_2$.

We now describe the content of this dissertation in more detail. We devote Chapter 1 to an exposition of the background information concerning the local Weyl modules of Yangians. We begin this chapter by introducing classical simple Lie algebras and their fundamental representations, which is related to the finite-dimensional representations of $\yg$.
We next introduce the definition of Yangians,  their highest weight representations and the fundamental representations of Yangians. $\yg$ admits a filtration and its associated graded algebra is isomorphic to $U(\g[t])$, the universal enveloping algebra of the current algebra $\g[t]$. Any highest weight representation $V$ of $\yg$ inherits the filtration, then the corresponding associated graded vector space $\operatorname{gr}(V)$ is a highest weight representation of $U(\g[t])$. Therefore, it is reasonable to introduce the current algebra and its finite-dimensional representations. In the end of this chapter, we introduce the local Weyl module $W(\lambda)$, where $\lambda=\sum\limits_{i\in I} m_i\omega_i$ is a dominant integral weight of $\g$, $\omega_i$ is a fundamental weight and $m_i\in \Z_{\geq 0}$.  The main result in Chapter 1 is that
\begin{center}
$\operatorname{Dim}\big(W(\lambda)\big)=\prod\limits_{i\in I}\Big(\operatorname{Dim}\big(W(\omega_i)\big)\Big)^{m_i}$ and $W(\omega_i)\cong_{\g}  V_{a}(\omega_i)$,
where $a\in \C$.
\end{center}

In Chapter 2, we give a definition of the local Weyl module $W(\pi)$ of Yangian $\yg$ in terms of generator and relations.  The main results in this chapter are that the local Weyl module $W(\pi)$ is finite-dimensional, and $\operatorname{Dim}\big(W\left(\pi\right)\big)\leq \operatorname{Dim}\big(W(\lambda)\big),$ where $W(\lambda)$ is the local Weyl module of the current algebra $\g[t]$ associated to the dominant integral weight $\lambda=\sum\limits_{i\in I} m_i\omega_i$ and $m_i$ is the degree of the polynomial $\pi_i$.

In Chapters 3 to 6, we obtain a description of the local Weyl module $W(\pi)$ of Yangians of finite-dimensional classical simple Lie algebras over $\C$. The ideas in these chapters can be applied to characterize the local Weyl modules of $Y(\g)$ when $\g$ is simple Lie algebra of type $G_2$ in Chapter 7. The main proofs are similar, so we can discuss them together. The main results in Chapter 1
lead us to consider a tensor product of fundamental representations of $\yg$. Let $L=V_{a_1}(\omega_{b_1})\otimes V_{a_2}(\omega_{b_2})\otimes\ldots\otimes V_{a_k}(\omega_{b_k}).$ Our main objective is to show that $L$ is a highest weight representation under certain conditions on the order of the tensor factors.

Let $W_1\left(a\right)$, $a\in \C$, be an evaluation representation of $\ysl$ which is isomorphic to $\C^2$ as an $\nysl$-module. It was proved in \cite{ChPr3} that, as a $\ysl$-module,
$W_1\left(a_1\right)\otimes W_1\left(a_2\right)\otimes\ldots\otimes W_1\left(a_r\right)$ is a highest weight representation if and only if $a_j-a_i\neq 1$ for $1\leq i<j\leq r$. Therefore some restriction on the subscripts $a_i$ in $L$ are expected. By arranging the orders of $V_{a_i}\left(\omega_{b_i}\right)$ in $L$ if necessary, we may assume that $$\operatorname{Re}(a_j)-\operatorname{Re}(a_i)\leq 0$$ for $1\leq i<j\leq l$. We show that $L$ is a highest weight representation of $\yg$ by induction on $k$. Denote by $v_i^{+}$ the highest weight vectors in $V_{a_i}\left(\omega_{b_i}\right)$ and by $v_1^{-}$ the lowest weight vectors in $V_{a_1}\left(\omega_{b_1}\right)$. Without loss of generality, we may assume that $k\geq 2$ and $V_{a_2}(\omega_{b_2})\otimes V_{a_3}(\omega_{b_3})\otimes\ldots\otimes V_{a_k}(\omega_{b_k})$ is a highest weight representation with a highest weight vector $v^{+}=v_2^{+}\otimes v_3^{+}\otimes \ldots \otimes v_k^{+}$.  It is shown in Corollary \ref{v-w+gvtw}, that $v_1^{-}\otimes v^{+}$ generates $L$. If we can show $v_1^{-}\otimes v^{+}\in \yg\left(v_1^{+}\otimes v^{+}\right)$, then $L$ is generated by $v_1^{+}\otimes v^{+}$. The other two conditions on $L$ to be a highest weight representation are easy to check; then $L$ is a highest weight representation of $\yg$.

We now show that $v_1^{-}\otimes v^{+}\in \yg\left(v_1^{+}\otimes v^{+}\right)$. Note that the fundamental representations of $\yg$ are finite-dimensional and $U(\g)\subseteq \yg$.  As $\g$-modules, the fundamental representations of $\yg$ are
completely reducible by Weyl's theorem. Their decomposition as $\g$-modules is known, see \cite{ChPr4}. There is a ``path" from the highest
weight vector $v_{1}^{+}$ to the lowest weight vector $v_{1}^{-}$ by applying negative root vectors in $U(\g)$ to $v_{1}^{+}$. Suppose that
$v^{-}_{1}=x_{n_s,0}^{-}x_{n_{s-1},0}^{-}\ldots x_{n_2,0}^{-}x_{n_1,0}^{-}v^{+}_{1}.$ Define $v_0=v^{+}_{1}$ and $v_j=x_{n_{j},0}^{-}x_{n_{j-1},0}^{-}\ldots
x_{n_1,0}^{-}v^{+}_{1}$ for $1\leq j\leq s$. Let $Y_{n_{j+1}}$ be the subalgebra of $\yg$ generated by $x_{n_{j+1},r}^{\pm}$ and $h_{n_{j+1},r}$, $r\in \Z_{\geq 0}$, which is isomorphic to $\ysl$.
We prove that, as $\ysl$-modules, $$Y_{n_{j+1}}\left(v_j\right)\otimes Y_{n_{j+1}}\left(v_2^{+}\right)\otimes \ldots\otimes Y_{n_{j+1}}\left(v_{k}^{+}\right)=Y_{n_{j+1}}\Big(v_j\otimes \left(v_2^{+}\otimes \ldots\otimes v_{k}^{+}\right)\Big)
.$$ It is obvious that
$v_{j+1}\otimes \left(v_2^{+}\otimes \ldots\otimes v_{k}^{+}\right)$ is contained in $Y_{n_{j+1}}\left(v_j\right)\otimes
Y_{n_{j+1}}\left(v_2^{+}\right)\otimes \ldots\otimes Y_{n_{j+1}}\left(v_{k}^{+}\right)$, so is in $Y_{n_{j+1}}\big(v_j\otimes \left(v_2^{+}\otimes
\ldots\otimes v_{k}^{+}\right)\big)$. Using induction on $j$ downward, we have $$v^{-}_{1}\otimes v^{+}\in \yg\left(v^{+}_{1}\otimes v^{+}\right).$$

%We show that $Y_{i+1}v_i$ is a highest weight representation of $\ysl$ which is isomorphic to $W_1\left(a\right)$, $w_2\left(a\right)$ or $w_3\left(a\right)$.

Let S be the multiset of all roots of $\pi_i\left(u\right)$ for all $i\in I$. Let $a_{m,n}$ be one of the numbers in $S$ with the maximal real parts, and denote $a_1=a_{m,n}$ and $b_1=m$. Inductively, let $a_{s,t} $ be one of the numbers in $S-\{a_1, a_2,\ldots, a_{r-1}\}$ with the maximal real part, and denote $a_r=a_{s,t}$ and $b_r=s$ $\left(r\geq 2\right)$. Then $L=V_{a_1}(\omega_{b_1})\otimes V_{a_2}(\omega_{b_2})\otimes\ldots\otimes V_{a_k}(\omega_{b_k})$ is a highest weight representation, and its associated polynomial is $\pi$.  Since $L$ is a quotient of $W(\pi)$, $\operatorname{Dim}(L)\leq \operatorname{Dim}\big(W(\pi)\big)$. However, $$\operatorname{Dim}(L)=\operatorname{Dim}\big(W(\lambda)\big)\geq \operatorname{Dim}\big(W(\pi)\big).$$ Thus both $L$ and $W(\pi)$ have the same dimension, and then they must be isomorphic. Therefore the structure of the local Weyl module $W(\pi)$ is obtained.

On the way to prove that $L$ is a highest weight representation, we can impose a finite set to replace the constraint that $\operatorname{Re}(a_j)-\operatorname{Re}(a_i)\leq 0$ for $1\leq i<j\leq k$, and then a sufficient condition for $L$ to be a highest weight representation is obtained: $L=V_{a_1}(\omega_{b_1})\otimes V_{a_2}(\omega_{b_2})\otimes\ldots\otimes V_{a_k}(\omega_{b_k})$ is a highest weight representation if $a_j-a_i\notin S(b_i, b_j)$ for $1\leq i<j\leq k$, where $S(b_i, b_j)$ is a finite set of positive rational numbers.  Note that $L$ is irreducible if and only if both $L$ and the left dual $\ ^tL$ are highest weight representations of $\yg$. It is known that the left dual of a fundamental representation of $\yg$ is also a fundamental representation, thus an irreducibility criterion for $L$ follows immediately from the cyclicity condition. Indeed, if $a_j-a_i\notin S(b_i,b_j)$ for $1\leq i\neq j\leq k$, then $L$ is irreducible. When $\g$ is of type $A$, $L$ is irreducible if and only if $a_j-a_i\notin S(b_i,b_j)$ for $1\leq i\neq j\leq k$.

\end{abstract}

\begin{acknowledgement}
I am deeply grateful to my supervisor, Nicolas Guay, for all the time spent on interpreting basic concepts, checking the detail of the thesis, and making comments and suggestions through the content. He is a dedicated person and sets up an example for me as a supervisor with principles.

I am deeply grateful to Dr. Sudarshan Sehgal for so many valuable advices in the past five years. I truly enjoyed those conversations!

I would like to thank my supervisory committee: Dr. G. Cliff, Dr. N. Guay and Dr. A. Pianzola; and my final examiners: Dr. M. De Montigny, Dr. J. kuttler. Special thanks goes to my external examiner, Dr. A. Moura, who spent time to read my thesis, made valuable comments and suggestions.

I would like to thank the Department of Mathematical and Statistical Science for the supports in teaching and researching.

Many thanks go to Lucy Chen for accompanying and encouraging.
\end{acknowledgement}

\tableofcontents

\begin{symbols}
\begin{stretchtable}
\begin{tabular}{lp{4in}}
$\Bbb{Z}$& the set of integers\\
$\Bbb{Z}_{\geq 0}$& the set of nonnegative integers\\
%$\Bbb{Z}/m\Bbb{Z}$& the finite cyclic group of order $m$\\
%$\kk$& an algebraically closed field of characteristic zero\\
%$\Bbb{Q}$& the field of rational numbers\\
$\Bbb{C}$& the field of complex numbers\\
$\delta_{ij}$& the Kronecker symbol, which is $1$ if $i=j$ and is $0$ if $i\neq j$\\
$\mathfrak{g}$& finite-dimensional simple Lie algebra over $\C$ \\
$\mathfrak{h}$ & the Cartan subalgebra of $\mathfrak{g}$\\
$l$ & rank of $\g$\\
$I=\{1,2,\ldots, l\}$& the nodes of the Dynkin diagram\\
%$\pi$ & $l$ tuple of polynomials $\big(\pi_1(u),\ldots,\pi_l(u)\big)$\\
$\W$& the Weyl group of $\g$\\
$w_0$ & the longest element in $\W$\\
$U\left(\mathfrak{g}\right)$& the universal enveloping algebra of $\g$\\
$\overline{k}$& $\overline{k}=0$ if $k$ is even; $\overline{k}=1$ if $k$ is odd\\
$\pi$ & $l$-tuple of polynomials in $u$\\
$\kappa$ & $\frac{1}{2}\times \text{dual Coxeter number of $\g$}$
%$\epsilon_{ijk}$& the sign of a cycle $\left(i,j,k\right)$, where $i,j,k\in\{1,2,3\}$\\
%$\mathbf{i}$&a square root of $-1$ in $\kk$\\
%$\zeta_m$&the standard $m$-th primitive  root of unity in $\kk$, i.e., $\zeta_m=e^{\frac{2\pi\mathbf{i}}{m}}$\\
%$D$&the Laurent polynomial ring $\kk[t^{\pm1}]$\\
%$D_m$&the ring $\kk[t^{\pm\frac{1}{m}}]$ which is an \'{e}tale extension of $D$\\
%$\widehat{D}$&the ring $\lim\limits_{\longrightarrow}D_m$\\
%$\dr{D}$&the $\kk$--differential ring $\left(D,\frac{d}{dt}\right)$\\
%$\dr{D}_m$&the $\kk$--differential ring $\left(D_m,\frac{d}{dt}\right)$\\
%$\widehat{\dr{D}}$&the $\kk$--differential ring $\left(\widehat{D},\frac{d}{dt}\right)$\\
%$\mathrm{Spec}\left(R\right)$&the spectrum of $R$\\
%$\mathrm{Mat}_2\left(R\right)$&the set of $2\times2$-matrices with coefficients in the ring $R$
\end{tabular}
\end{stretchtable}
\end{symbols}
}
\mainbody

\chapter{Background}
\label{chap:intro}
%This thesis is inspired by \cite{ChPr1} and \cite{VMFY}.

In this chapter we review some important definitions and theorems concerning the local Weyl modules of Yangians of finite-dimensional classical simple Lie algebras $\g$ over the complex numbers $\C$. The objects of this chapter are the Yangians and their representations, the current algebras and their local Weyl modules.  %The highlight in this chapter is the structure of local Weyl modules of the current algebras. The dimension of some local Weyl module of a current algebra is vital to an upper bound for the dimension of a local Weyl module of Yangian (which we will introduce in Chapter 2). %Certain fundamental Kirillov-Reshetikhin module makes the connection of both a fundamental Weyl module of the current algebras and a fundamental representation of Yangian. They are isomorphic to each other as $\g$-modules.

The universal enveloping algebra $U(\g)$ can be identified with the subalgebra of $\yg$. Thus every finite-dimensional representation of the Yangian is also a $\g$-module, hence it is isomorphic to a direct sum of irreducible representations of $\g$. So the dimensions of the fundamental representations of Yangians are determined by how they decompose as irreducible $\g$-modules. Therefore we begin this chapter by a review of the finite-dimensional classical simple Lie algebras and their finite-dimensional irreducible representations. %Let $\g$ denote a finite-dimensional classical simple Lie algebra over $\C$.
\section{Classical simple Lie algebras and their representations}

%In order to give a characterization of local Weyl modules of Yangians of classical simple Lie algebras, we need the fundamental representations of $A_l$ and $D_l$.  We give reviews on these important information.

Although there are many ways to choose a Cartan subalgebra of $\g$, we only take the widely used one, which we give in below. Let Dim $\h=l$ and $I=\{1,2,\ldots, l\}$ be nodes of the Dynkin diagram of $\g$. Let $\Delta$ be the root system corresponding to the Cartan subalgebra $\mathfrak{h}$, and $\Delta^{+}$ be the set of all positive roots.  Let $\prod=\{\alpha_1,\ldots,\alpha_l\}$ be the set of simple roots. Denote by $\theta\in \Delta^{+}$ the highest root.  Let $Q=\bigoplus\limits_{i=1}^{l}\Z\alpha_i$  be the root lattice, and Let $Q^{+}=\bigoplus\limits_{i=1}^{l}\Z_{\geq 0}\alpha_i$. %$Q=Q^{+}\bigcup Q^{-}$, and $Q^{-}=-Q^{+}$. %We are going to digress a little to introduce the notion: co-root. We want to mention that the definition here is different with the one in Humphreys.
Let $B\left(\cdot,\cdot\right)$ be the Killing form on $\g$. $B\left(\cdot,\cdot\right)$ is non-degenerate on $\h$. For any $\alpha\in \h^{\ast}$, there exists a unique $h_\alpha\in \h$ such that $\alpha\left(h_\beta\right)=B\left(h_\alpha,h_\beta\right)$. Define a bilinear form $\left(\cdot,\cdot\right)$ on $\h^{\ast}$ by restriction of the Killing form of $\g$ to $\h$. %normalized so that $\left(\theta,\theta\right)=2$.
%Let $\alpha\in \Delta$. Define $\alpha^{\vee}=\frac{2h_{\alpha}}{B\left(h_\alpha,h_\alpha\right)}$.
The lattice $P$ of integral weights is the set of elements $\lambda\in\h^{\ast}$ such that
$\lambda\left(h_\alpha\right)\in\Z$ for all $\alpha\in \Delta$, and let $P^+$ be the set of dominant
integral such that $\lambda\left(h_\alpha\right)\ge 0$. For $i\in I$,  the
fundamental weight
$\omega_i$ of $\g$ is given by $$\langle \omega_i,\alpha_j\rangle:=\frac{2\left(\omega_i,\alpha_j\right)}{\left(\alpha_j,\alpha_j\right)}=\delta_{ij}.$$ % Let
%$\langle\ ,\ \rangle$ be the bilinear pairing on $P$ such that $\langle\alpha_i,\omega_j\rangle
%=\delta_{ij}$. Set
%$a_{ij}=\langle\alpha_i,\alpha_j\rangle$ for $i,j\in I$. The bilinear form  induces an
%isomorphism $\frak h\cong \frak h^*$ such that,
%if $\beta=\sum_i r_i\alpha_i\in R^+$, then
%\begin{equation*}
%h_\beta = \sum_j \frac{d_jr_j}{d_\beta}h_j,
%\end{equation*}
%where for a root $\alpha\in R$, we set $d_\alpha =\frac12\langle\alpha,\alpha\rangle$.
Let
$\W\subset {\text{Aut}}\left(\frak h^*\right)$ be the Weyl group of $\frak g$, which is generated by simple reflections $s_i$ ($i\in I$).

%I suppose the reader has knowledge about the highest weight representation theory of $\g$.
We list a theorem concerning finite-dimensional irreducible representations of a simple Lie algebra $\g$ over $\C$.
\begin{theorem}[Theorem 21.2, \cite{Hu}]
If $\lambda\in \h$ is a dominant integral weight, then the irreducible $\g$-module $L=L(\lambda)$ is finite-dimensional, and its set of weights $\prod(\lambda)$ is permuted by $\W$, with $\operatorname{Dim}\left(L_{\mu}\right)=\operatorname{Dim}\left(L_{\sigma\left(u\right)}\right)$ for $\mu\in \prod(\lambda)$ and $\sigma\in \W$.
\end{theorem}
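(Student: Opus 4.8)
The plan is to follow the classical $\mathfrak{sl}_2$-reduction argument. Fix a highest weight vector $v^{+}$ of $L=L(\lambda)$, and for each $i\in I$ let $\mathfrak{s}_i=\langle x_i,h_i,y_i\rangle\cong \mathfrak{sl}_2$ be the subalgebra attached to the simple root $\alpha_i$, where $h_i=h_{\alpha_i}$ and $x_i,y_i$ span the root spaces $\g_{\alpha_i},\g_{-\alpha_i}$. Put $m_i=\langle\lambda,\alpha_i\rangle\in\Z_{\geq 0}$. The first step is to establish the key relation $y_i^{m_i+1}v^{+}=0$. Using the $\mathfrak{sl}_2$-identity $[x_i,y_i^{k}]=k\,y_i^{k-1}(h_i-k+1)$ together with $x_j v^{+}=0$ for all $j$ and $[x_j,y_i]=0$ for $j\neq i$ (as $\alpha_j-\alpha_i$ is not a root), one finds that $y_i^{m_i+1}v^{+}$ is annihilated by every $x_j$, hence is itself a highest weight vector of weight $\lambda-(m_i+1)\alpha_i\neq\lambda$. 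Since $L(\lambda)$ is irreducible, its space of highest weight vectors is one-dimensional, so this vector must vanish.

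Next I would upgrade this to local finiteness over each $\mathfrak{s}_i$. Let $L'$ be the sum of all finite-dimensional $\mathfrak{s}_i$-submodules of $L$. The relation above shows the cyclic $\mathfrak{s}_i$-module generated by $v^{+}$ is finite-dimensional, so $v^{+}\in L'$, whence $L'\neq 0$. Because $\g$ is spanned by finitely many root vectors, each acting locally nilpotently, one checks that $L'$ is stable under all of $\g$; by irreducibility $L'=L$. Thus $L$ is a locally finite, and hence completely reducible, $\mathfrak{s}_i$-module for every $i\in I$.

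From local finiteness I would read off the Weyl-symmetry. Local finiteness over $\mathfrak{s}_i$ forces $h_i$ to act diagonalizably with integer eigenvalues, and the representation theory of $\mathfrak{sl}_2$ shows that the $\alpha_i$-weight string through any weight $\mu$ is symmetric about the reflecting hyperplane of $s_i$, giving $s_i\mu\in\prod(\lambda)$ together with $\operatorname{Dim}\left(L_{\mu}\right)=\operatorname{Dim}\left(L_{s_i\mu}\right)$. Since $\W$ is generated by the simple reflections $s_i$, it follows that $\prod(\lambda)$ is stable under $\W$ and that $\operatorname{Dim}\left(L_{\mu}\right)=\operatorname{Dim}\left(L_{\sigma(\mu)}\right)$ for every $\sigma\in\W$.

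Finally, finite-dimensionality follows by a counting argument. Every weight of $L$ lies in $\lambda-Q^{+}$ and is $\W$-conjugate to a dominant weight $\mu$ with $\mu\leq\lambda$; there are only finitely many such dominant $\mu$, each $\W$-orbit is finite, and every weight space $L_{\mu}$ is finite-dimensional since weights have the form $\lambda-\sum_i k_i\alpha_i$ with the $k_i$ bounded (a PBW count). Hence $\prod(\lambda)$ is finite and $\operatorname{Dim}\big(L(\lambda)\big)=\sum_{\mu}\operatorname{Dim}\left(L_{\mu}\right)<\infty$. I expect the main obstacle to be the local-finiteness step, namely verifying that $L'$ is a $\g$-submodule, which rests on the local nilpotence of the root vectors and the fact that they carry finite-dimensional $\mathfrak{s}_i$-submodules to finite-dimensional ones; the relation $y_i^{m_i+1}v^{+}=0$ is the technical heart, and the remainder is bookkeeping with $\mathfrak{sl}_2$ weight strings.
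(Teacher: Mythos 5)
Your proposal is correct, and it takes the same route as the source the paper relies on: the paper gives no proof of this statement, quoting it verbatim as Theorem 21.2 of Humphreys \cite{Hu}, and your $\mathfrak{sl}_2$-reduction --- the relation $y_i^{m_i+1}v^{+}=0$, local finiteness over each subalgebra $\mathfrak{s}_i$, the $s_i$-symmetry of weight strings, and the finite count of dominant weights below $\lambda$ --- is precisely the classical argument given there. No gaps.
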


Especially, when $\lambda=\omega_i$ ($i\in I$), the module $L(\omega_i)$ is called the \textbf{$i$-th fundamental representation} of $\g$.

%In the remainder of this section, we will list most of the information above for $\g$.

\subsection{Simple Lie algebras $A_{l}$}
%Cartan subalgebra:  $\mathfrak{h}=\left\{\left(
%                                      \begin{array}{cccccc}
%                                       h_1 &  &  &  &    \\
%                                       & h_2 &  &  &    \\
%                                       &  & \ddots &  &    \\
%                                       & &    & h_{l}  &  \\
%                                      &  &    &  & h_{l+1} \\
%                                      \end{array}
%                                    \right)
%|h_i\in\C\right\}$.

Cartan Subalgebra: $\mathfrak{h}=\left\{diag\left(h_1,\ldots, h_{l+1}\right)|h_i\in \C, h_1+\ldots+ h_{l+1}=0\right\}$.\\
For $i=1,2,\ldots, l+1$, $\mu_i$ are the functions defined by $$\mu_i\Big(diag\left(h_1,\ldots, h_{l+1}\right)\Big)=h_i.$$

Root system: $\Phi=\{\mu_i-\mu_j|1\leq i\neq j\leq l+1\}$.

Simple roots: $\{\alpha_1=\mu_1-\mu_2,\ldots, \alpha_l=\mu_{l}-\mu_{l+1}\}$.

Positive roots: $\{\mu_i-\mu_j| 1\leq i< j\leq l+1\}$.

Fundamental weights: $\{\omega_i=\mu_1+\ldots+\mu_{i}|i=1,2\ldots, l\}$.

Longest root: $\mu_1-\mu_{l+1}=\alpha_1+\alpha_2+\ldots+\alpha_l$.

Weyl group: $\W=S_l=\langle s_1,\ldots, s_l\rangle$, where $s_i$ is defined by
\begin{center}
$s_i\left(\mu_i\right) = \mu_{i+1},\quad s_i\left(\mu_{i+1}\right)=\mu_{i}, \quad s_i\left(\mu_j\right) = \mu_j\ \text{for}\ j\neq i, i+1.$
\end{center}

One reduced expression of the longest element in the Weyl group is:
$$w_0=s_l\left(s_{l-1}s_{l}\right)\left(s_{l-2}s_{l-1}s_{l}\right)\ldots\left(s_1s_2\ldots s_{l}\right).$$

Indecomposable Cartan matrix of type $A_l$: $$\left(
                                                \begin{array}{ccccccc}
                                                  2 & -1 &  &  &  &  &  \\
                                                 -1 & 2 & -1 &  &  &  &  \\
                                                   & -1 & 2 & -1 &  &  &  \\
                                                  &  & \cdot & \cdot & \cdot & & \\
                                                  & &  & -1 &2 & -1   &  \\
                                                   &  &  &  & -1 & 2 & -1 \\
                                                   &  &  &  &  & -1 & 2\\
                                                \end{array}
                                              \right).$$
\begin{theorem}\label{frosllc1}
The $i$-fundamental module for the simple Lie algebra of type $A_l$ isomorphic to $\bigwedge^{i} V$, where $V=\C^{l+1}$; its dimension is ${l+1\choose i}$.
\end{theorem}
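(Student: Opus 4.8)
The plan is to realize $\bigwedge^i V$ concretely and to identify it with $L(\omega_i)$ by exhibiting a highest weight vector of weight $\omega_i$ and then proving irreducibility directly. Fix the standard basis $e_1,\dots,e_{l+1}$ of $V=\C^{l+1}$, on which $\nyn$ acts by matrix multiplication, and write $E_{jk}$ for the matrix unit sending $e_k$ to $e_j$ and killing the other basis vectors. With the Cartan subalgebra of traceless diagonal matrices as in the table above, $e_j$ has weight $\mu_j$, so a basis of $\bigwedge^i V$ is given by the wedges $e_{j_1}\wedge\cdots\wedge e_{j_i}$ with $j_1<\cdots<j_i$, and such a wedge has weight $\mu_{j_1}+\cdots+\mu_{j_i}$. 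In particular each weight space is one-dimensional, and the dimension count $\dim\bigwedge^i V=\binom{l+1}{i}$ is immediate, being the number of $i$-element subsets of $\{1,\dots,l+1\}$.

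First I would check that $v^{+}:=e_1\wedge\cdots\wedge e_i$ is a highest weight vector of weight $\omega_i$. Its weight is $\mu_1+\cdots+\mu_i$, which equals $\omega_i$ by the list of fundamental weights recorded above. The positive root vectors are the $E_{jk}$ with $j<k$; applying such an operator to $v^{+}$ either hits an index $k>i$ absent from $v^{+}$ (giving zero) or replaces $e_k$ by $e_j$ with $j<k\le i$, producing a wedge with a repeated factor $e_j$ (again zero). Hence $E_{jk}v^{+}=0$ for all $j<k$, so $v^{+}$ is annihilated by all positive root vectors and is a highest weight vector of weight $\omega_i$.

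The main work is to prove that $\bigwedge^i V$ is irreducible, which I would do by a connectivity argument rather than a dimension comparison. Let $M$ be a nonzero submodule. Since $M$ is stable under $\h$ and every weight space is one-dimensional, spanned by a single basis wedge, $M$ must contain at least one basis wedge $e_{j_1}\wedge\cdots\wedge e_{j_i}$. The key observation is that a simple root vector $E_{a+1,a}$ (resp. $E_{a,a+1}$) acts on a basis wedge by replacing an occupied index $a$ by the adjacent unoccupied index $a+1$ (resp. $a+1$ by $a$), sending it to a nonzero multiple of another basis wedge, and annihilating it otherwise. Since any $i$-subset of $\{1,\dots,l+1\}$ can be transformed into any other by a sequence of such adjacent single-index moves, repeated application of these operators carries the one basis wedge in $M$ to every basis wedge, up to nonzero scalars. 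Thus $M$ contains all basis wedges, so $M=\bigwedge^i V$, and $\bigwedge^i V$ is irreducible.

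Finally, having shown that $\bigwedge^i V$ is an irreducible $\nyn$-module with highest weight vector $v^{+}$ of weight $\omega_i$, the classification of finite-dimensional irreducibles by highest weight identifies it with $L(\omega_i)$, the $i$-th fundamental representation; together with the dimension count of the first paragraph this proves the theorem. The one genuinely substantive step is the irreducibility argument; the highest weight and dimension computations are routine once the weights of the basis wedges are in hand.
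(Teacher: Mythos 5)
Your proof is correct, but there is nothing in the paper to compare it against: Theorem \ref{frosllc1} is stated in the background chapter as a classical fact, with no proof supplied (it is the standard result found in, e.g., Fulton--Harris or \cite{Hu}). So your argument is a self-contained replacement rather than an alternative to the paper's route. The three ingredients — the weight computation for the basis wedges, the verification that $e_1\wedge\cdots\wedge e_i$ is a maximal vector of weight $\omega_i$, and irreducibility via the connectivity of $i$-subsets under adjacent single-index moves — are all sound, and the connectivity argument is indeed the one substantive step. One point you use silently and should state explicitly: that distinct basis wedges have distinct weights (which is what makes every weight space one-dimensional, and hence forces a nonzero submodule $M$ to contain a basis wedge). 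This holds because the only linear relation among $\mu_1,\dots,\mu_{l+1}$ on the traceless Cartan subalgebra is $\mu_1+\cdots+\mu_{l+1}=0$; if $\sum_{j\in S}\mu_j=\sum_{j\in T}\mu_j$ with $|S|=|T|=i$, then $\mathbf{1}_S-\mathbf{1}_T$ must be proportional to $(1,\dots,1)$, forcing $S=T$. With that line added, the proof is complete; the final identification with $L(\omega_i)$ via the highest-weight classification is routine, since in a finite-dimensional irreducible module any maximal vector generates and its weight is the highest weight.
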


\subsection{Simple Lie algebras $B_{l}$}

%Cartan subalgebra:  $\mathfrak{h}=\left\{\left(
%                                      \begin{array}{ccccccc}
%                                     0& &  &  &  &  &  \\
 %                                    &  h_1 &  &  &  &  &  \\
%                                     &  & \ddots &  &  &  &  \\
%                                     &  &  &h_{l}  &  &  &  \\
%                                    &   &  &  &-h_1  &  &  \\
%                                     &  & &  &  & \ddots  &  \\
%                                     & &  &  &  &  & -h_{l} \\
%                                      \end{array}
%                                    \right)
%|h_i\in\C\right\}$.\\
%$\{\mu_1,\mu_2,\ldots,\mu_{l+1}\}$ be the coordinate functions on the Cartan subalgebra of $\yn$.
Cartan Subalgebra: $\mathfrak{h}=\{diag\left(0, h_1,\ldots, h_{l},-h_{1},\ldots,-h_{l}\right)|h_i\in \C\}$.\\
For $i=1,2,\ldots, l$, $\mu_i$ are the functions defined by $$\mu_i\Big(diag\left(0, h_1,\ldots, h_{l},-h_{1},\ldots,-h_{l}\right)\Big)=h_i.$$

Root system: $\Phi=\{\pm\mu_i\pm\mu_j|1\leq i\neq j\leq l\}\bigcup \{\pm\mu_i\}$.

Positive roots: $\{\mu_i\pm\mu_j|1\leq i< j\leq l\}\bigcup \{\mu_i\}$.

Simple roots: $\{\alpha_1=\mu_1-\mu_2,\ldots, \alpha_{l-1}=\mu_{l-1}-\mu_{l},\alpha_{l}=\mu_{l}\}$.

Longest root: $\mu_1+\mu_2=\alpha_1+2\alpha_2+2\alpha_3+\ldots+2\alpha_{l-1}+2\alpha_{l}$.

Weyl group: $\W=\langle s_1,\ldots, s_l\rangle$, where $s_i$, $1\leq i\leq l-1$, is defined by
$$s_i\left(\mu_i\right)=\mu_{i+1}, \quad s_i\left(\mu_{i+1}\right)=\mu_{i}, \quad s_i\left(\mu_j\right) = \mu_j \ \text{for}\ j\neq i, i+1;$$
for $i=l$, $$s_l\left(\mu_{l}\right)=-\mu_{l}, \quad s_l\left(\mu_j\right) = \mu_j\ \text{for}\ j\neq l.$$\

One reduced expression of the longest element in the Weyl group is:
\begin{eqnarray*}
% \nonumber to remove numbering \left(before each equation\right)
  w_0=-1&=&s_{l}\left(s_{l-1}s_ls_{l-1}\right)\left(s_{l-2}s_{l-1}s_{l}s_{l-1}s_{l-2}\right)\ldots \\
   &&  \left(s_2\ldots s_{l-1}s_{l}s_{l-1}\ldots s_2\right)\left(s_1s_2\ldots s_{l-1}s_{l}s_{l-1}\ldots s_2 s_1\right).
\end{eqnarray*}

Fundamental weights: $\{\omega_i|\omega_i=\mu_1+\ldots+\mu_{i}$ for $1\leq i\leq l-1$, and\\ $\omega_{l}=\frac{1}{2}\left(\mu_1+\ldots+\mu_{l-1}+\mu_{l}\right)\}$.

Indecomposable Cartan matrix of type $B_l$: $$\left(
                                                \begin{array}{cccccccc}
                                                  2 & -1 &  &  &  &  & &\\
                                                 -1 & 2 & -1 &  &  &  & & \\
                                                   & -1 & 2 & -1 &  &  & & \\
                                                  &  & \cdot & \cdot & \cdot & && \\
                                                  & &  & -1 &2 & -1   & & \\
                                                   &  &  &  & -1 & 2 & -1& \\
                                                   &  &  &  &  & -1 & 2& -1\\
                                                      &  &  &  &  &  & -2&2\\
                                                \end{array}
                                              \right).$$
\begin{theorem}\label{fmotbc1}
For $1\leq i\leq l-1$,  the i-fundamental module for the simple Lie algebra $B_l$ is isomorphic to $\bigwedge^{i} V$ with dimension ${2l+1\choose i}$, where $V=\C^{2l+1}$; $L\left(\omega_{l}\right)\cong U$, where $U$ is the spin representation with dimension $2^{l}$.
\end{theorem}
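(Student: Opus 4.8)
The plan is to treat the two assertions separately. First I record that the natural (vector) representation $V=\C^{2l+1}$ of $\mathfrak{so}(2l+1,\C)$ has weight set $\{0,\pm\mu_1,\ldots,\pm\mu_l\}$, each weight occurring with multiplicity one, so that $\mu_1=\omega_1$ is the highest weight and $V\cong L(\omega_1)$. The weights of $\bigwedge^{i}V$ are then the sums of $i$ distinct weights of $V$, and the unique maximal such sum is $\mu_1+\cdots+\mu_i=\omega_i$, attained only by $e_1\wedge\cdots\wedge e_i$ where $e_j$ is a weight vector of weight $\mu_j$. Hence the $\omega_i$-weight space of $\bigwedge^i V$ is one-dimensional, $e_1\wedge\cdots\wedge e_i$ is a highest weight vector of weight $\omega_i$, and any irreducible summand containing it must be isomorphic to $L(\omega_i)$.

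The crux of the first part is the irreducibility of $\bigwedge^{i}V$ for $1\le i\le l-1$. I would invoke the tensor-space decomposition for the orthogonal Lie algebra: an exterior power is irreducible over $\mathfrak{so}(V)$ precisely when it is \emph{harmonic}, i.e. annihilated by every contraction with the invariant symmetric form $g$. For $\bigwedge^i V$ the relevant contraction is the $\mathfrak{so}(V)$-equivariant map $\bigwedge^i V\to\bigwedge^{i-2}V$ that pairs two slots via $g$; since $g$ is symmetric while the wedge is alternating, this map is identically zero. Thus $\bigwedge^i V$ is entirely harmonic and therefore irreducible, necessarily isomorphic to $L(\omega_i)$ by the weight computation above. (Alternatively, one checks by hand that applying root vectors of $\g$ to $e_1\wedge\cdots\wedge e_i$ reaches every basis wedge $e_{j_1}\wedge\cdots\wedge e_{j_i}$, so the highest weight vector generates the module.) The dimension $\binom{2l+1}{i}=\dim\bigwedge^i V$ is then immediate. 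The restriction $i\le l-1$ is essential: at $i=l$ the highest weight is $\mu_1+\cdots+\mu_l=2\omega_l$, so $\bigwedge^l V\cong L(2\omega_l)$ rather than the fundamental module $L(\omega_l)$, which is exactly why the last node requires a separate construction.

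For the spin representation I would pass to the Clifford algebra $\mathrm{Cl}(V,g)$ and use an isotropic decomposition $V=W\oplus W'\oplus\C e_0$, with $W,W'$ maximal isotropic of dimension $l$ in perfect duality under $g$. The spinor space $U:=\bigwedge^{\bullet}W$ carries a $\mathrm{Cl}(V,g)$-action (vectors of $W$ act by wedging, vectors of $W'$ by contraction through the pairing, and $e_0$ by a grading operator), and the standard embedding $\mathfrak{so}(2l+1,\C)\hookrightarrow\mathrm{Cl}(V,g)$ as the degree-two elements turns $U$ into an $\mathfrak{so}(2l+1,\C)$-module of dimension $2^l$. I would then compute that the weights of $U$ are $\tfrac12(\pm\mu_1\pm\cdots\pm\mu_l)$, each of multiplicity one, with highest weight $\tfrac12(\mu_1+\cdots+\mu_l)=\omega_l$, and deduce irreducibility from the fact that the associative algebra generated by the degree-two elements is all of the even Clifford algebra $\mathrm{Cl}^0(V,g)\cong\mathrm{Mat}_{2^l}(\C)$, which acts irreducibly on $U$. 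This identifies $U\cong L(\omega_l)$ of dimension $2^l$.

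The main obstacle I expect is establishing irreducibility cleanly in both halves. For the exterior powers the key point is to argue convincingly that the symmetric form produces no nonzero contraction, so that the $\mathfrak{gl}(V)$-irreducibility of $\bigwedge^i V$ survives restriction to $\mathfrak{so}(V)$; for the spin module the genuine work lies in setting up the Clifford-algebra construction and verifying that the induced $\mathfrak{so}(2l+1,\C)$-action has precisely the weights claimed with highest weight $\omega_l$. Everything else — the weight bookkeeping and the two dimension counts — is routine once these irreducibility statements are in hand.
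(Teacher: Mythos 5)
The paper never actually proves this statement: Theorem \ref{fmotbc1} is quoted as a classical fact, just like the parallel statements for types $A$, $C$ and $D$ in the same section, with standard references (Humphreys, Carter) implicitly doing the work. Your proposal therefore supplies a proof where the paper has none, and the proof you give is the standard one and is essentially correct. For $1\leq i\leq l-1$ the key point is exactly the one you isolate: contracting an alternating tensor against the \emph{symmetric} form $g$ vanishes identically (swapping two slots changes the sign of the tensor but not of $g$), so all of $\bigwedge^{i}V$ is traceless/harmonic, and Weyl's theorem on harmonic tensors of a fixed symmetry type then gives irreducibility; since $2l+1$ is odd, irreducibility over $O(2l+1)$ descends to $\mathfrak{so}(2l+1,\C)$. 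Be aware that this theorem on harmonic tensors is where all the content sits --- you cite it rather than prove it, which is acceptable for a classical result, but it deserves a precise citation (e.g.\ Weyl's \emph{Classical Groups}, or Fulton--Harris, Lecture 19). Two smaller remarks. First, your parenthetical alternative (checking that $e_1\wedge\cdots\wedge e_i$ generates $\bigwedge^{i}V$) does yield irreducibility, but only together with the standard fact that a finite-dimensional module generated by a maximal vector is irreducible (indecomposability of highest weight modules combined with Weyl's complete reducibility); generation by itself is strictly weaker than irreducibility, so this step should be made explicit. Second, the Clifford-algebra construction of the spin module, with irreducibility coming from the fact that the degree-two elements generate $\mathrm{Cl}^{0}(V)\cong\mathrm{Mat}_{2^{l}}(\C)$ as an associative algebra, is the standard argument and is correct; the weight computation $\tfrac12\left(\pm\mu_1\pm\cdots\pm\mu_l\right)$ with highest weight $\omega_l$, and the observation that $\bigwedge^{l}V\cong L(2\omega_l)$ explains the restriction $i\leq l-1$, both match the paper's normalization of the fundamental weights of $B_l$.
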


\subsection{Simple Lie algebras $C_{l}$}
%Cartan subalgebra:  $\mathfrak{h}=\left\{\left(
%                                      \begin{array}{ccccccc}
%
%                                       h_1 &  &  &  &  &  \\
%                                       & \ddots &  &  &  &  \\
%                                       &  &h_l  &  &  &  \\
%                                       &  &  &-h_1  &  &  \\
%                                       & &  &  & \ddots  &  \\
%                                      &  &  &  &  & -h_{l} \\
%                                      \end{array}
%%                                    \right)
%|h_i\in\C\right\}$.

Cartan Subalgebra: $\mathfrak{h}=\{diag\left( h_1,\ldots, h_{l},-h_{1},\ldots,-h_{l}\right)|h_i\in \C\}$.\\
For $i=1,2,\ldots, l$, $\mu_i$ are the functions defined by $$\mu_i\Big(diag\left( h_1,\ldots, h_{l},-h_{1},\ldots,-h_{l}\right)\Big)=h_i.$$

Root system: $\Phi=\{\pm\mu_i\pm\mu_j|1\leq i\neq j\leq l\}\bigcup \{\pm 2\mu_i\}$.

Positive roots: $\{\mu_i\pm\mu_j|1\leq i< j\leq l\}\bigcup \{2\mu_i\}$.

Simple roots: $\{\alpha_1=\mu_1-\mu_2,\ldots, \alpha_{l-1}=\mu_{l-1}-\mu_{l},\alpha_{l}=2\mu_{l}\}$.

Longest root: $2\mu_1=2\alpha_1+2\alpha_2+2\alpha_3+\ldots+2\alpha_{l-1}+\alpha_{l}$.

Weyl group: $\W=\langle s_1,\ldots, s_l\rangle $, where $s_i$, $1\leq i\leq l-1$, is defined by $$s_i\left(\mu_i\right)=\mu_{i+1}, \quad s_i\left(\mu_{i+1}\right)=\mu_{i}, \quad s_i\left(\mu_j\right) = \mu_j \ \text{for}\ j\neq i, i+1;$$
for $i=l$, $$s_l\left(\mu_{l}\right)=-\mu_{l}, \quad s_l\left(\mu_j\right) = \mu_j\ \text{for}\ j\neq l.$$\

One reduced expression of the longest element, $-1$, in the Weyl group is:
\begin{eqnarray*}
% \nonumber to remove numbering \left(before each equation\right)
  w_0=-1&=&s_{l}\left(s_{l-1}s_ls_{l-1}\right)\left(s_{l-2}s_{l-1}s_{l}s_{l-1}s_{l-2}\right)\ldots \\
   &&  \left(s_2\ldots s_{l-1}s_{l}s_{l-1}\ldots s_2\right)\left(s_1s_2\ldots s_{l-1}s_{l}s_{l-1}\ldots s_2 s_1\right).
\end{eqnarray*}

Fundamental weights: $\{\omega_i|\omega_i=\mu_1+\ldots+\mu_{i}$ for $1\leq i\leq l\}$.

Indecomposable Cartan matrix of type $C_l$: $$\left(
                                                \begin{array}{cccccccc}
                                                  2 & -1 &  &  &  &  & &\\
                                                 -1 & 2 & -1 &  &  &  & & \\
                                                   & -1 & 2 & -1 &  &  & & \\
                                                  &  & \cdot & \cdot & \cdot & && \\
%                                                  & &  & -1 &2 & -1   & & \\
                                                   &  &  &  & -1 & 2 & -1& \\
                                                   &  &  &  &  & -1 & 2& -2\\
                                                      &  &  &  &  &  & -1&2\\
                                                \end{array}
                                              \right).$$

\begin{theorem}\label{fmsimlieC}
The i-th fundamental modules $L(\omega_i)$ for the simple Lie algebra of type $C_l$ are given as follows.
\begin{enumerate}
  \item $L\left(\omega_1\right)$ is the natural $2l$-dimensional $C_l$-module $V\cong \C^{2l}$.
  \item For $2\leq i\leq l$, $L(\omega_i)$ is the submodule of $\bigwedge ^{i} V$ given by the kernel of the contraction map $\theta: \bigwedge^{i}V\rightarrow \bigwedge^{i-2} V$; and $Dim\Big(L\left(\omega_j\right)\Big)={2l\choose j}-{2l\choose j-2}$.
\end{enumerate}
\end{theorem}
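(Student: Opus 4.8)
The plan is to realise $C_l$ as the symplectic Lie algebra $\nysp$ preserving a non-degenerate alternating form $\langle\cdot,\cdot\rangle$ on $V=\C^{2l}$, and to analyse the exterior powers $\bigwedge^i V$ through this invariant form. Fix a symplectic basis $e_1,\dots,e_l,e_{-1},\dots,e_{-l}$ with $\langle e_j,e_{-j}\rangle=1$; with the conventions of this subsection the weight of $e_j$ is $\mu_j$ and that of $e_{-j}$ is $-\mu_j$, so the weights of $V$ are exactly $\pm\mu_1,\dots,\pm\mu_l$. The highest of these is $\mu_1=\omega_1$, its weight space is the line $\C e_1$, and $\mathfrak{n}^+e_1=0$; since the natural module is irreducible this gives $L(\omega_1)\cong V$, which is part (1).

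For part (2) I would introduce on $\bigwedge^\bullet V$ the $\nysp$-invariant bivector $\Omega=\sum_{j=1}^{l}e_j\wedge e_{-j}$, together with the two operators $e_\Omega=\Omega\wedge(-)\colon\bigwedge^i V\to\bigwedge^{i+2}V$ and the contraction $\theta\colon\bigwedge^i V\to\bigwedge^{i-2}V$ of the statement, normalised so that $\theta$ is the adjoint of $e_\Omega$ for the natural pairing on $\bigwedge^\bullet V$. Because $\Omega$ is $\nysp$-invariant, both $e_\Omega$ and $\theta$ are homomorphisms of $\nysp$-modules; in particular $\ker\theta\cap\bigwedge^i V$ is a submodule, as claimed. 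A short computation then shows that $e_\Omega$, $\theta$, and the grading operator $\mathsf{h}$ acting on $\bigwedge^i V$ by the scalar $i-l$ satisfy $[e_\Omega,\theta]=\mathsf{h}$, $[\mathsf{h},e_\Omega]=2e_\Omega$, $[\mathsf{h},\theta]=-2\theta$, so they span an $\mathfrak{sl}_2$ that commutes with the $\nysp$-action.

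Next I would identify the highest weight and the primitive part. The highest weight occurring in $\bigwedge^i V$ is $\mu_1+\dots+\mu_i=\omega_i$, attained on the line spanned by $e_1\wedge\dots\wedge e_i$; a direct check shows each simple positive root vector annihilates this vector, and it lies in $\ker\theta$ because no complementary pair $e_j,e_{-j}$ occurs among its factors, so $\theta(e_1\wedge\dots\wedge e_i)=0$. Hence $L(\omega_i)$ embeds in $P^i:=\ker\theta\cap\bigwedge^i V$. Standard $\mathfrak{sl}_2$-theory applied to the triple above yields the symplectic Lefschetz decomposition $\bigwedge^i V=\bigoplus_{j\ge0}e_\Omega^{\,j}P^{\,i-2j}$ and the injectivity of $e_\Omega$ in low degrees; feeding this into $\theta\,e_\Omega=e_\Omega\,\theta+\mathsf{h}$ shows that $\theta\colon\bigwedge^i V\to\bigwedge^{i-2}V$ is surjective for $i\le l$, whence $\dim P^i=\binom{2l}{i}-\binom{2l}{i-2}$.

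The crux is the irreducibility of $P^i$ for $2\le i\le l$. Since we already have $L(\omega_i)\hookrightarrow P^i$, it suffices to match dimensions: the surjectivity of $\theta$ gives $\dim P^i=\binom{2l}{i}-\binom{2l}{i-2}$, while the Weyl dimension formula applied to $\omega_i=\mu_1+\dots+\mu_i$ in type $C_l$ yields exactly $\dim L(\omega_i)=\binom{2l}{i}-\binom{2l}{i-2}$; equality forces $P^i=L(\omega_i)=\ker\theta$, which is part (2) together with its dimension statement. Equivalently, the Lefschetz decomposition identifies $\bigoplus_i P^i$ with the primitive part of $\bigwedge^\bullet V$ under the $\nysp$-commuting $\mathfrak{sl}_2$, each $P^i$ being a single irreducible constituent. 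I expect the Weyl-dimension computation — or, dually, the direct verification that $P^i$ carries a one-dimensional space of $\mathfrak{n}^+$-highest-weight vectors — to be the main technical obstacle, the remainder being formal manipulation of the two commuting actions.
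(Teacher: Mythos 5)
The first thing to say is that the paper itself contains no proof of this theorem: it appears in the background chapter as a quoted classical fact about the fundamental representations of $\nysp$, alongside the analogous unproved statements for types $A$, $B$ and $D$. So there is no in-paper argument to compare yours against, and your proposal has to be judged on its own. Judged that way, it is the standard textbook argument and its skeleton is sound: realizing $C_l$ as $\nysp$, using the invariant bivector $\Omega$ and the adjoint contraction $\theta$ to get an $\mathfrak{sl}_2$-action commuting with $\nysp$, observing that $e_1\wedge\cdots\wedge e_i$ is an $\mathfrak{n}^+$-singular vector of weight $\omega_i$ killed by $\theta$ (so $L(\omega_i)\hookrightarrow P^i:=\ker\theta\cap\bigwedge^iV$), and computing $\dim P^i=\binom{2l}{i}-\binom{2l}{i-2}$ from surjectivity of $\theta$ in degrees $i\leq l$, which does follow from finite-dimensional $\mathfrak{sl}_2$-theory since the target weight $(i-2)-l$ is negative. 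One trivial internal slip: the identities $[e_\Omega,\theta]=\mathsf{h}$ and $\theta\,e_\Omega=e_\Omega\,\theta+\mathsf{h}$ contradict each other (the second says $[e_\Omega,\theta]=-\mathsf{h}$); with $\mathsf{h}$ acting by $i-l$ the consistent choice is $\theta\,e_\Omega=e_\Omega\,\theta-\mathsf{h}$, and nothing downstream is affected.

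The one substantive debt is the step you flag yourself: the irreducibility input. Your dimension-matching scheme requires $\dim L(\omega_i)=\binom{2l}{i}-\binom{2l}{i-2}$ as an independent fact from the Weyl dimension formula; that computation is legitimate and non-circular, but it is real work and is only asserted here. The alternative you mention in passing is actually the cleaner completion given what you have already built: show that the $\mathfrak{n}^+$-invariants of $\bigwedge^iV$ are spanned by the vectors $e_\Omega^{\,j}\left(e_1\wedge\cdots\wedge e_{i-2j}\right)$, $j\geq 0$, so that exactly one of them lies in $\ker\theta$; since $P^i$ is completely reducible and each irreducible summand contributes a singular vector, a one-dimensional space of singular vectors in $P^i$ forces $P^i=L(\omega_i)$ with no appeal to the Weyl dimension formula at all. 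Either completion works, so this is not a gap in the sense of a step that would fail; but as written the proposal defers, rather than performs, the single genuinely non-formal verification in the proof.
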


\subsection{Simple Lie algebras $D_{l}$}
Cartan Subalgebra: $\mathfrak{h}=\{diag\left( h_1,\ldots, h_{l},-h_{1},\ldots,-h_{l}\right)|h_i\in \C\}$.\\
For $i=1,2,\ldots, l$, $\mu_i$ are the functions defined by $$\mu_i\Big(diag\left( h_1,\ldots, h_{l},-h_{1},\ldots,-h_{l}\right)\Big)=h_i.$$

Root system: $\Phi=\{\pm\mu_i\pm\mu_j|1\leq i\neq j\leq l\}$.

Simple roots: $\{\alpha_1=\mu_1-\mu_2,\ldots, \alpha_{l-1}=\mu_{l-1}-\mu_{l},\alpha_{l}=\mu_{l-1}+\mu_{l}\}$.

Positive roots: $\{\mu_i\pm\mu_j| 1\leq i< j\leq l\}$.

Longest root: $\mu_1+\mu_2=\alpha_1+2\alpha_2+2\alpha_3+\ldots+2\alpha_{l-2}+\alpha_{l-1}+\alpha_{l}$.

Fundamental weights: $\{\omega_i|\omega_i=\mu_1+\ldots+\mu_{i}$ for $1\leq i\leq l-2, \\ \qquad\omega_{l-1}=\frac{1}{2}\left(\mu_1+\ldots+\mu_{l-1}-\mu_{l}\right),\ \omega_{l}=\frac{1}{2}\left(\mu_1+\ldots+\mu_{l-1}+\mu_{l}\right)\}$.

Weyl group: $\W=\langle s_1,\ldots, s_l\rangle$, where $s_i$, $1\leq i\leq l-1$, is defined by
$$s_i\left(\mu_i\right) = \mu_{i+1}, \quad s_i\left(\mu_{i+1}\right) = \mu_{i}, \quad s_i\left(\mu_j\right) = \mu_j \quad\text{for}\quad j\neq i, i+1;$$
for $i=l$,
$$ s_l\left(\mu_{l-1}\right) = -\mu_{l}, \quad  s_l\left(\mu_{l}\right) = -\mu_{l-1}, \quad s_l\left(\mu_j\right) = \mu_j \  \text{for}\ j\neq l-1, l.
$$
One reduced expression of the longest element in the Weyl group:
\begin{align*}
% \nonumber to remove numbering \left(before each equation\right)
 w_0&=s_{l}s_{l-1}\left(s_{l-2}s_{l}s_{l-1}s_{l-2}\right)\ldots \left(s_3\ldots s_{l-2}s_{l}s_{l-1}s_{l-2}\ldots s_3\right)\\
   &\qquad\left(s_2\ldots s_{l-2}s_{l}s_{l-1}s_{l-2}\ldots s_2\right)\left(s_1s_2\ldots s_{l-2}s_{l}s_{l-1}s_{l-2}\ldots s_2 s_1\right).
\end{align*}
Indecomposable Cartan matrix of type $D_l$: $$\left(
                                                \begin{array}{cccccccc}
                                                  2 & -1 &  &  &  &  & &\\
                                                 -1 & 2 & -1 &  &  &  & & \\
                                                   & -1 & 2 & -1 &  &  & & \\
                                                  &  & \cdot & \cdot & \cdot & && \\
                                                  & &  & -1 &2 & -1   & & \\
                                                   &  &  &  & -1 & 2 & -1&-1 \\
                                                   &  &  &  &  & -1 & 2& 0\\
                                                      &  &  &  &  &  -1& 0&2\\
                                                \end{array}
                                              \right).$$
%Cartan subalgebra:  $\mathfrak{h}=\left\{\left(
%                                      \begin{array}{cccccc}
 %                                      h_1 &  &  &  &  &  \\
%                                       & \ddots &  &  &  &  \\
 %                                      &  &h_{l}  &  &  &  \\
%                                       &  &  &-h_1  &  &  \\
 %                                      & &  &  & \ddots  &  \\
%                                      &  &  &  &  & -h_{l} \\
%                                      \end{array}
%                                    \right)
%|h_i\in\C\right\}$.
\begin{theorem}\label{dofrdl}
For $1\leq i\leq l-2$,  the i-fundamental module for the simple Lie algebra of type $D_l$ is isomorphic to $\bigwedge^{i} V$ with dimension ${2l\choose i}$, where $V=\C^{l+1}$; $L\left(\omega_{l-1}\right)\cong U^{-}$ and $L\left(\omega_{l}\right)\cong U^{+}$, where $U^{-}$ and $U^{+}$ are the spin representations with dimension $2^{l-1}$.
\end{theorem}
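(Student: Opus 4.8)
The plan is to treat the two families of fundamental modules separately: the exterior powers $\bigwedge^{i}V$ for $1\le i\le l-2$, and the two half-spin modules $U^{\pm}$ realizing $L(\omega_{l-1})$ and $L(\omega_{l})$. First I would fix the natural $2l$-dimensional representation $V=\C^{2l}$ of $\nyo$, with a weight basis $e_{1},\dots,e_{l},e_{-1},\dots,e_{-l}$ carrying the weights $\mu_{1},\dots,\mu_{l},-\mu_{1},\dots,-\mu_{l}$; this identifies $V=L(\omega_{1})$. A basis of $\bigwedge^{i}V$ is given by the wedges $e_{j_{1}}\wedge\cdots\wedge e_{j_{i}}$, and a direct check of the action of the positive root vectors $E_{\mu_{a}-\mu_{b}}$ and $E_{\mu_{a}+\mu_{b}}$ (for $a<b$) shows that $v_{i}=e_{1}\wedge\cdots\wedge e_{i}$ is annihilated by the positive nilpotent subalgebra and has weight $\mu_{1}+\cdots+\mu_{i}=\omega_{i}$ (using the formula $\omega_{i}=\mu_{1}+\cdots+\mu_{i}$ valid for $i\le l-2$). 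Hence $L(\omega_{i})$ is the submodule generated by $v_{i}$, and it remains only to prove $\bigwedge^{i}V=L(\omega_{i})$.

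The crux is the irreducibility of $\bigwedge^{i}V$. Since $\nyo$ is semisimple, $\bigwedge^{i}V$ is completely reducible, so it is irreducible exactly when its space of highest weight vectors is one-dimensional. I would establish this by a weight-by-weight analysis of the joint kernel of all the $E_{\mu_{a}\pm\mu_{b}}$, showing the only highest weight vectors are scalar multiples of $v_{i}$. The structural reason behind this, and the point that distinguishes $D_{l}$ from the symplectic case $C_{l}$ in Theorem \ref{fmsimlieC}, is that the invariant bilinear form on $V$ is \emph{symmetric}, so contracting two antisymmetrized indices against it vanishes; there is therefore no nonzero contraction map $\bigwedge^{i}V\to\bigwedge^{i-2}V$ to split off, in contrast to the $C_{l}$ situation, and no proper submodule can appear. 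As an independent confirmation I would evaluate $\operatorname{Dim}L(\omega_{i})$ from the Weyl dimension formula $\prod_{\alpha>0}\langle\omega_{i}+\rho,\alpha\rangle/\langle\rho,\alpha\rangle$ over the positive roots $\mu_{a}\pm\mu_{b}$; this simplifies to $\binom{2l}{i}=\operatorname{Dim}\bigwedge^{i}V$, which together with the inclusion $L(\omega_{i})\subseteq\bigwedge^{i}V$ forces equality.

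For the half-spin modules I would pass to the Clifford algebra $\mathrm{Cl}(V)$ of $V$ equipped with its symmetric form, whose unique irreducible module is the $2^{l}$-dimensional spinor space $S$. Under the embedding $\nyo\cong\bigwedge^{2}V\hookrightarrow\mathrm{Cl}(V)$, the chirality operator splits $S=U^{+}\oplus U^{-}$ into the even and odd half-spinor spaces, each of dimension $2^{l-1}$. Listing the weights of $S$ as $\tfrac12(\pm\mu_{1}\pm\cdots\pm\mu_{l})$, each of multiplicity one, and sorting them by the parity of the number of minus signs, I would identify the highest weight of $U^{+}$ as $\tfrac12(\mu_{1}+\cdots+\mu_{l})=\omega_{l}$ and that of $U^{-}$ as $\tfrac12(\mu_{1}+\cdots+\mu_{l-1}-\mu_{l})=\omega_{l-1}$; the one-dimensionality of the weight spaces yields irreducibility, so $U^{+}\cong L(\omega_{l})$ and $U^{-}\cong L(\omega_{l-1})$. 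The main obstacle throughout is the irreducibility step for the exterior powers: proving that $\bigwedge^{i}V$ has a one-dimensional space of highest weight vectors (equivalently, that the Weyl-formula dimension equals $\binom{2l}{i}$), since the weight bookkeeping and the spinor construction are otherwise routine.
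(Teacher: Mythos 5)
Your proposal is a correct outline of the standard classical argument, but there is nothing in the paper to compare it against: Theorem \ref{dofrdl} is stated in Chapter 1 purely as background, with no proof given --- the thesis quotes it as a known fact about the fundamental representations of $D_l$, of the kind found in \cite{Ca} or \cite{Hu}. Two remarks on your write-up. First, you silently (and correctly) replace the paper's $V=\C^{l+1}$ by $V=\C^{2l}$; the former is a typo in the statement, as the dimension count $\binom{2l}{i}$ already shows. Second, two steps deserve tightening. Your observation that a symmetric form induces no nonzero contraction $\bigwedge^{i}V\to\bigwedge^{i-2}V$ is a good heuristic for why $D_l$ differs from the $C_l$ case, but it does not by itself exclude proper submodules; note that the weight spaces of $\bigwedge^{i}V$ are not all one-dimensional and $\omega_{i-2},\omega_{i-4},\dots$ do occur as dominant weights (e.g.\ $e_1\wedge\cdots\wedge e_{i-2}\wedge e_{i-1}\wedge e_{-(i-1)}$ has weight $\omega_{i-2}$), so the real content is showing these vectors are not annihilated by $\mathfrak{n}^{+}$ --- which is exactly your weight-by-weight kernel analysis, or equivalently the Weyl dimension formula computation $\dim L(\omega_i)=\binom{2l}{i}$ combined with $L(\omega_i)\subseteq\bigwedge^{i}V$; one of these must actually be carried out. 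Likewise, for the half-spin modules, one-dimensionality of the weight spaces alone does not give irreducibility (a direct sum of two modules with disjoint weights has the same property); what does is that each of $U^{\pm}$ contains exactly one dominant weight ($\omega_{l}$ for $U^{+}$, $\omega_{l-1}$ for $U^{-}$), of multiplicity one, so by complete reducibility each has exactly one irreducible summand. With those two points made explicit, your argument is complete and is the standard proof of this classical result.
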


\section{Yangians and their representations}

\subsection{Definition of Yangians}
%Quantum groups appeared in the first half of 1980s, concerning the quantum inverse scattering method in statistical mechanics in the work of Ludwig Faddeev and his school. The notion of quantum groups was introduced by V. Drinfeld and M. Jimbo independently. The lecture of V. Drinfeld
%at the International Congress of Mathematicians in 1986 brought quantum groups to the attention of mathematicians
%worldwide, and the foundations of quantum group theory is laid. The theory of quantum groups has great applications in many mathematical branches: topology, harmonic analysis, and number theory, to name a few.

%Yangians and quantum affine algebras form most important two classes of quantum groups. For any finite-dimensional simple Lie algebra $\mathfrak{g}$ over $\C$, V. Drinfeld \cite{Dr1} defined an infinite-dimensional Hopf algebra $\yg$, called the Yangian of $\mathfrak{g}$.
%The Yangian $\yg$ is a canonical deformation of the universal enveloping algebra $U\left(\g\otimes_\C \C[t]\right)$. $\yg$ is given by explicit generators and relations. %Those information can also be written as a specific matrix form, called RTT relation.
%A few years later(1988), V.Drinfeld give another definition \cite{Dr2}, which is the one we will use in this thesis.

Let $A=\left(a_{ij}\right)_{i,j\in I}$ be the Cartan matrix of $\g$, and $D=\operatorname{diag}\left(d_1,\ldots, d_{l}\right)$, $d_i\in \N$, such that $d_1, d_2,\ldots, d_l$ are co-prime and $DA$ is symmetric. The Yangian $Y\left(\frak g\right)$ is isomorphic to the
associative algebra with generators $x_{i,r}^{{}\pm{}}$,
$h_{i,r}$, $i\in I$, $r\in\Z_{\geq 0}$, and the following
defining relations:
\begin{equation*}\label{}
[h_{i,r},h_{j,s}]=0, \qquad [h_{i,0},\ x_{j,s}^{\pm}]={}\pm
d_ia_{ij}x_{j,s}^{\pm}, \qquad [x_{i,r}^+, x_{j,s}^-]=\delta_{i,j}h_{i,r+s},
\end{equation*}
\begin{equation*}\label{}
[h_{i,r+1}, x_{j,s}^{\pm}]-[h_{i,r}, x_{j,s+1}^{\pm}]=
\pm\frac{1}{2}d_i
a_{ij}\left(h_{i,r}x_{j,s}^{\pm}+x_{j,s}^{\pm}h_{i,r}\right),
\end{equation*}
\begin{equation*}\label{}
[x_{i,r+1}^{\pm}, x_{j,s}^{\pm}]-
[x_{i,r}^{\pm}, x_{j,s+1}^{\pm}]=\pm\frac12
d_ia_{ij}\left(x_{i,r}^{\pm}x_{j,s}^{\pm}
+x_{j,s}^{\pm}x_{i,r}^{\pm}\right),
\end{equation*}
\begin{equation*}\label{}
\sum_\pi
[x_{i,r_{\pi\left(1\right)}}^{\pm},
[x_{i,r_{\pi\left(2\right)}}^{\pm}, \ldots,
[x_{i,r_{\pi\left(m\right)}}^{\pm},
x_{j,s}^{\pm}]\cdots]]=0, i\neq j,
\end{equation*}
for all sequences of non-negative integers $r_1,\ldots,r_m$, where
$m=1-a_{ij}$ and the sum is over all permutations $\pi$ of $\{1,\dots,m\}$.

 The Yangian $\yg$ admits a filtration defined by setting the degree of $x_{i,r}^{\pm}$ and $h_{i,r}$ to be $r$. Let $\yg_r$, for $r\geq 0$, be the linear span of all monomials in the generators $x_{i,s}^{\pm}$, $h_{i,s}$ for which the sum of the indices $s$ is at most $r$. It follows from the definition that $\yg_r\subset \yg_{r+1}$, and $\yg_r\cdot\yg_s\subset \yg_{r+s}$. The associated graded algebra is denoted by gr$\yg$.

 Suppose in $\g$ that $$x_{\beta}^{\pm}=c[x_{i_1}^{\pm},[x_{i_2}^{\pm},\ldots,[x_{i_{k-1}}^{\pm},x_{i_{k}}^{\pm}]\ldots]],$$ where $0\neq c\in\C$. For each $r\in \N$, let $r=r_1+\ldots+r_k$ be a partition of $r$ into a sum of $k$ non-negative integers, and define $$x_{\beta,r}^{\pm}=c[x_{i_1,r_1}^{\pm},[x_{i_2,r_2}^{\pm},\ldots,[x_{i_{k-1},r_{k-1}}^{\pm},x_{i_{k}, r_k}^{\pm}]\ldots]].$$
If $\tilde{x}_{\beta,r}^{\pm}$ is obtained by using another different partition of $r$, then by the defining relations of the Yangian $$\tilde{x}_{\beta,r}^{\pm}-x_{\beta,r}^{\pm}\in \yg_{r-1}.$$

\begin{proposition}[Proposition 12.1.6, \cite{ChPr2}]
The Yangian $\yg$ has the structure of a filtered algebra, such that the associated graded algebra is isomorphic to $U\left(\g\otimes_\C\C[t]\right)$.
\end{proposition}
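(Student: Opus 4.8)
The plan is to realize the desired isomorphism as the associated-graded map induced by the defining relations, and then to secure injectivity with a Poincaré–Birkhoff–Witt (PBW) argument. First I would record that the degree assignment really does define a filtration: inspecting each defining relation, the degree attached to every right-hand side is at most the degree of the corresponding left-hand side, so that $\yg_r\cdot\yg_s\subseteq\yg_{r+s}$ (this is already observed in the excerpt) and $\operatorname{gr}\yg$ is a well-defined graded algebra. Throughout I write $\overline{z}$ for the image of $z\in\yg_r$ in $\operatorname{gr}_r\yg=\yg_r/\yg_{r-1}$, and recall that the graded commutator satisfies $[\overline{a},\overline{b}]=\overline{[a,b]}$, interpreted as $0$ when $[a,b]$ lands in a lower filtered piece.

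Next I would construct a surjective homomorphism of graded algebras $\phi\colon U(\g[t])\to\operatorname{gr}\yg$. Since $\g[t]=\g\otimes_\C\C[t]$ is generated by the elements $x_i^\pm\otimes t^r$ and $h_i\otimes t^r$, I set $\phi(x_i^\pm\otimes t^r)=\overline{x_{i,r}^\pm}$, $\phi(h_i\otimes t^r)=\overline{h_{i,r}}$, and $\phi(x_\beta^\pm\otimes t^r)=\overline{x_{\beta,r}^\pm}$ on the higher root vectors, which is forced and well defined modulo $\yg_{r-1}$ by the remark preceding the proposition. To see $\phi$ is a homomorphism it suffices to check that the top-degree symbols of the Yangian relations reproduce the bracket $[a\otimes t^r,b\otimes t^s]=[a,b]\otimes t^{r+s}$. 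The relations $[h_{i,r},h_{j,s}]=0$ and $[x_{i,r}^+,x_{j,s}^-]=\delta_{i,j}h_{i,r+s}$ are homogeneous of degree $r+s$ and pass directly to $\operatorname{gr}$. In the two ``shift'' relations the right-hand sides have degree $r+s$ while the left-hand sides have degree $r+s+1$, so in $\operatorname{gr}_{r+s+1}\yg$ the right-hand side dies and I obtain
\[ [\overline{h_{i,r+1}},\overline{x_{j,s}^\pm}]=[\overline{h_{i,r}},\overline{x_{j,s+1}^\pm}], \qquad [\overline{x_{i,r+1}^\pm},\overline{x_{j,s}^\pm}]=[\overline{x_{i,r}^\pm},\overline{x_{j,s+1}^\pm}]. \]
Each such bracket therefore depends only on $r+s$; feeding in the degree-zero relations, which are exactly the defining relations of $\g$, an induction yields $[\overline{h_{i,r}},\overline{x_{j,s}^\pm}]=\pm d_ia_{ij}\,\overline{x_{j,r+s}^\pm}$ and the analogue for the $x$'s, matching $\g[t]$ precisely, while the Serre relations pass to their top symbols unchanged. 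Thus $\phi$ is a well-defined algebra homomorphism, and it is surjective because $\operatorname{gr}\yg$ is generated by the symbols of the generators, all of which lie in the image.

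Third, I would deduce that $\phi$ is injective from a PBW theorem for $\yg$. Observe that $\phi$ carries a PBW basis of $U(\g[t])$, namely the ordered monomials in the $x_\beta^\pm\otimes t^r$ and $h_i\otimes t^r$, to the symbols of the corresponding ordered monomials in the $x_{\beta,r}^\pm$ and $h_{i,r}$. A surjection sending a basis to a linearly independent set is an isomorphism, so it is enough to show these symbols are linearly independent in $\operatorname{gr}\yg$. I would get this from the statement that the ordered monomials in the $x_{\beta,r}^\pm$ ($\beta\in\Delta^+$, $r\ge 0$) and $h_{i,r}$ form a basis of $\yg$ compatible with the filtration, so that $\yg_n$ is spanned by those of total degree at most $n$; passing to symbols then gives a basis of $\operatorname{gr}\yg$. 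The spanning half of this PBW statement is routine: one reorders an arbitrary monomial using the commutation relations, the shift relations controlling the indices, and every correction term drops into lower filtered degree, so an induction on degree closes.

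The hard part will be the linear independence of these ordered monomials, which is not a formal consequence of the presentation and requires genuine input. In the setting of this thesis, where $\g$ is a classical simple Lie algebra, the cleanest route is to import the PBW theorem available in the $RTT$ (R-matrix) presentation of $\yg$ and transport it across the isomorphism with Drinfeld's new presentation. Alternatively, one separates the ordered monomials by exhibiting a sufficiently rich family of finite-dimensional modules on which they act independently, for instance tensor products of the fundamental representations that are the central objects of the later chapters. Granting linear independence, $\phi$ sends a basis to a basis and is therefore the asserted graded isomorphism $\operatorname{gr}\yg\cong U(\g[t])$; I expect this linear-independence step, rather than the surjection or the reordering, to be the one genuine obstacle.
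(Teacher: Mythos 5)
The paper itself contains no proof of this proposition: it is quoted verbatim from Proposition 12.1.6 of \cite{ChPr2}, so there is no internal argument to compare yours against, and your proposal has to be judged on its own. Its skeleton is the standard degeneration argument, and your degree bookkeeping is correct: the two shift relations have right-hand sides of degree $r+s$ against left-hand sides of degree $r+s+1$, so they collapse in $\operatorname{gr}\yg$ to the statement that the brackets depend only on $r+s$, the induction from the degree-zero relations then yields the current-algebra brackets, and the Serre relations are homogeneous. However, your construction of $\phi\colon U(\g[t])\to\operatorname{gr}\yg$ has a suppressed input. Checking that the symbols $\overline{x_{i,r}^{\pm}},\overline{h_{i,r}}$ satisfy the degenerated relations produces a homomorphism out of the algebra $A$ \emph{presented} by those relations, not out of $U(\g[t])$; to conclude that $\phi$ is defined on $U(\g[t])$ you need the fact that these relations actually present the current algebra (a Drinfeld-type presentation of $\g[t]$), or else you must build a Lie algebra map $\g[t]\to\operatorname{gr}\yg$ directly on the basis $x_{\beta}^{\pm}\otimes t^{r}$, $h_i\otimes t^r$, which requires verifying congruences such as $[x_{\beta,r}^{\pm},x_{\gamma,s}^{\pm}]\equiv N_{\beta\gamma}\,x_{\beta+\gamma,r+s}^{\pm} \pmod{\yg_{r+s-1}}$ for all pairs of roots. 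Either way this is a genuine (if more tractable) cousin of the PBW input, and it is not addressed in your write-up.

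The second and more serious issue is the circularity risk in your injectivity step, which is worse than you indicate. In the very source the paper cites, the PBW theorem for $\yg$ in this realization (Proposition 12.1.8 of \cite{ChPr2}, also quoted in this thesis immediately after the present proposition) is \emph{deduced from} the graded isomorphism, not proved independently of it; so ``a PBW theorem for $\yg$'' cannot simply be invoked here without specifying a non-circular proof of it. Your fallback of importing PBW from the $RTT$ realization and transporting it across Drinfeld's isomorphism is legitimate for the classical types, but this thesis also treats $\g=G_2$, where no $RTT$ presentation is available; there one must instead use Drinfeld's original flatness/cohomological-rigidity argument or the methods of Levendorskii \cite{Le} to obtain the linear independence. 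So the proposal has the right shape, and you correctly locate the crux in the independence statement, but both load-bearing inputs (the presentation of $U(\g[t])$ and a PBW theorem proved independently of the graded isomorphism, covering $G_2$ as well) still need to be supplied or cited before this constitutes a proof.
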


We state the PBW theorem for Yangians.

\begin{proposition}[Proposition 12.1.8, \cite{ChPr2}]
Fix a total ordering on the set $$\sum=\{x_{\beta,r}^{\pm}|\beta\in \Delta^{+}, r\in\Z_{\geq 0}\}\bigcup \{h_{i,r}|i\in I, r\in\Z_{\geq 0}\}.$$ Then the set of ordered monomials in the elements of $\sum$ is a vector space basis of $\yg$.
\end{proposition}
In this thesis, we choose the order as $$x_{\beta,r_1}^{-}\preceq h_{\beta,r_2}\preceq x_{\beta,r_3}^{+}.$$

The universal enveloping algebra $U(\g)$ can be identified with the subalgebra of $\yg$ generated by the elements $x^{\pm}_{i,0}$ and $h_{i,0}$ for $i=1,\ldots, n$. Therefore $$U(\g)=\yg_0.$$
For a fixed $i$ and $r\in\Z_{\geq 0}$, both $x^{\pm}_{i,r}$ and $h_{i,r}$ generate a subalgebra of $\yg$ which isomorphic to $Y\left(\mathfrak{sl}_2\right)$.

Let $Y^{\pm}$, $H$ be the subalgebras of $\yg$ generated by the $x_{\beta, k}^{\pm}$, $h_{ik}$ respectively. Set $$N^{\pm}=\sum_{i,k} x_{ik}^{\pm}Y^{\pm}.$$

We next introduce an automorphism of $\yg$, which plays an important role in the representation theory of Yangians.
\begin{proposition}[Proposition 2.6, \cite{ChPr4}]\label{s1.2.1p26}
The assignment $$\tau_{a}(h_{i,k})=\sum_{r=0}^{k}{k\choose r}a^{k-r}h_{i,r},\qquad \tau_{a}(x_{i,k}^{\pm})=\sum_{r=0}^{k}{k\choose r}a^{k-r}x_{i,r}^{\pm}$$ extends a Hopf algebra automorphism of $\yg$.
\end{proposition}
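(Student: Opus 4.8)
The plan is to verify that $\tau_a$ respects the defining relations of $\yg$ in Drinfeld's second presentation, which is most transparent in generating-series form. Introduce the series $x_i^{\pm}(u)=\sum_{k\geq 0}x_{i,k}^{\pm}u^{-k-1}$ and $h_i(u)=1+\sum_{k\geq 0}h_{i,k}u^{-k-1}$. Summing the defining formula against $u^{-k-1}$ and using the identity $\sum_{k\geq r}\binom{k}{r}a^{k-r}u^{-k-1}=(u-a)^{-r-1}$, one finds that $\tau_a$ is nothing but the spectral shift
$$\tau_a\big(x_i^{\pm}(u)\big)=x_i^{\pm}(u-a),\qquad \tau_a\big(h_i(u)\big)=h_i(u-a).$$
This reformulation is the conceptual heart of the argument: it replaces the binomial coefficients by a single substitution $u\mapsto u-a$. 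Note also that the case $k=0$ shows $\tau_a$ fixes $h_{i,0}$ and $x_{i,0}^{\pm}$, hence fixes $U(\g)=\yg_0$ pointwise.

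First I would rewrite each of the four families of defining relations in series form; each of them depends on the spectral parameters only through differences (e.g.\ the relation $[x_{i,r}^+,x_{j,s}^-]=\delta_{ij}h_{i,r+s}$ becomes an identity for $[x_i^+(u),x_j^-(v)]$ governed by $u-v$, while the degree-shift relations become functional equations in $u$, and the Serre relations hold for all index sequences). Since substituting $u\mapsto u-a$ and $v\mapsto v-a$ simultaneously leaves every difference unchanged, the image under $\tau_a$ of each relation is again a consequence of the same relations. At the level of coefficients the only combinatorial input is the Vandermonde identity $\sum_{p+q=n}\binom{r}{p}\binom{s}{q}=\binom{r+s}{n}$, which is exactly what makes, for instance, the bracket $[\tau_a(x_{i,r}^+),\tau_a(x_{i,s}^-)]$ collapse to $\tau_a(h_{i,r+s})$. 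Hence $\tau_a$ extends to an algebra endomorphism of $\yg$. A direct composition check gives $\tau_b\circ\tau_a=\tau_{a+b}$, so $\tau_{-a}$ is a two-sided inverse and $\tau_a$ is an algebra automorphism.

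It remains to verify compatibility with the Hopf structure, and this is the step I expect to be the main obstacle, because the comultiplication of $\yg$ has no simple closed form on the Drinfeld generators $x_{i,k}^{\pm},h_{i,k}$. The efficient route is to pass to Drinfeld's first presentation, in which $\yg$ is generated by $\g$ together with elements $J(x)$ for $x\in\g$, the shift automorphism acts by the transparent rule $\tau_a(x)=x$ and $\tau_a(J(x))=J(x)+ax$, and the comultiplication $\Delta$ is explicit, namely $\Delta(x)=x\otimes 1+1\otimes x$ and $\Delta(J(x))=J(x)\otimes 1+1\otimes J(x)+\tfrac12[x\otimes 1,\Omega]$ with $\Omega$ the Casimir tensor. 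Since $\tau_a$ fixes $\g$ pointwise it fixes $\Omega$, and a one-line computation then yields $\Delta\circ\tau_a=(\tau_a\otimes\tau_a)\circ\Delta$ on both types of generators; the analogous checks for the counit $\varepsilon$ and the antipode $S$ are immediate. The one genuine point to address is that the two descriptions of $\tau_a$ agree under the isomorphism between the two presentations, which I would settle by matching the generators fixed by both (namely $h_{i,0}$ and $x_{i,0}^{\pm}$) and the degree-one generators against $J(x)+ax$. Combining the two parts shows that $\tau_a$ is a Hopf algebra automorphism.
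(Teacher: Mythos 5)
The paper does not prove this proposition at all: it is quoted verbatim as Proposition 2.6 of \cite{ChPr4}, so there is no in-thesis argument to compare yours against, and the comparison is really with the cited literature. Your proof is correct and is, in substance, the standard one. The reformulation $\tau_a\big(x_i^{\pm}(u)\big)=x_i^{\pm}(u-a)$, $\tau_a\big(h_i(u)\big)=h_i(u-a)$ is right (the identity $\sum_{k\geq r}\binom{k}{r}a^{k-r}u^{-k-1}=(u-a)^{-r-1}$ checks out), the coefficient-level verifications via Vandermonde for $[x_{i,r}^+,x_{j,s}^-]=\delta_{ij}h_{i,r+s}$ and via invariance of the remaining relations under simultaneous shift of all spectral parameters are sound, and $\tau_b\circ\tau_a=\tau_{a+b}$ does give invertibility. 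Your decision to verify the Hopf compatibility in Drinfeld's first presentation, where $\tau_a(x)=x$, $\tau_a(J(x))=J(x)+ax$ and $\Delta(J(x))=J(x)\otimes 1+1\otimes J(x)+\tfrac12[x\otimes 1,\Omega]$, is exactly how this is handled in the original sources, since no closed formula for $\Delta$ on the generators $x_{i,k}^{\pm},h_{i,k}$ exists (the thesis itself only ever works with $\Delta$ modulo error terms, as in its Proposition \ref{Delta}). You also correctly isolate the one point that genuinely requires care: that the two descriptions of $\tau_a$ agree under Drinfeld's isomorphism between the presentations. Your resolution is valid because $\yg$ is generated by the degree-zero and degree-one Drinfeld generators ($x_{i,k+1}^{\pm}$ is recovered from $[\bar h_{i,1},x_{i,k}^{\pm}]=\pm(\alpha_i,\alpha_i)x_{i,k+1}^{\pm}$ and $h_{i,k}=[x_{i,k}^+,x_{i,0}^-]$), and since the correction terms in the formulas $h_{i,1}=d_i J(h_i)+v_i$, etc., lie in $U(\g)$, which both versions of $\tau_a$ fix pointwise, agreement on these generators forces agreement everywhere; just be aware that carrying this out literally requires keeping track of the $d_i$ (and $\sqrt{d_i}$) normalizations in the identification $h_{i,0}=d_ih_i$, $x_{i,0}^{\pm}=\sqrt{d_i}\,x_i^{\pm}$.
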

\subsection{Coproduct of Yangians}
In this subsection, we denote by $\Delta_{\ysl}$ and $\Delta_{\yg}$ the coproduct of $\ysl$ and $\yg$, respectively. Define $N^{\pm}_{i}$ to be the subalgebra of $\yg$ generated by all monomials in $x_{\alpha,k}^{\pm}$ with at least one factor with $\alpha\neq\alpha_i$. Let $M^{\pm}_i$ and $H_i$ be the subalgebra of $\yg$ generated by $x_{i,k}^{\pm}$ and $h_{i,k}$, respectively. When $\g=\nysl$, $Y^{\pm}$ would be a better notation than $M_i^{\pm}$.

There is no explicit formula for the coproduct for the realization used in this thesis. In \cite{ChPr5}, part of the coproduct of $\ysl$ is defined as:
$$\Delta_{\ysl}(x_{k}^{-})=x_{k}^{-}\otimes 1 +1\otimes x_{k}^{-}+\sum_{s=1}^{k}x_{k-s}^{-}\otimes h_{s-1}\text{\ modulo\ }\sum_{p,q,r} Yx_{p}^{-}x_{q}^{-}\otimes Yx_{r}^{+};$$
$$\Delta_{\ysl}(x_{k}^{+})=x_{k}^{+}\otimes 1 +1\otimes x_{k}^{+}+\sum_{s=1}^{k}h_{s-1}\otimes x_{k-s}^{+}\text{\ modulo\ }\sum_{p,q,r} Yx_{p}^{-}\otimes Y x_{q}^{+}x_{r}^{+}.$$
For the former formula, it follows from the proof in this paper that we can replace the first $Y$ in $\sum_{p,q,r} Yx_{p}^{-}x_{q}^{-}\otimes Yx_{r}^{+}$ by $HY^{-}$ and the second $Y$ by $H$. For the latter formula, we can replace the first $Y$ in $\sum_{p,q,r} Yx_{p}^{-}\otimes Y x_{q}^{+}x_{r}^{+}$ by $H$ and the second $Y$ by $HY^{+}$.
%$$\Delta_{\ysl}(h_{k})=h_{k}\otimes 1 +1\otimes h_{k}+\sum_{s=1}^{k}h_{s-1}\otimes h_{k-s}\text{\ modulo\ }\sum_{p,q,r} Yx_{p}^{-}\otimes Y x_{q}^{+}x_{r}^{+},$$
%$$\Delta_{\ysl}(x_{k}^{-})=\frac{1}{2}[\Delta_{\ysl}(\overline{h}_{1}),\Delta_{\ysl}(x_{k-1}^{-})],$$ and $$\Delta_{\ysl}(\overline{h}_{1})=\bar{h}_{i,1}\otimes 1+1\otimes \bar{h}_{i,1}-(\alpha_i,\alpha_i)x^{-}_{i,0}\otimes x^{+}_{i,0}.$$
In \cite{ChPr4}, the following proposition was proved.%However, knowing the followings is sufficient in this thesis.
\begin{proposition}[Proposition 2.8,\cite{ChPr4}]\label{Delta}\

Define $Y:=\yg$. Modulo $\overline{Y}\equiv \left(N^{-}Y\otimes YN^{+}\right)\bigcap \left(YN^{-}\otimes N^{+}Y\right)$, we have
\begin{enumerate}
  \item $\Delta_{\yg}\left(x_{i,k}^{+}\right)=x_{i,k}^{+}\otimes 1+1\otimes x_{i,k}^{+}+\sum\limits_{j=1}^{k}h_{i,j-1}\otimes x_{i,k-j}^{+}$,
  \item $\Delta_{\yg}\left(x_{i,k}^{-}\right)=x_{i,k}^{-}\otimes 1+1\otimes x_{i,k}^{-}+\sum\limits_{j=1}^{k}x_{i,k-j}^{-}\otimes h_{i,j-1}$,
  \item $\Delta_{\yg}\left(h_{i,k}\right)=h_{i,k}\otimes 1+1\otimes h_{i,k}+\sum\limits_{j=1}^{k}h_{i,j-1}\otimes h_{i,k-j}$.
\end{enumerate}
\end{proposition}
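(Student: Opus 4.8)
The plan is to establish all three formulas by induction on $k$, using that $\Delta_{\yg}$ is an algebra homomorphism, that the degree-zero generators $x_{i,0}^{\pm},h_{i,0}$ are primitive, and that every generator $x_{i,k}^{\pm},h_{i,k}$ lives in the subalgebra $Y_i$ generated by $x_{i,r}^{\pm},h_{i,r}$, which is isomorphic to $\ysl$. The engine of the induction is the refined $\ysl$ coproduct formula recalled above, transported through $Y_i\cong\ysl$, together with the recursion coming from the defining relations that rewrites a degree-$(k+1)$ generator as a bracket of a degree-$k$ generator with $h_{i,1}$.

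First I would dispose of the base cases. For $k=0$ the three claimed identities are exact and hold because $x_{i,0}^{\pm},h_{i,0}$ generate the Hopf subalgebra $U(\g)=\yg_0$, on which $\Delta_{\yg}$ is the standard coproduct making these elements primitive; the sums on the right-hand sides are then empty. The first substantial input is $\Delta_{\yg}(h_{i,1})$: starting from the known expression $\Delta_{\yg}(h_{i,1})=h_{i,1}\otimes 1+1\otimes h_{i,1}+h_{i,0}\otimes h_{i,0}-\sum_{\alpha\in\Delta^{+}}c_{\alpha}\,x_{\alpha}^{-}\otimes x_{\alpha}^{+}$, I would note that each term $x_{\alpha}^{-}\otimes x_{\alpha}^{+}$ lies in $\overline{Y}$, since $x_{\alpha}^{-}\in N^{-}\subseteq (N^{-}Y)\cap(YN^{-})$ and $x_{\alpha}^{+}\in N^{+}\subseteq (YN^{+})\cap(N^{+}Y)$. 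Hence $\Delta_{\yg}(h_{i,1})\equiv h_{i,1}\otimes 1+1\otimes h_{i,1}+h_{i,0}\otimes h_{i,0}\pmod{\overline{Y}}$, which is formula (3) at $k=1$.

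For the inductive step I would use the defining relation with $i=j$ to write $x_{i,k+1}^{\pm}$ as a scalar multiple of $[h_{i,1},x_{i,k}^{\pm}]$ plus products of strictly lower-degree generators, and then apply $\Delta_{\yg}([a,b])=[\Delta_{\yg}(a),\Delta_{\yg}(b)]$. Substituting the inductive formula for $\Delta_{\yg}(x_{i,k}^{\pm})$ and the formula for $\Delta_{\yg}(h_{i,1})$ and expanding the bracket, the cross terms should telescope: the leading pieces reproduce $\sum_{j=1}^{k+1}h_{i,j-1}\otimes x_{i,k+1-j}^{+}$ for (1) and $\sum_{j=1}^{k+1}x_{i,k+1-j}^{-}\otimes h_{i,j-1}$ for (2), while every remaining term is absorbed into $\overline{Y}$. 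Formula (3) for general $k$ then follows by applying $\Delta_{\yg}$ to $h_{i,k}=[x_{i,k}^{+},x_{i,0}^{-}]$ and computing $[\Delta_{\yg}(x_{i,k}^{+}),\Delta_{\yg}(x_{i,0}^{-})]$ modulo $\overline{Y}$.

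The main obstacle is the control of the error terms, because $\overline{Y}=(N^{-}Y\otimes YN^{+})\cap(YN^{-}\otimes N^{+}Y)$ is an intersection of two subspaces and is not visibly a two-sided ideal. At each inductive step I must therefore verify that bracketing with $\Delta_{\yg}(h_{i,1})$ and left- or right-multiplication by $\Delta_{\yg}(x_{i,0}^{\pm})$ and $\Delta_{\yg}(h_{i,0})$ send $\overline{Y}$ into itself, tracking separately which tensor leg lands in $N^{\pm}$ versus $N^{\pm}Y$ or $YN^{\pm}$. Here the refinement of the $\ysl$ formula — replacing the outer factors $Y$ by $H$, $HY^{-}$ and $HY^{+}$ — is precisely what keeps the corrections inside $\overline{Y}$ rather than in a larger space, and the bookkeeping showing that the surviving corrections collapse exactly to the stated single sums is the delicate part of the argument.
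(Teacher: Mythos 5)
Before anything else: the thesis itself contains no proof of this statement; it is quoted verbatim as Proposition 2.8 of \cite{ChPr4}. So your proposal can only be measured against the standard argument, which is also the template for the thesis's own propositions (proved immediately after this one) comparing $\Delta_{\yg}$ with $\Delta_{Y_i}$. Your overall scheme --- induction on $k$ via $x_{i,k+1}^{\pm}=\pm(\alpha_i,\alpha_i)^{-1}[\bar{h}_{i,1},x_{i,k}^{\pm}]$ with $\bar{h}_{i,1}=h_{i,1}-\frac{1}{2}h_{i,0}^{2}$, starting from the primitive degree-zero generators and the explicit $\Delta_{\yg}(\bar{h}_{i,1})$ --- is indeed that argument. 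But the sketch has a genuine gap, and it is not mere bookkeeping. You assert that the ``leading pieces'' of the bracket reproduce the full sums $\sum_{j=1}^{k+1}h_{i,j-1}\otimes x_{i,k+1-j}^{+}$ while every remaining term is absorbed into $\overline{Y}$. This is backwards. Bracketing the primitive part $\bar{h}_{i,1}\otimes 1+1\otimes\bar{h}_{i,1}$ against the inductive formula only shifts the existing $k$ terms, giving $\sum_{j=1}^{k}h_{i,j-1}\otimes x_{i,k+1-j}^{+}$; the new term $h_{i,k}\otimes x_{i,0}^{+}$ is created by the correction $-(\alpha_i,\alpha_i)\,x_{i,0}^{-}\otimes x_{i,0}^{+}$ inside $\Delta_{\yg}(\bar{h}_{i,1})$ bracketed against $x_{i,k}^{+}\otimes 1$, via $[x_{i,0}^{-},x_{i,k}^{+}]=-h_{i,k}$. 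Hence the correction, although it lies in $\overline{Y}$, cannot be discarded before bracketing: if you work ``modulo $\overline{Y}$'' throughout, as your sketch prescribes, the sums never appear at all (iterating from the primitive $k=0$ case you would conclude $\Delta_{\yg}(x_{i,k}^{+})\equiv x_{i,k}^{+}\otimes 1+1\otimes x_{i,k}^{+}$ for every $k$, which contradicts the proposition). The congruence simply does not pass through the bracket, precisely because $\overline{Y}$ is not an ideal --- the very point you flag but then do not confront.

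The same failure breaks your derivation of (3). Computing $[\Delta_{\yg}(x_{i,k}^{+}),\Delta_{\yg}(x_{i,0}^{-})]$ ``modulo $\overline{Y}$'' requires $[\,\overline{Y},\,\Delta_{\yg}(x_{i,0}^{-})]\subseteq\overline{Y}$, and this is false: for instance $x_{i,0}^{-}\otimes h_{i,0}x_{i,0}^{+}\in\overline{Y}$, yet its bracket with $1\otimes x_{i,0}^{-}$ contains $x_{i,0}^{-}\otimes h_{i,0}^{2}$, whose second leg has no raising factor and so does not lie in $YN^{+}$. In general a single contraction $[x^{+},x^{-}]=h$ can strip the last raising (or lowering) factor from a tensor leg and exit $\overline{Y}$. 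This is exactly why the $\ysl$ congruences of \cite{ChPr5}, quoted in Section 1.2 of the thesis, are stated modulo the finer spaces $\sum Yx_{p}^{-}x_{q}^{-}\otimes Yx_{r}^{+}$ and $\sum Yx_{p}^{-}\otimes Yx_{q}^{+}x_{r}^{+}$ (two lowering, respectively raising, factors in one leg, so that one contraction still leaves one), and why the thesis's own propositions track errors in $H_iN_{i}^{-}\otimes H_iN_{i}^{+}$ rather than in $\overline{Y}$. To close your induction you must replace the hypothesis ``error $\in\overline{Y}$'' by such a refined description of the error terms and prove stability of that refined space under the specific brackets and multiplications used; that refinement is the actual content of Chari--Pressley's proof and is absent from your proposal.
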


In $\yg$, for a fixed $i\in I$, $\{x_{i,r}^{\pm}, h_{i,r}|r\geq 0\}$ generates $Y_i$, and the assignment $\frac{\sqrt{d_i}}{d_i^{k+1}}x_{i,k}^{\pm}\rightarrow x_{k}^{\pm}$ and $\frac{1}{d_i^{k+1}}h_{i,k}\rightarrow h_{k}$ extends an automorphism from $Y_i$ to $\ysl$. Let $\overline{h}_{1}=h_{1}-\frac{1}{2}h_{0}^2$ and $\overline{h}_{i,1}=h_{i,1}-\frac{1}{2}h_{i,0}^2$. It is proved in \cite{ChPr5} that $\Delta_{\ysl}\left(\bar{h}_{1}\right)=\bar{h}_{1}\otimes 1+1\otimes \bar{h}_{1}-2x^{-}_{0}\otimes x^{+}_{0}.$ The Hopf algebra homomorphism $Y_i\cong \ysl$ tells that $$\Delta_{Y_i}\left(\bar{h}_{i,1}\right)=\bar{h}_{i,1}\otimes 1+1\otimes \bar{h}_{i,1}-\left(\alpha_i,\alpha_i\right)x^{-}_{i,0}\otimes x^{+}_{i,0}.$$
%Without loss of generality, we may assume that $\{x_{i,r}^{\pm}, h_{i,r}|i\in I,\ r\geq 0\}$ satisfy the defining relations of $\ysl$. The assignments $x_{i,k}^{\pm}\rightarrow x_{k}^{\pm}$ and $h_{i,k}\rightarrow h_{k}$ extends to an algebra isomorphism $Y_i\rightarrow \ysl$.

There is a difference between the coproduct of the Yangian $Y_i$ and the coproduct of Yangians $\yg$. For instance,
$$\Delta_{Y_i}\left(\bar{h}_{i,1}\right)=\bar{h}_{i,1}\otimes 1+1\otimes \bar{h}_{i,1}-\left(\alpha_i,\alpha_i\right)x^{-}_{i,0}\otimes x^{+}_{i,0};$$
$$\Delta_{\yg}\left(\bar{h}_{i,1}\right)=\bar{h}_{i,1}\otimes 1+1\otimes \bar{h}_{i,1}-\left(\alpha_i,\alpha_i\right)x^{-}_{i,0}\otimes x^{+}_{i,0}-\sum\limits_{\alpha\succ 0 , \alpha\neq \alpha_i}\left(\alpha,\alpha_i\right)x^{-}_{\alpha,0}\otimes x^{+}_{\alpha,0}.$$ In this subsection, we will show that $\Delta_{\yg}\left(x_{ik_1}^{-}\ldots x_{ik_s}^{-}\right)$ acts on any tensor product of highest weight vectors in the same way as $\Delta_{Y_i}\left(x_{ik_1}^{-}\ldots x_{ik_s}^{-}\right)$.
We first show the difference between $\Delta_{\yg}\left(x_{ik_1}^{-}\right)$ and $\Delta_{Y_i}\left(x_{ik_1}^{-}\right)$.
\begin{proposition}\
$\Delta_{\yg}\left(x^{-}_{i,k}\right)-\Delta_{Y_i}\left(x^{-}_{i,k}\right)\in H_iN^{-}_i\otimes H_iN^{+}_i.$
\end{proposition}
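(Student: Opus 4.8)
The plan is to argue by induction on $k$, exploiting the fact that each $x_{i,k+1}^{-}$ is produced inside the $\ysl$-subalgebra $Y_i$ by a single commutator with $\overline{h}_{i,1}=h_{i,1}-\tfrac{1}{2}h_{i,0}^2$. A direct computation from the defining relations (using $a_{ii}=2$ and $(\alpha_i,\alpha_i)=2d_i$) gives
$$[\overline{h}_{i,1},x_{i,k}^{-}]=-(\alpha_i,\alpha_i)\,x_{i,k+1}^{-},$$
so $x_{i,k+1}^{-}=-\tfrac{1}{(\alpha_i,\alpha_i)}[\overline{h}_{i,1},x_{i,k}^{-}]$. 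Since both $\Delta_{\yg}$ and the $\yg\otimes\yg$-valued $\Delta_{Y_i}$ are algebra homomorphisms on $Y_i$, I apply each to this relation and subtract. Writing $D_k:=\Delta_{\yg}(x_{i,k}^{-})-\Delta_{Y_i}(x_{i,k}^{-})$ and $R:=\Delta_{\yg}(\overline{h}_{i,1})-\Delta_{Y_i}(\overline{h}_{i,1})$, the explicit coproduct of $\overline{h}_{i,1}$ recorded in the text gives $R=-\sum_{\alpha\succ 0,\ \alpha\neq\alpha_i}(\alpha,\alpha_i)\,x_{\alpha,0}^{-}\otimes x_{\alpha,0}^{+}\in N_i^{-}\otimes N_i^{+}$.

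The base case $k=0$ is immediate: $x_{i,0}^{-}\in U(\g)$ is primitive in $\yg$ and corresponds to a primitive element under $Y_i\cong\ysl$, so $D_0=0$. For the inductive step, expanding the commutators of the two sides and cancelling the common term $[\Delta_{Y_i}(\overline{h}_{i,1}),\Delta_{Y_i}(x_{i,k}^{-})]=\Delta_{Y_i}([\overline{h}_{i,1},x_{i,k}^{-}])$ yields
$$-(\alpha_i,\alpha_i)\,D_{k+1}=[\Delta_{Y_i}(\overline{h}_{i,1}),D_k]+[R,\Delta_{Y_i}(x_{i,k}^{-})]+[R,D_k].$$
Thus it suffices to show each of these three brackets lies in $H_iN_i^{-}\otimes H_iN_i^{+}$, given the inductive hypothesis $D_k\in H_iN_i^{-}\otimes H_iN_i^{+}$ and the known form of $\Delta_{Y_i}(x_{i,k}^{-})$, whose left tensor legs lie in $H_iM_i^{-}$ and whose right tensor legs lie in $H_iM_i^{+}$ apart from the single summand $1\otimes x_{i,k}^{-}$.

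To control the brackets I would first isolate a short list of closure lemmas, each proved by PBW reordering together with the defining relations: the subspace $H_iN_i^{-}$ is stable under left and right multiplication by $M_i^{-}$, by $H_i$, and by $x_{\alpha,0}^{-}$ ($\alpha\neq\alpha_i$), and under $\operatorname{ad}(\overline{h}_{i,1})$; the analogous statements hold for $H_iN_i^{+}$ with $M_i^{+}$ and $x_{\alpha,0}^{+}$. The point in each case is that commuting an $H_i$- or $M_i^{\pm}$-generator past a non-$\alpha_i$ root vector only produces $h_{i,\cdot}$-factors and root vectors whose root is again $\neq\alpha_i$, so the defining non-$\alpha_i$ factor is never lost. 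Granting these, the brackets $[\Delta_{Y_i}(\overline{h}_{i,1}),D_k]$ and $[R,D_k]$ fall into $H_iN_i^{-}\otimes H_iN_i^{+}$ termwise, since in each the two legs of $R$ (resp.\ of $\Delta_{Y_i}(\overline{h}_{i,1})$) multiply legs of the same sign coming from $D_k$.

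The main obstacle is the middle bracket $[R,\Delta_{Y_i}(x_{i,k}^{-})]$, where the positive leg $x_{\alpha,0}^{+}$ of $R$ must be multiplied against the lone negative right-leg $x_{i,k}^{-}$ appearing in the term $1\otimes x_{i,k}^{-}$. Here I expect to use the key root-theoretic observation that, because $\alpha_i$ is simple, $\alpha-\alpha_i$ is never a negative root for a positive root $\alpha\neq\alpha_i$; consequently $[x_{\alpha,0}^{+},x_{i,k}^{-}]$ is either zero or a positive root vector of root $\alpha-\alpha_i\neq\alpha_i$, and so lands in $N_i^{+}$ rather than leaking into the negative or Cartan part. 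Since the left legs coincide in that term, it contributes $x_{\alpha,0}^{-}\otimes[x_{\alpha,0}^{+},x_{i,k}^{-}]\in N_i^{-}\otimes N_i^{+}$; all remaining terms of $[R,\Delta_{Y_i}(x_{i,k}^{-})]$ have right legs built from $h_{i,\cdot}$ or $x_{i,\cdot}^{+}$ and are handled directly by the closure lemmas. Completing the argument then amounts to the routine but lengthy term-by-term verification, the only genuinely delicate point being this sign-preservation step, which guarantees that no term escapes $H_iN_i^{-}\otimes H_iN_i^{+}$.
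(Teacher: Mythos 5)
Your proposal is correct and follows essentially the same route as the paper's own proof: the same induction on $k$ driven by $x_{i,k+1}^{-}=-\frac{1}{\left(\alpha_i,\alpha_i\right)}[\bar{h}_{i,1},x_{i,k}^{-}]$, the same error term $R=-\sum_{\alpha\succ 0,\,\alpha\neq\alpha_i}\left(\alpha,\alpha_i\right)x_{\alpha,0}^{-}\otimes x_{\alpha,0}^{+}$, and the same three-bracket decomposition of the inductive step, with the delicate term $x_{\alpha,0}^{-}\otimes[x_{\alpha,0}^{+},x_{i,k}^{-}]$ isolated in both arguments. The only caveat is that in $\yg$ the bracket $[x_{\alpha,0}^{+},x_{i,k}^{-}]$ is a root vector only modulo lower filtration terms, so the precise statement (the one the paper proves by its iterated-bracket induction) is that it lies in $HN_{i}^{+}$ rather than being literally a multiple of a root vector of root $\alpha-\alpha_i$; this does not change your argument, since the target space absorbs the Cartan factors.
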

\begin{proof} We prove this proposition by induction. Note that $[\bar{h}_{i,1}, x^{\pm}_{i,k}]=\pm\left(\alpha_i,\alpha_i\right)x^{\pm}_{i,k+1}.$
It is shown in \cite{ChPr4} that
\begin{align*}\label{C6delta2}
\Delta_{\yg}\left(\bar{h}_{i,1}\right)&=\bar{h}_{i,1}\otimes 1+1\otimes \bar{h}_{i,1}-\left(\alpha_i,\alpha_i\right)x^{-}_{i,0}\otimes x^{+}_{i,0}-\sum\limits_{\alpha\succ 0 , \alpha\neq \alpha_i}\left(\alpha,\alpha_i\right)x^{-}_{\alpha,0}\otimes x^{+}_{\alpha,0}\\
&=\Delta_{Y_i}\left(\bar{h}_{i,1}\right)-\sum\limits_{\alpha\succ 0 , \alpha\neq \alpha_i}\left(\alpha,\alpha_i\right)x^{-}_{\alpha,0}\otimes x^{+}_{\alpha,0}
\end{align*}
and
\begin{equation*}
\Delta_{\yg}\left(x^{-}_{i,0}\right)=x^{-}_{i,0}\otimes 1+ 1\otimes x^{-}_{i,0}=\Delta_{Y_i}\left(x^{-}_{i,0}\right).
\end{equation*}
\begin{align*}
\Delta_{\yg}&\left(x^{-}_{i,1}\right)\\
&=\frac{-1}{\left(\alpha_i,\alpha_i\right)}\Delta_{\yg}\left([\bar{h}_{i,1},x^{-}_{i,0}]\right)\\
&=\frac{-1}{\left(\alpha_i,\alpha_i\right)}\left([\Delta_{\yg}\left(\bar{h}_{i,1}\right),\Delta_{\yg}\left(x^{-}_{i,0}\right)]\right)\\
&=\frac{-1}{\left(\alpha_i,\alpha_i\right)}[\Delta_{Y_i}\left(\bar{h}_{i,1}\right)-\sum\limits_{\alpha\succ 0 , \alpha\neq \alpha_i}\left(\alpha,\alpha_i\right)x^{-}_{\alpha,0}\otimes x^{+}_{\alpha,0}, \Delta_{Y_i}\left(x^{-}_{i,0}\right)]\\
&=\frac{-1}{\left(\alpha_i,\alpha_i\right)}\Delta_{Y_i}[\bar{h}_{i,1},x^{-}_{i,0}]\\
&+[\sum\limits_{\alpha\succ 0 ,\alpha\neq \alpha_i}\frac{\left(\alpha,\alpha_i\right)}{\left(\alpha_i,\alpha_i\right)}x^{-}_{\alpha,0}\otimes x^{+}_{\alpha,0},x^{-}_{i,0}\otimes 1+ 1\otimes x^{-}_{i,0}]\\
&=\frac{-1}{\left(\alpha_i,\alpha_i\right)}\Delta_{Y_i}[\left(\bar{h}_{i,1}\right),\left(x^{-}_{i,0}\right)]+[\sum\limits_{\alpha\succ 0 ,\alpha\neq \alpha_i}\frac{\left(\alpha,\alpha_i\right)}{\left(\alpha_i,\alpha_i\right)}x^{-}_{\alpha,0}\otimes x^{+}_{\alpha,0},x^{-}_{i,0}\otimes 1]\\
&+[\sum\limits_{\alpha\succ 0 ,\alpha\neq \alpha_i}\frac{\left(\alpha,\alpha_i\right)}{\left(\alpha_i,\alpha_i\right)}x^{-}_{\alpha,0}\otimes x^{+}_{\alpha,0}, 1\otimes x^{-}_{i,0}]\\
&\equiv \Delta_{Y_i}\left(x^{-}_{i,1}\right)+N^{-}_{i}\otimes H_iN^{+}_{i}.%\\
%&=x_{i,1}^{-}\otimes 1 +1\otimes x_{i,1}^{-}+x_{i,0}^{-}\otimes h_{i,0}+N^{-}_{i}\otimes H_iN^{+}_{i}.
\end{align*}
The only difficulty for the above computations is to show that $$[\sum\limits_{\alpha\succ 0 ,\alpha\neq \alpha_i}\frac{\left(\alpha,\alpha_i\right)}{\left(\alpha_i,\alpha_i\right)}x^{-}_{\alpha,0}\otimes x^{+}_{\alpha,0}, 1\otimes x^{-}_{i,0}]\in H_iN^{-}_{i}\otimes H_iN^{+}_{i}.$$
Indeed, for $\alpha\succ 0$ and $\alpha\neq \alpha_i$, by induction, we can show that $$[[\ldots[[x^{+}_{\alpha,0},x^{-}_{i,r_1}],x^{-}_{i,r_2}]\ldots]x^{-}_{i,r_m}]\in HN_{i}^{+}\quad \text{for any $m\geq 1$.}$$
Thus $[x^{-}_{\alpha,0}\otimes x^{+}_{\alpha,0}, 1\otimes x^{-}_{i,0}]=x^{-}_{\alpha,0}\otimes[ x^{+}_{\alpha,0}, x^{-}_{i,0}]\in H_iN^{-}_{i}\otimes H_iN^{+}_{i}.$

Suppose that $\Delta_{\yg}\left(x^{-}_{i,k-1}\right)=\Delta_{Y_i}\left(x^{-}_{i,k-1}\right)+H_iN^{-}_{i}\otimes H_iN^{+}_{i}$. %, where $\Delta_{Y_i}\left(x^{-}_{i,k-1}\right)=x_{i,k-1}^{-}\otimes 1 +1\otimes x_{i,k-1}^{-}+\sum\limits_{s=1}^{k-1}x_{i,k-1-s}^{-}\otimes h_{i,s-1}+H\left(M^{-}_{i}\right)^2\otimes HM^{+}_i$.
We next show: $\Delta_{\yg}\left(x^{-}_{i,k}\right)=\Delta_{Y_i}\left(x^{-}_{i,k}\right)+H_iN^{-}_{i}\otimes H_iN^{+}_{i}$. %Denote 1 by $h_{i,-1}$.
Note that $\Delta_{Y_i}\left(x^{-}_{i,k-1}\right)=x_{i,k-1}^{-}\otimes 1+1\otimes x_{i,k-1}^{-}+\sum\limits_{s=1}^{k-1}x_{i,k-1-s}^{-}\otimes h_{i,s-1}+H_i\left(M^{-}_{i}\right)^2\otimes H_iM^{+}_{i}$.
\begin{align*}
\Delta_{\yg}&\left(x^{-}_{i,k}\right)\\
&=\frac{-1}{\left(\alpha_i,\alpha_i\right)}\Delta_{\yg}\left([\bar{h}_{i,1},x^{-}_{i,k-1}]\right)\\
&\equiv \frac{-1}{\left(\alpha_i,\alpha_i\right)}\left([\Delta_{\yg}\left(\bar{h}_{i,1}\right),\Delta_{\yg}\left(x^{-}_{i,k-1}\right)]\right)\\
&\equiv \frac{-1}{\left(\alpha_i,\alpha_i\right)}[\Delta_{Y_i}\left(\bar{h}_{i,1}\right)-\sum\limits_{\alpha\succ 0 , \alpha\neq \alpha_i}\left(\alpha,\alpha_i\right)x^{-}_{\alpha,0}\otimes x^{+}_{\alpha,0},\\
&\qquad\qquad\qquad\Delta_{Y_i}\left(x^{-}_{i,k-1}\right)+H_iN^{-}_{i}\otimes H_iN^{+}_{i}]\\
&\equiv \frac{-1}{\left(\alpha_i,\alpha_i\right)}\Delta_{Y_i}[\bar{h}_{i,1},x^{-}_{i,k-1}]+\frac{-1}{\left(\alpha_i,\alpha_i\right)}[\Delta_{Y_i}\left(\bar{h}_{i,1}\right),H_iN^{-}_{i}\otimes H_iN^{+}_{i}]\\
&+[\sum\limits_{\alpha\succ 0 ,\alpha\neq \alpha_i}\frac{\left(\alpha,\alpha_i\right)}{\left(\alpha_i,\alpha_i\right)}x^{-}_{\alpha,0}\otimes x^{+}_{\alpha,0},\Delta_{Y_i}\left(x^{-}_{i,k-1}\right)]\\
&+[\sum\limits_{\alpha\succ 0 ,\alpha\neq \alpha_i}\frac{\left(\alpha,\alpha_i\right)}{\left(\alpha_i,\alpha_i\right)}x^{-}_{\alpha,0}\otimes x^{+}_{\alpha,0},H_iN^{-}_{i}\otimes H_iN^{+}_{i}]\\
&\equiv \Delta_{Y_i}\left(x^{-}_{i,k}\right)+H_iN^{-}_{i}\otimes H_iN^{+}_{i}\\
&+[\sum\limits_{\alpha\succ 0 ,\alpha\neq \alpha_i}\frac{\left(\alpha,\alpha_i\right)}{\left(\alpha_i,\alpha_i\right)}x^{-}_{\alpha,0}\otimes x^{+}_{\alpha,0},\\
&\qquad x_{i,k-1}^{-}\otimes 1+1\otimes x_{i,k-1}^{-}+\sum\limits_{s=1}^{k-1}x_{i,k-1-s}^{-}\otimes h_{i,s-1}+H_i\left(M^{-}_{i}\right)^2\otimes H_iM^{+}_{i}]\\
&+[\sum\limits_{\alpha\succ 0 ,\alpha\neq \alpha_i}\frac{\left(\alpha,\alpha_i\right)}{\left(\alpha_i,\alpha_i\right)}x^{-}_{\alpha,0}\otimes x^{+}_{\alpha,0},H_iN^{-}_{i}\otimes H_iN^{+}_{i}]\\
&\equiv \Delta_{Y_i}\left(x^{-}_{i,k}\right)+H_iN^{-}_{i}\otimes H_iN^{+}_{i}.
\end{align*}
By induction, the proposition is proved.
\end{proof}
Similarly, we can show that
\begin{proposition}\label{deltaxpc1}
 $\Delta_{\yg}\left(x^{+}_{i,k}\right)-\Delta_{Y_i}\left(x^{+}_{i,k}\right)\in HN^{-}\otimes HN^{+}.$
\end{proposition}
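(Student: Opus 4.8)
The plan is to mirror the inductive argument of the preceding proposition, interchanging the roles of the raising and lowering operators and working with the coarser subalgebras $HN^{-}$ and $HN^{+}$ in place of $H_iN^{-}_i$ and $H_iN^{+}_i$. I would induct on $k$, using the identity $x^{+}_{i,k}=\frac{1}{(\alpha_i,\alpha_i)}[\bar h_{i,1},x^{+}_{i,k-1}]$ together with the decomposition $\Delta_{\yg}(\bar h_{i,1})=\Delta_{Y_i}(\bar h_{i,1})-\sum_{\alpha\succ0,\,\alpha\neq\alpha_i}(\alpha,\alpha_i)\,x^{-}_{\alpha,0}\otimes x^{+}_{\alpha,0}$ already recorded above. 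The base case is immediate: $\Delta_{\yg}(x^{+}_{i,0})=x^{+}_{i,0}\otimes1+1\otimes x^{+}_{i,0}=\Delta_{Y_i}(x^{+}_{i,0})$, so the difference is $0\in HN^{-}\otimes HN^{+}$.

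For the inductive step I assume $\Delta_{\yg}(x^{+}_{i,k-1})\equiv\Delta_{Y_i}(x^{+}_{i,k-1})$ modulo $HN^{-}\otimes HN^{+}$, apply $\Delta_{\yg}$ to the bracket, and expand by bilinearity after substituting the two decompositions. Since $\Delta_{Y_i}$ is an algebra homomorphism, the $Y_i$-parts reassemble into $\Delta_{Y_i}(x^{+}_{i,k})$, and there remain three correction contributions to place in $HN^{-}\otimes HN^{+}$: the bracket of $\Delta_{Y_i}(\bar h_{i,1})$ with the inductive error term, the bracket of $\sum_{\alpha}(\alpha,\alpha_i)x^{-}_{\alpha,0}\otimes x^{+}_{\alpha,0}$ with $\Delta_{Y_i}(x^{+}_{i,k-1})$, and the bracket of that sum with the inductive error term. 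Each is expanded on the two tensor legs via $[a\otimes b,c\otimes d]=ac\otimes bd-ca\otimes db$, using the explicit form $\Delta_{Y_i}(x^{+}_{i,k-1})=x^{+}_{i,k-1}\otimes1+1\otimes x^{+}_{i,k-1}+\sum_{s}h_{i,s-1}\otimes x^{+}_{i,k-1-s}$ modulo $HM^{-}_i\otimes H(M^{+}_i)^{2}$.

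The main obstacle is the root-theoretic bookkeeping that keeps each correction inside $HN^{-}\otimes HN^{+}$, and here I would establish the analog of the auxiliary claim from the previous proof. The decisive point is that $\alpha_i$ is simple: for $\alpha\succ0$, $\alpha\neq\alpha_i$, the weight $\alpha_i-\alpha$ is never a positive root (else $\alpha_i$ would be a sum of two positive roots), so every left leg $[x^{-}_{\alpha,0},x^{+}_{i,r}]$ is either $0$ or lies in $N^{-}$, and brackets with $h_{i,s-1}$ preserve the weight $-\alpha$ and hence also stay in $N^{-}$. Symmetrically, the right legs only accumulate products and brackets among the positive-root vectors $x^{+}_{\alpha,0}$ and $x^{+}_{i,\cdot}$, which remain in $N^{+}$; I would record the iterated form $[\ldots[[x^{-}_{\alpha,0},x^{+}_{i,r_1}],x^{+}_{i,r_2}]\ldots,x^{+}_{i,r_m}]\in HN^{-}$ by induction on $m$ for use in the general step. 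Finally, since $H$ normalizes $N^{\pm}$ one has $HN^{\pm}=N^{\pm}H$, so the reorderings produced by the bracket rule never escape the target space, and the bracket of $\Delta_{Y_i}(\bar h_{i,1})=\bar h_{i,1}\otimes1+1\otimes\bar h_{i,1}-(\alpha_i,\alpha_i)x^{-}_{i,0}\otimes x^{+}_{i,0}$ with $HN^{-}\otimes HN^{+}$ is absorbed because $\operatorname{ad}\bar h_{i,1}$ preserves each of $HN^{-}$ and $HN^{+}$ while the rank-one term $x^{-}_{i,0}\otimes x^{+}_{i,0}$ has legs already in $N^{-}$ and $N^{+}$. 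Combining these, the inductive step closes and the proposition follows.
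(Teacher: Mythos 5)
Your proposal is correct and is essentially the paper's own argument: the paper disposes of this proposition with ``Similarly, we can show that,'' meaning precisely the mirrored induction you carry out, with $x^{+}$ and $x^{-}$ interchanged, the recursion $x^{+}_{i,k}=\frac{1}{(\alpha_i,\alpha_i)}[\bar h_{i,1},x^{+}_{i,k-1}]$ in place of the minus-sign version, and the iterated-bracket lemma $[\ldots[x^{-}_{\alpha,0},x^{+}_{i,r_1}],\ldots,x^{+}_{i,r_m}]\in HN^{-}$ playing the role of the paper's $[\ldots[x^{+}_{\alpha,0},x^{-}_{i,r_1}],\ldots,x^{-}_{i,r_m}]\in HN^{+}_{i}$. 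Your observation that simplicity of $\alpha_i$ forces $\alpha_i-\alpha$ to never be a positive root is exactly the root-theoretic fact that makes the swapped argument close, so nothing further is needed.
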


%Define $N^{\pm}_{i}$ to be the subalgebra of $\yg$ generated by all monomials in $x_{\alpha,k}^{\pm}$ with at least one factor with $\alpha\neq\alpha_i$. Let $M^{\pm}_i$ be the subalgebra of $\yg$ generated by $x_{i,k}^{\pm}$, where $k\in \Z_{\geq 0}$.

%\begin{remark}
%For the thesis's purpose \left(Corollary \ref{v-w+gvtw}\right), we need improve the first item of the above proposition.
%Similar to the proof in \cite{ChPr4}, we can prove that
%\begin{center}
% $\Delta\left(x^{+}_{i,k}\right)=x^{+}_{i,k}\otimes 1+ 1\otimes x^{+}_{i,k}+\sum\limits_{j=1}^{k} h_{i,j-1}\otimes x^{+}_{i,k-j}+ HN^{-}_i\otimes HN^{+}_i.$
%\end{center}
%\end{remark}

%In \cite{ChPr5}, the coproduct of $Y_i$ is defined as
%$$\Delta_{Y_i}\left(x_{k}^{-}\right)=x_{k}^{-}\otimes 1 +1\otimes x_{k}^{-}+\sum_{s=1}^{k}x_{k-s}^{-}\otimes h_{s-1}\text{\ modulo\ }\sum_{p,q,r} Yx_{p}^{-}x_{q}^{-}\otimes Yx_{r}^{+},$$ $$\Delta_{Y_i}\left(x_{k}^{-}\right)=\frac{1}{2}[\Delta_{Y_i}\left(\overline{h}_{1}\right),\Delta_{Y_i}\left(x_{k-1}^{-}\right)],$$ and $$\Delta_{Y_i}\left(\overline{h}_{1}\right)=\bar{h}_{i,1}\otimes 1+1\otimes \bar{h}_{i,1}-\left(\alpha_i,\alpha_i\right)x^{-}_{i,0}\otimes x^{+}_{i,0}.$$

%The coproduct of $\yg$ is defined as
%$$\Delta_{\yg}\left(x_{i,k}^{-}\right)=x_{i,k}^{-}\otimes 1 +1\otimes x_{i,k}^{-}+\sum_{s=1}^{k}x_{i,k-s}^{-}\otimes h_{i,s-1}\text{\ modulo\ }\overline{Y},$$
%where $\overline{Y}\equiv \left(N^{-}Y\otimes YN^{+}\right)\bigcap \left(YN^{-}\otimes N^{+}Y\right)$.
Next, we prove the following proposition which will be crucial for this thesis.
\begin{proposition}\label{c1dgd2d}
$\Delta_{\yg}\left(x_{ik_s}^{-}\ldots x_{ik_1}^{-}\right)-\Delta_{Y_i}\left(x_{ik_s}^{-}\ldots x_{ik_1}^{-}\right)\in HN^{-}_{i}\otimes M_i^{-}HN^{+}_{i}$.
\end{proposition}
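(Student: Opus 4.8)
The plan is to induct on $s$, the number of lowering generators in the product, exploiting that both $\Delta_{\yg}$ and $\Delta_{Y_i}$ are algebra homomorphisms. For the base case $s=1$ the preceding proposition gives $\Delta_{\yg}\left(x^{-}_{i,k}\right)-\Delta_{Y_i}\left(x^{-}_{i,k}\right)\in H_iN^{-}_i\otimes H_iN^{+}_i$; since $H_i\subseteq H$ and $1\in M_i^{-}$, the right-hand space is contained in $HN^{-}_i\otimes M_i^{-}HN^{+}_i$, so the case $s=1$ is immediate.

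For the inductive step I would split off one factor, writing $x_{ik_s}^{-}\ldots x_{ik_1}^{-}=x_{ik_s}^{-}P$ with $P=x_{ik_{s-1}}^{-}\ldots x_{ik_1}^{-}$. Setting $\Delta_{\yg}\left(x_{ik_s}^{-}\right)=\Delta_{Y_i}\left(x_{ik_s}^{-}\right)+E$ with $E\in H_iN^{-}_i\otimes H_iN^{+}_i$ (base case) and $\Delta_{\yg}\left(P\right)=\Delta_{Y_i}\left(P\right)+F$ with $F\in HN^{-}_i\otimes M_i^{-}HN^{+}_i$ (induction hypothesis), multiplicativity of the two coproducts yields
\[
\Delta_{\yg}\left(x_{ik_s}^{-}P\right)-\Delta_{Y_i}\left(x_{ik_s}^{-}P\right)=\Delta_{Y_i}\left(x_{ik_s}^{-}\right)F+E\,\Delta_{Y_i}\left(P\right)+E\,F.
\]
It then suffices to show that each of these three terms lies in $\mathcal{T}:=HN^{-}_i\otimes M_i^{-}HN^{+}_i$.

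The mechanism is a set of stability lemmas for $\mathcal{T}$. Because the commutators $[h_{j,r},x_{\alpha,s}^{\pm}]$ and $[x_{j,r}^{\mp},x_{\alpha,s}^{\pm}]$ with $\alpha\neq\alpha_j$ never alter the root $\alpha$, the subspaces $N^{-}_i$ and $N^{+}_i$ are two-sided ideals of $Y^{-}$ and $Y^{+}$, and the property ``carrying at least one non-$i$ factor'' survives every reordering into PBW form. From this I would deduce that $HN^{-}_i$ is stable under left and right multiplication by the non-positive subalgebra generated by $x_{i,k}^{-},h_{i,k}$, while $M_i^{-}HN^{+}_i$ is stable under left and right multiplication by all of $Y_i$. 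Since the left legs of $\Delta_{Y_i}\left(x_{ik_s}^{-}\right)$ and of $\Delta_{Y_i}\left(P\right)$ lie in that non-positive subalgebra and their right legs lie in $Y_i$, applying these lemmas leg by leg places all three terms in $\mathcal{T}$.

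The hard part will be the right-factor stability when a raising generator $x_{i,k}^{+}$ is multiplied into $M_i^{-}HN^{+}_i$; such generators genuinely occur in the right legs coming from the correction term $H_i(M_i^{-})^2\otimes H_iM_i^{+}$ of $\Delta_{Y_i}\left(x^{-}_{i,k}\right)$. Commuting $x_{i,k}^{+}$ past the $M_i^{-}$ prefix creates Cartan elements through $[x_{i,r}^{+},x_{i,s}^{-}]=h_{i,r+s}$, and pushing it into the $N^{+}_i$ tail interacts only with the $i$-raising factors there; the crux is to verify that at least one non-$i$ raising factor always survives and that the outcome recollects into the ordered shape $M_i^{-}HN^{+}_i$. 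This is precisely where the two-sided ideal property of $N^{+}_i$ and the index-preserving form of the defining relations are used decisively.
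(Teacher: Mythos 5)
Your proposal is correct and follows essentially the same route as the paper: the paper also inducts on $s$, factors $\Delta_{\yg}\left(x_{i,k_s}^{-}\ldots x_{i,k_1}^{-}\right)=\Delta_{\yg}\left(x_{i,k_s}^{-}\right)\Delta_{\yg}\left(x_{i,k_{s-1}}^{-}\ldots x_{i,k_1}^{-}\right)$, substitutes the single-generator proposition and the induction hypothesis, and absorbs the resulting cross terms into $HN^{-}_{i}\otimes M_i^{-}HN^{+}_{i}$ by exactly the stability arguments you describe. The commutation difficulty you single out (a raising generator $x_{i,k}^{+}$ meeting $M_i^{-}$ and producing Cartan elements, with a non-$i$ factor surviving) is the same point the paper handles, via the observation that iterated commutators of $x_{\alpha}^{+}$, $\alpha\neq\alpha_i$, with the $x_{i,r}^{-}$ remain in $HN_{i}^{+}$.
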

\begin{proof} We proof this proposition by induction. When $s=2$, the basis of induction.
 \begin{align*}
&\Delta_{\yg}\left(x^{-}_{i,k_2}x^{-}_{i,k_1}\right)\\
&=\Delta_{\yg}\left(x^{-}_{i,k_2}\right)\Delta_{\yg}\left(x^{-}_{i,k_1}\right)\\
&\equiv\left( \Delta_{Y_i}\left(x^{-}_{i,k_2}\right)+H_iN^{-}_{i}\otimes H_iN^{+}_{i}\right)\left(\Delta_{Y_i}\left(x^{-}_{i,k_1}\right)+H_iN^{-}_{i}\otimes H_iN^{+}_{i}\right)\\
&\equiv\Delta_{Y_i}\left(x^{-}_{i,k_2}\right)\cdot \Delta_{Y_i}\left(x^{-}_{i,k_1}\right)+ \Delta_{Y_i}\left(x^{-}_{i,k_2}\right)\cdot H_iN^{-}_{i}\otimes H_iN^{+}_{i}\\
&+H_iN^{-}_{i}\otimes H_iN^{+}_{i}\cdot \Delta_{Y_i}\left(x^{-}_{i,k_1}\right)+H_iN^{-}_{i}\otimes H_iN^{+}_{i}\cdot H_iN^{-}_{i}\otimes H_iN^{+}_{i}\\
%&+\left(\sum\limits_{s\equiv0}^{k_2}x_{i,k_2-s}^{-}\otimes h_{i,s-1}+H_i\left(M^{-}_{i}\right)^2\otimes H_iM^{+}_{i}\right)\left(H_iN^{-}_{i}\otimes H_iN^{+}_{i}\right)\\
%&+\left(H_iN^{-}_{i}\otimes H_iN^{+}_{i}\right)\left(\sum\limits_{s\equiv0}^{k_1}x_{i,k_1-s}^{-}\otimes h_{i,s-1}+H_i\left(M^{-}_{i}\right)^2\otimes H_iM^{+}_{i}\right)\\
%&+\left(H_iN^{-}_{i}\otimes H_iN^{+}_{i}\right)\left(H_iN^{-}_{i}\otimes H_iN^{+}_{i}\right)\\
&\equiv\Delta_{Y_i}\left(x^{-}_{i,k_2}\right)\Delta_{Y_i}\left(x^{-}_{i,k_1}\right)+H_iN^{-}_{i}\otimes x_{i,k_1}^{-}H_iN^{+}_{i}\\
&+HN^{-}_{i}\otimes x_{i,k_2}^{-}HN^{+}_{i}+H_iN^{-}_{i}\otimes H_iN^{+}_{i}\\
&\equiv\Delta_{Y_i}\left(x^{-}_{i,k_2}\right)\Delta_{Y_i}\left(x^{-}_{i,k_1}\right)+H_iN^{-}_{i}\otimes H_iN^{+}_{i}+H_iN^{-}_{i}\otimes M_i^{-}H_iN^{+}_{i}\\
&\equiv\Delta_{Y_i}\left(x^{-}_{i,k_2}\right)\Delta_{Y_i}\left(x^{-}_{i,k_1}\right)+H_iN^{-}_{i}\otimes M_i^{-}HN^{+}_{i}.
\end{align*}
%It follows from above computation, we have $$\Delta_{Y_i}\left(x^{-}_{i,k_2}\right)\Delta_{Y_i}\left(x^{-}_{i,k_1}\right)\in H_iM_i^{-}\otimes H_iM_i^{-}.$$
Similar proof as above, $\left(H_iN^{-}_{i}\otimes M_i^{-}H_iN^{+}_{i}\right)\Delta_{Y_i}\left(x^{-}_{i,k_1}\right)\subset H_iN^{-}_{i}\otimes M_i^{-}H_iN^{+}_{i}.$

Suppose that $$\Delta_{\yg}\left(x^{-}_{i,k_{s-1}}\ldots x^{-}_{i,k_2}x^{-}_{i,k_1}\right)=\Delta_{Y_i}\left(x^{-}_{i,k_{s-1}}\ldots x^{-}_{i,k_2}x^{-}_{i,k_1}\right)+H_iN^{-}_{i}\otimes M_i^{-}H_iN^{+}_{i}.$$ We next compute $\Delta_{\yg}\left(x^{-}_{i,k_{s}}x^{-}_{i,k_{s-1}}\ldots x^{-}_{i,k_2}x^{-}_{i,k_1}\right)$.
\begin{align*}
\Delta_{\yg}&\left(x^{-}_{i,k_{s}}x^{-}_{i,k_{s-1}}\ldots x^{-}_{i,k_2}x^{-}_{i,k_1}\right)\\
&=\Delta_{\yg}\left(x^{-}_{i,k_{s}}\right)\Delta_{\yg}\left(x^{-}_{i,k_{s}}x^{-}_{i,k_{s-1}}\ldots x^{-}_{i,k_2}x^{-}_{i,k_1}\right)\\
&\equiv\left(\Delta_{Y_i}\left(x^{-}_{i,k_s}\right)+H_iN^{-}_{i}\otimes H_iN^{+}_{i}\right)\\
&\qquad \Big(\Delta_{Y_i}\left(x^{-}_{i,k_{s-1}}\ldots x^{-}_{i,k_2}x^{-}_{i,k_1}\right)+HN^{-}_{i}\otimes M_i^{-}HN^{+}_{i}\Big)\\
&\equiv\Delta_{Y_i}\left(x^{-}_{i,k_s}\right)\Delta_{Y_i}\left(x^{-}_{i,k_{s-1}}\ldots x^{-}_{i,k_2}x^{-}_{i,k_1}\right)\\
&+\Delta_{Y_i}\left(x^{-}_{i,k_s}\right)\left(HN^{-}_{i}\otimes M_i^{-}HN^{+}_{i}\right)\\
&+\left(H_iN^{-}_{i}\otimes H_iN^{+}_{i}\right)\Delta_{Y_i}\left(x^{-}_{i,k_{s-1}}\ldots x^{-}_{i,k_2}x^{-}_{i,k_1}\right)\\
&+\left(H_iN^{-}_{i}\otimes H_iN^{+}_{i}\right)\left(HN^{-}_{i}\otimes M_i^{-}HN^{+}_{i}\right)\\
&\equiv\Delta_{Y_i}\left(x^{-}_{i,k_s}x^{-}_{i,k_{s-1}}\ldots x^{-}_{i,k_2}x^{-}_{i,k_1}\right)+HN^{-}_{i}\otimes M_i^{-}HN^{+}_{i}.
\end{align*}
By induction, the proposition is proved.
\end{proof}
It follows from Proposition \ref{c1dgd2d} that $\Delta_{\yg}\left(x_{ik_s}^{-}\ldots x_{ik_1}^{-}\right)$ acts on a tensor product of highest weight representations in the same way as $\Delta_{\ysl}\left(x_{ik_s}^{-}\ldots x_{ik_1}^{-}\right)$.

\subsection{Representations of Yangians}

The representation theory of Yangians has many applications in mathematics and physics. For instance, from any finite-dimensional representation of a Yangian, one can construct a $R$-matrix which is a  solution of the quantum Yang-Baxter equations:
\begin{equation*}\label{117}
    R_{12}\left(u\right)R_{13}\left(u+v\right)R_{23}\left(v\right)=R_{23}\left(v\right)R_{13}\left(u+v\right)R_{12}\left(u\right).
\end{equation*}
As stated in \cite{NT}, ``the physical data such as mass formula, fusion angle, and the spins of integrals of motion can be extracted from the Yangian highest weight representations." The representation theory of $\yg$ have found applications to the representation theory of classical Lie algebras \cite{Mo1}.

%From now on, we only consider representations of Yangian of finite-dimensional simple Lie algebra over $\C$. Recall that $l=rank\left(\g\right)$.%It is well known that every finite-dimensional representation of $\yg$ is a highest weight representation.

\begin{definition}
A representation $V(\mu)$ of $\yg$ is said to be highest weight if it is generated by a vector $v^{+}$ such that
\begin{center}
  $x_{i,k}^{+}v^{+}= 0$ and $h_{i,k}v^{+} =\mu_{i,k} v^{+}$,
\end{center}
where $\mu=\Big(\mu_1\left(u\right),\mu_2\left(u\right),\ldots,\mu_l(u)\Big)$ and $\mu_i\left(u\right)=1+\mu_{i,0}u^{-1}+\mu_{i,1}u^{-2}+\ldots$ is a formal series in $u^{-1}$ for $i\in I$.
\end{definition}

We call a weight vector $v$  \textbf{maximal} in an $\yg$-module $V$ if $N^{+}v=0.$
The defining relations of the Yangian allow one to define analogously the notion of Verma module $M(\mu)$ of $\yg$. The Verma module $M(\mu)$ is defined to be the quotient of $\yg$ by the left ideal generated by $N^{+}$ and the elements $h_{i,k}-\mu_{i,k}1$; $\yg$ acts on $M(\mu)$ by left multiplication.  A highest weight vector of $M(\mu)$ is $1_{\mu}$ which is the image of the element $1\in \yg$ in the quotient. The Verma module $M(\mu)$ is a universal highest weight representation in the sense that: if $V(\mu)$ is another highest weight representation with a highest weight vector $v$, then the mapping $1_{\mu}\mapsto v$ defines a surjective $\yg$-module homomorphism $M\left(\mu\right)\rightarrow V\left(\mu\right)$.

Let $V=V(\mu)$ be a highest weight representation of $\yg$. Pulling back by the imbedding $U(\g)\hookrightarrow \yg$, $V$ is a $\g$-module. The weight subspace $V_{\mu^{\left(0\right)}}$ with
$\mu^{\left(0\right)}=\left(\mu_{1,0},\dots,\mu_{l,0}\right)$ is one-dimensional and spanned
by the highest weight vector of $V$. All other nonzero weight subspaces
correspond to the weights $\eta$ of the form
\begin{equation*}
\eta=\mu^{\left(0\right)}-k_1\alpha_1-\ldots-k_l\alpha_l,
\end{equation*}
where all $k_i$ are nonnegative integers, not all of them are zero.
A standard argument shows that $M(\mu)$ has a unique irreducible quotient $L\left(\mu\right)$.
In \cite{Dr2}, Drinfeld gave a classification of the finite-dimensional irreducible representations of $Y\left(\frak g\right)$.

\begin{theorem}\label{cfdihwr}\
\begin{enumerate}
  \item[(a)] Every finite-dimensional irreducible representation of $\yg$ is highest weight.
  \item [(b)] The representation $L(\mu)$ is finite dimensional if and only if there exist polynomials $\pi_i\left(u\right)$, $i\in I$, such that $$\frac{\pi_i\left(u+d_i\right)}{\pi_i\left(u\right)}=1+\sum_{k=0}^{\infty} \mu_{i,k}u^{-k-1},$$ in the sense that the right-hand side is the Laurent expansion of the left-hand side about $u=\infty$.
\end{enumerate}
The polynomials $\pi_i\left(u\right)$ above are called Drinfeld polynomials.
\end{theorem}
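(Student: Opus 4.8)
The plan is to prove (a) by a weight-space argument and (b) by reducing the forward implication to the rank-one subalgebras $Y_i\cong\ysl$ while constructing explicit finite-dimensional modules for the converse. For (a), let $V$ be a finite-dimensional irreducible representation. The relation $[h_{i,0},x_{j,s}^{\pm}]=\pm d_ia_{ij}x_{j,s}^{\pm}$ shows that each $x_{i,k}^{+}$ raises the $\h$-weight by $\alpha_i$, independently of $k$, so I would decompose $V$ into weight spaces under $\h=\mathrm{span}\{h_{i,0}\}$ and choose a weight $\lambda$ maximal for the usual partial order, which exists because $V$ has finitely many weights. Then $x_{i,k}^{+}V_\lambda\subseteq V_{\lambda+\alpha_i}=0$, so every nonzero vector of $V_\lambda$ is annihilated by $N^{+}$. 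Since the $h_{i,r}$ commute and $\C$ is algebraically closed, they admit a common eigenvector $v^{+}\in V_\lambda$; this $v^{+}$ satisfies $x_{i,k}^{+}v^{+}=0$ and $h_{i,k}v^{+}=\mu_{i,k}v^{+}$, and by irreducibility it generates $V$, so $V$ is highest weight.

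For the forward implication of (b), I would restrict $L(\mu)$ to each $Y_i\cong\ysl$ and examine the cyclic $Y_i$-submodule generated by $v^{+}$, which is a finite-dimensional highest weight $\ysl$-module; since the degree-zero elements $x_{i,0}^{\pm},h_{i,0}$ span a copy of $\nysl$ acting on it, there is a non-negative integer $N$ with $(x_{i,0}^{-})^{N+1}v^{+}=0$. The heart of the argument is a recursion obtained from the defining relations: I would express $x_{i,r}^{+}(x_{i,0}^{-})^{k+1}v^{+}$ in terms of $(x_{i,0}^{-})^{k}v^{+}$ with coefficients assembled from the $\mu_{i,k}$, and repackage it as a single functional equation for the series $1+\sum_k\mu_{i,k}u^{-k-1}$. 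The vanishing $(x_{i,0}^{-})^{N+1}v^{+}=0$ then forces this series to be the Laurent expansion of a ratio $\pi_i(u+d_i)/\pi_i(u)$ with $\pi_i$ monic of degree $N$, the shift $d_i$ arising from the rescaling of generators in the isomorphism $Y_i\cong\ysl$.

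For the converse, I would construct a finite-dimensional module carrying a highest weight vector of weight $\mu$. Taking the fundamental representations $V_{a}(\omega_i)$ of $\yg$, which are finite-dimensional, I would form a tensor product whose highest weight vector $w^{+}=\bigotimes_j v_j^{+}$ is annihilated by $N^{+}$ and on which, by the coproduct formula for $h_{i,k}$ in Proposition \ref{Delta}, the $h_{i,k}$ act with eigenvalue series that multiply the Drinfeld polynomials of the factors. Choosing the factors and their parameters $a$ so that these products equal the prescribed $\pi_i$ produces a finite-dimensional highest weight module of highest weight $\mu$; its irreducible quotient is exactly $L(\mu)$, which is therefore finite-dimensional.

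The main obstacle is the $\ysl$ computation in the forward direction: extracting the precise functional equation from the recursion demands careful manipulation of the non-commutative defining relations, and it is exactly there that the rationality of the highest weight series—rather than its being merely a formal power series—is genuinely forced.
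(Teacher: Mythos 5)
First, a point of reference: the paper does not prove this theorem at all; it is quoted from Drinfeld \cite{Dr2}, with the detailed proofs in the literature due to Drinfeld and to Chari--Pressley (Theorem 12.1.11 of \cite{ChPr2}). So your proposal can only be measured against that standard argument. Your part (a) is exactly the standard argument and is correct: maximality of $\lambda$ forces $x_{i,k}^{+}V_\lambda=0$, and the commuting family $\{h_{i,r}\}$ preserves $V_\lambda$ (since $[h_{i,0},h_{j,r}]=0$), hence has a common eigenvector there; irreducibility then makes that vector a generating highest weight vector. (Implicitly you also use that $\h$ acts semisimply, which follows from Weyl's theorem applied to $V$ as a $U(\g)$-module.) Your reduction of the forward half of (b) to $Y_i\cong\ysl$ is likewise the standard route, with the $d_i$-shift correctly attributed to the rescaling isomorphism; note only that the rank-one recursion you describe, which you leave as a sketch, is essentially the entire analytic content of that direction.

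The genuine gap is in your converse. The finite-dimensionality of the fundamental representation $V_a(\omega_i)$ is, by definition, the converse of (b) in the special case $\pi_i(u)=u-a$ and $\pi_j(u)=1$ for $j\neq i$. Invoking ``the fundamental representations $V_{a}(\omega_i)$, which are finite-dimensional'' to prove the converse is therefore circular: you are assuming the base case of the very implication you are proving. To close the argument these modules must be constructed independently of the theorem: in type $A$ via the evaluation homomorphism $\yg\rightarrow U(\g)$, and in general by the case-by-case constructions of Chari--Pressley, which endow $L(\omega_i)$ (or a prescribed direct sum of fundamental $\g$-modules) with a $\yg$-action --- Propositions 12.1.17 and 12.1.18 of \cite{ChPr2}, restated in this thesis as Propositions \ref{tati} and \ref{dcofbdl}. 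Granting that input, the rest of your converse is sound: the cyclic submodule generated by the tensor product of the highest weight vectors has highest weight series equal to the product of the factors' series (Propositions \ref{Delta} and \ref{vtv'hwv}), it is finite-dimensional, and its irreducible quotient is $L(\mu)$.
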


\begin{lemma}\label{g2puisd3}
Let $\frac{Q\left(u+1\right)}{Q\left(u\right)}=1+du^{-1}+dau^{-2}+da^2u^{-3}+\ldots$ and $Q(u)$ be a monic polynomial. Then $Q(u)=\big(u-(a-d+1)\big)\ldots\big(u-(a-1)\big)\big(u-a\big)$.
\end{lemma}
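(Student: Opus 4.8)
The plan is to first rewrite the right-hand side in closed form and thereby convert the hypothesis into a single polynomial identity. Summing the tail as an expansion about $u=\infty$ gives
\[
1 + du^{-1} + dau^{-2} + da^2u^{-3} + \cdots = 1 + du^{-1}\sum_{k\ge 0}(a/u)^k = 1 + \frac{d}{u-a} = \frac{u-a+d}{u-a},
\]
so the hypothesis $\frac{Q(u+1)}{Q(u)} = \frac{u-a+d}{u-a}$ becomes equivalent to the polynomial identity
\[
(u-a)\,Q(u+1) = (u-a+d)\,Q(u).
\]
Before proceeding I would record that, since $Q$ is monic of degree $n$ say, the coefficient of $u^{-1}$ in $Q(u+1)/Q(u)$ equals $n$; comparing with the given expansion forces $d = n = \deg Q$, so in particular $d\in\Z_{\ge 0}$. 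This is what guarantees the induction below terminates.

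I would then argue by induction on $d$. For the inductive step ($d\ge 1$), substituting $u=a$ into the identity gives $0 = d\,Q(a)$, and since $d\neq 0$ we conclude $Q(a)=0$; hence $Q(u) = (u-a)Q_1(u)$ for a monic polynomial $Q_1$ of degree $d-1$. Writing $Q(u+1) = (u+1-a)Q_1(u+1)$ and cancelling the common factor $(u-a)$ from the polynomial identity (legitimate as an identity of polynomials) yields
\[
\big(u-(a-1)\big)\,Q_1(u+1) = \big(u-(a-1)+(d-1)\big)\,Q_1(u),
\]
which is exactly the original identity with the pair $(a,d)$ replaced by $(a-1,\,d-1)$. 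By the inductive hypothesis $Q_1$ has roots $a-1,a-2,\ldots,a-d+1$; multiplying back by $(u-a)$ produces $Q(u)$ with roots $a,a-1,\ldots,a-d+1$, which is the claimed product.

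The base case $d=0$ is trivial ($Q=1$), and one may equally well start from $d=1$, where $Q(u)=u-a$ visibly satisfies $\frac{Q(u+1)}{Q(u)}=\frac{u-a+1}{u-a}$. As a consistency check I would note that the proposed answer telescopes: with $Q(u)=\prod_{j=0}^{d-1}(u-a+j)$ the ratio $Q(u+1)/Q(u)$ collapses to $\frac{u-a+d}{u-a}$, confirming that such a $Q$ indeed exists. The only genuinely delicate point is the opening reduction — recognizing $du^{-1}+dau^{-2}+\cdots$ as the geometric series $d/(u-a)$ and hence reading off both the clean rational form and the degree $d$; once that is in hand, the root-by-root induction is entirely routine.
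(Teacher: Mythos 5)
Your proof is correct, but it proceeds quite differently from the paper's. The paper argues in two steps: first a uniqueness claim --- if $\frac{Q(u+1)}{Q(u)}=\frac{R(u+1)}{R(u)}$ then $\frac{Q(u)}{R(u)}$ is periodic, hence constant, hence $Q=R$ for monic $Q,R$ --- and then a direct verification that the claimed product $\big(u-(a-d+1)\big)\cdots\big(u-a\big)$ satisfies the functional equation, via telescoping and the geometric series expansion of $\frac{u-(a-d)}{u-a}$. In other words, the paper guesses the answer and checks it, with uniqueness doing the rest. You instead \emph{derive} the answer: you sum the series at the outset to turn the hypothesis into the polynomial identity $(u-a)\,Q(u+1)=(u-a+d)\,Q(u)$, observe that comparing the $u^{-1}$ coefficient forces $d=\deg Q\in\Z_{\ge 0}$, and then peel off roots one at a time by substituting $u=a$ and inducting on $d$. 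Your route is constructive and makes explicit a point the paper leaves implicit (that $d$ must be a nonnegative integer equal to $\deg Q$, without which the statement's $d$-fold product would not even parse); the paper's route is shorter once the answer is known, and its uniqueness argument is reusable whenever one can exhibit any solution. Note also that your closing ``consistency check'' is precisely the paper's verification step, so the two proofs share that computation --- the real difference is that you replace the periodicity/uniqueness argument with the root-extraction induction.
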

\begin{proof}
We first show the uniqueness of $Q\left(u\right)$. If $\frac{Q\left(u+1\right)}{Q\left(u\right)}=\frac{R\left(u+1\right)}{R\left(u\right)}$, then $\frac{Q\left(u\right)}{R\left(u\right)}=\frac{Q\left(u+1\right)}{R\left(u+1\right)}$. Therefore $\frac{Q\left(u\right)}{R\left(u\right)}$ is periodic in $u$, which implies $Q\left(u\right)=R\left(u\right)$.

Let $Q\left(u\right)=\left(u-\left(a-d+1\right)\right)\ldots\left(u-\left(a-1\right)\right)\left(u-a\right)$.
\begin{align*}
% \nonumber to remove numbering (before each equation)
   \frac{Q(u+1)}{Q(u)}
   &=\frac{\big(u-(a-d)\big)\big(u-(a-d+1)\big)\ldots\big(u-(a-2)\big)\big(u-(a-1)\big)}{\big(u-(a-d+1)\big)\ldots\big(u-(a-1)\big)\big(u-a\big)}\\
   &=\frac{u-(a-d)}{u-a}\\
     &= \frac{1-\left(a-d\right)u^{-1}}{1-au^{-1}}\\
                      &= \left(1-\left(a-d\right)u^{-1}\right)\left(1+au^{-1}+a^2u^{-2}+\ldots+a^nu^{-n}+\ldots\right)\\
                      &= 1+du^{-1}+dau^{-2}+da^2u^{-3}+\cdots.
\end{align*}
\end{proof}

%A open problem left: give an `explicit' realization of these representations with associated Drinfeld polynomials."

Among all the finite-dimensional irreducible representations of $\yg$, one type of representation, called fundamental, is important: each finite-dimensional irreducible representation is a subquotient of some tensor product of fundamental representations.
\begin{definition}
A finite-dimensional irreducible $Y\left(\mathfrak{\g}\right)$-module is called fundamental if its Drinfeld polynomials are given by
\begin{equation*}
\pi_j\left(u\right)=\begin{cases}
1\qquad \qquad\text{if}\qquad j\neq i, \\
u-a\qquad \text{if}\qquad j=i
\end{cases}
\end{equation*}
for some $i\in I$; the corresponding representation will be denoted by $V_{a}(\omega_i)$.
\end{definition}
\begin{theorem}[Theorem2.16, \cite{ChPr4}]\label{TC1irisq}
 Every finite-dimensional irreducible representation $V$ of $\yg$ is a sub-quotient of a tensor product $W=V_1\otimes\ldots\otimes V_n$ of the fundamental representations. In fact, $V$ is a quotient of the cyclic sub-representation of $W$ generated by the tensor product of the highest weight vectors in the $V_i$.
\end{theorem}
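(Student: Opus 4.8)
The plan is to reduce everything to the classification theorem together with the approximate coproduct formula of Proposition \ref{Delta}. First I would invoke Theorem \ref{cfdihwr}(a) to write $V\cong L(\mu)$ as a highest-weight module, and then Theorem \ref{cfdihwr}(b) to produce its Drinfeld polynomials $\pi_i(u)$, which satisfy $\pi_i(u+d_i)/\pi_i(u)=\mu_i(u)$. Factoring each into linear pieces, say $\pi_i(u)=\prod_{k}(u-a_{i,k})$, I attach to every root $a_{i,k}$ the fundamental representation $V_{a_{i,k}}(\omega_i)$, whose highest-weight series has $i$-th component $(u+d_i-a_{i,k})/(u-a_{i,k})$ and all other components equal to $1$. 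The candidate module is the (arbitrarily ordered) tensor product $W=\bigotimes_{i,k}V_{a_{i,k}}(\omega_i)$, and the distinguished vector is the tensor product $w^{+}$ of the highest-weight vectors $v_s^{+}$ of the factors.

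The heart of the argument is to check that $w^{+}$ is a highest-weight vector of $W$ whose associated series is exactly $\mu$. Using Proposition \ref{Delta}(1) for a two-fold tensor product, the genuine terms of $\Delta_{\yg}(x_{i,k}^{+})$ all contain a factor $x_{i,m}^{+}$ hitting one of the two highest-weight vectors and hence vanish, while the correction terms, lying in $\overline{Y}\subseteq N^{-}Y\otimes YN^{+}$, carry in their second tensor slot a factor from $N^{+}$ acting first (from the right) on the second highest-weight vector, which it annihilates; so these terms vanish as well. Iterating by coassociativity gives $N^{+}w^{+}=0$. Applying Proposition \ref{Delta}(3) in the same way gives, in the two-fold case,
\begin{equation*}
\Delta_{\yg}(h_{i,k})\,(v_1^{+}\otimes v_2^{+})=\Big(\mu_{i,k}^{(1)}+\mu_{i,k}^{(2)}+\sum_{j=1}^{k}\mu_{i,j-1}^{(1)}\mu_{i,k-j}^{(2)}\Big)\,(v_1^{+}\otimes v_2^{+}),
\end{equation*}
which is precisely the statement that the generating series of the two factors multiply. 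Hence the series of $w^{+}$ in the $i$-th component is $\prod_{k}(u+d_i-a_{i,k})/(u-a_{i,k})=\pi_i(u+d_i)/\pi_i(u)=\mu_i(u)$, so $w^{+}$ carries the series $\mu$.

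Finally, the cyclic submodule $W'=\yg\,w^{+}$ is then a highest-weight representation with the same series $\mu$ as $V$; by the universal property of the Verma module $M(\mu)$ and the uniqueness of its irreducible quotient $L(\mu)$, the module $V\cong L(\mu)$ is a quotient of $W'$, so $V$ is a subquotient of $W$ realized exactly as claimed. The step I expect to be the main obstacle is the passage from the two-fold coproduct formula of Proposition \ref{Delta} to the $n$-fold tensor product: one must verify, by induction using coassociativity, that the error terms living in $\overline{Y}$ continue to annihilate the tensor of highest-weight vectors at each stage, so that the clean multiplicative formula for the $h_{i,k}$-eigenvalues survives for arbitrarily many factors.
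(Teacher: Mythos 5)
The paper does not actually prove this statement: it is imported verbatim as Theorem 2.16 of \cite{ChPr4}, so there is no in-paper proof to compare against. Your argument is correct and is essentially the standard Chari--Pressley proof: factor the Drinfeld polynomials of $V\cong L(\mu)$, form the tensor product of the corresponding fundamental representations, check that the tensor product of highest weight vectors is a maximal $h_{i,k}$-eigenvector whose associated series multiply, and conclude via the unique irreducible quotient of the Verma module $M(\mu)$ that $L(\mu)$ is a quotient of $\yg\,w^{+}$. The only remark worth making is that the two-fold step you re-derive from Proposition \ref{Delta} (maximality of $v_1^{+}\otimes v_2^{+}$ together with multiplicativity of the associated polynomials) is exactly Proposition \ref{vtv'hwv}, which the paper quotes separately; citing it would let you pass directly to the induction on the number of factors, which, as you correctly observe, goes through because the inductive hypothesis $N^{+}\left(v_2^{+}\otimes\cdots\otimes v_n^{+}\right)=0$ is precisely what makes the correction terms in $\overline{Y}$ vanish at the next stage.
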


In \cite{ChPr3} and \cite{ChPr5}, an explicit realization of finite dimensional irreducible representations of $Y\left(\mathfrak{sl}_2\right)$ is given: every finite dimensional irreducible representation of $Y\left(\mathfrak{sl}_2\right)$ is a tensor product of representations  which are irreducible under $\mathfrak{sl}_2$. However, this property is not true in general. A. Molev has a counterexample in Remark 3.4.10 of his book \cite{Mo1}.  The $Y\left(\mathfrak{sl}_3\right)$-module $L\left(\mu\right)$ is 8-dimensional, where $\mu_1\left(u\right) = \left(1 + 3u^{-1}\right)\left(1 + u^{-1}\right), $ $\mu_2\left(u\right) = 1 + 3u^{-1}$ and $\mu_3\left(u\right) = 1 + 2u^{-1}$. On the other hand, the possible dimensions of the evaluation modules are 1, 3, 6, 8 \ldots,  so that $L\left(\mu\right)$ can not be
isomorphic to a non-trivial tensor product of such modules. The structure of the general finite-dimensional irreducible $\yg$-module still remains unknown.
In this thesis, a cyclicity criterion of the tensor product $W$ as in Theorem \ref{TC1irisq} is given when $\g$ is a classical simple Lie algebra or $\g=G_2$. Moreover, a sufficient condition for $W$ to be irreducible can follow from the cyclicity condition.

We next introduce the fundamental representations of $\yg$.
%As $\g$-module, $V_{a}(\omega_i)$ is completely reducible. Moreover $\omega_i$ is the highest weight. Thus $V_{a}(\omega_i)$ is a Kirillov-Reshetikhin module.
\begin{proposition}[Proposition 12.1.17, \cite{ChPr2}]\label{tati}
  Let $m_i$ be the multiplicity of the simple root $\alpha_i$ in the highest root $\theta$ of $\g$, and let $d_{\theta}=d_i$ if $\theta$ is conjugate to $\alpha_i$ under the Weyl group of $\g$. If $m_i=1$ or $m_i=d_\theta/d_i$,
the fundamental representation $L(\omega_i)$ of $\g$ can be made into a fundamental representation of $\yg$.% by letting $J\left(x\right)$ act as $bx$ for any $b\in \C$.
\end{proposition}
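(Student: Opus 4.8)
The plan is to work in Drinfeld's first realization of $\yg$, where $\yg$ is generated by a copy of $U(\g)$ together with a linear, $\g$-equivariant map $J\colon\g\to\yg$ (so that $[x,J(y)]=J([x,y])$ for $x,y\in\g$) subject to one further cubic relation whose right-hand side is a fixed $\g$-invariant expression assembled from the structure constants and a dual basis. Because the higher-mode generators $x_{i,r}^{\pm},h_{i,r}$ with $r\ge 2$ arise from iterated brackets of the degree-$0$ generators (the image of $U(\g)$) with the degree-$1$ data encoded by $J$, endowing the irreducible $\g$-module $V=L(\omega_i)$ with a $\yg$-structure reduces to three tasks: let $\g$ act by the given representation $\rho$; specify a $\g$-equivariant operator $J\in\Hom_\g\!\big(\g,\End(V)\big)$; and verify that $J$ satisfies the cubic relation as an identity in $\End(V)$. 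By Proposition \ref{s1.2.1p26} the shift automorphism $\tau_a$ translates the evaluation parameter, so it is enough to build a single $\yg$-structure and then twist by $\tau_a$ to realize $V_a(\omega_i)$ for every $a\in\C$.

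First I would parametrize the admissible maps $J$. By Schur's lemma, $\g$-equivariant maps $\g\to\End(V)\cong V\otimes V^{*}$ are classified by the multiplicity of $V$ in $\g\otimes V$; the scalar choice $J(x)=c\,\rho(x)$ is always present, and writing $J=c\,\rho+J_0$, the Jacobi identity and equivariance make the parameter $c$ decouple, so that the cubic relation collapses to a relation for $J_0$ alone together with the matching of the fixed invariant on the right-hand side. Expanding both sides over the irreducible constituents of $\g\otimes V$ and using that the Casimir acts by a known scalar on each constituent, the whole relation reduces to a finite list of numerical identities among these Casimir eigenvalues and the coupling coefficients of $\g\otimes V$.

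The crux of the argument, and the step I expect to be the main obstacle, is to show that these numerical identities hold exactly when $m_i=1$ or $m_i=d_\theta/d_i$. These two conditions single out the nodes for which $L(\omega_i)$ is minuscule-type, respectively quasi-minuscule-type: in the first case all weights lie in a single Weyl orbit, $\End(V)$ is multiplicity-free, so $J_0$ is forced to vanish and the right-hand invariant already acts as zero on $V$; in the second case $\g\otimes V$ acquires exactly one extra, controlled constituent, and the Casimir eigenvalues line up so that a nonzero $J_0$ can be chosen to absorb the invariant. Concretely, I would compute, type by type and node by node, the $\g$-module decomposition of $\g\otimes L(\omega_i)$ --- these are the classical decompositions recorded in the references cited in the excerpt --- and check the eigenvalue identity directly, observing that it fails at precisely the nodes violating the multiplicity hypothesis; for the classical series and $G_2$ this is a finite, if delicate, computation, and it can equivalently be carried out by exhibiting the explicit evaluation-type homomorphisms available at these nodes.

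Once the cubic relation is verified, $V$ is a genuine $\yg$-module; since it is already irreducible over the subalgebra $U(\g)\subseteq\yg$, it is an irreducible, hence highest-weight, $\yg$-module, whose highest-weight vector is the $\g$-highest-weight vector $v^{+}$ of $L(\omega_i)$ (note $x_{i,r}^{+}v^{+}$ has weight $\omega_i+\alpha_i$, which is not a weight of $V$, so it vanishes). Finally, reading off the highest-weight series $\mu(u)$ from the action of the $h_{i,r}$ on $v^{+}$ and invoking the classification in Theorem \ref{cfdihwr} identifies the Drinfeld polynomials as $\pi_i(u)=u-a$ and $\pi_j(u)=1$ for $j\ne i$, exhibiting $V$ as the fundamental representation $V_a(\omega_i)$, as required.
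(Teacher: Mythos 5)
You should first be aware that the paper contains no proof of this statement: it is quoted directly from [ChPr2] (Proposition 12.1.17), so the only benchmark is the argument in that source, which is indeed the kind of verification in Drinfeld's first realization that you outline. Your skeleton is the right one — realize $\yg$ as $U(\g)$ plus an equivariant map $J$, try to define $J$ on $V=L(\omega_i)$ inside $\Hom_{\g}(\g\otimes V,V)$, check the cubic relation, twist by $\tau_a$, and read off the Drinfeld polynomials from $\deg\pi_j=\omega_i(h_j)=\delta_{ij}$ — and your closing paragraph (irreducibility over $U(\g)\subseteq\yg$, identification of $\pi$) is sound.

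The genuine gap is at what you yourself call the crux, and it is twofold. First, the dichotomy you use to organize the case analysis is false. The condition $m_i=1$ does not mean $L(\omega_i)$ is minuscule: for $\g=\nysp$ one has $m_l=1$, yet $L(\omega_l)$ (for $l=3$, the $14$-dimensional module with weights $\pm\mu_1\pm\mu_2\pm\mu_3$ and $\pm\mu_j$) has weights in several Weyl orbits; for $\g=\nyso$ one has $m_1=1$, yet $L(\omega_1)=\C^{2l+1}$ contains the zero weight. Conversely, $m_i=d_\theta/d_i$ does not mean quasi-minuscule: for $B_l$ the node $l$ with $m_l=d_\theta/d_l=2$ carries the \emph{minuscule} spin module, and for $C_l$ the nodes $3\le i\le l-1$ satisfy $m_i=d_\theta/d_i=2$ while $L(\omega_i)=\ker\big(\bigwedge^i\C^{2l}\to\bigwedge^{i-2}\C^{2l}\big)$ is neither minuscule nor quasi-minuscule. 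So the structural shortcut fails precisely on the bulk of the nodes the proposition covers (all of type $C$, and type $B$ with the labels interchanged). Second, the mechanism you propose for the "second case" cannot work as described: in these cases $\Hom_{\g}(\g\otimes V,V)$ is still one-dimensional (e.g.\ $\Lambda^2\C^{2l+1}$ occurs once in $\C^{2l+1}\otimes\C^{2l+1}$, and the adjoint occurs once in $7\otimes 7$ for $G_2$), so $J$ is forced to be a scalar multiple of $\rho$; but then the left-hand side of the cubic relation vanishes identically by the Jacobi identity, and the relation holds if and only if the fixed cubic invariant on the right-hand side \emph{acts as zero} on $V$ — there is no extra $J_0$ available to "absorb" it. What actually remains is exactly the finite, node-by-node verification that this invariant annihilates $L(\omega_i)$ under the stated hypothesis, which you promise but do not perform; since the principle meant to organize that computation is incorrect, the proof has a hole at its decisive step. (A smaller point: the proposition asserts only sufficiency, so aiming to prove the identities hold \emph{exactly} when the hypothesis holds is more than is required — and the failure at the excluded nodes is a different statement, reflected in Proposition \ref{dcofbdl}, where the Yangian fundamental modules are $\g$-reducible.)
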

\begin{remark}\label{rofry}
The Proposition tells that
\begin{enumerate}
  \item When $\g$ is a simple Lie algebra of type $A$ or $C$, $L(\omega_i)\cong_\g V_{a}(\omega_i)$ for all $i\in I$;
  \item When $\g$ is a simple Lie algebra of type $B$, $L(\omega_i)\cong_\g V_{a}(\omega_i)$ for $i=1,l$;
  \item When $\g$ is a simple Lie algebra of type $D$, $L(\omega_i)\cong_\g V_{a}(\omega_i)$ for $i=1,l-1, l$.
\end{enumerate}
\end{remark}
\noindent The remaining fundamental representations $V_a(\omega_i)$ of $\g$ are given by the following proposition.
\begin{proposition}[Proposition 12.1.18, \cite{ChPr2}]\label{dcofbdl}\
\begin{enumerate}
  \item If $\g$ is a simple Lie algebra of type $B_l$, then, for $2\leq i\leq l-1$, $$V_{a}(\omega_i)\cong_{\g} \bigoplus\limits_{j=0}^{[\frac{i}{2}]}L\left(\omega_{i-2j}\right).$$
  \item If $\g$ is a simple Lie algebra of type $D_l$, then, for $2\leq i\leq l-2$, $$V_{a}(\omega_i)\cong_{\g} \bigoplus\limits_{j=0}^{[\frac{i}{2}]}L\left(\omega_{i-2j}\right).$$
\end{enumerate}
\end{proposition}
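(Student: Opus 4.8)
The plan is to realize $V_a(\omega_i)$ explicitly inside a tensor power of the vector representation and then read off its $\g$-module structure by Weyl's complete reducibility. By Proposition \ref{tati} and Remark \ref{rofry}, the first fundamental representation is the natural module, $V_b(\omega_1)\cong_\g V$, where $V$ is the defining representation of $\nyso$ in type $B_l$ and of $\nyo$ in type $D_l$. Using the shift automorphisms $\tau_b$ of Proposition \ref{s1.2.1p26} together with the coproduct of Proposition \ref{Delta}, I would form $T=V_{b_1}(\omega_1)\otimes\cdots\otimes V_{b_i}(\omega_1)$ for a carefully chosen arithmetic progression of spectral parameters $b_1,\dots,b_i$, the common difference being dictated by the normalization of the relevant root.

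The first step is to locate $V_a(\omega_i)$ inside $T$. As a $\g$-module $T\cong_\g V^{\otimes i}$, and this contains the exterior power $\bigwedge^i V$, which by Theorem \ref{fmotbc1} (type $B$) or Theorem \ref{dofrdl} (type $D$) is the irreducible module $L(\omega_i)$; let $w$ be its $\g$-highest weight vector, the antisymmetrization of the $i$ highest basis vectors, of $\g$-weight $\omega_i$. I would fix the progression $b_k$ so that $w$ is annihilated by every $x_{j,r}^{+}$, not merely by $x_{j,0}^{+}$; since $\Delta_{\yg}$ acts on tensor products of highest weight vectors exactly as $\Delta_{\ysl}$ does (Proposition \ref{c1dgd2d}), this reduces to an $\ysl$-computation at each node $j$. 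Granting it, $w$ is a $Y(\mathfrak g)$-highest weight vector whose Drinfeld polynomials are $\pi_i(u)=u-a$ and $\pi_j(u)=1$ for $j\neq i$, so by the classification in Theorem \ref{cfdihwr} the irreducible quotient of the cyclic module $Y(\mathfrak g)\,w$ is $V_a(\omega_i)$, and Theorem \ref{TC1irisq} confirms that this cyclic module is the correct object to analyze.

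The second step is to decompose $V_a(\omega_i)$ as a $\g$-module by squeezing it between an upper and a lower bound. For the upper bound, every $\g$-constituent $L(\lambda)$ satisfies $\lambda\le\omega_i$ and $\lambda\equiv\omega_i\pmod Q$, and a short weight-lattice check shows that the only dominant weights of this form occurring in $V^{\otimes i}$ are the $\omega_{i-2j}$ with $0\le j\le[\tfrac{i}{2}]$. For the lower bound, I would exhibit an explicit $\g$-highest weight vector of each weight $\omega_{i-2j}$ by applying the degree-one generators $x_{j,1}^{-}$ to $w$ and contracting two tensor slots against the invariant bilinear form, checking that these images survive in $V_a(\omega_i)$, so each $L(\omega_{i-2j})$ occurs at least once. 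That these are the only constituents, each with multiplicity one, is the heart of the matter; the type $B$ and type $D$ arguments run in parallel.

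The main obstacle is exactly this last identification: a tensor power $V^{\otimes i}$ of an orthogonal vector module carries many $\g$-constituents, and one must prove that the cyclic Yangian submodule generated by $w$ captures precisely the trace-reduction tower $L(\omega_i),L(\omega_{i-2}),\dots$ and nothing else. The coupling between consecutive levels $L(\omega_{i-2j})$ and $L(\omega_{i-2j-2})$ is produced by the non-$\ysl$ part of the coproduct: the shift $\tau_{b_k}$ feeds the distinct spectral parameters into the action of $x_{j,1}^{-}$ through the term $x_{j,0}^{-}\otimes h_{j,0}$ of $\Delta_{\yg}(x_{j,1}^{-})$, producing a nonzero trace component precisely because the $b_k$ are distinct, and the extra sum $\sum_{\alpha\succ 0,\ \alpha\neq\alpha_j}(\alpha,\alpha_j)\,x_{\alpha,0}^{-}\otimes x_{\alpha,0}^{+}$ appearing in $\Delta_{\yg}(\bar{h}_{j,1})$ records the same phenomenon. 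The delicate point is to show these couplings are nonzero at every level, so that the tower is connected and each summand genuinely appears, while the chosen progression simultaneously annihilates $w$ under the raising generators and suppresses every off-tower constituent; confirming multiplicity one then requires a weight-space (equivalently, character) computation, which is where the difficulty ultimately concentrates.
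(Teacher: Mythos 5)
First, a point of comparison: the paper does not prove this statement at all. It is quoted, with attribution, as Proposition 12.1.18 of \cite{ChPr2}, and the thesis simply imports it (together with Remark \ref{rofry}) to compute dimensions of fundamental representations. So there is no proof in the paper to measure your argument against; the only question is whether your proposal stands on its own. As written, it does not.

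Two concrete gaps. (1) Your upper bound fails as stated in type $B_l$. You claim that dominance $\lambda\le\omega_i$ together with congruence $\lambda\equiv\omega_i\pmod Q$ leaves only the weights $\omega_{i-2j}$. But in type $B_l$ the natural module has a zero weight and $\mu_1=\alpha_1+\cdots+\alpha_l$, so every fundamental weight $\omega_k$ with $k\le l-1$ lies in the root lattice $Q$, and moreover $\omega_i-\omega_{i-1}=\mu_i=\alpha_i+\cdots+\alpha_l\in Q^{+}$. Hence $\omega_{i-1}$ passes both of your tests and is not excluded by any weight-lattice consideration: the even-step (``trace tower'') restriction in type $B$ comes instead from the structure of $V^{\otimes i}$ for an orthogonal $V$ (only constituents labelled by partitions of size congruent to $i$ modulo $2$ occur, a Brauer-algebra/contraction fact), which you would need to invoke and justify separately. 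In type $D_l$ your congruence argument does work, because $\omega_1\notin Q$ there. (2) Everything that makes the proposition true is deferred rather than proved: the existence of an arithmetic progression $b_1,\dots,b_i$ for which the antisymmetrized vector $w$ is killed by all $x_{j,r}^{+}$ is asserted, not established; the survival of the trace-contraction highest weight vectors in the irreducible quotient $V_a(\omega_i)$ (as opposed to merely in $T$ or in the cyclic module $\yg w$) is exactly the nonvanishing-of-couplings statement you yourself flag as the delicate point; and multiplicity one is left to an unspecified character computation. You correctly identify these items as the heart of the matter, but they \emph{are} the proposition: what remains is a plausible plan, essentially the fusion-type construction used in the literature, not a proof.
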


The fundamental representations of Yangians of exceptional simple Lie algebras over $\C$ can be found in \cite{ChPr4}.

Let $V$ be an finite-dimensional irreducible representation of $\yg$ with associated polynomial $\pi$. We are going to define the following associated $\yg$-representations.

\begin{enumerate}
  \item Pulling back through $\tau_{a}$ as in Proposition \ref{s1.2.1p26}, the representation $V(a)$ has associated polynomial $\{\pi_i(u-a)\}$.
  \item The left dual $^t V$ of $V$ and right dual $V^t$ of $V$ are the representations of $\yg$ on the vector space dual of $V$ defined as follows:
\begin{center}
$\left(y\cdot f\right)\left(v\right)=f\left(S\left(y\right)\cdot v\right)$, \qquad $y\in \yg$, $f\in\ ^tV$, $v\in V$.\\
$\left(y\cdot f\right)\left(v\right)=f\left(S^{-1}\left(y\right)\cdot v\right)$, \qquad $y\in \yg$, $f\in V^t$, $v\in V$.
\end{center}

The dual of an irreducible representation is irreducible.
\end{enumerate}
%\begin{proposition}[Proposition 2.17, \cite{ChPr4}]\label{homuvw}
%Let $U$, $V$ and $W$ be finite-dimensional $\yg$-modules. Let $\kappa=\frac{1}{2}\times \text{dual Coxeter number of $\g$}$. Then,
%\begin{enumerate}
%  \item $Hom_{\yg}\left(U,V\otimes W\right)\cong Hom_{\yg}\left(^t V\otimes U, W\right)$;
%  \item
%  $Hom_{\yg}\left(U,W\otimes V\right)\cong Hom_{\yg}\left(U\otimes V^t, W\right).$
%  \item $^t V\cong V^{\ast}\left(-\kappa\right)$, $ V^t\cong V^{\ast}\left(\kappa\right)$.
  %\item $^t\left(V\left(a\right)\right)=\left(^tV\right)\left(a\right)$.
  %\item $^t{^tV}=V\left(-\frac{1}{2}c\left(\mathfrak{g}\right)\right)$.
%   \end{enumerate}
%\end{proposition}
\begin{lemma}[Lemma 3.3, \cite{ChPr4}]\label{dualfrc1}
Let $V_a(\omega_i)$ be a fundamental representation of $\yg$. Then
\begin{center}
$\ ^tV_a(\omega_i)\cong V_{a-\kappa}\left(\omega_{-w_0\left(i\right)}\right),$ where $\kappa=\frac{1}{2}\times \text{dual Coxeter number of $\g$}$.

\end{center}

\end{lemma}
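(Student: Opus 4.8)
The plan is to identify ${}^tV_a(\omega_i)$ by computing its Drinfeld polynomials. Since the dual of an irreducible representation is irreducible and the dual of a finite-dimensional space is finite-dimensional, Theorem \ref{cfdihwr} tells us that ${}^tV_a(\omega_i)\cong V_b(\omega_{i^\ast})$ for a single node $i^\ast$ and a single parameter $b\in\C$; it therefore suffices to pin down the pair $(i^\ast,b)$. I would extract the node from the underlying $\g$-module structure and the parameter from the eigenvalues of the Cartan generating series on the highest weight vector of the dual.

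First I would determine $i^\ast$. Restricting along $U(\g)\hookrightarrow\yg$, the antipode $S$ reduces to the ordinary antipode of $U(\g)$, so as a $\g$-module ${}^tV_a(\omega_i)$ is just $\big(V_a(\omega_i)|_{\g}\big)^{\ast}$, whose top $\g$-weight is $-w_0(\omega_i)$. Since $-w_0$ is the Dynkin-diagram involution $i\mapsto i^\ast$, one has $-w_0(\omega_i)=\omega_{-w_0(i)}$, which (using the explicit reduced expressions for $w_0$ recorded for each classical type) identifies $i^\ast=-w_0(i)$. The highest weight vector $\xi$ of ${}^tV_a(\omega_i)$ is then the functional dual to the lowest weight vector $v^-$ of $V_a(\omega_i)$, obtained from $v^+$ by a path of simple lowering operators $x^-_{n_s,0}\cdots x^-_{n_1,0}$.

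Next I would compute $b$. From the definition $\big(h_{i^\ast,k}\,\xi\big)(w)=\xi\big(S(h_{i^\ast,k})\,w\big)$, so the eigenvalue series of $h_{i^\ast}(u)=1+\sum_k h_{i^\ast,k}u^{-k-1}$ on $\xi$ is controlled by the action of $S\big(h_{i^\ast}(u)\big)$ on $v^-$. Proposition \ref{Delta}(3) shows that $h_{i^\ast}(u)$ is group-like modulo $\overline{Y}$, so its antipode acts on the weight vector $v^-$ as the inverse series up to weight-shifting terms that drop out of the pairing with $\xi$. Hence the highest weight eigenvalue of ${}^tV_a(\omega_i)$ is the inverse of the lowest weight eigenvalue $\nu_{i^\ast}(u)$ of $V_a(\omega_i)$, and feeding $\nu_{i^\ast}(u)^{-1}$ into Lemma \ref{g2puisd3} reconstructs the monic Drinfeld polynomial at node $i^\ast$, whose root is the sought value $b$.

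The crux is to show that $\nu_{i^\ast}(u)^{-1}$ corresponds precisely to $b=a-\kappa$, i.e.\ that passing from the highest to the lowest weight shifts the spectral parameter by exactly $\kappa=\tfrac12\times(\text{dual Coxeter number})$. I would compute $\nu_{i^\ast}(u)$ by working down the path $x^-_{n_s,0}\cdots x^-_{n_1,0}v^+$, restricting at each step to the subalgebra $Y_{n_j}\cong\ysl$ and using the base case ${}^tV_a(\omega_1)\cong V_{a-1}(\omega_1)$, which is verified directly on the two-dimensional evaluation representation of $\ysl$; the per-step shifts then accumulate, with the root-system combinatorics producing the dual Coxeter number. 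This is the main obstacle, as it requires control of the eigenvalues throughout $V_a(\omega_i)$ and not merely at its top, which is delicate for the non-minuscule fundamental representations such as the spin modules and the middle-node modules of types $B$ and $D$. As an independent check on both the magnitude and the sign of the shift, the structural identity $S^2=\tau_{-2\kappa}$ (with $\tau$ as in Proposition \ref{s1.2.1p26}) forces the double left dual to be $V_{a-2\kappa}(\omega_i)$, which is exactly what two applications of the claimed formula predict since $(i^\ast)^\ast=i$.
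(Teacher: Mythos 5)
First, a point of comparison: the paper does not actually prove this lemma — it is imported verbatim from [ChPr4, Lemma 3.3] — so there is no internal proof to measure you against, and your argument has to stand on its own. Your identification of the node is sound: restricting along $U(\g)\hookrightarrow\yg$, the antipode acts by $-1$ on $x_{i,0}^{\pm}$ and $h_{i,0}$, so $\ ^tV_a(\omega_i)$ restricted to $\g$ is the ordinary dual with highest $\g$-weight $-w_0(\omega_i)=\omega_{-w_0(i)}$; since the degrees of the Drinfeld polynomials equal the coefficients of the highest $\g$-weight (Remark \ref{mi=lambdahi}), the dual is indeed $V_b\left(\omega_{-w_0(i)}\right)$ for a single $b$.

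The genuine error is in your mechanism for computing $b$. It is not true that the highest-weight eigenvalue series of $\ ^tV$ is the inverse of the lowest-weight series of $V$: the $\overline{Y}$-correction terms you discard do contribute a nonzero multiple of $v^-$ after applying $m(S\otimes 1)$, and those terms are precisely where the spectral shift comes from. The failure is visible already for $\g=\nysl$, where everything is explicit. By Corollary \ref{slw1a}, the lowest weight vector $w_0$ of $W_1(a)$ satisfies $h_kw_0=-a^kw_0$, so the lowest-weight series is $1-\sum_k a^ku^{-k-1}=\frac{u-a-1}{u-a}$, whose inverse $\frac{u-a}{u-a-1}$ corresponds via Lemma \ref{g2puisd3} to the Drinfeld polynomial $u-(a+1)$. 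Your recipe thus yields $\ ^tW_1(a)\cong W_1(a+1)=W_1(a+\kappa)$ — the wrong sign. That this is wrong follows from results quoted in the paper itself: since $a-(a+1)\neq 1$, Proposition \ref{ctpihw} makes $W_1(a+1)\otimes W_1(a)$ a highest weight module, whose unique irreducible quotient is $W_2(a)$ by Proposition \ref{vtv'hwv}, so it has no trivial quotient; but the evaluation map $\ ^tV\otimes V\to\C$ is a surjection of $\yg$-modules onto the trivial module, so $\ ^tW_1(a)\otimes W_1(a)$ must have one. Hence $\ ^tW_1(a)\cong W_1(a-1)$, consistent with the lemma; the value $a+\kappa$ your method produces is in fact the parameter of the right dual $V^t$, not the left dual.

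Two further gaps compound this. The step you yourself call the main obstacle — that the shifts accumulated along the lowering path add up to exactly $\kappa$ — is never carried out, only asserted, and it is the entire content of the lemma. And your consistency check via $S^2=\tau_{-2\kappa}$ is neither proved in the paper nor by you; moreover it only constrains the sum of the shifts at the nodes $i$ and $-w_0(i)$, so in type $A$, where $-w_0(i)=l+1-i\neq i$ in general, it cannot determine how that sum splits between the two duals.
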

\begin{proposition}[Proposition 2.15, \cite{ChPr4}]\label{vtv'hwv}
Let both $V$ and $\widetilde{V}$ be finite-dimensional irreducible representations of $\yg$ with associated polynomials $\pi$ and $\widetilde{\pi}$, respectively. Let $v^{+}\in V$, $\widetilde{v}^{+}\in \widetilde{V}$ be their highest weight vectors. Then $v^{+}\otimes \widetilde{v}^{+}$ is a highest weight vector in $V\otimes \widetilde{V}$ and its associated polynomials are $\pi\widetilde{\pi}$.
\end{proposition}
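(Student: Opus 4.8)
The plan is to read off the action of the Drinfeld generators on $v^{+}\otimes\widetilde{v}^{+}$ from the coproduct formulas of Proposition \ref{Delta}, and then to convert the resulting highest weight series into Drinfeld polynomials. Write the highest weight series of $V$ and $\widetilde{V}$ as $\mu_i(u)=1+\sum_{k\geq 0}\mu_{i,k}u^{-k-1}$ and $\widetilde{\mu}_i(u)=1+\sum_{k\geq 0}\widetilde{\mu}_{i,k}u^{-k-1}$, so that $h_{i,k}v^{+}=\mu_{i,k}v^{+}$ and $h_{i,k}\widetilde{v}^{+}=\widetilde{\mu}_{i,k}\widetilde{v}^{+}$.

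First I would verify that $v^{+}\otimes\widetilde{v}^{+}$ is killed by every $x_{i,k}^{+}$. Applying part (1) of Proposition \ref{Delta}, the three explicit terms of $\Delta_{\yg}(x_{i,k}^{+})$ all vanish on $v^{+}\otimes\widetilde{v}^{+}$, since $x_{i,k}^{+}v^{+}=0$, $x_{i,k}^{+}\widetilde{v}^{+}=0$, and $x_{i,k-j}^{+}\widetilde{v}^{+}=0$ for $0\leq k-j$. The only issue is the correction term lying in $\overline{Y}$, and this is the step I expect to require the most care. Because $\overline{Y}\subseteq N^{-}Y\otimes YN^{+}$, each correction is a sum of tensors whose right-hand factor lies in $YN^{+}$; since $N^{+}\widetilde{v}^{+}=0$ one has $YN^{+}\widetilde{v}^{+}=0$, so the correction annihilates $v^{+}\otimes\widetilde{v}^{+}$ as well. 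The same observation handles $h_{i,k}$ via part (3): the correction again vanishes, leaving
\[
h_{i,k}(v^{+}\otimes\widetilde{v}^{+})=\Big(\mu_{i,k}+\widetilde{\mu}_{i,k}+\sum_{j=1}^{k}\mu_{i,j-1}\widetilde{\mu}_{i,k-j}\Big)(v^{+}\otimes\widetilde{v}^{+}).
\]

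Calling this eigenvalue $\nu_{i,k}$ and setting $\nu_i(u)=1+\sum_{k\geq 0}\nu_{i,k}u^{-k-1}$, I would then note that comparing coefficients of $u^{-k-1}$ shows exactly $\nu_i(u)=\mu_i(u)\widetilde{\mu}_i(u)$; that is, the highest weight series of $v^{+}\otimes\widetilde{v}^{+}$ is the product of those of $V$ and $\widetilde{V}$. To finish, since $V\otimes\widetilde{V}$ is finite-dimensional the submodule generated by $v^{+}\otimes\widetilde{v}^{+}$ is a finite-dimensional highest weight module, so Theorem \ref{cfdihwr} applies. Substituting $\mu_i(u)=\pi_i(u+d_i)/\pi_i(u)$ and $\widetilde{\mu}_i(u)=\widetilde{\pi}_i(u+d_i)/\widetilde{\pi}_i(u)$ into $\nu_i(u)=\mu_i(u)\widetilde{\mu}_i(u)$ gives $\nu_i(u)=(\pi_i\widetilde{\pi}_i)(u+d_i)/(\pi_i\widetilde{\pi}_i)(u)$, so by uniqueness of monic Drinfeld polynomials the associated polynomials of $v^{+}\otimes\widetilde{v}^{+}$ are $\pi\widetilde{\pi}$.

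In summary, the whole argument is routine once the $\overline{Y}$-correction terms are controlled, and the crux is the inclusion $\overline{Y}\subseteq N^{-}Y\otimes YN^{+}$, which forces the right tensor factor of every correction into $YN^{+}$ and hence into the annihilator of the highest weight vector $\widetilde{v}^{+}$.
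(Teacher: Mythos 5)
Your proof is correct. Note that the paper does not actually prove this statement---it imports it as Proposition 2.15 of \cite{ChPr4}---so there is no internal proof to compare against; your argument (the coproduct formulas of Proposition \ref{Delta}, with the $\overline{Y}$-corrections annihilating $v^{+}\otimes\widetilde{v}^{+}$ because their right tensor factors lie in $YN^{+}$, followed by the identification $\nu_i(u)=\mu_i(u)\widetilde{\mu}_i(u)$ and conversion to Drinfeld polynomials via Theorem \ref{cfdihwr} together with uniqueness of monic solutions as in Lemma \ref{g2puisd3}) is exactly the standard one, and it is the same mechanism the paper itself relies on when it applies Proposition \ref{Delta} to tensor products of highest weight vectors elsewhere, e.g.\ in Corollary \ref{v-w+gvtw} and Step 6 of Proposition \ref{Lhwr}.
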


\begin{proposition}[Proposition 3.8, \cite{ChPr8}]\label{VoWWoVhi}%previous version can be found in First draft.tex
Let $V$ be a finite-dimensional representation of $\yg$. $V$ is irreducible if and only if $V$ and $\ ^tV$ \big(respectively, $V$ and $V^t$\big) are both highest weight $\yg$-modules.
\end{proposition}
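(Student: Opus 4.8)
The plan is to prove the two implications separately, working with the left dual ${}^{t}V$ throughout; the statement for the right dual $V^{t}$ will follow by the identical argument with the antipode $S$ replaced by $S^{-1}$ (the point being only that $S^{-1}$ is also an anti\-automorphism of $\yg$). For the forward direction, suppose $V$ is irreducible. Since $V$ is finite\-dimensional and irreducible it is highest weight by Theorem \ref{cfdihwr}(a); and ${}^{t}V$, being the dual of an irreducible representation, is again irreducible (as recorded above), finite\-dimensional, hence highest weight by the same theorem. So this direction is immediate.

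The substance is the converse, so assume both $V$ and ${}^{t}V$ are highest weight and let me deduce irreducibility. The first step is to convert ``${}^{t}V$ is highest weight'' into information about the lattice of submodules of $V$. For a submodule $V'\subseteq V$ the annihilator $(V')^{\perp}=\{f: f|_{V'}=0\}$ is a $\yg$-submodule of ${}^{t}V$ (because $S(y)V'\subseteq V'$ for all $y\in\yg$), and $V'\mapsto (V')^{\perp}$ is an inclusion\-reversing bijection between the submodules of $V$ and those of ${}^{t}V$. Since ${}^{t}V$ is highest weight, its cyclic highest weight vector $w^{+}$ spans a one\-dimensional top weight space; as the weights of ${}^{t}V$ are the negatives of the weights of $V$, this forces $V$ to possess a unique minimal weight $\lambda$ with $\dim V_{\lambda}=1$, and identifies $w^{+}$ (up to scalar) with the coordinate functional dual to a lowest weight vector $v^{-}$. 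Translating the cyclicity of $w^{+}$ through the bijection, I would then show that a nonzero submodule of $V$ fails to contain $v^{-}$ exactly when its annihilator is a proper submodule containing $w^{+}$; cyclicity of $w^{+}$ rules this out, so every nonzero submodule of $V$ contains $v^{-}$. Consequently $M_{0}:=\yg v^{-}$ is the unique minimal nonzero submodule, and in particular $M_{0}$ is irreducible.

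It remains to force $M_{0}=V$, and here I would run a weight comparison. Write $\mu^{(0)}$ for the $\g$-highest weight of $V$ (dominant integral, as $V$ is finite\-dimensional) and $\sigma^{(0)}$ for the $\g$-highest weight of the irreducible module $M_{0}$. Since $M_{0}\subseteq V$ and all weights of $V$ are $\le\mu^{(0)}$, we get $\sigma^{(0)}\le\mu^{(0)}$. Conversely, $v^{+}$ generates a copy of the $\g$-irreducible of highest weight $\mu^{(0)}$ inside $V$, so $w_{0}\mu^{(0)}$ is a weight of $V$ and hence $\lambda\le w_{0}\mu^{(0)}$; because $v^{-}$ is also the lowest weight vector of the irreducible $M_{0}$, its weight is $w_{0}\sigma^{(0)}$, giving $\sigma^{(0)}=w_{0}\lambda$, and applying the order\-reversing $w_{0}$ to $\lambda\le w_{0}\mu^{(0)}$ yields $\sigma^{(0)}\ge\mu^{(0)}$. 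Therefore $\sigma^{(0)}=\mu^{(0)}$, so the highest weight vector of $M_{0}$ lies in $V_{\mu^{(0)}}=\C v^{+}$, which is one\-dimensional; thus $v^{+}\in M_{0}$ and $V=\yg v^{+}\subseteq M_{0}$, forcing $V=M_{0}$ to be irreducible.

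The main obstacle I expect is the converse, and within it the duality bookkeeping of the second paragraph: checking that $(V')^{\perp}$ is genuinely a $\yg$-submodule, that cyclicity of $w^{+}$ is equivalent to ``every nonzero submodule of $V$ contains $v^{-}$,'' and that the minimal weight space is one\-dimensional. Once this dictionary is set up, the weight inequalities that pin $\sigma^{(0)}=\mu^{(0)}$ are short but are the decisive step, as they are exactly what a highest-weight-but-reducible module fails (its lowest weight vector sits in the head rather than in the socle).
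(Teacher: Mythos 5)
Your proof is correct, but note that the thesis itself contains no proof of this statement to compare against: it is quoted as Proposition 3.8 of \cite{ChPr8}, so the only "in-paper approach" is a citation. Your argument is a sound, self-contained version of the standard duality proof. The forward direction is exactly as you say: Theorem \ref{cfdihwr}(a) plus the fact (recorded in the thesis) that the dual of an irreducible representation is irreducible. For the converse, your annihilator dictionary works: $(V')^{\perp}$ is a $\yg$-submodule of $\ ^tV$ because $S(y)V'\subseteq V'$; since submodules of $V$ are weight modules and $w^{+}$ vanishes on every weight space except $V_{\lambda}=\C v^{-}$, one gets $w^{+}\in (V')^{\perp}$ exactly when $v^{-}\notin V'$, and cyclicity of $w^{+}$ then forces every nonzero submodule to contain $v^{-}$, making $M_{0}=\yg v^{-}$ the irreducible socle. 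The weight comparison is also correct, though it silently uses two facts worth making explicit: (i) $S(h_{i,0})=-h_{i,0}$, so the $\g$-weights of $\ ^tV$ are the negatives of those of $V$, which is what gives the unique minimal weight $\lambda$ of $V$ with $\dim V_{\lambda}=1$; and (ii) the weights of the finite-dimensional $\g$-module $M_{0}$ form a Weyl-group-invariant set, which combined with the bound $\eta\leq\sigma^{(0)}$ (valid since $M_{0}$ is a highest weight $\yg$-module) shows $w_{0}\sigma^{(0)}$ is the unique minimal weight of $M_{0}$, so that $\sigma^{(0)}=w_{0}\lambda$. With these in place, $\sigma^{(0)}=w_{0}\lambda\geq w_{0}\left(w_{0}\mu^{(0)}\right)=\mu^{(0)}$ together with $\sigma^{(0)}\leq\mu^{(0)}$ pins the highest weight vector of $M_{0}$ inside the one-dimensional space $V_{\mu^{(0)}}=\C v^{+}$, giving $M_{0}=V$. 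As a sanity check, your argument is consistent with the $\ysl$ example implicit in the thesis: $W_1(a+1)\otimes W_1(a)$ is highest weight but reducible, and indeed its left dual $W_1(a-1)\otimes W_1(a)$ fails to be highest weight by Proposition \ref{ctpihw}.
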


\begin{lemma}[Lemma 3.1, \cite{ChPr4}]
Let $V$ and $W$ be finite-dimensional irreducible representations of $Y(\g)$ with lowest and highest weight vectors $v^{-}$ and $w^{+}$, respectively. Then $v^{-}\otimes w^{+}$generates $V\otimes W$.% and is an eigenvector for the $h_{i,k}$.
\end{lemma}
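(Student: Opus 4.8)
The plan is to show that the $\yg$-submodule $T:=\yg\,(v^-\otimes w^+)$ of $V\otimes W$ is all of $V\otimes W$. Two structural facts about the extremal vectors drive everything. First, since $V$ is finite dimensional and irreducible its lowest weight space is one dimensional (by the $\W$-invariance of weight multiplicities, Theorem 21.2 of \cite{Hu} above), so every $h_{i,k}$ acts on $v^-$ by a scalar, and $x_{i,k}^-v^-=0$ for all $i,k$ because the weight $\operatorname{wt}(v^-)-\alpha_i$ does not occur in $V$; hence $N^-v^-=0$. Dually $N^+w^+=0$ and each $h_{i,k}$ is scalar on $w^+$. Second, I claim $Y^+v^-=V$: by the PBW theorem (valid for any ordering of the generators) I may order them as $x^+\preceq h\preceq x^-$, so $\yg=Y^+HY^-$; since $N^-v^-=0$ gives $Y^-v^-=\C v^-$ and $Hv^-=\C v^-$, irreducibility yields $V=\yg v^-=Y^+HY^-v^-=Y^+v^-$. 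Symmetrically $Y^-w^+=W$.

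The mechanism is the coproduct modulo $\overline{Y}$ of Proposition~\ref{Delta}, together with the observation that $\overline{Y}$ annihilates the relevant tensors: $\overline{Y}\subseteq N^-Y\otimes YN^+$ kills any $\zeta\otimes w^+$ because $YN^+w^+=Y\cdot 0=0$, and $\overline{Y}\subseteq YN^-\otimes N^+Y$ kills any $v^-\otimes\zeta$ because $YN^-v^-=0$. Applying $x_{i,k}^+$ to a vector $v\otimes w^+$ therefore gives exactly $x_{i,k}^+v\otimes w^+$, the remaining summands vanishing since $x_{i,k}^+w^+=0$ and $x_{i,k-j}^+w^+=0$; thus $Y^+$ acts on $V\otimes w^+$ through its first factor and $Y^+(v^-\otimes w^+)=(Y^+v^-)\otimes w^+=V\otimes w^+\subseteq T$. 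The same computation with $x_{i,k}^-$ applied to $v^-\otimes\zeta$ gives exactly $v^-\otimes x_{i,k}^-\zeta$ (using $x_{i,k}^-v^-=x_{i,k-j}^-v^-=0$), whence $v^-\otimes W=v^-\otimes(Y^-w^+)\subseteq T$.

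To fill the interior I would prove $V\otimes W_\eta\subseteq T$ by downward induction on the $\g$-weight $\eta$ of the second tensor factor, the base case $\eta=\operatorname{wt}(w^+)$ being $V\otimes w^+\subseteq T$. For $\eta<\operatorname{wt}(w^+)$, write a weight vector of $W_\eta$ as $x_{i,k}^-w'$ with $w'\in W_{\eta+\alpha_i}$ and apply $\Delta_{\yg}(x_{i,k}^-)$ to $v\otimes w'\in T$. The naive summands of Proposition~\ref{Delta} are $x_{i,k}^-v\otimes w'$ and $x_{i,k-j}^-v\otimes h_{i,j-1}w'$, all lying in $V\otimes W_{\eta+\alpha_i}\subseteq T$ by the inductive hypothesis (note $h_{i,j-1}$ preserves the weight $\eta+\alpha_i$), so modulo the error $\overline{Y}(v\otimes w')$ we obtain $v\otimes x_{i,k}^-w'\in T$.

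The main obstacle is precisely controlling the error $\overline{Y}(v\otimes w')$: here neither factor is extremal, and Proposition~\ref{Delta} only locates $\overline{Y}$ inside the large two-sided object $N^-Y\otimes YN^+$, whose weights are not bounded. What I would have to extract from the explicit coproduct computation in \cite{ChPr4,ChPr5} is that each genuine correction term factors as a weight-lowering element in the first slot tensored with a \emph{strictly} weight-raising element of $N^+$ in the second slot; applied to $v\otimes w'$ this places the second factor in some $W_{\eta'}$ with $\eta'>\eta$, i.e.\ in $V\otimes W_{>\eta}\subseteq T$ by the inductive hypothesis. Granting this weight-strictness the induction closes and $V\otimes W=\sum_\eta V\otimes W_\eta\subseteq T$, proving the lemma. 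Establishing the strictness — equivalently, that the coproduct corrections can never return the $W$-factor to a weight $\le\eta$ — is where the real work lies.
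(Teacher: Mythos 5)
Your boundary analysis is correct and matches the first half of the paper's own argument (the paper proves this statement in the stronger form of Corollary \ref{v-w+gvtw}, for highest weight rather than irreducible modules): the facts $N^-v^-=0$, $Hv^-=\C v^-$, $Y^+v^-=V$ and $Y^-w^+=W$ via PBW, and the computations giving $V\otimes w^+\subseteq T$ and $v^-\otimes W\subseteq T$, all go through because on such tensors the error $\overline{Y}$ of Proposition \ref{Delta} visibly acts by zero. The gap you flag in the interior step is, however, genuine as you have set it up: Proposition \ref{Delta} alone locates the corrections only in $\left(N^-Y\otimes YN^+\right)\bigcap\left(YN^-\otimes N^+Y\right)$, and neither factor of that intersection gives any weight bound on the second slot (in $YN^+$ the raising operators act \emph{first} and then all of $Y$, so $YN^+w'$ can land in any weight space of $W$), so your downward induction on the weight $\eta$ does not close from the quoted proposition. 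The good news is that the strictness you need is already recorded in this paper's Section 1.2, so the hole is fillable rather than fatal: the refined formula of \cite{ChPr5} quoted there puts the correction to $\Delta_{\ysl}\left(x_k^-\right)$ in $\sum_{p,q,r}HY^-x_p^-x_q^-\otimes Hx_r^+$ (second slot strictly raising by $\alpha_i$ once transported to $Y_i$), and the unlabeled proposition of that subsection puts $\Delta_{\yg}\left(x^-_{i,k}\right)-\Delta_{Y_i}\left(x^-_{i,k}\right)$ in $H_iN^-_i\otimes H_iN^+_i$, whose second slot is a nonempty product of positive-root raising vectors, hence also strictly raising. Citing these two single-generator statements in place of the bare Proposition \ref{Delta} closes your induction; but do not substitute the product version, Proposition \ref{c1dgd2d}, whose error space $HN^-_i\otimes M_i^-HN^+_i$ raises and then lowers in the second slot and is \emph{not} strictly raising.

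For comparison, the paper avoids the weight question entirely: it fixes an arbitrary $w\in W$ in the second slot and inducts on the first slot, writing $v=\prod\left(x^+_{j_i,k_i}\right)^{a_i}v^-$ and inducting on the filtration degree $N=\sum k_ia_i$ (then on the length $\sum a_i$); the correction terms are absorbed because the defining relations show $h_{j_1,j-1}\left(\cdots\right)v^-$ is a combination of monomials on $v^-$ of degree strictly less than $N$, with the $\overline{Y}$-term controlled via Proposition \ref{deltaxpc1}. So your proposal is a genuinely different organization (weight bookkeeping in the second factor versus degree bookkeeping in the first), and it uses irreducibility only to get $V=Y^+v^-$ and $W=Y^-w^+$, which the paper instead extracts from the highest/lowest weight module structure. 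Two small repairs: a vector of $W_\eta$ is in general only a \emph{sum} of vectors $x^-_{i,k}w'$ with $w'\in W_{\eta+\alpha_i}$; and Theorem 21.2 of \cite{Hu} concerns irreducible $\g$-modules, which $V$ need not be as a $\g$-module, so the one-dimensionality of the lowest weight space should be deduced from $\W$-invariance of weight multiplicities of arbitrary finite-dimensional $\g$-modules together with $\operatorname{Dim}\left(V_{\mu^{(0)}}\right)=1$.
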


The following corollary is analogue of Lemma 4.2 of \cite{Ch3}. Note that every finite-dimensional highest-weight representation is also a lowest-weight representation and vice-versa. We can not find a proof for the case of the representation theory of Yangians, so we prove it.
\begin{corollary}\label{v-w+gvtw}
Let $V$ and $W$ be finite-dimensional highest weight representations of $\yg$ with lowest and highest weight vectors $v^{-}$ and $w^{+}$, respectively. Then $v^{-}\otimes w^{+}$ generates $V\otimes W$.% and is an eigenvector for the $h_{i,k}$.
\end{corollary}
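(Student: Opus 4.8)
The plan is to set $U=\yg\!\left(v^{-}\otimes w^{+}\right)$, the $\yg$-submodule generated by $v^{-}\otimes w^{+}$, and to prove $U=V\otimes W$. Since $V$ and $W$ are finite-dimensional highest weight representations, each is simultaneously a lowest weight representation; combining this with $N^{-}v^{-}=0$, $N^{+}w^{+}=0$ and the PBW theorem one obtains $V=Y^{+}v^{-}$ and $W=Y^{-}w^{+}$. Hence every element of $V\otimes W$ is a combination of vectors $v\otimes\left(y^{-}w^{+}\right)$ with $v\in V$ and $y^{-}\in Y^{-}$, and it suffices to place all of these in $U$.

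First I would obtain the two ``boundary'' inclusions $V\otimes\C w^{+}\subseteq U$ and $\C v^{-}\otimes W\subseteq U$. Applying $\Delta_{\yg}\!\left(x_{i,k}^{+}\right)$ to a vector $x\otimes w^{+}$ and using Proposition \ref{Delta}, every term of the explicit formula other than $\left(x_{i,k}^{+}x\right)\otimes w^{+}$ carries a raising operator in its second tensor factor and so annihilates $w^{+}$, while the $\overline{Y}$-correction, lying in $N^{-}Y\otimes YN^{+}$, is killed by $N^{+}w^{+}=0$; iterating over $Y^{+}$ gives $\Delta_{\yg}\!\left(Y^{+}\right)\!\left(v^{-}\otimes w^{+}\right)=\left(Y^{+}v^{-}\right)\otimes w^{+}=V\otimes w^{+}$. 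The mirror computation, applying $\Delta_{\yg}\!\left(x_{i,k}^{-}\right)$ to $v^{-}\otimes w$ and now killing the correction through its left factor via $N^{-}v^{-}=0$, yields $\C v^{-}\otimes W\subseteq U$.

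The core of the argument is to fill in the interior of $V\otimes W$, and I would do so by induction on the weight $\eta$ of the second tensor factor, proving $V\otimes W_{\eta}\subseteq U$ for every weight $\eta$ of $W$. The base case $\eta=\operatorname{wt}\!\left(w^{+}\right)$ is the inclusion $V\otimes\C w^{+}\subseteq U$ just established. For the inductive step I would write a weight vector of $W_{\eta}$ as $x_{i,k}^{-}w'$ with $w'\in W_{\eta+\alpha_{i}}$ and expand $\Delta_{\yg}\!\left(x_{i,k}^{-}\right)\!\left(v\otimes w'\right)$, which lies in $U$ because $v\otimes w'$ does by the inductive hypothesis; the terms carrying $w'$ or $h_{i,j}w'$ in the second factor remain at the strictly higher weight $\eta+\alpha_{i}$ and are absorbed by the hypothesis, isolating the desired $v\otimes\left(x_{i,k}^{-}w'\right)$ up to the $\overline{Y}$-correction.

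The main obstacle is precisely this last correction: once $w'$ is no longer a highest weight vector, neither tensor factor of $\overline{Y}$ is annihilated outright, so the clean vanishing used in the boundary steps fails and a crude weight count does not suffice, because the two factors of $\overline{Y}$ individually carry uncontrolled weights. Here I would invoke Proposition \ref{c1dgd2d}: on a tensor product built from highest weight vectors the coproduct $\Delta_{\yg}$ of a single-index string $x_{i,k_{s}}^{-}\cdots x_{i,k_{1}}^{-}$ agrees with the $\ysl$-coproduct $\Delta_{Y_{i}}$ modulo a correction in $HN_{i}^{-}\otimes M_{i}^{-}HN_{i}^{+}$ whose right factor dies against $w^{+}$ since $N_{i}^{+}w^{+}=0$. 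Descending by single-index strings applied directly to $V\otimes w^{+}$ thus reduces to the explicit $\ysl$-coproduct and gives clean inclusions $V\otimes\left(Y_{i}^{-}w^{+}\right)\subseteq U$ for each $i$; feeding these, together with $\C v^{-}\otimes W\subseteq U$, into the weight induction forces every correction term to land in $V\otimes W_{>\eta}\subseteq U$. Assembling the induction yields $V\otimes W=\sum_{\eta}V\otimes W_{\eta}\subseteq U$, whence $U=V\otimes W$ and $v^{-}\otimes w^{+}$ generates $V\otimes W$.
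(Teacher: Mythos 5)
Your two boundary inclusions are correct, and the second one, $\C v^{-}\otimes W\subseteq U$, is exactly the first half of the paper's own proof. The gap lies in your central weight induction. When you expand $\Delta_{\yg}\left(x_{i,k}^{-}\right)\left(v\otimes w'\right)$ with $v\in V$ arbitrary and $w'\in W_{\eta+\alpha_i}$ no longer a highest weight vector, the correction lands in $\left(N^{-}Yv\right)\otimes\left(YN^{+}w'\right)$ (equivalently $\left(YN^{-}v\right)\otimes\left(N^{+}Yw'\right)$). Because the factor $Y$ here is all of $\yg$, the second component is \emph{not} confined to $\bigoplus_{\mu\succ\eta}W_{\mu}$: applying $N^{+}$ to $w'$ raises the weight, but the adjacent copy of $Y$ can lower it again arbitrarily, so these terms can sit at weight $\eta$ or below, precisely in the region your induction has not yet reached. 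Knowing in addition that $V\otimes\C w^{+}$, $\C v^{-}\otimes W$ and $V\otimes\left(M_{i}^{-}w^{+}\right)$ lie in $U$ does not change where the correction terms land, so the assertion that they are ``forced to land in $V\otimes W_{>\eta}$'' is a non sequitur, and in fact false. Moreover, Proposition \ref{c1dgd2d} only controls strings $x_{i,k_s}^{-}\cdots x_{i,k_1}^{-}$ in a \emph{single} index $i$ evaluated against $w^{+}$, and $\sum_{i}M_{i}^{-}w^{+}$ is in general a proper subspace of $W$ once the rank is at least $2$ (a vector such as $x_{1,0}^{-}x_{2,0}^{-}w^{+}$ lies in none of the $M_{i}^{-}w^{+}$), so this device cannot substitute for the failed inductive step.

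The paper escapes this trap by anchoring the induction on the \emph{other} tensor factor: after establishing $v^{-}\otimes W\subseteq U$, it writes an arbitrary $v\in V$ as a raising monomial $\prod_{i}\left(x_{j_i,k_i}^{+}\right)^{a_i}v^{-}$ (using that $V$ is also a lowest weight module) and inducts on the filtration degree $N=\sum k_ia_i$, and for fixed $N$ on the length $M$, with the second factor $w$ ranging over \emph{all} of $W$ at every stage. With that anchoring the corrections are harmless: for $\Delta_{\yg}\left(x_{j_1,k_1}^{+}\right)$ they lie in $HN^{-}\otimes HN^{+}$ by Proposition \ref{deltaxpc1}, their second components may land anywhere in $W$ — which the hypothesis allows — and their first components contain a lowering operator which, straightened against the raising monomial using $N^{-}v^{-}=0$ and the defining relations, is absorbed by the degree induction; the same degree count handles the $h_{j_1,j-1}\otimes x_{j_1,k_1-j}^{+}$ terms. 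In short, the coproduct corrections are uncontrollable in exactly one tensor slot, so the induction variable must live in the other slot; your proposal places it in the wrong one, and repairing it essentially means switching to the paper's induction.
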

\begin{proof}

%By the PBW theorem of Yangians, $V=\yg v^{-}=Y^{+}v^{-}=span\{\prod\limits_{i=1}^{m} \left(x_{j_i,k_i}^{+}\right)^{a_i}v^{-}\}$.

We first show that $v^{-}\otimes W\subseteq \yg\left(v^{-}\otimes w^+\right)$. $\forall w\in W$, there exists $X^{-}\in Y^{-}$ such that $X^{-}w^{+}=w$ since $W$ is a highest weight representation. Then it follows from (ii) of Proposition \ref{Delta} that $$X^{-}\left(v^{-}\otimes w^+\right)=\Delta\left(X^{-}\right)\left(v^{-}\otimes w^+\right)=v^{-}\otimes X^{-}w^+=v^{-}\otimes w.$$
Thus $v^{-}\otimes w\in \yg\left(v^{-}\otimes w^+\right)$. Therefore $v^{-}\otimes W\subseteq \yg\left(v^{-}\otimes w^+\right)$.

To prove this corollary, it is equivalent to show that $v\otimes w\in \yg\left(v^{-}\otimes w^+\right)$, where $v\in V$ and $w\in W$. Since $V$ is a lowest weight representation, it is enough to show that $\left(\prod\limits_{i=1}^{m} \left(x_{j_i,k_i}^{+}\right)^{a_i}v^{-}\right)\otimes w\in \yg\left(v^{-}\otimes w^+\right)$.
We use induction on the degree $N=k_1a_1+\ldots+k_ma_m$ of $\prod\limits_{i=1}^{m} \left(x_{j_i,k_i}^{+}\right)^{a_i}$. For fixed $N$, we use induction on $M=a_1+\ldots+a_m$.

For $N=0$, $M=1$, Without loss of generality, we may assume that $a_1\neq 0$.
 $$\left(x_{j_1,0}^{+}v^{-}\right)\otimes w=x_{j_1,0}^{+}\left(v^{-}\otimes w\right)-v^{-}\otimes \left(x_{j_1,0}^{+}w\right)\in \yg\left(v^{-}\otimes w^+\right).$$
Therefore the claim is true. Suppose $M\geq 2$, and the claim is true for $M-1$. By the induction hypothesis, we may assume that for $a_1+\ldots+a_m=M-1$, $$\left(\prod\limits_{i=1}^{m} \left(x_{j_i,0}^{+}\right)^{a_i}v^{-}\right)\otimes w\in \yg\left(v^{-}\otimes w^+\right) \text{for all $w\in W$}.$$
\begin{align*}
% \nonumber to remove numbering \left(before each equation\right)
   \left(\prod\limits_{i=1}^{m} \left(x_{j_i,0}^{+}\right)^{a_i}v^{-}\right)\otimes w
   &=\left(x_{j_1,0}^{+}\left(x_{j_1,0}^{+}\right)^{a_1-1}\prod\limits_{i=2}^{m} \left(x_{j_i,0}^{+}\right)^{a_i}v^{-}\right)\otimes w \\
      &= x_{j_1,0}^{+}\left(\left(x_{j_1,0}^{+}\right)^{a_1-1}\prod\limits_{i=2}^{m} \left(x_{j_i,0}^{+}\right)^{a_i}v^{-}\otimes w\right) \qquad\qquad \qquad\qquad \qquad\qquad\\
&-\left(\left(x_{j_1,0}^{+}\right)^{a_1-1}\prod\limits_{i=2}^{m} \left(x_{j_i,0}^{+}\right)^{a_i}v^{-}\right)\otimes \Bigg(x_{j_1,0}^{+}w\Bigg).
\end{align*}
The claim follows from the induction hypothesis.

Suppose that $N\geq 1$, and the claim is true for all value $k\leq N-1$. Without loss of generality, we may assume that both $k_1>0$ and $a_1>0$. %We are going to show that $\prod\limits_{i=1}^{m} x_{\beta_i}^{a_i}u^{-}\otimes v\in \yn\left(v^{-}\otimes w^+\right)$. WLOG, we may assume that $a_1\neq 1$.
\begin{align*}
 %\nonumber to remove numbering \Big(before each equation\Big)
\Big(\prod\limits_{i=1}^{m} &\left(x_{j_i,k_i}^{+}\right)^{a_i}v^{-}\Big)\otimes w\\
   &= \Big(x_{j_i,k_1}^{+}\left(x_{j_i,k_1}^{+}\right)^{a_1-1}\prod\limits_{i=2}^{m} \left(x_{j_i,k_i}^{+}\right)^{a_i}v^{-}\Big)\otimes w \\
   &= x_{j_1,k_1}^{+}\Big(\left(x_{j_1,k_1}^{+}\right)^{a_1-1}\prod\limits_{i=2}^{m} \left(x_{j_i,k_i}^{+}\right)^{a_i}v^{-}\otimes w\Big)\\
   &-\Big( \left(x_{j_1,k_1}^{+}\right)^{a_1-1}\prod\limits_{i=2}^{m}\left(x_{j_i,k_i}^{+}\right)^{a_i}v^{-}\Big)\otimes \Big(x_{j_1,k_1}^{+}w\Big) \\
   &- \sum_{j=1}^{k_1} h_{j_1,j-1}\otimes x_{j_1,k-j}^{+}\Big(\Big(\left(x_{j_1,k_1}^{+}\right)^{a_1-1}\prod\limits_{i=2}^{m} \left(x_{j_i,k_i}^{+}\right)^{a_i}v^{-}\Big)\otimes w\Big)\\
   &-\overline{Y}\Big(\Big(x_{j_1,k_1}^{+}\Big)^{a_1-1}\prod\limits_{i=2}^{m} \left(x_{j_i,k_i}^{+}\right)^{a_i}v^{-}\Big)\otimes w\Big). %   \qquad\qquad \qquad\qquad \qquad\qquad \qquad\qquad
\end{align*}
Note that $\overline{Y}\in H_iN^{-}_i\otimes H_iN^{+}_i$ by Proposition \ref{deltaxpc1}. By the induction hypothesis, most terms but one are obviously in $\yg\left(v^{-}\otimes w^{+}\right)$: $$\sum\limits_{j=1}^{k_1} h_{j_1,j-1}\otimes x_{j_1,k-j}^{+}\bigg(\Big(\left(x_{j_1,k_1}^{+}\right)^{a_1-1}\prod\limits_{i=2}^{m} \left(x_{j_i,k_i}^{+}\right)^{a_i}v^{-}\Big)\otimes w\bigg).$$ %Note that $h_{1,j-1}\Big(\left(x_{j_1,k_1}^{+}\right)^{a_1-1}\prod\limits_{i=2}^{m} \left(x_{j_i,k_i}^{+}\right)^{a_i}v^{-}\Big)$
%is a linear combination of \\$\Big(\left(x_{1,k'_1}^{+}\right)^{a_1-1}\prod\limits_{i=2}^{m} \left(x_{j_i,k_i}^{+}\right)^{a_i}v^{-}\Big)$ for $j=1,2,\ldots,k$.
By the defining relations of Yangians, $\mathrm{\operatorname{Deg}}\bigg(h_{j_1,j-1}\Big(\left(x_{j_1,k_1}^{+}\right)^{a_1-1}\prod\limits_{i=2}^{m} \left(x_{j_i,k_i}^{+}\right)^{a_i}v^{-}\Big)\bigg)$ is strictly less than $N$. Therefore $$\sum\limits_{j=1}^{k_1} h_{j_1,j-1}\otimes x_{j_1,k-j}^{+}\bigg(\Big(\left(x_{j_1,k_1}^{+}\right)^{a_1-1}\prod\limits_{i=2}^{m} \left(x_{j_i,k_i}^{+}\right)^{a_i}v^{-}\Big)\otimes w\bigg) \in \yg\left(v^{-}\otimes w^{+}\right),$$
and then $$\left(\prod\limits_{i=1}^{m} \left(x_{j_i,k_i}^{+}\right)^{a_i}v^{-}\right)\otimes w\in \yg\left(v^{-}\otimes w^+\right).$$
%Here is a problem: If $\bar{Y}$ as in Proposition \ref{Delta} satisfying $$\bar{Y}\left(\left(\left(x_{j_1,k_1}^{+}\right)^{a_1-1}\prod\limits_{i=2}^{m} \left(x_{j_i,k_i}^{+}\right)^{a_i}v^{-}\right)\otimes w\right)\in \yg\left(v^{-}\otimes w^+\right)?$$ It seems to be true.
This finished the proof. \end{proof}

\section{The current algebras and their representations}
%The finite-dimensional representations of affine and quantum affine algebras have been studied in a number of papers, see \cite{ChPr1} for more details. However the finite-dimensional irreducible representations of quantum affine algebra is hard to
%understand. There is no completely general theory to understand their representations, but some theory have been developed with quite success: the theory of crystal bases, the theory of $q$-characters, etc. In \cite{ChPr1} the authors have studied the classical limits of these representations. They first studied the class of all highest weight finite-dimensional representations of affine Lie algebra $\hat{\g}$. Among the class, there exists unique finite-dimensional highest weight modules $W(\pi)$, such that every finite-dimensional highest weight modules $V$ with highest weight $\pi$ is a quotient of $W(\pi)$. $W(\pi)$ is called a local Weyl module.  They conjectured that every $W(\pi)$ is the classical limit of an irreducible $U_q\left(\hat{\g}\right)$-module.
%The authors also point out that the dimension of the local Weyl module $W(\pi)$ is crucial for the proof of the conjecture. There is an analogous notion for the category of finite-dimensional representations of the current algebra, a subalgebra of a maximal parabolic subalgebra of the affine algebra $\hat{\g}$.%It is known that it is good enough to study the local Weyl modules for the current algebra of  simple Lie algebra.

Let $t$ be an indeterminate. Denote by $\C[t]$ the polynomial ring in $t$. Let $\mathfrak{g}[t]=\mathfrak{g}\otimes_{\C}\C[t]$ be a Lie algebra with commutator $$[x\otimes_\C f, y\otimes_\C g]=[x,y]\otimes_{\C} fg,\quad x,y\in \mathfrak{g}, \quad f,g\in \C[t].$$
%Denote $\mathfrak{h}\otimes t\C[t]$ a set of $\mathfrak{a}[t]$ generated by elements of the form $\mathfrak{h}_i\otimes t^{n}$, where $\mathfrak{h}_i\in \mathfrak{h}$ and $t>0$.

\begin{definition}\label{c1lwmotca}
Let $\lambda=\sum m_i \omega_i$ be a dominant integral weight of $\g$. Denote by $W(\lambda)$ the $\g[t]$-module generated by an element $v_{\lambda}$ with the relations:
$$
\mathfrak{n}^{+}\otimes \C[t] v_{\lambda} = 0, \;h \otimes t\C[t] v_{\lambda} = 0, \; hv_{\lambda} = \lambda\left(h\right)v_{\lambda},  \; \left(x_{\alpha_i}^{-}\otimes 1\right)^{m_i +1}v_{\lambda}= 0
$$
for all $h \in \mathfrak{h}$ and all simple roots $\alpha_i$. This module is called
the Weyl module for $\g[t]$ associated to $\lambda\in P^+$.
\end{definition}
\begin{theorem}[Theorem 1.2.2, \cite{ChLo}]\label{mpocac1}
For all $\lambda\in P^{+}$, the local Weyl modules $W(\lambda)$ are finite dimensional. Moreover, any finite-dimensional $\g[t]$-module $V$ generated by an element $v\in V$ satisfying the relations:
$$
\mathfrak{n}^{+}\otimes \C[t] v = 0, \;h \otimes t\C[t] v= 0, \; hv = \lambda\left(h\right)v
$$ is a quotient of $W(\lambda)$.
\end{theorem}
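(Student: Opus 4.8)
The plan is to establish the two assertions separately, beginning with the universal (``moreover'') statement, which is the more elementary. Let $V$ be a finite-dimensional $\g[t]$-module generated by $v$ satisfying the three displayed relations. Since $W(\lambda)$ is presented by generators and relations, it suffices to verify that $v$ satisfies the one remaining defining relation of $W(\lambda)$, namely $(x_{\alpha_i}^{-}\otimes 1)^{m_i+1}v=0$ for each simple root $\alpha_i$, where $m_i=\langle\lambda,\alpha_i\rangle$; then $v_\lambda\mapsto v$ extends to a surjective $\g[t]$-homomorphism $W(\lambda)\to V$. To check it I restrict $V$ along $\g=\g\otimes 1\hookrightarrow\g[t]$. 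The hypotheses $\mathfrak{n}^{+}\otimes 1\cdot v=0$ and $h\otimes 1\cdot v=\lambda(h)v$ say that $v$ is a highest weight vector of weight $\lambda$ in the finite-dimensional $\g$-module $V$. Applying $\mathfrak{sl}_2$-theory to the $\mathfrak{sl}_2$-triple attached to $\alpha_i$, a highest weight vector of $\mathfrak{sl}_2$-weight $m_i\in\Z_{\geq 0}$ inside a finite-dimensional module is annihilated by $(x_{\alpha_i}^{-}\otimes 1)^{m_i+1}$, which is exactly what is needed.

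For finite-dimensionality I would show that $W(\lambda)$ has finitely many weights and finite-dimensional weight spaces. Writing $\mathfrak{n}^{\pm}[t]=\mathfrak{n}^{\pm}\otimes\C[t]$ and $\h[t]=\h\otimes\C[t]$, the PBW theorem together with $\mathfrak{n}^{+}[t]\,v_\lambda=0$ and the fact that $\h[t]$ acts by scalars on $v_\lambda$ gives $W(\lambda)=U(\mathfrak{n}^{-}[t])\,v_\lambda$, so restricting to $\h\otimes 1$ makes $W(\lambda)$ a weight module with all weights in $\lambda-Q^{+}$. Finiteness of the weight set I obtain from integrability under $\g\otimes 1$: since $\mathrm{ad}(x_{\alpha_i}^{\pm}\otimes 1)$ acts nilpotently on $\g[t]=\bigoplus_k\g\otimes t^k$, and $v_\lambda$ is killed by $x_{\alpha_i}^{+}\otimes 1$ and by $(x_{\alpha_i}^{-}\otimes 1)^{m_i+1}$, a standard propagation argument shows $x_{\alpha_i}^{\pm}\otimes 1$ act locally nilpotently on all of $W(\lambda)=U(\g[t])v_\lambda$. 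Hence the weight set is stable under $\W$; a $\W$-invariant subset of $\lambda-Q^{+}$ is finite, since every such weight is $\W$-conjugate into the finite set of dominant weights $\leq\lambda$, each with finite $\W$-orbit.

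It remains to bound each weight space. Fix $\mu=\lambda-\eta$ with $\eta\in Q^{+}$; then $W(\lambda)_\mu$ is spanned by monomials $(x_{\beta_1}^{-}\otimes t^{n_1})\cdots(x_{\beta_s}^{-}\otimes t^{n_s})v_\lambda$ with $\beta_1+\cdots+\beta_s=\eta$ and $\beta_j\in\Delta^{+}$. Since the number of factors satisfies $s\leq\mathrm{ht}(\eta)$ and the multiset $\{\beta_j\}$ ranges over a finite set, it is enough to bound each exponent $n_j$. This is the crux, and where Garland's identities enter: working inside the $\mathfrak{sl}_2$-current subalgebra attached to a simple root, these identities express any product of negative root vectors of large $t$-degree as a combination of products of strictly smaller $t$-degree, modulo the left ideal killing $v_\lambda$, using that the elements $h_{\alpha_i}\otimes t^s$ annihilate $v_\lambda$ for $s\geq 1$ and that $(x_{\alpha_i}^{-}\otimes 1)^{m_i+1}v_\lambda=0$. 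Reducing non-simple root vectors $x_\beta^{-}\otimes t^k$ to iterated commutators of simple ones propagates the bound to all $\beta$, so only finitely many $t$-degrees occur in $W(\lambda)_\mu$ and $\dim W(\lambda)_\mu<\infty$; together with the finiteness of the weight set this yields $\dim W(\lambda)<\infty$.

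The genuinely hard step is the last one: setting up the precise Garland identity and verifying that the $t$-degree bound it supplies for the simple-root $\mathfrak{sl}_2$-subalgebras propagates correctly to all positive roots and interacts properly with the chosen PBW ordering of $\mathfrak{n}^{-}[t]$. The earlier steps are essentially bookkeeping once integrability is in hand, whereas this degree reduction is where the real combinatorial work lies.
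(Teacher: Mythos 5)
The paper does not actually prove this statement: it is quoted, with proof deferred, as Theorem 1.2.2 of \cite{ChLo}, so there is no internal argument to compare yours against; the comparison has to be with the published proof in \cite{ChLo} (and its loop-algebra prototype in \cite{ChPr1}). Measured against that, your proposal follows essentially the same route and its structure is sound. The universal-property half is complete and correct: restricting along $\g\otimes 1\hookrightarrow\g[t]$, the generator $v$ is a highest weight vector of weight $\lambda$ in a finite-dimensional $\g$-module, so $\mathfrak{sl}_2$-theory for each simple root yields $\left(x_{\alpha_i}^{-}\otimes 1\right)^{m_i+1}v=0$, which is exactly the one defining relation of $W(\lambda)$ not among the hypotheses, and the assignment $v_\lambda\mapsto v$ then factors through $W(\lambda)$. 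The finite-dimensionality half is the standard argument: $W(\lambda)=U\left(\mathfrak{n}^{-}\otimes\C[t]\right)v_\lambda$, local nilpotency of the $x_{\alpha_i}^{\pm}\otimes 1$ makes the weight set $\W$-stable inside $\lambda-Q^{+}$ and hence finite, and the remaining issue is bounding the $t$-degrees in each weight space, which you correctly attribute to Garland's identities in the $\mathfrak{sl}_2\otimes\C[t]$-subalgebras attached to simple roots together with propagation to non-simple root vectors via commutators. That is precisely the crux of the proof in \cite{ChLo}. The only caveat is that you describe this degree-reduction step rather than carry it out, so what you have is a faithful, correctly structured outline of the known proof rather than a self-contained one; completing it would require stating the Garland identity explicitly and running the induction that rewrites PBW monomials with bounded exponents.
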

The structure of Weyl mdoules for $\g[t]$ associated to a dominant integral weight $\lambda$ is known, see \cite{ChLo, FoLi, Na}.

\begin{theorem}[Corollary B, \cite{Na}]
Let $\lambda=\sum m_i \omega_i$ be a dominant integral weight of $\g$. We have
{\footnotesize{$$ W(\lambda)\cong W\left(\omega_1\right)_{a_1}\ast\ldots\ast W\left(\omega_1\right)_{a_{m_1}}\ast W\left(\omega_2\right)_{b_1}\ast\ldots\ast W\left(\omega_2\right)_{b_{m_2}}\ast\ldots\ast W\left(\omega_l\right)_{l_1}\ast\ldots\ast W\left(\omega_l\right)_{l_{m_l}},$$}}
    where $*$ denotes the fusion product introduced in \cite{FeLo2},
    and $a_1,\dots,a_{m_1}$,\\ $b_1,\dots,b_{m_2},\ldots, l_1,\dots, l_{m_l}$ are parameters used to define the fusion product.
\end{theorem}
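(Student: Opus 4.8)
The plan is to prove the isomorphism by exhibiting a surjection of $W(\lambda)$ onto the fusion product and then matching dimensions. Write $\mathbb{W}=W(\omega_1)_{a_1}\ast\cdots\ast W(\omega_l)_{l_{m_l}}$ for the right-hand side. First I would record that $\mathbb{W}$ is a cyclic graded $\g[t]$-module: by the construction of \cite{FeLo2} it is the associated graded of the natural filtration on the cyclic submodule generated by the tensor product $\bar v$ of the cyclic vectors of the fundamental Weyl modules (taken with multiplicities $m_i$) inside the tensor product of the evaluation modules at the distinct parameters $a_1,\dots,l_{m_l}$, and $\bar v$ has weight $\lambda=\sum_{i}m_i\omega_i$.

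Next I would check that $\bar v$ satisfies the defining relations of $W(\lambda)$ from Definition \ref{c1lwmotca}. That $\mathfrak{n}^{+}\otimes\C[t]$ and $\h\otimes t\C[t]$ annihilate $\bar v$, and that $h$ acts by $\lambda(h)$, are all immediate from the graded structure of the fusion product together with the corresponding relations in each fundamental factor $W(\omega_i)$. The delicate relation is the integrability relation $(x^{-}_{\alpha_i}\otimes 1)^{m_i+1}\bar v=0$, which I would deduce from a Garland-type identity inside the $\mathfrak{sl}_2$-triple attached to $\alpha_i$, using that the $\alpha_i$-weight of $\bar v$ equals $\langle\lambda,\alpha_i\rangle=m_i$. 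Granting these, Theorem \ref{mpocac1} yields a surjective $\g[t]$-homomorphism $W(\lambda)\twoheadrightarrow\mathbb{W}$, so $\dim W(\lambda)\ge\dim\mathbb{W}$.

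For $\dim\mathbb{W}$ I would use that the fusion product, being an associated graded module, has the same dimension as the cyclic submodule from which it is built, and that for distinct parameters the tensor product of evaluation Weyl modules is cyclic on the tensor product of the cyclic vectors; hence the cyclic submodule is the whole tensor product and $\dim\mathbb{W}=\prod_{i\in I}\big(\dim W(\omega_i)\big)^{m_i}$. It then remains to prove the reverse inequality $\dim W(\lambda)\le\prod_{i\in I}\big(\dim W(\omega_i)\big)^{m_i}$; combined with the surjection this forces equality of dimensions, hence the surjection is an isomorphism, and in particular the isomorphism type of $\mathbb{W}$ is independent of the chosen distinct parameters.

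The main obstacle is exactly this upper bound, since the surjection only bounds $\dim W(\lambda)$ from below. One route is to produce a spanning set of $W(\lambda)=U(\mathfrak{n}^{-}\otimes\C[t])v_\lambda$ of the correct size, using the PBW theorem together with Garland's relations to push positive $t$-powers and raising operators onto the generator; this combinatorial reduction is workable but intricate, especially in the non-simply-laced types treated here. The cleaner and more robust route, and the one underlying \cite{Na}, is to realize $W(\lambda)$ as a Demazure module inside a level-one integrable highest-weight module of the affine algebra $\widehat{\g}$ --- extending the simply-laced identifications of \cite{ChLo,FoLi} --- and then to read off both the dimension and the fusion-product filtration from the Demazure character formula.
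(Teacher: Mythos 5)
Your proposal is sound, but note that the paper itself offers no proof of this statement: it is quoted verbatim as Corollary B of \cite{Na}, just as the dimension formula $\operatorname{Dim} W(\lambda)=\prod_{i\in I}\big(\operatorname{Dim} W(\omega_i)\big)^{m_i}$ is quoted immediately afterwards as Corollary A of \cite{Na}. So the honest comparison is with the cited literature, and there your outline matches the standard strategy: the surjection $W(\lambda)\twoheadrightarrow\mathbb{W}$ obtained from the universal property, the Feigin--Loktev cyclicity of a tensor product of cyclic modules at pairwise distinct parameters (giving $\dim\mathbb{W}=\prod_i\big(\dim W(\omega_i)\big)^{m_i}$), and the upper bound $\dim W(\lambda)\le\prod_i\big(\dim W(\omega_i)\big)^{m_i}$ as the genuine content, which in the non-simply-laced case is exactly what Naoi's Demazure-module realization supplies. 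In effect you have reduced Corollary B to Corollary A, which is a legitimate and clean derivation given that the paper assumes both. Two small simplifications: the integrability relation $(x_{\alpha_i}^{-}\otimes 1)^{m_i+1}\bar v=0$ needs no Garland-type identity, since $\bar v$ is a vector of $\alpha_i$-weight $m_i$ annihilated by $x_{\alpha_i}^{+}$ inside a finite-dimensional module, so plain $\mathfrak{sl}_2$-theory gives it; and in fact Theorem \ref{mpocac1} as stated in the paper does not require that relation at all --- finite-dimensionality of $\mathbb{W}$ together with the three highest-weight relations already yields the surjection.
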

\begin{remark}
As a $\g$-module, the fusion products are isomorphic to tensor products.
\end{remark}
\begin{theorem}[Corollary A, \cite{Na}]
$$\operatorname{Dim}\ W(\lambda)=\prod_{i\in I} \Big(\operatorname{Dim}\big(W(\omega_i)\big)\Big)^{m_i}.$$
\end{theorem}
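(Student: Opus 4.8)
The plan is to read this off directly from the structure theorem (Corollary B) stated immediately above, which realizes $W(\lambda)$ as a fusion product of fundamental Weyl modules in which $W(\omega_i)$ occurs exactly $m_i$ times. The one conceptual input needed is that the fusion product, defined in \cite{FeLo2} as the associated graded space of a filtration on the tensor product of the evaluation modules $W(\omega_i)_a$, has the same underlying dimension as that tensor product. Indeed, the Remark following Corollary B already records that fusion products and tensor products coincide as $\g$-modules, hence in particular as vector spaces, so the whole computation reduces to bookkeeping of dimensions.

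Concretely, I would first invoke Corollary B to write
$$W(\lambda)\cong W(\omega_1)_{a_1}\ast\ldots\ast W(\omega_l)_{l_{m_l}},$$
grouping the factors so that for each $i\in I$ there are precisely $m_i$ copies of $W(\omega_i)$ (carrying various spectral parameters, as dictated by $\lambda=\sum_i m_i\omega_i$). Taking dimensions and using that the fusion product preserves total dimension, the right-hand side equals the dimension of the corresponding tensor product. Since $\operatorname{Dim}(A\otimes B)=\operatorname{Dim}(A)\operatorname{Dim}(B)$, and the spectral parameter does not affect the dimension of a module $W(\omega_i)_a$ (it is obtained from $W(\omega_i)$ by a grading shift, which is a vector space isomorphism), each block of $m_i$ copies contributes $\big(\operatorname{Dim}(W(\omega_i))\big)^{m_i}$, yielding
$$\operatorname{Dim}\ W(\lambda)=\prod_{i\in I}\big(\operatorname{Dim}(W(\omega_i))\big)^{m_i}.$$

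The only genuinely nontrivial ingredient is the dimension-preserving property of the fusion product, and I would not reprove it here: it is built into the Feigin--Loktev construction, where the graded module is by definition the associated graded of a filtered tensor product and therefore has the same dimension. Consequently there is no real obstacle in this corollary beyond carefully applying Corollary B and the Remark; the entire content is the observation that the multiplicity of each fundamental factor $W(\omega_i)$ in the fusion decomposition is exactly the coefficient $m_i$ appearing in $\lambda=\sum_i m_i\omega_i$.
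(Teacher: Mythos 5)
Your proposal is correct. Note that the paper itself gives no proof of this statement: it is imported verbatim from \cite{Na} as ``Corollary A,'' just as Corollary B is. Your derivation --- Corollary B realizes $W(\lambda)$ as a fusion product with exactly $m_i$ factors $W(\omega_i)$, the fusion product is the associated graded of a filtered tensor product and hence has the same dimension as that tensor product (this is precisely what the paper's Remark records when it says fusion products are isomorphic to tensor products as $\g$-modules), and evaluation shifts $W(\omega_i)_a$ do not change dimension --- is exactly the logic that the paper's arrangement of Corollary B, the Remark, and Corollary A implicitly suggests, so there is nothing to correct.
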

We will not give the definition of Kirillov-Reshetkhin modules $\operatorname{KR}(m\omega_i)$ in this thesis because we only require the fundamental ones (i.e. when $m=1$) and it is well known that $\operatorname{KR}(\omega_i)\cong_{\g[t]} W(\omega_i)$. From the first part of the main theorem of Section 2.2 in \cite{ChMo}, we obtain that $\operatorname{KR}\left(\omega_i\right)\cong_{\g} V_a\left(\omega_i\right)$, where $\g$ is a classical simple Lie algebra, any $i\in I$ and $a\in \C$. Consequently, we have the following important corollary.
\begin{corollary}\label{dkrvawocsp}
$\operatorname{Dim}\big(V_a(\omega_i)\big)=\operatorname{Dim}\big(W(\omega_i)\big).$
\end{corollary}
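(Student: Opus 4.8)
The plan is to obtain the claimed equality of dimensions by simply chaining the two module isomorphisms recalled immediately before the statement, so the corollary will follow with essentially no computation. First I would use the well-known isomorphism $\operatorname{KR}(\omega_i)\cong_{\g[t]} W(\omega_i)$. Restricting the $\g[t]$-action along the embedding $\g\cong\g\otimes 1\hookrightarrow\g[t]$, this specializes to an isomorphism of $\g$-modules, so that in particular $W(\omega_i)\cong_{\g}\operatorname{KR}(\omega_i)$ as complex vector spaces. Next I would invoke the result extracted from \cite{ChMo}, namely $\operatorname{KR}(\omega_i)\cong_{\g} V_a(\omega_i)$, valid for every classical simple $\g$, every $i\in I$, and every $a\in\C$.

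Composing these two, we get $W(\omega_i)\cong_{\g}\operatorname{KR}(\omega_i)\cong_{\g} V_a(\omega_i)$, and since isomorphic $\g$-modules share the same underlying vector space, their dimensions agree, giving $\operatorname{Dim}\big(V_a(\omega_i)\big)=\operatorname{Dim}\big(W(\omega_i)\big)$. The statement is thus an immediate consequence of the two cited facts and presents no genuine obstacle; the only point that deserves a word of care is that the two input isomorphisms live at different levels—one as $\g[t]$-modules and one only as $\g$-modules—so one must first restrict the Kirillov–Reshetikhin/Weyl isomorphism down to $\g$ before composing it with the one coming from \cite{ChMo}, after which the equality of dimensions is forced.
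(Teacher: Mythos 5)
Your proposal is correct and is exactly the paper's argument: the corollary is stated as an immediate consequence of chaining $\operatorname{KR}(\omega_i)\cong_{\g[t]} W(\omega_i)$ with $\operatorname{KR}(\omega_i)\cong_{\g} V_a(\omega_i)$ from \cite{ChMo}, just as you do. Your remark about restricting the first isomorphism to $\g$ before composing is a fine point of care, but there is no difference in substance.
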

%This chapter finish here.
%The generator and defining relations would follow from the proof of the conjecture.  %if $V$ is a finite-dimensional representation of quantum affine algebra $U_q\left(\check{g}\right)$, its $$ \cite{Ch, ChPr1, ChGr1}

%%%%%%%%%%%%%%%%%%%%%%%%%%%%%%%%%%%%%%%%%%%%%%%%%%%%%%%%%%%%%%%%%%%%%%%%%%%%%%%%%%%%%
%                                          Chapter 1                                %
%%%%%%%%%%%%%%%%%%%%%%%%%%%%%%%%%%%%%%%%%%%%%%%%%%%%%%%%%%%%%%%%%%%%%%%%%%%%%%%%%%%%%
\chapter{Finiteness of local Weyl modules} %of $\yg$}

In this chapter, we first show the existence of the local Weyl module $W(\pi)$ of $\yg$ associated to a generic $l$-tuple of polynomials $\pi=\big(\pi_1(u),\ldots,\pi_l(u)\big)$, where $\pi_i\left(u\right)=\prod\limits_{j=1}^{m_i} \left(u-a_{ij}\right)$. Then we prove that $W(\pi)$ are finite-dimensional. In the end, we obtain an upper bound for the dimension of $W(\pi)$.
\begin{definition}
%Let $\pi=\Big(\pi_1\left(u\right),\pi_2\left(u\right),\ldots,\pi_l\left(u\right)\Big)$, where $\pi_i\left(u\right)=\prod\limits_{j=1}^{m_i} \left(u-a_{ij}\right)$. %and let $\lambda= \pi^{\left(1\right)} \omega_1+\left(\pi^{\left(1\right)}+n\right)\omega_2 \in P^+$.
The local Weyl module $W\left(\mathbf{\pi}\right)$ for $\yg$ is defined as the module generated by a highest weight vector $w_\pi$ with the following relations: \begin{equation}
x_{i,k}^{+}w_\pi=0, \quad \left(x_{i,0}^{-}\right)^{m_i+1}w_\pi=0,\quad \left(h_{i}\left(u\right)-\frac{\pi_i\left(u+d_i\right)}{\pi_i\left(u\right)}\right)w_\pi=0.\end{equation}
%\begin{equation}  \end{equation}
\end{definition}

\begin{remark}\label{mi=lambdahi}\
\begin{enumerate}
  \item The local Weyl module $W(\pi)$ can be considered as the quotient of the Verma module $M(\mu)$ by the submodule generated by $\left(x_{i,0}^{-}\right)^{m_i+1}w_\pi$, where $\mu_i(u)=\frac{\pi_i(u+d_i)}{\pi_i(u)}$.
  \item It follows from Remark [2], part C of section 12.1 of \cite{ChPr2} that $m_i=\lambda(h_i)$, where $h_i=\frac{h_{i,0}}{d_i}$. Thus for any finite-dimensional highest weight representation $V(\pi)$ of $\yg$, $\left(x_{i,0}^{-}\right)^{m_i+1}w_\pi=0$. Therefore $V(\pi)$ is a quotient of $W(\pi)$.
\end{enumerate}
\end{remark}
We are now in the position ready to prove the main objective of this Chapter.
\begin{theorem}\label{wmifd}
The local Weyl module $W(\pi)$ is finite-dimensional.
\end{theorem}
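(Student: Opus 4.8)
The plan is to transport the problem to the current algebra $\g[t]$ via the filtration of $\yg$ whose associated graded algebra is $U(\g[t])$, and thereby to bound $\dim W(\pi)$ by $\dim W(\lambda)$, where $\lambda=\sum_{i\in I}m_i\omega_i$ and $m_i=\deg\pi_i$. First I would equip $W(\pi)=\yg\,w_\pi$ with the induced filtration $F_rW(\pi)=\yg_r w_\pi$. Since $\yg_r\cdot\yg_s\subseteq\yg_{r+s}$, this is a compatible algebra-module filtration, so $\operatorname{gr}\big(W(\pi)\big)=\bigoplus_r F_rW(\pi)/F_{r-1}W(\pi)$ is a graded module over $\operatorname{gr}\yg\cong U(\g[t])$, generated by the symbol $\bar w_\pi$ of $w_\pi$ in degree $0$. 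Under this identification the symbol of $x_{i,r}^{\pm}$ is $x^{\pm}_{\alpha_i}\otimes t^{r}$ and the symbol of $h_{i,r}$ is $h_{\alpha_i}\otimes t^{r}$. Because $\dim W(\pi)=\sum_r\dim\big(F_rW(\pi)/F_{r-1}W(\pi)\big)=\dim\operatorname{gr}\big(W(\pi)\big)$, it suffices to prove that $\operatorname{gr}\big(W(\pi)\big)$ is finite-dimensional.

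The key step is to check that $\bar w_\pi$ satisfies the four defining relations of the current-algebra local Weyl module $W(\lambda)$ from Definition \ref{c1lwmotca}. The relation $x_{i,k}^{+}w_\pi=0$ gives $(x^{+}_{\alpha_i}\otimes t^{k})\bar w_\pi=0$ for all $i,k$; since $Av=Bv=0$ forces $[A,B]v=ABv-BAv=0$, every iterated bracket of the elements $x^{+}_{\alpha_i}\otimes t^{k}$ annihilates $\bar w_\pi$, and as these brackets span $\mathfrak{n}^{+}\otimes\C[t]$ we obtain $(\mathfrak{n}^{+}\otimes\C[t])\bar w_\pi=0$. For the Cartan relation, write $\mu_i(u)=\pi_i(u+d_i)/\pi_i(u)=1+\sum_{r\geq 0}\mu_{i,r}u^{-r-1}$, so that $h_{i,r}w_\pi=\mu_{i,r}w_\pi$. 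For $r\geq 1$ the operator $h_{i,r}$ has degree $r$, yet $\mu_{i,r}w_\pi\in F_0W(\pi)\subseteq F_{r-1}W(\pi)$, so the class of $h_{i,r}w_\pi$ in $F_rW(\pi)/F_{r-1}W(\pi)$ vanishes; hence $(h_{\alpha_i}\otimes t^{r})\bar w_\pi=0$ for all $r\geq 1$, i.e. $(\mathfrak{h}\otimes t\C[t])\bar w_\pi=0$. The degree-zero part $h_{i,0}w_\pi=\mu_{i,0}w_\pi$ gives $h\,\bar w_\pi=\lambda(h)\bar w_\pi$ once one uses $m_i=\lambda(h_i)$ from Remark \ref{mi=lambdahi}. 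Finally, the symbol of $x_{i,0}^{-}$ is $x^{-}_{\alpha_i}\otimes 1$, so $(x_{i,0}^{-})^{m_i+1}w_\pi=0$ yields $(x^{-}_{\alpha_i}\otimes 1)^{m_i+1}\bar w_\pi=0$.

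With these relations verified, $\operatorname{gr}\big(W(\pi)\big)$ is a $\g[t]$-module generated by $\bar w_\pi$ subject to all the relations defining $W(\lambda)$; by the universal property of the presentation in Definition \ref{c1lwmotca} it is therefore a quotient of $W(\lambda)$. By Theorem \ref{mpocac1} the module $W(\lambda)$ is finite-dimensional, so $\dim\operatorname{gr}\big(W(\pi)\big)\leq\dim W(\lambda)<\infty$, and consequently $\dim W(\pi)=\dim\operatorname{gr}\big(W(\pi)\big)<\infty$. This simultaneously delivers the bound $\dim W(\pi)\leq\dim W(\lambda)$ advertised for this chapter.

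I expect the main obstacle to be the careful bookkeeping in the associated-graded translation rather than any deep difficulty: one must be sure that the positive-degree Cartan eigenvalues genuinely drop into lower filtration degree, so that each $h_{\alpha_i}\otimes t^{r}$ with $r\geq 1$ really kills the generator; that annihilation by the simple raising operators propagates to all of $\mathfrak{n}^{+}\otimes\C[t]$; and that one invokes the universal property of the presentation of $W(\lambda)$, which needs no finiteness hypothesis, rather than the finite-dimensional ``Moreover'' clause of Theorem \ref{mpocac1}, thereby avoiding any circularity.
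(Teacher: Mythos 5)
Your proof is correct and follows essentially the same route as the paper: filter $W(\pi)$ by the images of $\yg_r$, show the associated graded module is a highest-weight $U(\g[t])$-module that is a quotient of the current-algebra local Weyl module $W(\lambda)$, and conclude from the finite-dimensionality of $W(\lambda)$ together with $\operatorname{Dim}\big(W(\pi)\big)=\operatorname{Dim}\Big(\operatorname{gr}\big(W(\pi)\big)\Big)$. The only refinement is your explicit appeal to the universal property of the presentation in Definition \ref{c1lwmotca} rather than the quotient clause of Theorem \ref{mpocac1} (which is stated for finite-dimensional modules), thereby tidying a small circularity that the paper's Step 4 glosses over.
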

\begin{proof} Let $\lambda=\sum\limits_{i\in I}m_i\omega_i$, where $m_i$ is the degree of the polynomial $\pi_i\left(u\right)$.
We divide the proof into steps.

Step 1: The $\N$-filtration on $\yg$ induces a filtration on $W(\pi)$ as follows: let $W(\pi)_s$ be the subspace of $W(\pi)$ spanned by elements of the form $yw_{\pi}$, where $y\in \yg_{s}$.
Denote by gr$\big(W(\pi)\big)=\bigoplus\limits_{r=0}^{\infty} W(\pi)_{r}/W(\pi)_{r-1}$ the associated graded module, where $W(\pi)_{-1}=0$. Let $\overline{w_{\pi}}$ be the image of $w_{\pi}$ in gr$\big(W(\pi)\big)$.

%Step 3. $\text{gr}\big(W(\pi)\big)=\yg.\overline{w_{\pi}}$.

%Proof: The action of $\yg$ on gr$\big(W(\pi)\big)$ is given by $$y.\overline{w_{\pi}}=\overline{y. w_{\pi}},\text{where $y\in \yg$}.$$ This step follows from the fact that $W(\pi)$ is a highest weight representation of $\yg$.

Step 2: show $\text{gr}\big(W(\pi)\big)=\text{gr}\big(\yg\big)
\overline{w_{\pi}}=U\left(\g[t]\right)\overline{w_{\pi}}$.

Proof: The action of $\text{gr}\big(\yg\big)$ on gr$\big(W(\pi)\big)$ is given by $\overline{y}\ \overline{w_{\pi}}=\overline{y w_{\pi}},$ where $\bar{y}$ is the image of $y\in \yg$ in $\text{gr}\big(\yg\big).$ It is obvious that $\text{gr}\big(W(\pi)\big)\supseteq \text{gr}\big(\yg\big)\overline{w_{\pi}}.$ Let $\overline{v}\in \text{gr}\big(W(\pi)\big)$. Without loss of generality, we may assume that $v\in W(\pi)_r\backslash W(\pi)_{r-1}.$ Thus $v=yw_{\pi}$ for some $y\in \yg_r\backslash \yg_{r-1}$ by Step 1 and hence $\bar{v}=\bar{y}\ \overline{w_{\pi}}.$ So $\text{gr}\big(W(\pi)\big)\subseteq \text{gr}\big(\yg\big)\overline{w_{\pi}}.$

Step 3: show $\text{gr}\big(W(\pi)\big)$ is a highest weight representation of $\text{gr}\big(\yg\big)$.

Proof: Note that $\left(x_{\alpha}^{+}\otimes t^r\right)\overline{w_{\pi}}=\overline{x_{\alpha,r}^{+}w_{\pi}}= \overline{0}$ and $h_i\overline{w_{\pi}}=\overline{h_{i,0}w_{\pi}}=m_i\overline{w_{\pi}}$. Since $w_\pi\in W(\pi)_0\subseteq W(\pi)_{r}$ for all $r\geq 1$, $\left(h_i\otimes t^r\right)\overline{w_{\pi}}=\overline{h_{i,r}w_{\pi}}=
\overline{c.w_{\pi}}=\overline{0}$, where $c\in\C$.  Moreover, $(x_{i}^-)^{m_i+1}\overline{w_{\pi}}=\overline{(x_{i,0}^-)^{m_i+1}w_{\pi}}=0$. Therefore Step 3 holds.

Step 4: $W(\pi)$ is finite-dimensional.

Proof: There is a surjective homomorphism $\varphi: W\left(\lambda\right)\rightarrow \text{gr}\big(W(\pi)\big)$, by Theorem \ref{mpocac1}. From Theorem \ref{PS3TADOLWM}, we know that $W(\lambda)$ is finite-dimensional and hence $\text{gr}\big(W(\pi)\big)$ is as well since it is a quotient of $W(\lambda)$. As $\operatorname{Dim}\big(W(\pi)\big)= \operatorname{Dim}\Big(\text{gr}\big(W(\pi)\big)\Big)$, $W(\pi)$ is finite-dimensional.
\end{proof}

Since $\text{gr}\big(W(\pi)\big)$ is a quotient of the Weyl module $W\left(\lambda\right)$ of $\g[t]$, we obtain an upper bound of the dimension of the local Weyl module.
\begin{theorem}\label{ubodowm} Using the notation as in the above theorem,
$$\text{Dim}\big(W(\pi)\big)\leq\text{Dim}\big(W(\lambda)\big).$$
\end{theorem}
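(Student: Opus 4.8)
The plan is to read the inequality straight off the machinery already assembled in the proof of Theorem~\ref{wmifd}; almost all the work has been done there, and only two elementary observations remain to be combined. The first is that passing to the associated graded module preserves total dimension, so that $\operatorname{Dim}\big(W(\pi)\big)=\operatorname{Dim}\big(\operatorname{gr}(W(\pi))\big)$. The second is that $\operatorname{gr}(W(\pi))$ is a quotient of the current-algebra Weyl module $W(\lambda)$, which forces $\operatorname{Dim}\big(\operatorname{gr}(W(\pi))\big)\leq\operatorname{Dim}\big(W(\lambda)\big)$.

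For the first observation I would note that the filtration $W(\pi)_0\subseteq W(\pi)_1\subseteq\cdots$ from Step~1 is exhaustive, and, because $W(\pi)$ is finite-dimensional by Theorem~\ref{wmifd}, it stabilizes after finitely many steps. Hence the total dimension of $\operatorname{gr}(W(\pi))=\bigoplus_{r\geq 0} W(\pi)_r/W(\pi)_{r-1}$ is the telescoping sum $\sum_{r\geq 0}\big(\operatorname{Dim} W(\pi)_r-\operatorname{Dim} W(\pi)_{r-1}\big)$, which collapses to $\operatorname{Dim}\big(W(\pi)\big)$ since $W(\pi)_{-1}=0$. For the second observation I would invoke Steps~2 and~3 of the same proof: there $\operatorname{gr}(W(\pi))$ was identified as a highest-weight $U(\g[t])$-module generated by $\overline{w_\pi}$, and $\overline{w_\pi}$ was shown to satisfy exactly the defining relations of $W(\lambda)$ recorded in Theorem~\ref{mpocac1}, including the crucial relation $(x_{\alpha_i}^{-}\otimes 1)^{m_i+1}\overline{w_\pi}=0$. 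The universal property stated in Theorem~\ref{mpocac1} then produces the surjection $\varphi\colon W(\lambda)\twoheadrightarrow\operatorname{gr}(W(\pi))$, giving the dimension comparison.

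Chaining the two observations yields $\operatorname{Dim}\big(W(\pi)\big)=\operatorname{Dim}\big(\operatorname{gr}(W(\pi))\big)\leq\operatorname{Dim}\big(W(\lambda)\big)$, which is the assertion. I do not expect any genuine obstacle here, as the statement is a corollary of Theorem~\ref{wmifd} rather than an independent result; the only point deserving a moment's care is confirming that $\overline{w_\pi}$ satisfies the top relation so that the \emph{universal property of the Weyl module} $W(\lambda)$ applies, and not merely the weaker universal property of a Verma-type module for $\g[t]$. That verification is precisely the content of Step~3, so the argument is complete once the two dimension facts are recorded.
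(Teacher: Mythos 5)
Your proposal is correct and follows essentially the same route as the paper: the paper obtains Theorem \ref{ubodowm} directly from the machinery of the proof of Theorem \ref{wmifd}, namely that $\operatorname{gr}\big(W(\pi)\big)$ is a quotient of $W(\lambda)$ via Theorem \ref{mpocac1} (Steps 2--4 there) together with $\operatorname{Dim}\big(W(\pi)\big)=\operatorname{Dim}\Big(\operatorname{gr}\big(W(\pi)\big)\Big)$. Your additional care in verifying the relation $\left(x_{\alpha_i}^{-}\otimes 1\right)^{m_i+1}\overline{w_{\pi}}=0$ and in spelling out why the filtration preserves dimension only makes explicit what the paper leaves implicit.
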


%\Large{\textbf{An another way to prove the Theorem \ref{wmifd}.}}
%We need a theorem for the proof.
%\begin{theorem}[Theorem 1.2, \cite{BiFuMo}]
%All weight subspaces of the
%\end{theorem}

%\begin{proof}

%\end{proof}

\chapter{Local Weyl modules of $\yn$}
In this chapter, the local Weyl modules of $\yn$ are studied. We first look into the local Weyl modules of $\ysl$. A lot of useful information is contained in this case. Next we consider the general case: the local Weyl modules for $\yn$. The ideas in this chapter will inspire us to characterize the local Weyl modules of Yangians of simple Lie algebras of types $B$, $C$, $D$ and $G_2$.

Let  $\pi=\big(\pi_1(u),\pi_2(u),\ldots, \pi_l(u)\big)$ be a generic $l$-tuple of monic polynomials in $u$, and $\pi_i\left(u\right)=\prod\limits_{j=1}^{m_i}\left(u-a_{i,j}\right)$. Let $k=m_1+m_2+\ldots+m_l$, $S=\{a_{i,j}|i=1,\ldots,l; j=1,\ldots,m_i\}$, and $\lambda=\sum\limits_{i=1}^{l}m_i\omega_i$.
Let $a_{m,n}$ be one of the numbers in $S$ with the maximal real part. Then define $a_1=a_{m,n}$ and $b_1=m$. Inductively, let $a_{s,t}$ be one of the numbers in $S-\{a_1,\ldots, a_{i-1}\}$ ($i\geq 2$) with the maximal real part. Then define $a_i=a_{s,t}$ and $b_i=s$. We construct a tensor product $L=V_{a_1}\left(\omega_{b_1}\right)\otimes V_{a_2}\left(\omega_{b_2}\right)\otimes \ldots \otimes V_{a_k}\left(\omega_{b_k}\right),$ where $V_{a_i}(\omega_{b_i})$ are fundamental representations of $\yn$. We prove that $L$ is a highest weight representation associated to $\pi$.
Note that the dimension of $V_{a_i}\left(\omega_{b_i}\right)$ is ${l+1\choose b_i}$. Then $\operatorname{Dim}(L)=\prod\limits_{i=1}^{l} {l+1\choose i}^{m_i}$. Since $L$ is a quotient of $W(\pi)$, a lower bound on the dimension of the local Weyl module $W(\pi)$ is obtained.

Let $W(\lambda)$ be the local Weyl module associated to $\lambda$ of the current algebra $\nyn[t]$. It follows from main theorem of \cite{ChLo} that $\operatorname{Dim}\big(W(\lambda)\big)=\prod\limits_{i=1}^{l} {l+1\choose i}^{m_i}$. Comparing the dimensions of $W(\lambda)$ and $L$, the structure of the local Weyl module $W(\pi)$ of $\yn$ is obtained, namely, $$W\left(\pi\right)\cong V_{a_1}\left(\omega_{b_1}\right)\otimes V_{a_2}\left(\omega_{b_2}\right)\otimes \ldots \otimes V_{a_k}\left(\omega_{b_k}\right),\ \text{where}\ a_i\in S.$$
%The dimension of $W(\pi)$ can be recovered from Theorem \ref{dofrdl}, Remark \ref{rofry} and Proposition \ref{dcofbdl}.
%P.S. I do not know if there is a notation, Kirillov-ResHetikhin modules for $\yg$, but there are notations for $\g[t]$ and the paper, On the Fermionic Formula and the Kirillov-Reshetikhin
%Conjecture, might be an interesting paper.

Similar to the proof that $L$ is a highest weight representation (Proposition \ref{Lhwr}), we obtain a sufficient condition for a tensor product of fundamental representations of the form $V_{a_1}(\omega_{b_1})\otimes V_{a_2}(\omega_{b_2})\otimes\ldots\otimes V_{a_k}(\omega_{b_k})$ to be a highest weight representation. If $a_j-a_i\notin S(b_i, b_j)$ for $1\leq i<j\leq k$, then $L$ is a highest weight representation, where $S(b_i, b_j)$ is a finite set of positive rational numbers as in Lemma \ref{C3l317ss}. A sufficient and necessary condition on the irreducibility of the tensor product is obtained: $L$ is irreducible if and only if $a_j-a_i\notin S(b_i, b_j)$ for $1\leq i\neq j\leq k$.

\section{On the local Weyl modules for Yangian $Y\left(\mathfrak{sl}_2\right)$}
In this section, we denote the generators of $\ysl$ by $x_{r}^{\pm}$,
$h_{r}$, $r\in\Z_{\geq 0}$.
\begin{lemma}
The assignment %$\rho: \ysl\mapsto U(\nysl)$
$\rho(x_{k}^{\pm})=\delta_{k,0}x_{k}^{\pm}$ and $\rho(h_{k})=\delta_{k,0}h_{k}$ extends linearly to a homomorphism from $\ysl$ to $U(\nysl)$ for $k\geq 0$.
\end{lemma}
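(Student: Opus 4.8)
The plan is to exploit Drinfeld's presentation: $\ysl$ is generated by $x_k^{\pm}, h_k$ ($k \in \Z_{\geq 0}$) subject to the relations in the definition of $\yg$ specialized to a single node with $d_1 = 1$ and $a_{11} = 2$; in particular the Serre relations never occur. Since $\rho$ is prescribed on generators, it extends uniquely to the free algebra on them, so the entire content is to verify that $\rho$ respects each defining relation, i.e.\ that applying $\rho$ to both sides of every relation yields a true identity in $U(\nysl)$. I would check the five families of relations in turn.

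The three ``lower'' relations $[h_r,h_s]=0$, $[h_0,x_s^{\pm}]=\pm 2x_s^{\pm}$ and $[x_r^+,x_s^-]=h_{r+s}$ are quickly dispatched. The only indices for which $\rho$ does not annihilate a factor are the degree-zero ones, and there the three relations reduce exactly to the $\nysl$-relations $[h_0,h_0]=0$, $[h_0,x_0^{\pm}]=\pm 2x_0^{\pm}$ and $[x_0^+,x_0^-]=h_0$, which hold in $U(\nysl)$; for all other indices both sides map to $0$ (note that $\rho(h_{r+s})=0$ as soon as $r+s\geq 1$). The two ``current'' relations are where the argument really lives. Because $\rho$ kills every generator of positive degree, the left-hand sides $[h_{r+1},x_s^{\pm}]-[h_r,x_{s+1}^{\pm}]$ and $[x_{r+1}^{\pm},x_s^{\pm}]-[x_r^{\pm},x_{s+1}^{\pm}]$ always carry a positive-degree factor and hence map to $0$; one is then left only to check that $\rho$ annihilates the right-hand sides. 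For $r\geq 1$ or $s\geq 1$ this is automatic, since $\pm(h_rx_s^{\pm}+x_s^{\pm}h_r)$ and $\pm(x_r^{\pm}x_s^{\pm}+x_s^{\pm}x_r^{\pm})$ again contain a positive-degree factor.

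The main obstacle is thus the single case $r=s=0$ of each current relation, where the left-hand side is killed by $\rho$ while the right-hand sides are the degree-zero expressions $\pm(h_0x_0^{\pm}+x_0^{\pm}h_0)$ and $\pm 2(x_0^{\pm})^2$. Note that in each current relation the right-hand side has strictly smaller filtration degree than the left-hand side, so these correction terms vanish in $\operatorname{gr}(\ysl)\cong U(\nysl[t])$ under evaluation at $t=0$; this pinpoints that the whole weight of the lemma rests on controlling precisely these two degree-zero quadratic terms in $U(\nysl)$, and it is there that I would concentrate the bulk of the proof, treating the remaining relations as routine.
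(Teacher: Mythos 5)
Your easy reductions are fine as far as they go, and you have correctly isolated the only problematic instances, namely the $r=s=0$ cases of the two current relations. But the step you defer to ``the bulk of the proof'' is not a hard step that remains to be done --- it is impossible, and this sinks the whole approach. For $\rho$ to respect $[h_{1},x_{0}^{\pm}]-[h_{0},x_{1}^{\pm}]=\pm\left(h_{0}x_{0}^{\pm}+x_{0}^{\pm}h_{0}\right)$ you need, as you yourself note, $\rho$ to annihilate the right-hand side; since $\rho$ is the identity on the degree-zero generators, that means $h_{0}x_{0}^{\pm}+x_{0}^{\pm}h_{0}=0$ in $U(\nysl)$, and similarly the relation $[x_{1}^{\pm},x_{0}^{\pm}]-[x_{0}^{\pm},x_{1}^{\pm}]=\pm2\left(x_{0}^{\pm}\right)^{2}$ forces $\left(x_{0}^{\pm}\right)^{2}=0$ in $U(\nysl)$. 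Both fail: by the PBW theorem $\left(x_{0}^{\pm}\right)^{2}$ is a basis monomial, and $h_{0}x_{0}^{+}+x_{0}^{+}h_{0}=2h_{0}x_{0}^{+}-2x_{0}^{+}\neq0$. Your observation that these correction terms ``vanish in $\operatorname{gr}(\ysl)$'' is true but irrelevant: an algebra homomorphism must kill each defining relation in $U(\nysl)$ itself, and passing to the associated graded discards precisely the lower-filtration terms whose survival is the obstruction.

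The source of the plausibility is that both offending expressions do act as zero on the two-dimensional irreducible $\nysl$-module, where $\left(x^{\pm}\right)^{2}=0$ and $hx^{\pm}+x^{\pm}h=0$ as operators; so the assignment does define a $\ysl$-module structure on $\C^{2}$. But already on the three-dimensional irreducible module $h_{0}x_{0}^{+}+x_{0}^{+}h_{0}$ acts nontrivially, so pulling back $W_{m}$ for $m\geq 2$ along this assignment does not even yield a representation. The genuine evaluation homomorphism $\ysl\to U(\nysl)$ of Chari and Pressley is not obtained by killing the positive-degree Drinfeld generators: it sends $x_{1}^{\pm}$ and $h_{1}$ to nonzero quadratic elements of $U(\nysl)$, consistently with Proposition \ref{wra}, where $x_{k}^{\pm}$ and $h_{k}$ act nontrivially on $W_{m}(a)$ for all $k\geq 1$. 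So any correct proof must start from different formulas for $\rho$ on the degree-one generators; no verification of the relations for the assignment as written can succeed. (For what it is worth, the thesis states this lemma without any proof, so the flaw lies in the statement itself; your relation-checking strategy is exactly what exposes it.)
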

%Let $W_1$ be the irreducible $\mathfrak{sl}_2$-module with highest weight 1. Extend $W_1$ to a $\mathfrak{gl}_2$-module that $E_{11}+E_{22}$ acting on its highest weight vector by $1$. It keeps irreducible over %$$\mathfrak{gl}_2$.   Let $W_1\left(a\right)$ be the evaluation module through $\rho: t_{ij}\left(u\right)\mapsto \delta_{ij}+\frac{E_{ij}}{u-a}$.
%\begin{proposition}[Proposition 2.8,\cite{ChPr5}]\label{Delta2}
%The comultiplicaiton map $\Delta$ of $Y=\ysl$ satisfies
%\begin{enumerate}
%  \item $\Delta\left(x_{k}^{+}\right)\equiv x_{k}^{+}\otimes 1+\sum\limits_{j=1}^{k}h_{j-1}\otimes x_{k-j}^{+}+1\otimes x_{k}^{+}$ modulo $\sum\limits_{p,q,r\geq 0} %Yx_p^{-}\otimes Yx_q^{+}x_r^{+}$;
%  \item $\Delta\left(x_{k}^{-}\right)=x_{k}^{-}\otimes 1+\sum\limits_{j=1}^{k}x_{k-j}^{-}\otimes h_{j-1}+1\otimes x_{k}^{-}$ modulo $\sum\limits_{p,q,r\geq 0} Yx_p^{-}x_q^{-}\otimes Yx_r^{+}$,
%  \item $\Delta\left(h_{k}\right)=h_{k}\otimes 1+\sum\limits_{j=1}^{k}h_{j-1}\otimes h_{k-j}+1\otimes h_{k}$ modulo $\sum\limits_{p\geq 0} Y\otimes Yx_p^{+}+ Yx_p^{+}\otimes Y$.
%\end{enumerate}
%\end{proposition}

Let $W_m$ be the irreducible representation of $\nysl$ with highest weight $m\in Z_{\geq 1}$. Pulling it back by $\rho$, $W_m$ becomes to a $\ysl$-module. Let $W_m\left(a\right)$ be the irreducible representation of $\ysl$ associated to the Drinfeld polynomial $\pi(u)=\Big(u-a\Big)\Big(u-(a_1+1)\Big)\ldots \Big(u-(a+m-1)\Big)$.

\begin{proposition}[Proposition 3.5,\cite{ChPr3}]\label{wra}
For any $m\geq 1$, $a\in\C$, $W_m\left(a\right)$ has a basis
$\{w_0,w_1,\ldots, w_m\}$ on which the action of $\ysl$ is given by
\begin{center}
$ x_k^+.w_s=\left(s+a\right)^k\left(s+1\right)w_{s+1},\ \ x_k^-.w_s=\left(s+a-1\right)^k\left(m-s+1\right)w_{s-1},$\\

$h_k.w_s=\Big(\left(s+a-1\right)^ks\left(m-s+1\right)-\left(s+a\right)^k\left(s+1\right)\left(m-s\right)\Big)w_s$.
\end{center}
\end{proposition}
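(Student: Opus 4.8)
The plan is to realize $W_m(a)$ concretely as the span of $w_0,\dots,w_m$ equipped with the prescribed operators, to verify that these operators satisfy the Drinfeld relations, to check irreducibility, and finally to read off the Drinfeld polynomial. Since $W_m(a)$ is by definition the unique irreducible $\ysl$-module attached to $\pi$, this identifies the module and simultaneously establishes the stated formulas.

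First I would set $V=\bigoplus_{s=0}^{m}\C w_s$ (with the convention $w_{-1}=w_{m+1}=0$) and define $x_k^{\pm}$, $h_k$ by the listed formulas. Because $\nysl$ has a single node the Serre relations are vacuous, so only the following must be checked: $[h_r,h_s]=0$, $[h_0,x_s^{\pm}]=\pm 2x_s^{\pm}$, $[x_r^+,x_s^-]=h_{r+s}$, and the two deformed relations
\[
[h_{r+1},x_s^{\pm}]-[h_r,x_{s+1}^{\pm}]=\pm\left(h_r x_s^{\pm}+x_s^{\pm}h_r\right),\qquad
[x_{r+1}^{\pm},x_s^{\pm}]-[x_r^{\pm},x_{s+1}^{\pm}]=\pm\left(x_r^{\pm}x_s^{\pm}+x_s^{\pm}x_r^{\pm}\right).
\]
Each $h_k$ is diagonal, so $[h_r,h_s]=0$ is immediate, and $[h_0,x_s^{\pm}]=\pm 2x_s^{\pm}$ amounts to the weight computation $h_0w_t=(2t-m)w_t$. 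Writing $A(t)=t(m-t+1)$ and $B(t)=(t+1)(m-t)$, one has $h_kw_t=\big((t+a-1)^kA(t)-(t+a)^kB(t)\big)w_t$; evaluating $x_r^+x_s^-$ and $x_s^-x_r^+$ on $w_t$ and subtracting gives exactly $\big((t+a-1)^{r+s}A(t)-(t+a)^{r+s}B(t)\big)w_t=h_{r+s}w_t$, which is the relation $[x_r^+,x_s^-]=h_{r+s}$.

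The substance is the two deformed relations, and after cancelling the common scalar factor each becomes a polynomial identity in $t$. The facts that make them collapse are the identity $A(t+1)=B(t)$ and the unit spacing of consecutive evaluation points: for the $x^+x^+$ relation, with $p=t+a$ and $q=t+1+a$, the left-hand side produces $p^sq^{r+1}-p^{s+1}q^r+p^rq^{s+1}-p^{r+1}q^s=(q-p)(p^sq^r+p^rq^s)$, and $q-p=1$ reproduces the symmetric right-hand side $x_r^+x_s^++x_s^+x_r^+$. For the $h$-$x$ relation the analogous telescoping $D_{r+1}-(t+a)D_r=\Sigma_r$ holds, where $D_k$ and $\Sigma_k$ denote the difference and the sum of the $h_k$-eigenvalues at $t+1$ and $t$; it follows from $A(t+1)=B(t)$ together with the single-step cancellations $-(t+1+a)+(t+a)=-1$ and $-(t+a-1)+(t+a)=1$. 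The $x^-$ and $hx^-$ cases are identical under the Chevalley symmetry $s\mapsto m-s$, so no separate work is needed.

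Finally I would establish irreducibility and compute the highest weight. Since $h_0$ acts on $w_s$ with the distinct eigenvalues $2s-m$, any submodule is a sum of the lines $\C w_s$; as $x_0^+w_s=(s+1)w_{s+1}$ and $x_0^-w_s=(m-s+1)w_{s-1}$ have nonzero coefficients throughout the range, any nonzero submodule equals $V$, so $V$ is irreducible. The vector $w_m$ is killed by every $x_k^+$ and satisfies $h(u)w_m=\mu(u)w_m$ with $\mu(u)=1+\tfrac{m}{u-(m+a-1)}=\tfrac{u-a+1}{u-a-m+1}$; for $\pi(u)=(u-a)(u-(a+1))\cdots(u-(a+m-1))$ one checks directly that $\pi(u+1)/\pi(u)=\mu(u)$, so by Theorem \ref{cfdihwr} this monic $\pi$ is the Drinfeld polynomial of $V$ and hence $V\cong W_m(a)$. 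The only real obstacle is the two deformed relations; once the identity $A(t+1)=B(t)$ and the unit spacing $q-p=1$ are recorded, everything reduces to the elementary factorizations above, leaving nothing beyond careful bookkeeping of the exponents $r,s$ and the index shift $t\mapsto t\pm 1$.
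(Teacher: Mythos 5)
Your proof is correct. Note, though, that the thesis itself offers no proof of this statement: it is imported verbatim as Proposition 3.5 of \cite{ChPr3}, where the formulas are obtained by realizing $W_m(a)$ as an evaluation module, i.e.\ by pushing the $(m+1)$-dimensional irreducible $\nysl$-module through an evaluation homomorphism $\ysl\to U(\nysl)$ and computing the action of the Drinfeld generators on the standard weight basis. Your route runs in the opposite direction: you take the displayed formulas as the \emph{definition} of operators on $V=\bigoplus_{s}\C w_s$, check the defining relations of $\ysl$ directly, and then pin the module down via irreducibility and the Drinfeld polynomial. The checks are sound: $[x_r^+,x_s^-]w_t=\big((t+a-1)^{r+s}A(t)-(t+a)^{r+s}B(t)\big)w_t$ with $A(t)=t(m-t+1)$, $B(t)=(t+1)(m-t)$ is exactly $h_{r+s}w_t$; the $x^{\pm}x^{\pm}$ relations reduce to the factorization $(q-p)(p^sq^r+p^rq^s)$ with unit spacing $q-p=1$; the $h$--$x$ relations reduce to $D_{r+1}-(t+a)D_r=\Sigma_r$, which does follow from $A(t+1)=B(t)$ together with the single-step cancellations you record; irreducibility follows from the distinct $h_0$-eigenvalues $2s-m$; and $\pi(u+1)/\pi(u)=1+m/\big(u-(m+a-1)\big)$ identifies the highest weight of $w_m$, so $V\cong L(\mu)=W_m(a)$ by Theorem \ref{cfdihwr} and uniqueness of the irreducible highest weight module with a given highest weight. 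One phrase deserves tightening: the ``Chevalley symmetry $s\mapsto m-s$'' you invoke for the $x^-$ and $h$--$x^-$ cases is not literally a symmetry of the formulas; what actually transports the argument is that the $x^-$ evaluation points $t+a-2$ and $t+a-1$ are again unit-spaced (now with $p'-q'=-1$, which produces the required minus sign), and that $B(t-1)=A(t)$ plays the role that $A(t+1)=B(t)$ played on the positive side. As for what each approach buys: the evaluation construction of \cite{ChPr3} explains where the formulas come from and is shorter once the evaluation homomorphism is in hand, while your verification is elementary, self-contained, and uses nothing beyond the defining relations and Drinfeld's classification.
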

The cases when $m=1$ and $m=2$ are important for the characterization of the local Weyl modules of Yangians. Calculations are listed in the two corollaries below.
\begin{corollary}\label{slw1a}
In $W_1\left(a\right)$,
\begin{equation}\label{Cor3.3}
  h_{k}w_1=a^kw_1,\quad x_{k}^{-}w_1=a^kx_{0}^{-}w_1,\quad h_{k}x_{i,0}^{-}w_1=-a^kx_{0}^{-}w_1.
\end{equation}
\end{corollary}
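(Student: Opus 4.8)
The plan is to obtain all three identities by directly specializing the action formulas of Proposition \ref{wra} to the case $m=1$, where the representation $W_1(a)$ has basis $\{w_0,w_1\}$. No genuinely new idea is needed; the work consists in substituting $m=1$ and the two relevant values $s=0,1$ into the displayed formulas and keeping track of which factors vanish.

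First I would treat $h_k w_1$. Setting $m=1$ and $s=1$ in the formula for $h_k$, the second summand carries the factor $(m-s)=0$ while the first carries $(m-s+1)=1$, so that
\[
h_k w_1 = \big(a^k\cdot 1\cdot 1 - (1+a)^k\cdot 2\cdot 0\big)w_1 = a^k w_1,
\]
giving the first identity. Next I would identify $x_0^- w_1$: taking $k=0$, $m=1$, $s=1$ in the formula for $x_k^-$ yields $x_0^- w_1 = (a-1)^0\cdot 1\cdot w_0 = w_0$. With this in hand, the same formula for general $k$ gives $x_k^- w_1 = a^k w_0 = a^k x_0^- w_1$, which is the second identity.

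Finally, for the third identity I would use the observation $x_0^- w_1 = w_0$ to rewrite $h_k x_0^- w_1 = h_k w_0$, and then specialize the $h_k$ formula at $m=1$, $s=0$. Here the first summand carries the factor $s=0$ and hence vanishes, leaving
\[
h_k w_0 = \big((a-1)^k\cdot 0\cdot 2 - a^k\cdot 1\cdot 1\big)w_0 = -a^k w_0 = -a^k x_0^- w_1,
\]
as claimed.

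I do not expect a real obstacle here; the only points requiring care are the preliminary identification $x_0^- w_1 = w_0$ (which is what lets us re-express each right-hand side in terms of $x_0^- w_1$ rather than $w_0$) and the bookkeeping of which factor collapses to zero at $s=0$ versus $s=1$.
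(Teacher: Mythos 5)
Your proof is correct and takes exactly the route the paper intends: Corollary \ref{slw1a} is stated as an immediate specialization of Proposition \ref{wra} to $m=1$ (with the identification $x_0^- w_1 = w_0$ used to rewrite everything in terms of $x_0^- w_1$), which is precisely what you carry out. One cosmetic slip: in your $k=0$ evaluation the base should be $s+a-1 = a$ (since $s=1$), not $a-1$, though with exponent $0$ this changes nothing.
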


\begin{corollary}\label{w2ihw}
Let $W_2\left(a\right)=\ysl\left(w_2\right)=span\{w_0, w_1, w_2\}$.
\begin{enumerate}
  \item $x_{k}^{-}w_2=\left(a+1\right)^kw_1$, $x_{k}^{-}w_1=2a^kw_0$.
  \item $x_{0}^{-}x_{1}^{-}\left(w_2\right)=\left(a+1\right)\left(x_{0}^{-}\right)^2\left(w_2\right)$.
  \item $x_{1}^{-}x_{0}^{-}\left(w_2\right)=a\left(x_{0}^{-}\right)^2\left(w_2\right)$.
    \item $\left(x_{1}^{-}x_{0}^{-}+x_{0}^{-}x_{1}^{-}\right)\left(w_2\right)=\left(2a+1\right)\left(x_{0}^{-}\right)^2\left(w_2\right)$.
    \item $\left(x_{2}^{-}x_{0}^{-}+x_{0}^{-}x_{2}^{-}\right)\left(w_2\right)=\left(2a^2+2a+1\right)\left(x_{0}^{-}\right)^2\left(w_2\right)$.
    \item $\left(x_{1}^{-}\right)^2\left(w_2\right)=a\left(a+1\right)\left(x_{0}^{-}\right)^2\left(w_2\right)$;
\end{enumerate}
\end{corollary}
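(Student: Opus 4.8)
The plan is to prove Corollary \ref{w2ihw} by direct computation from the explicit action of $\ysl$ on $W_2(a)$ recorded in Proposition \ref{wra}, specializing to $m=2$. Recall that $W_2(a)$ has basis $\{w_0,w_1,w_2\}$ with the highest weight vector being $w_2$ (the index $s$ runs from $0$ to $m=2$). The formula $x_k^-.w_s=(s+a-1)^k(m-s+1)w_{s-1}$ with $m=2$ gives $x_k^-.w_2=(a+1)^k\cdot 1\cdot w_1=(a+1)^kw_1$ and $x_k^-.w_1=a^k\cdot 2\cdot w_0=2a^kw_0$, which is exactly part (1). Everything else follows by composing these two relations, so no new input beyond Proposition \ref{wra} is needed.

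For the remaining parts I would first compute the baseline $(x_0^-)^2(w_2)=x_0^-(w_1\cdot(a+1)^0)=x_0^-(w_1)=2w_0$, using $k=0$ in both formulas, so that $(x_0^-)^2(w_2)=2w_0$. Then each of (2)--(6) reduces to expressing a composite $x_{k}^-x_{j}^-(w_2)$ as a multiple of $w_0$ and comparing with $2w_0$. Concretely, $x_j^-(w_2)=(a+1)^jw_1$ and then $x_k^-\big((a+1)^jw_1\big)=(a+1)^j\cdot 2a^k\,w_0$, so $x_k^-x_j^-(w_2)=2a^k(a+1)^jw_0=a^k(a+1)^j(x_0^-)^2(w_2)$. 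Applying this master formula: part (2) is $x_0^-x_1^-(w_2)=a^0(a+1)^1(x_0^-)^2(w_2)=(a+1)(x_0^-)^2(w_2)$; part (3) is $x_1^-x_0^-(w_2)=a^1(a+1)^0(x_0^-)^2(w_2)=a(x_0^-)^2(w_2)$; part (6) is $(x_1^-)^2(w_2)=a^1(a+1)^1(x_0^-)^2(w_2)=a(a+1)(x_0^-)^2(w_2)$.

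Parts (4) and (5) are then obtained by adding the relevant instances of the master formula. For (4), $(x_1^-x_0^-+x_0^-x_1^-)(w_2)=\big(a(a+1)^0+a^0(a+1)\big)(x_0^-)^2(w_2)=(a+(a+1))(x_0^-)^2(w_2)=(2a+1)(x_0^-)^2(w_2)$. For (5), $(x_2^-x_0^-+x_0^-x_2^-)(w_2)=\big(a^2(a+1)^0+a^0(a+1)^2\big)(x_0^-)^2(w_2)=\big(a^2+(a+1)^2\big)(x_0^-)^2(w_2)=(2a^2+2a+1)(x_0^-)^2(w_2)$. Thus all six identities follow mechanically once the master formula $x_k^-x_j^-(w_2)=a^k(a+1)^j(x_0^-)^2(w_2)$ is established.

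I do not anticipate a genuine obstacle here, since the statement is purely computational and every ingredient is supplied by Proposition \ref{wra}; the only point requiring a little care is bookkeeping of the indices, namely remembering that $w_2$ is the highest weight vector and that lowering acts by $w_2\mapsto w_1\mapsto w_0$, so that the coefficient $(s+a-1)^k$ picks up $(a+1)^k$ on the first application and $a^k$ on the second. The mild subtlety worth flagging is the factor $2$ coming from the multiplicity $(m-s+1)=2$ when lowering from $w_1$ to $w_0$; it appears uniformly in $(x_0^-)^2(w_2)$ and in every composite, so it cancels when one divides by $(x_0^-)^2(w_2)$, which is why the final coefficients are clean polynomials in $a$.
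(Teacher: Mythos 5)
Your proof is correct and is exactly the computation the paper intends: the corollary is stated as a direct specialization of Proposition \ref{wra} to $m=2$ (the paper records the results without writing out the calculation), and your master formula $x_k^-x_j^-(w_2)=a^k(a+1)^j\left(x_0^-\right)^2(w_2)$ cleanly packages all six parts. The index bookkeeping (highest weight vector $w_2$, coefficients $(a+1)^k$ then $a^k$, and the factor $2$ cancelling against $\left(x_0^-\right)^2(w_2)=2w_0$) is handled correctly.
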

It follows from Propositions \ref{Delta} and \ref{wra} that
\begin{corollary}\label{w1bw1a}
Let $v^{+}\left(v^{-}\right)$ and $w^{+}\left(w^{-}\right)$ be the highest (lowest) weight vectors in $W_1\left(b\right)$ and $W_1\left(a\right)$ respectively.
In $W_1\left(b\right)\otimes W_1\left(a\right)$,
\begin{enumerate}
  \item $x_{0}^{-}\left(v^{+}\otimes w^{+}\right)$ and $x_{1}^{-}\left(v^{+}\otimes w^{+}\right)$ are linear independent.
  \item $x_{0}^{-}x_{1}^{-}\left(v^{+}\otimes w^{+}\right)=\frac{\left(a+b+1\right)}{2}\left(x_{0}^{-}\right)^2\left(v^{+}\otimes w^{+}\right)$.
  \item $x_{1}^{-}x_{0}^{-}\left(v^{+}\otimes w^{+}\right)=\frac{\left(a+b-1\right)}{2}\left(x_{0}^{-}\right)^2\left(v^{+}\otimes w^{+}\right)$.
  \item $\left(x_{1}^{-}x_{0}^{-}+x_{0}^{-}x_{1}^{-}\right)\left(v^{+}\otimes w^{+}\right)=\left(a+b\right)\left(x_{0}^{-}\right)^2\left(v^{+}\otimes w^{+}\right)$.
  \item $x_{2}^{-}\left(v^{+}\otimes w^{+}\right)=\left(b^2+b+a\right)v^{-}\otimes w^{+}+a^2 v^{+}\otimes w^{-}$, \text{and}\\ $x_0^{-}x_{2}^{-}\left(v^{+}\otimes w^{+}\right)=\frac{\left(b^2+b+a+a^2\right)}{2}\left(x_{0}^{-}\right)^2\left(v^{+}\otimes w^{+}\right),$ \text{and} \\ $x_1^{-}x_{2}^{-}\left(v^{+}\otimes w^{+}\right)=\frac{ab\left(a+b+1\right)}{2}\left(x_{0}^{-}\right)^2\left(v^{+}\otimes w^{+}\right)$.
  \item $x_1^{-}x_{1}^{-}\left(v^{+}\otimes w^{+}\right)=ab\left(x_{0}^{-}\right)^2\left(v^{+}\otimes w^{+}\right)$.
  \item $x_{3}^{-}\left(v^{+}\otimes w^{+}\right)=\left(b^3+b^2+ab+a^2\right)v^{-}\otimes w^{+}+a^3 v^{+}\otimes w^{-}$ \text{and}\\ $x_0^{-}x_{3}^{-}\left(v^{+}\otimes w^{+}\right)=\frac{b^3+b^2+ab+a^2+a^3}{2}\left(x_{0}^{-}\right)^2\left(v^{+}\otimes w^{+}\right)$.

\end{enumerate}
\end{corollary}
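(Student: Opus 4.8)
The plan is to prove Corollary~\ref{w1bw1a} by a single direct computation, reducing every assertion to the one-factor action recorded in Corollary~\ref{slw1a} together with the coproduct formula of Proposition~\ref{Delta}(ii). Write $w^{+}=w_1$, $w^{-}=w_0$ for the highest and lowest weight vectors of $W_1(a)$, so that $x_{k}^{-}w^{+}=a^{k}w^{-}$, $x_{k}^{-}w^{-}=0$, $h_{k}w^{+}=a^{k}w^{+}$ and $h_{k}w^{-}=-a^{k}w^{-}$; the vectors $v^{+},v^{-}$ of $W_1(b)$ satisfy the same relations with $a$ replaced by $b$. Two structural facts about these two-dimensional modules will be used throughout: any two successive lowering generators annihilate the highest weight vector, and any raising generator annihilates it. The whole calculation stays inside $\operatorname{span}\{v^{+}\otimes w^{+},\,v^{-}\otimes w^{+},\,v^{+}\otimes w^{-},\,v^{-}\otimes w^{-}\}$.

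First I would apply the reduced coproduct $\Delta(x_{k}^{-})=x_{k}^{-}\otimes 1+1\otimes x_{k}^{-}+\sum_{j=1}^{k}x_{k-j}^{-}\otimes h_{j-1}$ to $v^{+}\otimes w^{+}$. Inserting the one-factor relations yields the master formula
$$\Delta(x_{k}^{-})(v^{+}\otimes w^{+})=\Big(b^{k}+\sum_{j=1}^{k}a^{j-1}b^{k-j}\Big)\,v^{-}\otimes w^{+}+a^{k}\,v^{+}\otimes w^{-}.$$
Specializing $k=0,1,2,3$ gives the first displayed identities of parts (1), (5) and (7). For part~(1) the vectors $x_{0}^{-}(v^{+}\otimes w^{+})$ and $x_{1}^{-}(v^{+}\otimes w^{+})$ lie in the plane $\operatorname{span}\{v^{-}\otimes w^{+},\,v^{+}\otimes w^{-}\}$, and their independence is controlled by the determinant $a-(b+1)$, which is exactly the genericity hypothesis. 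Applying $x_{0}^{-}$ a second time gives $(x_{0}^{-})^{2}(v^{+}\otimes w^{+})=2\,v^{-}\otimes w^{-}$, the reference vector against which every remaining coefficient is compared.

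For the composite parts (2)--(4), (6) and the second identities of (5) and (7), I would apply a second lowering operator to the vectors already produced, using the two auxiliary formulas $x_{k}^{-}(v^{-}\otimes w^{+})=a^{k}\,v^{-}\otimes w^{-}$ and $x_{k}^{-}(v^{+}\otimes w^{-})=\big(b^{k}-\sum_{j=1}^{k}a^{j-1}b^{k-j}\big)\,v^{-}\otimes w^{-}$, both obtained from the reduced coproduct together with $h_{j-1}w^{-}=-a^{j-1}w^{-}$. For instance $x_{1}^{-}x_{0}^{-}(v^{+}\otimes w^{+})=x_{1}^{-}(v^{-}\otimes w^{+})+x_{1}^{-}(v^{+}\otimes w^{-})=(a+b-1)\,v^{-}\otimes w^{-}$, and each such coefficient is then matched against $\tfrac12(\cdots)\,(x_{0}^{-})^{2}(v^{+}\otimes w^{+})$. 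This part is routine bookkeeping once the two auxiliary formulas are established.

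The only genuine obstacle is justifying that the reduced coproduct of Proposition~\ref{Delta} may be used at every stage, that is, that the correction terms it suppresses act as zero on all vectors occurring in the computation. When the operators are applied directly to $v^{+}\otimes w^{+}$ this is Proposition~\ref{c1dgd2d}: $\Delta_{\yg}(x_{k_{s}}^{-}\cdots x_{k_{1}}^{-})$ and $\Delta_{Y_i}(x_{k_{s}}^{-}\cdots x_{k_{1}}^{-})$ differ by an element of $HN_{i}^{-}\otimes M_{i}^{-}HN_{i}^{+}$, whose second tensor leg ends in $N_{i}^{+}$ and hence kills $w^{+}$. For the intermediate vectors the suppressed terms split into two types: those whose second leg carries a raising generator vanish on $v^{-}\otimes w^{+}$ because $x^{+}w^{+}=0$, while those whose first leg carries two lowering generators vanish on $v^{+}\otimes w^{-}$ because $W_1(b)\cong\C^{2}$ is two-dimensional. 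Thus at each step only the three explicit terms of the reduced coproduct contribute, the iterated computation is legitimate, and all the identities of Corollary~\ref{w1bw1a} follow.
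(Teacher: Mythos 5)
Your proposal is correct and is essentially the paper's own argument: the paper derives this corollary by exactly the combination you use, namely the coproduct formula of Proposition~\ref{Delta} applied to the explicit one-factor action of Proposition~\ref{wra} (equivalently Corollary~\ref{slw1a}), and all of your coefficients check out against the stated identities. If anything, you supply more detail than the paper's one-line citation — in particular the verification that the suppressed coproduct correction terms annihilate each of the four basis vectors $v^{\pm}\otimes w^{\pm}$ (via $x^{+}w^{+}=0$ on one leg and $x_{p}^{-}x_{q}^{-}v^{+}=0$ in the two-dimensional module on the other), and the observation that the linear independence in part (1) requires $a\neq b+1$, which holds in every context where the corollary is invoked.
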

%\begin{proof}
%$x_{0}^{-}\left(v_1\otimes w_1\right)=v_0\otimes w_1+v_1\otimes w_0$ and $x_{1}^{-}\left(v_1\otimes w_1\right)=\left(b+1\right)v_0\otimes w_1+av_1\otimes w_1$, which prove \left(1\right).

%$x_0x_{1}^{-}\left(v_1\otimes w_1\right)=x_0\left(\left(b+1\right)v_0\otimes w_1+av_1\otimes w_1\right)=\left(a+b+1\right)v_0\otimes w_0$. $x_1x_0\left(v_1\otimes w_1\right)=x_1\left(v_0\otimes w_1+v_1\otimes w_0\right)=a\left(v_0\otimes w_0\right)+\left(b-1\right)\left(v_0\otimes w_0\right)=\left(a+b-1\right)v_0\otimes w_0$.
%\end{proof}
%We would like to see a special case of Proposition \ref{ctpihw} when $r=2$. The calculation is needed later for the characterization of Weyl modules.v_0\otimes w_1+v_1\otimes w_1

\begin{proposition}[Proposition 3.6, \cite{ChPr3}]
Let $a\in\C$ and let $v_0=v^+\otimes w^{-} -v^-\otimes
w^+$, where $v^+$ and $w^+$ ($v^-$ and $w^{-}$) are highest (lowest) weight
vectors in $W_1\left(a+1\right)$ and $W_1\left(a\right)$, respectively.
$W_1\left(a+1\right)\otimes W_1\left(a\right)=\ysl\left(v^+\otimes v^-\right)$, and we have a short
exact sequence of $\ysl$-modules
$$0\rightarrow\ysl v_0\rightarrow W_1\left(a+1\right)\otimes W_1\left(a\right)\rightarrow W_2\left(a\right)\rightarrow 0,$$
where $\ysl v_0$ is the one-dimensional trivial module.
\end{proposition}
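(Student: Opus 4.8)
The plan is to use the fact that, restricted to $\nysl$, the four-dimensional module $W_1(a+1)\otimes W_1(a)$ decomposes as $W_2\oplus W_0$, the sum of the three-dimensional irreducible and the trivial $\nysl$-module, and that $v_0$ spans the trivial summand. Write $b=a+1$, and let $v^+,v^-$ (resp. $w^+,w^-$) be the highest and lowest weight vectors of $W_1(b)$ (resp. $W_1(a)$). The proof splits into three parts: (i) the highest weight vector $v^+\otimes w^+$ generates the whole module; (ii) $\C v_0$ is a one-dimensional trivial $\ysl$-submodule; (iii) the quotient is isomorphic to $W_2(a)$.

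For (i), I would apply lowering operators to $v^+\otimes w^+$. The vectors $x_0^-(v^+\otimes w^+)$ and $x_1^-(v^+\otimes w^+)$ both lie in the two-dimensional zero-weight space and are linearly independent by Corollary \ref{w1bw1a}(1), so they span it. Applying $x_0^-$ once more gives $(x_0^-)^2(v^+\otimes w^+)=2\,v^-\otimes w^-$, the lowest weight vector. Hence $\ysl(v^+\otimes w^+)$ meets every $\nysl$-weight space and therefore equals the whole module.

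For (ii), one first checks directly that $x_0^\pm v_0=0$, so $v_0$ is $\nysl$-invariant. The crux is to show $x_k^\pm v_0=0$ for all $k\geq 1$; granting this, $h_k v_0=[x_k^+,x_0^-]v_0=0$ follows at once. Here I would use the explicit coproduct expansions of $\Delta_{\ysl}(x_k^\pm)$ recalled earlier from \cite{ChPr5}: their ``modulo'' correction terms carry a factor $x^\pm x^\pm$ on one tensor leg, which annihilates the two-dimensional modules $W_1(b)$, $W_1(a)$, so these terms contribute nothing on $v^+\otimes w^-$ or $v^-\otimes w^+$. A direct computation using Proposition \ref{wra} and Corollary \ref{slw1a} then gives $x_k^+(v^+\otimes w^-)=\big(a^k+\sum_{s=1}^{k}b^{s-1}a^{k-s}\big)v^+\otimes w^+$ and $x_k^+(v^-\otimes w^+)=b^k\,v^+\otimes w^+$. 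The decisive point is the geometric sum $\sum_{s=1}^{k}b^{s-1}a^{k-s}=(b^k-a^k)/(b-a)=b^k-a^k$, valid precisely because $b-a=1$; it forces $x_k^+ v_0=0$. The analogous computation for $x_k^-$ yields $x_k^-(v^+\otimes w^-)=x_k^-(v^-\otimes w^+)=a^k\,v^-\otimes w^-$, whence $x_k^- v_0=0$. Thus $\ysl v_0=\C v_0$ is trivial.

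For (iii), by Proposition \ref{vtv'hwv} the generator $v^+\otimes w^+$ is a highest weight vector whose Drinfeld polynomial is the product $(u-a-1)(u-a)$, which is exactly the Drinfeld polynomial of $W_2(a)$. The quotient $\big(W_1(b)\otimes W_1(a)\big)/\C v_0$ is therefore a three-dimensional highest weight $\ysl$-module with this Drinfeld data, and its unique irreducible quotient is the three-dimensional module $W_2(a)$; a dimension count then forces this quotient to be irreducible and equal to $W_2(a)$, giving the asserted short exact sequence. I expect the main obstacle to be part (ii): controlling the correction terms of the coproduct acting on $v_0$ (which, unlike $v^+\otimes w^+$, is not a tensor of highest weight vectors) and verifying the cancellation produced by the relation $b-a=1$.
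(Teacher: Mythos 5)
Your proof is correct, but there is nothing in the thesis to compare it against: the proposition is quoted from Chari--Pressley \cite{ChPr3} and used as a black box, with no proof given, so your argument in effect supplies that missing proof. Checking it on its own merits: the key reduction --- that the correction terms of $\Delta_{\ysl}(x_k^{\pm})$ act as zero on all of $W_1(a+1)\otimes W_1(a)$, because each such term carries a product of two lowering (resp.\ raising) generators on one tensor leg while the factors have $\nysl$-weights $\pm1$ only --- is valid, and it is exactly the observation needed, since $v_0$ is not a tensor product of highest weight vectors and the truncated coproduct formulas are a priori stated only modulo those terms. Your explicit formulas for $x_k^{\pm}$ on $v^{+}\otimes w^{-}$ and $v^{-}\otimes w^{+}$ agree with Proposition \ref{wra} and Corollary \ref{slw1a}, the telescoping identity $\sum_{s=1}^{k}b^{s-1}a^{k-s}=b^{k}-a^{k}$ for $b-a=1$ does force $x_k^{\pm}v_0=0$, and then $h_kv_0=[x_k^{+},x_0^{-}]v_0=0$ is automatic. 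Parts (i) and (iii) are also sound: for $b=a+1$ the two vectors in item (1) of Corollary \ref{w1bw1a} are independent (the relevant determinant is $a-b-1=-2\neq 0$), so the cyclic module meets every $\nysl$-weight space and hence is everything; and the three-dimensional highest weight quotient, whose associated Drinfeld polynomial is $(u-a)\big(u-(a+1)\big)$ by Proposition \ref{vtv'hwv}, must coincide with its unique irreducible quotient $W_2(a)$ by a dimension count, which yields the asserted short exact sequence.
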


\begin{proposition}[Proposition 3.7, \cite{ChPr3}]\label{ctpihw}
Let $a_1,a_2,\ldots,a_m\in\C$, $m\geq 1$. Then if $a_j-a_i\neq 1$ when $i<j$,  $$W_1\left(a_1\right)\otimes\ldots\otimes W_1\left(a_m\right)$$ is a highest weight $Y\left(\mathfrak{sl}_2\right)$-module.
\end{proposition}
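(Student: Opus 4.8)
The plan is to argue by induction on $m$, the case $m=1$ being immediate since $W_1(a_1)$ is irreducible. By Proposition \ref{vtv'hwv} the vector $\mathbf{v}^{+}=v_1^{+}\otimes\cdots\otimes v_m^{+}$ (each $v_i^{+}$ the highest weight vector of $W_1(a_i)$) is already a highest weight vector of the tensor product, so the only thing left to establish is cyclicity, i.e. that $\mathbf{v}^{+}$ generates the whole module. For the inductive step I set $V=W_1(a_1)$ with highest and lowest weight vectors $v_1^{+},v_1^{-}$, and $W'=W_1(a_2)\otimes\cdots\otimes W_1(a_m)$, which is a highest weight module generated by $v^{+}=v_2^{+}\otimes\cdots\otimes v_m^{+}$ by the induction hypothesis (the inequalities $a_j-a_i\neq 1$ for $2\le i<j\le m$ hold). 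By Corollary \ref{v-w+gvtw}, $v_1^{-}\otimes v^{+}$ generates $V\otimes W'$, so it suffices to prove that $v_1^{-}\otimes v^{+}\in M:=\ysl\,(v_1^{+}\otimes v^{+})$.

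To do this I would apply the lowering generators to $v_1^{+}\otimes v^{+}$ and read off a copy of $v_1^{-}\otimes v^{+}$. Working with the generating series $x^{-}(u)=\sum_{k\ge 0}x_k^{-}u^{-k-1}$ and using Proposition \ref{Delta}(ii) (its correction terms lie in $\overline{Y}$ and annihilate the tensor $v_1^{+}\otimes v^{+}$ of highest weight vectors, exactly as in the proof of Corollary \ref{v-w+gvtw}), one obtains
\begin{equation*}
x^{-}(u)\,(v_1^{+}\otimes v^{+})=\frac{\nu(u)}{u-a_1}\,(v_1^{-}\otimes v^{+})+v_1^{+}\otimes\big(x^{-}(u)\,v^{+}\big),
\end{equation*}
where I have used $x^{-}(u)v_1^{+}=\tfrac{1}{u-a_1}v_1^{-}$ from Corollary \ref{slw1a} and written $\nu(u)=\prod_{i=2}^m\frac{u+1-a_i}{u-a_i}$ for the eigenvalue of the series $h(u)$ on $v^{+}$ (the associated polynomial of $W'$ is $\prod_{i=2}^m(u-a_i)$ by Proposition \ref{vtv'hwv}). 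Every coefficient of $u^{-k-1}$ on the left lies in $M$, both sides live in the single weight space of $V\otimes W'$ lying one root below the top weight, the first summand is supported on the line $\C\,(v_1^{-}\otimes v^{+})$, and the second on the complementary subspace $v_1^{+}\otimes W'_{1}$, where $W'_{1}$ is the weight space of $W'$ one root below its top.

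The heart of the argument is then to separate these two summands, and this is exactly where the hypothesis enters. Expanding $x^{-}(u)\,v^{+}=\sum_{i=2}^m D_i(u)\,e_i$ with $e_i=v_2^{+}\otimes\cdots\otimes v_i^{-}\otimes\cdots\otimes v_m^{+}$ and $D_i(u)=\frac{\prod_{i'>i}(u+1-a_{i'})}{\prod_{i'\ge i}(u-a_{i'})}$, I would compare orders of poles at $u=a_1$. Writing $r=\#\{i\ge 2:a_i=a_1\}$, the inequalities $a_j-a_1\neq 1$ for $j\ge 2$ say precisely that no factor $u+1-a_j$ vanishes at $u=a_1$, so the numerator $\prod_{i=2}^m(u+1-a_i)$ of $\nu(u)/(u-a_1)$ is nonzero there and this coefficient has a pole of order exactly $r+1$; the same inequalities force every $D_i(u)$ to have a pole of order at most $r$ at $u=a_1$. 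Hence the $(v_1^{-}\otimes v^{+})$-component carries strictly the highest-order pole at $a_1$, and extracting the leading Laurent coefficient at $u=a_1$ (a vector that automatically lies in the $\C$-span of the coefficients $x_k^{-}(v_1^{+}\otimes v^{+})$, hence in $M$) produces a nonzero multiple of $v_1^{-}\otimes v^{+}$. This yields $v_1^{-}\otimes v^{+}\in M$ and closes the induction.

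The main obstacle I anticipate is the pole-order bookkeeping in the presence of repeated or congruent spectral parameters: one must verify that $a_j-a_1\neq 1$ genuinely prevents any cancellation that could drop the order of the pole of the $(v_1^{-}\otimes v^{+})$-coefficient to that of the $W'$-component, and one must justify cleanly that the top Laurent coefficient of a rational series valued in the finite-dimensional weight space is a finite $\C$-linear combination of its expansion coefficients at infinity — a standard linear independence of the sequences $\big\{\binom{k}{j}a^{k-j}\big\}$, but worth stating carefully. Everything else reduces to the explicit $\ysl$-action recorded in Proposition \ref{wra} and Corollary \ref{slw1a}.
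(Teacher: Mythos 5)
Your proof is correct, but note that the thesis itself contains no proof of Proposition \ref{ctpihw}: the result is imported by citation from \cite{ChPr3}, so there is no in-paper argument to match yours against. Judged on its own, your argument is sound, and it is essentially a rank-one instance of the machinery the thesis builds for higher rank: your reduction (maximality of $v_1^{+}\otimes v^{+}$, then Corollary \ref{v-w+gvtw} to reduce cyclicity to $v_1^{-}\otimes v^{+}\in\ysl\left(v_1^{+}\otimes v^{+}\right)$) is exactly the scheme of Proposition \ref{Lhwr}, while your extraction step is the genuinely new ingredient relative to the thesis. That step holds up: since each $W_1\left(a_i\right)$ is two-dimensional and the $x_k^{+}$ kill $v^{+}$, the correction terms of Proposition \ref{Delta} annihilate $v_1^{+}\otimes v^{+}$, giving your displayed identity with $\nu(u)/(u-a_1)=\prod_{i\geq 2}(u+1-a_i)\big/\big((u-a_1)\prod_{i\geq 2}(u-a_i)\big)$; the hypothesis $a_i-a_1\neq 1$ for $i\geq 2$ keeps this numerator nonzero at $u=a_1$, so the coefficient of $v_1^{-}\otimes v^{+}$ has pole order exactly $r+1$ there (with $r$ the number of $i\geq 2$ having $a_i=a_1$), whereas each $D_i(u)$ has numerator $\prod_{i'>i}(u+1-a_{i'})$, again nonvanishing at $u=a_1$, over a denominator of $(u-a_1)$-multiplicity at most $r$; so no cancellation can spoil the separation, even with repeated parameters. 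The remaining point you flag, that the leading Laurent coefficient at $u=a_1$ of a rational function valued in a finite-dimensional weight space lies in the span of its $u^{-k-1}$-coefficients, is standard; a cleaner route than the Vandermonde bookkeeping is to pass to the quotient of the weight space by that span, where the image of the rational function has identically vanishing expansion at infinity and hence is zero, so every Laurent coefficient at every point lies in the span. Two small repairs: Proposition \ref{vtv'hwv} is stated for two irreducible factors, so for the maximality of the $m$-fold tensor product of highest weight vectors you should instead cite Proposition \ref{Delta} (or iterate the two-factor statement), exactly as the thesis does elsewhere; and since you invoke Corollary \ref{v-w+gvtw}, one must check it is not itself proved via Proposition \ref{ctpihw} --- it is not (its proof uses only Propositions \ref{Delta} and \ref{deltaxpc1}), so there is no circularity.
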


It follows from the above proposition that we have
\begin{corollary}\label{tpihw}
Let $a_1,a_2,\ldots,a_m\in\C$, $m\geq 1$, and $\operatorname{Re}\left(a_1\right)\geq \ldots \geq \operatorname{Re}\left(a_m\right)$. Then $$W_1\left(a_1\right)\otimes\ldots\otimes W_1\left(a_m\right)$$ is a highest weight $Y\left(\mathfrak{sl}_2\right)$-module.
\end{corollary}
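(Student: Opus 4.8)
The plan is to derive this corollary as an immediate consequence of Proposition \ref{ctpihw}, so the only work is to verify that the real-part hypothesis forces the numerical condition $a_j - a_i \neq 1$ for all $i < j$. First I would fix indices $i < j$ and use the given chain $\operatorname{Re}(a_1) \geq \operatorname{Re}(a_2) \geq \ldots \geq \operatorname{Re}(a_m)$ to conclude that $\operatorname{Re}(a_i) \geq \operatorname{Re}(a_j)$, equivalently
\[
\operatorname{Re}(a_j - a_i) = \operatorname{Re}(a_j) - \operatorname{Re}(a_i) \leq 0.
\]
Since the complex number $1$ has real part $1 > 0$, the inequality above shows that $a_j - a_i$ cannot equal $1$; indeed, if we had $a_j - a_i = 1$ then taking real parts would give $1 \leq 0$, a contradiction.

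Having established that $a_j - a_i \neq 1$ for every pair $i < j$, I would then invoke Proposition \ref{ctpihw} verbatim with the same ordered tuple $(a_1, \ldots, a_m)$. That proposition guarantees precisely that, under the hypothesis $a_j - a_i \neq 1$ for $i < j$, the tensor product $W_1(a_1) \otimes \ldots \otimes W_1(a_m)$ is a highest weight $\ysl$-module. This completes the argument, as the real-part condition is a strictly stronger (and more easily checked) hypothesis than the one appearing in Proposition \ref{ctpihw}.

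I do not expect any genuine obstacle here: the corollary is a specialization of the preceding proposition to a convenient sufficient condition, and the passage from the real-part inequality to the exclusion of the value $1$ is elementary. The only point worth stating explicitly, to avoid any gap, is that the hypothesis is assumed for the given ordering of the tensor factors, so that the indices $i < j$ in Proposition \ref{ctpihw} match the ordering in which the real parts are weakly decreasing. The practical value of this reformulation, which justifies recording it separately, is that it replaces an exact arithmetic condition on differences by a condition that can always be arranged by reordering the tensor factors according to decreasing real part, a reduction used repeatedly in the subsequent chapters.
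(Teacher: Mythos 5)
Your proof is correct and is exactly the route the paper takes: the paper states the corollary as an immediate consequence of Proposition \ref{ctpihw}, leaving implicit the elementary observation you spell out, namely that $\operatorname{Re}(a_j - a_i) \leq 0$ for $i < j$ rules out $a_j - a_i = 1$. Nothing is missing.
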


\begin{lemma}[Corollary 3.8.\cite{ChPr3}]\label{beautiful}
Let $P_1, P_2,\ldots, P_m$ be polynomials, and let $V\left(P_i\right)$ be the irreducible representation whose Drinfeld polynomial is $P_i$. Assume that if $a_i$ is a root of $P_i$ and $a_j$ a root of $P_j$, where $i<j$, then $a_j-a_i\neq 1$. Then
$$V\left(P_1\right)\otimes V\left(P_2\right)\otimes \ldots\otimes V\left(P_m\right)$$ is a highest weight $\ysl$-module.
\end{lemma}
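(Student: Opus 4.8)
The plan is to realize $V(P_1)\otimes\cdots\otimes V(P_m)$ as a quotient of one large ordered tensor product of the two-dimensional evaluation modules $W_1(c)$, and then to read off the highest-weight property from Proposition \ref{ctpihw}. First I would, for each $i$, write $P_i(u)=\prod_{j=1}^{n_i}(u-a_{i,j})$ with the roots labelled so that $\operatorname{Re}(a_{i,1})\geq\cdots\geq\operatorname{Re}(a_{i,n_i})$. By Corollary \ref{tpihw} the ordered product $M_i:=W_1(a_{i,1})\otimes\cdots\otimes W_1(a_{i,n_i})$ is a highest weight $\ysl$-module, and by Proposition \ref{vtv'hwv} the associated polynomial of its highest weight vector $\mathbf{v}_i^{+}:=v_{i,1}^{+}\otimes\cdots\otimes v_{i,n_i}^{+}$ is exactly $P_i$. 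Hence the irreducible quotient of $M_i$ is $V(P_i)$, which gives a surjective $\ysl$-module homomorphism $\varphi_i:M_i\twoheadrightarrow V(P_i)$ sending $\mathbf{v}_i^{+}$ to the highest weight vector $v_i^{+}$ of $V(P_i)$.

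Next I would form the single tensor product $\widetilde{M}:=M_1\otimes M_2\otimes\cdots\otimes M_m$, that is, the tensor product of all the factors $W_1(c)$ listed block by block, where block $i$ consists of the roots of $P_i$ taken in decreasing real part. The central step is to show that $\widetilde{M}$ is highest weight by verifying the hypothesis of Proposition \ref{ctpihw}: for any two factors $W_1(c_p)$ and $W_1(c_q)$ with $p$ preceding $q$ in this global order, one needs $c_q-c_p\neq 1$. There are two cases. If the two factors lie in the same block $i$, then $\operatorname{Re}(c_q)\leq\operatorname{Re}(c_p)$ by the chosen intra-block order, so $\operatorname{Re}(c_q-c_p)\leq 0\neq 1$. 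If they lie in different blocks, say $c_p$ is a root of $P_i$ and $c_q$ a root of $P_{i'}$ with $i<i'$, then $c_q-c_p\neq 1$ is precisely the hypothesis of the lemma. Thus every relevant difference avoids $1$, Proposition \ref{ctpihw} applies, and $\widetilde{M}$ is a highest weight module generated by $\bigotimes_{i,j}v_{i,j}^{+}$.

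Finally I would tensor the surjections together. Since $\ysl$ is a Hopf algebra, $\varphi_1\otimes\cdots\otimes\varphi_m$ is a surjective homomorphism of $\ysl$-modules $\widetilde{M}\twoheadrightarrow V(P_1)\otimes\cdots\otimes V(P_m)$, and it carries the generating highest weight vector $\bigotimes_{i,j}v_{i,j}^{+}$ to $v_1^{+}\otimes\cdots\otimes v_m^{+}$, which is therefore nonzero. A quotient of a highest weight module is again highest weight: the image of a cyclic highest weight generator still generates, and it continues to satisfy $x_k^{+}(\cdot)=0$ and the $h_k$-eigenvector conditions. Hence $V(P_1)\otimes\cdots\otimes V(P_m)$ is a highest weight $\ysl$-module, as claimed.

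I expect the only delicate point to be the ordering bookkeeping of the second paragraph. One must choose a single global order whose within-block differences are controlled by the real-part estimate while its across-block differences are exactly the ones governed by the stated hypothesis; arranging both kinds of pairs to simultaneously satisfy the condition of Proposition \ref{ctpihw} is the crux of the argument. Everything else, including the passage from $M_i$ to $V(P_i)$ and the descent of the highest-weight property along the surjection, is formal.
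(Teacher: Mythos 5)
Your proof is correct. Note that the paper itself offers no proof of this lemma: it is imported verbatim as Corollary~3.8 of \cite{ChPr3}, so there is no in-paper argument to compare against. Your reconstruction is the standard derivation and uses only results already quoted in the paper: the block modules $M_i$ are highest weight with associated polynomial $P_i$ by Corollary~\ref{tpihw} and Proposition~\ref{vtv'hwv}, hence surject onto $V(P_i)$; the global ordering check for Proposition~\ref{ctpihw} is handled correctly (within a block the difference has real part at most $0$, so it cannot equal $1$; across blocks the hypothesis of the lemma applies verbatim); and the descent of the highest-weight property along the tensor product of surjections $\varphi_1\otimes\cdots\otimes\varphi_m$ is formal, since over a Hopf algebra a tensor product of module maps is a module map and the image of the cyclic maximal vector is the nonzero vector $v_1^{+}\otimes\cdots\otimes v_m^{+}$. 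If anything, your route is slightly more economical than the original one in \cite{ChPr3}, which has available the structure theorem that every finite-dimensional irreducible $\ysl$-module is itself a tensor product of evaluation modules; you only need that $V(P_i)$ is a quotient of the ordered product of two-dimensional modules, which is weaker and follows from the universal property of highest weight modules.
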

\begin{corollary}\label{tpihw2}
Let $a_1,a_2,\ldots,a_m\in\C$, $m\geq 1$, and $\operatorname{Re}\left(a_1\right)\geq \ldots \geq \operatorname{Re}\left(a_m\right)$. Then $$W_2\left(a_1\right)\otimes W_1\left(a_2\right)\otimes W_1\left(a_3\right)\otimes \ldots\otimes W_1\left(a_m\right)$$ is a highest weight $Y\left(\mathfrak{sl}_2\right)$-module.
\end{corollary}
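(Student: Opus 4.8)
The plan is to reduce the statement to Lemma \ref{beautiful} by identifying each tensor factor with the irreducible representation attached to an explicit Drinfeld polynomial. By definition, $W_2(a_1)$ is the irreducible $\ysl$-module $V(P_1)$ whose Drinfeld polynomial is $P_1(u)=(u-a_1)\big(u-(a_1+1)\big)$, with root multiset $\{a_1,\,a_1+1\}$, while for $j\geq 2$ the factor $W_1(a_j)$ is $V(P_j)$ with $P_j(u)=u-a_j$ and single root $a_j$. Thus the module in question is exactly $V(P_1)\otimes V(P_2)\otimes\cdots\otimes V(P_m)$, and it suffices to verify the hypothesis of Lemma \ref{beautiful}: for every pair $i<j$, every root $\alpha$ of $P_i$, and every root $\beta$ of $P_j$, one has $\beta-\alpha\neq 1$.

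I would split the verification into two cases according to whether the earlier factor is the $W_2$-factor. First suppose $i=1$ and $j\geq 2$, so $\alpha\in\{a_1,\,a_1+1\}$ and $\beta=a_j$; the two constraints $a_j-a_1\neq 1$ and $a_j-(a_1+1)\neq 1$ amount to $a_j-a_1\notin\{1,2\}$. Since $\operatorname{Re}(a_1)\geq\operatorname{Re}(a_j)$ we have $\operatorname{Re}(a_j-a_1)\leq 0$, whereas $a_j-a_1\in\{1,2\}$ would force $\operatorname{Re}(a_j-a_1)\geq 1$; hence neither value can occur. For the remaining case $2\leq i<j$ the roots are $\alpha=a_i$ and $\beta=a_j$, and the same real-part inequality $\operatorname{Re}(a_j)\leq\operatorname{Re}(a_i)$ gives $\operatorname{Re}(a_j-a_i)\leq 0<1$, so $a_j-a_i\neq 1$. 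With the hypothesis of Lemma \ref{beautiful} confirmed, the tensor product is a highest weight $\ysl$-module.

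I expect essentially no obstacle here: the content of the corollary is carried entirely by the descending ordering of the real parts, which turns each required inequality into an automatic consequence, just as in the proof of Corollary \ref{tpihw}. The only point demanding care is that the $W_2$-factor contributes two roots, one of them shifted by $+1$, so for that factor I must exclude a difference of $2$ in addition to a difference of $1$; both exclusions follow at once from $a_1$ having the (weakly) largest real part among the $a_i$.
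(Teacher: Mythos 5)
Your proof is correct and follows essentially the same route as the paper: identify $W_2(a_1)$ and the $W_1(a_j)$ with the irreducible modules $V(P_j)$ via their Drinfeld polynomials and invoke Lemma \ref{beautiful}, the root-difference condition being automatic from the ordering of real parts. The paper merely states that the condition "is easy to check," whereas you spell out the verification (including the extra exclusion $a_j-a_1\neq 2$ coming from the root $a_1+1$ of the $W_2$-factor), which is exactly the intended argument.
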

\begin{proof}

Let $P_1\left(u\right)=\big(u-\left(a_1+1\right)\big)\big(u-a_1\big)$, $P_2\left(u\right)=u-a_2$, $P_3\left(u\right)=u-a_3$, $\ldots$, $P_m\left(u\right)=u-a_m$. It is easy to check that if $a_i$ is a root of $P_i$ and $a_j$ a root of $P_j$, where $i<j$, then $a_j-a_i\neq 1$. $V\left(P_1\right)=W_2\left(a_1\right)$, $V\left(P_2\right)=W_1\left(a_1\right)$, $V\left(P_3\right)=W_1\left(a_3\right)$, $\ldots$, $V\left(P_m\right)=W_1\left(a_m\right)$. Therefore the corollary follows from the above lemma.
\end{proof}

\begin{theorem}\label{wmfsl2} Let $\pi(u)=(u-a_1)(u-a_2)\ldots(u-a_m)$ be a decomposition of $\pi$ over $\C$ such that $\operatorname{Re}\left(a_1\right)\geq \ldots \geq \operatorname{Re}\left(a_m\right)$.
Then $W\left(\pi\right)\cong W_1\left(a_1\right)\otimes\ldots\otimes W_1\left(a_m\right)$.
\end{theorem}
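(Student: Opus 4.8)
The plan is to prove the isomorphism by a dimension-squeeze argument: I will exhibit $L := W_1(a_1)\otimes\cdots\otimes W_1(a_m)$ as a quotient of $W(\pi)$ and then show that the two modules have the same dimension, so the quotient map is forced to be an isomorphism.

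First I would identify the right-hand side as a highest weight representation with associated polynomial $\pi$. Each factor $W_1(a_i)$ is, by definition, the fundamental representation $V_{a_i}(\omega_1)$ with Drinfeld polynomial $u-a_i$, and by Proposition \ref{wra} (taken with $m=1$, whose basis is $\{w_0,w_1\}$) it is two-dimensional. Since $\operatorname{Re}(a_1)\geq\cdots\geq\operatorname{Re}(a_m)$, Corollary \ref{tpihw} shows that $L$ is a highest weight $\ysl$-module generated by $v_1^{+}\otimes\cdots\otimes v_m^{+}$, and repeated application of Proposition \ref{vtv'hwv} shows that its associated polynomial is $\prod_{i=1}^{m}(u-a_i)=\pi(u)$. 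Therefore, by the universal property recorded in Remark \ref{mi=lambdahi}, $L$ is a quotient of $W(\pi)$; this gives a surjection $\varphi\colon W(\pi)\twoheadrightarrow L$ of $\ysl$-modules and hence $\operatorname{Dim}(L)\leq\operatorname{Dim}\big(W(\pi)\big)$.

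Next I would compute both dimensions. On one side $\operatorname{Dim}(L)=2^{m}$, since each tensor factor is two-dimensional. On the other side, Theorem \ref{ubodowm} gives $\operatorname{Dim}\big(W(\pi)\big)\leq\operatorname{Dim}\big(W(\lambda)\big)$, where $\lambda=m\omega_1$ is the dominant integral weight of $\nysl$ determined by $\deg\pi=m$. The dimension formula for the current-algebra Weyl module \cite{Na} specializes to $\operatorname{Dim}\big(W(\lambda)\big)=\big(\operatorname{Dim}(W(\omega_1))\big)^{m}$, and Corollary \ref{dkrvawocsp} together with the fact that $V_a(\omega_1)=W_1(a)$ is two-dimensional yields $\operatorname{Dim}(W(\omega_1))=2$. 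Hence $\operatorname{Dim}\big(W(\lambda)\big)=2^{m}$.

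Combining the chain $2^{m}=\operatorname{Dim}(L)\leq\operatorname{Dim}\big(W(\pi)\big)\leq\operatorname{Dim}\big(W(\lambda)\big)=2^{m}$ forces every inequality to be an equality, so $\operatorname{Dim}\big(W(\pi)\big)=\operatorname{Dim}(L)$; the surjection $\varphi$ between finite-dimensional spaces of equal dimension is then an isomorphism. All the genuine content is imported from earlier results, namely the cyclicity input of Corollary \ref{tpihw} (ultimately \cite{ChPr3}) that makes $L$ highest weight, and the matching upper bound of Theorem \ref{ubodowm}. I expect the main point requiring care to be the bookkeeping that the associated polynomial of the \emph{ordered} tensor product is exactly $\pi$, so that $L$ is genuinely a quotient of $W(\pi)$ rather than of some $W(\pi')$ attached to a reordered tuple; the hypothesis $\operatorname{Re}(a_1)\geq\cdots\geq\operatorname{Re}(a_m)$ is precisely what guarantees both the highest weight property and this matching.
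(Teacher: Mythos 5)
Your proposal is correct and follows essentially the same route as the paper: both arguments establish that $L=W_1(a_1)\otimes\cdots\otimes W_1(a_m)$ is a highest weight representation with associated polynomial $\pi$ via Corollary \ref{tpihw} and Proposition \ref{vtv'hwv}, realize it as a quotient of $W(\pi)$ by the universal property, and then squeeze $\operatorname{Dim}\big(W(\pi)\big)$ between $2^m$ and the current-algebra bound of Theorem \ref{ubodowm}. The only difference is cosmetic: you cite Naoi's formula together with Corollary \ref{dkrvawocsp} for $\operatorname{Dim}\big(W(\lambda)\big)=2^m$, whereas the paper invokes the main theorem of \cite{ChLo} directly.
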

\begin{proof}
It follows from Corollary \ref{tpihw} that $W_1\left(a_1\right)\otimes\ldots\otimes W_1\left(a_m\right)$ is a highest weight module of $Y\left(\mathfrak{sl}_2\right)$. By Proposition \ref{vtv'hwv}, the associated polynomial is $\pi(u)$. By the maximality of the local Weyl module $W(\pi)$, $\operatorname{Dim}\Big(W\left(\pi\right)\Big)\geq 2^m$. It follows from the main theorem of \cite{ChLo} that
$\operatorname{Dim}\Big(W\left(\pi\right)\Big)\leq 2^m$.
Thus it follows from Theorem \ref{ubodowm} that $\operatorname{Dim}\Big(W\left(\pi\right)\Big)=2^m$. Therefore $$W\left(\pi\right)=W_1\left(a_1\right)\otimes\ldots\otimes W_1\left(a_m\right).$$
\end{proof}
\section{On the local Weyl modules of $\yn$ for $l\geq 2$}
In this section, we characterize the local Weyl modules of $\yn$ for $l\geq 2$. Let $\pi$ be an $l$-tuple of polynomials $\pi=\Big(\pi_1(u),\ldots, \pi_{l}(u)\Big)$, and $W(\pi)$ be the local Weyl module associated to $\pi$.

The next corollary is a consequence of Theorem \ref{ubodowm} and the main theorem of \cite{ChLo}.
\begin{corollary}\label{dowmsl} Let $\pi$ be an $l$-tuple of polynomials $\pi=\Big(\pi_1(u),\ldots, \pi_{l}(u)\Big)$, and $m_i$ be the degree of $\pi_i(u)$. Then
$\mathrm{Dim}\Big(W\left(\pi\right)\Big)\leq \prod\limits_{i=1}^{l} {l+1\choose i}^{m_i}$.
\end{corollary}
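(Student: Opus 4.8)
The plan is to chain Theorem \ref{ubodowm} together with the known dimension of the local Weyl module of the current algebra $\nyn[t]$, and then evaluate every factor explicitly in type $A$. Writing $\lambda=\sum_{i=1}^{l}m_i\omega_i$ with $m_i$ the degree of $\pi_i$, Theorem \ref{ubodowm} gives at once $\operatorname{Dim}\big(W(\pi)\big)\leq\operatorname{Dim}\big(W(\lambda)\big)$, so the whole task reduces to computing the right-hand side.

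First I would invoke the product formula for current-algebra Weyl modules (Corollary A of \cite{Na}, i.e.\ the main theorem of \cite{ChLo}), namely $\operatorname{Dim}\big(W(\lambda)\big)=\prod_{i=1}^{l}\big(\operatorname{Dim}(W(\omega_i))\big)^{m_i}$. This factorization reduces the estimate to determining $\operatorname{Dim}(W(\omega_i))$ for a single fundamental weight. Next I would identify each such factor with a binomial coefficient: by Corollary \ref{dkrvawocsp} one has $\operatorname{Dim}(W(\omega_i))=\operatorname{Dim}(V_a(\omega_i))$, and since $\g=\nyn$ is of type $A$, Remark \ref{rofry} gives $V_a(\omega_i)\cong_{\g}L(\omega_i)$, whose dimension is ${l+1\choose i}$ by Theorem \ref{frosllc1}. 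Substituting back yields $\operatorname{Dim}\big(W(\lambda)\big)=\prod_{i=1}^{l}{l+1\choose i}^{m_i}$, and combining this with Theorem \ref{ubodowm} gives the asserted bound.

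I do not expect any genuine obstacle: the argument is a purely formal concatenation of results already established in the excerpt, and the serious input (the structure and dimension of $W(\lambda)$) is imported wholesale from \cite{ChLo, Na}. The only point requiring care is bookkeeping — keeping distinct the two objects carrying $\omega_i$-type notation, namely the current-algebra Weyl module $W(\omega_i)$ and the Yangian fundamental representation $V_a(\omega_i)$ — and making sure the dimension identification (routed through the Kirillov--Reshetikhin module behind Corollary \ref{dkrvawocsp}) is applied in type $A$, where the fundamental representations stay irreducible over $\g$ and hence have dimension exactly ${l+1\choose i}$.

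Finally, I note that this bound is sharp: in the subsequent sections the reverse inequality $\operatorname{Dim}\big(W(\pi)\big)\geq\prod_{i=1}^{l}{l+1\choose i}^{m_i}$ will be obtained by exhibiting the ordered tensor product $L=V_{a_1}(\omega_{b_1})\otimes\cdots\otimes V_{a_k}(\omega_{b_k})$ as a highest-weight quotient, so the present corollary is precisely the half of the dimension count that flows directly from the current-algebra comparison.
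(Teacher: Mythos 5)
Your proposal is correct and follows essentially the same route as the paper: the paper derives this corollary in one line from Theorem \ref{ubodowm} together with the main theorem of \cite{ChLo}, which for $\nyn[t]$ gives $\operatorname{Dim}\big(W(\lambda)\big)=\prod_{i=1}^{l}{l+1\choose i}^{m_i}$ directly. Your only deviation is cosmetic — you evaluate $\operatorname{Dim}\big(W(\lambda)\big)$ through Naoi's factorization, Corollary \ref{dkrvawocsp}, Remark \ref{rofry} and Theorem \ref{frosllc1} instead of quoting the Chari--Loktev dimension formula outright — and this chain is equally valid.
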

\subsection{On a lower bound of the local Weyl modules of $\yn$}

\begin{proposition}\label{Lhwr}
Let $L=V_{a_1}(\omega_{b_1})\otimes V_{a_2}(\omega_{b_2})\otimes\ldots\otimes V_{a_k}(\omega_{b_k})$, where $b_{i}\in\{1,\ldots, l\}$. If $\operatorname{Re}\left(a_1\right)\geq \operatorname{Re}\left(a_2\right)\geq \ldots \geq \operatorname{Re}\left(a_k\right)$,  then $L$ is a highest weight representation of $\yn$.
\end{proposition}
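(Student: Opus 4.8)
The plan is to prove the statement by induction on $k$, reducing everything to a single generation claim. By iterating Proposition \ref{vtv'hwv}, the vector $v^{+}=v_1^{+}\otimes\cdots\otimes v_k^{+}$ is annihilated by every $x_{i,r}^{+}$ and is a common eigenvector of the $h_{i,r}$, so the only substantive point is that $v^{+}$ generates $L$ over $\yn$. The base case $k=1$ holds because a fundamental representation is irreducible. For the inductive step I would take $W=V_{a_2}(\omega_{b_2})\otimes\cdots\otimes V_{a_k}(\omega_{b_k})$ to be a highest weight module with highest weight vector $w^{+}=v_2^{+}\otimes\cdots\otimes v_k^{+}$ (the inequalities $\operatorname{Re}(a_2)\geq\cdots\geq\operatorname{Re}(a_k)$ are inherited), and then apply Corollary \ref{v-w+gvtw} to conclude that $v_1^{-}\otimes w^{+}$ generates $L=V_{a_1}(\omega_{b_1})\otimes W$. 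Hence it suffices to prove the single containment $v_1^{-}\otimes w^{+}\in\yn\left(v_1^{+}\otimes w^{+}\right)$; from it one gets $L=\yn(v_1^{-}\otimes w^{+})\subseteq\yn(v_1^{+}\otimes w^{+})\subseteq L$, so $v^{+}=v_1^{+}\otimes w^{+}$ generates $L$.

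To pass from $v_1^{+}$ to $v_1^{-}$ I would fix a path of simple lowering operators inside $V_{a_1}(\omega_{b_1})$: set $v_0=v_1^{+}$ and $v_{j}=x_{n_{j},0}^{-}\cdots x_{n_1,0}^{-}v_1^{+}$, with $v_s=v_1^{-}$. It then suffices to prove, for each $j$, that $v_{j+1}\otimes w^{+}\in\yn\left(v_j\otimes w^{+}\right)$ and to chain these containments downward from $j=0$. The engine for one step is the identity, read inside the copy $Y_{n_{j+1}}\cong\ysl$,
$$Y_{n_{j+1}}(v_j)\otimes Y_{n_{j+1}}(v_2^{+})\otimes\cdots\otimes Y_{n_{j+1}}(v_k^{+})=Y_{n_{j+1}}\Big(v_j\otimes\left(v_2^{+}\otimes\cdots\otimes v_k^{+}\right)\Big),$$
since $v_{j+1}=x_{n_{j+1},0}^{-}v_j$ lies in the left factor $Y_{n_{j+1}}(v_j)$ and $w^{+}$ lies in the remaining factors, so $v_{j+1}\otimes w^{+}$ sits in the right-hand cyclic module and therefore in $\yn\left(v_j\otimes w^{+}\right)$.

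The crux is this cyclicity identity, and my approach is to show that every factor is a highest weight $\ysl$-module so that the Chari--Pressley criterion applies. Because the fundamental representations of $\yn$ are the minuscule modules $\bigwedge^{b}\C^{l+1}$ (Theorem \ref{frosllc1}, Remark \ref{rofry}), every $\nysl$-string at the node $n_{j+1}$ has length at most two and all weight spaces are one-dimensional; thus each $v_i^{+}$ is a $Y_{n_{j+1}}$-highest weight vector, each $v_j$ on the path is an $\nysl$-highest weight vector that can still be lowered and so spans the top of a two-dimensional string $\cong W_1(\cdot)$, and the remaining factors are trivial. Proposition \ref{c1dgd2d} is exactly the bridge needed: it shows that $\Delta_{\yn}$ of a product of lowering currents $x_{n_{j+1},\ast}^{-}$ acts on a tensor product of $Y_{n_{j+1}}$-highest weight modules precisely as the $\ysl$-coproduct does, so the cyclicity may be decided entirely inside $\ysl$ and is governed by Lemma \ref{beautiful}: the product is cyclic as soon as, for factors in positions $p<q$, the parameter of the later factor never exceeds that of the earlier one by exactly $1$.

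I expect the genuine obstacle to be the verification of this arithmetic condition on the $\ysl$-parameters, precisely because the vectors $v_j$ lie deep inside $V_{a_1}(\omega_{b_1})$ rather than at its highest weight. The parameter attached to the string through $v_j$ must be extracted from the eigenvalue of $h_{n_{j+1},1}$ on $v_j=x_{n_{j},0}^{-}\cdots x_{n_1,0}^{-}v_1^{+}$ by means of the defining relations, with the explicit $W_1$-formulas of Proposition \ref{wra} and Corollary \ref{w1bw1a} serving as the local model; these parameters differ from $a_1$ and the $a_i$ by real shifts determined by the position of the string. The role of the hypothesis $\operatorname{Re}(a_1)\geq\cdots\geq\operatorname{Re}(a_k)$ is then to place the largest-real-part factor leftmost, so that the descending real parts force every later-minus-earlier unshifted parameter difference to have real part $\leq 0$; the remaining work is the bookkeeping that the rational shifts cannot push any such difference up to exactly $1$, for all nodes $n_{j+1}$ and all path positions. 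Once that bound is secured, Lemma \ref{beautiful} delivers the cyclicity identity, the downward chaining gives $v_1^{-}\otimes w^{+}\in\yn(v_1^{+}\otimes w^{+})$, and the induction closes.
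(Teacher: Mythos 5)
Your proposal is correct and follows essentially the same route as the paper's proof: induction on $k$, reduction via Corollary \ref{v-w+gvtw} to the containment $v_1^{-}\otimes w^{+}\in\yn(v_1^{+}\otimes w^{+})$, a path of simple lowering operators through the minuscule module $\bigwedge^{b_1}\C^{l+1}$, identification of each $Y_{i'}(v_j)$ as a two-dimensional module $W_1(a)$, the coproduct comparison of Proposition \ref{c1dgd2d}, and the Chari--Pressley cyclicity criterion applied to the resulting ordered product of $W_1$'s. The one piece you flag as "remaining work" --- extracting the shifted parameter $a$ from the $h_{i',1}$-eigenvalue and checking $\operatorname{Re}(a)\geq\operatorname{Re}(a_1)$ --- is exactly what the paper also defers (its Step 5, proved in the supplement section, where $a=a_1+\tfrac{r+s}{2}$), so your plan mirrors the paper's structure step for step.
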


\begin{proof}
Let $Y_{i}$ be the Yangian subalgebra generated by $x_{i, r}^{\pm}$ and $h_{i,r}$, where $r\in \Z_{\geq 0}$. Let $v^{+}_i$ be a highest weight vector of $V_{a_{i}}\left(\omega_{b_{i}}\right)$, and let $v^{-}_1$ be a lowest weight vector of $V_{a_{1}}(\omega_{b_{1}})$.

We prove this proposition by induction on $k$. It is obvious that for $k=1$, $L=V_{a_{1}}(\omega_{b_{1}})$ is irreducible, hence it is a highest weight representation. We assume that the claim is true for any integer less than or equal $k-1$ ($k\geq 2$).
By the induction hypothesis, $V_{a_2}(\omega_{b_2})\otimes V_{a_3}(\omega_{b_3})\otimes\ldots\otimes V_{a_k}(\omega_{b_k})$ is a highest weight representation of $\yn$, and a highest weight vector is $v^{+}=v^{+}_{2}\otimes \ldots\otimes v^{+}_k$.

Let $\{e_1,e_2,\ldots,e_{l+1}\}$ be the standard basis of $\C^{l+1}$, and $\{\mu_1,\mu_2,\ldots,\mu_{l+1}\}$ be the coordinate functions on the Cartan subalgebra of $\yn$. Then in $V_{a_{1}}(\omega_{b_{1}})$,  $$v^{+}_1=e_1\wedge e_2\wedge\ldots\wedge e_{b_1}$$ and $$v^{-}_1=e_{l+2-b_1}\wedge e_{l+3-b_1}\wedge\ldots\wedge e_{l+1}.$$ The corresponding weights of $v^{+}_1$ and $v^{-}_1$ are $\mu_1+\ldots+\mu_{b_1}$ and $\mu_{l+2-b_1}+\ldots+\mu_{l+1}$, respectively.
Note that $$v^{-}_1=\left(x_{l+1-b_1,0}^{-}\ldots x_{1,0}^{-}\right)\left(x_{l+2-b_1,0}^{-}\ldots x_{2,0}^{-}\right)\ldots \left(x_{l,0}^{-}\ldots x_{b_1,0}^{-}\right)v_1^{+}.$$ %\left(Question: if there is a reduced expression? c.f. GTM009 section 21.\right)
Let
$\sigma=\left(s_{l+1-b_1}\ldots s_{1}\right)\left(s_{l+2-b_1}\ldots s_{2}\right)\ldots\left(s_{l}\ldots s_{b_1}\right)$, where $s_{k}=\left(k,k+1\right)$.
Define $\sigma_0=1$, $\sigma_1=s_{b_1}\sigma_0$, $\sigma_2=s_{b_1+1}\sigma_1$, and so on (based on the expression of $\sigma$). Thus there exists some $i'\in\{1,\ldots,l\}$ such that $\sigma_{i+1}=s_{i'}\sigma_i$. For a fixed $i$, there exist unique nonnegative integers $r$ and $s$ for $0\leq s< l-b_1+1$ such that $i=r\left(l-b_1+1\right)+s$. Then $i'=b_1-r+s$.
Define $v_{\omega_{b_1}}=v^{+}_{1}$ and $v_{\sigma_{i+1}(\omega_{b_1})} = x_{i',0}^- v_{\sigma_{i}(\omega_{b_1})}$ inductively. Moreover, these vectors are non-zero.

It follows from Proposition \ref{Delta} that $v^{+}_1\otimes v^{+}$ is a maximal vector and $h_{i,k}$ acts on $v^{+}_1\otimes v^{+}$ by a scalar multiple. Thus we only have to show that $v^{+}_1\otimes v^{+}$ generates $L$. It follows from Proposition \ref{v-w+gvtw} that it is enough to prove that $$v^{-}_{1}\otimes v^{+}\in Y\left(\mathfrak{sl}_{l+1}\right)\left(v^{+}_1\otimes v^{+}\right).$$ We divide the proof into the following steps.

Step 1: $\sigma_i^{-1}\left(\alpha_{i'}\right)\in \Delta^{+}$.

Proof: It is routine to check.

Step 2: $Y_{i'}\left(v_{\sigma_i\left(\omega_{b_1}\right)}\right)$ is a highest weight module of $Y_{i'}$.%%\left(there is %a little confusion on the subscript $k+1$\right).
%For example, $V_{i}^{\left(k\right)}=x_{2,0}^{-}\ldots \left(x_{n+1,0}^{-}\ldots x_{b_1,0}^{-}\right)v_1^{+}$, then $Y_{k+1}$ means $Y_{3}$.

Proof: Since the weight $\sigma_i\left(\omega_{b_1}\right)$ is on the Weyl group orbit of the highest weight and the representation $V_{a_1}(\omega_{b_1})$ is finite-dimensional, the weight space of weight $\sigma_i\left(\omega_{b_1}\right)$ is 1-dimensional. The elements $h_{j,s}$ form a commutative subalgebra $H$ of $\yn$, so $v_{\sigma_i\left(\omega_{b_1}\right)}$ is an eigenvector of $h_{i',r}$. Therefore we only have to show that $v_{\sigma_i\left(\omega_{b_1}\right)}$ is a maximal vector. Suppose to the contrary that $x_{i',k}^{+}v_{\sigma_i\omega_{b_i}}\neq 0$.  Then $x_{i',k}^{+}v_{\sigma_i\omega_{b_i}}$ is a weight vector of weight $\sigma_i\omega_{b_i}+\alpha_{i'}$, and then $\omega_{b_1}+\sigma_i^{-1}(\alpha_{i'})$ is a weight of $V_{a_1}\left(\omega_{b_1}\right)$. Therefore $\omega_{b_1}$ precedes $\omega_{b_1}+\sigma_i^{-1}(\alpha_{i'})$  by Step 1, which contradicts that $\omega_i$ is the highest weight of $V_{a_{1}}(\omega_{b_{1}})$.

%Step 2: [Remark, Section 3.1, \cite{ChPr4}] Let $\sigma$ be any element of the Weyl group of a finite-dimensional complex simple Lie algebra $\mathfrak{g}$ and set $\Delta_{\sigma}^{+}=\{\alpha\in \Delta^{+}: \sigma^{-1}\left(\alpha\right)\in -\Delta^{+}\}$. Since $V_{\sigma_i\left(\omega_{b_1}\right)}$ is 1-dimensional, there exists scalars $d_{i'k,\sigma_i}^{+}$ such that $h_{i',k} v_{\sigma_i\left(\omega_{b_1}\right)}=d_{i'k,\sigma_i}^{+}v_{\sigma_i\left(\omega_{b_1}\right)}$. Then
%\begin{equation*}
%1+\sum_{k\geq 0}\frac{d_{i'k,\sigma_i}^{+}}{u^{k+1}}=\begin{cases}
%\frac{P_{i',\sigma_i}^{+}\left(u+\frac{1}{2}\left(\alpha_i,\alpha_i\right)\right)}{P\left(u\right)}\qquad \mathrm{if}\qquad \alpha_i\notin \Delta_{\sigma}^{+}, \\
%\frac{P\left(u\right)}{P_{i',\sigma_i}^{+}\left(u+\frac{1}{2}\left(\alpha_i,\alpha_i\right)\right)}\qquad \mathrm{if}\qquad \alpha_i\in \Delta_{\sigma}^{+}.
%\end{cases}
%\end{equation*}

%Moreover we may assume that $P\left(u\right)$ is monic.

Step 3:$wt\left(v_{\sigma_i\left(\omega_{b_1}\right)}\right)=\left(\mu_1+\ldots+\mu_{b_1-r-1}\right)+\mu_{b_1-r+s}+\left(\mu_{l-r+2}+\ldots \mu_{l+1}\right)$.

Proof: Recall that $v_{\omega_{b_1}}=e_1\wedge\ldots\wedge e_{b_1}$. $$v_{\sigma_i\left(\omega_{b_1}\right)}=\left(e_1\wedge e_2\wedge\ldots\wedge e_{b_1-r-1}\right)\wedge e_{b_1-r+s}\wedge \left(e_{l-r+2}\wedge e_{l-r+3}\wedge\ldots \wedge e_{l+1}\right).$$ Thus $wt\left(v_{\sigma_i\left(\omega_{b_1}\right)}\right)=\left(\mu_1+\ldots+\mu_{b_1-r-1}\right)+\mu_{b_1-r+s}+\left(\mu_{l-r+2}+\ldots \mu_{l+1}\right)$.

Step 4: The associated polynomial $P\left(u\right)$ of $Y_{i'}\left(v_{\sigma_i\left(\omega_{b_1}\right)}\right)$ has of degree 1.

Proof: Note that $i'=b_1-r+s$. It follows from step 3 that $$h_{i',0}v_{\sigma_i\left(\omega_{b_1}\right)}=v_{\sigma_i \left(\omega_{b_1}\right)}.$$
Thus the degree of $P\left(u\right)$ equals 1.

Step 5: $Y_{i'}\left(v_{\sigma_i\left(\omega_{b_1}\right)}\right)$ is a 2-dimensional $Y_{i'}$-highest weight module which is isomorphic to $W_1\left(a\right)$. Moreover, if $i\geq 1$, then $\operatorname{Re}\left(a\right)>\operatorname{Re}\left(a_1\right)$. The value of $a$ will be explicitly computed in the next section.

Proof: Let ${P\left(u\right)}=\left(u-a\right)$.
It follows from the definition of the local Weyl module of $Y\left(\mathfrak{sl}_2\right)$ that $Y_{i'}\left(v_{\sigma_i\left(\omega_{b_1}\right)}\right)$ is a sub-quotient of $W_1\left(a\right)$.
Since $W_{1}\left(a\right)$ has dimension 2, $Y_{i'}\left(v_{\sigma_i\left(\omega_{b_1}\right)}\right)=W_1\left(a\right)$ by the maximality of $W_1\left(a\right)$. By Proposition \ref{wra}, the value of $a$ can be determined by the eigenvalue of $v_{\sigma_i\left(\omega_{b_1}\right)}$ under $h_{i',1}$. We claim $a=\left(a_1+\frac{r+s}{2}\right)$, i.e., $$h_{i',1}v_{\sigma_i\left(\omega_{b_1}\right)}=\left(a_1+\frac{r+s}{2}\right)v_{\sigma_i\left(\omega_{b_1}\right)},$$ where $r, s$ are decided uniquely from $i=r\left(l-b_1+1\right)+s$ for $0\leq s<l-b_1+1$.  Let us assume for the moment that this is done (the proof will be given in the next section), and let us proceed to the next step.

Step 6:  $Y_{i'}\left(v_{\sigma_i\left(\omega_{b_1}\right)}\otimes v_2^{+}\otimes\ldots \otimes v_k^{+}\right)=Y_{i'}\left(v_{\sigma_i\left(\omega_{b_1}\right)}\right)\otimes Y_{i'}\left(v_2^{+}\right)\otimes\ldots\otimes Y_{i'}\left(v_k^{+}\right)$.

Proof:  $Y_{i'}\left(v_m^{+}\right)$ is either trivial or isomorphic to $W_{1}\left(a_m\right)$ if $b_m=i'$.  Suppose in $\{b_1,\ldots,b_k\}$ that $b_{j_1}=b_{j_2}=\ldots=b_{j_{m}}=i'$ with $j_1<\ldots<j_{m}$, and $b_n\neq i'$ if $n\notin \{j_1, j_2,\ldots, j_m\}$.
Note in Step 5 that $Y_{i'}\left(v_{\sigma_i\left(\omega_{b_1}\right)}\right)\cong W_1\left(a\right)$. Thus
\begin{align*}
% \nonumber to remove numbering \left(before each equation\right)
   Y_{i'}&\left(v_{\sigma_i\left(\omega_{b_1}\right)}\right)\otimes Y_{i'}\left(v_2^{+}\right)\otimes\ldots\otimes Y_{i'}\left(v_k^{+}\right) \\
   &\cong \begin{cases}
 W_1\left(a\right)\otimes W_1\left(a_{j_1}\right)\otimes \ldots\otimes W_{1}\left(a_{j_m}\right)\ \mathrm{if}\ b_1\neq i' \\
 W_1\left(a\right)\otimes W_1\left(a_{j_2}\right)\otimes \ldots\otimes W_{1}\left(a_{j_m}\right) \ \mathrm{if}\ b_1=i'.
\end{cases}
\end{align*}

Since $\operatorname{Re}\left(a\right)\geq \operatorname{Re}\left(a_1\right)\geq \ldots\geq \operatorname{Re}\left(a_{j_1}\right)\geq \ldots\geq \operatorname{Re}\left(a_{j_m}\right)$, it follows from Corollary \ref{tpihw} that $Y_{i'}\left(v_{\sigma_i\left(\omega_{b_1}\right)}\right)\otimes Y_{i'}\left(v_2^{+}\right)\otimes\ldots\otimes Y_{i'}\left(v_k^{+}\right)$ is a highest weight module with highest weight vector $v_{\sigma_i\left(\omega_{b_1}\right)}\otimes v_2^{+}\otimes\ldots \otimes v_k^{+}$. Thus $$Y_{i'}\left(v_{\sigma_i\left(\omega_{b_1}\right)}\otimes v_2^{+}\otimes\ldots \otimes v_k^{+}\right)\supseteq Y_{i'}\left(v_{\sigma_i\left(\omega_{b_1}\right)}\right)\otimes Y_{i'}\left(v_2^{+}\right)\otimes\ldots\otimes Y_{i'}\left(v_k^{+}\right).$$ It follows from the coproduct of the Yangian  and Proposition \ref{c1dgd2d} that $$Y_{i'}\left(v_{\sigma_i\left(\omega_{b_1}\right)}\otimes v_2^{+}\otimes\ldots \otimes v_k^{+}\right)\subseteq Y_{i'}\left(v_{\sigma_i\left(\omega_{b_1}\right)}\right)\otimes Y_{i'}\left(v_2^{+}\right)\otimes\ldots\otimes Y_{i'}\left(v_k^{+}\right).$$ Therefore the claim is true.

Step 7: $v_{\sigma_{i+1}(\omega_{b_1})}\otimes v^{+}\in Y_{i'}\left(v_{\sigma_i\left(\omega_{b_1}\right)}\otimes v^{+}\right)$.

Proof: The claim follows from $$v_{\sigma_{i+1}(\omega_{b_1})}\otimes v^{+}\in Y_{i'}\left(v_{\sigma_{i}\omega_{b_1}}\right)\otimes Y_{i'}\left(v^{+}\right)=Y_{i'}\left(v_{\sigma_i\left(\omega_{b_1}\right)}\otimes v^{+}\right).$$

Step 8: $v_1^{-}\otimes v^{+}\in Y\left(\mathfrak{sl}_{l+1}\right)\left(v_1^{+}\otimes v^{+}\right)$.

Proof: It follows from step 7 immediately by induction on the subscript of $\sigma_i$.

It follows from Step 8 and Corollary \ref{v-w+gvtw} that $L=Y\left(\mathfrak{sl}_{l+1}\right)\left(v_1^{+}\otimes v^{+}\right)$.
\end{proof}

%It follows from the proof of Proposition \ref{Lhwr} that
%\begin{corollary}\label{yncfvoa}
%Let $S=\{\frac{i}{2}|i=2,3\ldots, l, l+1\}$. If $a_j-a_i\notin S$ for $1\leq i<j\leq k$, then $L=\bigotimes\limits^{k}_{i=1} %V_{a_{i}}\left(\omega_{b_{i}}\right)$ is a highest weight representation of $\yn$.
%\end{corollary}
%\begin{proof} Denote $a=a_0$, where $a$ is the value defined in the above proposition. The fact that $a_j-a_i\notin S$ for $1\leq i<j\leq k$ guarantees $a_m-a_n\neq 1$ for $0\leq n<m\leq k$. Similar to the proof of Proposition \ref{Lhwr}, we can show that $L$ is a highest weight representation.
\begin{remark}
In what follows, these are the records of values of $a$ as in Step 5 of Proposition \ref{Lhwr}, which will be calculated explicitly in the next section.
%Note that in this case $$v^{-}_1=\left(x_{l+1-b_1,0}^{-}\ldots x_{1,0}^{-}\right)\left(x_{l+2-b_1,0}^{-}\ldots x_{2,0}^{-}\right)\ldots \left(x_{l,0}^{-}\ldots x_{b_1,0}^{-}\right)v_1^{+}.$$

In what follows, the notation $a_1\xlongrightarrow{x_{b_1,0}^{-}}a_1+\frac{1}{2}\xlongrightarrow{x_{b_1+1,0}^{-}} a_1+1$ means that $Y_{b_1}\left(v_1^{+}\right)\cong W_1\left(a_1\right)$ and $Y_{b_1+1}\left(x_{b_1,0}^{-}v_1^{+}\right)=W_1\left(a_1+\frac{1}{2}\right)$. %Similar interpretations for the other notations.

\noindent $a_1\xlongrightarrow{x_{b_1,0}^{-}}a_1+\frac{1}{2}\xlongrightarrow{x_{b_1+1,0}^{-}}\ldots\xlongrightarrow{x_{l-1,0}^{-}} a_1+\frac{l-b_1}{2}\xlongrightarrow{x_{l,0}^{-}}a_1+\frac{1}{2}\xlongrightarrow{x_{b_1-1,0}^{-}}\left(a_1+\frac{1}{2}\right)+\frac{1}{2} \xlongrightarrow{x_{b_1,0}^{-}}\ldots \xlongrightarrow{x_{l-2,0}^{-}}\left(a_1+\frac{1}{2}\right)+\frac{l-b_1}{2}\xlongrightarrow{x_{l-1,0}^{-}} \left(a_1+1\right)\xlongrightarrow{x_{b_1-2,0}^{-}}\left(a_1+1\right)+\frac{1}{2}\xlongrightarrow{x_{b_1-1,0}^{-}}\ldots\xlongrightarrow{x_{l-3,0}^{-}} \left(a_1+1\right)+\frac{l-b_1}{2}\xlongrightarrow{x_{l-2,0}^{-}} \left(a_1+\frac{3}{2}\right)\xlongrightarrow{x_{b_1-3,0}^{-}}\ldots \xlongrightarrow{x_{l-b_1+2,0}^{-}} \left(a_1+\frac{b_1-1}{2}\right)\xlongrightarrow{x_{1,0}^{-}}\left(a_1+\frac{b_1-1}{2}\right)+\frac{1}{2}\xlongrightarrow{x_{2,0}^{-}}\ldots\xlongrightarrow{x_{l-b_1,0}^{-}} \left(a_1+\frac{b_1-1}{2}\right)+\frac{l-b_1}{2}=a_1+\frac{l-1}{2}\xlongrightarrow{x_{l+1-b_1,0}^{-}}\text{the lowest weight vector reached}$.
%Let $1\leq k\leq l-1$. Denote $a=a_1+\frac{k}{2}$. If $a_j-a\neq 1$, then $Y_{i'}\left(v_{\sigma_i\left(\omega_{b_1}\right)}\right)\otimes Y_{i'}\left(v_2^{+}\right)\otimes\ldots\otimes Y_{i'}\left(v_k^{+}\right)$ is a highest weight representation. Thus we know that all the steps in the proposition will follow. $L$ is a highest weight representation. $a_j-a\neq 1$ implies $a_j-a_1\neq \frac{k+2}{2}$, or $a_j-a_1\notin S$.
\end{remark}
Replacing the condition $\operatorname{Re}\left(a_1\right)\geq \operatorname{Re}\left(a_2\right)\geq \ldots \geq \operatorname{Re}\left(a_k\right)$, a much broader condition on $L$ to be a highest weight representation can be obtained, which allows us to obtain a sufficient condition on $L$ to be irreducible. Note that the first item of next lemma was proved by other methods in \cite{ChPr8}.

\begin{lemma}\label{C3l317ss}
$V_{a_m}\left(\omega_{b_m}\right)\otimes V_{a_n}\left(\omega_{b_n}\right)$ is a highest weight representation if $a_n-a_m\notin S\left(b_m,b_n\right)$, where the set $S\left(b_m,b_n\right)$ is defined as follows:
\begin{enumerate}
  \item $S\left(b_m,b_n\right)=\left\{\frac{b_n-b_m}{2}+k|1\leq k\leq \text{min}\left\{b_m, l-b_n+1\right\}\right\}$ for $b_m\leq b_n$.
  \item $S\left(b_m,b_n\right)=\left\{\frac{b_n-b_m}{2}+k|b_m-b_n+1\leq k\leq \text{min}\left\{b_m, l-b_n+1\right\}\right\}$\\ for $b_m>b_n$.
\end{enumerate}
\end{lemma}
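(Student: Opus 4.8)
The plan is to adapt the walking argument of Proposition \ref{Lhwr} to the two-factor module $L = V_{a_m}(\omega_{b_m}) \otimes V_{a_n}(\omega_{b_n})$, replacing the real-part hypothesis $\operatorname{Re}(a_m)\geq\operatorname{Re}(a_n)$ by the sharper $\mathfrak{sl}_2$-cyclicity criterion of Proposition \ref{ctpihw}. Writing $v_m^+, v_n^+$ for the highest weight vectors and $v_m^-$ for the lowest weight vector of the first factor, Proposition \ref{Delta} shows that $v_m^+ \otimes v_n^+$ is a maximal vector on which $H$ acts by scalars, so by Corollary \ref{v-w+gvtw} it suffices to prove that $v_m^- \otimes v_n^+ \in \yn\left(v_m^+ \otimes v_n^+\right)$.

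First I would reuse the same chain of intermediate vectors $v_{\sigma_i(\omega_{b_m})}$ from Proposition \ref{Lhwr}, together with the structural facts established there: at step $i = r(l - b_m + 1) + s$, with $0 \le s \le l - b_m$ and $0 \le r \le b_m - 1$, the relevant simple index is $i' = b_m - r + s$ and $Y_{i'}\left(v_{\sigma_i(\omega_{b_m})}\right) \cong W_1(a)$ with $a = a_m + \tfrac{r+s}{2}$. These identities concern only the first tensor factor and never invoke any ordering of the $a_i$, so they transfer verbatim. The walk propagates to the next vector exactly when $Y_{i'}\left(v_{\sigma_i(\omega_{b_m})}\right) \otimes Y_{i'}(v_n^+)$ is a highest weight $Y_{i'}$-module: as in Step 6 of Proposition \ref{Lhwr}, Proposition \ref{c1dgd2d} identifies the $Y_{i'}$-submodule generated by $v_{\sigma_i(\omega_{b_m})}\otimes v_n^+$ with $Y_{i'}\left(v_{\sigma_i(\omega_{b_m})}\right)\otimes Y_{i'}(v_n^+)$, and then $v_{\sigma_{i+1}(\omega_{b_m})}\otimes v_n^+$ lies inside it.

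The heart of the argument is to pin down exactly which steps constrain $a_n - a_m$. Since $h_{i',0} v_n^+ = \delta_{i', b_n} v_n^+$, the factor $Y_{i'}(v_n^+)$ is the trivial one-dimensional module unless $i' = b_n$, in which case it is $W_1(a_n)$; when it is trivial the tensor product is automatically highest weight. For the steps with $i' = b_n$ we have $W_1(a) \otimes W_1(a_n)$, which by Proposition \ref{ctpihw} is highest weight as soon as $a_n - a \neq 1$. Imposing $i' = b_n$ forces $s = r + b_n - b_m$, whence $a = a_m + r + \tfrac{b_n - b_m}{2}$, so the obstruction $a_n - a = 1$ reads $a_n - a_m = \tfrac{b_n - b_m}{2} + (r+1)$. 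Putting $k = r+1$ and reading off the admissible range of $r$ from $0 \le s \le l - b_m$ and $0 \le r \le b_m - 1$, namely $\max(0, b_m - b_n) \le r \le \min(b_m - 1, l - b_n)$, reproduces precisely the set $S(b_m, b_n)$ in its two cases $b_m \le b_n$ and $b_m > b_n$.

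Finally I would conclude that if $a_n - a_m \notin S(b_m, b_n)$ then $a_n - a \neq 1$ at every step, so each $Y_{i'}\left(v_{\sigma_i(\omega_{b_m})}\right) \otimes Y_{i'}(v_n^+)$ is highest weight, the walk reaches $v_m^- \otimes v_n^+$, and hence $L = \yn\left(v_m^+ \otimes v_n^+\right)$ is a highest weight representation. I expect the only delicate point to be the bookkeeping of the index ranges, i.e.\ matching $\max(0, b_m - b_n) \le r \le \min(b_m-1, l - b_n)$ with the two $k$-ranges defining $S(b_m, b_n)$, since all of the representation-theoretic content is inherited directly from Proposition \ref{Lhwr} and the $\ysl$-theory of Section~3.1.
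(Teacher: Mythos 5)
Your proposal is correct and follows essentially the same route as the paper: the paper's proof is exactly this walk through the vectors $v_{\sigma_i(\omega_{b_m})}$, observing that the only obstructed steps are those with $i'=b_n$, where $Y_{i'}\left(v_{\sigma_i(\omega_{b_m})}\right)\otimes Y_{i'}(v_n^+)\cong W_1\left(a_m+r+\tfrac{b_n-b_m}{2}\right)\otimes W_1(a_n)$ must be cyclic, which by Proposition \ref{ctpihw} fails only when $a_n-a_m=\tfrac{b_n-b_m}{2}+(r+1)$. The only difference is presentational: the paper splits into four cases according to the signs of $b_n-b_m$ and $b_n-(l-b_m+1)$ and locates $x_{b_n,0}^{-}$ inside each parenthesis of the explicit expression for $v_m^{-}$, whereas you solve $i'=b_n$ for $s$ uniformly and extract the $k$-range from the inequalities $0\le s\le l-b_m$, $0\le r\le b_m-1$, which yields the same set $S(b_m,b_n)$.
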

\begin{proof}
By Proposition \ref{Lhwr}, if $i'\neq b_n$, then $Y_{i'}\left(v_{b_n}\right)$ is 1-dimensional. Thus $Y_{i'}\left(v_{\sigma_{i}\left(\omega_{b_m}\right)}\right)\otimes Y_{i'}\left(v_{b_n}\right)\cong Y_{i'}\left(v_{\sigma_{i}\left(\omega_{b_m}\right)}\right)$, which is a highest weight representation. So we may assume that $i'=b_n$.
Let's compare the values of $l-b_m+1$ and $b_n$.

Case 1: $b_m\leq b_n$ and $l-b_m+1\leq b_n$.

Note that in this case
\begin{align*}
% \nonumber to remove numbering \left(before each equation\right)
   v^{-}_1&= \left(x_{l+1-b_m,0}^{-}\ldots x_{1,0}^{-}\right)\left(x_{l+2-b_m,0}^{-}\ldots x_{2,0}^{-}\right)\ldots \left(x_{b_n-1,0}^{-}\ldots  x_{b_m-\left(l-b_n\right)-1,0}^{-}\right)\\
   &  \left(x_{b_n,0}^{-}\ldots  x_{b_m-\left(l-b_n\right),0}^{-}\right)\ldots \left(x_{l-1,0}^{-}\ldots x_{b_n,0}^{-} \ldots x_{b_m-1,0}^{-}\right)\\
   & \left(x_{l,0}^{-}\ldots x_{b_n,0}^{-} \ldots x_{b_m,0}^{-}\right)v_1^{+}.
\end{align*}
The position \big(counting from the back\big) of $x_{b_n,0}^{-}$ in the $k$-th parenthesis \big(counting from the back\big) is $b_n-b_m+k$. Moreover $1\leq k\leq \left(l-b_n\right)+1$ (this is the number of parenthesis which contain the term $x_{b_n,0}^{-}$).

The only possible values for $i$ are $(k-1)\left(l-b_m+1\right)+\left(b_n-b_m+k-1\right)$. Therefore $$Y_{i'}\left(v_{\sigma_{i}\left(\omega_{b_m}\right)}\right)\cong W_1\left(a_m+\frac{k+b_n-b_m+k-2}{2}\right).$$
For $1\leq k\leq \left(l-b_n\right)+1$, if $a_n-\left(a_m+\frac{2k+b_n-b_m-2}{2}\right)\neq 1$, or $a_n-a_m\neq \frac{2k+b_n-b_m}{2}$, then $Y_{i'}\left(v_{\sigma_{i}\left(\omega_{b_m}\right)}\right)\otimes Y_{i'}\left(v_{b_n}\right)$ is a highest weight $Y_{i'}$-module.

Case 2: $b_m\leq b_n$ and $l-b_m+1>b_n$.

Note that in this case
\begin{align*}
% \nonumber to remove numbering \left(before each equation\right)
   v^{-}_1=& \left(x_{l+1-b_m,0}^{-}\ldots x_{b_n,0}^{-} \ldots x_{1,0}^{-}\right)\left(x_{l+2-b_m,0}^{-}\ldots x_{b_n,0}^{-}\ldots x_{2,0}^{-}\right) \ldots\\
   &\qquad \left(x_{l-1,0}^{-}\ldots x_{b_n,0}^{-} \ldots x_{b_m-1,0}^{-}\right) \left(x_{l,0}^{-}\ldots x_{b_n,0}^{-} \ldots x_{b_m,0}^{-}\right)v_1^{+}.
\end{align*}
The position (counting from the back) of $x_{b_n,0}^{-}$ in the $k$-th parenthesis (counting from the back) is $b_n-b_m+k$ for $1\leq k\leq b_m$.

The possible values for $i$ are $(k-1)\left(l-b_m+1\right)+\left(b_n-b_m+k-1\right)$. Therefore $$Y_{i'}\left(v_{\sigma_{i}\left(\omega_{b_m}\right)}\right)\cong W_1\left(a_m+\frac{k+b_n-b_m+k-2}{2}\right).$$
For $1\leq k\leq b_m$, if $a_n-\left(a_m+\frac{2k+b_n-b_m-2}{2}\right)\neq 1$, or $a_n-a_m\neq \frac{2k+b_n-b_m}{2}$,\\ $Y_{i'}\left(v_{\sigma_{i}\left(\omega_{b_m}\right)}\right)\otimes Y_{i'}\left(v_{b_n}\right)$ is a highest weight $Y_{i'}$-module.

Case 3: $b_m>b_n$ and $l-b_m+1\leq b_n$.

In this case
\begin{align*}
% \nonumber to remove numbering \left(before each equation\right)
   v^{-}_1=& \left(x_{l+1-b_m,0}^{-}\ldots x_{1,0}^{-}\right)\ldots \left(x_{b_n-1,0}^{-}\ldots  x_{b_m-\left(l-b_n\right)-1,0}^{-}\right)\\
   &\left(x_{b_n,0}^{-}\ldots  x_{b_m-\left(l-b_n\right),0}^{-}\right)\ldots %\left(x_{l-b_m+b_n-1,0}^{-}\ldots x_{b_n,0}^{-} x_{b_n-1,0}^{-}\right)
\left(x_{l-b_m+b_n,0}^{-}\ldots x_{b_n,0}^{-}\right)\\
   & \left(x_{l-b_m+b_n+1,0}^{-}\ldots x_{b_n+1,0}^{-}\right) \ldots \left(x_{l-1,0}^{-} \ldots x_{b_m-1,0}^{-}\right)\left(x_{l,0}^{-} \ldots x_{b_m,0}^{-}\right)v_1^{+}.
\end{align*}
The position of $x_{b_n,0}^{-}$ in the $k$-th parenthesis (counting from the back) is $k-\left(b_m-b_n\right)$, where $b_m-b_n+1\leq k\leq l-b_n+1$.

The possible values of $i$ are $(k-1)\left(l-b_m+1\right)+\Big(k-\left(b_m-b_n\right)-1\Big)$. Therefore $$Y_{i'}\left(v_{\sigma_{i}\left(\omega_{b_m}\right)}\right)\cong W_1\left(a_m+\frac{2k-b_m+b_n-2}{2}\right).$$
For $b_m-b_n+1\leq k\leq l-b_n+1$, if $a_n-\left(a_m+\frac{2k-b_m+b_n-2}{2}\right)\neq 1$, or $a_n-a_m\neq \frac{2k-b_m+b_n}{2}$, then $Y_{i'}\left(v_{\sigma_{i}\left(\omega_{b_m}\right)}\right)\otimes Y_{i'}\left(v_{b_n}\right)$ is a highest weight $Y_{i'}$-module.

Case 4: $b_m>b_n$ and $l-b_m+1> b_n$.

In this case
\begin{align*}
% \nonumber to remove numbering \left(before each equation\right)
   v^{-}_1=& \left(x_{l+1-b_m,0}^{-}\ldots x_{1,0}^{-}\right)\ldots \left(x_{l-b_m+b_n-1,0}^{-}\ldots x_{b_n,0}^{-} x_{b_n-1,0}^{-}\right)\left(x_{l-b_m+b_n,0}^{-}\ldots x_{b_n,0}^{-}\right)\\
   & \left(x_{l-b_m+b_n+1,0}^{-}\ldots x_{b_n+1,0}^{-}\right) \ldots \left(x_{l-1,0}^{-} \ldots x_{b_m-1,0}^{-}\right)\left(x_{l,0}^{-} \ldots x_{b_m,0}^{-}\right)v_1^{+}.
\end{align*}
The position of $x_{b_n,0}^{-}$ in the $k$-th parenthesis (counting from the back) is $k-\left(b_m-b_n\right)$, where $b_m-b_n+1\leq k\leq b_m$.

The possible values of $i$ are $(k-1)\left(l-b_m+1\right)+\Big(k-\left(b_m-b_n\right)-1\Big)$. Therefore $$Y_{i'}\left(v_{\sigma_{i}\left(\omega_{b_m}\right)}\right)\cong W_1\left(a_m+\frac{2k-b_m+b_n-2}{2}\right).$$
For $b_m-b_n+1\leq k\leq b_m$, if $a_n-\left(a_m+\frac{2k-b_m+b_n-2}{2}\right)\neq 1$, or $a_n-a_m\neq \frac{2k-b_m+b_n}{2}$, then $Y_{i'}\left(v_{\sigma_{i}\left(\omega_{b_m}\right)}\right)\otimes Y_{i'}\left(v_{b_n}\right)$ is a highest weight $Y_{i'}$-module.
\end{proof}

%\begin{remark}
%In the above  lemma, the value of $k\leq \text{min}\{b_m, l-b_n+1\}$.
%\end{remark}
%Similar to the proof of Corollary \ref{yslnlcii}, we have the following theorem.
%\begin{theorem}\
%\begin{enumerate}
%  \item $L=\bigotimes\limits^{k}_{i=1} V_{a_{i}}\left(\omega_{b_{i}}\right)$ is a highest weight representation of $\yn$ if for any pair $(i,j)$ with $1\leq i<j\leq k$, $a_j-a_i\notin S\left(b_i,b_j\right)$.
%\end{enumerate}
%\end{theorem}
\begin{theorem}\label{3mt2icl}
The tensor product $L=V_{a_1}(\omega_{b_1})\otimes V_{a_2}(\omega_{b_2})\otimes\ldots\otimes V_{a_k}(\omega_{b_k})$ is a highest weight representation of $\yn$ if $a_j-a_i\notin S\left(b_i,b_j\right)$ for any pair $(i,j)$ with $1\leq i<j\leq k$.
\end{theorem}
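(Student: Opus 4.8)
The plan is to run the inductive argument of Proposition \ref{Lhwr} essentially unchanged, replacing the real-part ordering hypothesis by the condition $a_j-a_i\notin S(b_i,b_j)$ and swapping Lemma \ref{beautiful} for Corollary \ref{tpihw} at the single point where the ordering is actually used. I would induct on $k$; the case $k=1$ is trivial. The hypothesis restricts to the same condition on the pairs $(i,j)$ with $2\le i<j\le k$, so by the induction hypothesis $V_{a_2}(\omega_{b_2})\otimes\ldots\otimes V_{a_k}(\omega_{b_k})$ is a highest weight representation with highest weight vector $v^{+}=v_2^{+}\otimes\ldots\otimes v_k^{+}$. Exactly as in Proposition \ref{Lhwr}, $v_1^{+}\otimes v^{+}$ is maximal and each $h_{i,r}$ acts on it by a scalar (Proposition \ref{Delta}), so by Corollary \ref{v-w+gvtw} it suffices to prove $v_1^{-}\otimes v^{+}\in\yn\big(v_1^{+}\otimes v^{+}\big)$.

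I would then reuse verbatim the path $v_{\omega_{b_1}}=v_1^{+}$, $v_{\sigma_{i+1}(\omega_{b_1})}=x_{i',0}^{-}v_{\sigma_i(\omega_{b_1})}$ from the highest to the lowest weight vector of $V_{a_1}(\omega_{b_1})$, together with the identifications $Y_{i'}\big(v_{\sigma_i(\omega_{b_1})}\big)\cong W_1(a)$ and, for $j\ge 2$, $Y_{i'}(v_j^{+})\cong W_1(a_j)$ when $b_j=i'$ and $Y_{i'}(v_j^{+})$ trivial otherwise, all established in Steps 1--5 of Proposition \ref{Lhwr}. Everything is unchanged until Step 6, where one must show that, after discarding the trivial tensor factors, $$W_1(a)\otimes W_1(a_{j_1})\otimes\ldots\otimes W_1(a_{j_m})$$ is a highest weight $Y_{i'}$-module with highest weight vector $v_{\sigma_i(\omega_{b_1})}\otimes v^{+}$, where $j_1<\ldots<j_m$ enumerate the indices $j\ge 2$ with $b_j=i'$. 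Here I would invoke Lemma \ref{beautiful} in place of Corollary \ref{tpihw}. Granting this, the inclusion $Y_{i'}\big(v_{\sigma_i(\omega_{b_1})}\otimes v^{+}\big)\supseteq Y_{i'}\big(v_{\sigma_i(\omega_{b_1})}\big)\otimes Y_{i'}(v_2^{+})\otimes\ldots\otimes Y_{i'}(v_k^{+})$ follows as before, the reverse inclusion from the coproduct together with Proposition \ref{c1dgd2d}, and hence $v_{\sigma_{i+1}(\omega_{b_1})}\otimes v^{+}\in Y_{i'}\big(v_{\sigma_i(\omega_{b_1})}\otimes v^{+}\big)$; induction along the path then delivers $v_1^{-}\otimes v^{+}\in\yn\big(v_1^{+}\otimes v^{+}\big)$.

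The crux, and the main obstacle, is checking the hypotheses of Lemma \ref{beautiful}: for roots $c,c'$ drawn from an earlier and a later factor one needs $c'-c\neq 1$. Two kinds of pairs arise. For the path factor $W_1(a)$ against a later factor $W_1(a_{j_t})$ the inequality $a_{j_t}-a\neq 1$ must hold for \emph{every} value of $a$ appearing along the path at the node $i'=b_{j_t}$; by the case-by-case computation in the proof of Lemma \ref{C3l317ss} the resulting excluded values of $a_{j_t}-a_1$ are exactly the elements of $S(b_1,b_{j_t})$, so requiring the inequality for all of them is precisely the hypothesis $a_{j_t}-a_1\notin S(b_1,b_{j_t})$ for the pair $(1,j_t)$. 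For two later factors $W_1(a_{j_s}),W_1(a_{j_t})$ with $s<t$ one has $b_{j_s}=b_{j_t}=i'$, and case 1 of Lemma \ref{C3l317ss} gives $S(i',i')=\{1,2,\ldots,\min\{i',l-i'+1\}\}\ni 1$, so the hypothesis $a_{j_t}-a_{j_s}\notin S(i',i')$ forces $a_{j_t}-a_{j_s}\neq 1$. Thus all cyclicity inequalities hold, Lemma \ref{beautiful} applies, and the proof closes as in Proposition \ref{Lhwr}. The difficulty is entirely bookkeeping: one must align the list of $a$-values produced by the path with the explicit four-case description of $S(b_1,b_{j_t})$, and record the identity $1\in S(i',i')$ that neutralises repeated occurrences of the same fundamental node beyond what the hypothesis already guarantees.
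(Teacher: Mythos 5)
Your proposal is correct and takes essentially the same route as the paper: the paper's proof of Theorem \ref{3mt2icl} likewise keeps the entire inductive framework of Proposition \ref{Lhwr} and modifies only Step 6, replacing the real-part ordering by the observation that the hypothesis $a_j-a_i\notin S(b_i,b_j)$ forces all the relevant root differences to avoid $1$, and then invoking Proposition \ref{ctpihw}. Your substitution of Lemma \ref{beautiful} for Proposition \ref{ctpihw} is immaterial here since in type $A$ every module $Y_{i'}\left(v_{\sigma_i(\omega_{b_1})}\right)$ is a $W_1(a)$, and the bookkeeping you spell out (matching the path values against $S(b_1,b_{j_t})$ via Lemma \ref{C3l317ss}, and noting $1\in S(i',i')$ for pairs of later factors) is exactly what the paper's terse one-paragraph proof leaves implicit.
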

\begin{proof} Let us keep the notations used in Proposition \ref{Lhwr}. The proof is similar to the proof of Proposition \ref{Lhwr}. The only difference is Step 6. If $a_j-a_i\notin S\left(b_i,b_j\right)$ with $1\leq i<j\leq k$, then the difference of the root of any two associated polynomials of $Y_{i'}\left(v_{\sigma_i\left(\omega_{b_1}\right)}\right)$, $Y_{i'}\left(v_2^{+}\right),\ldots, Y_{i'}\left(v_k^{+}\right)$ never equals 1.   Therefore $Y_{i'}\left(v_{\sigma_i\left(\omega_{b_1}\right)}\right)\otimes Y_{i'}\left(v_2^{+}\right)\otimes\ldots\otimes Y_{i'}\left(v_k^{+}\right)$ is a highest weight representation by Proposition \ref{ctpihw}.
%The proofs of the only if part of items (i) and (ii) follow from the above theorem.
\end{proof}

We close this section by giving a sufficient and necessary condition for the irreducibility of $L$.
\begin{theorem}[Theorem 6.2, \cite{ChPr8}]\label{cp8c3t}
Let $1\leq b_m\leq b_n\leq l$. $V_{a_m}(\omega_{b_m})\otimes V_{a_n}(\omega_{b_n})$ is reducible as a $\yn$-module if and only if
\begin{center}
 $a_n-a_m=\pm\Big(\frac{b_n-b_m}{2}+r\Big)$, where $0<r\leq$ min $(b_m,l+1-b_n).$
\end{center}
\end{theorem}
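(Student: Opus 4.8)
The plan is to obtain this sharp irreducibility criterion by combining the cyclicity result Lemma \ref{C3l317ss} with the duality characterization of irreducibility in Proposition \ref{VoWWoVhi}, which asserts that $L=V_{a_m}(\omega_{b_m})\otimes V_{a_n}(\omega_{b_n})$ is irreducible precisely when both $L$ and its left dual $\ ^tL$ are highest weight $\yn$-modules. So the first step is to compute $\ ^tL$. Since the left dual reverses the order of tensor factors, $\ ^tL\cong\ ^tV_{a_n}(\omega_{b_n})\otimes\ ^tV_{a_m}(\omega_{b_m})$, and Lemma \ref{dualfrc1} evaluates each dual: for $\nyn$ one has $\kappa=\frac{l+1}{2}$ and $-w_0(i)=l+1-i$, so $\ ^tV_a(\omega_i)\cong V_{a-\kappa}(\omega_{l+1-i})$. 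Hence $\ ^tL\cong V_{a_n-\kappa}(\omega_{l+1-b_n})\otimes V_{a_m-\kappa}(\omega_{l+1-b_m})$.

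For the first direction — that the non-resonance condition is sufficient for irreducibility, equivalently necessary for reducibility — I would assume $a_n-a_m\neq\pm\big(\frac{b_n-b_m}{2}+r\big)$ for every $r$ with $0<r\leq\min(b_m,l+1-b_n)$. Then $a_n-a_m\notin S(b_m,b_n)$, so Lemma \ref{C3l317ss} shows $L$ is highest weight. Applying the same lemma to $\ ^tL$, whose factors carry subscripts $l+1-b_n\leq l+1-b_m$ and parameter difference $(a_m-\kappa)-(a_n-\kappa)=a_m-a_n$, requires $a_m-a_n\notin S(l+1-b_n,l+1-b_m)$. The key algebraic observation is that the set $S$ is symmetric under this duality: directly from Case 1 of its definition, $S(l+1-b_n,l+1-b_m)=\big\{\frac{b_n-b_m}{2}+k : 1\leq k\leq\min(b_m,l+1-b_n)\big\}=S(b_m,b_n)$. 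Thus $\ ^tL$ is highest weight iff $a_n-a_m\notin -S(b_m,b_n)$, which holds by hypothesis. Both $L$ and $\ ^tL$ are then highest weight, and Proposition \ref{VoWWoVhi} gives irreducibility.

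The converse — that each resonance value forces reducibility — is the main obstacle, since Lemma \ref{C3l317ss} only gives sufficiency of the non-resonance condition, not its sharpness. By the duality symmetry just established, the negative resonance values for $L$ correspond to positive resonance values for $\ ^tL$, so it suffices to treat a positive resonance $a_n-a_m=\frac{b_n-b_m}{2}+r\in S(b_m,b_n)$ and show that $L$ itself fails to be highest weight. I would return to the inductive path built in the proof of Proposition \ref{Lhwr}: at the critical step indexed by $k=r$, the relevant $Y_{i'}$-string degenerates to a product $W_1(a)\otimes W_1(a_n)$ with $a_n-a=1$ exactly. The sharp form of the $\nysl$ analysis (Proposition 3.6 of \cite{ChPr3}, where the corresponding two-factor product is strictly larger than the highest weight module it surjects onto and carries a one-dimensional trivial constituent spanned by a vector of the form $v^+\otimes w^- - v^-\otimes w^+$) shows this local factor is not cyclic on its highest weight vector.

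Converting this local failure into a genuine reducibility statement for $L$ is the delicate part, and is where I expect the real work to lie: a broken link in one inductive path does not by itself show $v_1^-\otimes v^+\notin\yn(v_1^+\otimes v^+)$, since other paths might in principle reach the lowest weight vector. The cleanest way to finish is to promote the trivial-constituent vector of the critical $Y_{i'}$-string to an explicit nonzero maximal vector of $L$ of weight strictly below $\mu^{(0)}$, and then verify, using the coproduct formulas of Proposition \ref{Delta} together with the resonance relation $a_n-a_m=\frac{b_n-b_m}{2}+r$, that it is annihilated by every $x_{i,k}^+$. Since an irreducible highest weight module has a one-dimensional space of maximal vectors, exhibiting a second independent maximal vector produces a proper submodule and proves $L$ reducible. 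The computations needed to check maximality at the resonance values, routine in spirit, mirror the structure calculations recorded in Corollary \ref{w1bw1a}, and the final answer can be cross-checked for consistency against Theorem \ref{3mt2icl}.
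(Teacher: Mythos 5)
You should first note a point of comparison that matters here: the paper never proves this statement at all. It is imported verbatim as Theorem 6.2 of \cite{ChPr8} (where it is established by quite different means, via singularities of $R$-matrices and Dorey's rule), and the paper only uses it as an ingredient in Theorem \ref{cp8c3tng2}. So your proposal stands or falls on its own. Its first half is correct: the deduction that non-resonance implies irreducibility, via Lemma \ref{C3l317ss} applied to $L$, the symmetry $S(l+1-b_n,l+1-b_m)=S(b_m,b_n)$, Lemma \ref{dualfrc1}, and Proposition \ref{VoWWoVhi}, is sound and is essentially the same argument the paper itself runs inside the proof of Theorem \ref{cp8c3tng2}; you also avoid circularity by not invoking that theorem.

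The converse direction, however, contains a genuine structural error, not merely an unexecuted computation. You reduce to the positive resonance $a_n-a_m=\tfrac{b_n-b_m}{2}+r$ and propose to prove reducibility by exhibiting a second maximal vector modeled on the vector $v^+\otimes w^--v^-\otimes w^+$ of Proposition 3.6 of \cite{ChPr3}. But that vector spans a trivial \emph{sub}module only in the order $W_1(a+1)\otimes W_1(a)$, i.e.\ at \emph{negative} resonance. In the order your critical step actually produces, $W_1(a)\otimes W_1(a_n)$ with $a_n-a=1$, the trivial constituent is a quotient, not a submodule: every Yangian submodule is in particular an $\nysl$-submodule of the multiplicity-free sum of a three-dimensional and a one-dimensional isotypic piece, and (using the non-splitness of the exact sequence in Proposition 3.6 together with duality) the only proper nonzero Yangian submodule is the three-dimensional one generated by the top vector. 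Hence there is no second maximal vector there. Already for $l=1$, $b_m=b_n=1$, $a_n-a_m=1$, the object you propose to construct does not exist, even though $W_1(a_m)\otimes W_1(a_m+1)$ is indeed reducible --- reducibility at positive resonance is witnessed by failure of cyclicity, not by an extra maximal vector. The reduction should therefore go the opposite way: treat negative resonance directly (where $L$ is highest weight and the extra constituent genuinely sits inside $L$, so a second maximal vector can exist), then deduce positive resonance by dualizing. Even after that repair, the step you defer as ``routine in spirit'' --- writing down the candidate maximal vector in $V_{a_m}(\omega_{b_m})\otimes V_{a_n}(\omega_{b_n})$ at every resonance value and checking it is killed by all $x_{i,k}^{+}$ --- is the entire content of the hard direction of the theorem, and it is precisely what \cite{ChPr8} required $R$-matrix and Dorey-rule technology to accomplish. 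As written, your proposal establishes only one of the two implications.
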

\begin{remark}\label{C3ysllrat}\
 Theorem \ref{cp8c3t} can be paraphrased as the following. Let $1\leq b_m\leq b_n\leq l$. $V_{a_m}(\omega_{b_m})\otimes V_{a_n}(\omega_{b_n})$ is reducible as a $\yn$-module if and only if $a_n-a_m\notin S(b_m, b_n)$ or $a_m-a_n\notin S(b_m, b_n)$.
\end{remark}
\begin{theorem}\label{cp8c3tng2}
$L=V_{a_1}(\omega_{b_1})\otimes V_{a_2}(\omega_{b_2})\otimes\ldots\otimes V_{a_k}(\omega_{b_k})$ is an irreducible representation of $\yn$ if and only if for any $a_i$ and $a_j$, $a_j-a_i\notin S\left(b_i,b_j\right)$ with $1\leq i\neq j\leq k$.
\end{theorem}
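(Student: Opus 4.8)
The plan is to reduce the statement to the already-sharp two-factor criterion of Theorem \ref{cp8c3t}. The first fact I would record is that the set $S$ is symmetric in its arguments, i.e. $S(b_i,b_j)=S(b_j,b_i)$: this is a direct reindexing of the two formulas in Lemma \ref{C3l317ss}, since the substitution $k\mapsto k-(b_j-b_i)$ carries one index range onto the other while leaving $\tfrac{b_n-b_m}{2}+k$ fixed. With this symmetry in hand, the hypothesis ``$a_j-a_i\notin S(b_i,b_j)$ for all $1\le i\ne j\le k$'' says exactly that for every unordered pair $\{i,j\}$ both $a_j-a_i$ and $a_i-a_j$ avoid $S(b_i,b_j)$, which by Theorem \ref{cp8c3t} (cf. Remark \ref{C3ysllrat}) is equivalent to the assertion that each two-factor product $V_{a_i}(\omega_{b_i})\otimes V_{a_j}(\omega_{b_j})$ is irreducible. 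Thus the theorem is equivalent to the clean statement: $L$ is irreducible if and only if every pairwise tensor product of its factors is irreducible.

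For the ``if'' direction I would use the duality criterion of Proposition \ref{VoWWoVhi}: $L$ is irreducible precisely when both $L$ and ${}^tL$ are highest weight. Since the hypothesis holds in particular for $i<j$, Theorem \ref{3mt2icl} gives that $L$ is highest weight. For ${}^tL$ I would use that the antipode is an anti-automorphism, so the left dual reverses the tensor order, whence by Lemma \ref{dualfrc1} (with $\kappa=\tfrac{l+1}{2}$ and $w_0$ acting by $b\mapsto l+1-b$ in type $A$)
\[
{}^tL\cong V_{a_k-\kappa}(\omega_{l+1-b_k})\otimes\cdots\otimes V_{a_1-\kappa}(\omega_{l+1-b_1}).
\]
Writing this reversed product as $V_{a'_1}(\omega_{b'_1})\otimes\cdots\otimes V_{a'_k}(\omega_{b'_k})$ with $a'_p=a_{k+1-p}-\kappa$ and $b'_p=l+1-b_{k+1-p}$, the hypothesis of Theorem \ref{3mt2icl} applied to ${}^tL$ reads $a'_q-a'_p\notin S(b'_p,b'_q)$ for $p<q$, i.e. $a_i-a_j\notin S(l+1-b_j,l+1-b_i)$ for $i<j$. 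A short computation from Lemma \ref{C3l317ss} shows $S(l+1-b_j,l+1-b_i)=S(b_i,b_j)$, so this is again contained in the hypothesis; hence ${}^tL$ is highest weight and $L$ is irreducible.

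For the ``only if'' direction I would argue by contraposition: if some pair $V_{a_i}(\omega_{b_i})\otimes V_{a_j}(\omega_{b_j})$ is reducible, then $L$ is reducible. Here the governing principle is that irreducibility of the full product forces irreducibility of each two-factor sub-product. Since an irreducible $L$ is isomorphic to the unique irreducible module whose Drinfeld polynomial is the product of those of the factors (Proposition \ref{vtv'hwv}), and this product is independent of order, I would bring the factors $V_{a_i}(\omega_{b_i})$ and $V_{a_j}(\omega_{b_j})$ into adjacent position and descend cyclicity, in both orders, from $L$ to the adjacent pair; by Proposition \ref{VoWWoVhi} this descent says precisely that the pair is irreducible. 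Combined with the sharp criterion Theorem \ref{cp8c3t} (and the symmetry of $S$) this yields the numerical condition for every pair, completing the equivalence.

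The main obstacle is exactly this last descent step: showing that irreducibility of the full ordered tensor product propagates to irreducibility of every two-factor sub-product. The conditions of Theorem \ref{3mt2icl} are only sufficient, so there is genuine content in establishing the reverse implication ``full cyclicity $\Rightarrow$ adjacent-pair cyclicity'' (equivalently, that a reducible adjacent pair produces an obstruction to cyclicity of the whole product after reordering). This is the single place where the argument is special to type $A$, and it is driven entirely by the fact that the pairwise criterion Theorem \ref{cp8c3t} is an \emph{equivalence} rather than a one-sided sufficient condition.
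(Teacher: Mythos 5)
Your proposal follows the paper's own proof almost step for step. The ``if'' direction is identical: cyclicity of $L$ from Theorem \ref{3mt2icl}, the reversal formula for the left dual via Lemma \ref{dualfrc1} and ${}^{t}(V\otimes W)\cong{}^{t}W\otimes{}^{t}V$, the computation $S(l+1-b_j,l+1-b_i)=S(b_i,b_j)$, and then Proposition \ref{VoWWoVhi}. The ``only if'' direction also matches: permute the offending pair into adjacent position (using that permutations of the factors of an irreducible tensor product give isomorphic modules --- a fact which the paper, exactly like you, asserts rather than proves; your appeal to Proposition \ref{vtv'hwv} only shows that an irreducible permuted product would be isomorphic to $L$, not that it is irreducible), then apply the sharp two-factor criterion of Theorem \ref{cp8c3t} together with the symmetry $S(b_i,b_j)=S(b_j,b_i)$.

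The one place where your mechanism is off is the descent step. As written, you descend cyclicity to the pair ``in both orders'' and claim that Proposition \ref{VoWWoVhi} then yields irreducibility of the pair; but that proposition pairs a module with its \emph{left dual} ${}^{t}\big(V_{a_i}(\omega_{b_i})\otimes V_{a_j}(\omega_{b_j})\big)={}^{t}V_{a_j}(\omega_{b_j})\otimes{}^{t}V_{a_i}(\omega_{b_i})$, not with the opposite tensor order $V_{a_j}(\omega_{b_j})\otimes V_{a_i}(\omega_{b_i})$, so cyclicity of the two orders of the pair is not the hypothesis that proposition requires. The paper avoids this entirely by running the descent in the elementary direction: if the leading adjacent pair were reducible (which Theorem \ref{cp8c3t} guarantees when the numerical condition fails, in whichever order has the smaller fundamental weight first), then a proper nonzero submodule $M$ of that pair gives the proper nonzero submodule $M\otimes(\text{remaining factors})$ of the whole product, contradicting irreducibility of $L$. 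Your version can be repaired within your own framework --- descend cyclicity to $V_{a_i}(\omega_{b_i})\otimes V_{a_j}(\omega_{b_j})$ from a permutation placing the pair first, and to its left dual from ${}^{t}$ of a permutation placing the pair last --- but the submodule argument is shorter and is the one the paper actually uses.
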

\begin{proof}
$``\Leftarrow"$ It follows from Theorem \ref{3mt2icl} that if $a_j-a_i\notin S(b_i,b_j)$, then $L$ is a highest weight representation. We next show that $\ ^tL$ is a highest weight representation if $a_i-a_j\notin  S(b_i,b_j)$.
Note that $\ ^tV_{a}\left(\omega_{b}\right)=V_{a-\kappa}\left(\omega_{l+1-b}\right)$ and $\ ^t(V\otimes W)=\ ^tW\otimes \ ^tV$ as $\yn$-module, where $\kappa=\frac{1}{2}\times$dual Coxeter number of $\nyn$. Therefore
\begin{align*}
\ ^tL=V_{a_{k}-\kappa}&\left(\omega_{l+1-b_{k}}\right)\otimes \ldots \otimes V_{a_{j}-\kappa}\left(\omega_{l+1-b_{j}}\right)\\
& \otimes \ldots \otimes V_{a_{i}-\kappa}\left(\omega_{l+1-b_{i}}\right)\otimes \ldots\otimes V_{a_{1}-\kappa}\left(\omega_{l+1-b_{1}}\right).
\end{align*}
It follows from Theorem \ref{3mt2icl} that $\ ^tL$ is a highest weight representation if $a_i-a_j\notin S(l+1-b_j, l+1-b_i)$. A straight forward computation shows that $S(l+1-b_j, l+1-b_i)=S(b_i,b_j)$. Therefore $a_i-a_j\notin S(b_i, b_j)$ implies that $\ ^tL$ is a highest weight representation. Then $L$ is irreducible by Proposition \ref{VoWWoVhi}.

$``\Rightarrow"$ Suppose that $L$ is irreducible and there exists $a_i$ and $a_j$ such that  $a_j-a_i\in S(b_i,b_j)$. Since $L$ is irreducible, any permutation of tensor factors $V_{a_s}(\omega_{b_s})$ gives an isomorphic representation of $\yn$. Arranging the order if necessary, we may assume that $a_2-a_1\in S(b_1, b_2)$. A straight computation shows that $S(b_1,b_2)=S(b_2, b_1)$. If $b_1\leq b_2$, then $V_{a_1}(\omega_{b_1})\otimes V_{a_2}(\omega_{b_2})$ is reducible by Theorem \ref{cp8c3t}, so is $L$, contradicting to the assumption that $L$ is irreducible. Thus $b_1> b_2$. Since $a_2-a_1\in S(b_1,b_2)=S(b_2, b_1)$, $V_{a_2}(\omega_{b_2})\otimes V_{a_1}(\omega_{b_1})$ is reducible by Theorem \ref{cp8c3t}, and then $L$ is reducible. Therefore, if $a_j-a_i\in S(b_i,b_j)$, $L$ is reducible.
\end{proof}
\section{Supplement Step 5 in Proposition \ref{Lhwr}}
It remains to show that $\operatorname{Re}\left(a\right)\geq \operatorname{Re}\left(a_1\right)$. Before we prove it, we would like to introduce a notation to denote $x_{l,0}^{-}\ldots x_{b_1+1,0}^{-}x_{b_1,0}^{-}$ for the simplicity of formulas.
Let $X_{b_1}^{\left(m+1\right)}=x_{b_1+m,0}^{-}\ldots x_{b_1+1,0}^{-}x_{b_1,0}^{-}$ with the following conditions: $0\leq m\leq l-b_1$, the number of terms in $x_{b_1+m,0}^{-}\ldots x_{b_1+1,0}^{-}x_{b_1,0}^{-}$ is $m+1$, and there is a consecutive increase from $b_1$ to $b_1+m$. For example, $X_{b_1-r}^{\left(s\right)}=x_{b_1-r+s-1,0}^{-}\ldots x_{b_1-r+1,0}^{-}x_{b_1-r,0}^{-}$ and $X_{b_1-r}^{\left(l-b_1+1\right)}=x_{l-r,0}^{-}\ldots x_{b_1-r+1,0}^{-}x_{b_1-r,0}^{-}$.

%Let $i=r\left(l-b_1+1\right)+s$. Then
%{\footnotesize{\begin{equation*}
% \sigma_i=\left(s_{b_1-r-2+s}\ldots s_{b_1-r}s_{b_1-r-1}\right)\left(s_{l-r}\ldots s_{b_1-r+1}s_{b_1-r}\right)\ldots\left(s_{l-1}\ldots s_{b_1}s_{b_1-1}\right)\left(s_{l}\ldots s_{b_1+1}s_{b_1}\right).
%\end{equation*}}}

We want to point out that the irreducible representation $W_1\left(a\right)$ of $\ysl$ is very important in the calculations in this section.
It follows from Proposition 2.6 of \cite{ChPr3} that in the 2-dimensional irreducible module $W_1\left(a\right)=\mathrm{span}\{v_1, x_{i,0}^{-}v_1\}$ of $Y\left(\mathfrak{sl}_2\right)$,
\begin{center}
if $h_{i,1}v_1=av_1$, then $x_{i,1}^{-}v=ax_{i,0}^{-}v$ and $h_{i,1}x_{i,0}^{-}v_1=-ax_{i,0}^{-}v_1.\qquad\qquad (\ref{Cor3.3})$
\end{center}

Let $i=r(l-b_1+1)+s$. We claim that
\begin{align}\label{hi1v}
% \nonumber to remove numbering \left(before each equation\right)
   h_{i',1}X_{b_1-r}^{\left(s\right)}&X_{b_1-r+1}^{\left(l-b_1+1\right)}\ldots X_{b_1}^{\left(l-b_1+1\right)}v^{+}_1 \nonumber\\
&= h_{b_1-r+s,1}X_{b_1-r}^{\left(s\right)}X_{b_1-r+1}^{\left(l-b_1+1\right)}\ldots X_{b_1}^{\left(l-b_1+1\right)}v^{+}_1 \nonumber\\
   &= \left(a_1+\frac{r+s}{2}\right)X_{b_1-r}^{\left(s\right)}X_{b_1-r+1}^{\left(l-b_1+1\right)}\ldots X_{b_1}^{\left(l-b_1+1\right)}v^{+}_1.
\end{align}

We prove it by induction on $r$. For a fixed $r$, we use induction on $s$.
Let $r=0, s=1$.
\begin{align*}
% \nonumber to remove numbering \left(before each equation\right)
  h_{b_1+1,1}x_{b_1,0}^{-}v^{+}_1 &= [h_{b_1+1,1},x_{b_1,0}^{-}]v^{+}_1  \\
   &= \left([h_{b_1+1,0},x_{b_1,1}^{-}]+\frac{1}{2}\left(h_{b_1+1,0}x_{b_1,0}^{-}+x_{b_1,0}^{-}h_{b_1+1,0}\right)\right)v^{+}_1 \\
   &= \left(x_{b_1,1}^{-}+\frac{1}{2}x_{b_1,0}^{-}+x_{b_1,0}^{-}h_{b_1+1,0}\right)v^{+}_1 \\
  &= x_{b_1,1}^{-}v^{+}_1+\frac{1}{2}x_{b_1,0}^{-}v^{+}_1 \\
  &= \left(a_1+\frac{1}{2}\right)x_{b_1,0}^{-}v^{+}_1.
\end{align*}
The claim is true.

Given a pair $\left(r,s\right)$, suppose that the claim (\ref{hi1v}) is true for all pairs $\left(m,n\right)$ such that  $m<r$ and all possible values of $n$.%, or for $m=r$ and all values $0\leq n<s$.

We next show that (\ref{hi1v}) is true for the pair $\left(r,0\right)$.
By the induction hypothesis we may assume that
\begin{align*}
% \nonumber to remove numbering \left(before each equation\right)
h_{b_1-r+1,1}&X_{b_1-r+2}^{\left(l-b_1+1\right)}X_{b_1-r+3}^{\left(l-b_1+1\right)}\ldots X_{b_1}^{\left(l-b_1+1\right)}v^{+}_1 \\
 &= \left(a_1+\frac{r-1}{2}\right)X_{b_1-r+2}^{\left(l-b_1+1\right)}X_{b_1-r+2}^{\left(l-b_1+1\right)}\ldots X_{b_1}^{\left(l-b_1+1\right)}v^{+}_1.
\end{align*}
From equations \ref{Cor3.3}, we have
\begin{align*}
% \nonumber to remove numbering \left(before each equation\right)
 x_{b_1-r+1,1}^{-}&X_{b_1-r+2}^{\left(l-b_1+1\right)}X_{b_1-r+3}^{\left(l-b_1+1\right)}\ldots X_{b_1}^{\left(l-b_1+1\right)}v^{+}_1 \\
 &= \left(a_1+\frac{r-1}{2}\right)x_{b_1-r+1,0}^{-}X_{b_1-r+2}^{\left(l-b_1+1\right)}X_{b_1-r+3}^{\left(l-b_1+1\right)}\ldots X_{b_1}^{\left(l-b_1+1\right)}v^{+}_1.
\end{align*}
It is easy to see by the defining relations of Yangians that
\begin{align*}
% \nonumber to remove numbering \left(before each equation\right)
h_{b_1-r,0}^{-}X_{b_1-r+2}^{\left(l-b_1+1\right)}\ldots X_{b_1}^{\left(l-b_1+1\right)}v^{+}_1&=0.
\end{align*}
Thus
\begin{align*}
     % \nonumber to remove numbering \left(before each equation\right)
        [h_{b_1-r,1},& x_{b_1-r+1,0}^{-}] X_{b_1-r+2}^{\left(l-b_1+1\right)}\ldots X_{b_1}^{\left(l-b_1+1\right)}v^{+}_1 \\
        &=\left(x_{b_1-r+1,1}^{-}+\frac{1}{2}x_{b_1-r+1,0}^{-}+x_{b_1-r+1,0}^{-}h_{b_1-r,0}\right)X_{b_1-r+2}^{\left(l-b_1+1\right)}\ldots X_{b_1}^{\left(l-b_1+1\right)}v^{+}_1\\
       &=\left(a_1+\frac{r}{2}\right)X_{b_1-r+1}^{\left(l-b_1+1\right)}X_{b_1-r+2}^{\left(l-b_1+1\right)}\ldots X_{b_1}^{\left(l-b_1+1\right)}v^{+}_1.
     \end{align*}
Therefore
\begin{align*}
% \nonumber to remove numbering \left(before each equation\right)
h_{b_1-r,1}&X_{b_1-r+1}^{\left(l-b_1+1\right)}X_{b_1-r+2}^{\left(l-b_1+1\right)}\ldots X_{b_1}^{\left(l-b_1+1\right)}v^{+}_1 \\
&=[h_{b_1-r,1},X_{b_1-r+1}^{\left(l-b_1+1\right)}X_{b_1-r+2}^{\left(l-b_1+1\right)}\ldots X_{b_1}^{\left(l-b_1+1\right)}]v^{+}_1 \\
&=[h_{b_1-r,1},X_{b_1-r+1}^{\left(l-b_1+1\right)}]X_{b_1-r+2}^{\left(l-b_1+1\right)}\ldots X_{b_1}^{\left(l-b_1+1\right)}v^{+}_1\\
&=X_{b_1-r+1}^{\left(l-b_1\right)}[h_{b_1-r,1}, x_{b_1-r+1,0}^{-}]X_{b_1-r+2}^{\left(l-b_1+1\right)}\ldots X_{b_1}^{\left(l-b_1+1\right)}v^{+}_1\\
  &=\left(a_1+\frac{r}{2}\right)X_{b_1-r+1}^{\left(l-b_1+1\right)}X_{b_1-r+2}^{\left(l-b_1+1\right)}\ldots X_{b_1}^{\left(l-b_1+1\right)}v^{+}_1.
\end{align*}
We next show that (\ref{hi1v}) is true for the pair $\left(r,s\right)$ by induction on $s$. We may assume that $(\ref{hi1v})$ is true for all number less than or equals $s-1$.
%We first compute $$[h_{b_1-r+s,1},x_{b_1-r+s-1,0}^{-}]X_{b_1-r,0}^{\left(s-1\right)}X_{b_1-r+1}^{\left(l-b_1+1\right)}X_{b_1-r+2}^{\left(l-b_1+1\right)}\ldots X_{b_1}^{\left(l-b_1+1\right)}v^{+}_1.$$
Note that it was shown in Proposition \ref{Lhwr} that $$\mathrm{wt}\left(v_{\sigma_i\left(\omega_{b_1}\right)}\right)=\left(\mu_1+\ldots+\mu_{b_1-r-1}\right)+\mu_{b_1-r+s}+\left(\mu_{l-r+2}+\ldots +\mu_{l+1}\right).$$

Thus \begin{align*}
     % \nonumber to remove numbering \left(before each equation\right)
        \mathrm{wt}&\left(X_{b_1-r}^{\left(s-1\right)}X_{b_1-r+1}^{\left(l-b_1+1\right)}X_{b_1-r+2}^{\left(l-b_1+1\right)}\ldots X_{b_1}^{\left(l-b_1+1\right)}v^{+}_1\right) \\
        &=\left(\mu_1+\ldots+\mu_{b_1-r-1}\right)+\mu_{b_1-r+s-1}+\left(\mu_{l-r+2}+\ldots +\mu_{l+1}\right)
     \end{align*}
and
\begin{align*}
     % \nonumber to remove numbering \left(before each equation\right)
        \mathrm{wt}&\left(X_{b_1-r+1}^{\left(s\right)}X_{b_1-r+2}^{\left(l-b_1+1\right)}\ldots X_{b_1}^{\left(l-b_1+1\right)}v^{+}_1\right) \\
        &=\left(\mu_1+\ldots+\mu_{b_1-r}\right)+\mu_{b_1-r+s-1}+\left(\mu_{l-r+3}+\ldots +\mu_{l+1}\right).
     \end{align*}

It follows from induction hypothesis and \ref{Cor3.3} that
\begin{align*}
% \nonumber to remove numbering \left(before each equation\right)
   x_{b_1-r+s-1,1}^{-}& X_{b_1-r}^{\left(s-1\right)}X_{b_1-r+1}^{\left(l-b_1+1\right)}X_{b_1-r+2}^{\left(l-b_1+1\right)}\ldots X_{b_1}^{\left(l-b_1+1\right)}v^{+}_1 \\
   &=\left(a_1+\frac{r+\left(s-1\right)}{2}\right)X_{b_1-r}^{\left(s-1\right)}X_{b_1-r+1}^{\left(l-b_1+1\right)}X_{b_1-r+2}^{\left(l-b_1+1\right)}\ldots X_{b_1}^{\left(l-b_1+1\right)}v^{+}_1,
\end{align*}
and
\begin{align*}
% \nonumber to remove numbering \left(before each equation\right)
   x_{b_1-r+s+1,1}^{-}&X_{b_1-r+1}^{\left(s\right)}X_{b_1-r+2}^{\left(l-b_1+1\right)}\ldots X_{b_1}^{\left(l-b_1+1\right)}v^{+}_1\\
   &=\left(a_1+\frac{\left(r-1\right)+s}{2}\right)X_{b_1-r+1}^{\left(s+1\right)}X_{b_1-r+2}^{\left(l-b_1+1\right)}\ldots X_{b_1}^{\left(l-b_1+1\right)}v^{+}_1.
\end{align*}
%It follows from Step 3 of Proposition \ref{Lhwr} that
%\begin{align*}
% \nonumber to remove numbering \left(before each equation\right)
%h_{b_1-r+s-1,0}^{-}X_{b_1-r-1,0}^{\left(s-1\right)}X_{b_1-r}^{\left(l-b_1+1\right)}\ldots X_{b_1}^{\left(l-b_1+1\right)}v^{+}_1&=0.
%\end{align*}

Now we are ready to prove (\ref{hi1v}) for the pair $\left(r,s\right)$.
\begin{align*}
% \nonumber to remove numbering \left(before each equation\right)
h_{b_1-r+s,1}&X_{b_1-r}^{\left(s\right)}X_{b_1-r+1}^{\left(l-b_1+1\right)}X_{b_1-r+2}^{\left(l-b_1+1\right)}\ldots X_{b_1}^{\left(l-b_1+1\right)}v^{+}_1 \\
  &= h_{b_1-r+s,1}\left(x_{b_1-r+s-1,0}^{-}\ldots x_{b_1-r,0}^{-}\right)X_{b_1-r+1}^{\left(l-b_1+1\right)}\ldots X_{b_1}^{\left(l-b_1+1\right)}v^{+}_1\\
  &=[h_{b_1-r+s,1},x_{b_1-r+s-1,0}^{-}]X_{b_1-r}^{\left(s-1\right)}X_{b_1-r+1}^{\left(l-b_1+1\right)}X_{b_1-r+2}^{\left(l-b_1+1\right)}\ldots X_{b_1}^{\left(l-b_1+1\right)}v^{+}_1\\
   &+X_{b_1-r}^{\left(s\right)}h_{b_1-r+s,1}X_{b_1-r+1}^{\left(l-b_1+1\right)}X_{b_1-r+2}^{\left(l-b_1+1\right)}\ldots X_{b_1}^{\left(l-b_1+1\right)}v^{+}_1\\
   &= \left(x_{b_1-r+s-1,1}^{-}+\frac{1}{2}x_{b_1-r+s-1,0}^{-}+x_{b_1-r+s-1,0}^{-}h_{b_1-r+s,0}\right)X_{b_1-r}^{\left(s-1\right)}X_{b_1-r+1}^{\left(l-b_1+1\right)}\\
   &\qquad \qquad \cdot X_{b_1-r+2}^{\left(l-b_1+1\right)}\ldots X_{b_1}^{\left(l-b_1+1\right)}v^{+}_1\\
  &+X_{b_1-r}^{\left(s\right)}h_{b_1-r+s,1}\left(x_{l-r+1,0}^{-}\ldots x_{b_1-r+2,0}^{-}x_{b_1-r+1,0}^{-}\right)X_{b_1-r+2}^{\left(l-b_1+1\right)}\ldots X_{b_1}^{\left(l-b_1+1\right)}v^{+}_1\\
  &= \left(a_1+\frac{r+\left(s-1\right)}{2}+\frac{1}{2}\right)X_{b_1-r}^{\left(s\right)}X_{b_1-r+1}^{\left(l-b_1+1\right)}X_{b_1-r+2}^{\left(l-b_1+1\right)}\ldots X_{b_1}^{\left(l-b_1+1\right)}v^{+}_1\\
  &+X_{b_1-r}^{\left(s\right)}h_{b_1-r+s,1}\left(x_{l-r+1,0}^{-}\ldots x_{b_1-r+2,0}^{-}x_{b_1-r+1,0}^{-}\right)X_{b_1-r+2}^{\left(l-b_1+1\right)}\ldots X_{b_1}^{\left(l-b_1+1\right)}v^{+}_1\\
  &= \left(a_1+\frac{r+s}{2}\right)X_{b_1-r}^{\left(s\right)}X_{b_1-r+1}^{\left(l-b_1+1\right)}X_{b_1-r+2}^{\left(l-b_1+1\right)}\ldots X_{b_1}^{\left(l-b_1+1\right)}v^{+}_1\\%
  &+X_{b_1-r}^{\left(s\right)}X_{b_1-r+s+2}^{\left(l-b_1-s\right)}[h_{b_1-r+s,1},x_{b_1-r+s+1,0}^{-}]X_{b_1-r+1}^{\left(s\right)}X_{b_1-r+2}^{\left(l-b_1+1\right)}\ldots X_{b_1}^{\left(l-b_1+1\right)}v^{+}_1\\%
  &+X_{b_1-r}^{\left(s\right)}X_{b_1-r+s+1}^{\left(l-b_1-s+1\right)} h_{b_1-r+s,1}x_{b_1-r+s,0}^{-} X_{b_1-r+1}^{\left(s-1\right)}X_{b_1-r+2}^{\left(l-b_1+1\right)}\ldots X_{b_1}^{\left(l-b_1+1\right)}v^{+}_1\\%
   &= \left(a_1+\frac{r+s}{2}\right)X_{b_1-r}^{\left(s\right)}X_{b_1-r+1}^{\left(l-b_1+1\right)}X_{b_1-r+2}^{\left(l-b_1+1\right)}\ldots X_{b_1}^{\left(l-b_1+1\right)}v^{+}_1\\%
  &+X_{b_1-r}^{\left(s\right)}X_{b_1-r+s+2}^{\left(l-b_1-s\right)}\left(x_{b_1-r+s+1,1}^{-}+\frac{1}{2}x_{b_1-r+s+1,0}^{-}+x_{b_1-r+s+1,0}^{-}h_{b_1-r+s,0}\right)\\
  &\qquad\qquad \cdot X_{b_1-r+1}^{\left(s\right)}X_{b_1-r+2}^{\left(l-b_1+1\right)}\ldots X_{b_1}^{\left(l-b_1+1\right)}v^{+}_1\\%
  &-X_{b_1-r}^{\left(s\right)}X_{b_1-r+s+1}^{\left(l-b_1-s+1\right)} h_{b_1-r+s,1}X_{b_1-r+1}^{\left(s-1\right)}X_{b_1-r+2}^{\left(l-b_1+1\right)}\ldots X_{b_1}^{\left(l-b_1+1\right)}v^{+}_1\\%
%   \end{align*}
%  \begin{align*}
 &= \left(a_1+\frac{r+s}{2}\right)X_{b_1-r}^{\left(s\right)}X_{b_1-r+1}^{\left(l-b_1+1\right)}X_{b_1-r+2}^{\left(l-b_1+1\right)}\ldots X_{b_1}^{\left(l-b_1+1\right)}v^{+}_1\\
  &+\left(a_1+\frac{\left(r-1\right)+s}{2}+\frac{1}{2}-1\right) X_{b_1-r}^{\left(s\right)}X_{b_1-r+1}^{\left(l-b_1+1\right)}X_{b_1-r+2}^{\left(l-b_1+1\right)}\ldots X_{b_1}^{\left(l-b_1+1\right)}v^{+}_1\\
  &-\left(a_1+\frac{(r-1)+(s-1)}{2}\right)X_{b_1-r}^{\left(s\right)}X_{b_1-r+1}^{\left(l-b_1+1\right)}X_{b_1-r+2}^{\left(l-b_1+1\right)}\ldots X_{b_1}^{\left(l-b_1+1\right)}v^{+}_1\\
% &= \left(\left(a_1+\frac{r+s+1}{2}\right)+\left(a_1+\frac{r+s}{2}+\frac{1}{2}-1\right)-\left(a_1+\frac{r+\left(s-1\right)}{2}\right)\right)X_{b_1-r}^{\left(s\right)}\ldots X_{b_1}^{\left(l-b_1+1\right)}v^{+}_1\\
 &= \left(a_1+\frac{r+s}{2}\right)X_{b_1-r}^{\left(s\right)}X_{b_1-r+1}^{\left(l-b_1+1\right)}X_{b_1-r+2}^{\left(l-b_1+1\right)}\ldots X_{b_1}^{\left(l-b_1+1\right)}v^{+}_1.
\end{align*}
By induction, the claim is proved.
\section{On the local Weyl modules of $\yn$}
Our main theorem in this chapter follows from Theorem \ref{frosllc1} and Propositions \ref{tati}, \ref{Lhwr} and \ref{vtv'hwv}.
\begin{theorem}\label{wmiatp}
Let $\pi=\Big(\pi_1\left(u\right),\ldots, \pi_{l}\left(u\right)\Big)$, where $\pi_i\left(u\right)=\prod\limits_{j=1}^{m_i}\left(u-a_{i,j}\right)$ . Let $S=\{a_{1,1},\ldots, a_{1,m_1},\ldots, a_{l,1}\ldots, a_{l,m_l}\}$ be the multiset of roots of these polynomials. Let  $a_1=a_{m,n}$ be one of the numbers in $S$ with the maximal real part and let $b_1=m$. Similarly let $a_j=a_{s,t}\left(j\geq 2\right)$ be one of the numbers in $S\setminus\{a_1, \ldots, a_{j-1}\}$ with the maximal real part, and let $b_j=s$. Let $L=V_{a_1}(\omega_{b_1})\otimes V_{a_2}(\omega_{b_2})\otimes\ldots\otimes V_{a_k}(\omega_{b_k})$, where $k=m_1+\ldots+m_l$. Then $L$ is a highest weight representation of dimension $\prod\limits_{i=1}^{l} {l+1\choose i}^{m_i}$, and its associated polynomial is $\pi$.
\end{theorem}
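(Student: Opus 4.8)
The plan is to assemble all three assertions from results already in hand, since the ordering of the roots $a_1,\ldots,a_k$ was arranged precisely to meet the hypothesis of Proposition \ref{Lhwr}. First I would note that, by construction, $a_1$ has maximal real part among the elements of the multiset $S$, and each subsequent $a_j$ has maximal real part among the remaining elements; consequently
$$\operatorname{Re}(a_1)\geq \operatorname{Re}(a_2)\geq \ldots \geq \operatorname{Re}(a_k).$$
Proposition \ref{Lhwr} then applies verbatim and shows that $L$ is a highest weight representation of $\yn$, generated by the tensor product $v_1^{+}\otimes v_2^{+}\otimes \ldots \otimes v_k^{+}$ of the highest weight vectors $v_i^{+}$ of the factors $V_{a_i}(\omega_{b_i})$. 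This disposes of the first assertion.

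Next I would identify the associated polynomial of $L$. Each fundamental factor $V_{a_i}(\omega_{b_i})$ has, by definition, Drinfeld polynomial equal to $u-a_i$ in the $b_i$-th slot and equal to $1$ in every other slot. Applying Proposition \ref{vtv'hwv} repeatedly across the $k$ tensor factors, the associated polynomial of $L$ is the componentwise product of these, so its $n$-th component is $\prod_{i\,:\,b_i=n}(u-a_i)$. By the way the pairs $(a_i,b_i)$ were defined, the indices $i$ with $b_i=n$ correspond exactly to the roots $a_{n,1},\ldots,a_{n,m_n}$ of $\pi_n$, whence this component equals $\prod_{t=1}^{m_n}(u-a_{n,t})=\pi_n(u)$. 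Therefore the associated polynomial of $L$ is $\pi$.

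Finally I would compute the dimension. By Remark \ref{rofry}, which follows from Proposition \ref{tati}, in type $A$ one has $V_{a_i}(\omega_{b_i})\cong_{\g} L(\omega_{b_i})$, and Theorem \ref{frosllc1} gives $\operatorname{Dim}\big(L(\omega_{b_i})\big)={l+1\choose b_i}$. As each factor is finite-dimensional, $\operatorname{Dim}(L)$ is the product of the dimensions of its factors; grouping the factors by the common value $n$ of $b_i$, which occurs with multiplicity $m_n$, gives
$$\operatorname{Dim}(L)=\prod_{i=1}^{k}{l+1\choose b_i}=\prod_{n=1}^{l}{l+1\choose n}^{m_n}.$$

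I do not expect any genuine obstacle here: the substantive work has already been carried out in Proposition \ref{Lhwr}, and this theorem is essentially a packaging of that result together with the dimension count. The only point that needs a little care is the bookkeeping in the second paragraph, namely verifying that the componentwise product of the fundamental Drinfeld polynomials reconstitutes $\pi$; this is immediate once the bijection between the chosen pairs $(a_i,b_i)$ and the roots of the $\pi_n$ is spelled out.
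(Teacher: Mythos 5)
Your proposal is correct and follows exactly the route the paper takes: the paper derives this theorem directly from Theorem \ref{frosllc1} and Propositions \ref{tati}, \ref{Lhwr} and \ref{vtv'hwv}, which are precisely the ingredients you combine (cyclicity from the ordering via Proposition \ref{Lhwr}, the associated polynomial via Proposition \ref{vtv'hwv}, and the dimension count via Proposition \ref{tati}/Remark \ref{rofry} and Theorem \ref{frosllc1}). Your write-up simply spells out the bookkeeping that the paper leaves implicit.
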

Comparing the upper bound of dimension of $W(\pi)$ and the dimension of $L$ above, we can determine explicitly the local Weyl module $W(\pi)$ and its dimension.
\begin{theorem}
The local Weyl module $W(\pi)$ of $\yn$ has dimension $\prod\limits_{i=1}^{l} {l+1\choose i}^{m_i}$, and is isomorphic to $L$ as in Theorem \ref{wmiatp}.
\end{theorem}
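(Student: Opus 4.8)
The plan is a short dimension-counting sandwich, since all of the representation-theoretic substance has already been established. Write $\lambda=\sum_{i=1}^{l}m_i\omega_i$ with $m_i=\deg\pi_i$. By Theorem \ref{wmiatp}, the ordered tensor product $L=V_{a_1}(\omega_{b_1})\otimes\cdots\otimes V_{a_k}(\omega_{b_k})$ is a highest weight representation of $\yn$ with associated polynomial $\pi$ and dimension $\prod_{i=1}^{l}{l+1\choose i}^{m_i}$.

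First I would use the universal property of the local Weyl module recorded in part (2) of Remark \ref{mi=lambdahi}: every finite-dimensional highest weight representation of $\yn$ with associated polynomial $\pi$ is a quotient of $W(\pi)$. Applied to $L$, this yields a surjective $\yn$-module homomorphism $W(\pi)\twoheadrightarrow L$, and hence the lower bound
\[
\operatorname{Dim}\big(W(\pi)\big)\geq \operatorname{Dim}(L)=\prod_{i=1}^{l}{l+1\choose i}^{m_i}.
\]

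For the reverse inequality I would quote Corollary \ref{dowmsl}, which is the type-$A$ specialization of the general upper bound in Theorem \ref{ubodowm} combined with the known dimension of the current-algebra Weyl module $W(\lambda)$; it gives
\[
\operatorname{Dim}\big(W(\pi)\big)\leq \prod_{i=1}^{l}{l+1\choose i}^{m_i}.
\]
Combining the two estimates forces equality, so $\operatorname{Dim}\big(W(\pi)\big)=\operatorname{Dim}(L)$.

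Finally, the surjection $W(\pi)\twoheadrightarrow L$ is a linear map between finite-dimensional vector spaces of equal dimension and is therefore an isomorphism, giving $W(\pi)\cong L$. I do not expect any real obstacle within the present statement: the entire difficulty has been absorbed into Theorem \ref{wmiatp}, whose proof in turn rests on the cyclicity assertion of Proposition \ref{Lhwr} and the dimension formula of Theorem \ref{frosllc1}. Here the only point to verify is that the lower bound coming from the universal property matches the upper bound coming from the filtered/graded comparison with the current algebra, which they do by construction.
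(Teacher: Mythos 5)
Your proof is correct and is essentially the paper's own argument: the paper likewise obtains the lower bound from the universal property of $W(\pi)$ applied to the highest weight module $L$ of Theorem \ref{wmiatp}, obtains the upper bound from Corollary \ref{dowmsl} (i.e.\ Theorem \ref{ubodowm} together with the dimension of the current-algebra Weyl module $W(\lambda)$), and concludes $W(\pi)\cong L$ by comparing dimensions, exactly as in the $\ysl$ case of Theorem \ref{wmfsl2}.
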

%%%%%%%%%%%%%%%%%%%%%%%%%%%%%%%%%%%%%%%%%%%%%%%%%%%%%%%%%%%%%%%%%%%%%%%%%%%%%%%%%%%%%%%%%%%%%%%%%%%%%%%%%%%%%%%%%%%%%%%%%%%%%%%%%%%%%%%%%%%%%%%%%%%%%%%%%%%%%%%%%%%%%%%%%%%%%%%%%%
%%                                                            Chapter 3                                                                                                         %%
%%                                                                                                                                                                              %%
%%                                                                                                                                                                              %%
%%%%%%%%%%%%%%%%%%%%%%%%%%%%%%%%%%%%%%%%%%%%%%%%%%%%%%%%%%%%%%%%%%%%%%%%%%%%%%%%%%%%%%%%%%%%%%%%%%%%%%%%%%%%%%%%%%%%%%%%%%%%%%%%%%%%%%%%%%%%%%%%%%%%%%%%%%%%%%%%%%%%%%%%%%%%%%%%%%
\chapter{Local Weyl modules of $\yo$}
In this chapter, the local Weyl modules of $\yo$ are studied. The structure of the local Weyl modules is determined, and the dimensions of the local Weyl modules are obtained.  In the process of characterizing the local Weyl modules, a sufficient condition for a tensor product of fundamental representations of the Yangian to be highest weight representation is obtained, which shall lead to an irreducibility criterion for the tensor product.

Let $\pi=\big(\pi_1(u),\pi_2(u),\ldots, \pi_l(u)\big)$ be a generic $l$-tuple of monic polynomials in $u$, and $\pi_i\left(u\right)=\prod\limits_{j=1}^{m_i}\left(u-a_{i,j}\right)$. Let $k=m_1+m_2+\ldots+m_l$, $S=\{a_{i,j}|i=1,\ldots,l; j=1,\ldots,m_i\}$, and $\lambda=\sum\limits_{i=1}^{l}m_i\omega_i$.
Let $a_{m,n}$ be one of the numbers in $S$ with the maximal real part. Then define $a_1=a_{m,n}$ and $b_1=m$. Inductively, let $a_{s,t}$ be one of the numbers in $S-\{a_1,\ldots, a_{r-1}\}$ ($r\geq 2$) with the maximal real part. Then define $a_r=a_{s,t}$ and $b_r=s$. We prove that the ordered tensor product $L=V_{a_1}\left(\omega_{b_1}\right)\otimes V_{a_2}\left(\omega_{b_2}\right)\otimes \ldots \otimes V_{a_k}\left(\omega_{b_k}\right)$ is a highest weight representation, where $V_{a_i}(\omega_{b_i})$ are fundamental representations of $\yo$. A standard argument shows that the associated $l$-tuple of polynomials of $L$ is $\pi$.
Since $L$ is a quotient of $W(\pi)$, a lower bound on the dimension of $W(\pi)$ is obtained.

Let $W(\lambda)$ be the Weyl module associated to $\lambda$ of the current algebra $\nyo[t]$. It is showed in Corollary B of \cite{FoLi} that $\operatorname{Dim}\big(W(\lambda)\big)=\prod\limits_{i\in I} \Big(\operatorname{Dim}\big(W(\omega_i)\big)\Big)^{m_i}$. It follows from Corollary \ref{dkrvawocsp} that $\operatorname{Dim}(W(\lambda))=\operatorname{Dim}(L)$. Since $\operatorname{Dim}\big(W(\lambda)\big)\geq \operatorname{Dim}(W(\pi))\geq \operatorname{Dim}(L)$,  $$W\left(\pi\right)\cong V_{a_1}(\omega_{b_1})\otimes V_{a_2}(\omega_{b_2})\otimes\ldots\otimes V_{a_k}(\omega_{b_k}).$$
The dimension of $W(\pi)$ can be recovered from Theorem \ref{dofrdl}, Remark \ref{rofry} and Proposition \ref{dcofbdl}.
%P.S. I do not know if there is a notation, Kirillov-ResHetikhin modules for $\yg$, but there are notations for $\g[t]$ and the paper, On the Fermionic Formula and the Kirillov-Reshetikhin
%Conjecture, might be an interesting paper.

Similar to the proof that $L$ is a highest weight representation (Proposition \ref{v+=gv-}), we can obtain a sufficient condition for a tensor product of fundamental representations of the form $V_{a_1}(\omega_{b_1})\otimes V_{a_2}(\omega_{b_2})\otimes\ldots\otimes V_{a_k}(\omega_{b_k})$ to be a highest weight representation. If $a_j-a_i\notin S(b_i, b_j)$ for $1\leq i<j\leq k$, then $L$ is a highest weight representation, where $S(b_i, b_j)$ is a finite set of positive rational numbers as in Lemma \ref{ycdlsbij}. By Proposition \ref{VoWWoVhi} and Lemma \ref{dualfrc1}, an irreducible criterion for a tensor product of fundamental representations of $\yo$ is obtained: if $a_j-a_i\notin S(b_i, b_j)$ for $1\leq i\neq j\leq k$, then $L$ is irreducible.

\section{From the highest weight vector to the lowest weight vector in $V_{a}(\omega_i)$}
%In this subsection, we only consider $\mathfrak{g}=\mathfrak{so}\left(2l,\C\right)$.
%As we have seen in Chapter 3, the weight of $v_{\sigma_i(\omega_{1})}$ is crucial to understand. To understand it well, we need to see it in a better way than the one used in Chapter 3.   We are going to see how fundamental reflections act on a fundamental weight.

Let $s_i$ be the fundamental reflections of the Weyl group of $\nyo$ for $i=1,\ldots, l$. Denote by $\{\mu_1,\mu_2,\ldots,\mu_{l}\}$ the coordinate functions on the Cartan subalgebra of $\nyo$. When $1\leq i\leq l-1$, $s_i\left(\mu_i\right)=\mu_{i+1}$, $s_i\left(\mu_{i+1}\right)=\mu_{i}$ and $s_i\left(\mu_j\right)=\mu_j$ for $j\neq i, i+1$.
When $i=l$, $s_l\left(\mu_{l-1}\right)=-\mu_{l}$, $s_l\left(\mu_{l}\right)=-\mu_{l-1}$ and $s_l\left(\mu_j\right)=\mu_j$ for $j\neq l-1, l$.
The fundamental weights of $\nyo$ are given by $\omega_i=\mu_1+\ldots+\mu_i$ for $1\leq i\leq l-2$, $\omega_{l-1}=\frac{1}{2} \left(\mu_1+\ldots+\mu_{l-1}-\mu_l\right)$ and $\omega_{l}=\frac{1}{2} \left(\mu_1+\ldots+\mu_{l-1}+\mu_l\right)$. It follows that $\mu_1=\omega_1$, $\mu_2=-\omega_1+\omega_2$, $\ldots$, $\mu_{l-2}=-\omega_{l-3}+\omega_{l-2}$, $\mu_{l-1}=-\omega_{l-2}+\omega_{l-1}+\omega_{l}$ and $\mu_{l}=-\omega_{l-1}+\omega_{l}$. Thus $\alpha_1=2\omega_1-\omega_2$, $\alpha_i=-\omega_{i-1}+2\omega_{i}-\omega_{i+1}$ for $2\leq i\leq l-3$, $\alpha_{l-2}=-\omega_{l-3}+2\omega_{l-2}-\omega_{l-1}-\omega_{l}$, $\alpha_{l-1}=-\omega_{l-2}+2\omega_{l-1}$, and
$\alpha_{l}=-\omega_{l-2}+2\omega_{l}$.
All the above information can be checked in Section 13.4 \cite{Ca}.
\begin{proposition}\label{soomegasio}
$s_i\left(\omega_j\right)=\omega_j$ for $i\neq j$. $s_1\left(\omega_1\right)=-\omega_1+\omega_2$, $s_2\left(\omega_2\right)=\omega_1-\omega_2+\omega_3,\ldots,$ $s_{l-3}\left(\omega_{l-3}\right)=\omega_{l-4}-\omega_{l-3}+\omega_{l-2}$, $s_{l-2}\left(\omega_{l-2}\right)=\omega_{l-3}-\omega_{l-2}+\omega_{l-1}+\omega_{l}$, $s_{l-1}\left(\omega_{l-1}\right)=\omega_{l-2}-\omega_{l-1}$ and $s_{l}\left(\omega_{l}\right)=\omega_{l-2}-\omega_{l}$.
\end{proposition}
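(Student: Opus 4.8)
The plan is to reduce the entire statement to the standard action formula for a simple reflection on a weight, namely
\[
s_i(\lambda) = \lambda - \langle \lambda, \alpha_i \rangle \alpha_i, \qquad \langle \lambda, \alpha_i\rangle = \frac{2(\lambda,\alpha_i)}{(\alpha_i,\alpha_i)},
\]
combined with the defining property of the fundamental weights recorded earlier in this section, $\langle \omega_j, \alpha_i \rangle = \delta_{ij}$. First I would apply the formula with $\lambda = \omega_j$. The pairing $\langle \omega_j, \alpha_i\rangle$ equals $\delta_{ij}$, so for $i \neq j$ the correction term vanishes and $s_i(\omega_j) = \omega_j$, which settles the first assertion at once.

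For the diagonal case $i = j$ the formula collapses to $s_i(\omega_i) = \omega_i - \alpha_i$, and it remains only to substitute the expansions of the simple roots in the fundamental-weight basis that were derived immediately before the proposition. Concretely, using $\alpha_1 = 2\omega_1 - \omega_2$ gives $s_1(\omega_1) = \omega_1 - (2\omega_1-\omega_2) = -\omega_1 + \omega_2$; using $\alpha_i = -\omega_{i-1} + 2\omega_i - \omega_{i+1}$ for $2 \le i \le l-3$ gives $s_i(\omega_i) = \omega_{i-1} - \omega_i + \omega_{i+1}$; and the three boundary roots $\alpha_{l-2} = -\omega_{l-3} + 2\omega_{l-2} - \omega_{l-1} - \omega_l$, $\alpha_{l-1} = -\omega_{l-2} + 2\omega_{l-1}$, $\alpha_l = -\omega_{l-2} + 2\omega_l$ yield the last three displayed identities after subtracting from $\omega_{l-2}$, $\omega_{l-1}$, $\omega_l$ respectively.

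Since every ingredient — the reflection formula, the value $\langle \omega_j, \alpha_i\rangle = \delta_{ij}$, and the expansions of the $\alpha_i$ — is already available, there is no genuine obstacle; the computation is a one-line substitution in each of the finitely many cases. The only point demanding care is bookkeeping at the forked end of the $D_l$ Dynkin diagram, where $\alpha_{l-2}$ is adjacent to both $\alpha_{l-1}$ and $\alpha_l$ while the latter two are mutually orthogonal. This is exactly why $s_{l-2}(\omega_{l-2})$ picks up both $\omega_{l-1}$ and $\omega_l$, whereas $s_{l-1}(\omega_{l-1})$ and $s_l(\omega_l)$ each involve only $\omega_{l-2}$; verifying that these three special cases match the asserted formulas, rather than the generic pattern valid for $2 \le i \le l-3$, is the one place where one must avoid a careless off-by-one error.
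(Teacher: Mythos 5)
Your proof is correct, and every ingredient you invoke is genuinely available in the paper: the pairing $\langle\omega_j,\alpha_i\rangle=\delta_{ij}$ is the defining property of the fundamental weights recorded in Section 1.1, the expansions of the simple roots in the $\omega$-basis are listed immediately before the proposition, and the reflection formula $s_i(\lambda)=\lambda-\langle\lambda,\alpha_i\rangle\alpha_i$ is itself used later in the paper (in the proof of Proposition \ref{v+=gv-}). Your route, however, differs from the paper's. The paper verifies the identities in the coordinate realization: it expands each $\omega_i$ in the coordinate functions $\mu_1,\ldots,\mu_l$, applies $s_i$ through its defining permutation/sign action on the $\mu_j$, and converts back, e.g.
\[
s_{l-3}\left(\omega_{l-3}\right)=s_{l-3}\left(\mu_1+\ldots+\mu_{l-3}\right)=\mu_1+\ldots+\mu_{l-4}+\mu_{l-2}=\omega_{l-4}-\omega_{l-3}+\omega_{l-2}.
\]
Your argument stays entirely in the weight basis: $s_i(\omega_j)=\omega_j-\delta_{ij}\alpha_i$ disposes of the off-diagonal case at a stroke and reduces each diagonal case to substituting one root expansion. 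What your approach buys is a uniform, case-free treatment of $i\neq j$ (which the paper only dismisses as routine to check) and complete independence from the concrete $\mu$-coordinates; your bookkeeping at the fork of the $D_l$ diagram is also handled correctly, since the asserted formulas for $s_{l-2},s_{l-1},s_l$ follow from the stated expansions of $\alpha_{l-2},\alpha_{l-1},\alpha_l$. What the paper's approach buys is that it works directly with $s_i$ as defined in this section, namely by its action on the $\mu_j$, so it needs no appeal to the identification of that concrete action with the abstract reflection $\lambda\mapsto\lambda-\langle\lambda,\alpha_i\rangle\alpha_i$ --- a standard fact, but one your proof implicitly relies on.
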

\begin{proof}
It is routine to check. For example, $s_{l-3}\left(\omega_{l-3}\right)=s_{l-3}\left(\mu_1+\ldots+\mu_{l-3}\right)=\mu_1+\ldots+\mu_{l-4}+\mu_{l-2}=\omega_4+\mu_{l-2}=\omega_{l-4}-\omega_{l-3}+\omega_{l-2}$; $s_{l-1}\left(\omega_{l-1}\right)=\frac{1}{2}s_{l-1}\left(\mu_1+\ldots+\mu_{l-2}+\mu_{l-1}-\mu_{l}\right)=\frac{1}{2}\left(\mu_1+\ldots+\mu_{l-3}+\mu_{l-2}+\mu_{l}-\mu_{l-1}\right)=\frac{1}{2}\left(\omega_{l-2}-\omega_{l-1}+\omega_{l}+\omega_{l-2}-\omega_{l-1}-\omega_{l}\right)
=\omega_{l-2}-\omega_{l-1}$.
\end{proof}
Let $w_0$ be the longest element of the Weyl group of $\nyo$. One reduced expression of $w_0$ is given by $$\left(s_l\right)\left(s_{l-1}\right)\left(s_{l-2}s_ls_{l-1}s_{l-2}\right)\left(s_{l-3}s_{l-2}s_ls_{l-1}s_{l-2}s_{l-3}\right)\ldots \left(s_{1}\ldots s_{l-2}s_ls_{l-1}s_{l-2}\ldots s_{1}\right).$$ %If $l$ is even, $w_0=-1$.
Let $\overline{k}=0$ if $k$ is even and let $\overline{k}=1$ if $k$ is odd.
Define for $1\leq i\leq l-2$,
\begin{align*}
% \nonumber to remove numbering \left(before each equation\right)
  w_i &= \left(s_{i}\ldots s_{l-2}s_ls_{l-1}\ldots s_{i}\right)\left(s_{i-1}s_{i}\ldots s_{l-2}s_ls_{l-1}\ldots s_{i}\right)\ldots \\
   &\left(s_{2}\ldots s_{l-2}s_{l}s_{l-1}\ldots s_{i}\right)\left(s_{1}\ldots s_{l-2}s_ls_{l-1}\ldots s_{i+1}s_{i}\right);
\end{align*}
%define
$$w_{l-1}=s_{l-\overline{l-1}}\left(s_{l-2}s_{l-\overline{l-2}}\right)\left(s_{l-3}s_{l-2}s_{l-\overline{l-3}}\right)\ldots \left(s_{2}\ldots s_{l-2}s_{l}\right)\left(s_{1}\ldots s_{l-2}s_{l-1}\right);$$
%define
$$w_{l}=s_{l-\overline{l}}\left(s_{l-2}s_{l-\overline{l-1}}\right)\left(s_{l-3}s_{l-2}s_{l-\overline{l-2}}\right)\ldots \left(s_{2}\ldots s_{l-2}s_{l-1}\right)\left(s_{1}\ldots s_{l-2}s_{l}\right).$$

According to the expression of $w_j$ for a fixed $j\in I$, we define $\sigma_k$ to be a product of the last $k$ terms in $w_j$ and keep the same orders as in $w_j$.  There exists $k'\in\{1,2,\ldots, l\}$ such that $\sigma_{k+1}=s_{k'}\sigma_k$.
Denote $v_{\sigma_k(\omega_i)}$ be a weight vector of weight space of weight $\sigma_k(\omega_i)$.
Since the weight space of $V_a(\omega_i)$ of weight $\omega_i$ is 1-dimensional, the weight space of weight $\sigma_k(\omega_i)$ is 1-dimensional, and then $v_{\sigma_k(\omega_i)}$ is unique, up to scalar.

\begin{proposition}\label{rileq2}
Write $\sigma_k(\omega_i)$ as $r_{k'}\omega_{k'}+\sum\limits_{j\neq k'} r_j\omega_j$. Then $r_{k'}\in\{1,2\}$.
\end{proposition}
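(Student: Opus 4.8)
The plan is to track the coefficient $r_{k'}$ of the simple root being applied at each step and show that it can only take the values $1$ or $2$. Recall that $\sigma_{k+1}=s_{k'}\sigma_k$, so by Proposition \ref{soomegasio} and the standard formula for a simple reflection, $s_{k'}(\sigma_k(\omega_i))=\sigma_k(\omega_i)-\langle\sigma_k(\omega_i),\alpha_{k'}\rangle\alpha_{k'}$. Writing $\sigma_k(\omega_i)$ in the basis of fundamental weights, the $\omega_{k'}$-coefficient is precisely $\langle\sigma_k(\omega_i),\alpha_{k'}\rangle$, and applying $s_{k'}$ negates this coefficient while shifting neighbouring coefficients according to $\alpha_{k'}=-\omega_{k'-1}+2\omega_{k'}-\omega_{k'+1}$ (with the appropriate modifications near the branch node). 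So the quantity $r_{k'}$ in the statement is really the value $\langle\sigma_k(\omega_i),\alpha_{k'}\rangle$ \emph{after} the reflection has acted, i.e. it equals $+\langle\sigma_k(\omega_i),\alpha_{k'}\rangle$ in absolute value but with sign flipped.

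First I would make the interpretation precise: since $v_{\sigma_{k+1}(\omega_i)}=x^{-}_{k',0}v_{\sigma_k(\omega_i)}$ is a nonzero weight vector obtained by applying a lowering operator, the weight $\sigma_k(\omega_i)$ must itself have a nonnegative $\alpha_{k'}$-pairing, and $\sigma_{k+1}(\omega_i)=\sigma_k(\omega_i)-m\alpha_{k'}$ for the appropriate $m\geq 1$; because the reduced expression for $w_0$ is chosen so that each reflection moves the weight exactly one step along a root string in a fundamental representation $V_a(\omega_i)$, the pairing $\langle\sigma_k(\omega_i),\alpha_{k'}\rangle$ is bounded. The key structural input is that $\omega_i$ is minuscule or quasi-minuscule for the relevant $i$ in type $D_l$: for $1\leq i\leq l-2$ the fundamental weight $\omega_i$ is the highest weight of $\bigwedge^i V$, whose weights are the $\pm\mu$-sums, and all weight multiplicities along any simple-root string are controlled so that the pairing with any simple coroot lies in $\{0,1,2\}$, with the value $2$ occurring only at the branch. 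I would verify this by explicitly expanding $\sigma_k(\omega_i)$ in the $\omega$-basis using Proposition \ref{soomegasio} and checking the coefficient of $\omega_{k'}$ directly.

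Concretely, the computation I would carry out is to track, as $k$ increases, the vector of fundamental-weight coefficients of $\sigma_k(\omega_i)$, using the action $s_j(\omega_j)=\omega_{j-1}-\omega_j+\omega_{j+1}$ (and the branch-node variants $s_{l-2}(\omega_{l-2})=\omega_{l-3}-\omega_{l-2}+\omega_{l-1}+\omega_l$, $s_{l-1}(\omega_{l-1})=\omega_{l-2}-\omega_{l-1}$, $s_l(\omega_l)=\omega_{l-2}-\omega_l$) from Proposition \ref{soomegasio}. At each application of $s_{k'}$ the new $\omega_{k'}$-coefficient $r_{k'}$ equals the negative of the old one plus contributions from the neighbouring coefficients; since the weights of a fundamental representation of type $D$ pair with each simple coroot in $\{-2,-1,0,1,2\}$, and since we only apply $s_{k'}$ when the string can be extended downward, the resulting $r_{k'}$ is forced into $\{1,2\}$. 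The value $2$ arises exactly when the branch node contributes two neighbours simultaneously; elsewhere $r_{k'}=1$.

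The main obstacle I anticipate is the bookkeeping near the branch node $\alpha_{l-2}$, where $\alpha_{l-2}=-\omega_{l-3}+2\omega_{l-2}-\omega_{l-1}-\omega_l$ couples three fundamental weights at once, so the naive $A_l$-type string argument fails and one must check separately that the coefficient of $\omega_{l-2}$ never exceeds $2$ even when both spin-node coefficients feed into it. I would handle this by a careful case analysis on whether the starting fundamental weight $\omega_i$ is one of the vector-type weights ($i\le l-2$) or a spin-type weight ($i\in\{l-1,l\}$), and within each case by induction on $k$, maintaining the invariant that all fundamental-weight coefficients of $\sigma_k(\omega_i)$ lie in $\{-1,0,1\}$ except possibly the one currently being reflected, whose post-reflection value is then pinned to $1$ or $2$. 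The rest of the verification is routine root-system arithmetic.
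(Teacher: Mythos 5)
Your high-level strategy --- bound $|r_{k'}|\leq 2$ using the explicit description of the Weyl orbit of $\omega_i$, then force positivity from the structure of the reduced word --- is sound and genuinely different from the paper, which simply computes every chain $w_0(\omega_i)$ explicitly in the four cases $i=1$, $2\leq i\leq l-2$, $i=l-1$, $i=l$ and reads the claim off the lists. But as written your argument has genuine gaps. First, the inductive invariant you propose to maintain, namely that all fundamental-weight coefficients of $\sigma_k(\omega_i)$ lie in $\{-1,0,1\}$ except the one currently being reflected, is false along the very chains in question: for $2\leq i\leq l-2$ the orbit passes through $2\omega_{i-2}-2\omega_{i-1}+\omega_i$ (two coefficients of absolute value $2$ simultaneously) and later through $-2\omega_1+\omega_i,\,-2\omega_2+\omega_i,\ldots$, where a coefficient $-2$ sits at a node that is not reflected for many subsequent steps. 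So the induction collapses exactly where the work is. Second, your localization of the value $2$ is wrong: $r_{k'}=2$ does not arise ``exactly when the branch node contributes two neighbours''; it first appears when $s_i$ is applied to $\omega_{i-1}+\omega_i-\omega_{i+1}$, producing $2\omega_{i-1}-\omega_i$, and it then propagates down the type-$A$ part of the diagram through $s_{i-1},s_{i-2},\ldots,s_1$, arbitrarily far from the branch node $l-2$.

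Third, your argument for $r_{k'}\geq 1$ is circular: you appeal to the nonvanishing of $x^{-}_{k',0}v_{\sigma_k(\omega_i)}$, but the identification $v_{\sigma_{k+1}(\omega_i)}=\left(x^{-}_{k',0}\right)^{r_{k'}}v_{\sigma_k(\omega_i)}$ is only made (and justified) in Proposition \ref{v+=gv-}, which explicitly invokes Proposition \ref{rileq2}; the proposition itself is pure Weyl-group combinatorics and must be proved without that construction. The non-circular route is available to you: since each partial product $\sigma_k$ of the reduced word satisfies $\sigma_k^{-1}(\alpha_{k'})\in\Delta^{+}$, dominance of $\omega_i$ gives $r_{k'}=\langle\sigma_k(\omega_i),\alpha_{k'}\rangle=\langle\omega_i,\sigma_k^{-1}(\alpha_{k'})\rangle\geq 0$, and $r_{k'}=0$ is excluded because, by Remark \ref{ycdr1}, the word $w_i$ is obtained from $w_0$ precisely by deleting the reflections that fix the intermediate weight. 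Combining this with your correct observation that every weight in the Weyl orbit of $\omega_i$ has the form $\pm\mu_{j_1}\pm\cdots\pm\mu_{j_i}$ for $i\leq l-2$ \big(respectively $\frac{1}{2}\left(\pm\mu_1\pm\cdots\pm\mu_l\right)$ for $i\in\{l-1,l\}$\big), hence pairs with every simple coroot in $\{-2,-1,0,1,2\}$ (respectively $\{-1,0,1\}$), would yield a complete proof that is cleaner than the paper's case-by-case computation; but the proof as you have written it does not close.
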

\begin{proof}
%We prove this proposition by explicit calculations on $w_i\left(\omega_k\right)$, where $1\leq k,i\leq l$.
It is enough to compute $w_0\left(\omega_k\right)$ for $1\leq k\leq l$; see Remark \ref{ycdr1} for the reason. In what follows, for example, $\omega_{l-3}-\omega_{l-2}\xlongrightarrow{s_2\ldots s_{l-4}s_{l-3}}\omega_{1}-\omega_{2}$ means $$s_2\ldots s_{l-4}s_{l-3}\left(\omega_{l-3}-\omega_{l-2}\right)=s_2\Big(\ldots s_{l-4}\big(s_{l-3}\left(\omega_{l-3}-\omega_{l-2}\right)\big)\ldots\Big) =\omega_{1}-\omega_{2}.$$ % cf. Proposition \ref{soomegasio} for the details of calculations.
This fact can be easily proved by induction on $l$ and using Proposition \ref{soomegasio}. \

Case 1: $i=1$.
\begin{flushleft}
$\omega_1\xlongrightarrow{s_1}-\omega_1+\omega_2\xlongrightarrow{s_2}-\omega_2+\omega_3\xlongrightarrow{s_3}-\omega_3+\omega_4\xlongrightarrow{s_4} \ldots\xlongrightarrow{s_{l-3}}-\omega_{l-3}+\omega_{l-2}\xlongrightarrow{s_{l-2}}-\omega_{l-2}+\omega_{l-1}+\omega_{l}\xlongrightarrow{s_{l-1}}-\omega_{l-1}+\omega_{l}
\xlongrightarrow{s_{l}}\omega_{l-2}-\omega_{l-1}-\omega_{l}\xlongrightarrow{s_{l-2}}\omega_{l-3}-\omega_{l-2}\xlongrightarrow{s_2\ldots s_{l-4}s_{l-3}}\omega_{1}-\omega_{2}\xlongrightarrow{s_{1}}-\omega_{1}\xlongrightarrow{\left(s_l\right)\left(s_{l-1}\right)\left(s_{l-2}s_ls_{l-1}s_{l-2}\right)\left(s_{l-3}s_{l-2}s_ls_{l-1}s_{l-2}s_{l-3}\right)\ldots \left(s_{2}\ldots s_{l-2}s_ls_{l-1}s_{l-2}\ldots s_{2}\right)}-\omega_{1}$.
\end{flushleft}

Case 2: $2\leq i\leq l-2$.
\begin{flushleft}
$\omega_i\xlongrightarrow{s_{i-1}\ldots s_2s_1}\omega_i\xlongrightarrow{s_i}\omega_{i-1}-\omega_{i}+\omega_{i+1}\xlongrightarrow{s_{l-3}s_{l-4}\ldots s_{i+1}}\omega_{i-1}-\omega_{l-3}+\omega_{l-2}\xlongrightarrow{s_{l-2}}\omega_{i-1}-\omega_{l-2}+\omega_{l-1}+\omega_{l}\xlongrightarrow{s_{l-1}} \omega_{i-1}-\omega_{l-1}+\omega_{l}\xlongrightarrow{s_{l}}\omega_{i-1}+\omega_{l-2}-\omega_{l-1}-\omega_{l}\xlongrightarrow{s_{l-2}}\omega_{i-1}+
\omega_{l-3}-\omega_{l-2}\xlongrightarrow{s_{l-3}}\omega_{i-1}+\omega_{l-4}-\omega_{l-3}
\xlongrightarrow{s_{i+1}s_{i+2}\ldots s_{l-4}}\omega_{i-1}+\omega_{i}-\omega_{i+1}\xlongrightarrow{s_{i}}2\omega_{i-1}-\omega_{i}
\xlongrightarrow{s_{i-1}}2\omega_{i-2}-2\omega_{i-1}+\omega_{i}\xlongrightarrow{s_2s_3\ldots s_{i-2}}2\omega_{1}-2\omega_{2}+\omega_{i}
\xlongrightarrow{s_1}-2\omega_1+\omega_{i}\xlongrightarrow{s_{2}s_{3}\ldots s_{l-2}s_ls_{l-1}\ldots s_2}-2\omega_2+\omega_{i}
\xlongrightarrow{s_{3}s_{4}\ldots s_{l-2}s_ls_{l-1}\ldots s_3}-2\omega_3+\omega_{i}
\xlongrightarrow{\left(s_{i}s_{i+1}\ldots s_{l-2}s_ls_{l-1}\ldots s_{i}\right)\left(s_{i-1}s_{i}\ldots s_{l-2}s_ls_{l-1}\ldots s_{i-1}\right)\ldots\left(s_{4}s_5\ldots s_{l-2}s_ls_{l-1}\ldots s_4\right)}-\omega_{i}\xlongrightarrow{s_ls_{l-1}\ldots\left(s_{i+1}s_{i+2}\ldots s_{l-2}s_ls_{l-1}\ldots s_{i+1}\right)}-\omega_{i}$.
\end{flushleft}

Case 3: $i=l-1$.
\begin{flushleft}
$\omega_{l-1}\xlongrightarrow{s_{l-2}\ldots s_1}\omega_{l-1}\xlongrightarrow{s_{l-1}}\omega_{l-2}-\omega_{l-1}\xlongrightarrow{s_{l}}\omega_{l-2}-\omega_{l-1}\xlongrightarrow{s_{l-2}}\omega_{l-3}-\omega_{l-2}+\omega_{l}
\xlongrightarrow{s_{l-3}}\omega_{l-4}-\omega_{l-3}+\omega_{l}\xlongrightarrow{s_2s_3\ldots s_{l-4}}\omega_{1}-\omega_{2}+\omega_{l}\xlongrightarrow{s_1}-\omega_{1}+\omega_{l}\xlongrightarrow{s_{l-1}\ldots s_1}-\omega_{1}+\omega_{l}\xlongrightarrow{s_{l}} -\omega_{1}+\omega_{l-2}-\omega_{l}\xlongrightarrow{s_{l-2}}-\omega_{1}+\omega_{l-3}+\omega_{l-1}
\xlongrightarrow{s_{2}s_3\ldots s_{l-3}}-\omega_2+\omega_{l-1}
\xlongrightarrow{s_{3}s_4\ldots s_{l-2}s_ls_{l-1}\ldots s_3}-\omega_3+\omega_{l}
\xlongrightarrow{s_{4}s_5\ldots s_{l-2}s_{l}s_{l-1}\ldots s_4}-\omega_4+\omega_{l-1}\xlongrightarrow{\left(s_l\right)\left(s_{l-1}\right)\left(s_{l-2}s_ls_{l-1}s_{l-2}\right)\ldots \left(s_{5}s_{6}\ldots s_{l-2}s_{l}s_{l-1}\ldots s_5\right)}-\omega_{l-\overline{l-1}}$.
\end{flushleft}

Case 4:  $i=l$.
\begin{flushleft}
$\omega_{l}\xlongrightarrow{s_{l-1}\ldots s_{1}}\omega_{l}\xlongrightarrow{s_{l}}\omega_{l-2}-\omega_{l}\xlongrightarrow{s_{l-2}}\omega_{l-3}-\omega_{l-2}+\omega_{l-1}
\xlongrightarrow{s_2s_3\ldots s_{l-3}}\omega_{1}-\omega_{2}+\omega_{l-1}\xlongrightarrow{s_1}-\omega_{1}+\omega_{l-1}
\xlongrightarrow{s_{l-2}\ldots s_{1}}-\omega_{1}+\omega_{l-1}
\xlongrightarrow{s_{l-1}}-\omega_{1}+\omega_{l-2}-\omega_{l-1}
\xlongrightarrow{s_{l}}-\omega_{1}+\omega_{l-2}-\omega_{l-1}
\xlongrightarrow{s_{l-2}}-\omega_{1}+\omega_{l-3}-\omega_{l-2}+\omega_{l}\xlongrightarrow{s_{2}s_3\ldots s_{l-3}}-\omega_2+\omega_{l}
\xlongrightarrow{\left(s_l\right)\left(s_{l-1}\right)\left(s_{l-2}s_ls_{l-1}s_{l-2}\right)\ldots \left(s_{3}s_4\ldots s_{l-2}s_ls_{l-1}\ldots s_3\right)}=-\omega_{l-\overline{l}}$.
\end{flushleft}
This proposition becomes clear by the above calculations. \end{proof}
\begin{remark}\label{ycdr1}
%\textcolor[rgb]{1.00,0.00,0.00}{It seems that it hard for me to explain the difference between $w_0$ and $w_i$.}
Delete all $s_m$ in the above calculations $w_0(\omega_i)$ such that $\omega\xlongrightarrow{s_m}\omega$, and keep the rest in order. The product of what is left above each arrow, from the back to the front, is $w_i$. The purpose to define $w_i\left(i\in I\right)$ is to guarantee $v_{\sigma_k(\omega_i)}\neq v_{\sigma_{k+1}(\omega_i)}$.
\end{remark}

%For the simplicity of expressions, denote $\overline{r}=0$ if $r$ is even; $\overline{r}=1$ if $r$ is odd, where $r\in \N$.

%The following proposition is very useful to prove the main theorem of this section.

\begin{proposition}\label{v+=gv-}
Let $v_{1}^{+}$ and $v_{1}^{-}$ be highest and lowest weight vectors in the fundamental representation $V_{a}(\omega_i)$ of $\yo$. There exists $y\in U\big(\nyo\big)$ such that $v_{1}^{-}=y.v_{1}^{+}$.
\end{proposition}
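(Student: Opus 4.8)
The plan is to travel from $v_1^{+}$ to $v_1^{-}$ one simple reflection at a time, using the reduced word $w_i$ and the chain of extremal weight vectors $v_{\sigma_k(\omega_i)}$ already set up above. Write $w_i=s_{k_N'}\cdots s_{k_1'}$ for the reduced expression fixed before Proposition \ref{rileq2}, and let $\sigma_k$ be the product of its last $k$ letters, so that $\sigma_0=\id$, $\sigma_N=w_i$, and $\sigma_{k+1}=s_{k'}\sigma_k$ with $k'=k_{k+1}'$. Since the reflections deleted in Remark \ref{ycdr1} fix the running weight, $\sigma_N(\omega_i)=w_i(\omega_i)=w_0(\omega_i)$, which is the lowest weight of $V_a(\omega_i)$; hence $v_{\sigma_N(\omega_i)}$ is a scalar multiple of the lowest weight vector $v_1^{-}$, while $v_{\sigma_0(\omega_i)}=v_1^{+}$. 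The proposition will then follow once I connect consecutive links of this chain by negative root vectors in $U(\nyo)$.

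The heart of the argument is the single-step claim: for each $k$, setting $r=\langle\sigma_k(\omega_i),\alpha_{k'}\rangle$, one has $r\in\{1,2\}$ and $(x_{k',0}^{-})^{r}\,v_{\sigma_k(\omega_i)}$ is a nonzero multiple of $v_{\sigma_{k+1}(\omega_i)}$. That $r\in\{1,2\}$ is precisely Proposition \ref{rileq2}, as $r$ is the coefficient of $\omega_{k'}$ in $\sigma_k(\omega_i)$ and $\nyo$ is simply laced. To produce the vector I pass to the $\mathfrak{sl}_2$-triple $\{x_{k',0}^{\pm},h_{k',0}\}\subseteq U(\nyo)$ and verify that $v_{\sigma_k(\omega_i)}$ is a highest weight vector for it of $\mathfrak{sl}_2$-weight $r$: it is an $h_{k',0}$-eigenvector with eigenvalue $\langle\sigma_k(\omega_i),\alpha_{k'}\rangle=r$, and it is annihilated by $x_{k',0}^{+}$. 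Granting the annihilation, finite-dimensional $\mathfrak{sl}_2$-theory makes $(x_{k',0}^{-})^{r}v_{\sigma_k(\omega_i)}$ nonzero of weight $s_{k'}\sigma_k(\omega_i)=\sigma_k(\omega_i)-r\alpha_{k'}=\sigma_{k+1}(\omega_i)$; since this weight lies in the $\W$-orbit of $\omega_i$ its weight space is one-dimensional, so the result is a nonzero multiple of $v_{\sigma_{k+1}(\omega_i)}$ (the nonvanishing being exactly what Remark \ref{ycdr1} guarantees).

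The only delicate point, and the main obstacle, is the annihilation $x_{k',0}^{+}v_{\sigma_k(\omega_i)}=0$. I would establish it just as in Steps 1--2 of Proposition \ref{Lhwr}: because $w_i$ is reduced, $\ell(\sigma_{k+1})=\ell(\sigma_k)+1$, which forces $\sigma_k^{-1}(\alpha_{k'})\in\Delta^{+}$. Were $x_{k',0}^{+}v_{\sigma_k(\omega_i)}$ nonzero, it would be a weight vector of weight $\sigma_k(\omega_i)+\alpha_{k'}=\sigma_k\big(\omega_i+\sigma_k^{-1}(\alpha_{k'})\big)$, so $\omega_i+\sigma_k^{-1}(\alpha_{k'})$ would be a weight of $V_a(\omega_i)$, contradicting that $\omega_i$ is the highest weight and that $\sigma_k^{-1}(\alpha_{k'})$ is positive. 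This reduced-word positivity is the structural input that makes every step both well-defined and genuinely strictly lowering.

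Composing the $N$ steps yields $v_1^{-}=c\,(x_{k_N',0}^{-})^{r_N}\cdots(x_{k_1',0}^{-})^{r_1}\,v_1^{+}$ for some nonzero scalar $c$, so that $y:=c^{-1}(x_{k_N',0}^{-})^{r_N}\cdots(x_{k_1',0}^{-})^{r_1}$ lies in $U(\nyo)$ and satisfies $v_1^{-}=y\,v_1^{+}$, as required. (As a sanity check, that $w_0(\omega_i)$ is indeed the lowest weight, matching $-\omega_{-w_0(i)}$ in Lemma \ref{dualfrc1}, and that the ambient weight space is one-dimensional in all of $V_a(\omega_i)$ despite the $\g$-decomposition of Proposition \ref{dcofbdl}, can be read off from the extremality of Weyl-orbit weights.) I expect the genuinely laborious part to be only the bookkeeping of the reduced word across Cases 1--4 of Proposition \ref{rileq2}; the conceptual ingredients, namely the implication (reducedness $\Rightarrow$ positivity of $\sigma_k^{-1}(\alpha_{k'})$) and the bound $r\in\{1,2\}$ on the $\mathfrak{sl}_2$-weight, are already in place.
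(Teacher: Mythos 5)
Your proposal is correct and takes essentially the same route as the paper's own proof: both descend the chain of extremal weight vectors $v_{\sigma_k(\omega_i)}$ along the reduced word $w_i$, using Proposition \ref{rileq2} together with the reflection formula to see that each step is achieved by $\left(x_{k',0}^{-}\right)^{r_{k'}}$ with $r_{k'}\in\{1,2\}$. The only difference is one of exposition: you spell out the $\mathfrak{sl}_2$-theoretic justification (maximality of $v_{\sigma_k(\omega_i)}$ via positivity of $\sigma_k^{-1}(\alpha_{k'})$, nonvanishing, one-dimensionality of extremal weight spaces) that the paper leaves implicit, deferring to the analogous steps of Proposition \ref{Lhwr} and simply recording the resulting explicit expressions for $v_1^{-}$.
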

\begin{proof}
By either Remark \ref{rofry} or Proposition \ref{dcofbdl}, $V_a(\omega_i)=L(\omega_i)\oplus M$ as a $\mathfrak{so}\left(2l,\C\right)$-module, and both $v_{1}^{+}$ and $v_{1}^{-}$ are in $L(\omega_i)$. %It follows from Proposition \ref{rileq2} and formula $s_i\left(\omega\right)=\omega-\frac{2\left(\omega,\alpha_i\right)}{\left(\alpha_i, \alpha_i\right)}\alpha_i$ that  %It is well known that $$v_{1}^{-}=\prod_{i=1}^{l^2-2}\left(x_{i',0}^{-}\right)^{r_{i'}}v_{1}^{+}$$
It follows from Proposition \ref{rileq2} and the formula $s_r\left(\omega\right)=\omega-\frac{2\left(\omega,\alpha_r\right)}{\left(\alpha_r, \alpha_r\right)}\alpha_r$ that the corresponding weight vector $v_{\sigma_{j+1}(\omega_i)}$ of weight $\sigma_{j+1}(\omega_i)$ can be defined as
$$v_{\sigma_{j+1}(\omega_i)}=\left(x_{j',0}^{-}\right)^{r_{j'}}v_{\sigma_j\omega_i}.$$
%Both $\sigma_j$ and $v_{\sigma_{j+1}(\omega_i)}$ are defined similarly to the cases $i=l-1$ and $i=l$.

\noindent Case 1: $i=1,\ldots, l-2$.
\begin{align*}
% \nonumber to remove numbering \left(before each equation\right)
v^{-}_1 &= \Big(x_{i,0}^{-}\ldots x_{l-2,0}^{-}x_{l,0}^{-}\ldots x_{i,0}^{-}\Big)\Big(\left(x_{i-1,0}^{-}\right)^2x_{i,0}^{-}\ldots x_{l-2,0}^{-}x_{l,0}^{-}\ldots x_{i,0}^{-}\Big)\ldots \\
 &\qquad \Big(\left(x_{1,0}^{-}\right)^2\ldots \left(x_{i-1,0}^{-}\right)^2x_{i,0}^{-}\ldots x_{l-2,0}^{-}x_{l,0}^{-}\ldots x_{i,0}^{-}\Big)v^{+}_1.
\end{align*}
Case 2: $i=l-1$.
\begin{align*}
% \nonumber to remove numbering \left(before each equation\right)
  v^{-}_1 &= x_{l-\overline{l-1},0}^{-}\left(x_{l-2,0}^{-}x_{l-\overline{l-2},0}^{-}\right)\left(x_{l-3,0}^{-}x_{l-2,0}^{-} x_{l-\overline{l-3},0}^{-}\right)\ldots \\
   &\qquad \left(x_{2,0}^{-}\ldots  x_{l-2,0}^{-}x_{l,0}^{-}\right)\left(x_{1,0}^{-}\ldots  x_{l-2,0}^{-}x_{l-1,0}^{-}\right)v^{+}_1.
\end{align*}
Case 3: $i=l$.
\begin{align*}
% \nonumber to remove numbering \left(before each equation\right)
  v^{-}_1 &= x_{l-\overline{l},0}^{-}\left(x_{l-2,0}^{-}x_{l-\overline{l-1},0}^{-}\right)\left(x_{l-3,0}^{-}x_{l-2,0}^{-} x_{l-\overline{l-2},0}^{-}\right)\ldots \\
   &\qquad \left(x_{2,0}^{-}\ldots  x_{l-2,0}^{-}x_{l-1,0}^{-}\right)\left(x_{1,0}^{-}\ldots  x_{l-2,0}^{-}x_{l,0}^{-}\right)v^{+}_1.
\end{align*}
\end{proof}
\section{On a lower bound for the dimension of the local Weyl module $W(\pi)$}

In this section, we construct a highest weight module which is a tensor product of fundamental representations of $\yo$. By the maximality of the local Weyl modules, a lower bound for the local Weyl module $W(\pi)$ of $\yo$ can be obtained.

\begin{proposition}\label{mtoysl2l}
Let $L=V_{a_1}(\omega_{b_1})\otimes V_{a_2}(\omega_{b_2})\otimes\ldots\otimes V_{a_k}(\omega_{b_k})$, where $b_{i}\in I$.  If $\operatorname{Re}\left(a_1\right)\geq \operatorname{Re}\left(a_2\right)\geq \ldots \geq \operatorname{Re}\left(a_k\right)$, then $L$ is a highest weight representation of $\yo$.
\end{proposition}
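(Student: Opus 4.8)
The plan is to mirror the proof of Proposition \ref{Lhwr}, which established the analogous statement for $\yn$, adapting the combinatorics to type $D_l$. I would argue by induction on $k$. The base case $k=1$ is immediate since $V_{a_1}(\omega_{b_1})$ is irreducible. For the inductive step, assume $V_{a_2}(\omega_{b_2})\otimes\cdots\otimes V_{a_k}(\omega_{b_k})$ is a highest weight representation with highest weight vector $v^{+}=v_2^{+}\otimes\cdots\otimes v_k^{+}$. By Proposition \ref{Delta}, the vector $v_1^{+}\otimes v^{+}$ is maximal and each $h_{i,k}$ acts on it by a scalar, so the problem reduces to showing that $v_1^{+}\otimes v^{+}$ generates $L$. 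By Corollary \ref{v-w+gvtw}, the vector $v_1^{-}\otimes v^{+}$ generates $L$, so it suffices to prove that
$$v_1^{-}\otimes v^{+}\in\yo\big(v_1^{+}\otimes v^{+}\big).$$

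To descend from $v_1^{+}$ to $v_1^{-}$ I would use the explicit chain of negative root vectors furnished by Proposition \ref{v+=gv-}. Writing the reduced word $w_{b_1}$ and its tails $\sigma_k$, I set $v_{\sigma_{k+1}(\omega_{b_1})}=(x_{k',0}^{-})^{r_{k'}}v_{\sigma_k(\omega_{b_1})}$, where $r_{k'}\in\{1,2\}$ by Proposition \ref{rileq2}. The one genuine novelty compared with the type-$A$ argument is the appearance of the exponent $2$: when $r_{k'}=2$ the $Y_{k'}$-submodule $Y_{k'}(v_{\sigma_k(\omega_{b_1})})$ is a three-dimensional copy of $W_2(a)$ rather than the two-dimensional $W_1(a)$ which always occurred for $\yn$.

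The heart of each step is to show that $Y_{k'}$ acts on $v_{\sigma_k(\omega_{b_1})}\otimes v^{+}$ as the tensor product of its actions on the factors, namely
$$Y_{k'}\big(v_{\sigma_k(\omega_{b_1})}\otimes v^{+}\big)=Y_{k'}\big(v_{\sigma_k(\omega_{b_1})}\big)\otimes Y_{k'}(v_2^{+})\otimes\cdots\otimes Y_{k'}(v_k^{+}).$$
Each $Y_{k'}(v_m^{+})$ is either trivial or a copy of $W_1(a_m)$, since $v_m^{+}$ has the fundamental weight $\omega_{b_m}$; the leading factor is $W_1(a)$ or $W_2(a)$. Given the hypothesis $\operatorname{Re}(a)\geq\operatorname{Re}(a_1)\geq\cdots\geq\operatorname{Re}(a_k)$, the ordered real parts allow me to invoke Corollary \ref{tpihw} when $r_{k'}=1$ and Corollary \ref{tpihw2} when $r_{k'}=2$, making this tensor product a highest weight $Y_{k'}$-module; the reverse inclusion follows from Proposition \ref{c1dgd2d}, which ensures that the coproduct of $\yo$ acts on tensor products of highest weight vectors exactly as $\Delta_{\ysl}$ does. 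A downward induction on the index of $\sigma_k$ then carries membership in $\yo(v_1^{+}\otimes v^{+})$ from $v_{\sigma_k(\omega_{b_1})}\otimes v^{+}$ up to $v_{\sigma_{k+1}(\omega_{b_1})}\otimes v^{+}$, eventually producing $v_1^{-}\otimes v^{+}$.

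The main obstacle is verifying that the parameter $a$ attached to $Y_{k'}(v_{\sigma_k(\omega_{b_1})})$ satisfies $\operatorname{Re}(a)\geq\operatorname{Re}(a_1)$, which is precisely what licenses the cyclicity corollaries. This requires computing the $h_{k',1}$-eigenvalue of each intermediate vector $v_{\sigma_k(\omega_{b_1})}$, an induction over the reduced word that uses the defining relations for $[h_{i,1},x_{j,0}^{-}]$ together with the module identities of Proposition \ref{wra}. I expect this computation, along with the careful bookkeeping of where the exponent $r_{k'}=2$ occurs along the type-$D$ path, to be the genuinely laborious part, and I would isolate it in a separate section as was done for Proposition \ref{Lhwr}; the remainder of the argument is formal and parallels the type-$A$ case.
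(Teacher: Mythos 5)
Your skeleton matches the paper's proof exactly: induction on $k$, reduction via Corollary \ref{v-w+gvtw} to showing $v_1^{-}\otimes v^{+}\in\yo(v_1^{+}\otimes v^{+})$, the walk along the reduced word using Propositions \ref{rileq2} and \ref{v+=gv-}, the factorization step $Y_{k'}\big(v_{\sigma_k(\omega_{b_1})}\otimes v^{+}\big)=Y_{k'}\big(v_{\sigma_k(\omega_{b_1})}\big)\otimes Y_{k'}(v_2^{+})\otimes\cdots\otimes Y_{k'}(v_k^{+})$ via Proposition \ref{c1dgd2d} and the $\ysl$ cyclicity results, and the deferral of the eigenvalue computations to a supplementary section.

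However, there is a genuine error in your identification of the degree-two steps. You assert that when $r_{k'}=2$ the module $Y_{k'}\big(v_{\sigma_k(\omega_{b_1})}\big)$ is ``a three-dimensional copy of $W_2(a)$,'' and you tie your choice of cyclicity corollary to this: Corollary \ref{tpihw2} when $r_{k'}=2$. In type $D_l$ this is false in the generic case. The paper's computations (Lemma \ref{l-2is34d} and its sequels) show that for $2\leq b_1\leq l-2$ with $b_1<l-2$ the associated polynomial of such a step has two roots differing by $l-b_1-1\geq 2$, so the module is the \emph{four}-dimensional irreducible tensor product $W_1(b)\otimes W_1(a)$ with $\operatorname{Re}(b)>\operatorname{Re}(a)$; only in the boundary case $b_1=l-2$ can the three-dimensional $W_2(a)$ occur, and even then the module may still be four-dimensional. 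This matters for two reasons. First, when the leading factor is $W_1(b)\otimes W_1(a)$ you cannot invoke Corollary \ref{tpihw2} (whose first factor must be $W_2$); you must instead apply Corollary \ref{tpihw} to the chain $W_1(b)\otimes W_1(a)\otimes W_1(a_{j_1})\otimes\cdots$, which is legitimate precisely because both roots satisfy $\operatorname{Re}(b)>\operatorname{Re}(a)\geq\operatorname{Re}(a_1)$ --- a fact that must itself be proved. Second, identifying a degree-two Drinfeld polynomial requires determining \emph{both} roots, which forces eigenvalue computations for $h_{k',2}$ as well as $h_{k',1}$ (this is exactly what the paper does in Lemmas \ref{l-2is34d}--\ref{l-2i34d23}); your plan mentions only the $h_{k',1}$-eigenvalue, so the supplementary computation as you describe it is underdetermined. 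The paper's Step 5 and Step 6 are written to allow all three possibilities $W_1(a)$, $W_2(a)$, and $W_1(b)\otimes W_1(a)$, and your argument needs the same case analysis to go through.
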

\begin{proof}

%There is a big different to Proposition \ref{Lhwr} of Step 5. In there $Y_{i'}v_{\sigma_i\left(\omega_{b_1}\right)}$ is a 2-dimensional highest weight representation; but in the case of $\yo$, $Y_{i'}v_{\sigma_i\left(\omega_{b_1}\right)}$ could be dimension 2, 3 or 4, corresponding $W_1\left(a\right)$, $W_2\left(a\right)$, or $W_1\left(b\right)\otimes W_1\left(a\right)\left(\operatorname{Re}\left(b\right)>\operatorname{Re}\left(a\right)\right)$, respectively. But we will prove that $\operatorname{Re}\left(a\right)>\operatorname{Re}\left(a_1\right)$ in all three cases.

%Let $Y\left(2l\right)=Y\left(\mathfrak{so}\left(2l,\C\right)\right)$, and
Denote by $Y_i$ the subalgebra of $\yo$,  generated by $x_{i,r}^{\pm}$, and $h_{i,r}$, where $r\in \mathbb{Z}_{\geq 0}$. $Y_i\cong Y\left(\mathfrak{sl}_2\right)$. Let $v_m^{+}$ be the highest weight vectors of $V_{a_m}\left(\omega_{b_m}\right)$ ($1\leq m\leq k$), and let $v_1^{-}$ be the lowest weight vector of $V_{a_1}\left(\omega_{b_1}\right)$.

We prove this proposition by induction on $k$. Without loss of generality, we may assume that $k\geq 2$, $V_{a_2}(\omega_{b_2})\otimes V_{a_3}(\omega_{b_3})\otimes\ldots\otimes V_{a_k}(\omega_{b_k})$ is a highest weight representation of $\yo$, and $v^{+}=v_2^{+}\otimes \ldots \otimes v_k^{+}$ is a highest weight vector. To show that $L$ is a highest weight representation, it follows from Corollary \ref{v-w+gvtw} that it suffices to show that $$v^{-}_{1}\otimes v^{+}\in \yo\left(v^{+}_1\otimes v^{+}\right).$$ We divide the proof into the following steps.

Step 1: $\sigma_i^{-1}\left(\alpha_{i'}\right)\in \Delta^{+}$.

Proof: It is routine to check.

%Proof: First off, $l\left(s_{i'}\sigma_i\right)=l\left(\sigma_i\right)\pm 1$ because $s_{i'}$ keeps all positive roots but $\alpha_{i'}$. Let $T_i=\{\alpha\in \Phi^{+}| \sigma_i\left(\alpha\right)\in -\Phi^{+}\}$. Then $|T_i|=l\left(\sigma_i\right)$. Easy to see that
%$T_1=\{\mu_{b_1}-\mu_{b_1+1}\}$, and $T_2=\{\mu_{b_1}-\mu_{b_1+1}\}$. It follows from the definition of $\sigma_i$ that $T_i\subseteq T_{i+1}$, which gives $l\left(s_{i'}\sigma_i\right)]\geq l\left(\sigma_i\right)$. Then $l\left(s_{i'}\sigma_i\right)=l\left(\sigma_i\right)+1$.

Step 2: $Y_{i'}\left(v_{\sigma_i\left(\omega_{b_1}\right)}\right)$ is a highest weight module of $Y_{i'}$.%%\left(there is %a little confusion on the subscript $k+1$).
%For example, $V_{i}^{\left(k\right)}=x_{2,0}^{-}\ldots \left(x_{n+1,0}^{-}\ldots x_{b_1,0}^{-}\right)v_1^{+}$, then $Y_{k+1}$ means $Y_{3}$.

Proof: Since the weight $\sigma_i\left(\omega_{b_1}\right)$ is on the Weyl group orbit of the highest weight and the representation $V_{a_1}(\omega_{b_1})$ is finite-dimensional, the weight space of weight $\sigma_i\left(\omega_{b_1}\right)$ is 1-dimensional. The elements $h_{j,s}$ form a commutative subalgebra, so $v_{\sigma_i\left(\omega_{b_1}\right)}$ is an eigenvector of $h_{i',r}$. Therefore we only have to show that $v_{\sigma_i\left(\omega_{b_1}\right)}$ is a maximal vector.
Suppose to the contrary that $x_{i',k}^{+}v_{\sigma_i\omega_{b_i}}\neq 0$.  Then $x_{i',k}^{+}v_{\sigma_i\omega_{b_i}}$ is a weight vector of weight $\sigma_i\left(\omega_{b_1}\right)+\alpha_{i'}$.
Thus $\omega_{b_1} + \sigma_i^{-1}(\alpha_{i'})$ is also a weight of $V_{a_1}(\omega_{b_1})$ and, by Step 1, $\omega_{b_1}$ precedes $\omega_{b_1} + \sigma_i^{-1}(\alpha_{i'})$: this contradicts the fact that $\omega_{b_1}$ is the highest weight of $V_{a_1}(\omega_{b_1})$.

%Step 2: [Remark, Section 3.1, \cite{ChPr4}] Let $\sigma$ be any element of the Weyl group of a finite-dimensional complex simple Lie algebra $\mathfrak{g}$ and set $\Delta_{\sigma}^{+}=\{\alpha\in \Delta^{+}: \sigma^{-1}\left(\alpha\right)\in -\Delta^{+}\}$. Since $V_{\sigma_i\left(\omega_{b_1}\right)}$ is 1-dimensional, there exists scalars $d_{i'k,\sigma_i}^{+}$ such that $h_{i',k} v_{\sigma_i\left(\omega_{b_1}\right)}=d_{i'k,\sigma_i}^{+}$. Then
%\begin{equation*}
%1+\sum_{k\geq 0}\frac{d_{i'k,\sigma_i}^{+}}{u^{k+1}}=\begin{cases}
%\frac{P_{i',\sigma_i}^{+}\left(u+\frac{1}{2}\left(\alpha_i,\alpha_i\right)\right)}{P\left(u\right)}\qquad \mathrm{if}\qquad \alpha_i\notin \Delta_{\sigma}^{+}, \\
%\frac{P\left(u\right)}{P_{i',\sigma_i}^{+}\left(u+\frac{1}{2}\left(\alpha_i,\alpha_i\right)\right)}\qquad \mathrm{if}\qquad \alpha_i\in \Delta_{\sigma}^{+}.
%\end{cases}
%\end{equation*}

%Moreover we may assume that $P\left(u\right)$ is monic.

Step 3: %Let $P\left(u\right)$ be the associated polynomial of $Y_{i'}\left(v_{\sigma_i\left(\omega_{b_1}\right)}\right)$. Then
As a highest weight $Y_{i'}$-module, $Y_{i'}\left(v_{\sigma_i\left(\omega_{b_1}\right)}\right)$ has highest weight $\frac{P\left(u+1\right)}{P\left(u\right)}$ for some monic polynomial $P(u)$.

Proof: It follows from the representation theory of $\ysl$.

Step 4: $P\left(u\right)$ has of degree 1 or 2.

Proof: The degree of $P$ equals the eigenvalue of $h_{i',0}$ on $v_{\sigma_i\left(\omega_{b_1}\right)}$. Since $$h_{i',0} v_{\sigma_i\left(\omega_{b_1}\right)}=\Big(\sigma_{i}\left(\omega_{b_1}\right)\left(h_{i',0}\right)\Big)v_{\sigma_i\left(\omega_{b_1}\right)},$$ it follows from Proposition \ref{rileq2} that $\sigma_{i}\left(\omega_{b_1}\right)\left(h_{i',0}\right)=1$ or 2. Therefore the degree of $P\left(u\right)$ equals 1 or 2.

%Proof: It is follows from Steps 1, 2 and 3.

Step 5: If $\operatorname{Deg}\Big(P\left(u\right)\Big)=1$, then $Y_{i'}\left(v_{\sigma_i\left(\omega_{b_1}\right)}\right)$ is 2-dimensional and isomorphic to $W_1\left(a\right)$;  If $\operatorname{Deg}\Big(P\left(u\right)\Big)=2$, then $Y_{i'}\left(v_{\sigma_i\left(\omega_{b_1}\right)}\right)$ is either 3-dimensional or 4-dimensional, and is isomorphic to $W_2\left(a\right)$ or $W_1\left(b\right)\otimes W_1\left(a\right)$ $\big(\operatorname{Re}(b)>\operatorname{Re}(a)\big)$, respectively. Moreover, if $i\geq 1$, then $\operatorname{Re}\left(a\right)>\operatorname{Re}\left(a_1\right)$. The values of both $b$ and $a$ will be explicitly computed in the next section.

Proof: Let us assume for the moment that this is done (the proof will be given in the next section), and let us proceed to the next step.

Step 6:  $Y_{i'}\left(v_{\sigma_i\left(\omega_{b_1}\right)}\otimes v_2^{+}\otimes\ldots \otimes v_k^{+}\right)=Y_{i'}\left(v_{\sigma_i\left(\omega_{b_1}\right)}\right)\otimes Y_{i'}\left(v_2^{+}\right)\otimes\ldots\otimes Y_{i'}\left(v_k^{+}\right)$.

Proof: Let $W$ be one of the modules $W_1\left(a\right)$, $W_2\left(a\right)$ or $W_1\left(b\right)\otimes W_1\left(a\right)$. $Y_{i'}\left(v_m^{+}\right)$ is nontrivial if and only if $b_m=i'$; moreover, $Y_{i'}\left(v_m^{+}\right)\cong W_1\left(a_m\right)$.  Suppose in $\{b_1,\ldots,b_k\}$ that $b_{j_1}=\ldots=b_{j_{m}}=i'$ with $j_1<\ldots<j_{m}$; moreover, if $s\notin\{j_1,\ldots, j_m\}$, then $b_s\neq i'$.
Note in Step 5 that $Y_{i'}\left(v_{\sigma_i\left(\omega_{b_1}\right)}\right)\cong W$. Thus
\begin{align*}
% \nonumber to remove numbering (before each equation)
   Y_{i'}&\left(v_{\sigma_i\left(\omega_{b_1}\right)}\right)\otimes Y_{i'}\left(v_2^{+}\right)\otimes\ldots\otimes Y_{i'}\left(v_k^{+}\right) \\
   &\cong \begin{cases}
 W\otimes W_1\left(a_{j_1}\right)\otimes \ldots\otimes W_{1}\left(a_{j_m}\right)\qquad \mathrm{if}\qquad b_1\neq i' \\
 W\otimes W_1\left(a_{j_2}\right)\otimes \ldots\otimes W_{1}\left(a_{j_m}\right) \qquad \mathrm{if}\qquad b_1=i'.
\end{cases}
\end{align*}

Since $\operatorname{Re}\left(a\right)\geq \operatorname{Re}\left(a_1\right)\geq \operatorname{Re}\left(a_{j_1}\right)\geq \ldots\geq \operatorname{Re}\left(a_{j_m}\right)$, it follows from either Corollary \ref{tpihw} or Corollary \ref{tpihw2} that $Y_{i'}\left(v_{\sigma_i\left(\omega_{b_1}\right)}\right)\otimes Y_{i'}\left(v_2^{+}\right)\otimes\ldots\otimes Y_{i'}\left(v_k^{+}\right)$ is a highest weight module with highest weight vector $v_{\sigma_i\left(\omega_{b_1}\right)}\otimes v_2^{+}\otimes\ldots \otimes v_k^{+}$. Therefore $$Y_{i'}\left(v_{\sigma_i\left(\omega_{b_1}\right)}\otimes v_2^{+}\otimes\ldots \otimes v_k^{+}\right)\supseteq Y_{i'}\left(v_{\sigma_i\left(\omega_{b_1}\right)}\right)\otimes Y_{i'}\left(v_2^{+}\right)\otimes\ldots\otimes Y_{i'}\left(v_k^{+}\right).$$

%Since $v_{\sigma_i(\omega_{b_1})} \otimes v_2^+ \otimes \cdots \otimes v_k^+$ is contained in $Y_{i'} \Big(v_{\sigma_i(\omega_{b_1})} \Big) \otimes Y_{i'}(v_2^+) \otimes \cdots \otimes Y_{i'}(v_k^+)$, the reverse inclusion also holds:
It easy to see by the coproduct of Yangians  and Proposition \ref{c1dgd2d} that
$$Y_{i'}\left(v_{\sigma_i\left(\omega_{b_1}\right)}\otimes v_2^{+}\otimes\ldots \otimes v_k^{+}\right)\subseteq Y_{i'}\left(v_{\sigma_i\left(\omega_{b_1}\right)}\right)\otimes Y_{i'}\left(v_2^{+}\right)\otimes\ldots\otimes Y_{i'}\left(v_k^{+}\right).$$ Thus the claim is true.

Step 7: $v_{\sigma_{i+1}(\omega_{b_1})}\otimes v^{+}\in Y_{i'}\left(v_{\sigma_i\left(\omega_{b_1}\right)}\otimes v^{+}\right)$.

Proof: $v_{\sigma_{i+1}(\omega_{b_1})}\otimes v^{+}\in Y_{i'}\left(v_{\sigma_{i}\omega_{b_1}}\right)\otimes Y_{i'}\left(v^{+}\right)=Y_{i'}\left(v_{\sigma_i\left(\omega_{b_1}\right)}\otimes v^{+}\right)$.

Step 8: $v_1^{-}\otimes v^{+}\in \yo\left(v_1^{+}\otimes v^{+}\right)$.

Proof: It follows from Step 7 immediately by induction on the subscript of $\sigma_i$.

It follows from Step 8 and Corollary \ref{v-w+gvtw} that $L=\yo\left(v_1^{+}\otimes v^{+}\right)$.
\end{proof}

%\begin{remark}
%In what follows, $\left(\ldots\right)$ denotes some bracket in Proposition \ref{v+=gv-}.
%\end{remark}
\begin{remark}\label{yocap1}
In what follows, we record the values of root(s) of the associated polynomial of $Y_{i'}\left(v_{\sigma_i\left(\omega_{b_1}\right)}\right)$ in Step 5 of Proposition \ref{mtoysl2l},  which will be calculated explicitly in the next section.  In next paragraph,  $a_1\xlongrightarrow{x_{a,0}^{-}} \left(b_2,a_2\right)\xlongrightarrow{\left(x_{b,0}^{-}\right)^2} \left(b_3,a_3\right)\xlongrightarrow{\left(x_{c,0}^{-}\right)^2}$ means that $Y_{a}\left(v\right)=W_1\left(a_1\right)$, $Y_{b}\Big(x_{a,0}^{-}v\Big)\cong W$, $Y_{c}\Big(\left(x_{b,0}^{-}\right)^2x_{a,0}^{-}v\Big)\cong V$, and the lowest weight vector reached if there is no number after the arrow, where $W$ is at least 3 dimensional and is a quotient of  $W_1\left(b_2\right)\otimes W_1\left(a_2\right)$; $V$ is at least 3 dimensional and is a quotient of  $W_1\left(b_3\right)\otimes W_1\left(a_3\right)$. %Similar interpretations for the other notations.
The second (last) parenthesis means the parenthesis in Proposition \ref{v+=gv-}, counting from the back to the front.

When $b_1=1,2,\ldots, l-2$,\\
$a_1\xlongrightarrow{x_{b_1,0}^{-}}a_1+\frac{1}{2}\xlongrightarrow{x_{b_1+1,0}^{-}}\ldots\xlongrightarrow{x_{l-2,0}^{-}}a_1+\frac{l-1-b_1}{2}
\xlongrightarrow{x_{l-1,0}^{-}} a_1+\frac{l-1-b_1}{2}\xlongrightarrow{x_{l,0}^{-}}\\ a_1+\frac{l-b_1}{2}
\xlongrightarrow{x_{l-2,0}^{-}} a_1+\frac{l-b_1+1}{2}\xlongrightarrow{x_{l-3,0}^{-}}\ldots\xlongrightarrow{x_{b_1+1,0}^{-}} a_1+l-b_1-1\xlongrightarrow{x_{b_1,0}^{-}}\\ \left(a_1+l-b_1-\frac{1}{2}, a_1+\frac{1}{2}\right)\xlongrightarrow{\left(x_{b_1-1,0}^{-}\right)^2} \left(a_1+l-b_1,a_1+1\right)\xlongrightarrow{\left(x_{b_1-2,0}^{-}\right)^2}\ldots \xlongrightarrow{\left(x_{2,0}^{-}\right)^2} \left(a_1+l-b_1-1+\frac{b_1-1}{2}=a_1+\frac{2l-b_1-3}{2}, a_1+\frac{b_1-1}{2}\right)\xlongrightarrow{\left(x_{1,0}^{-}\right)^2}$\\
(The second parenthesis) $\left(a_1+1\right)\xlongrightarrow{x_{b_1,0}^{-}}\left(a_1+1\right)+\frac{1}{2}\xlongrightarrow{x_{b_1+1,0}^{-}}\left(a_1+1\right)+1\xlongrightarrow{x_{b_1+2,0}^{-}}\ldots \xlongrightarrow{\left(x_{3,0}^{-}\right)^2}\Big(\left(a_1+1\right)+\frac{2l-b_1-4}{2}, \left(a_1+1\right)+\frac{b_1-2}{2}\Big)\xlongrightarrow{\left(x_{2,0}^{-}\right)^2}$

%\left(The third parenthesis) $\left(a_1+2\right)\xlongrightarrow{x_{b_1,0}^{-}}\left(a_1+2\right)+\frac{1}{2}\xlongrightarrow{x_{b_1+1,0}^{-}}\left(a_1+2\right)+1\xlongrightarrow{x_{b_1+2,0}^{-}}\ldots \xlongrightarrow{\left(x_{4,0}^{-}\right)^2} \left(a_1+2\right)+l-b_1-1+\frac{b_1-4}{2}=a_1+l-\frac{b_1}{2}-\frac{3}{2}\xlongrightarrow{\left(x_{3,0}^{-}\right)^2}$

$\ldots$\\
(The last parenthesis)
$\left(a_1+b_1-1\right)\xlongrightarrow{x_{b_1,0}^{-}}\left(a_1+b_1-1\right)+\frac{1}{2}\xlongrightarrow{x_{b_1+1,0}^{-}}\\\left(a_1+b_1-1\right)+1\xlongrightarrow{x_{b_1+2,0}^{-}}\ldots \xlongrightarrow{x_{b_1+1,0}^{-}} \left(a_1+b_1-1\right)+l-b_1-1=a_1+l-2\xlongrightarrow{x_{b_1,0}^{-}}\text{the lowest weight vector reached}$.

When $b_1=l-1$,\\
$a_1\xlongrightarrow{x_{l-1,0}^{-}} a_1+\frac{1}{2}
\xlongrightarrow{x_{l-2,0}^{-}} a_1+1\xlongrightarrow{x_{l-3,0}^{-}}\ldots\xlongrightarrow{x_{2,0}^{-}}a_1+\frac{l-2}{2}\xlongrightarrow{x_{1,0}^{-}}$\\
(The second parenthesis)$\left(a_1+1\right)\xlongrightarrow{x_{l,0}^{-}}\left(a_1+1\right)+\frac{1}{2}\xlongrightarrow{x_{l-2,0}^{-}}\left(a_1+1\right)+1 \xlongrightarrow{x_{l-3,0}^{-}}\ldots \xlongrightarrow{x_{3,0}^{-}}\left(a_1+1\right)+\frac{l-3}{2}=a_1+\frac{l-1}{2}\xlongrightarrow{x_{2,0}^{-}}$\\
(The third parenthesis)$\left(a_1+2\right)\xlongrightarrow{x_{l-1,0}^{-}}\left(a_1+2\right)+\frac{1}{2}\xlongrightarrow{x_{l-2,0}^{-}}\left(a_1+2\right)+1 \xlongrightarrow{x_{l-3,0}^{-}}\ldots \xlongrightarrow{x_{4,0}^{-}}\left(a_1+2\right)+\frac{l-4}{2}=a_1+\frac{l}{2}\xlongrightarrow{x_{3,0}^{-}}$

$\ldots$\\
(The last two parentheses)$\left(a_1+l-3\right)\xlongrightarrow{x_{l-\overline{l-2},0}^{-}}\left(a_1+l-3\right)+\frac{1}{2}\xlongrightarrow{x_{l-2,0}^{-}}\\\left(a_1+l-3\right)+1 \xlongrightarrow{x_{l-\overline{l-1},0}^{-}}\text{the lowest weight vector reached}$.
\end{remark}

%It is enough to show that $V_{a_1}(\omega_{b_1})\otimes V_{a_i}(\omega_{b_i})$ is a highest weight representation if $a_i-a_1\notin S(1,i)$, where $S(1,i)$ is a finite set. Then $S=\bigcup\limits_{i<j}S(i,j)$.
%Denote $a=a_0$. The fact that $a_j-a_i\notin S$ for $1\leq i<j\leq k$ guarantees $a_m-a_n\neq 1$ for $0\leq n<m\leq k$.
%

%If $a_j-a_i\notin A$ for $1\leq i<j\leq k$, similar to the proof in the above proposition, we can prove that $L$ is a highest weight representation. The only difference between the proofs is Step 6. In the next section, we will show that the set of the roots of associated polynomials of $Y_{i'}\left(v_{\sigma_i\left(\omega_{b_1}\right)}\right)$ for all possible $i$ values is $\{a_1, a_1+\frac{1}{2},\ldots, a_1+l-\frac{5}{2}, a_1+l-2\}$.  The fact that $a_j-a_i\notin A$ for $1\leq i<j\leq k$ guarantees the difference of root(s) of the associated polynomials of $Y_{i'}\left(v_{\sigma_i\left(\omega_{b_1}\right)}\right)$ for any possible value $i$ and $W_1(a_{j_r})$ does not equal 1. It follows from Lemma \ref{beautiful} that $Y_{i'}\left(v_{\sigma_i\left(\omega_{b_1}\right)}\right)\otimes Y_{i'}\left(v_2^{+}\right)\otimes\ldots\otimes Y_{i'}\left(v_k^{+}\right)$ is a highest weight module.
%
%In what follows,

%Similar to the proof of Proposition \ref{mtoysl2l}, we can show that $L$ is a highest weight representation.
%Note that when $l= 4$, the second and third parentheses of the above calculations does not exist.
%when $l=4$, v^{-}=324123v^{+}.
%\end{proof}
A broader condition for the cyclicity of the module $L$ will be given in Theorem \ref{mt2c4}. We first show the following lemma. Since the proof is similar to the proof of Lemma \ref{C3l317ss}, we omit the proof.
\begin{lemma}\label{ycdlsbij}
$V_{a_m}\left(\omega_{b_m}\right)\otimes V_{a_n}\left(\omega_{b_n}\right)$ is a highest weight representation if $a_n-a_m\notin S\left(b_m,b_n\right)$, where the set $S\left(b_m,b_n\right)$ is defined as follows:
\begin{enumerate}
  \item $S\left(b_m,b_n\right)=\left\{\frac{|b_m-b_n|}{2}+1+r, l+r-\frac{b_m+b_n}{2}|0\leq r<\text{min}\{b_m, b_n\}\right\}$, where\newline $1\leq b_m, b_n\leq l-2$;
  \item  $S\left(l-1,b_n\right)=S\left(l,b_n\right)=S\left(b_n,l-1\right)=S\left(b_n,l\right)=\\\qquad\qquad\qquad\qquad \left\{\frac{l-1-b_n}{2}+1+r|0\leq r<b_n, 1\leq b_n\leq l-2\right\}$;
  \item $S\left(l-1,l\right)=S\left(l,l-1\right)=\left\{2,4,\ldots,l-2+\overline{l}\right\}$;
   \item $S\left(l-1,l-1\right)=S\left(l,l\right)=\left\{1,3,\ldots,l-1-\overline{l}\right\}$.
\end{enumerate}
\end{lemma}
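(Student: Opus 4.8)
The plan is to run the argument of Proposition~\ref{mtoysl2l} essentially verbatim, but to relax the real-part ordering used there, replacing it at the single place it was needed (Step~6) by the numerical condition $a_n-a_m\notin S(b_m,b_n)$; this is exactly the passage from the ordered Proposition~\ref{mtoysl2l} to the unordered refinement, in parallel with how Lemma~\ref{C3l317ss} refines Proposition~\ref{Lhwr} in type $A$. Concretely, by Corollary~\ref{v-w+gvtw} it suffices to show $v_1^-\otimes v_2^+\in\yo\left(v_1^+\otimes v_2^+\right)$, where $v_1^\pm$ are the extreme weight vectors of $V_{a_m}(\omega_{b_m})$ and $v_2^+$ the highest weight vector of $V_{a_n}(\omega_{b_n})$. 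Following Steps~6--8 of Proposition~\ref{mtoysl2l}, this reduces to checking, at each step $i$ of the reduced word $w_{b_m}$ of Proposition~\ref{v+=gv-}, that $Y_{i'}\left(v_{\sigma_i(\omega_{b_m})}\right)\otimes Y_{i'}\left(v_2^+\right)$ is a highest weight $Y_{i'}\cong\ysl$-module. Whenever $i'\neq b_n$ the second tensorand is one-dimensional and the product is trivially highest weight, so the entire verification localizes at the steps with $i'=b_n$.

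At such a step $Y_{b_n}(v_2^+)\cong W_1(a_n)$, while by Steps~4--5 of Proposition~\ref{mtoysl2l} the module $Y_{b_n}\left(v_{\sigma_i(\omega_{b_m})}\right)$ is $W_1(a)$, or $W_2(a)$ or $W_1(b)\otimes W_1(a)$ when the coefficient $r_{b_n}$ of Proposition~\ref{rileq2} equals $2$. In each case its Drinfeld roots have the explicit shape $a_m+\tfrac{c}{2}$, with $c$ a nonnegative rational read off from the chain of values recorded in Remark~\ref{yocap1} and determined by the position of $x_{b_n,0}^-$ in $w_{b_m}$. By Proposition~\ref{ctpihw}, Corollary~\ref{tpihw2} and Lemma~\ref{beautiful}, the tensor product $Y_{b_n}\left(v_{\sigma_i(\omega_{b_m})}\right)\otimes W_1(a_n)$ is highest weight as soon as $a_n-\left(a_m+\tfrac{c}{2}\right)\neq 1$ for every such root, i.e.\ as soon as $a_n-a_m\neq\tfrac{c}{2}+1$. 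Thus the forbidden differences are precisely the numbers $\tfrac{c}{2}+1$ arising as $x_{b_n,0}^-$ ranges over its occurrences, and $S(b_m,b_n)$ is by definition the set they sweep out.

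The remaining work is the combinatorial accounting of these occurrences, which splits into the four cases of the statement according to the branching of the $D_l$ diagram. For $1\leq b_m,b_n\leq l-2$ the node $b_n$ is crossed twice in each block of $w_{b_m}$ --- once on the ascending stretch $x_{b_m}^-\cdots x_{l-2}^-$ and once on the descending stretch around the fork --- and since there are $\min\{b_m,b_n\}$ blocks containing both crossings, the two sweeps produce exactly the two families $\tfrac{|b_m-b_n|}{2}+1+r$ and $l+r-\tfrac{b_m+b_n}{2}$, $0\leq r<\min\{b_m,b_n\}$, of item~(1). When one index is a spin node $l-1$ or $l$ and the other lies in $\{1,\dots,l-2\}$, only the single family of item~(2) survives, because in $w_{l-1}$ and $w_l$ the interior node $b_n$ is met only once per block; and when both indices are spin nodes, the alternation forced by the correction terms $s_{l-\overline{l-1}},s_{l-\overline{l}}$ in $w_{l-1},w_l$ makes $b_n\in\{l-1,l\}$ occur only on every second step, producing the even progression $\{2,4,\dots,l-2+\overline{l}\}$ of item~(3) when $b_m\neq b_n$ and the odd progression $\{1,3,\dots,l-1-\overline{l}\}$ of item~(4) when $b_m=b_n$.

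I expect the main obstacle to be exactly this bookkeeping at the fork: one must confirm, using Proposition~\ref{rileq2} to locate the degree-two (squared) steps and the value computations of the next section to evaluate $c$ there, that the roots emitted at a degree-two step match the prediction $a_m+\tfrac{c}{2}$, and --- crucially --- that the $W_2$ versus $W_1\otimes W_1$ dichotomy never creates a coincidence ``difference equals $1$'' beyond those already tabulated in $S(b_m,b_n)$. Once the occurrence lists are shown to reproduce the four displayed sets, the cyclicity is immediate from the $Y_{i'}$-level criterion, so the entire proof is the type-$D$ root accounting; this is also why it is reasonable to omit it as ``similar to Lemma~\ref{C3l317ss}.''
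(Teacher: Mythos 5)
Your proposal is correct and is essentially the proof the paper intends: the paper omits the argument, saying only that it is ``similar to the proof of Lemma~\ref{C3l317ss}'', and your plan --- reduce via Corollary~\ref{v-w+gvtw} and Steps 6--8 of Proposition~\ref{mtoysl2l} to the $Y_{i'}$-level, discard the steps with $i'\neq b_n$ where $Y_{i'}(v_2^+)$ is trivial, read the Drinfeld roots $a_m+\tfrac{c}{2}$ of $Y_{b_n}\left(v_{\sigma_i(\omega_{b_m})}\right)$ off Remark~\ref{yocap1} and the supplementary computations, and forbid $a_n-\bigl(a_m+\tfrac{c}{2}\bigr)=1$ via Lemma~\ref{beautiful} --- is exactly that adaptation, with the four cases of $S(b_m,b_n)$ arising from the branching of the $D_l$ diagram as you describe. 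The one imprecision is in item (1) when $b_n<b_m$: there the node $b_n$ is not met on the ascending/descending sweeps (which stop at $b_m$) but exactly once per block as a squared step $\left(x_{b_n,0}^{-}\right)^2$, and it is the two Drinfeld roots of the resulting degree-two module $W_1(b)\otimes W_1(a)$ (or $W_2(a)$, which has the same roots, so the dichotomy is harmless) that emit both families simultaneously --- this is consistent with the degree-two accounting you flag in your final paragraph, so the bookkeeping still closes.
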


Note that $S(b_i, b_j)=S(b_j, b_i)$. Similar to the proof of Theorem \ref{3mt2icl} that
\begin{theorem}\label{mt2c4}
Let $L=V_{a_1}(\omega_{b_1})\otimes V_{a_2}(\omega_{b_2})\otimes\ldots\otimes V_{a_k}(\omega_{b_k})$, and $S(b_i, b_j)$ be defined as above.
\begin{enumerate}
  \item If $a_j-a_i\notin S(b_i, b_j)$ for $1\leq i<j\leq k$, then $L$ is a highest weight representation of $\yo$.
  \item If $a_j-a_i\notin S(b_i, b_j)$ for $1\leq i\neq j\leq k$, then $L$ is an irreducible representation of $\yo$.
\end{enumerate}
\end{theorem}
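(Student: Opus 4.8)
The final statement to be proved is Theorem \ref{mt2c4}, which asserts a cyclicity condition and an irreducibility criterion for the ordered tensor product $L=V_{a_1}(\omega_{b_1})\otimes\ldots\otimes V_{a_k}(\omega_{b_k})$ over $\yo$. The plan is to follow the template already established for type $A$, adapting the two-part argument verbatim in structure while invoking the type-$D$ versions of the intermediate results. Throughout I would keep the notation of Proposition \ref{mtoysl2l}: the Yangian subalgebras $Y_{i'}\cong\ysl$, the weight vectors $v_{\sigma_i(\omega_{b_1})}$ along the Weyl-group path from highest to lowest weight, and the decompositions computed in Lemma \ref{ycdlsbij}.

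For part (1), the cyclicity condition, I would mimic the proof of Theorem \ref{3mt2icl}. The skeleton of Proposition \ref{mtoysl2l} carries over unchanged: Steps 1 through 5 and Steps 7, 8 never used the hypothesis $\operatorname{Re}(a_1)\geq\ldots\geq\operatorname{Re}(a_k)$ in an essential way, so they remain valid. The only place the real-part ordering entered was Step 6, where Corollary \ref{tpihw} or Corollary \ref{tpihw2} was applied to conclude that the $Y_{i'}$-module $Y_{i'}(v_{\sigma_i(\omega_{b_1})})\otimes Y_{i'}(v_2^+)\otimes\ldots\otimes Y_{i'}(v_k^+)$ is a highest weight $\ysl$-module. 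I would replace that appeal by an appeal to Proposition \ref{ctpihw} (and Lemma \ref{beautiful} in the case where a factor is $W_2(a)$ or $W_1(b)\otimes W_1(a)$ of total degree two): the hypothesis $a_j-a_i\notin S(b_i,b_j)$ for $1\leq i<j\leq k$ is precisely engineered, via Lemma \ref{ycdlsbij}, so that the differences of the roots of the associated polynomials of the various $Y_{i'}(v_{\sigma_i(\omega_{b_1})})$ and $Y_{i'}(v_m^+)$ never equal $1$ in the relevant order. Hence the tensor product stays highest weight, Step 6 goes through, and $v_1^-\otimes v^+\in\yo(v_1^+\otimes v^+)$, making $L$ cyclic on $v_1^+\otimes v^+$; that $v_1^+\otimes v^+$ is maximal and an eigenvector for the $h_{i,k}$ follows from Proposition \ref{Delta} as before.

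For part (2), the irreducibility criterion, I would argue exactly as in the ``$\Leftarrow$'' direction of Theorem \ref{cp8c3tng2}. By part (1), if $a_j-a_i\notin S(b_i,b_j)$ for all $i<j$ then $L$ is highest weight. To conclude irreducibility via Proposition \ref{VoWWoVhi}, it suffices to show the left dual $\ ^tL$ is also highest weight. Using Lemma \ref{dualfrc1}, $\ ^tV_a(\omega_b)\cong V_{a-\kappa}(\omega_{-w_0(b)})$, and $\ ^t(V\otimes W)=\ ^tW\otimes\ ^tV$, so $\ ^tL$ is again an ordered tensor product of fundamental representations with shifted spectral parameters and reversed order. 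Applying part (1) to $\ ^tL$ reduces the claim to the symmetry $S(-w_0(b_j),-w_0(b_i))=S(b_i,b_j)$, which I would verify directly from the explicit description in Lemma \ref{ycdlsbij}, together with $S(b_i,b_j)=S(b_j,b_i)$. Since $-w_0$ fixes the nodes of type $D_l$ when $l$ is even and swaps only the two spin nodes $l-1,l$ when $l$ is odd, this symmetry check is the one genuinely type-$D$-specific computation; it is where the case distinction between the four sets in Lemma \ref{ycdlsbij} must be examined, and the behaviour of the two half-spin weights under $-w_0$ is the main obstacle I anticipate. Once that symmetry is established, the hypothesis $a_j-a_i\notin S(b_i,b_j)$ for all $i\neq j$ guarantees both $L$ and $\ ^tL$ are highest weight, so $L$ is irreducible.

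I expect the main difficulty to lie not in the overall scheme, which is structurally identical to the type-$A$ arguments, but in the bookkeeping around the two spin representations: verifying the duality symmetry of $S(b_i,b_j)$ in all four cases of Lemma \ref{ycdlsbij} and ensuring the degree-two modules $W_2(a)$ and $W_1(b)\otimes W_1(a)$ arising in Step 5 of Proposition \ref{mtoysl2l} are handled correctly by Proposition \ref{ctpihw} and Lemma \ref{beautiful}. Unlike type $A$, part (2) here is stated only as a sufficient condition for irreducibility rather than an equivalence, so I would not attempt the converse ``$\Rightarrow$'' direction; the absence of a known necessary-and-sufficient reducibility criterion for pairs of fundamental representations of $\yo$ (the analogue of Theorem \ref{cp8c3t}) is precisely why equivalence is out of reach in this case.
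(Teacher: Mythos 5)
Your proposal is correct and follows essentially the same route as the paper, which proves part (1) by modifying Step 6 of Proposition \ref{mtoysl2l} exactly as in Theorem \ref{3mt2icl} (invoking Proposition \ref{ctpihw}, Corollary \ref{tpihw2} and Lemma \ref{beautiful} for the degree-two factors), and part (2) by the duality argument of Theorem \ref{cp8c3tng2} via Lemma \ref{dualfrc1} and Proposition \ref{VoWWoVhi}. The spin-node symmetry $S\big(-w_0(b_j),-w_0(b_i)\big)=S(b_i,b_j)$ that you flag as the main anticipated obstacle is in fact immediate, since Lemma \ref{ycdlsbij} already defines the sets so that $S(l-1,b_n)=S(l,b_n)$, $S(l-1,l)=S(l,l-1)$ and $S(l-1,l-1)=S(l,l)$, and the paper notes $S(b_i,b_j)=S(b_j,b_i)$.
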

\begin{remark}
We note that in Theorem 7.2 \cite{ChPr8},  the authors gave a sufficient and necessary condition for the cyclicity and irreducibility of $V_{a_m}\left(\omega_{b_m}\right)\otimes V_{a_n}\left(\omega_{b_n}\right)$, where $b_m\leq b_n$. They proved that $V_{a_m}\left(\omega_{b_m}\right)\otimes V_{a_n}\left(\omega_{b_n}\right)$ is reducible if and only if $\pm(a_n-a_m)\notin T(b_m, b_n)$, where $T(b_m,b_n)$ is a finite set of positive rational numbers. We note that $T(b_m,b_n)=S(b_m,b_n)$ for most of the cases but when $1\leq b_m\leq b_n\leq l-2$ and $b_m+b_n\geq l+1$. At this case, $T(b_m,b_n)\subsetneqq S(b_m,b_n)$. That means the methodology developed in this thesis only provides a sufficient condition when $\g=\nyo$. The main reason is that we use  Lemma \ref{beautiful} in our proof and the lemma only provides a sufficient condition for a tensor product of fundamental representations of $\ysl$ to be cyclic. For example, $V_{2}(a-1)\otimes V_1(a)$ is irreducible, but it fails the prescribed condition of Lemma \ref{beautiful}.
\end{remark}
\section{Supplement Step 5 of Proposition \ref{mtoysl2l}}\label{mtoysl2ls}
In this subsection, we will complete Step 5 of Proposition  \ref{mtoysl2l}. The step 5 is shown in 3 cases: $b_1=1$, $b_1=l-1$ or $b_1=l$, and $2\leq b_1\leq l-2$.
%By the symmetry of the nodes $l-1$ and $l$, the proof of the case $b_1=l$ will follow if it is true for the case where $b_1=l-1$.

We would like to review the following important fact (Corollary \ref{slw1a}): in $W_1\left(a\right)$,
\begin{center}
$h_{i,1}v_1=av_1$, $x_{i,1}^{-}v_1=ax_{i,0}^{-}v_1$ and $h_{i,1}x_{i,0}^{-}v_1=-ax_{i,0}^{-}v_1.\qquad\qquad (\ref{Cor3.3})$
\end{center}
\subsection{Case 1: $b_1=1$.}

In this case, the results are summarized in Proposition \ref{i=1yo2n}. Recall that in this case, $$v^{-}_1=x_{1,0}^{-}\ldots x_{l-2,0}^{-}x_{l,0}^{-}x_{l-1,0}^{-}\ldots x_{1,0}^{-}v^{+}_1.$$

%\left(Comments on the following lemma: calculations become easier with this lemma. I keep the old-long calculations, and provide a new version below it.\right)

\begin{lemma}\label{so1c1lf1}For $1\leq k\leq l-2$,
$Y_{k}\left(x_{k+1,0}^{-}x_{k,0}^{-}x_{k-1,0}^{-}\ldots x_{1,0}^{-}v^{+}_1\right)$ is a trivial representation of dimension 1.
\end{lemma}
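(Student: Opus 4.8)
The plan is to reduce everything to a weight count inside the natural representation. By Remark \ref{rofry} and Theorem \ref{dofrdl} (the case $i=1$), as a $\nyo$-module $V_a(\omega_1)\cong_{\nyo}\C^{2l}$ is the natural representation, whose weights are exactly $\pm\mu_1,\ldots,\pm\mu_l$, each occurring with multiplicity one. No Yangian relation beyond the weight grading will be needed, so the argument handles all modes $r$ simultaneously.

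First I would pin down the weight of $v:=x_{k+1,0}^{-}x_{k,0}^{-}\cdots x_{1,0}^{-}v_1^{+}$. The vector $v_1^{+}$ has weight $\omega_1=\mu_1$, and for $1\le j\le l-1$ the operator $x_{j,0}^{-}$ lowers weights by $\alpha_j=\mu_j-\mu_{j+1}$; applying the factors from right to left therefore carries $\mu_1\mapsto\mu_2\mapsto\cdots\mapsto\mu_{k+1}\mapsto\mu_{k+2}$. Here I use the hypothesis $k\le l-2$, so that every index occurring is at most $l-1$ and in particular $\alpha_{k+1}=\mu_{k+1}-\mu_{k+2}$ is of the generic form (never the exceptional root $\alpha_l$). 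Since each intermediate weight space of the natural module is one-dimensional and each lowering step is an isomorphism onto the next, $v$ is a nonzero weight vector of weight $\mu_{k+2}$.

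Next I would note that, for every $r\ge 0$, the generator $x_{k,r}^{\pm}$ shifts the $\mathfrak{h}$-weight (computed against the $h_{i,0}$) by $\pm\alpha_k=\pm(\mu_k-\mu_{k+1})$; this is immediate from the relation $[h_{i,0},x_{k,r}^{\pm}]=\pm d_i a_{ik}x_{k,r}^{\pm}$. Hence $x_{k,r}^{+}v$ would lie in the weight space of weight $\mu_{k+2}+\mu_k-\mu_{k+1}$ and $x_{k,r}^{-}v$ in that of weight $\mu_{k+2}-\mu_k+\mu_{k+1}$. Neither of these is of the form $\pm\mu_i$, so both of these weight spaces of $V_a(\omega_1)$ vanish, forcing $x_{k,r}^{\pm}v=0$ for all $r\ge 0$. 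From $h_{k,r}=[x_{k,r}^{+},x_{k,0}^{-}]$ it then follows that $h_{k,r}v=0$ for all $r$ as well.

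Combining these, every generator of $Y_k$ except the identity annihilates $v$, so $Y_k(v)=\C v$ is the one-dimensional trivial $Y_k\cong\ysl$-module, which is the assertion. I do not expect a genuine obstacle: the only points needing care are the bookkeeping of the weight path from $\mu_1$ to $\mu_{k+2}$ and the verification that $\mu_{k+2}\pm\alpha_k$ lies outside $\{\pm\mu_i\}$. The passage to all modes $r$ is automatic, since it is governed purely by the weight grading rather than by any detailed feature of the $\ysl$-action.
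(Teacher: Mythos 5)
Your proof is correct. The weight path $\mu_1\mapsto\mu_2\mapsto\cdots\mapsto\mu_{k+2}$ is right (every index involved is at most $l-1$ because $k\le l-2$, so the exceptional root $\alpha_l=\mu_{l-1}+\mu_l$ of type $D_l$ never enters), the non-vanishing of $v=x_{k+1,0}^{-}\cdots x_{1,0}^{-}v_1^{+}$ via the two-step $\alpha_j$-strings in the natural module is sound, and since $\mu_{k+2}\pm(\mu_k-\mu_{k+1})$ involves three distinct indices with nonzero coefficients, it is indeed not among the weights $\pm\mu_1,\ldots,\pm\mu_l$ of $V_{a_1}(\omega_1)\cong_{\nyo}\C^{2l}$; the relations $[h_{i,0},x_{k,r}^{\pm}]=\pm d_ia_{ik}x_{k,r}^{\pm}$ and $[x_{k,r}^{+},x_{k,0}^{-}]=h_{k,r}$ then finish the argument. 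Your route, however, is genuinely different from the paper's. The paper first shows that $v$ is a maximal vector for $Y_k$ using the positivity $\sigma_{k+1}^{-1}(\alpha_k)\in\Delta^{+}$ (as in Step 2 of Proposition \ref{mtoysl2l}), so that $Y_k(v)$ is a highest weight $\ysl$-module; it then computes the weight of $v$ in the fundamental-weight basis via Proposition \ref{rileq2} to obtain $h_{k,0}v=0$, and concludes that the associated Drinfeld polynomial has degree $0$, whence the module is trivial. In other words, the lowering operators $x_{k,r}^{-}$ are disposed of only indirectly, through the classification of finite-dimensional highest weight $\ysl$-modules. You instead exploit the concrete realization of $V_{a_1}(\omega_1)$ as the natural module and kill all generators $x_{k,r}^{\pm}$, $h_{k,r}$ at once by pure weight-support considerations, which is more elementary and self-contained (no appeal to Drinfeld polynomials or to maximality) and treats raising and lowering operators symmetrically. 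What the paper's formulation buys is uniformity: the same maximality-plus-Drinfeld-polynomial template is reused verbatim in the neighbouring steps of Proposition \ref{i=1yo2n}, where the shifted weight spaces do not vanish and the modules $Y_{i'}(v_{\sigma_i(\omega_{b_1})})$ really are $W_1(a)$; your argument is special to the degenerate case where both shifted weights fall outside the weight support, which is exactly the case at hand.
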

\begin{proof}

It is easy to see that $\sigma_{k+1}^{-1}\left(\alpha_k\right)\in \Delta^{+}$. Similar to Step 2 of Proposition \ref{mtoysl2l} that $x_{k,0}^{+}x_{k+1,0}^{-}x_{k,0}^{-}x_{k-1,0}^{-}\ldots x_{1,0}^{-}v^{+}_1=0$. Thus $Y_{k}\left(x_{k+1,0}^{-}x_{k,0}^{-}x_{k-1,0}^{-}\ldots x_{1,0}^{-}v^{+}_1\right)$ is a highest weight representation. Let $P\left(u\right)$ be the associated polynomial.

By case 1 of Proposition \ref{rileq2},
for $1\leq k\leq l-4$, $$wt\left(x_{k+1,0}^{-}x_{k,0}^{-}x_{k-1,0}^{-}\ldots x_{1,0}^{-}v^{+}_1\right)=-\omega_{k+1}+\omega_{k+2};$$
for $k=l-3$, $$wt\left(x_{l-2,0}^{-}x_{l-3,0}^{-}x_{l-4,0}^{-}\ldots x_{1,0}^{-}v^{+}_1\right)=-\omega_{l-2}+\omega_{l-1}+\omega_{l};$$
for $k=l-2$, $$wt\left(x_{l-1,0}^{-}x_{l-2,0}^{-}x_{l-3,0}^{-}\ldots x_{1,0}^{-}v^{+}_1\right)=-\omega_{l-1}+\omega_{l}.$$
Hence $h_{k,0}x_{k+1,0}^{-}x_{k,0}^{-}x_{k-1,0}^{-}\ldots x_{1,0}^{-}v^{+}_1=0$, and then the degree of $P\left(u\right)$ is 0. Therefore $Y_{k}\left(x_{k+1,0}^{-}x_{k,0}^{-}x_{k-1,0}^{-}\ldots x_{1,0}^{-}v^{+}_1\right)$ is a trivial representation of dimension 1. In particular, $h_{k,1}x_{k+1,0}^{-}x_{k,0}^{-}x_{k-1,0}^{-}\ldots x_{1,0}^{-}v^{+}_1=0$.
\end{proof}

\begin{proposition}\label{i=1yo2n}\
\begin{enumerate}
  \item $Y_{k}\left(x_{k-1,0}^{-}x_{k-2,0}^{-}\ldots x_{1,0}^{-}v^{+}_1\right)\cong W_1\left(a_1+\frac{k-1}{2}\right)$ for $1\leq k\leq l-1$.
  \item $Y_{l}\left(x_{l-1,0}^{-}\ldots x_{2,0}^{-}x_{1,0}^{-}v^{+}_1\right)\cong W_1\left(a_1+\frac{l-2}{2}\right)$.
  \item $Y_{l-2}\left(x_{l,0}^{-}x_{l-1,0}^{-}\ldots x_{2,0}^{-}x_{1,0}^{-}v^{+}_1\right)\cong W_1\left(a_1+\frac{l-1}{2}\right)$.
  \item $Y_{k}\left(x_{k+1,0}^{-}\ldots x_{l-2,0}^{-}x_{l,0}^{-}\ldots x_{1,0}^{-}v^{+}_1\right)\cong W_1\left(a_1+\frac{2l-k-3}{2}\right)$ for $1\leq k\leq l-3$.
\end{enumerate}
\end{proposition}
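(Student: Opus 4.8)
The plan is to reduce each of the four isomorphism statements to the computation of a single scalar --- the eigenvalue of $h_{i',1}$ on the relevant weight vector --- and then to obtain that scalar by an induction along the path from $v_1^+$ to $v_1^-$ described in Proposition \ref{v+=gv-}. The starting point is Steps 2--5 of Proposition \ref{mtoysl2l}: each vector appearing in the statement is a maximal vector for the corresponding $Y_{i'}\cong\ysl$ whose $h_{i',0}$-eigenvalue is $1$, so the $Y_{i'}$-module it generates is a highest weight module with a degree-one Drinfeld polynomial; by the maximality of the local Weyl module it is exactly $W_1(a)$, and by Proposition \ref{wra} (equivalently Corollary \ref{slw1a}) the parameter $a$ equals the eigenvalue of $h_{i',1}$ on the generating vector. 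Thus the whole proposition amounts to evaluating these eigenvalues.

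The base case is $Y_1(v_1^+)\cong W_1(a_1)$: since the Drinfeld polynomial of $V_{a_1}(\omega_1)$ is $\pi_1(u)=u-a_1$ and $\pi_j(u)=1$ for $j\ne 1$, the expansion $\mu_1(u)=\pi_1(u+1)/\pi_1(u)=1+u^{-1}+a_1u^{-2}+\cdots$ (Lemma \ref{g2puisd3} with $d=1$) gives $h_{1,0}v_1^+=v_1^+$ and $h_{1,1}v_1^+=a_1v_1^+$, i.e. $a=a_1+\tfrac{1-1}{2}$. For the inductive step I would use the Yangian relation with $r=s=0$ and $d_i=1$ (type $D$ is simply laced), namely
\[
[h_{i,1},x_{j,0}^-]=-a_{ij}x_{j,1}^- -\tfrac12 a_{ij}\bigl(h_{i,0}x_{j,0}^-+x_{j,0}^-h_{i,0}\bigr),
\]
where $a_{ij}=-1$ for the adjacent nodes that occur. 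Writing the next vector on the path as $x_{j,0}^- w$, where $w$ generates a previously computed $W_1(b)$ under $Y_j$, one finds $h_{i,0}w=0$, $h_{i,0}x_{j,0}^-w=x_{j,0}^-w$ (from the weight formulas of Proposition \ref{rileq2}), $x_{j,1}^-w=b\,x_{j,0}^-w$ by identity $(\ref{Cor3.3})$, and $h_{i,1}w=0$ because $w$ is maximal and $h_{i,0}$-trivial for $Y_i$, by the argument of Lemma \ref{so1c1lf1}. Collecting these yields $h_{i,1}(x_{j,0}^-w)=(b+\tfrac12)x_{j,0}^-w$, so each elementary step of the descent raises the parameter by $\tfrac12$; this is exactly the pattern $a_1+\tfrac{k-1}{2}$, $a_1+\tfrac{l-2}{2}$, $a_1+\tfrac{l-1}{2}$, $a_1+\tfrac{2l-k-3}{2}$ recorded in the four claims.

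The four claims differ only in which segment of the path they describe. Claim (1) is the straight descent through the linear nodes $\alpha_1,\dots,\alpha_{l-1}$; claims (2) and (3) handle the crossing of the fork of the $D_l$ diagram, where $\alpha_{l-2}$ branches to $\alpha_{l-1}$ and $\alpha_l$ and where Lemma \ref{so1c1lf1} supplies the trivial $Y_{l-1}$-- or $Y_l$--modules that keep the recursion closed; claim (4) is the ascending portion, in which the parameter continues to increase by $\tfrac12$ per step up to $a_1+\tfrac{2l-k-3}{2}$. In each case the same three ingredients --- the displayed commutation relation, identity $(\ref{Cor3.3})$, and the weight bookkeeping of Proposition \ref{rileq2} --- do the work, so I would organize the proof as one induction on the position $i$ along $\sigma_i$, with a separate verification of the transition at the branch node. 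Since every value produced is $a_1$ plus a positive rational, the accompanying claim $\operatorname{Re}(a)>\operatorname{Re}(a_1)$ needed in Step 5 of Proposition \ref{mtoysl2l} follows at once.

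The step I expect to be the main obstacle is precisely this transition at the fork $\{\alpha_{l-2},\alpha_{l-1},\alpha_l\}$: there the path is not a simple chain, two of the reflections act on the spin-type weights, and one must check carefully which intermediate vectors are maximal (so that the $W_1$ analysis of Steps 2--5 applies) and which generate the trivial modules of Lemma \ref{so1c1lf1}. Tracking the half-integer shifts correctly across these direction changes is the delicate bookkeeping; the individual commutator computations, though lengthy, are routine once the recursive pattern above is fixed.
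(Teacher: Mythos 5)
Your proposal is correct and follows essentially the same route as the paper: reduce each claim to the eigenvalue of $h_{i',1}$ on the path vector (degree-one Drinfeld polynomial via the weight computations of Proposition \ref{rileq2}), then run an induction along the path using the relation $[h_{i,1},x_{j,0}^{-}]=x_{j,1}^{-}+\tfrac12\left(h_{i,0}x_{j,0}^{-}+x_{j,0}^{-}h_{i,0}\right)$, identity $(\ref{Cor3.3})$, and the vanishing $h_{i,1}w=0$ supplied by Lemma \ref{so1c1lf1}, so that each step adds $\tfrac12$. The paper organizes this as four separate computations (upward induction for (i), single calculations at the fork for (ii)--(iii) where $h_{l,1}$ and $h_{l-2,1}$ commute past the non-adjacent factor, and downward induction for (iv)), which is exactly the branch-node bookkeeping you flagged as the delicate part.
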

\begin{proof}
(i). It follows from Proposition \ref{rileq2} that the weight of $x_{k-1,0}^{-}x_{k-2,0}^{-}\ldots x_{1,0}^{-}v^{+}_1$ is $-\omega_{k-1}+\omega_{k}+\delta_{l-1, k}\omega_{l}$. Thus $$h_{k,0}x_{k-1,0}^{-}x_{k-2,0}^{-}\ldots x_{1,0}^{-}v^{+}_1= x_{k-1,0}^{-}x_{k-2,0}^{-}\ldots x_{1,0}^{-}v^{+}_1.$$ Therefore the associated polynomial $P(u)$ to $Y_k(x_{k-1,0}^{-}x_{k-2,0}^{-}\ldots x_{1,0}^{-}v^{+}_1)$ has of degree 1. Suppose $P(u)=u-a$. Note that
$$\frac{P\left(u+1\right)}{P\left(u\right)}=  \frac{u-(a-1)}{u-a}= 1+u^{-1}+au^{-2}+a^2u^{-3}+\ldots.$$
Thus the eigenvalue of $x_{k-1,0}^{-}x_{k-2,0}^{-}\ldots x_{1,0}^{-}v^{+}_1$ under $h_{k,1}$ will tell the value of $a$.

We claim that $a=a_1+\frac{k-1}{2}$, and prove it by using induction on $k$. If $k=1$, then $h_{1,1}v^{+}_1=a_1v^{+}_1$. The claim is true. Suppose the claim is true for $k-1$.
By the induction hypothesis, we have $$h_{k-1,1}x_{k-2,0}^{-}\ldots x_{1,0}^{-}v^{+}_1=\left(a_1+\frac{k-2}{2}\right)x_{k-2,0}^{-}\ldots x_{1,0}^{-}v^{+}_1;$$ and then by $(\ref{Cor3.3})$ $$x_{k-1,1}^{-}x_{k-2,0}^{-}\ldots x_{1,0}^{-}v^{+}_1=\left(a_1+\frac{k-2}{2}\right)x_{k-1,0}^{-}x_{k-2,0}^{-}\ldots x_{1,0}^{-}v^{+}_1.$$
We are about to show that the claim is true for $k$.
\begin{align*}
% \nonumber to remove numbering \left(before each equation\right)
   h_{k,1}&x_{k-1,0}^{-}x_{k-2,0}^{-}\ldots x_{1,0}^{-}v^{+}_1 \\
   &= [h_{k,1},x_{k-1,0}^{-}]x_{k-2,0}^{-}\ldots x_{1,0}^{-}v^{+}_1\\
 &= \left(x_{k-1,1}^{-}+\frac{1}{2}x_{k-1,0}^{-}+x_{k-1,0}^{-}h_{k,0}\right)x_{k-2,0}^{-}\ldots x_{1,0}^{-}v^{+}_1\\
 &= \left(a_1+\frac{k-2}{2}+\frac{1}{2}\right)x_{k-1,0}^{-}x_{k-2,0}^{-}\ldots x_{1,0}^{-}v^{+}_1\\
 &= \left(a_1+\frac{k-1}{2}\right)x_{k-1,0}^{-}x_{k-2,0}^{-}\ldots x_{1,0}^{-}v^{+}_1.
\end{align*}
Thus the claim is true for $k$. Therefore the claim is true by induction.\\
(ii). It follows from Proposition \ref{rileq2} that
the weight of $x_{l-1,0}^{-}\ldots x_{2,0}^{-}x_{1,0}^{-}v^{+}_1$ is $-\omega_{l-1}+\omega_l$. Thus $$h_{l,0}x_{l-1,0}^{-}\ldots x_{2,0}^{-}x_{1,0}^{-}v^{+}_1= x_{l-1,0}^{-}\ldots x_{2,0}^{-}x_{1,0}^{-}v^{+}_1.$$ Therefore the associated polynomial $P(u)$ to $Y_l(x_{l-1,0}^{-}\ldots x_{2,0}^{-}x_{1,0}^{-}v^{+}_1)$ has of degree 1. Suppose $P(u)=u-b$. Similar to Step 1,  the eigenvalue of $x_{l-1,0}^{-}\ldots x_{2,0}^{-}x_{1,0}^{-}v^{+}_1$ under $h_{l,1}$ will tell the value of $b$.
\begin{align*}
% \nonumber to remove numbering \left(before each equation\right)
   h_{l,1}&x_{l-1,0}^{-}x_{l-2,0}^{-}\ldots x_{1,0}^{-}v^{+}_1 \\
   &= x_{l-1,0}^{-}[h_{l,1},x_{l-2,0}^{-}]x_{l-3,0}^{-}\ldots x_{1,0}^{-}v^{+}_1\\
 &= x_{l-1,0}^{-}\left(x_{l-2,1}^{-}+\frac{1}{2}x_{l-2,0}^{-}+x_{l-2,0}^{-}h_{l,0}\right)x_{l-3,0}^{-}\ldots x_{1,0}^{-}v^{+}_1\\
 &= \left(a_1+\frac{l-3}{2}+\frac{1}{2}\right)x_{l-1,0}^{-}x_{l-2,0}^{-}\ldots x_{1,0}^{-}v^{+}_1\\
 &= \left(a_1+\frac{l-2}{2}\right)x_{l-1,0}^{-}x_{l-2,0}^{-}\ldots x_{1,0}^{-}v^{+}_1.
\end{align*}

(iii). The proof of this item is similar to the proof of the item (i). We only provide the computation of the eigenvalue of $x_{l,0}^{-}x_{l-1,0}^{-}x_{l-2,0}^{-}\ldots x_{1,0}^{-}v^{+}_1$ under $h_{l-2,1}$.
\begin{align*}
% \nonumber to remove numbering \left(before each equation\right)
   h_{l-2,1}&x_{l,0}^{-}x_{l-1,0}^{-}x_{l-2,0}^{-}\ldots x_{1,0}^{-}v^{+}_1 \\
   &=[h_{l-2,1}, x_{l,0}^{-}]x_{l-1,0}^{-}x_{l-2,0}^{-}\ldots x_{1,0}^{-}v^{+}_1 +x_{l,0}^{-}h_{l-2,1},x_{l-1,0}^{-}x_{l-2,0}^{-}\ldots x_{1,0}^{-}v^{+}_1 \\
 &=\left(x_{l,1}^{-}+\frac{1}{2}x_{l,0}^{-}+x_{l,0}^{-}h_{l-2,0}\right)x_{l-1,0}^{-}x_{l-2,0}^{-}x_{l-3,0}^{-}\ldots x_{1,0}^{-}v^{+}_1\\
 &= \left(a_1+\frac{l-2}{2}+\frac{1}{2}\right)x_{l,0}^{-}x_{l-1,0}^{-}x_{l-2,0}^{-}\ldots x_{1,0}^{-}v^{+}_1\\
 &= \left(a_1+\frac{l-1}{2}\right)x_{l,0}^{-}x_{l-1,0}^{-}x_{l-2,0}^{-}\ldots x_{1,0}^{-}v^{+}_1.
\end{align*}

(iv). The proof of this item is similar to the proof of the item (i). We only provide the computation of the eigenvalue of $x_{k+1,0}^{-}\ldots x_{l-2,0}^{-}x_{l,0}^{-}\ldots x_{1,0}^{-}v^{+}_1$ under $h_{k,1}$ for $1\leq k\leq l-3$. We would like introduce some notation to simplify the following formula.  We abbreviate $x_{k+1,0}^{-}\ldots x_{l-2,0}^{-}x_{l,0}^{-}\ldots x_{2,0}^{-}x_{1,0}^{-}v^{+}_1$ to $\overline{x_{k+1,0}^{-}\ldots x_{1,0}^{-}}v^{+}_1$. The overline means the expression contains terms $x_{l,0}^{-}$. We claim that$$h_{k,1}\overline{x_{k+1,0}^{-}\ldots x_{1,0}^{-}}v^{+}_1=\left(a_1+\frac{2l-k-3}{2}\right)\overline{x_{k+1,0}^{-}\ldots x_{1,0}^{-}}v^{+}_1.$$

We use induction downward on $k$.

When $k=l-3$.
\begin{align*}
% \nonumber to remove numbering \left(before each equation\right)
   h_{l-3,1}&x_{l-2,0}^{-}x_{l,0}^{-}x_{l-1,0}^{-}\ldots x_{2,0}^{-}x_{1,0}^{-}v^{+}_1  \\
   &= [h_{l-3,1}x_{l-2,0}^{-}]x_{l,0}^{-}x_{l-1,0}^{-}\ldots x_{2,0}^{-}x_{1,0}^{-}v^{+}_1\\ &+x_{l-2,0}^{-}x_{l,0}^{-}x_{l-1,0}^{-}h_{l-3,1}x_{l-2,0}^{-}\ldots x_{2,0}^{-}x_{1,0}^{-}v^{+}_1\\
   &= \left(x_{l-2,1}^{-}+\frac{1}{2}x_{l-2,0}^{-}+x_{l-2,0}^{-}h_{l-3,0}\right)x_{l,0}^{-}x_{l-1,0}^{-}\ldots x_{2,0}^{-}x_{1,0}^{-}v^{+}_1\\
   &= \left(a_1+\frac{l-1}{2}+\frac{1}{2}\right)x_{l-2,0}^{-}x_{l,0}^{-}x_{l-1,0}^{-}\ldots x_{2,0}^{-}x_{1,0}^{-}v^{+}_1 \\
   &= \left(a_1+\frac{l}{2}\right)x_{l-2,0}^{-}x_{l,0}^{-}x_{l-1,0}^{-}\ldots x_{2,0}^{-}x_{1,0}^{-}v^{+}_1.
\end{align*}
Thus the claim is true for $k=l-3$. Suppose that the claim is true for $k+1$.
\begin{align*}
% \nonumber to remove numbering \left(before each equation\right)
   h_{k,1}& x_{k+1,0}^{-}\ldots x_{l-2,0}^{-}x_{l,0}^{-}\ldots x_{2,0}^{-}x_{1,0}^{-}v^{+}_1 \\
   &= [h_{k,1}x_{k+1,0}^{-}]\overline{x_{k+2,0}^{-}\ldots x_{1,0}^{-}}v^{+}_1+\overline{x_{k+1,0}^{-}\ldots x_{k+2,0}^{-}}h_{k,1}x_{k+1,0}^{-}x_{k,0}^{-}\ldots x_{1,0}^{-}v^{+}_1\\
   &= \left(x_{k+1,1}^{-}+\frac{1}{2}x_{k+1,0}^{-}+x_{k+1,0}^{-}h_{k,0}\right)\overline{x_{k+2,0}^{-}\ldots x_{1,0}^{-}}v^{+}_1+0\\
   &= \left(a_1+\frac{2l-k-4}{2}+\frac{1}{2}\right)\overline{x_{k+1,0}^{-}\ldots x_{1,0}^{-}}v^{+}_1\\
      &= \left(a_1+\frac{2l-k-3}{2}\right)x_{k+1,0}^{-}\ldots x_{l-2,0}^{-}x_{l,0}^{-}\ldots x_{2,0}^{-}x_{1,0}^{-}v^{+}_1.
\end{align*}
Therefore the claim is true by induction.
%Comparing the coefficients, we have $\operatorname{Re}\left(a\right)\geq \operatorname{Re}\left(a_1\right)$.
\end{proof}
\subsection{Case 2: $b_1=l-1$ or $b_1=l$.}

Let $b_1=l-1$. Recall that in this case
\begin{align*}
% \nonumber to remove numbering \left(before each equation\right)
  v^{-}_1 &= x_{l-\overline{l-1},0}^{-}\left(x_{l-2,0}^{-}x_{l-\overline{l-2},0}^{-}\right)\left(x_{l-3,0}^{-}x_{l-2,0}^{-} x_{l-\overline{l-3},0}^{-}\right)\ldots \\
   & \left(x_{2,0}^{-}\ldots  x_{l-2,0}^{-}x_{l,0}^{-}\right)\left(x_{1,0}^{-}\ldots  x_{l-2,0}^{-}x_{l-1,0}^{-}\right)v^{+}_1.
\end{align*}

In order to simplify the expressions of calculations, we introduce a notation: $X_{k,m}$, where $1\leq k\leq m\leq l$. When $k\leq m\leq l-1$, $X_{k,m}=x_{k,0}^{-}x_{k+1,0}^{-}\ldots x_{m,0}^{-}$ without break. When $m=l$, $ X_{k,l}=x_{k,0}^{-}x_{k+1,0}^{-}\ldots x_{l-2,0}^{-}x_{l,0}^{-}$.

\begin{proposition} $Y_{i'}\left(v_{\sigma_i\left(\omega_{l-1}\right)}\right)\cong W_1\left(a\right)$, where $\operatorname{Re}\left(a\right)\geq \operatorname{Re}\left(a_1\right)$.
\end{proposition}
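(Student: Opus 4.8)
The plan is to mirror the argument already carried out for $b_1=1$ in Proposition \ref{i=1yo2n}, exploiting that $\omega_{l-1}$ is a minuscule weight of $\nyo$. First I would note that, by Step 2 of Proposition \ref{mtoysl2l}, $v_{\sigma_i(\omega_{l-1})}$ is a maximal vector for $Y_{i'}$, so $Y_{i'}\left(v_{\sigma_i(\omega_{l-1})}\right)$ is a highest weight $Y_{i'}$-module with some monic associated polynomial $P(u)$. Because $L(\omega_{l-1})$ is the half-spin representation, all weights of $V_{a_1}(\omega_{l-1})$ lie in a single Weyl orbit and satisfy $\langle \mu,\alpha_{i'}\rangle\in\{-1,0,1\}$; hence in the notation of Proposition \ref{rileq2} the coefficient $r_{i'}$ is always $1$ (never $2$) along the paths recorded in Cases 3 and 4 there. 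Consequently $h_{i',0}$ acts on $v_{\sigma_i(\omega_{l-1})}$ by the scalar $1$, so $\deg P(u)=1$, and by the maximality of $W_1(a)$, exactly as in Step 5 of Proposition \ref{mtoysl2l}, we obtain $Y_{i'}\left(v_{\sigma_i(\omega_{l-1})}\right)\cong W_1(a)$, a $2$-dimensional module.

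It then remains to identify $a$ and bound its real part. Writing $P(u)=u-a$, the scalar $a$ is read off from the eigenvalue of $h_{i',1}$ on $v_{\sigma_i(\omega_{l-1})}$, just as in the proof of Proposition \ref{i=1yo2n}. I would compute this eigenvalue by induction on the subscript $i$ of $\sigma_i$, that is, along the factorization of $v_1^-$ recorded in Case 2 of Proposition \ref{v+=gv-} in terms of the blocks $X_{k,m}$. The engine of the computation is the Yangian relation
\[
[h_{i',1},x_{j,0}^-]=[h_{i',0},x_{j,1}^-]-\tfrac12 d_{i'}a_{i'j}\big(h_{i',0}x_{j,0}^-+x_{j,0}^-h_{i',0}\big),
\]
which, for $j$ adjacent to $i'$ and since $d_k=1$ for every node of $\nyo$, reduces to $[h_{i',1},x_{j,0}^-]=x_{j,1}^-+\tfrac12 x_{j,0}^-+x_{j,0}^-h_{i',0}$, combined with the basic identities \eqref{Cor3.3} valid in each $2$-dimensional $Y_{i'}\cong\ysl$-subquotient. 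At each application of a new negative root vector the eigenvalue increases by exactly $\tfrac12$, in agreement with the table in Remark \ref{yocap1}; summing these nonnegative increments gives $a=a_1+\tfrac{m}{2}$ for a nonnegative integer $m$, so $\operatorname{Re}(a)\ge\operatorname{Re}(a_1)$, which is all the statement requires. The case $b_1=l$ is handled identically after interchanging the roles of the two fork nodes $l-1$ and $l$.

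The hard part will be the bookkeeping around the branch node of the $D_l$ diagram. Unlike the type $A$ computation of Chapter 3, here nodes $l-1$ and $l$ are both joined to $l-2$ but not to each other, so $a_{l-1,l}=0$ and the vectors $x_{l-1,0}^-,x_{l,0}^-$ commute; this is precisely why the blocks $X_{k,l}=x_{k,0}^-\ldots x_{l-2,0}^-x_{l,0}^-$ jump from $l-2$ directly to $l$. I must therefore track carefully when a commutator $[h_{i',1},x_{j,0}^-]$ vanishes because $j$ is non-adjacent to $i'$, and I must handle the parity factors $\overline{l-k}$ appearing in $w_{l-1}$ and in $v_1^-$, which record whether the path passes through node $l-1$ or node $l$ at a given height. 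Once these adjacency and parity cases are separated, each resulting subcomputation is a routine variant of the inductive step already displayed in Proposition \ref{i=1yo2n}.
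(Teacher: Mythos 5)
Your first half is sound, and is in fact a slightly cleaner route than the paper's: maximality of $v_{\sigma_i\left(\omega_{l-1}\right)}$ (Step 2 of Proposition \ref{mtoysl2l}) together with the minuscule property of the half-spin weight $\omega_{l-1}$ forces $\operatorname{Deg}\big(P(u)\big)=1$, which is exactly what Cases 3 and 4 of Proposition \ref{rileq2} verify by explicit Weyl-orbit computation; the identification $Y_{i'}\left(v_{\sigma_i\left(\omega_{l-1}\right)}\right)\cong W_1\left(a\right)$ then follows, as in the paper, because the two-dimensional module $W_1(a)$ is the (irreducible) local Weyl module of $\ysl$ attached to a degree-one polynomial.

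The gap is in your computation of $a$, which is the actual content of the proposition. Your claim that ``at each application of a new negative root vector the eigenvalue increases by exactly $\tfrac12$, in agreement with the table in Remark \ref{yocap1}'' is false, and the table itself refutes it: the recorded values increase by $\tfrac12$ only \emph{inside} a single parenthesis of $v_1^-$; at each block boundary they fall back, e.g.\ from $a_1+\tfrac{l-2}{2}$ at the end of the first parenthesis to $a_1+1$ at the start of the second (already for $l=4$ the boundary increment is $0$, and for $l\geq 5$ it is negative). If the increments were uniformly $+\tfrac12$, the terminal value would be $a_1+\tfrac12\left(\tfrac{l(l-1)}{2}-1\right)$ rather than the $a_1+l-2$ the table records. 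The underlying reason a single-track induction cannot close is that when the active node changes, $h_{i',1}$ fails to commute with the lowering operators adjacent to $i'$ that sit inside \emph{earlier} blocks of the word, so its eigenvalue on the new vector is not determined by the immediately preceding eigenvalue alone. This is precisely what the paper's proof handles: it establishes the two-parameter formula (\ref{so2ll-1}), namely that the $h_{k,1}$-eigenvalue equals $a_1+r+\tfrac{s}{2}$ where $r$ is the block index and $s$ the position within the block, by a double induction on $(r,s)$, and the inductive step is a three-term computation --- a commutator with the leading factor of the current block, a commutator with a factor of the \emph{previous} block, and, with a minus sign, the induction hypothesis applied deeper in the word --- whose cancellation produces $a_1+r+\tfrac{s}{2}$. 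So your final assertion $a=a_1+r+\tfrac{s}{2}$ with $r,s\geq 0$, hence $\operatorname{Re}(a)\geq\operatorname{Re}(a_1)$, is correct, but the induction you describe would not produce it; you need the full two-parameter induction hypothesis, not a running $+\tfrac12$ count.
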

\begin{proof}%Here are some basic calculations for $a$, according to Proposition \ref{v+=gv-}.
% \begin{align*}
% \nonumber to remove numbering \left(before each equation\right)
% h_{k,1}x_{k+1,0}^{-}\ldots x_{l-2,0}^{-}x_{l-1,0}^{-}v^{+}_1&= \left(a_1+\frac{l-k-1}{2}\right)x_{k+1,0}^{-}\ldots x_{l-1,0}^{-}v^{+}_1.\\
% h_{l,1}x_{1,0}^{-}\ldots x_{l-2,0}^{-}x_{l-1,0}^{-}v^{+}_1&= \left(a_1+1\right)x_{1,0}^{-}\ldots x_{l-2,0}^{-}x_{l-1,0}^{-}v^{+}_1.\\
%h_{l-2,1}x_{l,0}^{-}x_{1,0}^{-}\ldots x_{l-2,0}^{-}x_{l-1,0}^{-}v^{+}_1&= \left(a_1+\frac{3}{2}\right)x_{l,0}^{-}x_{1,0}^{-}\ldots x_{l-2,0}^{-}x_{l-1,0}^{-}v^{+}_1.
%\end{align*}
%We can prove a general formula for the eigenvalue $a$.

Recall that $\overline{r}=0$ if $r$ is even; $\overline{r}=1$ if $r$ is odd. We claim
\begin{align}\label{so2ll-1}
 h_{k,1}&X_{k+1,l-\overline{\left(r+1\right)}}X_{r,l-\bar{r}}\ldots X_{2,l}X_{1,l-1}v^{+}_1\nonumber\\
 &=\left(a_1+r+\frac{l-k-1}{2}\right)X_{k+1,l-\overline{\left(r+1\right)}}X_{r,l-\bar{r}}\ldots X_{2,l}X_{1,l-1}v^{+}_1.\nonumber\\
  &=\left(a_1+r+\frac{s}{2}\right)X_{k+1,l-\overline{\left(r+1\right)}}X_{r,l-\bar{r}}\ldots X_{2,l}X_{1,l-1}v^{+}_1,
\end{align}
where $s=l-k-1$ is the number of $x_{i,0}^{-}$ in $X_{k+1,l-\overline{\left(r+1\right)}}$. If $k=l-\overline{\left(r+1\right)}$, then $s=0$. %Comment: $X_{k+1,l-\overline{\left(r+1\right)}}=x_{k+1,0}^{-}x_{k+2,0}^{-}\ldots x_{l-2,0}^{-}x_{l-\overline{\left(r+1\right)},0}^{-}$. Thus the number of $x_{i,0}^{-}$ is $l-1-\left(k+1\right)+1=l-k-1$.

We will prove equation (\ref{so2ll-1}) by induction on $r$. For a fixed $r$, induction on $s$.   It is easy to check that the claim (\ref{so2ll-1}) is true for $\left(r,s\right)=\left(0,0\right)$. Suppose that the claim (\ref{so2ll-1}) are true for all pairs $\left(m,n\right)$ such that  $m<r$ and all possible $n$ values.%; and for $m=r$ but all values $0\leq n<s$.

We first show the claim is true for the pair $\left(r,0\right)$. By induction hypothesis, we have
\begin{align*}
x_{l-2,1}^{-}&x_{l-\overline{r},0}^{-}X_{r-1,l-\overline{r-1}}\ldots X_{2,l}X_{1,l-1}v^{+}_1\\
&=\Big(a_1+(r-1)+\frac{1}{2}\Big)x_{l-2,0}^{-}x_{l-\overline{r},0}^{-}X_{r-1,l-\overline{r-1}}\ldots X_{2,l}X_{1,l-1}v^{+}_1;\\
x_{l-2,1}^{-}&x_{l-\overline{r-1},0}^{-}X_{r-2,l-\overline{r-2}}\ldots X_{2,l}X_{1,l-1}v^{+}_1\\
&=\Big(a_1+(r-2)+\frac{1}{2}\Big)x_{l-2,0}^{-}x_{l-\overline{r-1},0}^{-}X_{r-2,l-\overline{r-2}}\ldots X_{2,l}X_{1,l-1}v^{+}_1;\\
h_{l-\overline{\left(r+1\right)},1}&x_{l-\overline{r-1},0}^{-}X_{r-2,l-\overline{r-2}}\ldots X_{2,l}X_{1,l-1}v^{+}_1\\
&=-\Big(a_1+(r-2)\Big)x_{l-2,0}^{-}x_{l-\overline{r-1},0}^{-}X_{r-2,l-\overline{r-2}}\ldots X_{2,l}X_{1,l-1}v^{+}_1;
\end{align*}
Then
\begin{align*}
X_{r,l-3}&[h_{l-\overline{\left(r+1\right)},1},x_{l-2,0}^{-}]x_{l-\overline{r},0}^{-}X_{r-1,l-\overline{r-1}}\ldots X_{2,l}X_{1,l-1}v^{+}_1\\ &=X_{r,l-3}\big(x_{l-2,1}^{-}+\frac{1}{2}x_{l-2,0}^{-}+x_{l-2,0}^{-}h_{l-\overline{\left(r+1\right)},0}\big)x_{l-\overline{r},0}^{-}X_{r-1,l-\overline{r-1}}\ldots X_{1,l-1}v^{+}_1\\
&=\Big(a_1+(r-1)+\frac{1}{2}+\frac{1}{2}+0\Big)X_{r,l-\overline{r}}X_{r-1,l-\overline{r-1}}\ldots X_{2,l}X_{1,l-1}v^{+}_1.
\end{align*}
Similarly we have
\begin{align*}
X_{r,l-\overline{r}}&\left(X_{r-1,l-3} [h_{l-\overline{\left(r+1\right)},1},x_{l-2,0}^{-}]x_{l-\overline{r-1},0}^{-}\right)X_{r-2,l-\overline{r-2}}\ldots X_{2,l}X_{1,l-1}v^{+}_1\\
&=\Big(a_1+(r-2)+\frac{1}{2}+\frac{1}{2}-1\Big)X_{r,l-\overline{r}}X_{r-1,l-\overline{r-1}}\ldots X_{2,l}X_{1,l-1}v^{+}_1,
\end{align*}
and
\begin{align*}
X_{r,l-\overline{r}}&\left(X_{r-1,l-2}h_{l-\overline{\left(r+1\right)},1}x_{l-\overline{r-1},0}^{-}\right)X_{r-2,l-\overline{r-2}}\ldots X_{2,l}X_{1,l-1}v^{+}_1\\
&=-\Big(a_1+(r-2)\Big)X_{r,l-\overline{r}}X_{r-1,l-\overline{r-1}}\ldots X_{2,l}X_{1,l-1}v^{+}_1.
\end{align*}
%We abbreviate $x_{r,0}^{-}\ldots x_{l-2,0}^{-}x_{l,0}^{-}\right)\left(x_{r-1,0}^{-}\ldots x_{l-2,0}^{-}x_{l-1,0}^{-}\right)\ldots \left( x_{1,0}^{-}\ldots x_{l-1,0}^{-}\right)$
Thus
\begin{align*}
% \nonumber to remove numbering \left(before each equation\right)
h_{l-\overline{\left(r+1\right)},1}&X_{r,l-\overline{r}}X_{r-1,l-\overline{r-1}}X_{r-2,l-\overline{r-2}}\ldots X_{2,l}X_{1,l-1}v^{+}_1 \\
&= \left(X_{r,l-3} [h_{l-\overline{\left(r+1\right)},1},x_{l-2,0}^{-}]x_{l-\overline{r},0}^{-}\right)X_{r-1,l-\overline{r-1}}\ldots X_{2,l}X_{1,l-1}v^{+}_1\\
&+ X_{r,l-\overline{r}}h_{l-\overline{\left(r+1\right)},1}X_{r-1,l-\overline{r-1}}X_{r-2,l-\overline{r-2}}\ldots X_{2,l}X_{1,l-1}v^{+}_1\\
&= \left(X_{r,l-3} [h_{l-\overline{\left(r+1\right)},1},x_{l-2,0}^{-}]x_{l-\overline{r},0}^{-}\right)X_{r-1,l-\overline{r-1}}\ldots X_{2,l}X_{1,l-1}v^{+}_1\\
&+ X_{r,l-\overline{r}}\left(X_{r-1,l-3} [h_{l-\overline{\left(r+1\right)},1},x_{l-2,0}^{-}]x_{l-\overline{r-1},0}^{-}\right)X_{r-2,l-\overline{r-2}}\ldots X_{2,l}X_{1,l-1}v^{+}_1\\
&+ X_{r,l-\overline{r}}\left(X_{r-1,l-2} h_{l-\overline{\left(r-1\right)},1}x_{l-\overline{r+1},0}^{-}\right)X_{r+2,l-\overline{r+2}}\ldots X_{2,l}X_{1,l-1}v^{+}_1\\
   &= \left(a_1+\left(r-1\right)+\frac{1}{2}+\frac{1}{2}\right)X_{r,l-\bar{r}}\ldots X_{2,l}X_{1,l-1}v^{+}_1\\
   &+ \left(a_1+\left(r-2\right)+\frac{1}{2}+\frac{1}{2}-1\right)X_{r,l-\bar{r}}\ldots X_{2,l}X_{1,l-1}v^{+}_1\\
   &- \Big(a_1+\left(r-2\right)\Big)X_{r,l-\bar{r}}\ldots X_{2,l}X_{1,l-1}v^{+}_1\\
   &= \left(a_1+r\right)X_{r,l-\bar{r}}\ldots X_{2,l}X_{1,l-1}v^{+}_1.
\end{align*}
%Comment for the following calculations: Similar to Lemma \ref{so1c1lf1} that $$ h_{l-\overline{\left(r+1\right)},1}x_{l-2,0}^{-}x_{l-\overline{r-1},0}^{-}X_{r-2,l-\overline{r-2}}\ldots X_{2,l}X_{1,l-1}v^{+}_1=0.$$ This fact simplifies the
%the calculations a lot.
%
%\textbf{New version:}
%\begin{align*}
% \nonumber to remove numbering \left(before each equation\right)
%   & h_{l-\overline{\left(r+1\right)},1}X_{r,l-\overline{r}}X_{r-1,l-\overline{r-1}}\ldots X_{2,l}X_{1,l-1}v^{+}_1 \\
%   &= \left(X_{r,l-3} [h_{l-\overline{\left(r+1\right)},1},x_{l-2,0}^{-}]x_{l-\overline{r},0}^{-}\right)X_{r-1,l-\overline{r-1}}\ldots X_{2,l}X_{1,l-1}v^{+}_1\\
%   &+ X_{r,l-\overline{r}}\left(X_{r+1,l-3} h_{l-\overline{\left(r+1\right)},1},x_{l-2,0}^{-}x_{l-\overline{r-1},0}^{-}\right)X_{r-2,l-\overline{r-2}}\ldots X_{2,l}X_{1,l-1}v^{+}_1\\
%      &= \left(X_{r,l-3} [h_{l-\overline{\left(r+1\right)},1},x_{l-2,0}^{-}]x_{l-\overline{r},0}^{-}\right)X_{r-1,l-\overline{r-1}}\ldots X_{2,l}X_{1,l-1}v^{+}_1+0\\
%   &= \left(a_1+\left(r-1\right)+\frac{1}{2}+\frac{1}{2}\right)X_{r,l-\bar{r}}\ldots X_{2,l}X_{1,l-1}v^{+}_1\\
%   &= \left(a_1+r\right)X_{r,l-\bar{r}}\ldots X_{2,l}X_{1,l-1}v^{+}_1.
%\end{align*}
We now prove (\ref{so2ll-1}) for the pair $\left(r,s\right)$. By the induction hypothesis, we may assume that the claim (\ref{so2ll-1}) is true for $1,2,\ldots, s-1$. Similar to the above computations,
\begin{align*}
% \nonumber to remove numbering \left(before each equation\right)
   h_{k,1}&X_{k+1,l-\overline{\left(r+1\right)}}X_{r,l-\bar{r}}\ldots X_{2,l}X_{1,l-1}v^{+}_1\\
   &= [h_{k,1},x_{k+1,0}^{-}]X_{k+2,l-\overline{\left(r+1\right)}}X_{r,l-\bar{r}}\ldots X_{2,l}X_{1,l-1}v^{+}_1\\
   &+X_{k+1,l-\overline{\left(r+1\right)}} h_{k,1}X_{r,l-\bar{r}}\ldots X_{2,l}X_{1,l-1}v^{+}_1\\
   &= [h_{k,1},x_{k+1,0}^{-}]X_{k+2,l-\overline{\left(r+1\right)}}X_{r,l-\bar{r}}\ldots X_{2,l}X_{1,l-1}v^{+}_1\\
   &+X_{k+1,l-\overline{\left(r+1\right)}}X_{r,k-2}[h_{k,1},x_{k-1,0}^{-}]X_{k,l-\bar{r}}\ldots X_{2,l}X_{1,l-1}v^{+}_1\\
   &+X_{k+1,l-\overline{\left(r+1\right)}}X_{r,k-1}h_{k,1}X_{k,l-\bar{r}}\ldots X_{2,l}X_{1,l-1}v^{+}_1\\
   &= \left(a_1+r+\frac{s-1}{2}+\frac{1}{2}\right)X_{k+1,l-\overline{\left(r+1\right)}}X_{r,l-\bar{r}}\ldots X_{2,l}X_{1,l-1}v^{+}_1\\
   &+\left(a_1+\left(r-1\right)+\frac{s+1}{2}+\frac{1}{2}-1\right)X_{k+1,l-\overline{\left(r+1\right)}}X_{r,l-\bar{r}}\ldots X_{2,l}X_{1,l-1}v^{+}_1\\
   &-\left(a_1+\left(r-1\right)+\frac{s}{2}\right)X_{k+1,l-\overline{\left(r+1\right)}}X_{r,l-\bar{r}}\ldots X_{2,l}X_{1,l-1}v^{+}_1\\
   &= \left(a_1+r+\frac{s}{2}\right)X_{k+1,l-\overline{\left(r+1\right)}}X_{r,l-\bar{r}}\ldots X_{2,l}X_{1,l-1}v^{+}_1.
\end{align*}
%Comment: Similarly we have $$h_{k,1}X_{r,l-\bar{r}}\ldots X_{2,l}X_{1,l-1}v^{+}_1=0.$$

%New Version:
%\begin{align*}
% \nonumber to remove numbering \left(before each equation\right)
 %  &  h_{k,1}X_{k+1,l-\overline{\left(r+1\right)}}X_{r,l-\bar{r}}\ldots X_{2,l}X_{1,l-1}v^{+}_1\\
%   &= [h_{k,1},x_{k+1,0}^{-}]X_{k+2,l-\overline{\left(r+1\right)}}X_{r,l-\bar{r}}\ldots X_{2,l}X_{1,l-1}v^{+}_1\\
%   &+X_{k+1,l-\overline{\left(r+1\right)}} h_{k,1}X_{r,l-\bar{r}}\ldots X_{2,l}X_{1,l-1}v^{+}_1\\
%   &= [h_{k,1},x_{k+1,0}^{-}]X_{k+2,l-\overline{\left(r+1\right)}}X_{r,l-\bar{r}}\ldots X_{2,l}X_{1,l-1}v^{+}_1+0\\
% &= \left(x_{k+1,1}^{-}+\frac{1}{2}x_{k+1,0}^{-}+x_{k+1,0}^{-}h_{k,0}\right)X_{k+2,l-\overline{\left(r+1\right)}}X_{r,l-\bar{r}}\ldots X_{2,l}X_{1,l-1}v^{+}_1\\
%   &= \left(a_1+r+\frac{s-1}{2}+\frac{1}{2}\right)X_{k+2,l-\overline{\left(r+1\right)}}X_{r,l-\bar{r}}\ldots X_{2,l}X_{1,l-1}v^{+}_1\mathrm{\left(by \ref{Cor3.3}\right)}\\
%   &= \left(a_1+r+\frac{s}{2}\right)X_{k+2,l-\overline{\left(r+1\right)}}X_{r,l-\bar{r}}\ldots X_{2,l}X_{1,l-1}v^{+}_1.
%\end{align*}

By induction we know that the claim (\ref{so2ll-1}) is true in general.

Comparing the coefficients, we have $\operatorname{Re}\left(a\right)\geq \operatorname{Re}\left(a_1\right)$.
\end{proof}
\begin{remark}
By symmetry of nodes $l-1$ and $l$, we can similarly prove a similar result for the case $b_1=l$.
\end{remark}
\subsection{Case 3: $2\leq b_1\leq l-2$.}

For the simplicity of subscript of $x_{b_1,0}^{-}$, we assume that $b_1=i$ in this subsection.%Note that for now we assume that $2\leq i\leq l-2$.

Recall that in this case
\begin{align*}
% \nonumber to remove numbering \left(before each equation\right)
v^{-}_1 &= \left(x_{i,0}^{-}\ldots x_{l-2,0}^{-}x_{l,0}^{-}\ldots x_{i,0}^{-}\right)\left(\left(x_{i-1,0}^{-}\right)^2x_{i,0}^{-}\ldots x_{l-2,0}^{-}x_{l,0}^{-}\ldots x_{i,0}^{-}\right)\ldots \\
 & \left(\left(x_{1,0}^{-}\right)^2\ldots \left(x_{i-1,0}^{-}\right)^2x_{i,0}^{-}\ldots x_{l-2,0}^{-}x_{l,0}^{-}\ldots x_{i,0}^{-}\right)v^{+}_1.
\end{align*}

Denote, for $1\leq m<i$,
$$\left(x_{m,0}^{-}\right)^2\ldots\left(x_{i-1,0}^{-}\right)^2x_{i,0}^{-}\ldots x_{l-2,0}^{-}x_{l,0}^{-}\ldots x_{i+1,0}^{-}x_{i,0}^{-}v^{+}_1=\overline{\left(x_{m,0}^{-}\right)^2\ldots x_{i,0}^{-}}v^{+}_1.$$

\begin{proposition}\label{yoc3p1} Let $i\leq k\leq l-2$ and $1\leq m\leq i-2$.
\begin{enumerate}
  \item $Y_{k+1}\left(x_{k,0}^{-}\ldots x_{i+1,0}^{-}x_{i,0}^{-}v^{+}_1\right)\cong W_1\left(a_1+\frac{k-i}{2}\right)$.
  \item $Y_{l}\left(x_{l-1,0}^{-}\ldots x_{i+1,0}^{-}x_{i,0}^{-}v^{+}_1\right)\cong W_1\left(a_1+\frac{l-i-1}{2}\right)$.
  \item $Y_{k}\left(x_{k+1,0}^{-}\ldots x_{l-2,0}^{-}x_{l,0}^{-}x_{l-1,0}^{-}\ldots x_{i+1,0}^{-}x_{i,0}^{-}v^{+}_1\right)\cong W_1\left(a_1+\frac{2l-i-k-2}{2}\right)$.
  \item \begin{enumerate}
  \item If $i<l-2$, $Y_{i-1}\Big(\overline{x_{i,0}^{-}\ldots x_{i,0}^{-}}v^{+}_1\Big)\cong W_1\left(a_1+l-i-\frac{1}{2}\right)\otimes W_1\left(a_1+\frac{1}{2}\right)$.
  \item If $i=l-2$, $Y_{i-1}\Big(\overline{x_{i,0}^{-}\ldots x_{i,0}^{-}}v^{+}_1\Big)$ is isomorphic to either $W_2\left(a_1+\frac{1}{2}\right)$ or $W_1\left(a_1+\frac{3}{2}\right)\otimes W_1\left(a_1+\frac{1}{2}\right)$, respectively.
\end{enumerate}
\item \begin{enumerate}
  \item If $i<l-2$, $Y_{m}\Big(\overline{\left(x_{m+1,0}^{-}\right)^2\ldots x_{i,0}^{-}}v^{+}_1\Big)\cong W_1\left(a_1+\frac{2l-i-m-2}{2}\right)\otimes$\\ $ W_1\left(a_1+\frac{i-m}{2}\right)$.
  \item If $i=l-2$, $Y_{m}\Big(\overline{\left(x_{m+1,0}^{-}\right)^2\ldots x_{i,0}^{-}}v^{+}_1\Big)$ is isomorphic to either\\ $W_2\left(a_1+\frac{i-m}{2}\right)$ or $W_1\left(a_1+\frac{i-m+2}{2}\right)\otimes W_1\left(a_1+\frac{i-m}{2}\right)$.
\end{enumerate}
\item $Y_{i}\Big(\overline{\left(x_{1,0}^{-}\right)^2\ldots x_{i,0}^{-}}v^{+}_1\Big)\cong W_1\left(a_1+1\right)$.
\end{enumerate}
\end{proposition}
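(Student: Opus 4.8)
The plan is to dispatch all seven parts by the single method already deployed in this section for $b_1=1$ and $b_1=l-1$. For each weight vector $v=v_{\sigma_j(\omega_i)}$ along the path from $v_1^+$ to $v_1^-$ recorded before the statement, one first notes that $v$ is a maximal vector for the subalgebra $Y_{i'}\cong\ysl$, exactly as in Step~2 of Proposition \ref{mtoysl2l}, so that $Y_{i'}(v)$ is a highest weight $\ysl$-module. Its Drinfeld polynomial $P(u)$ has degree equal to the eigenvalue of $h_{i',0}$ on $v$, which by Proposition \ref{rileq2} is $1$ or $2$. Thus the whole content of the proposition is the determination of the root(s) of $P(u)$, and I would extract these from the eigenvalues of $h_{i',1}$ (and, for the degree-two parts, also $h_{i',2}$) acting on $v$.

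For the degree-one items (i), (ii), (iii) and (vi) I would follow Proposition \ref{i=1yo2n} essentially verbatim, up to a shift of the base index from $1$ to $i$: since $\deg P=1$ the module is $2$-dimensional and equals $W_1(a)$ by maximality of the local Weyl module, with $a$ the $h_{i',1}$-eigenvalue. That eigenvalue is computed by induction along the string of lowering operators using the single Yangian relation
$$[h_{i',1},x_{j,0}^-]=x_{j,1}^-+\tfrac12\,x_{j,0}^-+x_{j,0}^-h_{i',0}\qquad(a_{i'j}=-1),$$
together with $(\ref{Cor3.3})$, which lets one replace $x_{j,1}^-$ by $a\,x_{j,0}^-$ on a two-term $Y_j$-string and pass $h_{i',1}$ through an $x_{i',0}^-$ with a sign flip. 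The recursion is identical to the $b_1=1$ case; only the $\tfrac12$-increments need be summed to produce the stated values $a_1+\tfrac{k-i}2$, $a_1+\tfrac{l-i-1}2$, $a_1+\tfrac{2l-i-k-2}2$ and $a_1+1$.

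The genuine work is in the degree-two items (iv) and (v), where the doubled operators $(x_{m,0}^-)^2$ push $\deg P$ to $2$. Writing $P(u)=(u-c_1)(u-c_2)$, the expansion of $P(u+1)/P(u)$ as in Lemma \ref{g2puisd3} shows that the $h_{i',1}$- and $h_{i',2}$-eigenvalues on $v$ are $c_1+c_2+1$ and $c_1^2+c_2^2+c_1+c_2$; equivalently, I would read off both roots from the action of $x_{i',1}^-$ and $x_{i',2}^-$ on the relevant highest weight vector via the formulas of Corollary \ref{w1bw1a}. Once the unordered pair $\{c_1,c_2\}$ is known, the structure follows from the $\ysl$-theory: $Y_{i'}(v)$ is a finite-dimensional highest weight module with Drinfeld polynomial $P$, hence a quotient of the $\ysl$ local Weyl module, which is $W_1(c_1)\otimes W_1(c_2)$ of dimension $4$ by Theorem \ref{wmfsl2}. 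When $i<l-2$ a direct check gives $c_1-c_2=l-i-1\ge 2$, so this tensor product is irreducible and $Y_{i'}(v)$ must equal it, yielding (iv)(a) and (v)(a). When $i=l-2$ one gets $c_1-c_2=1$, so the tensor product is reducible with proper quotient $W_2(c_2)$ (by the short exact sequence recalled from \cite{ChPr3}), and $Y_{i'}(v)$ is one of these two modules, which is exactly the alternative asserted in (iv)(b) and (v)(b).

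The hard part will be the degree-two eigenvalue computations themselves. Commuting $h_{i',1}$, and especially $h_{i',2}$, inward past the squares $(x_{m,0}^-)^2$ produces several correction terms whose coefficients must be tracked through a nested induction---on $m$, and for fixed $m$ on the position of $x_{i',0}^-$ inside the parenthesis---in the style of the $b_1=l-1$ computation but now isolating two distinct roots rather than one. I expect the most delicate point to be the boundary index $i=l-2$: this is precisely where the parity corrections $\overline{k}$ attached to the exceptional node $l$ intervene, and simultaneously where the string condition $c_1-c_2=1$ switches on, so that the clean four-dimensional identification degenerates into the $W_2$-versus-$W_1\otimes W_1$ dichotomy.
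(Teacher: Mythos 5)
Your proposal is correct and follows essentially the same route as the paper: the degree-one items are handled exactly as in Proposition \ref{i=1yo2n} by inductively computing the $h_{i',1}$-eigenvalue with the relation $[h_{i',1},x_{j,0}^{-}]=x_{j,1}^{-}+\tfrac12 x_{j,0}^{-}+x_{j,0}^{-}h_{i',0}$, while the degree-two items (the paper's Lemmas \ref{l-2is34d}, \ref{l-2i34d2} and \ref{l-2i34d23}) are settled by extracting the two roots from the $h_{i',1}$- and $h_{i',2}$-eigenvalues via the expansion of $P(u+1)/P(u)$ and then invoking the $\ysl$ local Weyl module theory, with the same dichotomy at $i=l-2$ where the root difference equals $1$. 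The only cosmetic difference is that you phrase the final identification as ``quotient of the local Weyl module $W_1(c_1)\otimes W_1(c_2)$,'' whereas the paper argues by maximality and a dimension count, which is the same mechanism.
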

\begin{proof}
The items $\left(i\right), \left(ii\right)$ and $\left(iii\right)$ can be proved very similarly as the ones in  Proposition \ref{i=1yo2n}, so we omit the proof. The item $\left(iv\right)$ is proved in Lemma \ref{l-2is34d}. Lemmas \ref{l-2i34d2} and \ref{l-2i34d23} are devoted to the proof of the item $\left(v\right)$. The item $\left(vi\right)$ is showed in Lemma \ref{i=l-2v2}.
\end{proof}

\begin{lemma}\label{l-2is34d}\
\begin{enumerate}
  \item If $i<l-2$, $Y_{i-1}\left(\overline{x_{i,0}^{-}\ldots x_{i,0}^{-}}v^{+}_1\right)\cong W_1\left(a_1+l-i-\frac{1}{2}\right)\otimes W_1\left(a_1+\frac{1}{2}\right)$.
  \item If $i=l-2$, $Y_{i-1}\left(\overline{x_{i,0}^{-}\ldots x_{i,0}^{-}}v^{+}_1\right)$ is either 3-dimensional or 4-dimensional, which is isomorphic to either $W_2\left(a_1+\frac{1}{2}\right)$ or $W_1\left(a_1+\frac{3}{2}\right)\otimes W_1\left(a_1+\frac{1}{2}\right)$, respectively.
\end{enumerate}

%\begin{align*}
% \nonumber to remove numbering \left(before each equation\right)
% h_{i-1,1}\overline{x_{i,0}^{-}\ldots x_{i,0}^{-}}v^{+}_1&= \left(2a_1+\left(l-i\right)+1\right)\overline{x_{i,0}^{-}\ldots x_{i,0}^{-}}v^{+}_1.\\
%\end{align*}
%Similarly we can prove that
\end{lemma}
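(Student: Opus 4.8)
The plan is to treat both parts uniformly: I will show that the vector $w:=\overline{x_{i,0}^{-}\ldots x_{i,0}^{-}}v_1^{+}$ (the terminal vector of the first, rightmost block in the expression for $v_1^{-}$ in Proposition \ref{v+=gv-}) generates a highest weight $Y_{i-1}\cong\ysl$-module whose Drinfeld polynomial is monic of degree $2$ with roots $a_1+l-i-\tfrac12$ and $a_1+\tfrac12$, and then read off the two cases from the difference of these roots using the $\ysl$-theory already assembled in this chapter.

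First I would fix the weight of $w$. Its weight is $\sigma(\omega_i)$ for the appropriate initial segment $\sigma$ of the Weyl element $w_i$, and Proposition \ref{rileq2} (Case 2) yields $\langle\sigma(\omega_i),\alpha_{i-1}\rangle=2$, so $h_{i-1,0}w=2w$. Exactly as in Steps 1--2 of Proposition \ref{mtoysl2l} one checks $\sigma^{-1}(\alpha_{i-1})\in\Delta^{+}$; since $\omega_i$ is the top weight of $V_{a_1}(\omega_i)$ (the remaining $\nyo$-constituents $L(\omega_{i-2j})$, $j\ge 1$, from Proposition \ref{dcofbdl} all have strictly smaller highest weight), any nonzero $x_{i-1,k}^{+}w$ would produce the weight $\omega_i+\sigma^{-1}(\alpha_{i-1})>\omega_i$, a contradiction. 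Hence $w$ is $Y_{i-1}$-maximal and, the weight space being one-dimensional, an $h_{i-1,r}$-eigenvector; therefore $Y_{i-1}(w)$ is a highest weight $\ysl$-module whose Drinfeld polynomial $P(u)$ is monic of degree equal to the $h_{i-1,0}$-eigenvalue, namely $2$.

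The technical core is to compute the two roots of $P(u)$. Writing $w=x_{i,0}^{-}u$, items (i)--(iii) of Proposition \ref{yoc3p1} identify $u$ as the $Y_i$-highest weight vector of $W_1(a_1+l-i-1)$, so Corollary \ref{slw1a} gives $x_{i,1}^{-}u=(a_1+l-i-1)\,w$. The roots of $P(u)$ are encoded in the eigenvalues of $h_{i-1,0},h_{i-1,1},h_{i-1,2}$ on $w$, and these I would obtain by pushing each $h_{i-1,r}$ through the lowering string defining $w$, repeatedly applying the defining relation $[h_{i-1,r+1},x_{j,s}^{-}]-[h_{i-1,r},x_{j,s+1}^{-}]=-\tfrac12 a_{i-1,j}\big(h_{i-1,r}x_{j,s}^{-}+x_{j,s}^{-}h_{i-1,r}\big)$ together with Corollary \ref{slw1a}, by induction along the string in the style of the supplementary computations for Step 5. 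The expected output is $P(u)=\big(u-(a_1+l-i-\tfrac12)\big)\big(u-(a_1+\tfrac12)\big)$. Granting this, both conclusions follow from the $\ysl$ structure theory: the two roots differ by $l-i-1$. If $i<l-2$ this difference is $\ge 2$, hence $\neq\pm 1$, so $W_1(a_1+l-i-\tfrac12)\otimes W_1(a_1+\tfrac12)$ is irreducible by Proposition \ref{ctpihw} and Proposition \ref{VoWWoVhi} and coincides with the local Weyl module $W(P)$ by Theorem \ref{wmfsl2}; being a nonzero highest weight module with Drinfeld polynomial $P$, $Y_{i-1}(w)$ must equal it, giving part (i). If $i=l-2$ the roots are $a_1+\tfrac32$ and $a_1+\tfrac12$, differing by $1$, so $W(P)=W_1(a_1+\tfrac32)\otimes W_1(a_1+\tfrac12)$ is four-dimensional and reducible with unique three-dimensional highest weight quotient $W_2(a_1+\tfrac12)$, whence $Y_{i-1}(w)$, as a highest weight quotient of $W(P)$, is one of these two, giving part (ii).

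The main obstacle I anticipate is precisely this root computation. Unlike the degree-one steps of Proposition \ref{i=1yo2n}, a degree-two Drinfeld polynomial requires two symmetric functions of the roots, so computing $h_{i-1,1}w$ alone (which only yields the sum-type quantity) is insufficient and I must also control $h_{i-1,2}w$. This is delicate because node $i-1$ is \emph{inactive} along the chain recorded in Remark \ref{yocap1} that produced $w$, so no eigenvalue for $h_{i-1,r}$ is inherited from Proposition \ref{yoc3p1}; it has to be computed from scratch through the entire lowering string, and the presence of the fork nodes $l-1,l$ inside that string makes the bookkeeping the heaviest part of the argument.
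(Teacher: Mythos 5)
Your outline coincides with the paper's own proof: use Proposition \ref{rileq2} to get $h_{i-1,0}w=2w$, so the Drinfeld polynomial $P(u)$ of the highest weight $Y_{i-1}$-module $Y_{i-1}(w)$ is monic of degree $2$; determine its two roots from the eigenvalues of $h_{i-1,1}$ and $h_{i-1,2}$ on $w$; then settle the two cases by $\ysl$-theory (for $i<l-2$ the roots differ by $l-i-1\geq 2$, so $W(P)\cong W_1(a_1+l-i-\tfrac12)\otimes W_1(a_1+\tfrac12)$ is irreducible and equals any nonzero highest weight quotient; for $i=l-2$ they differ by $1$, and the only nonzero highest weight quotients of $W(P)$ are $W(P)$ itself and $W_2(a_1+\tfrac12)$). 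This case analysis, and the roots you anticipate, are exactly the paper's.

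However, as a proof your submission has a genuine gap: the identification of the roots as $a_1+\tfrac12$ and $a_1+l-i-\tfrac12$, which is essentially the entire content of the paper's proof, is only announced (``the expected output is\ldots''), never carried out. The paper does it by two explicit computations, $h_{i-1,1}w=(2a_1+l-i+1)w$ and $h_{i-1,2}w=\Bigl(2\bigl(a_1+\tfrac{l-i+1}{2}\bigr)^2+\tfrac{(l-i-2)(l-i)}{2}\Bigr)w$, and then by solving the resulting system for $a+b$ and $a^2+b^2+a+b$, which reduces to the quadratic $x^2-x=\tfrac{(l-i)(l-i-2)}{4}$. Moreover, the difficulty you anticipate is misplaced: the fork nodes $l-1,l$ cause no bookkeeping problems, since $h_{i-1,r}$ commutes with every $x_{j,s}^{\pm}$ having $a_{i-1,j}=0$; the whole computation localizes at the two occurrences of $x_{i,0}^{-}$ at the ends of the block $\overline{x_{i,0}^{-}\ldots x_{i,0}^{-}}$, and the only external inputs are the eigenvalue $A=a_1+l-i-1$ from item (iii) of Proposition \ref{yoc3p1} together with Corollary \ref{slw1a} -- data you correctly identify but never actually put to use.
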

\begin{proof}
It follows from Proposition \ref{rileq2} that $wt(\overline{x_{i,0}^{-}\ldots x_{i,0}^{-}}v^{+}_1)=2\omega_{i-1}-\omega_{i}$. Thus $$h_{i-1,0}\overline{x_{i,0}^{-}\ldots x_{i,0}^{-}}v^{+}_1= 2\overline{x_{i,0}^{-}\ldots x_{i,0}^{-}}v^{+}_1.$$ Thus the associated polynomial $P\left(u\right)$ to $Y_{i-1}\left(\overline{x_{i,0}^{-}\ldots x_{i,0}^{-}}v^{+}_1\right)$ has of degree 2. Suppose $P\left(u\right)=\left(u-a\right)\left(u-b\right)$ with $\operatorname{Re}\left(a\right)\leq \operatorname{Re}\left(b\right)$.
Thus we have
\begin{align*}
&\frac{P\left(u+1\right)}{P\left(u\right)}\\
&\quad= \frac{u-\left(a-1\right)}{u-a}\frac{u-\left(b-1\right)}{u-b} \\
&\quad= \left(1+u^{-1}+au^{-2}+a^2u^{-3}+\ldots\right)\left(1+u^{-1}+bu^{-2}+b^2u^{-3}+\ldots\right)\\
&\quad= 1+2u^{-1}+\left(a+b+1\right)u^{-2}+\left(a^2+b^2+a+b\right)u^{-3}+\ldots.
\end{align*}
The eigenvalues of $\overline{x_{i,0}^{-}\ldots x_{i,0}^{-}}v^{+}_1$ under $h_{i-1,1}$ and $h_{i-1,2}$ will tell the values of $a$ and $b$.

%We next calculate $h_{i-1,1}\overline{x_{i,0}^{-}\ldots x_{i,0}^{-}}v^{+}_1$ and $h_{i-1,2}\overline{x_{i,0}^{-}\ldots x_{i,0}^{-}}v^{+}_1$.
Note that the weight of $\overline{x_{i+1,0}^{-}\ldots x_{i,0}^{-}}v^{+}_1$ is $\omega_{i-1}+\omega_{i}-\omega_{i+1}$. Thus $$h_{i-1,0}\overline{x_{i+1,0}^{-}\ldots x_{i,0}^{-}}v^{+}_1=\overline{x_{i+1,0}^{-}\ldots x_{i,0}^{-}}v^{+}_1.$$
It follows from $(iii)$ of Proposition \ref{yoc3p1} that
$$h_{i,1}x_{i+1,0}^{-}\ldots x_{l-2,0}^{-}x_{l,0}^{-}\ldots x_{i,0}^{-}v^{+}_1=\left(a_1+l-i-1\right)x_{i+1,0}^{-}\ldots x_{l-2,0}^{-}x_{l,0}^{-}\ldots x_{i,0}^{-}v^{+}_1.$$
Denote $A=a_1+l-i-1$.
\begin{align*}
% \nonumber to remove numbering \left(before each equation\right)
   h_{i-1,1}&\overline{x_{i,0}^{-}\ldots x_{i,0}^{-}}v^{+}_1 \\
%   &= h_{i-1,1}\overline{x_{i,0}^{-}\ldots x_{i,0}^{-}}v^{+}_1\\
   &= [h_{i-1,1},x_{i,0}^{-}]\overline{x_{i+1,0}^{-}\ldots x_{i,0}^{-}}v^{+}_1 +\overline{x_{i,0}^{-}\ldots x_{i+1,0}^{-}}[h_{i-1,1},x_{i,0}^{-}]v^{+}_1\\
    &= \left(x_{i,1}^{-}+\frac{1}{2}x_{i,0}^{-}+x_{i,0}^{-}h_{i-1,0}\right)\overline{x_{i+1,0}^{-}\ldots x_{i,0}^{-}}v^{+}_1\\
     &+\overline{x_{i,0}^{-}\ldots x_{i+1,0}^{-}}\left(x_{i,1}^{-}+\frac{1}{2}x_{i,0}^{-}+x_{i,0}^{-}h_{i-1,0}\right)v^{+}_1 \\
     &= \Bigg(\left(A+\frac{1}{2}\right)+\left(a_1+\frac{1}{2}\right)+1\Bigg)\overline{x_{i,0}^{-}\ldots x_{i,0}^{-}}v^{+}_1\\
%      &= \left(A+a_1+2\right)\overline{x_{i,0}^{-}\ldots x_{i,0}^{-}}v^{+}_1\\
      &= \left(2a_1+l-i+1\right)\overline{x_{i,0}^{-}\ldots x_{i,0}^{-}}v^{+}_1.
      %&=2\left(a+1\right)\overline{x_{i,0}^{-}\ldots x_{i,0}^{-}}v^{+}_1.
\end{align*}
\begin{align*}
% \nonumber to remove numbering \left(before each equation\right)
   h_{i-1,2}&\overline{x_{i,0}^{-}\ldots x_{i,0}^{-}}v^{+}_1 \\
   &= [h_{i-1,2},x_{i,0}^{-}]\overline{x_{i+1,0}^{-}\ldots x_{i,0}^{-}}v^{+}_1+\overline{x_{i,0}^{-}\ldots x_{i+1,0}^{-}}[h_{i-1,2},x_{i,0}^{-}]v^{+}_1\\
    &= \left([h_{i-1,1}, x_{i,1}^{-}]+\frac{1}{2}\left(h_{i-1,1}x_{i,0}^{-}+x_{i,0}^{-}h_{i-1,1}\right)\right)\overline{x_{i+1,0}^{-}\ldots x_{i,0}^{-}}v^{+}_1 \\
     &+\overline{x_{i,0}^{-}\ldots x_{i+1,0}^{-}}\left([h_{i-1,1}, x_{i,1}^{-}]+\frac{1}{2}\left(h_{i-1,1}x_{i,0}^{-}+x_{i,0}^{-}h_{i-1,1}\right)\right)v^{+}_1 \\
     &= [h_{i-1,1}, x_{i,1}^{-}]\overline{x_{i+1,0}^{-}\ldots x_{i,0}^{-}}v^{+}_1+\frac{1}{2}h_{i-1,1}\overline{x_{i,0}^{-}\ldots x_{i,0}^{-}}v^{+}_1\\
      &+ \frac{1}{2}\overline{x_{i,0}^{-}\ldots x_{i+1,0}^{-}} h_{i-1,1}^{-}x_{i,0}^{-}v^{+}_1+\overline{x_{i,0}^{-}\ldots x_{i+1,0}^{-}}[h_{i-1,1},x_{i,1}^{-}]v^{+}_1\\
      &+\frac{1}{2}\overline{x_{i,0}^{-}\ldots x_{i+1,0}^{-}}h_{i-1,1}x_{i,0}^{-}v^{+}_1\\
       &= h_{i-1,1}x_{i,1}^{-}\overline{x_{i+1,0}^{-}\ldots x_{i,0}^{-}}v^{+}_1-x_{i,1}^{-}h_{i-1,1}\overline{x_{i+1,0}^{-}\ldots x_{i,0}^{-}}v^{+}_1\\
       &+\frac{1}{2}h_{i-1,1}\overline{x_{i,0}^{-}\ldots x_{i,0}^{-}}v^{+}_1 +\overline{x_{i,0}^{-}\ldots x_{i+1,0}^{-}}h_{i-1,1}x_{i,1}^{-}v^{+}_1\\
      &+\overline{x_{i,0}^{-}\ldots x_{i+1,0}^{-}}h_{i-1,1}x_{i,0}^{-}v^{+}_1\\
      &= Ah_{i-1,1}\overline{x_{i,0}^{-}\ldots x_{i,0}^{-}}v^{+}_1-x_{i,1}^{-}\overline{x_{i+1,0}^{-}\ldots x_{i+1,0}^{-}}h_{i-1,1}x_{i,0}^{-}v^{+}_1\\
     &+\frac{1}{2}h_{i-1,1}\overline{x_{i,0}^{-}\ldots x_{i,0}^{-}}v^{+}_1+\overline{x_{i,0}^{-}\ldots x_{i+1,0}^{-}}h_{i-1,1}x_{i,1}^{-}v^{+}_1\\
      &+\overline{x_{i,0}^{-}\ldots x_{i+1,0}^{-}}h_{i-1,1}x_{i,0}^{-}v^{+}_1\\
       &= A\left(A+a_1+2\right)\overline{x_{i,0}^{-}\ldots x_{i,0}^{-}}v^{+}_1\\
       &-\left(a_1+\frac{1}{2}\right)A\overline{x_{i,0}^{-}\ldots x_{i,0}^{-}}v^{+}_1+\frac{1}{2}\left(A+a_1+2\right)\overline{x_{i,0}^{-}\ldots x_{i,0}^{-}}v^{+}_1\\
      &+ a_1\left(a_1+\frac{1}{2}\right)\overline{x_{i,0}^{-}\ldots x_{i,0}^{-}}v^{+}_1+\left(a_1+\frac{1}{2}\right)\overline{x_{i,0}^{-}\ldots x_{i,0}^{-}}v^{+}_1\\
      &= \left(A+\frac{1}{2}\right)\left(A+a_1+2\right)\overline{x_{i,0}^{-}\ldots x_{i,0}^{-}}v^{+}_1\\
       &-\left(a_1+\frac{1}{2}\right)\left(A-a_1-1\right)\overline{x_{i,0}^{-}\ldots x_{i,0}^{-}}v^{+}_1\\
      %&= \left(a_1+l-i-1+\frac{1}{2}\right)\left(a_1+l-i-1+a_1+2\right)\overline{x_{i,0}^{-}\ldots x_{i,0}^{-}}v^{+}_1\\
      %&-\left(a_1+\frac{1}{2}\right)\left(a_1+l-i-1-a_1-1\right)\overline{x_{i,0}^{-}\ldots x_{i,0}^{-}}v^{+}_1\\
      %&= \left(a_1+\frac{l-i+1}{2}+\frac{l-i-2}{2}\right)\left(2a_1+l-i+1\right)\overline{x_{i,0}^{-}\ldots x_{i,0}^{-}}v^{+}_1\\
      %&-\left(a_1+\frac{1}{2}\right)\left(l-i-2\right)\overline{x_{i,0}^{-}\ldots x_{i,0}^{-}}v^{+}_1\\
    %  &= \left(2\left(a_1+\frac{l-i+1}{2}+\frac{l-i-2}{2}\right)\left(a_1+\frac{l-i+1}{2}\right)-\left(a_1+\frac{1}{2}\right)\left(l-i-2\right)\overline{x_{i,0}^{-}\ldots x_{i,0}^{-}}v^{+}_1\\
%\end{align*}
%\begin{align*}
% \nonumber to remove numbering \left(before each equation\right)
%&= 2\left(a_1+\frac{l-i+1}{2}\right)^2\overline{x_{i,0}^{-}\ldots x_{i,0}^{-}}v^{+}_1\\
%&+\frac{l-i-2}{2}\left(2a_1+l-i+1\right)\overline{x_{i,0}^{-}\ldots x_{i,0}^{-}}v^{+}_1\\
%      &-\left(a_1+\frac{1}{2}\right)\left(l-i-2\right)\overline{x_{i,0}^{-}\ldots x_{i,0}^{-}}v^{+}_1\\
&= \left(2\left(a_1+\frac{l-i+1}{2}\right)^2+\frac{\left(l-i-2\right)\left(l-i\right)}{2}\right)\overline{x_{i,0}^{-}\ldots x_{i,0}^{-}}v^{+}_1.
\end{align*}
Thus $a+b+1=\left(2a_1+\left(l-i\right)+1\right)$ and $\left(a^2+b^2+a+b\right)=2\left(a_1+\frac{l-i+1}{2}\right)^2+\frac{\left(l-i-2\right)\left(l-i\right)}{2}$. Suppose $a=a_1+\frac{l-i+1}{2}-x$. Then $b=a_1+\frac{l-i-1}{2}+x$. Since $\operatorname{Re}\left(b\right)\geq \operatorname{Re}\left(a\right)$, $\operatorname{Re}\left(x\right)\geq \frac{1}{2}$. Substituting them to $\left(a^2+b^2+a+b\right)=2\left(a_1+\frac{l-i+1}{2}\right)^2+\frac{\left(l-i-2\right)\left(l-i\right)}{2}$, we have
$$x^2-x=\frac{\left(l-i\right)\left(l-i-2\right)}{4}.$$ %Note that the value of $x$ is not dependent on the value of $B$.
Thus $x=\frac{1}{2}+\sqrt{\frac{\left(l-i\right)\left(l-i-2\right)+1}{4}}=\frac{l-i}{2}$ is a real number. Then $a=a_1+\frac{1}{2}$ and $b=a_1+l-i-\frac{1}{2}$.

Note that the dimension of $Y_{i-1}\left(\overline{x_{i,0}^{-}\ldots x_{i,0}^{-}}v^{+}_1\right)$ is at least 3. The dimension of the local Weyl module $W\left(P\right)$ of $\ysl$ is 4, which is isomorphic to\\ $W_1\left(a_1+l-i-\frac{1}{2}\right)\otimes W_1\left(a_1+\frac{1}{2}\right)$. If $i<l-2$, then $W\left(P\right)$ is irreducible, so is $Y_{i-1}\left(\overline{x_{i,0}^{-}\ldots x_{i,0}^{-}}v^{+}_1\right)$. If $i=l-2$, then $Y_{i-1}\left(\overline{x_{i,0}^{-}\ldots x_{i,0}^{-}}v^{+}_1\right)$ is either 3-dimensional and isomorphic to $W_2\left(a_1+\frac{3}{2}\right)$ or 4-dimensional and isomorphic to $W_1\left(a_1+\frac{3}{2}\right)\otimes W_1\left(a_1+\frac{1}{2}\right)$.
%Especially when $i=l-2$, $x=\frac{1}{2}$, and then $b-a=1$. This is the only possible case that $Y_{i-1}\left(\overline{x_{i,0}^{-}\ldots x_{i,0}^{-}}v^{+}_1\right)\cong W_2\left(a\right)$, where $a=a_1+\frac{l-i}{2}$.
\end{proof}

It follows from either Corollary \ref{w2ihw} or Corollary \ref{w1bw1a} that\begin{corollary}\label{soi-1if}
$$x_{i-1,1}^{-}x_{i-1,0}^{-}\overline{x_{i,0}^{-}\ldots x_{i,0}^{-}}v^{+}_1=\left(a_1+\frac{l-i-1}{2}\right)\overline{\left(x_{i-1,0}^{-}\right)^2\ldots x_{i,0}^{-}}v^{+}_1.$$
$$\left(x_{i-1,1}^{-}x_{i-1,0}^{-}+x_{i-1,0}^{-}x_{i-1,1}^{-}\right)\overline{x_{i,0}^{-}\ldots x_{i,0}^{-}}v^{+}_1=\left(2a_1+{l-i}\right)\overline{\left(x_{i-1,0}^{-}\right)^2\ldots x_{i,0}^{-}}v^{+}_1.$$
\begin{align*}
\left(x_{i-1,1}^{-}\right)^2&\overline{x_{i,0}^{-}\ldots x_{i,0}^{-}}v^{+}_1\\
&=\Bigg(\left(a_1+\frac{l-i}{2}\right)^2-\frac{\left(l-i-2\right)\left(l-i\right)+1}{4}\Bigg)\overline{\left(x_{i-1,0}^{-}\right)^2\ldots x_{i,0}^{-}}v^{+}_1.
\end{align*}
\begin{align*}
x_{i-1,0}^{-}x_{i-1,2}^{-}&\overline{x_{i,0}^{-}\ldots x_{i,0}^{-}}v^{+}_1\\
&=\Bigg(\left(a_1+\frac{l-i+1}{2}\right)^2+\frac{\left(l-i-2\right)\left(l-i\right)}{4}\Bigg)\overline{\left(x_{i-1,0}^{-}\right)^2\ldots x_{i,0}^{-}}v^{+}_1.
\end{align*}
\end{corollary}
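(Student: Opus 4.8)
The plan is to deduce all four identities from the already-known $Y_{i-1}$-module structure of the vector $w:=\overline{x_{i,0}^{-}\ldots x_{i,0}^{-}}v^{+}_1$, determined in Lemma \ref{l-2is34d}, by transporting the explicit $\ysl$-formulas of Corollary \ref{w1bw1a} (or Corollary \ref{w2ihw}) through the isomorphism $Y_{i-1}\cong\ysl$. Since $\nyo$ is of type $D_l$, every $d_j=1$, so this isomorphism sends $x_{i-1,k}^{\pm}\mapsto x_k^{\pm}$ and $h_{i-1,k}\mapsto h_k$. By the computation in the proof of Lemma \ref{l-2is34d}, $w$ is a maximal vector with $h_{i-1,0}w=2w$, hence it is the highest weight vector of the highest weight $\ysl$-module $Y_{i-1}(w)$; moreover $\overline{\left(x_{i-1,0}^{-}\right)^2\ldots x_{i,0}^{-}}v^{+}_1=(x_{i-1,0}^{-})^2 w$ is its image under $(x_0^-)^2$. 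An isomorphism of highest weight modules carries highest weight vector to highest weight vector, so all ratios of coefficients relative to $(x_{i-1,0}^{-})^2 w$ are preserved.

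First I would treat the generic case $i<l-2$, where Lemma \ref{l-2is34d} gives $Y_{i-1}(w)\cong W_1(b)\otimes W_1(a)$ with $a=a_1+\frac{1}{2}$ and $b=a_1+l-i-\frac{1}{2}$, and $w$ corresponds to $v^{+}\otimes w^{+}$. Then each identity is just the corresponding line of Corollary \ref{w1bw1a} after substitution: item (iii) gives the first, using $\frac{a+b-1}{2}=a_1+\frac{l-i-1}{2}$; item (iv) gives the second, using $a+b=2a_1+l-i$; item (vi) gives the third, using $ab=\left(a_1+\frac{l-i}{2}\right)^2-\frac{(l-i-2)(l-i)+1}{4}$; and item (v) gives the fourth, using $\frac{a^2+b^2+a+b}{2}=\left(a_1+\frac{l-i+1}{2}\right)^2+\frac{(l-i-2)(l-i)}{4}$. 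Each of these is an elementary polynomial identity in $a_1$ and $l-i$, and I would simply verify them by expanding both sides.

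It remains to handle the degenerate case $i=l-2$, where Lemma \ref{l-2is34d} also permits $Y_{i-1}(w)\cong W_2\left(a_1+\frac{1}{2}\right)$ (the $4$-dimensional possibility is already covered by the previous paragraph with $l-i=2$). Here I would invoke Corollary \ref{w2ihw} with $a=a_1+\frac{1}{2}$: its items (iii), (iv), (vi) yield the first three identities directly, and specializing the formulas of the previous paragraph at $l-i=2$ shows the coefficients agree. The one coefficient not literally recorded in Corollary \ref{w2ihw} is $x_0^-x_2^-(w)$ by itself---that corollary lists only the symmetric combination $x_2^-x_0^-+x_0^-x_2^-$---so I would supply it by a one-line computation directly from the action in Proposition \ref{wra}, namely $x_0^-x_2^-(w)=(a+1)^2(x_0^-)^2 w=\left(a_1+\frac{3}{2}\right)^2(x_0^-)^2 w$, matching the fourth identity at $l-i=2$.

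The proof requires no new representation theory beyond Lemma \ref{l-2is34d} and the two cited corollaries; the only care needed is bookkeeping---inserting the pair $(a,b)$ with the correct assignment of real parts, and not overlooking that for $i=l-2$ the module may be $W_2$, whose $x_0^-x_2^-$ value must be filled in from Proposition \ref{wra} rather than read off Corollary \ref{w2ihw}. I expect this last point to be the only genuine, if minor, obstacle.
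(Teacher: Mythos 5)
Your proposal is correct and takes essentially the same route as the paper: the paper's entire justification for this corollary is the citation ``It follows from either Corollary \ref{w2ihw} or Corollary \ref{w1bw1a}'', which is precisely your transport of those explicit $\ysl$-formulas through the module identification $Y_{i-1}\left(\overline{x_{i,0}^{-}\ldots x_{i,0}^{-}}v^{+}_1\right)\cong W_1\left(a_1+l-i-\tfrac{1}{2}\right)\otimes W_1\left(a_1+\tfrac{1}{2}\right)$ (or $W_2\left(a_1+\tfrac{1}{2}\right)$ when $i=l-2$) from Lemma \ref{l-2is34d}. Your only additions are the explicit polynomial verifications and the value of $x_{0}^{-}x_{2}^{-}$ on $W_2\left(a_1+\tfrac{1}{2}\right)$ read off from Proposition \ref{wra}, details the paper leaves implicit.
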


%\begin{remark}
% $x=1$ if and only if $i=l-2$. In this case the associated polynomial of $Y_{i-1}\Big(\overline{x_{i,0}^{-}\ldots x_{i,0}^{-}}v^{+}_1\Big)$ is $P\left(u\right)=\Big(u-a\Big)\Big(u-\left(a+1\right)\Big)$ where $a=a_1+\frac{l-i-1}{2}$. It follows from the result obtained for local Weyl modules of $\ysl$ that $Y_{i-1}\Big(\overline{x_{i,0}^{-}\ldots x_{i,0}^{-}}v^{+}_1\Big)$ is a subquotient of $W_1\left(a+1\right)\otimes W_1\left(a\right)$. Thus $Y_{i-1}\Big(\overline{x_{i,0}^{-}\ldots x_{i,0}^{-}}v^{+}_1\Big)$ is either 3 dimensional\left(irreducible\right) which is isomorphic to $W_2\left(a\right)$ or 4-dimensional \left(reducible\right) which is isomorphic to $W_1\left(a+1\right)\otimes W_1\left(a\right)$.

%When $i\neq l-2$, $x>1$, $Y_{i-1}\Big(\overline{x_{i,0}^{-}\ldots x_{i,0}^{-}}v^{+}_1\Big)\cong W_1\left(b\right)\otimes W_1\left(a\right)$ where $\operatorname{Re}\left(b\right)-\operatorname{Re}\left(a\right)>1$.

%The way we found the value of $x$ is independent with the value of $a_1+\frac{l-i+1}{2}$. It will benefit the following calculations.
%\end{remark}
If $i=2$, jump to Lemma \ref{i=l-2v2}. If $i\geq 3$, the calculations continue.

\begin{lemma}\label{l-2i34d2}
\
\begin{enumerate}
  \item If $i<l-2$, $Y_{i-2}\Big(\overline{\left(x_{i-1,0}^{-}\right)^2\ldots x_{i,0}^{-}}v^{+}_1\Big)\cong W_1\left(a_1+l-i\right)\otimes W_1\left(a_1+1\right)$.
  \item If $i=l-2$, $Y_{i-2}\Big(\overline{\left(x_{i-1,0}^{-}\right)^2\ldots x_{i,0}^{-}}v^{+}_1\Big)$ is either 3-dimensional and isomorphic to $W_2\left(a_1+1\right)$ or 4-dimensional and isomorphic to $W_1\left(a_1+2\right)\otimes W_1\left(a_1+1\right)$.
\end{enumerate}
\end{lemma}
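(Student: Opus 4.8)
The plan is to run the same argument used for Lemma \ref{l-2is34d}, but with the node $i-1$ replaced by $i-2$ and the eigenvalue data supplied by Corollary \ref{soi-1if}; throughout we are in the range $i\geq 3$. First I would check that $Y_{i-2}\big(\overline{(x_{i-1,0}^{-})^2\ldots x_{i,0}^{-}}v_1^{+}\big)$ is a highest weight $Y_{i-2}\cong\ysl$-module: the attached weight lies on the Weyl orbit of $\omega_i$, so its weight space is one-dimensional, and maximality (that $x_{i-2,k}^{+}$ annihilates the vector) follows exactly as in Step 2 of Proposition \ref{mtoysl2l}. To pin down the degree of the associated Drinfeld polynomial $P(u)$, I would compute the weight explicitly: starting from $\mathrm{wt}(w)=2\omega_{i-1}-\omega_i$ for $w:=\overline{x_{i,0}^{-}\ldots x_{i,0}^{-}}v_1^{+}$ and subtracting $2\alpha_{i-1}=2(-\omega_{i-2}+2\omega_{i-1}-\omega_i)$ gives $2\omega_{i-2}-2\omega_{i-1}+\omega_i$, so $h_{i-2,0}$ acts by $2$ (consistent with Proposition \ref{rileq2}) and $\deg P=2$. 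Writing $P(u)=(u-a)(u-b)$ with $\operatorname{Re}(a)\leq\operatorname{Re}(b)$ and expanding $\tfrac{P(u+1)}{P(u)}$ as in Lemma \ref{l-2is34d}, the pair $a,b$ is determined by the eigenvalues of the vector under $h_{i-2,1}$ and $h_{i-2,2}$.

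The heart of the proof is these two eigenvalue computations. The key simplification is that node $i-2$ is adjacent to $i-1$ but to no node $\geq i$, so $h_{i-2,0},h_{i-2,1},h_{i-2,2}$ all commute with every factor of $w$ and annihilate $w$ (node $i-2$ is off the support of $w$ and $\pi_{i-2}=1$). I would therefore push $h_{i-2,1}$ and $h_{i-2,2}$ through the two factors of $(x_{i-1,0}^{-})^2$ using the defining relation, whose first instance reads $[h_{i-2,1},x_{i-1,0}^{-}]=x_{i-1,1}^{-}+\tfrac12 x_{i-1,0}^{-}+x_{i-1,0}^{-}h_{i-2,0}$ and whose second instance feeds $x_{i-1,2}^{-}$ and symmetrized $h_{i-2,1}x_{i-1,\ast}^{-}$ terms into the computation. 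Everything then reduces to the action on $w$ of the products $x_{i-1,1}^{-}x_{i-1,0}^{-}$, $(x_{i-1,1}^{-})^2$, and $x_{i-1,0}^{-}x_{i-1,2}^{-}$, which are exactly the quantities evaluated in Corollary \ref{soi-1if} (those in turn coming from the $W_1\otimes W_1$, resp.\ $W_2$, structure of Corollaries \ref{w1bw1a} and \ref{w2ihw}). I expect the main obstacle to be the $h_{i-2,2}$ computation: like the long display proving Lemma \ref{l-2is34d}, it produces several nested commutators and products of the $x_{i-1,k}^{-}$ in assorted orders, and the delicate bookkeeping is to symmetrize them (e.g.\ rewriting $x_{i-1,2}^{-}x_{i-1,0}^{-}$ via $x_{i-1,0}^{-}x_{i-1,2}^{-}$ plus a commutator) so that each surviving term matches a single entry of Corollary \ref{soi-1if}.

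Carrying out this bookkeeping should give $a+b+1=2a_1+l-i+2$ together with the corresponding value of $a^2+b^2+a+b$. Setting $a=a_1+\tfrac{l-i+2}{2}-x$ and $b=a_1+\tfrac{l-i}{2}+x$ (so that the first equation holds) and substituting into the second reduces to a quadratic $x^2-x=\tfrac{(l-i-1)(l-i+1)}{4}$ whose positive root is the half-integer $x=\tfrac{l-i}{2}$; with $\operatorname{Re}(b)\geq\operatorname{Re}(a)$ this forces $a=a_1+1$ and $b=a_1+l-i$, as claimed.

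Finally I would read off the module structure. The module is a nonzero quotient of the $\ysl$ local Weyl module $W(P)\cong W_1(b)\otimes W_1(a)$, which is $4$-dimensional by Theorem \ref{wmfsl2}, and it is at least $3$-dimensional since $(x_{i-2,0}^{-})^2$ does not annihilate the vector (all vectors along the path of Proposition \ref{mtoysl2l} are non-zero). When $i<l-2$ we have $b-a=l-i-1\geq 2\neq 1$, so $W(P)$ is irreducible and the quotient must equal it, giving $W_1(a_1+l-i)\otimes W_1(a_1+1)$. When $i=l-2$ we have $b-a=1$, the resonant case, so the quotient is either the full $4$-dimensional $W_1(a_1+2)\otimes W_1(a_1+1)$ or its $3$-dimensional irreducible quotient $W_2(a_1+1)$, exactly as in the dichotomy of Lemma \ref{l-2is34d}.
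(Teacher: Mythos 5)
Your proposal is correct and follows the paper's own proof essentially step for step: the same weight computation giving $\operatorname{Deg}\big(P(u)\big)=2$, the same eigenvalue computations of $h_{i-2,1}$ and $h_{i-2,2}$ obtained by pushing them through $\left(x_{i-1,0}^{-}\right)^2$ with the defining relations and the entries of Corollary \ref{soi-1if}, and the same final dichotomy via the local Weyl module theory of $\ysl$. One arithmetic slip: with your parametrization the quadratic should read $x^2-x=\frac{(l-i)(l-i-2)}{4}$, not $\frac{(l-i-1)(l-i+1)}{4}$; the root $x=\frac{l-i}{2}$ and the values $a=a_1+1$, $b=a_1+l-i$ that you then state are exactly those of the correct quadratic, so the conclusion is unaffected.
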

\begin{proof} Note that the weight of $\overline{x_{i,0}^{-}\ldots x_{i,0}^{-}}v_1^{+}$ is $2\omega_{i-1}-\omega_i$, and the weight of $\overline{x_{i-1,0}^{-}\ldots x_{i,0}^{-}}v_1^{+}$ is $\omega_{i-2}$.
It follows from Proposition \ref{rileq2} that the weight of vector $\overline{(x_{i-1,0}^{-})^2\ldots x_{i,0}^{-}}v^{+}_1$ is $2\omega_{i-2}-2\omega_{i-1}+\omega_{i}$. Thus $$h_{i-2,0}\overline{\left(x_{i-1,0}^{-}\right)^2\ldots x_{i,0}^{-}}v^{+}_1=2\overline{\left(x_{i-1,0}^{-}\right)^2\ldots x_{i,0}^{-}}v^{+}_1.$$
Thus the associated polynomial $P\left(u\right)$ has of degree 2. Suppose $P\left(u\right)=\left(u-a\right)\left(u-b\right)$ with $\operatorname{Re}\left(a\right)\leq \operatorname{Re}\left(b\right)$.
\begin{align*}
% \nonumber to remove numbering \left(before each equation\right)
h_{i-2,1}&\overline{\left(x_{i-1,0}^{-}\right)^2\ldots x_{i,0}^{-}}v^{+}_1 \\
   &= [h_{i-2,1}, \left(x_{i-1,0}^{-}\right)^2]\overline{x_{i,0}^{-}\ldots x_{i,0}^{-}}v_1^{+}\\
   &= [h_{i-2,1}, x_{i-1,0}^{-}]\overline{x_{i-1,0}^{-}\ldots x_{i,0}^{-}}v_1^{+}+ x_{i-1,0}^{-}[h_{i-2,1}, x_{i-1,0}^{-}]\overline{x_{i,0}^{-}\ldots x_{i,0}^{-}}v_1^{+}\\
    &= \left(x_{i-1,1}^{-}+\frac{1}{2}x_{i-1,0}^{-}+x_{i-1,0}^{-}h_{i-2,0}\right)\overline{x_{i-1,0}^{-}\ldots x_{i,0}^{-}}v_1^{+}\\
    &+x_{i-1,0}^{-}\left(x_{i-1,1}^{-}+\frac{1}{2}x_{i-1,0}^{-}+x_{i-1,0}^{-}h_{i-2,0}\right)\overline{x_{i,0}^{-}\ldots x_{i,0}^{-}}v_1^{+}\\
   &= \left(x_{i-1,1}^{-}x_{i-1,0}^{-}+x_{i-1,0}^{-}x_{i-1,1}^{-}\right)\overline{x_{i,0}^{-}\ldots x_{i,0}^{-}}v_1^{+}+2\overline{\left(x_{i-1,0}^{-}\right)^2\ldots x_{i,0}^{-}}v^{+}_1\\
   &= \left(2a_1+\left(l-i\right)+2\right)\overline{\left(x_{i-1,0}^{-}\right)^2\ldots x_{i,0}^{-}}v^{+}_1.
\end{align*}
Denote $A=a_1+\frac{l-i}{2}$.
\begin{align*}
% \nonumber to remove numbering \left(before each equation\right)
h_{i-2,2}&\overline{\left(x_{i-1,0}^{-}\right)^2\ldots x_{i,0}^{-}}v^{+}_1  \\
   &= [h_{i-2,2}, \left(x_{i-1,0}^{-}\right)^2]\overline{x_{i,0}^{-}\ldots x_{i,0}^{-}}v_1^{+} \\
   &= [h_{i-2,2}, x_{i-1,0}^{-}]\overline{x_{i-1,0}^{-}\ldots x_{i,0}^{-}}v_1^{+}+ x_{i-1,0}^{-}[h_{i-2,2}, x_{i-1,0}^{-}]\overline{x_{i,0}^{-}\ldots x_{i,0}^{-}}v_1^{+}\\
   &=\Big([h_{i-2,1}, x_{i-1,1}^{-}]+\frac{1}{2}\left(h_{i-2,1}x_{i-1,0}^{-}+x_{i-1,0}^{-}h_{i-2,1}^{-}\right)\Big)\overline{x_{i-1,0}^{-}\ldots x_{i,0}^{-}}v_1^{+}\\
   &+x_{i-1,0}^{-}\Big([h_{i-2,1}, x_{i-1,1}^{-}]+\frac{1}{2}\left(h_{i-2,1}x_{i-1,0}^{-}+x_{i-1,0}^{-}h_{i-2,1}^{-}\right)\Big)\overline{x_{i,0}^{-}\ldots x_{i,0}^{-}}v_1^{+}\\
   &=\left(h_{i-2,1}x_{i-1,1}^{-}-x_{i-1,1}^{-}h_{i-2,1}\right)\overline{x_{i-1,0}^{-}\ldots x_{i,0}^{-}}v_1^{+}\\
    &+\frac{1}{2}\left(h_{i-2,1}x_{i-1,0}^{-}+x_{i-1,0}^{-}h_{i-2,1}^{-}\right)\overline{x_{i-1,0}^{-}\ldots x_{i,0}^{-}}v_1^{+}\\
   &+x_{i-1,0}^{-}\left(h_{i-2,1}x_{i-1,1}^{-}-x_{i-1,1}^{-}h_{i-2,1}\right)\overline{x_{i,0}^{-}\ldots x_{i,0}^{-}}v_1^{+}\\
   &+\frac{1}{2}x_{i-1,0}^{-}\left(h_{i-2,1}x_{i-1,0}^{-}+x_{i-1,0}^{-}h_{i-2,1}^{-}\right)\overline{x_{i,0}^{-}\ldots x_{i,0}^{-}}v_1^{+}\\
   &=h_{i-2,1}x_{i-1,1}^{-}\overline{x_{i-1,0}^{-}\ldots x_{i,0}^{-}}v_1^{+}-x_{i-1,1}^{-}h_{i-2,1}\overline{x_{i-1,0}^{-}\ldots x_{i,0}^{-}}v_1^{+}\\
   &+\frac{1}{2}h_{i-2,1}\overline{\left(x_{i-1,0}^{-}\right)^2\ldots x_{i,0}^{-}}v^{+}_1+ x_{i-1,0}^{-}h_{i-2,1}x_{i-1,1}^{-}\overline{x_{i,0}^{-}\ldots x_{i,0}^{-}}v_1^{+}\\
   &+x_{i-1,0}^{-}h_{i-2,1}\overline{x_{i-1,0}^{-}\ldots x_{i,0}^{-}}v_1^{+}\\
   &=\left(a_1+\frac{l-i-1}{2}\right)h_{i-2,1}\left(x_{i-1,0}^{-}\right)^2\overline{x_{i,0}^{-}\ldots x_{i,0}^{-}}v^{+}_1\\
   &-x_{i-1,1}^{-}\left(x_{i-1,1}^{-}+\frac{1}{2}x_{i-1,0}^{-}+x_{i-1,0}^{-}h_{i-2,0}\right)\overline{x_{i,0}^{-}\ldots x_{i,0}^{-}}v^{+}_1\\
   &+\frac{1}{2}h_{i-2,1}\overline{\left(x_{i-1,0}^{-}\right)^2x_{i,0}^{-}\ldots x_{i,0}^{-}}v^{+}_1 \\
   &+x_{i-1,0}^{-}\left(x_{i-1,2}^{-}+\frac{1}{2}x_{i-1,1}^{-}+x_{i-1,1}^{-}h_{i-2,0}\right)\overline{x_{i,0}^{-}\ldots x_{i,0}^{-}}v^{+}_1\\
   &+x_{i-1,0}^{-}\left(x_{i-1,1}^{-}+\frac{1}{2}x_{i-1,0}^{-}+x_{i-1,0}^{-}h_{i-2,0}\right)\overline{x_{i,0}^{-}\ldots x_{i,0}^{-}}v^{+}_1\\
   &=A\left(2A+2\right)\overline{\left(x_{i-1,0}^{-}\right)^2\ldots x_{i,0}^{-}}v^{+}_1\\
   &-\left(A+\frac{1}{2}\right)\left(A-\frac{1}{2}\right)\overline{\left(x_{i-1,0}^{-}\right)^2\ldots x_{i,0}^{-}}v^{+}_1\\
   &+\frac{\left(l-i\right)\left(l-i-2\right)}{2}\overline{\left(x_{i-1,0}^{-}\right)^2\ldots x_{i,0}^{-}}v^{+}_1-\frac{1}{2}\left(A-\frac{1}{2}\right)\overline{\left(x_{i-1,0}^{-}\right)^2\ldots x_{i,0}^{-}}v^{+}_1\\
   &+\Bigg(\left(A+\frac{1}{2}\right)^2+\frac{1}{2}\left(A+\frac{1}{2}\right)\Bigg)\overline{\left(x_{i-1,0}^{-}\right)^2\ldots x_{i,0}^{-}}v^{+}_1\\
   &+\left(A+\frac{1}{2}\right)\overline{\left(x_{i-1,0}^{-}\right)^2\ldots x_{i,0}^{-}}v^{+}_1+\frac{1}{2}\overline{\left(x_{i-1,0}^{-}\right)^2\ldots x_{i,0}^{-}}v^{+}_1\\
%   &= \left(2\left(A+1\right)^2+\frac{\left(l-i-2\right)\left(l-i\right)}{2}\right)\overline{\left(x_{i-1,0}^{-}\right)^2\ldots x_{i,0}^{-}}v^{+}_1\\
   &= \Bigg(2\left(a_1+\frac{l-i+2}{2}\right)^2+\frac{\left(l-i-2\right)\left(l-i\right)}{2}\Bigg)\overline{\left(x_{i-1,0}^{-}\right)^2\ldots x_{i,0}^{-}}v^{+}_1.
\end{align*}
Similar to the last lemma,  $a=a_1+1$ and $b=a_1+l-i$. If $i<l-2$, then $Y_{i-1}\left(\overline{x_{i,0}^{-}\ldots x_{i,0}^{-}}v^{+}_1\right)\cong W_1\left(a_1+l-i\right)\otimes W_1\left(a_1+1\right)$. If $i=l-2$, then $Y_{i-1}\left(\overline{x_{i,0}^{-}\ldots x_{i,0}^{-}}v^{+}_1\right)$ is either 3-dimensional and isomorphic to $W_2\left(a_1+1\right)$ or 4-dimensional and isomorphic to $W_1\left(a_1+2\right)\otimes W_1\left(a_1+1\right)$.
\end{proof}
If $i>3$, the computations will keep going. Similar to the above lemma, we have
\begin{lemma}\label{l-2i34d23} Let $1\leq m\leq i-2$.

\begin{enumerate}
  \item If $i<l-2$, $Y_{m}\Big(\overline{\left(x_{m+1,0}^{-}\right)^2\ldots x_{i,0}^{-}}v^{+}_1\Big)\cong W_1\left(a_1+\frac{2l-i-m-2}{2}\right)\otimes W_1\left(a_1+\frac{i-m}{2}\right)$.
  \item If $i=l-2$, $Y_{m}\Big(\overline{\left(x_{m+1,0}^{-}\right)^2\ldots x_{i,0}^{-}}v^{+}_1\Big)$ is either 3-dimensional and isomorphic to $W_2\left(a_1+\frac{i-m}{2}\right)$ or 4-dimensional andis isomorphic to\\ $W_1\left(a_1+\frac{i-m+2}{2}\right)\otimes W_1\left(a_1+\frac{i-m}{2}\right)$.
\end{enumerate}
\end{lemma}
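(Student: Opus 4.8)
The plan is to prove the lemma by downward induction on $m$, taking Lemma \ref{l-2i34d2} (the case $m=i-2$) as the base case and deducing the statement for $m$ from that for $m+1$. Write $w_m=\overline{\left(x_{m+1,0}^{-}\right)^2\ldots x_{i,0}^{-}}v^{+}_1$, so that $w_m=\left(x_{m+1,0}^{-}\right)^2 w_{m+1}$, where $w_{m+1}=\overline{\left(x_{m+2,0}^{-}\right)^2\ldots x_{i,0}^{-}}v^{+}_1$ is the vector treated at the previous inductive level. First I would read off the weight of $w_m$ from Proposition \ref{rileq2}; exactly as in the two preceding lemmas this gives $\mathrm{wt}(w_m)=2\omega_m-2\omega_{m+1}+\omega_i$, whence $h_{m,0}w_m=2w_m$ and the associated polynomial $P(u)$ of the highest weight $Y_m$-module $Y_m(w_m)$ has degree $2$. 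Writing $P(u)=(u-a)(u-b)$ with $\operatorname{Re}(a)\leq\operatorname{Re}(b)$, the eigenvalues of $h_{m,1}$ and $h_{m,2}$ on $w_m$ determine $a+b+1$ and $a^2+b^2+a+b$, hence $a$ and $b$.

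The computational heart is evaluating $h_{m,1}w_m$ and $h_{m,2}w_m$. The key simplification is that $h_{m,r}w_{m+1}=0$ for all $r\geq 0$: the indices occurring in $w_{m+1}$ are all $\geq m+2$ and hence non-adjacent to $m$, so $h_{m,r}$ commutes with every factor of $w_{m+1}$ by the defining relations, while $h_{m,r}v_1^{+}=0$ because the Drinfeld polynomial of $V_{a}(\omega_i)$ at node $m\neq i$ is trivial. Consequently $h_{m,r}w_m=[h_{m,r},(x_{m+1,0}^{-})^2]w_{m+1}$, and expanding this commutator through the relation $[h_{m,r+1},x_{m+1,s}^{-}]-[h_{m,r},x_{m+1,s+1}^{-}]=\frac{1}{2}\left(h_{m,r}x_{m+1,s}^{-}+x_{m+1,s}^{-}h_{m,r}\right)$ reduces everything to the actions of $x_{m+1,1}^{-}x_{m+1,0}^{-}$, $(x_{m+1,1}^{-})^2$ and $x_{m+1,0}^{-}x_{m+1,2}^{-}$ on $w_{m+1}$. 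These are supplied by the inductive hypothesis together with Corollary \ref{w1bw1a}: since $Y_{m+1}(w_{m+1})$ is a highest weight quotient of $W_1(b')\otimes W_1(a')$ with $a'=a_1+\frac{i-m-1}{2}$ and $b'=a_1+\frac{2l-i-m-3}{2}$, the needed scalars are the universal ones of Corollary \ref{w1bw1a}, depending only on $a'$ and $b'$. Carrying out the bookkeeping exactly as in Lemma \ref{l-2i34d2} yields $a+b+1=2a_1+l-m$ together with the matching second-order identity, and solving the resulting quadratic gives $a=a_1+\frac{i-m}{2}$ and $b=a_1+\frac{2l-i-m-2}{2}$.

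Finally I would identify the module. Since $h_{m,0}w_m=2w_m$, the module $Y_m(w_m)$ has dimension at least $3$, and it is a highest weight quotient of the local Weyl module $W(P)$ of $\ysl$, which is $4$-dimensional and isomorphic to $W_1(b)\otimes W_1(a)$. Here $b-a=l-i-1$. When $i<l-2$ we have $b-a>1$, so $W(P)$ is irreducible and hence $Y_m(w_m)\cong W_1\!\left(a_1+\frac{2l-i-m-2}{2}\right)\otimes W_1\!\left(a_1+\frac{i-m}{2}\right)$. When $i=l-2$ we have $b-a=1$, so $W(P)$ is reducible and $Y_m(w_m)$ is either its $3$-dimensional irreducible quotient $W_2\!\left(a_1+\frac{i-m}{2}\right)$ or all of $W_1\!\left(a_1+\frac{i-m+2}{2}\right)\otimes W_1\!\left(a_1+\frac{i-m}{2}\right)$.

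The main obstacle I anticipate is the $h_{m,2}$ computation: as in Lemma \ref{l-2i34d2} it unfolds into many terms requiring repeated use of the Yangian relations and of the second-order formulas of Corollary \ref{w1bw1a}, and one must verify that the $W_2$-versus-tensor ambiguity in $Y_{m+1}(w_{m+1})$ (which occurs only when $i=l-2$) does not affect the outcome. It does not, precisely because the scalars of Corollary \ref{w2ihw} agree with those of Corollary \ref{w1bw1a} under the specialization $b'=a'+1$, so the induction proceeds uniformly in both the generic and the borderline case.
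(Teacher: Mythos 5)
Your proposal is correct and follows essentially the same route as the paper: downward induction on $m$ with Lemma \ref{l-2i34d2} as the base case, the reduction $h_{m,r}w_m=[h_{m,r},(x_{m+1,0}^{-})^2]w_{m+1}$ (valid since $h_{m,r}$ commutes with all factors of index $\geq m+2$ and kills $v_1^{+}$), evaluation of the $h_{m,1}$ and $h_{m,2}$ eigenvalues via the scalars of Corollaries \ref{w2ihw}/\ref{w1bw1a} supplied by the inductive hypothesis, solving the resulting quadratic for $a$ and $b$, and identifying the module through the local Weyl module theory of $\ysl$ according to whether $b-a=l-i-1$ equals $1$. Your observation that the $W_2$-versus-tensor ambiguity at $i=l-2$ is harmless because the relevant scalars coincide under $b'=a'+1$ is a point the paper leaves implicit, but it is the same argument.
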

\begin{proof}
It follows from Proposition \ref{rileq2} that the weight of vector $\overline{(x_{m+1,0}^{-})^2\ldots x_{i,0}^{-}}v^{+}_1$ is $2\omega_{m}-2\omega_{m+1}+\omega_{i}$. Thus
$$h_{m,0}\overline{\left(x_{m+1,0}^{-}\right)^2\ldots x_{i,0}^{-}}v^{+}_1=2\overline{\left(x_{m+1,0}^{-}\right)^2\ldots x_{i,0}^{-}}v^{+}_1.$$
We are going to show the following claims are true by induction on $m$ downward:
$$h_{m,1}\overline{\left(x_{m+1,0}^{-}\right)^2\ldots x_{i,0}^{-}}v^{+}_1= \left(2a_1+l-m\right)\overline{\left(x_{m+1,0}^{-}\right)^2\ldots x_{i,0}^{-}}v^{+}_1;$$ and
\begin{align*}
% \nonumber to remove numbering \left(before each equation\right)
   &h_{m,2}\overline{\left(x_{m+1,0}^{-}\right)^2\ldots x_{i,0}^{-}}v^{+}_1  \\
   &=\Big(2\left(a_1+\frac{l-m}{2}\right)^2+\frac{\left(l-i-2\right)\left(l-i\right)}{2}\Big)\overline{\left(x_{m+1,0}^{-}\right)^2\ldots x_{i,0}^{-}}v^{+}_1.
\end{align*}
The basis of induction is proved in the above lemma. Suppose that the claims are true for $k\geq m+1$. We next show that they are also true for $k=m$.
By the induction hypothesis, we may assume that $$h_{m+1,1}\overline{\left(x_{m+2,0}^{-}\right)^2\ldots x_{i,0}^{-}}v^{+}_1= \left(2a_1+l-m-1\right)\overline{\left(x_{m+2,0}^{-}\right)^2\ldots x_{i,0}^{-}}v^{+}_1;$$ and
\begin{align*}
% \nonumber to remove numbering \left(before each equation\right)
   &h_{m+1,2}\overline{\left(x_{m+2,0}^{-}\right)^2\ldots x_{i,0}^{-}}v^{+}_1  \\
   &=\Big(2\left(a_1+\frac{l-m-1}{2}\right)^2+\frac{\left(l-i-2\right)\left(l-i\right)}{2}\Big)\overline{\left(x_{m+2,0}^{-}\right)^2\ldots x_{i,0}^{-}}v^{+}_1.
\end{align*}
Set $c=2a_1+l-m-1$.
\begin{align*}
% \nonumber to remove numbering \left(before each equation\right)
h_{m,1}&\overline{\left(x_{m+1,0}^{-}\right)^2\ldots x_{i,0}^{-}}v^{+}_1 \\
&= h_{m,1}\left(x_{m+1,0}^{-}\right)^2\overline{\left(x_{m+2,0}^{-}\right)^2\ldots x_{i,0}^{-}}v^{+}_1\\
&= [h_{m,1},\left(x_{m+1,0}^{-}\right)^2]\overline{\left(x_{m+2,0}^{-}\right)^2\ldots x_{i,0}^{-}}v^{+}_1\\
&= [h_{m,1},x_{m+1,0}^{-}]x_{m+1,0}^{-}\overline{\left(x_{m+2,0}^{-}\right)^2\ldots x_{i,0}^{-}}v^{+}_1\\
&+ x_{m+1,0}^{-}[h_{m,1},x_{m+1,0}^{-}]\overline{\left(x_{m+2,0}^{-}\right)^2\ldots x_{i,0}^{-}}v^{+}_1\\
&=\left(x_{m+1,1}^{-}+\frac{1}{2}x_{m+1,0}^{-}+x_{m+1,0}^{-}h_{m,0}\right)x_{m+1,0}^{-}\overline{\left(x_{m+2,0}^{-}\right)^2\ldots x_{i,0}^{-}}v^{+}_1 \\
%&\mathrm{the weight of $x_{m+1,0}^{-}\overline{\left(x_{m+2,0}^{-}\right)^2\ldots x_{i,0}^{-}}v^{+}_1$ is $\omega_{m-1}-\omega_{m+1}+\omega_i$}\\
&+ x_{m+1,0}^{-}\left(x_{m+1,1}^{-}+\frac{1}{2}x_{m+1,0}^{-}+x_{m+1,0}^{-}h_{m,0}\right)\overline{\left(x_{m+2,0}^{-}\right)^2\ldots x_{i,0}^{-}}v^{+}_1 \\
%&\mathrm{the weight of $\overline{\left(x_{m+2,0}^{-}\right)^2\ldots x_{i,0}^{-}}v^{+}_1$ is $2\omega_{m}-2\omega_{m+1}+\omega_i$}\\
&=\left(x_{m+1,1}^{-}x_{m+1,0}^{-}+x_{m+1,0}^{-}x_{m+1,1}^{-}\right)\overline{\left(x_{m+2,0}^{-}\right)^2\ldots x_{i,0}^{-}}v^{+}_1\\
&+2\overline{\left(x_{m+1,0}^{-}\right)^2\ldots x_{i,0}^{-}}v^{+}_1\\
&=\left(2a_1+l-m-2+2\right)\overline{\left(x_{m+1,0}^{-}\right)^2\ldots x_{i,0}^{-}}v^{+}_1\\
&=\Big(2a_1+l-m\Big)\overline{\left(x_{m+1,0}^{-}\right)^2\ldots x_{i,0}^{-}}v^{+}_1.
\end{align*}
Denote $\overline{\left(x_{m+2,0}^{-}\right)^2\ldots x_{i,0}^{-}}v^{+}_1$ by $v$.
\begin{align*}
% \nonumber to remove numbering \left(before each equation\right)
h_{m-1,2}&\overline{\left(x_{m+1,0}^{-}\right)^2\ldots x_{i,0}^{-}}v^{+}_1 \\
&= h_{m-1,2}\left(x_{m+1,0}^{-}\right)^2v\\
&= [h_{m-1,2},\left(x_{m+1,0}^{-}\right)^2]v\\
&= [h_{m-1,2},x_{m+1,0}^{-}]x_{m+1,0}^{-}v\\
&+ x_{m+1,0}^{-}[h_{m-1,2},x_{m+1,0}^{-}]v\\
&=\Big([h_{m,1}, x_{m+1,1}^{-}]+\frac{1}{2}\left(h_{m,1}x_{m+1,0}^{-}+x_{m+1,0}^{-}h_{m,1}\right)\Big)x_{m+1,0}^{-}v \\
&+ x_{m+1,0}^{-}\Big([h_{m,1}, x_{m+1,1}^{-}]+\frac{1}{2}\left(h_{m,1}x_{m+1,0}^{-}+x_{m+1,0}^{-}h_{m,1}\right)\Big)v\\
&=[h_{m,1}, x_{m+1,1}^{-}]x_{m+1,0}^{-}v \\
&+ \frac{1}{2}h_{m,1}\left(x_{m+1,0}^{-}\right)^2v+x_{m+1,0}^{-}h_{m,1}x_{m+1,0}^{-}v\\
%& \mathrm{the second is a combination of terms. half come from latter expression.}\\
&+ x_{m+1,0}^{-}[h_{m,1}, x_{m+1,1}^{-}]v\\
&=h_{m,1}x_{m+1,1}^{-}x_{m+1,0}^{-}v-x_{m+1,1}^{-}h_{m,1}x_{m+1,0}^{-}v\\
&+ \frac{1}{2}h_{m,1}\left(x_{m+1,0}^{-}\right)^2v+x_{m+1,0}^{-}[h_{m,1},x_{m+1,0}^{-}]v\\
&+ x_{m+1,0}^{-}[h_{m,1}, x_{m+1,1}^{-}]v \\
&=\frac{1}{2}\left(c-2\right)h_{m,1}\left(x_{m+1,0}^{-}\right)^2v\\%\mathrm{cor 3.5 and induction hypothesis}\\
&-x_{m+1,1}^{-}\left(x_{m+1,1}^{-}+\frac{1}{2}x_{m+1,0}^{-}+x_{m+1,0}^{-}h_{m,0}\right)v \\
&+ \frac{1}{2}h_{m,1}\left(x_{m+1,0}^{-}\right)^2v\\
&+x_{m+1,0}^{-}\left(x_{m+1,1}^{-}+\frac{1}{2}x_{m+1,0}^{-}+x_{m+1,0}^{-}h_{m,0}\right)v\\
&+ x_{m+1,0}^{-}\left(x_{m+1,2}^{-}+\frac{1}{2}x_{m+1,1}^{-}+x_{m+1,1}^{-}h_{m,0}\right)v \\
&=\frac{1}{2}\left(c-1\right)h_{m,1}\left(x_{m+1,0}^{-}\right)^2v\\%\mathrm{combine with third row}\\
&-\left(x_{m+1,1}^{-}\right)^2v-\frac{1}{2}x_{m+1,1}^{-}x_{m+1,0}^{-}v \\
&+\frac{3}{2}x_{m+1,0}^{-}x_{m+1,1}^{-}v+\frac{1}{2}\left(x_{m+1,0}^{-}\right)^2v\\
&+ x_{m+1,0}^{-}x_{m+1,2}^{-}v \\
&=\frac{1}{2}\left(c-1\right)\left(c+1\right)\overline{\left(x_{m+1,0}^{-}\right)^2\ldots x_{i,0}^{-}}v^{+}_1\\%\mathrm{combine with third row}\\
&-\Big(\frac{1}{4}c^2-\frac{c}{2}-\frac{\left(l-i-2\right)\left(l-i\right)}{4}\Big)\overline{\left(x_{m+1,0}^{-}\right)^2\ldots x_{i,0}^{-}}v^{+}_1\\
&-\frac{1}{4}\left(c-2\right)\overline{\left(x_{m+1,0}^{-}\right)^2\ldots x_{i,0}^{-}}v^{+}_1 \\
&+\frac{3}{4}c\left(x_{m+1,0}^{-}\right)^2v+\frac{1}{2}\overline{\left(x_{m+1,0}^{-}\right)^2\ldots x_{i,0}^{-}}v^{+}_1\\
&+\frac{1}{2}\Big(\frac{1}{2}c^2+\frac{\left(l-i-2\right)\left(l-i\right)}{2}\Big)\overline{\left(x_{m+1,0}^{-}\right)^2\ldots x_{i,0}^{-}}v^{+}_1 \\
&=\Big(\frac{1}{2}\left(c+1\right)^2+\frac{\left(l-i-2\right)\left(l-i\right)}{2}\Big)\overline{\left(x_{m+1,0}^{-}\right)^2\ldots x_{i,0}^{-}}v^{+}_1\\
&=\Bigg(2\left(a_1+\frac{l-m}{2}\right)^2+\frac{\left(l-i-2\right)\left(l-i\right)}{2}\Bigg)\overline{\left(x_{m+1,0}^{-}\right)^2\ldots x_{i,0}^{-}}v^{+}_1.
\end{align*}
After a similar discussion as in Lemma \ref{l-2is34d}, we have $a=a_1+\frac{i-m}{2}$ and $b=a_1+\frac{2l-i-m-2}{2}$. If $i<l-2$, $Y_{m}\Big(\overline{\left(x_{m+1,0}^{-}\right)^2\ldots x_{i,0}^{-}}v^{+}_1\Big)\cong W_1\left(a_1+\frac{2l-i-m-2}{2}\right)\otimes W_1\left(a_1+\frac{i-m}{2}\right)$. If $i=l-2$, $Y_{m}\Big(\overline{\left(x_{m+1,0}^{-}\right)^2\ldots x_{i,0}^{-}}v^{+}_1\Big)$ is either 3 or 4-dimensional and isomorphic to $W_2\left(a_1+\frac{i-m}{2}\right)$ or $W_1\left(a_1+\frac{i-m+2}{2}\right)\otimes W_1\left(a_1+\frac{i-m}{2}\right)$, respectively.
\end{proof}

%The next lemma is need to the general discussion.

\begin{lemma}\label{i=l-2v2}
$Y_{i}\left(\overline{\left(x_{1,0}^{-}\right)^2\ldots x_{i,0}^{-}}v^{+}_1\right)\cong W_1\left(a_1+1\right)$.
\end{lemma}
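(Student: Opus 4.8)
The plan is to follow the two-stage strategy of Lemmas \ref{l-2is34d}--\ref{l-2i34d23}: first read off the degree of the associated polynomial from the weight, then extract its root from the eigenvalue of $h_{i,1}$. Set $w:=\overline{\left(x_{1,0}^{-}\right)^2\ldots x_{i,0}^{-}}v^{+}_1$. By Proposition \ref{rileq2} (Case 2 of its proof) the weight of $w$ is $-2\omega_1+\omega_i$, so since $i\geq 2$ we have $h_{i,0}w=w$; together with the maximality argument of Step 2 of Proposition \ref{mtoysl2l} (applied with $i'=i$), $w$ is a $Y_i$-maximal vector whose associated polynomial $P(u)$ has degree $1$. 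Hence $Y_i(w)\cong W_1(a)$ with $a$ fixed by $h_{i,1}w=aw$, and it remains to prove $a=a_1+1$.

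To compute $a$, write $w=\left(x_{1,0}^{-}\right)^2\cdots\left(x_{i-1,0}^{-}\right)^2 z$, where $z=\overline{x_{i,0}^{-}\ldots x_{i,0}^{-}}v^{+}_1$ is the single-power segment. For $m\leq i-2$ the nodes $m$ and $i$ are non-adjacent in the Dynkin diagram of $\nyo$, so the defining relations give $[h_{i,r},x_{m,s}^{-}]=0$ for all $r,s$, whence each factor $\left(x_{m,0}^{-}\right)^2$ with $m\leq i-2$ commutes with $h_{i,1}$. Thus $h_{i,1}w=\left(x_{1,0}^{-}\right)^2\cdots\left(x_{i-2,0}^{-}\right)^2\,h_{i,1}\left(x_{i-1,0}^{-}\right)^2 z$, and it suffices to show $h_{i,1}\left(x_{i-1,0}^{-}\right)^2 z=(a_1+1)\left(x_{i-1,0}^{-}\right)^2 z$.

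Two inputs feed this last step. First, writing $z=x_{i,0}^{-}z'$ with $z'=\overline{x_{i+1,0}^{-}\ldots x_{i,0}^{-}}v^{+}_1$, item (iii) of Proposition \ref{yoc3p1} with $k=i$ gives $Y_i(z')\cong W_1(a_1+l-i-1)$; hence $z$ is the lowest weight vector of this $2$-dimensional $Y_i$-module and Corollary \ref{slw1a} yields $h_{i,1}z=-(a_1+l-i-1)z$. Second, Corollary \ref{soi-1if} supplies $x_{i-1,1}^{-}x_{i-1,0}^{-}z=\left(a_1+\frac{l-i-1}{2}\right)\left(x_{i-1,0}^{-}\right)^2 z$ and, via the symmetric sum recorded there, $x_{i-1,0}^{-}x_{i-1,1}^{-}z=\left(a_1+\frac{l-i+1}{2}\right)\left(x_{i-1,0}^{-}\right)^2 z$; this is where the two possibilities for $Y_{i-1}(z)$ (the $W_1\otimes W_1$ and the $W_2$ cases of Lemma \ref{l-2is34d}) are absorbed uniformly.

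Expanding $[h_{i,1},\left(x_{i-1,0}^{-}\right)^2]$ through $[h_{i,1},x_{i-1,0}^{-}]=x_{i-1,1}^{-}+\tfrac12 x_{i-1,0}^{-}+x_{i-1,0}^{-}h_{i,0}$, and using $h_{i,0}z=-z$ together with $h_{i,0}x_{i-1,0}^{-}z=0$ (the weight of $x_{i-1,0}^{-}z$ is $\omega_{i-2}$), the two inputs combine to $[h_{i,1},\left(x_{i-1,0}^{-}\right)^2]z=(2a_1+l-i)\left(x_{i-1,0}^{-}\right)^2 z$. Adding $\left(x_{i-1,0}^{-}\right)^2 h_{i,1}z=-(a_1+l-i-1)\left(x_{i-1,0}^{-}\right)^2 z$ gives exactly $(a_1+1)\left(x_{i-1,0}^{-}\right)^2 z$, and commuting the remaining squares back through $h_{i,1}$ yields $h_{i,1}w=(a_1+1)w$, so $Y_i(w)\cong W_1(a_1+1)$. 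The only delicate points are the bookkeeping of the $h_{i,0}$-eigenvalues along the chain and checking that Corollary \ref{soi-1if} applies verbatim in the boundary case $i=l-2$; both are routine, and the non-adjacency reduction is precisely what keeps this computation short in contrast to the degree-two computations of the earlier lemmas.
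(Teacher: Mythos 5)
Your proposal is correct and follows essentially the same route as the paper's own proof: read off degree one from the weight $-2\omega_1+\omega_i$, commute $h_{i,1}$ past the non-adjacent squared factors, expand $[h_{i,1},(x_{i-1,0}^{-})^2]$ via $[h_{i,1},x_{i-1,0}^{-}]=x_{i-1,1}^{-}+\tfrac12 x_{i-1,0}^{-}+x_{i-1,0}^{-}h_{i,0}$, feed in the symmetric sum from Corollary \ref{soi-1if} and the lowest-weight eigenvalue $h_{i,1}z=-(a_1+l-i-1)z$ coming from Proposition \ref{yoc3p1}(iii) and Corollary \ref{slw1a}, and obtain $(2a_1+l-i)-(a_1+l-i-1)=a_1+1$. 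The only differences are presentational (you isolate the two cross terms separately and correctly record the weight of $x_{i-1,0}^{-}z$ as $\omega_{i-2}$, where the paper writes $\omega_{i-1}$, a harmless typo since both vanish on $h_{i,0}$).
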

\begin{proof}
It follows from Proposition \ref{rileq2} that the weight of vector $\overline{(x_{1,0}^{-})^2\ldots x_{i,0}^{-}}v^{+}_1$ is $-2\omega_{1}+\omega_{i}$. Thus $$h_{i,0}\overline{\left(x_{1,0}^{-}\right)^2\ldots x_{i,0}^{-}}v^{+}_1=\overline{\left(x_{1,0}^{-}\right)^2\ldots x_{i,0}^{-}}v^{+}_1.$$ It tells us that the associated polynomial has of degree 1. Thus we have $$Y_{i}\left(\overline{\left(x_{1,0}^{-}\right)^2\ldots x_{i,0}^{-}}v^{+}_1\right)\cong W_1\left(a\right).$$

The eigenvalue of $h_{i,1}$ on $\overline{\left(x_{1,0}^{-}\right)^2\ldots x_{i,0}^{-}}v^{+}_1$ tells us the value of $a$.
Note that $wt(x_{i-1,0}^{-}\overline{x_{i,0}^{-}\ldots x_{i,0}^{-}}v^{+}_1)=\omega_{i-1}$.
\begin{align*}
% \nonumber to remove numbering \left(before each equation\right)
   h_{i,1}&\overline{\left(x_{1,0}^{-}\right)^2\ldots x_{i,0}^{-}}v^{+}_1 \\
   &=\left(x_{1,0}^{-}\right)^2\ldots\left(x_{i-2,0}^{-}\right)^2[h_{i,1}, x_{i-1,0}^{-}]\overline{x_{i-1,0}^{-}\ldots x_{i,0}^{-}}v^{+}_1\\
   &+\left(x_{1,0}^{-}\right)^2\ldots \left(x_{i-2,0}^{-}\right)^2x_{i-1,0}^{-}[h_{i,1}, x_{i-1,0}^{-}]\overline{x_{i,0}^{-}\ldots x_{i,0}^{-}}v^{+}_1 \\
   &+\left(x_{1,0}^{-}\right)^2\ldots \left(x_{i-1,0}^{-}\right)^2 h_{i,1}\overline{x_{i,0}^{-}\ldots x_{i,0}^{-}}v^{+}_1\\
   &=  \left(x_{1,0}^{-}\right)^2\ldots\left(x_{i-2,0}^{-}\right)^2x_{i-1,0}^{-}\left(x_{i-1,1}^{-}+\frac{1}{2}x_{i-1,0}^{-}+x_{i-1,0}^{-}h_{i,0}\right)\overline{x_{i,0}^{-}\ldots x_{i,0}^{-}}v^{+}_1\\
   &+ \left(x_{1,0}^{-}\right)^2\ldots\left(x_{i-2,0}^{-}\right)^2\left(x_{i-1,1}^{-}+\frac{1}{2}x_{i-1,0}^{-}+x_{i-1,0}^{-}h_{i,0}\right)x_{i-1,0}^{-}\overline{x_{i,0}^{-}\ldots x_{i,0}^{-}}v^{+}_1\\
   &- \left(a_1+l-i-1\right)\overline{\left(x_{1,0}^{-}\right)^2\ldots x_{i,0}^{-}}v^{+}_1\\
   &= \Big(2a_1+l-i-\left(a_1+l-i-1\right)\Big)\overline{\left(x_{1,0}^{-}\right)^2\ldots x_{i,0}^{-}}v^{+}_1\\
   &= \left(a_1+1\right)\overline{\left(x_{1,0}^{-}\right)^2\ldots x_{i,0}^{-}}v^{+}_1.
\end{align*}
\end{proof}
\begin{lemma}
Denote $\mathbf{s}_1=s_1s_2\ldots s_{l-2}s_ls_{l-1}\ldots s_i$. Then $\mathbf{s}_1^{-1}\left(\alpha_j\right)\in \Delta^{+}$, where $j=2,3,\ldots, l$, and $\alpha_j$ are the simple roots of $\nyo$.
\end{lemma}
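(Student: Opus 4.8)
The plan is to reduce the statement to an explicit computation with signed permutations. Recall that for any $w\in\W$ and any simple root $\alpha_j$ one has $w^{-1}(\alpha_j)\in\Delta^{+}$ if and only if $s_j$ is not a left descent of $w$; so the lemma amounts to saying that $s_1$ is the \emph{only} left descent of $\mathbf{s}_1$. Rather than argue this abstractly, I would simply compute $\mathbf{s}_1^{-1}$ as a signed permutation of the coordinate functions $\mu_1,\ldots,\mu_l$ and apply it to each $\alpha_j$. Here I use the action recalled at the start of this section: $s_k$ interchanges $\mu_k$ and $\mu_{k+1}$ for $1\le k\le l-1$, while $s_l(\mu_{l-1})=-\mu_l$, $s_l(\mu_l)=-\mu_{l-1}$ and $s_l(\mu_j)=\mu_j$ otherwise, together with $\alpha_j=\mu_j-\mu_{j+1}$ for $j\le l-1$ and $\alpha_l=\mu_{l-1}+\mu_l$.

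First I would factor $\mathbf{s}_1=P_1\,s_l\,P_2$, where $P_1=s_1s_2\cdots s_{l-2}$ and $P_2=s_{l-1}s_{l-2}\cdots s_i$. Each of $P_1,P_2$ involves no $s_l$, hence is a genuine permutation of the $\mu_k$: a direct check shows that $P_1$ is the cycle sending $\mu_m\mapsto\mu_{m+1}$ for $1\le m\le l-2$ and $\mu_{l-1}\mapsto\mu_1$, while $P_2$ sends $\mu_i\mapsto\mu_l$ and $\mu_m\mapsto\mu_{m-1}$ for $i+1\le m\le l$. Passing the middle reflection $s_l$ through these and inverting, I expect to obtain the closed form
\begin{align*}
\mathbf{s}_1^{-1}(\mu_1)&=-\mu_i, & \mathbf{s}_1^{-1}(\mu_k)&=\mu_{k-1}\ (2\le k\le i),\\
\mathbf{s}_1^{-1}(\mu_m)&=\mu_m\ (i+1\le m\le l-1), & \mathbf{s}_1^{-1}(\mu_l)&=-\mu_l.
\end{align*}
A useful sanity check along the way is that exactly two sign changes occur, so that $\mathbf{s}_1^{-1}$ indeed lies in the Weyl group of $\nyo$, and that the underlying index map is a bijection of $\{1,\ldots,l\}$.

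With this formula in hand, the conclusion follows by a short case analysis on $j$. Applying $\mathbf{s}_1^{-1}$ to $\alpha_j=\mu_j-\mu_{j+1}$ gives $\alpha_{j-1}$ for $2\le j\le i-1$, gives $\mu_{i-1}-\mu_{i+1}=\alpha_{i-1}+\alpha_i$ for $j=i$, and gives $\alpha_j$ itself for $i+1\le j\le l-2$; for $j=l-1$ one gets $\mu_{l-1}+\mu_l=\alpha_l$, and for $j=l$ one gets $\mu_{l-1}-\mu_l=\alpha_{l-1}$. In every case the image is a positive root, which proves the claim, and the degenerate ranges (for instance $i=2$, where the first range is empty, or $i=l-2$) are covered by the same formulas. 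The only delicate point in the whole argument is bookkeeping the composition order and the two sign changes introduced by $s_l$ when passing from $P_2$ through $s_l$ to $P_1$; once the signed permutation is pinned down correctly the rest is immediate. This computation also justifies Step 1 of Propositions \ref{v+=gv-} and \ref{mtoysl2l}, since $\mathbf{s}_1$ is precisely the last factor of the reduced word $w_i$.
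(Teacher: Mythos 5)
Your proposal is correct and takes essentially the same route as the paper's proof: the paper likewise computes $\mathbf{s}_1^{-1}$ directly as a signed permutation of the coordinate functions $\mu_k$ and records exactly the images of the simple roots that you obtain (it merely asserts these formulas and calls the verification ``trivial,'' whereas you derive them cleanly via the factorization $\mathbf{s}_1=P_1 s_l P_2$). One minor caveat: your closing remark overreaches, since Step 1 of Propositions \ref{v+=gv-} and \ref{mtoysl2l} concerns all the partial products $\sigma_i$ rather than just the full factor $\mathbf{s}_1$; the present lemma is in fact what feeds into Proposition \ref{v2aga1}.
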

\begin{proof}
$\mathbf{s}_1^{-1}=s_i\ldots s_{l-1}s_ls_{l-2}\ldots s_1$. We claim that \begin{equation*}
\mathbf{s}_1^{-1}\left(\mu_j-\mu_{j+1}\right)=\begin{cases}
u_j-u_{j+1}\qquad \mathrm{if}\qquad i< j\neq l-1 \\
\mu_{j-1}-u_{j+\delta_{ij}} \qquad \mathrm{if}\qquad 2\leq j\leq i.
\end{cases}
\end{equation*}
and $$\mathbf{s}_1^{-1}\left(\mu_{l-1}\pm \mu_{l}\right)=\mu_{l-1}\mp\mu_{l}.$$
The proof of the claim is trivial.
\end{proof}
\begin{proposition}\label{v2aga1}Let $v_2=\overline{\left(x_{1,0}^{-}\right)^2\ldots x_{i,0}^{-}}v^{+}_1$.
\begin{enumerate}
  \item $Y_{i+1}\left(x_{i,0}^{-}v_2\right)$, $Y_{i+2}\left(x_{i+1,0}^{-}x_{i,0}^{-}v_2\right)$, $\ldots$ , $Y_{i}\left(\overline{x_{i+1,0}^{-}\ldots x_{i,0}^{-}}v_2\right)$ are 2-dimensional, and are isomorphic to $W_1\left(a\right)$ with $\operatorname{Re}\left(a\right)\geq \operatorname{Re}\left(a_1\right)$.
  \item $Y_{i-1}\left(\overline{x_{i,0}^{-}\ldots x_{i,0}^{-}}v_2\right)$, $Y_{i-2}\Big(\overline{\left(x_{i-1,0}\right)^2\ldots x_{i,0}^{-}}v_2\Big)$, $\ldots$, $Y_{2}\Big(\overline{(x_{3,0}^{-})^2\ldots x_{i,0}^{-}}v_2\Big)$ are either 3-dimensional or 4-dimensional and are isomorphic to either $W_2\left(a\right)$ or $W_1\left(b\right)\otimes W_1\left(a\right)$, $\operatorname{Re}\left(b\right)\geq \operatorname{Re}\left(a\right)$, respectively.
  \item $Y_{i}\Big(\overline{\left(x_{2,0}^{-}\right)^2\ldots x_{i,0}^{-}}v_2\Big)\cong W_1\left(a+2\right)$.
\end{enumerate}
The values of $a$ and $b$ can be recovered from the corresponding cases of Proposition \ref{yoc3p1} by replacing $a_1$ by $a_1+1$.
\end{proposition}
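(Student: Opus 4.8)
The plan is to treat $v_2$ as a new highest weight vector situated one level deeper along the path from $v_1^{+}$ to $v_1^{-}$, and to observe that the computation governing the path emanating from $v_2$ is, step for step, formally identical to the one already carried out in Proposition \ref{yoc3p1} and its supporting lemmas (Lemmas \ref{l-2is34d}, \ref{l-2i34d2}, \ref{l-2i34d23} and \ref{i=l-2v2}) for the path emanating from $v_1^{+}$. The only difference is in the base eigenvalue, which shifts from $a_1$ to $a_1+1$; this single substitution then propagates through every eigenvalue recursion and yields the parameters asserted in the three items. Accordingly, rather than reproduce the long calculations, I would invoke those earlier results directly, checking only that the two hypotheses they rely on — maximality of the starting vector and the value of the starting $h_{\bullet,1}$-eigenvalue — hold for $v_2$ in the shifted form.

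First I would verify that $v_2$ is a maximal vector for each relevant subalgebra $Y_{j'}$, so that every module $Y_{j'}(\cdots v_2)$ in the statement is genuinely a highest weight module. This is exactly what the lemma immediately preceding the proposition supplies: writing $v_2 = v_{\mathbf{s}_1(\omega_i)}$, the relations $\mathbf{s}_1^{-1}(\alpha_j)\in\Delta^{+}$ for $j=2,\ldots,l$ allow the same argument as in Step 2 of Proposition \ref{mtoysl2l} to conclude $x_{j,k}^{+}v_2=0$. Indeed, if some $x_{j,k}^{+}v_2\neq 0$, then $\mathbf{s}_1(\omega_i)+\alpha_j$ is a weight of $V_{a_1}(\omega_i)$, whence $\omega_i+\mathbf{s}_1^{-1}(\alpha_j)$ is a weight; since $\mathbf{s}_1^{-1}(\alpha_j)\in\Delta^{+}$ this weight strictly exceeds the highest weight $\omega_i$, a contradiction. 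Together with the fact that $v_2$ spans a one-dimensional weight space (by Proposition \ref{rileq2}) and is therefore a simultaneous eigenvector of the commuting $h_{j,r}$, this gives the highest weight property along the entire $v_2$-path.

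Next comes the base eigenvalue. Lemma \ref{i=l-2v2} already computes $h_{i,1}v_2=(a_1+1)v_2$, which is precisely the statement that $v_2$ now plays the role that $v_1^{+}$ played under $h_{i,1}v_1^{+}=a_1v_1^{+}$, but with $a_1$ replaced by $a_1+1$. With this base case fixed, every recursion in the proofs of Proposition \ref{yoc3p1} and of Lemmas \ref{l-2is34d}--\ref{i=l-2v2} carries over verbatim, because those recursions use only the defining relations of the Yangian, the identities of Corollary \ref{slw1a} (in the form $(\ref{Cor3.3})$) valid in each $W_1(\cdot)$, and the weights of the intermediate vectors. The intermediate weights on the $v_2$-path are governed by the remaining factors of $w_i$ after deleting $\mathbf{s}_1$, so they take the same shape as in Proposition \ref{rileq2}, and in particular the degree-$1$-or-$2$ dichotomy for the associated polynomials is unchanged. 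Consequently each $Y_{j'}$-module is isomorphic to $W_1(\cdot)$, $W_2(\cdot)$, or $W_1(\cdot)\otimes W_1(\cdot)$ with parameters obtained from the corresponding case of Proposition \ref{yoc3p1} under $a_1\mapsto a_1+1$, which is exactly the content of items (i)--(iii) and of the concluding sentence.

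The main obstacle is organizational rather than conceptual: one must align each parenthesis block and each intermediate weight of the $v_2$-path with its counterpart on the $v_1^{+}$-path and confirm that the shift is uniformly $+1$ and does not accumulate extra corrections from one block to the next. Once this alignment is made explicit, the inequalities $\operatorname{Re}(a)\geq\operatorname{Re}(a_1)$ and $\operatorname{Re}(b)\geq\operatorname{Re}(a)$ follow, as in the earlier lemmas, by simply comparing the explicit values of the parameters. I would therefore present the proof as a reduction to Proposition \ref{yoc3p1} and Lemmas \ref{l-2is34d}--\ref{i=l-2v2}, emphasizing that replacing the highest weight vector $v_1^{+}$ (with $h_{i,1}$-eigenvalue $a_1$) by $v_2$ (with $h_{i,1}$-eigenvalue $a_1+1$) alters every output by the single substitution $a_1\mapsto a_1+1$.
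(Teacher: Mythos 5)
Your proposal is correct and follows essentially the same route as the paper: establish that $v_2$ is maximal for the nodes $j\geq 2$ via the preceding lemma ($\mathbf{s}_1^{-1}(\alpha_j)\in\Delta^{+}$), read off the base eigenvalue $a_1+1$ from Lemma \ref{i=l-2v2}, and then rerun the computations of Proposition \ref{yoc3p1} with $a_1$ replaced by $a_1+1$. The only difference is one of packaging: the paper justifies the ``verbatim with a shift'' step by noting that the subalgebra $Y^{(1)}$ generated by the nodes $\{2,\ldots,l\}$ is isomorphic to $Y(D_{l-1})$ and that $Y^{(1)}(v_2)$ is a highest weight representation with Drinfeld polynomials $P_j=1$ for $j\neq i$ and $P_i(u)=u-(a_1+1)$ (using that the weight of $v_2$ is $-2\omega_1+\omega_i$), so the remaining path is literally an instance of the same setup at rank $l-1$ — a cleaner structural justification of the alignment you carry out step by step.
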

\begin{proof}
Removing the first node in the Dynkin diamgram of the Lie algebra of type $D_l$, then we get a simple Lie algebra which is isomorphic to the Lie algebra of type $D_{l-1}$. Denote $I'=\{2,3,\ldots, l\}$.  Let $Y^{\left(1\right)}$ be the Yangian generated by all $x_{j,r}^{\pm}$ and $h_{j,r}$ for $j\in I'$ and $r\in \mathbb{Z}_{\geq 0}$. $Y^{\left(1\right)}\cong Y\left(D_{l-1}\right)$.

From Proposition \ref{rileq2}, the weight of $v_2$ is $\mathbf{s}_1(\omega_i)$, and then $x_{j,0}^{+}v_2$ has weight $\mathbf{s}_1(\omega_i)+\alpha_j$, where $j\geq 2$. If $x_{j,0}^{+}v_2\neq 0$, $\mathbf{s}_1(\omega_i)+\alpha_j$ is a weight of $V_{a_1}(\omega_i)$, so is $\omega_i+\mathbf{s}_1^{-1}\left(\alpha_j\right)$. By the above Lemma, $\omega_i+\mathbf{s}_1^{-1}\left(\alpha_j\right)$ precedes $\omega_i$, which contradicts to the fact that $\omega_i$ is the highest weight. Thus $x_{j,0}^{+}v_2=0$. Therefore $v_2$ is a maximal vector of $Y^{\left(1\right)}$. $v_2$ has weight $-2\omega_1+\omega_i$ and then $h_{j,0}v_2=\delta_{ij}v_2$. therefore
$Y^{\left(1\right)}\left(v_2\right)$ is a highest weight representation with highest weight $P_j=1$ if $j\neq i$ and $P_i=\left(u-a\right)$. It follows from the above lemma that $a=a_1+1$. The rest of the proof of this Proposition is similarly to the proof of Proposition \ref{yoc3p1}, just replacing $a_1$ by $a_1+1$.

Note that if $i=2$, only part $\left(i\right)$ is true and necessary.
\end{proof}
For what follows, we assume that $i\geq 3$.
Let $3\leq m\leq i$. Similarly as the proposition above, we define $Y^{\left(m\right)}$  to be the Yangian generated by all $x_{j,r}^{\pm}$ and $h_{j,r}$ for all $j>m-1$. It is easy to see that $l-m+1\geq l-i+1\geq 2+1=3$.

If $l-m+1\geq 4$, then $Y^{\left(m\right)}$ is isomorphic to Yangian of $D_{l-m+1}$. By  induction on $m$, we have the following results. Again, when $m=i$, only the part $\left(i\right)$ of the below proposition is true and necessary.

\begin{proposition}\label{vmaga1}Let $3\leq m\leq i$ and $v_m=\overline{\left(x_{m-1,0}^{-}\right)^2\ldots x_{i,0}^{-}}v_{m-1}$, where $v_2$ is defined as above.
\begin{enumerate}
  \item $Y_{i+1}\left(x_{i,0}^{-}v_m\right)$, $\ldots$, $Y_{i}\left(\overline{x_{i+1,0}^{-}\ldots  x_{i,0}^{-}}v_m\right)$ are 2-dimensional, which are isomorphic to $W_1\left(a\right)$ with $\operatorname{Re}\left(a\right)\geq \operatorname{Re}\left(a_1\right)$.
  \item $Y_{i-1}\left(\overline{x_{i,0}^{-}\ldots x_{i,0}^{-}}v_m\right)$, $Y_{i-2}\Big(\overline{\left(x_{i-1,0}^{-}\right)^2\ldots x_{i,0}^{-}}v_m\Big)\ldots$, $Y_{m}\Big(\overline{\left(x_{m+1,0}^{-}\right)^2\ldots x_{i,0}^{-}}v_m\Big)$ are either 3-dimensional or 4-dimensional, which is isomorphic to either $W_2\left(a\right)$ or $W_1\left(b\right)\otimes W_1\left(a\right)$, $b\geq a$, respectively.
  \item $Y_{i}\Big(\overline{\left(x_{m,0}^{-}\right)^2\ldots x_{i,0}^{-}}v_{m}\Big)\cong W_1\left(a_1+m\right)$.
\end{enumerate}
\end{proposition}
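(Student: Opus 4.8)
The plan is to prove Proposition \ref{vmaga1} by induction on $m$, using Proposition \ref{v2aga1} as the base case ($m=2$) and reducing the inductive step to an application of Proposition \ref{yoc3p1} for a Yangian of smaller rank. The central observation is that deleting the first $m-1$ nodes of the Dynkin diagram of $D_l$ leaves the sub-diagram on the nodes $\{m, m+1, \ldots, l\}$, which is of type $D_{l-m+1}$ whenever $l-m+1 \geq 4$; correspondingly the subalgebra $Y^{(m)} \subseteq \yo$ generated by $x_{j,r}^{\pm}$ and $h_{j,r}$ for $j > m-1$ is isomorphic to $Y(D_{l-m+1})$. The low-rank boundary case $l-m+1 = 3$ falls under $D_3 \cong A_3$ and is covered after relabeling; since $l-m+1 \geq l-i+1 \geq 3$, no smaller rank occurs.

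First I would show that $v_m$ is a maximal vector for $Y^{(m)}$, that is, $x_{j,k}^{+} v_m = 0$ for all $j \in \{m, \ldots, l\}$ and all $k \geq 0$. By Proposition \ref{rileq2} the weight of $v_m$ lies on the Weyl-group orbit of $\omega_i$, say $\mathrm{wt}(v_m) = \sigma(\omega_i)$ for the product $\sigma$ of simple reflections attached to the string defining $v_m$, and the argument of the lemma preceding Proposition \ref{v2aga1} extends to give $\sigma^{-1}(\alpha_j) \in \Delta^{+}$ for each $j \geq m$. Then, exactly as in Step 2 of Proposition \ref{mtoysl2l}, any nonzero $x_{j,k}^{+} v_m$ would be a weight vector of weight $\sigma(\omega_i) + \alpha_j$, forcing $\omega_i + \sigma^{-1}(\alpha_j)$ to be a weight of $V_{a_1}(\omega_i)$ strictly above the highest weight $\omega_i$, a contradiction. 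Since the $h_{j,s}$ act as scalars on the one-dimensional weight space of $v_m$, this shows that $Y^{(m)}(v_m)$ is a highest weight $Y^{(m)}$-module.

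Next I would identify $Y^{(m)}(v_m)$ with a fundamental representation of $Y(D_{l-m+1})$. From the weight $-2\omega_{m-1} + \omega_i$ of $v_m$ (Proposition \ref{rileq2}), restricting to the Cartan of $Y^{(m)}$ gives $h_{j,0} v_m = \delta_{ij} v_m$, so the Drinfeld polynomials of $Y^{(m)}(v_m)$ are $P_j = 1$ for $j \neq i$ and $P_i$ of degree one. Computing the eigenvalue of $h_{i,1}$ on $v_m$, which is where the inductive hypothesis enters by propagating the spectral shift accumulated at each stage, yields $h_{i,1} v_m = (a_1 + m - 1) v_m$, whence $P_i(u) = u - (a_1 + m - 1)$ and $Y^{(m)}(v_m) \cong V_{a_1 + m - 1}(\omega_i)$ as a $Y(D_{l-m+1})$-module. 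With this identification, items (i)--(iii) follow from the corresponding items of Proposition \ref{yoc3p1} applied inside $Y^{(m)}$, with $a_1$ replaced throughout by $a_1 + m - 1$; in particular item (iii) reads $Y_i\big(\overline{(x_{m,0}^{-})^2 \ldots x_{i,0}^{-}} v_m\big) \cong W_1\big((a_1+m-1)+1\big) = W_1(a_1+m)$.

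The main obstacle I anticipate is the bookkeeping in the $h_{i,1}$-eigenvalue computation certifying the spectral shift $a_1 \mapsto a_1 + m - 1$: one must verify that passing from $v_{m-1}$ to $v_m = \overline{(x_{m-1,0}^{-})^2 \ldots x_{i,0}^{-}} v_{m-1}$ raises the relevant eigenvalue by exactly one, using the defining relations of the Yangian together with Corollary \ref{soi-1if} and its analogues for the $W_1 \otimes W_1$ and $W_2$ pieces produced along the string. Once this single scalar is pinned down, the isomorphism $Y^{(m)} \cong Y(D_{l-m+1})$ converts the entire statement into the already-established Case~3 computation, so no new representation-theoretic input is needed beyond confirming the low-rank boundary behavior.
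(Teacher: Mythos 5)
Your proposal follows essentially the same route as the paper: induction on $m$, realizing the subalgebra $Y^{(m)}$ generated by the nodes $j\geq m$ as the Yangian of type $D_{l-m+1}$, checking that $v_m$ is a maximal vector via $\sigma^{-1}\left(\alpha_j\right)\in \Delta^{+}$, reading off from its weight and from item (iii) of the previous stage that the Drinfeld polynomial of $Y^{(m)}\left(v_m\right)$ is concentrated at node $i$ with root $a_1+m-1$, and then invoking Proposition \ref{yoc3p1} with $a_1$ replaced by $a_1+m-1$. The only divergence is the boundary case $l-m+1=3$ (forcing $m=i=l-2$), which the paper settles by the explicit calculations in the unnumbered lemma following the proposition rather than by your $D_3\cong A_3$ relabeling, but both dispositions are sound.
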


\begin{remark}
When $i<l-2$, $v^{-}_1=\overline{x_{i,0}^{-}x_{i+1,0}^{-}\ldots  x_{i,0}^{-}}v^{+}_i$.
\end{remark}
If $l-m+1=3$, then $m=i=l-2$. In this case, the above proposition is still true for $v_3,\ldots, v_{l-3}$. By the part $\left(iii\right)$ of the above proposition,
$$Y_{l-2}\Big(v_{l-2}\Big)=Y_{l-2}\Big(\overline{\left(x_{l-3,0}^{-}\right)^2\ldots x_{l-2,0}^{-}}v_{l-3}\Big)\cong W_1\left(a_1+l-3\right).$$ Explicit calculations show that
\begin{lemma}\begin{enumerate}\
               \item $Y_{l-1}\left(x_{l-2,0}^{-}v_{l-2}\right)\cong W_1\left(a_1+l-3+\frac{1}{2}\right)$.
               \item $Y_{l}\left(x_{l-1,0}^{-}x_{l-2,0}^{-}v_{l-2}\right)\cong W_1\left(a_1+l-3+\frac{1}{2}\right)$.
               \item $Y_{l-2}\left(x_{l,0}^{-}x_{l-1,0}^{-}x_{l-2,0}^{-}v_{l-2}\right)\cong W_1\left(a_1+l-3+1\right)$.
             \end{enumerate}
\end{lemma}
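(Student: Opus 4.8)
The plan is to prove the three isomorphisms by the same three-step scheme already used repeatedly in this section to identify the modules $Y_{i'}\left(v_{\sigma_i(\omega_{b_1})}\right)$. Throughout, recall that $v_{l-2}$ has weight $-2\omega_{l-3}+\omega_{l-2}$ by Proposition \ref{rileq2}, and that $Y_{l-2}(v_{l-2})\cong W_1(a_1+l-3)$ has already been established, so that $h_{l-2,1}v_{l-2}=(a_1+l-3)v_{l-2}$ and, by (\ref{Cor3.3}), $x_{l-2,1}^{-}v_{l-2}=(a_1+l-3)x_{l-2,0}^{-}v_{l-2}$. In particular the node-$l-2$ weights of $x_{l-2,0}^{-}v_{l-2}$ and of $x_{l-1,0}^{-}x_{l-2,0}^{-}v_{l-2}$ are $-1$ and $0$, while their node-$(l-1)$ and node-$l$ weights vanish.

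First I would check that the three vectors $x_{l-2,0}^{-}v_{l-2}$, $x_{l-1,0}^{-}x_{l-2,0}^{-}v_{l-2}$ and $x_{l,0}^{-}x_{l-1,0}^{-}x_{l-2,0}^{-}v_{l-2}$ are maximal for $Y_{l-1}$, $Y_{l}$ and $Y_{l-2}$, respectively. This is verbatim the positive-root argument of Step 2 of Proposition \ref{mtoysl2l}: each vector has a weight on the Weyl orbit of $\omega_{l-2}$, hence a one-dimensional weight space on which the $h_{j,r}$ act by scalars, and $x_{j',k}^{+}$ must annihilate it lest $\omega_{l-2}$ be preceded by a higher weight. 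Consequently each $Y_{j'}(\cdot)$ is a highest weight $Y\left(\mathfrak{sl}_2\right)$-module. Reading off the eigenvalue of $h_{j',0}$ from the weights above shows the associated Drinfeld polynomial has degree $1$ in every case, so that — exactly as in Proposition \ref{i=1yo2n} and by the maximality of the two-dimensional local Weyl module of $\ysl$ (Theorem \ref{wmfsl2}) — each module is isomorphic to some $W_1(a)$, the value $a$ being the eigenvalue of $h_{j',1}$ on the maximal vector.

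The substance is the computation of $a$, carried out with the defining relation $[h_{j',1},x_{k,0}^{-}]=x_{k,1}^{-}+\tfrac12 x_{k,0}^{-}+x_{k,0}^{-}h_{j',0}$ for adjacent nodes $j',k$, together with $[h_{j',1},x_{k,0}^{-}]=0$ when $j'$ and $k$ are non-adjacent (as for the pair $l$, $l-1$ in type $D$), the identity (\ref{Cor3.3}), and the two facts recalled above. For (i), since $Y_{l-1}(v_{l-2})$ is trivial one gets $h_{l-1,1}x_{l-2,0}^{-}v_{l-2}=x_{l-2,1}^{-}v_{l-2}+\tfrac12 x_{l-2,0}^{-}v_{l-2}=\left(a_1+l-3+\tfrac12\right)x_{l-2,0}^{-}v_{l-2}$. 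For (ii), $h_{l,1}$ commutes past $x_{l-1,0}^{-}$ and the computation collapses to the same commutator past $x_{l-2,0}^{-}$, again yielding $a_1+l-3+\tfrac12$. For (iii) one commutes $h_{l-2,1}$ through $x_{l,0}^{-}x_{l-1,0}^{-}x_{l-2,0}^{-}$; here the intermediate vector satisfies $h_{l-2,1}x_{l-1,0}^{-}x_{l-2,0}^{-}v_{l-2}=0$ (the two resulting terms cancel), so the entire contribution comes from the commutator $[h_{l-2,1},x_{l,0}^{-}]$ acting on the maximal vector of (ii), giving $h_{l-2,1}x_{l,0}^{-}x_{l-1,0}^{-}x_{l-2,0}^{-}v_{l-2}=\left(a_1+l-2\right)x_{l,0}^{-}x_{l-1,0}^{-}x_{l-2,0}^{-}v_{l-2}$, whence $a=a_1+l-3+1$ and the lowest weight vector is reached at the next application of $x_{l-2,0}^{-}$.

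The main obstacle is the bookkeeping for (iii): node $l-2$ is adjacent to both $l-1$ and $l$, so expanding $h_{l-2,1}$ through all three operators produces several terms, and one must carefully record which partial vectors are annihilated (using the node-$l-2$ weights noted in the first paragraph) and feed in the scalars found in (i) and (ii) via (\ref{Cor3.3}). This is precisely the type of commutator bookkeeping already performed in Proposition \ref{i=1yo2n}(iv) and Lemma \ref{l-2i34d23}, and no new idea beyond organizing the cancellations is required; the maximality and degree verifications of the first step are then routine.
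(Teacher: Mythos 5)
You are not measured here against a written proof: the paper gives none for this lemma (it is prefaced only by ``Explicit calculations show that''), so your plan --- the maximality argument of Step 2 of Proposition \ref{mtoysl2l}, degree count via $h_{j',0}$, then the eigenvalue of $h_{j',1}$ computed with $[h_{j',1},x_{k,0}^{-}]=x_{k,1}^{-}+\tfrac12 x_{k,0}^{-}+x_{k,0}^{-}h_{j',0}$ and (\ref{Cor3.3}) --- is exactly the intended argument, modeled on Proposition \ref{i=1yo2n} and Proposition \ref{vmaga1}. Your three eigenvalue computations are in fact correct, including the cancellation $h_{l-2,1}x_{l-1,0}^{-}x_{l-2,0}^{-}v_{l-2}=0$ and the use of the non-adjacency of nodes $l-1$ and $l$ in (ii).

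There is, however, a concrete error in your weight bookkeeping, and it sits under the one step the identification with $W_1(\cdot)$ actually rests on. You assert that the node-$(l-1)$ and node-$l$ weights of $x_{l-2,0}^{-}v_{l-2}$ and of $x_{l-1,0}^{-}x_{l-2,0}^{-}v_{l-2}$ vanish, and then claim that reading off $h_{j',0}$ ``from the weights above'' gives Drinfeld polynomials of degree $1$. These two statements contradict each other: if those weights vanished, the degrees in (i) and (ii) would be $0$, and $Y_{l-1}\left(x_{l-2,0}^{-}v_{l-2}\right)$, $Y_{l}\left(x_{l-1,0}^{-}x_{l-2,0}^{-}v_{l-2}\right)$ would be one-dimensional trivial modules rather than $W_1(\cdot)$. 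The correct weights, computed from $\mathrm{wt}(v_{l-2})=-2\omega_{l-3}+\omega_{l-2}$ together with $\alpha_{l-2}=-\omega_{l-3}+2\omega_{l-2}-\omega_{l-1}-\omega_{l}$, $\alpha_{l-1}=-\omega_{l-2}+2\omega_{l-1}$, $\alpha_{l}=-\omega_{l-2}+2\omega_{l}$, are $\mathrm{wt}\left(x_{l-2,0}^{-}v_{l-2}\right)=-\omega_{l-3}-\omega_{l-2}+\omega_{l-1}+\omega_{l}$, $\mathrm{wt}\left(x_{l-1,0}^{-}x_{l-2,0}^{-}v_{l-2}\right)=-\omega_{l-3}-\omega_{l-1}+\omega_{l}$, and $\mathrm{wt}\left(x_{l,0}^{-}x_{l-1,0}^{-}x_{l-2,0}^{-}v_{l-2}\right)=-\omega_{l-3}+\omega_{l-2}-\omega_{l-1}-\omega_{l}$; so $h_{l-1,0}$, $h_{l,0}$ and $h_{l-2,0}$ act by $+1$ on the respective vectors, which is what yields degree $1$ in (i), (ii), (iii). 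What does vanish --- and what your commutator computations silently and correctly use --- is the weight of $v_{l-2}$ itself at nodes $l-1$ and $l$, whence $Y_{l-1}(v_{l-2})$ and $Y_{l}(v_{l-2})$ are trivial as in Lemma \ref{so1c1lf1} and $h_{l-1,1}v_{l-2}=h_{l,1}v_{l-2}=0$. With that sentence corrected, your proof is complete and is precisely the calculation the paper suppresses.
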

\section{On the local Weyl modules of $\yo$}\ \newline
%The next theorem follows from Theorems A and B, and Corollary $B$ of \cite{FoLi}.
\begin{theorem}[Corollary B, \cite{FoLi}]
As modules of $\nyo[t]$, $$\operatorname{Dim}\Big(W(\lambda)\Big)=\prod_{i}\Big(\operatorname{Dim}\big(W(\omega_i)\big)\Big)^{m_i}.$$ %where $\lambda=\sum\limits_{i} m_i\omega_i$ and $\operatorname{KR}(\omega_i)$ are the fundamental Kirillov-Reshetikhin modules.
\end{theorem}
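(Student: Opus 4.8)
The plan is to squeeze $\operatorname{Dim}\big(W(\lambda)\big)$ between two matching bounds. For the lower bound, fix fusion parameters as in \cite{FeLo2} and form the fusion product $F = W(\omega_1)_{a_1} \ast \cdots \ast W(\omega_l)_{l_{m_l}}$ having $m_i$ tensor factors equal to $W(\omega_i)$. The cyclic generator $v_F$, the image of the tensor product of the highest weight vectors, is annihilated by $\mathfrak{n}^+\otimes\C[t]$ and by $\h\otimes t\C[t]$ and has weight $\lambda$, so it satisfies exactly the three relations in the universal property of $W(\lambda)$; hence by Theorem \ref{mpocac1} the finite-dimensional module $F$ is a quotient of $W(\lambda)$. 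Since a fusion product is isomorphic to the ordinary tensor product as a $\g$-module, $\operatorname{Dim}(F)=\prod_i\big(\operatorname{Dim}(W(\omega_i))\big)^{m_i}$, and therefore $\operatorname{Dim}\big(W(\lambda)\big)\ge\prod_i\big(\operatorname{Dim}(W(\omega_i))\big)^{m_i}$.

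For the reverse inequality, which carries the real content, I would exploit that $\nyo$ is simply laced (type $D_l$) and follow the strategy of \cite{FoLi}: realize $W(\lambda)$ as a level-one Demazure module inside an integrable highest-weight module $L(\Lambda)$ of the affine Kac--Moody algebra $\widehat{\nyo}$. Concretely, one chooses a level-one dominant weight $\Lambda$ and a Weyl group element $w$ so that the $\nyo[t]$-stable Demazure submodule $V_w(\Lambda)\subseteq L(\Lambda)$ is generated by the extremal weight vector $v_{w\Lambda}$, whose weight is determined by $\lambda$, and one checks that $v_{w\Lambda}$ satisfies precisely the defining relations of $W(\lambda)$; this yields a surjection $W(\lambda)\twoheadrightarrow V_w(\Lambda)$. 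The upper bound then reduces to the multiplicativity of level-one Demazure characters in the simply-laced case: the Demazure module attached to $\lambda=\sum_i m_i\omega_i$ admits an excellent (Demazure) filtration whose character factors as the product of the fundamental ones, giving $\operatorname{Dim}(V_w(\Lambda))=\prod_i\big(\operatorname{Dim}(V_{w_i}(\Lambda_i))\big)^{m_i}$ with $V_{w_i}(\Lambda_i)\cong W(\omega_i)$, and hence $\operatorname{Dim}\big(W(\lambda)\big)\le\prod_i\big(\operatorname{Dim}(W(\omega_i))\big)^{m_i}$.

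I expect the main obstacle to be exactly this last step: showing that the cyclic $\nyo[t]$-module cut out by the Weyl-module relations genuinely coincides with the Demazure module rather than merely surjecting onto it, and establishing the product formula for level-one Demazure characters through the excellent filtration (this is where simple-lacedness is essential, and where a naive PBW spanning argument falls short). Once both inequalities are in hand they force equality. Combined with $\operatorname{Dim}\big(V_a(\omega_i)\big)=\operatorname{Dim}\big(W(\omega_i)\big)$ from Corollary \ref{dkrvawocsp}, this dimension formula is precisely the input needed to identify $\operatorname{Dim}\big(W(\pi)\big)$ for $\yo$.
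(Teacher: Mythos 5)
The first thing to say is that the thesis contains no proof of this statement: it is quoted verbatim as Corollary B of \cite{FoLi} and used purely as an input (together with Corollary \ref{dkrvawocsp}) to pin down $\operatorname{Dim}\big(W(\pi)\big)$ for $\yo$. So the only meaningful comparison is against the strategy of the cited reference, and your outline is a faithful reconstruction of it: a fusion-product lower bound plus an identification of $W(\lambda)$ with a level-one affine Demazure module, with the simple-lacedness of type $D_l$ carrying the weight. Your lower-bound half is complete and correct as written: the fusion product is finite-dimensional and cyclic, its generator satisfies the three relations required in Theorem \ref{mpocac1}, hence it is a quotient of $W(\lambda)$, and its dimension equals the product because fusion products coincide with ordinary tensor products as $\g$-modules.

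The gap is in the upper-bound half, and it is a directional one. The surjection $W(\lambda)\twoheadrightarrow V_w(\Lambda)$ that you get by checking the Weyl-module relations on the extremal vector yields $\operatorname{Dim}\big(W(\lambda)\big)\ge\operatorname{Dim}\big(V_w(\Lambda)\big)$, which points the same way as your fusion bound; combining it with the multiplicativity of level-one Demazure dimensions therefore gives nothing beyond the lower bound. To conclude $\operatorname{Dim}\big(W(\lambda)\big)\le\prod_i\big(\operatorname{Dim}(W(\omega_i))\big)^{m_i}$ you need injectivity of that surjection, i.e.\ the isomorphism $W(\lambda)\cong V_w(\Lambda)$, and that isomorphism is precisely the main theorem of \cite{FoLi}; its proof requires a separate argument bounding $W(\lambda)$ from above (via Demazure flags and excellent-filtration results of Joseph, Mathieu and Polo in the simply-laced case), which the surjection plus the character factorization do not supply. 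You correctly flag this as ``the main obstacle,'' which is the right diagnosis, but as written the proposal assumes exactly that point rather than proving it: it is an accurate roadmap of \cite{FoLi}, not a self-contained proof of the dimension formula.
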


It follows from Propositions \ref{mtoysl2l} and \ref{vtv'hwv} that

\begin{theorem}\label{wmiatpso}
Let $\pi=\Big(\pi_1\left(u\right),\ldots, \pi_{l}\left(u\right)\Big)$, where $\pi_i\left(u\right)=\prod\limits_{j=1}^{m_i}\left(u-a_{i,j}\right)$. Let $S=\{a_{1,1},\ldots, a_{1,m_1},\ldots, a_{l,1}\ldots, a_{l,m_l}\}$ be the multiset of roots of these polynomials. Let $a_1=a_{m,n}$ be one of the numbers in $S$ with the maximal real part, and let $b_1=m$. Similarly, let $a_r=a_{s,t}\left(r\geq 2\right)$ be one of the numbers in $S\setminus\{a_1, \ldots, a_{r-1}\}$ ($r\geq 2$) with the maximal real part, and $b_r=s$. Let $k=m_1+\ldots+m_l$. Then $L=V_{a_1}(\omega_{b_1})\otimes V_{a_2}(\omega_{b_2})\otimes\ldots\otimes V_{a_k}(\omega_{b_k})$ is a highest weight representation, and its associated polynomial is $\pi$.
\end{theorem}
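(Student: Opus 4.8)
The plan is to assemble the two cited results. First I would verify that the inductive selection rule defining the ordering $(a_1,b_1),(a_2,b_2),\ldots,(a_k,b_k)$ produces a sequence with non-increasing real parts. By construction $a_1$ is chosen among all elements of $S$ to have maximal real part, and each subsequent $a_r$ is chosen to have maximal real part among $S\setminus\{a_1,\ldots,a_{r-1}\}$; hence $\operatorname{Re}(a_1)\geq\operatorname{Re}(a_2)\geq\cdots\geq\operatorname{Re}(a_k)$. This is precisely the hypothesis of Proposition \ref{mtoysl2l}.

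With the ordering in place, Proposition \ref{mtoysl2l} applies verbatim to give that $L=V_{a_1}(\omega_{b_1})\otimes\cdots\otimes V_{a_k}(\omega_{b_k})$ is a highest weight representation of $\yo$, generated by the tensor product $v_1^{+}\otimes\cdots\otimes v_k^{+}$ of the highest weight vectors of the factors. In particular $v_1^{+}\otimes\cdots\otimes v_k^{+}$ is a maximal vector on which the elements $h_{i,k}$ act by scalars.

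It remains to identify the associated polynomial. Each factor $V_{a_i}(\omega_{b_i})$ is a fundamental, hence irreducible, representation whose associated $l$-tuple of Drinfeld polynomials $\pi^{(i)}$ has $b_i$-th entry $u-a_i$ and all other entries equal to $1$. Applying Proposition \ref{vtv'hwv} inductively along the tensor product, the tensor of highest weight vectors is a highest weight vector of $L$ whose associated $l$-tuple is the componentwise product $\prod_{i=1}^{k}\pi^{(i)}$. Its $j$-th component is $\prod_{i:\,b_i=j}(u-a_i)$, and since the multiset $\{a_i:b_i=j\}$ is exactly the multiset of roots of $\pi_j(u)$ by the construction of $S$, this component equals $\pi_j(u)$. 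Hence the associated polynomial of $L$ is $\pi$.

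Both ingredients being already available, the only genuine content of the argument lies in the previous sections: the main obstacle, namely establishing the cyclicity in the reverse direction along the tensor product via the $Y_{i'}$-module decompositions of Step~5, was resolved in Proposition \ref{mtoysl2l} and its supplement in Section \ref{mtoysl2ls}. Here I would only need to note the mild point that Proposition \ref{vtv'hwv}, although stated for irreducible representations, is applied to the irreducible fundamental factors, so that reading off the scalar action of the $h_{i,k}$ on $v_1^{+}\otimes\cdots\otimes v_k^{+}$ yields the product of the factors' series and therefore the Drinfeld polynomials $\pi$.
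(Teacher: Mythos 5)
Your proposal is correct and is essentially identical to the paper's own argument: the paper derives Theorem \ref{wmiatpso} directly by combining Proposition \ref{mtoysl2l} (applicable because the greedy selection by maximal real part yields $\operatorname{Re}(a_1)\geq\cdots\geq\operatorname{Re}(a_k)$) with Proposition \ref{vtv'hwv} to read off the associated polynomial as the componentwise product, which equals $\pi$ since the $a_i$ exhaust the multiset $S$. Your closing remark about extending Proposition \ref{vtv'hwv} from two irreducible factors to a $k$-fold product is the same implicit step the paper takes.
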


By comparing the upper bound of dimension of $W(\pi)$ and dimension of $L$ above, we can determine explicitly the local Weyl module $W(\pi)$ and its dimension.
\begin{theorem}
The local Weyl module $W(\pi)$ is isomorphic to $L$ as in Theorem \ref{wmiatpso}. The dimension of $W(\pi)$ can be recovered from Theorem \ref{dofrdl}, Remark \ref{rofry} and Proposition \ref{dcofbdl}.
\end{theorem}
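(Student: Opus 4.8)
The plan is to prove the isomorphism by a dimension squeeze, exactly parallel to the argument used for $\yn$ at the end of the previous chapter. The heavy analytic work has already been carried out in Proposition \ref{mtoysl2l} and its supplement in Section \ref{mtoysl2ls}; what remains is to assemble the dimension inequalities and force an equality.

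First I would record the lower bound. By Theorem \ref{wmiatpso}, the ordered tensor product $L = V_{a_1}(\omega_{b_1}) \otimes \cdots \otimes V_{a_k}(\omega_{b_k})$ is a highest weight representation of $\yo$ whose associated $l$-tuple of polynomials is exactly $\pi$; that is, $L$ is a finite-dimensional highest weight representation of the form $V(\pi)$. By part (2) of Remark \ref{mi=lambdahi}, every such $V(\pi)$ is a quotient of the local Weyl module $W(\pi)$, so $L$ is a quotient of $W(\pi)$ and hence
\[
\operatorname{Dim}(L) \leq \operatorname{Dim}\big(W(\pi)\big).
\]

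Next I would establish the matching upper bound. Let $\lambda = \sum_{i} m_i \omega_i$ with $m_i = \deg \pi_i$. Theorem \ref{ubodowm} gives $\operatorname{Dim}(W(\pi)) \leq \operatorname{Dim}(W(\lambda))$, where $W(\lambda)$ is the local Weyl module of $\nyo[t]$. By Corollary B of \cite{FoLi} we have $\operatorname{Dim}(W(\lambda)) = \prod_{i}\big(\operatorname{Dim}(W(\omega_i))\big)^{m_i}$, and by Corollary \ref{dkrvawocsp}, $\operatorname{Dim}(W(\omega_i)) = \operatorname{Dim}(V_a(\omega_i))$ for every $i$ and any $a\in\C$. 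Since the multiplicity with which the node $i$ occurs among $b_1,\dots,b_k$ equals $m_i$, the tensor product $L$ contains exactly $m_i$ factors isomorphic to $V_a(\omega_i)$, so $\operatorname{Dim}(L) = \prod_{i}\big(\operatorname{Dim}(V_a(\omega_i))\big)^{m_i} = \operatorname{Dim}(W(\lambda))$. Chaining the inequalities yields
\[
\operatorname{Dim}(L) \leq \operatorname{Dim}\big(W(\pi)\big) \leq \operatorname{Dim}\big(W(\lambda)\big) = \operatorname{Dim}(L),
\]
so all three quantities coincide.

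Finally, since $L$ is a quotient of $W(\pi)$ and the two have the same finite dimension, the surjection $W(\pi) \twoheadrightarrow L$ is an isomorphism, proving $W(\pi) \cong L$. For the dimension, once the isomorphism is known one simply multiplies the dimensions of the fundamental factors: for $i = 1, l-1, l$ these are read off from Theorem \ref{dofrdl} via Remark \ref{rofry}, while for $2 \leq i \leq l-2$ one uses the $\g$-decomposition of Proposition \ref{dcofbdl} together with Theorem \ref{dofrdl} to sum the dimensions of the constituents $L(\omega_{i-2j})$. I do not expect a genuine obstacle in this final theorem: the only delicate point, namely that $L$ is highest weight with the correct Drinfeld data, is precisely the content of Theorem \ref{wmiatpso}, and everything else is bookkeeping of dimensions.
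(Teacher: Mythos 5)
Your proposal is correct and follows essentially the same route as the paper: the paper also deduces $\operatorname{Dim}(L)\leq\operatorname{Dim}\big(W(\pi)\big)\leq\operatorname{Dim}\big(W(\lambda)\big)$ from the quotient property and Theorem \ref{ubodowm}, identifies $\operatorname{Dim}\big(W(\lambda)\big)=\operatorname{Dim}(L)$ via Corollary B of \cite{FoLi} and Corollary \ref{dkrvawocsp}, and concludes $W(\pi)\cong L$ by the dimension squeeze. Your additional remark that node $i$ occurs exactly $m_i$ times among $b_1,\dots,b_k$ is the same bookkeeping the paper leaves implicit.
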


%\subsection{On local Weyl modules of the current algebras of $\yo$}
\chapter{Local Weyl modules of $\ysp$}
In this chapter, the local Weyl modules of $\ysp$ are studied. The structure of the local Weyl modules is determined, and the dimensions of the local Weyl modules are obtained.  In the process of characterizing the local Weyl modules, a sufficient condition for a tensor product of fundamental representations of Yangians to be a highest weight representation is obtained, which shall lead to an irreducibility criterion for the tensor product.

Let $\pi=\big(\pi_1(u),\pi_2(u),\ldots, \pi_l(u)\big)$ be a generic $l$-tuple of monic polynomials in $u$, and $\pi_i\left(u\right)=\prod\limits_{j=1}^{m_i}\left(u-a_{i,j}\right)$. Let $k=m_1+m_2+\ldots+m_l$, $S=\{a_{i,j}|i=1,\ldots,l; j=1,\ldots,m_i\}$, and $\lambda=\sum\limits_{i=1}^{l}m_i\omega_i$.
Let $a_{m,n}$ be one of the numbers in $S$ with the maximal real part. Then define $a_1=a_{m,n}$ and $b_1=m$. Inductively, let $a_{s,t}$ be one of the numbers in $S-\{a_1,\ldots, a_{r-1}\}$ with the maximal real part. Then define $a_r=a_{s,t}$ and $b_r=s$.  We prove that the ordered tensor product $L=V_{a_1}\left(\omega_{b_1}\right)\otimes V_{a_2}\left(\omega_{b_2}\right)\otimes \ldots \otimes V_{a_k}\left(\omega_{b_k}\right)$ is a highest weight representation. A standard argument shows that the associated l-tuple of polynomials of $L$ is $\pi$.
Since $L$ is a quotient of $W(\pi)$, a lower bound on the dimension of $W(\pi)$ is obtained.

Let $W(\lambda)$ be the Weyl module associated to $\lambda$ of $\nysp[t]$. In \cite{Na}, the author proved $\operatorname{Dim}\Big(W(\lambda)\Big)=\prod\limits_{i\in I} \Big(\operatorname{Dim}\big(W(\omega_i)\big)\Big)^{m_i}.$ It follows from Corollary \ref{dkrvawocsp} that $\operatorname{Dim}(W(\lambda))=\operatorname{Dim}(L)$. Since $\operatorname{Dim}\big(W(\lambda)\big)\geq \operatorname{Dim}(W(\pi))\geq \operatorname{Dim}(L)$,  $$W\left(\pi\right)\cong L.$$
The dimension of $W(\pi)$ can be recovered from Theorem \ref{fmsimlieC} and Remark \ref{rofry}.
%P.S. I do not know if there is a notation, Kirillov-ResHetikhin modules for $\yg$, but there are notations for $\g[t]$ and the paper, On the Fermionic Formula and the Kirillov-Reshetikhin
%Conjecture, might be an interesting paper.R

Similar to the proof that $L$ is a highest weight representation (Proposition \ref{mtoyspl}), we can obtain a sufficient condition for a tensor product of fundamental representations of the form $V_{a_1}(\omega_{b_1})\otimes V_{a_2}(\omega_{b_2})\otimes\ldots\otimes V_{a_k}(\omega_{b_k})$ to be a highest weight representation. If $a_j-a_i\notin S(b_i, b_j)$ for $1\leq i<j\leq k$, then $L$ is a highest weight representation, where $S(b_i, b_j)$ is a finite set of positive rational numbers. By Proposition \ref{VoWWoVhi} and Lemma \ref{dualfrc1}, an irreducible criterion for a tensor product of fundamental representations of $\ysp$ is obtained: if $a_j-a_i\notin S(b_i, b_j)$ for $1\leq i\neq j\leq k$, then $L$ is irreducible.

%Similar to the proof that $L$ is a highest weight representation (Proposition \ref{mtoyspl}), we can obtain a sufficient condition for a tensor product of fundamental representations of $\ysp$ to be a highest weight representation. Indeed, there exists a finite set $A=\{1,\frac{3}{2}, 2,\frac{5}{2},3,\dots, l,\frac{2l+1}{2}, l+1\}$ such that if $a_j-a_i\notin A$ for $1\leq i<j\leq k$, then $L$ is a highest weight representation. An irreducibility criterion for a tensor product of fundamental representations of $\ysp$ is obtained:  if $a_j-a_i\notin A$ for $1\leq i\neq j\leq k$, then $L$ is irreducible.

\section{From the highest weight vector to the lowest one in $V_{a}(\omega_i)$}

Let $s_i$ be the fundamental reflections of the Weyl group of $\mathfrak{sp}\left(2l,\C\right)$ for $i=1,\ldots, l$. Let $\{\mu_1,\mu_2,\ldots,\mu_{l}\}$ be the coordinate functions on the Cartan subalgebra of $\nysp$. For $1\leq i\leq l-1$, $s_i\left(\mu_i\right)=\mu_{i+1}$, $s_i\left(\mu_{i+1}\right)=\mu_{i}$ and $s_i\left(\mu_j\right)=\mu_j$ for $j\neq i, i+1$.
For $i=l$, $s_l\left(\mu_{l}\right)=-\mu_{l}$ and $s_l\left(\mu_j\right)=\mu_j$ for $j\neq l$.
The fundamental weights of $\nysp$ are given by $\omega_i=\mu_1+\ldots+\mu_i$ for $1\leq i\leq l$. It follows that $\mu_1=\omega_1$, $\mu_2=-\omega_1+\omega_2$, $\ldots$, $\mu_{l-1}=-\omega_{l-2}+\omega_{l-1}$, and $\mu_{l}=-\omega_{l-1}+\omega_{l}$. Thus $\alpha_1=2\omega_1-\omega_2$, $\alpha_i=-\omega_{i-1}+2\omega_{i}-\omega_{i+1}$ for $2\leq i\leq l-1$, and
$\alpha_{l}=-2\omega_{l-1}+2\omega_{l}$.

Summarize all information above, we have
\begin{proposition}
$s_i\left(\omega_j\right)=\omega_j$ for $i\neq j$. $s_1\left(\omega_1\right)=-\omega_1+\omega_2$, $s_2\left(\omega_2\right)=\omega_1-\omega_2+\omega_3,\ldots,$ $s_{l-2}\left(\omega_{l-2}\right)=\omega_{l-3}-\omega_{l-2}+\omega_{l-1}$, $s_{l-1}\left(\omega_{l-1}\right)=\omega_{l-2}-\omega_{l-1}+\omega_{l}$ and $s_{l}\left(\omega_{l}\right)=2\omega_{l-1}-\omega_{l}$.
\end{proposition}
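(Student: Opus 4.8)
The plan is to reduce the entire statement to a single application of the reflection formula $s_r(\omega) = \omega - \frac{2(\omega,\alpha_r)}{(\alpha_r,\alpha_r)}\alpha_r$, combined with the defining pairing of the fundamental weights, namely $\langle \omega_j,\alpha_i\rangle = \frac{2(\omega_j,\alpha_i)}{(\alpha_i,\alpha_i)} = \delta_{ij}$. Rewriting the reflection formula in the form $s_i(\omega_j) = \omega_j - \langle \omega_j,\alpha_i\rangle\,\alpha_i$, the correction coefficient is precisely $\delta_{ij}$. This disposes of the off-diagonal case at once: when $i\neq j$ the coefficient vanishes, so $s_i(\omega_j) = \omega_j$, which is the first assertion.

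For the diagonal entries $i=j$ the same identity gives $s_i(\omega_i) = \omega_i - \alpha_i$, so all that remains is to substitute the expressions of the simple roots in terms of the fundamental weights recorded immediately before the statement, namely $\alpha_1 = 2\omega_1 - \omega_2$, then $\alpha_i = -\omega_{i-1} + 2\omega_i - \omega_{i+1}$ for $2\leq i\leq l-1$, and finally $\alpha_l = -2\omega_{l-1} + 2\omega_l$. Carrying out the three substitutions yields $s_1(\omega_1) = \omega_1 - (2\omega_1-\omega_2) = -\omega_1+\omega_2$; next $s_i(\omega_i) = \omega_i - (-\omega_{i-1}+2\omega_i-\omega_{i+1}) = \omega_{i-1}-\omega_i+\omega_{i+1}$ for $2\leq i\leq l-1$, which covers the displayed middle cases; and lastly $s_l(\omega_l) = \omega_l - (-2\omega_{l-1}+2\omega_l) = 2\omega_{l-1}-\omega_l$. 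These match the claimed values exactly.

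There is no genuine obstacle here: the proposition is a bookkeeping consequence of the Cartan data for type $C_l$, and it is carried out in the same spirit as the $D_l$ computation in Proposition~\ref{soomegasio}. The only point deserving a word of care is the long root $\alpha_l$, whose squared length $(\alpha_l,\alpha_l)$ differs from that of the short roots; but this normalization has already been absorbed when passing from the Cartan matrix to the expression $\alpha_l = -2\omega_{l-1}+2\omega_l$, so no separate inner-product calculation is required and the substitution proceeds uniformly. Thus the whole proof consists of one invocation of the reflection formula followed by the three displayed substitutions.
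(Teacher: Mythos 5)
Your proof is correct, but it follows a different route from the paper's. The paper derives this proposition (as signalled by ``Summarize all information above'') from the concrete coordinate realization: it writes $\omega_j=\mu_1+\ldots+\mu_j$, applies the explicit permutation/sign action of $s_i$ on the coordinate functions $\mu_k$ of the Cartan subalgebra of $\nysp$, and converts the result back into fundamental weights --- exactly as in the displayed sample computations of the analogous type-$D_l$ statement, Proposition \ref{soomegasio}. You instead invoke the abstract reflection formula once: since $\frac{2(\omega_j,\alpha_i)}{(\alpha_i,\alpha_i)}=\delta_{ij}$ by the definition of fundamental weights, $s_i(\omega_j)=\omega_j-\delta_{ij}\alpha_i$, which settles all off-diagonal cases instantly and reduces the diagonal cases to substituting the expressions $\alpha_1=2\omega_1-\omega_2$, $\alpha_i=-\omega_{i-1}+2\omega_i-\omega_{i+1}$ ($2\leq i\leq l-1$), $\alpha_l=-2\omega_{l-1}+2\omega_l$ recorded just before the proposition. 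Your three substitutions are carried out correctly, and your remark about the long root is also right: no separate length computation is needed because the coefficient is $\delta_{ij}$ by definition, independently of $(\alpha_l,\alpha_l)$. What each approach buys: yours is uniform and type-independent (the identical argument proves the corresponding propositions in the $B_l$, $D_l$ and $G_2$ chapters), at the cost of taking the expansions of the $\alpha_i$ in the $\omega_j$ (i.e.\ the columns of the Cartan matrix) as given; the paper's coordinate computation is more pedestrian but independently verifies those expansions, since it only uses the action of $s_i$ on the $\mu_k$ and the definitions $\omega_i=\mu_1+\ldots+\mu_i$.
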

%Let $\W$ be the Weyl group of $\nysp$, and
Let $w_0=-1$ be the longest element of the Weyl group $\W$ of $\nysp$. One reduced expression of the longest element $w_0$  is given by
\begin{align*}
% \nonumber to remove numbering \left(before each equation\right)
   w_0= \left(s_l\right)&\left(s_{l-1}s_{l}s_{l-1}\right)\left(s_{l-2}s_{l-1}s_ls_{l-1}s_{l-2}\right)\ldots \left(s_{2}\ldots s_{l-2}s_{l-1}s_ls_{l-1}s_{l-2}\ldots s_{2}\right) \\
   & \left(s_{1}\ldots s_{l-2}s_{l-1}s_ls_{l-1}s_{l-2}\ldots s_{1}\right).
\end{align*}

%Note that there are $1+3+\ldots+2l-1=l^2$ simple reflection in $w_0$.
According to the reduced expression of $w_0$, define
\begin{align*}
% \nonumber to remove numbering \left(before each equation\right)
   w_i=&\left(s_i\ldots s_{l-1}s_ls_{l-1}\ldots s_{i}\right)\ldots\left(s_{2}\ldots s_{i-1}s_i\ldots s_{l-1}s_ls_{l-1}\ldots s_{i}\right)\\
   &\left(s_{1}\ldots s_{i-1}s_i\ldots s_{l-1}s_ls_{l-1}\ldots s_{i}\right).
\end{align*}

Let $\sigma_k\in \W$ be the product of the last $k$ terms in $w_i$ and keep the same order as in $w_i$. For every suitable value $k$, there exists a $k'\in I$ such that $\sigma_{k+1}=s_{k'}\sigma_{k}$. %Note that there are $\big(2\left(l-i\right)+1\big)+\big(2\left(l-i\right)+2\big)+\ldots+\big(2\left(l-i\right)+i\big)=\frac{i\big(4\left(l-i\right)+i+1\big)}{2}$ simple refections in $w_i$. For every $j\in \{0,1,\ldots, \frac{i\left(4\left(l-i\right)+i+1\right)}{2}-1\}$, there exists $j'\in \{1,2,\ldots, l\}$ such that $\sigma_{j+1}=s_{j'}\sigma_{j}$.
Let $v_{\sigma_k(\omega_i)}$ be a vector in the weight space of weight $\sigma_k(\omega_i)$. Since the weight space of weight $\omega_i$ is 1-dimensional, the weight space of weight $\sigma_k(\omega_i)$ is 1-dimensional, and then $v_{\sigma_k(\omega_i)}$ is unique, up to a scalar.

\begin{proposition}\label{siok}
Let $\sigma_k(\omega_i)=r_{k'}\omega_{k'}+\sum\limits_{k'\neq j} r_j\omega_j$. Then $r_{i'}\in\{1,2\}$.
\end{proposition}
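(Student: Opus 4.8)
The plan is to mirror exactly the structure of Proposition \ref{rileq2}, which is the type-$D$ analogue of this statement, since the current Proposition \ref{siok} is its type-$C$ counterpart. The key observation is that $\sigma_k(\omega_i)$ is always a weight lying on the Weyl-group orbit of the highest weight $\omega_i$, and the coefficient $r_{i'}$ of $\omega_{i'}$ (where $\sigma_{k+1}=s_{i'}\sigma_k$) controls the value $\sigma_k(\omega_i)(h_{i',0})$, i.e.\ the degree of the associated polynomial of the $Y_{i'}$-module generated by $v_{\sigma_k(\omega_i)}$. As in Remark \ref{ycdr1}, it suffices to compute the full chain $w_0(\omega_i)$ for each $i\in I$ and read off the coefficient appearing at each reflection step, because every intermediate weight $\sigma_k(\omega_i)$ appears somewhere along the reduced expression for some $i$, and deleting the trivial steps (those $s_m$ with $\omega \xrightarrow{s_m}\omega$) recovers exactly the $w_i$.

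First I would record, from the preceding proposition, the action of each simple reflection on the fundamental weights: $s_i(\omega_j)=\omega_j$ for $i\neq j$, together with the explicit formulas $s_1(\omega_1)=-\omega_1+\omega_2$, $s_j(\omega_j)=\omega_{j-1}-\omega_j+\omega_{j+1}$ for $2\le j\le l-1$, and $s_l(\omega_l)=2\omega_{l-1}-\omega_l$. Then, for each fixed $i$, I would carry out the step-by-step computation of $w_0(\omega_i)=\sigma_{\mathrm{max}}(\omega_i)$ by successively applying the simple reflections in the reduced word, displaying the running weight above each arrow in the notation $\omega\xrightarrow{s_m}\omega'$, exactly in the style used in Cases~1--4 of Proposition \ref{rileq2}. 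At each stage the weight has the shape $r_{i'}\omega_{i'}+\sum_{j\neq i'}r_j\omega_j$, and one simply checks that the leading coefficient $r_{i'}$ never exceeds $2$ and is never $0$ at an actual (non-trivial) reflection step. The crucial structural fact driving this is that the symplectic reflection $s_l(\omega_l)=2\omega_{l-1}-\omega_l$ introduces the coefficient $2$; every other reflection $s_j(\omega_j)=\omega_{j-1}-\omega_j+\omega_{j+1}$ (for $j<l$) keeps the relevant coefficients in $\{1,2\}$ once one has passed through an $s_l$-step, so coefficient $2$ propagates but never grows beyond $2$.

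I would organize the proof into cases according to $i$: the case $i=1$, the generic case $2\le i\le l-1$, and the case $i=l$. In each case the computation is routine but must be displayed carefully so that the reader can verify that whenever $\sigma_{k+1}=s_{i'}\sigma_k$, the coefficient of $\omega_{i'}$ in $\sigma_k(\omega_i)$ equals $1$ (giving a degree-$1$, i.e.\ two-dimensional $Y_{i'}$-module $W_1(a)$) or $2$ (giving a degree-$2$ module). The main obstacle, as usual in these explicit Weyl-orbit computations, is purely bookkeeping: ensuring that the long reduced word is applied in the correct order and that the intermediate weights are tracked without sign or index errors, particularly around the symplectic node $l$ where the factor of $2$ enters. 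Since I may invoke the analogue Proposition \ref{rileq2} and Remark \ref{ycdr1} as a template, I would close by remarking (as in Remark \ref{ycdr1}) that deleting all trivial reflection steps from these chains and reading the surviving reflections from back to front reproduces the definition of $w_i$, which is precisely what guarantees $v_{\sigma_k(\omega_i)}\neq v_{\sigma_{k+1}(\omega_i)}$ and hence that each coefficient $r_{i'}\in\{1,2\}$ corresponds to a genuine nontrivial step. The proof then concludes with the line ``It is routine to check'' followed by the case-by-case displays, exactly paralleling the type-$D$ argument.
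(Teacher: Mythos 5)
Your proposal is correct and follows essentially the same route as the paper: the paper's proof likewise defers to Proposition \ref{rileq2} for the general strategy and simply displays the step-by-step computation of $w_0(\omega_i)$ in the three cases $i=1$, $1<i\leq l-1$, and $i=l$, reading off that the coefficient at each nontrivial reflection step lies in $\{1,2\}$. One small heuristic imprecision: in the middle case the coefficient $2$ first arises not directly from $s_l(\omega_l)=2\omega_{l-1}-\omega_l$ but when the reflected chain folds back and meets $\omega_{i-1}$ at the step $\omega_{i-1}+\omega_i-\omega_{i+1}\xrightarrow{s_i}2\omega_{i-1}-\omega_i$; this does not affect the validity of your argument, since the case-by-case computation is what carries the proof.
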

\begin{proof}
%We prove it by explicit calculations on $w_i\left(\omega_k\right)$ for $1\leq k,i\leq l$.
%It is enough to compute $w_0\left(\omega_k\right)$. %Delete all $s_m$ in the calculations $w_0(\omega_i)$ such that $\omega\xlongrightarrow{s_m}\omega$, and keep the rest in order. What left is just $w_i$.
The proof of this proposition is similar to Proposition \ref{rileq2}. We only provides the detail of the computation of $w_0(\omega_i)$.

Case 1: $i=1$.
\begin{flushleft}
$\omega_1\xlongrightarrow{s_1}-\omega_1+\omega_2\xlongrightarrow{s_2}-\omega_2+\omega_3\xlongrightarrow{s_3}-\omega_3+\omega_4\xlongrightarrow{s_4} \ldots\xlongrightarrow{s_{l-3}}-\omega_{l-3}+\omega_{l-2}\xlongrightarrow{s_{l-2}}-\omega_{l-2}+\omega_{l-1}\xlongrightarrow{s_{l-1}}-\omega_{l-1} +\omega_{l}\xlongrightarrow{s_{l}}\omega_{l-1}-\omega_{l}\xlongrightarrow{s_{l-1}}\omega_{l-2}-\omega_{l-1}\xlongrightarrow{s_2\ldots s_{l-3}s_{l-2}}\omega_{1}-\omega_{2}\xlongrightarrow{s_{1}}-\omega_{1}\xlongrightarrow{\left(s_l\right)\left(s_{l-1}s_{l}s_{l-1}\right)\ldots \left(s_{2}\ldots s_{l-2}s_{l-1}s_ls_{l-1}s_{l-2}\ldots s_{2}\right)}-\omega_{1}$.
\end{flushleft}

Case 2: $1<i\leq l-1$.
\begin{flushleft}
$\omega_i\xlongrightarrow{s_{i-1}\ldots s_1}\omega_i\xlongrightarrow{s_i}\omega_{i-1}-\omega_{i}+\omega_{i+1}\xlongrightarrow{s_{l-3}\ldots s_{i+2}s_{i+1}}\omega_{i-1}-\omega_{l-3}+\omega_{l-2}\xlongrightarrow{s_{l-2}}\omega_{i-1}-\omega_{l-2}+\omega_{l-1}\xlongrightarrow{s_{l-1}} \omega_{i-1}-\omega_{l-1}+\omega_{l}\xlongrightarrow{s_{l}}\omega_{i-1}+\omega_{l-1}-\omega_{l}\xlongrightarrow{s_{l-1}}\omega_{i-1}+
\omega_{l-2}-\omega_{l-1}\xlongrightarrow{s_{i}\ldots s_{l-3}s_{l-2}}2\omega_{i-1}-\omega_{i}\xlongrightarrow{s_{i-1}}2\omega_{i-2}-2\omega_{i-1}+\omega_{i}\xlongrightarrow{s_2\ldots s_{i-3}s_{i-2}}2\omega_{1}-2\omega_{2}+\omega_{i}\xlongrightarrow{s_1}-2\omega_1+\omega_{i}\xlongrightarrow{s_{2}\ldots s_{l-1}s_ls_{l-1}\ldots s_2}-2\omega_2+\omega_{i}\xlongrightarrow{s_{3}\ldots s_{l-1}s_ls_{l-1}\ldots s_3}-2\omega_3+\omega_{i}\xlongrightarrow{\left(s_{i}\ldots s_ls_{l-1}\ldots s_{i}\right)\ldots\left(s_{4}\ldots s_{l-1}s_ls_{l-1}\ldots s_4\right)}-\omega_{i}\xlongrightarrow{s_l\left(s_{l-1} s_ls_{l-1}\right)\ldots\left(s_{i+1}\ldots s_{l-1}s_ls_{l-1}\ldots s_{i+1}\right)}-\omega_{i}$.
\end{flushleft}

Case 3:  $i=l$.
\begin{flushleft}
$\omega_{l}\xlongrightarrow{s_{l-1}\ldots s_1}\omega_{l}\xlongrightarrow{s_{l}}2\omega_{l-1}-\omega_{l}\xlongrightarrow{s_{l-1}}2\omega_{l-2}-2\omega_{l-1}+\omega_{l}
\xlongrightarrow{s_2\ldots s_{l-3}s_{l-2}}2\omega_{1}-2\omega_{2}+\omega_{l}\xlongrightarrow{s_1}-2\omega_{1}+\omega_{l}\xlongrightarrow{s_{2}\ldots s_{l-1}s_{l}s_{l-1}\ldots s_2}-2\omega_{2}+\omega_{l}\xlongrightarrow {\left(s_{l-2}s_{l-1}s_{l}s_{l-1}s_{l-2}\right)\ldots\left(s_{3}\ldots s_{l-1}s_{l}s_{l-1}\ldots s_3\right)}-2\omega_{l-2}+\omega_{l}\xlongrightarrow{s_{l-1}}-2\omega_{l-2}+\omega_{l}\xlongrightarrow{s_{l}}-2\omega_{l-2}+2\omega_{l-1}-\omega_{l} \xlongrightarrow{s_{l-1}}-2\omega_{l-1}+\omega_{l}\xlongrightarrow{s_{l}}-\omega_{l}.$
\end{flushleft}
This proposition immediately follows from above calculations. \end{proof}
\begin{proposition}\label{v+=gv-2}
Let $v_{1}^{+}$ and $v_{1}^{-}$ be highest and lowest weight vectors in the fundamental representation $V_{a}(\omega_i)$ of $\nysp$. There exists $y\in \nysp$ such that $$v_{1}^{-}=y.v_{1}^{+}.$$
\end{proposition}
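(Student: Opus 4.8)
The plan is to prove Proposition \ref{v+=gv-2} by the same strategy already used for the type $D$ case in Proposition \ref{v+=gv-}: namely, to exhibit an explicit product of simple negative root vectors $x_{j,0}^{-}$ in $U(\nysp)$ that carries the highest weight vector $v_1^{+}$ along a saturated chain of weights from $\omega_i$ down to $w_0(\omega_i) = -\omega_i$, ending at the lowest weight vector $v_1^{-}$. Since $w_0 = -1$ for $\nysp$, the lowest weight is exactly $-\omega_i$, and the explicit reduced expression $w_0 = (s_l)(s_{l-1}s_ls_{l-1})\cdots(s_1\ldots s_{l-1}s_ls_{l-1}\ldots s_1)$ together with the $w_i$ defined above gives the combinatorial backbone for this chain.

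First I would invoke Proposition \ref{siok}, which guarantees that at each step the coefficient $r_{k'}$ of the relevant fundamental weight $\omega_{k'}$ in $\sigma_k(\omega_i)$ is either $1$ or $2$. This is precisely the data needed to decide how many times the corresponding root vector must be applied: by the reflection formula $s_r(\omega) = \omega - \frac{2(\omega,\alpha_r)}{(\alpha_r,\alpha_r)}\alpha_r$, passing from $\sigma_k(\omega_i)$ to $\sigma_{k+1}(\omega_i) = s_{k'}\sigma_k(\omega_i)$ lowers the weight by $r_{k'}\alpha_{k'}$, so the transition is realized by $(x_{k',0}^{-})^{r_{k'}}$. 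Thus I would define inductively
\[
v_{\sigma_{k+1}(\omega_i)} = \left(x_{k',0}^{-}\right)^{r_{k'}} v_{\sigma_k(\omega_i)},
\]
exactly as in the type $D$ proof, and argue that each such vector is nonzero because the weight space of weight $\sigma_k(\omega_i)$ is one-dimensional (it lies on the Weyl orbit of the extremal weight $\omega_i$) and $U(\nysp)$ acts so as to reach every extremal weight space. Composing all these steps over the full reduced word for $w_0$ produces the desired $y \in U(\nysp)$ with $v_1^{-} = y.v_1^{+}$.

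The main work, as in Proposition \ref{v+=gv-}, is to write down the three explicit cases for $y$ according to the value of $b_1 = i$: the cases $1 \le i \le l-1$ and $i = l$, reflecting the different shapes of the reduced words $w_i$ and the fact that $r_{k'} = 2$ occurs precisely at the doubling steps recorded in the four-case computation of $w_0(\omega_i)$ inside the proof of Proposition \ref{siok}. I would read off from that computation exactly where the squared factors $(x_{k',0}^{-})^2$ appear and arrange the terms into products of parentheses mirroring the grouping in $w_i$, just as the $D_l$ formulas were organized.

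I expect the main obstacle to be purely bookkeeping rather than conceptual: correctly transcribing the doubling pattern from the weight computation in Proposition \ref{siok} into the ordered product of root vectors, keeping the indices in each parenthesized block consistent with the short root $\alpha_l = -2\omega_{l-1} + 2\omega_l$ (which behaves differently from the type $D$ case, where there were two long nodes $l-1, l$). The genuine representation-theoretic content—nonvanishing of the intermediate vectors and the fact that the extremal weight spaces are one-dimensional—follows verbatim from the argument in Proposition \ref{v+=gv-} and from Weyl's theorem, so no new ideas are required there.
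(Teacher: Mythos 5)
Your proposal is correct and follows essentially the same route as the paper's proof: invoke Proposition \ref{siok} together with the reflection formula $s_r(\omega)=\omega-\frac{2(\omega,\alpha_r)}{(\alpha_r,\alpha_r)}\alpha_r$ to read off the exponents $r_{k'}\in\{1,2\}$, then realize $v_1^{-}$ as the explicit ordered product of factors $\left(x_{k',0}^{-}\right)^{r_{k'}}$ along the reduced word $w_i$ (the paper in fact packages all values of $i$ into a single formula rather than splitting into cases, since the $i=l$ block degenerates to $x_{l,0}^{-}$). One minor slip: in type $C_l$ the simple root $\alpha_l=-2\omega_{l-1}+2\omega_l$ is the \emph{long} root, not the short one, though this does not affect your argument.
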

\begin{proof}
Recall that $V_a(\omega_i)\cong L(\omega_i)$ as a $\nysp$-module, and both $v_{1}^{+}$ and $v_{1}^{-}$ are in $L(\omega_i)$. It follows from Proposition \ref{siok} and formula $s_i\left(\omega\right)=\omega-\frac{2\left(\omega,\alpha_i\right)}{\left(\alpha_i, \alpha_i\right)}\alpha_i$ that  %It is well known that $$v_{1}^{-}=\prod_{i=1}^{l^2-2}\left(x_{i',0}^{-}\right)^{r_{i'}}v_{1}^{+}$$

\begin{align*}
% \nonumber to remove numbering \left(before each equation\right)
   v^{-}_1&= \Big(x_{i,0}^{-}\ldots x_{l-1,0}^{-}x_{l,0}^{-}x_{l-1,0}^{-}\ldots x_{i,0}^{-}\Big) \left(\left(x_{i-1,0}^{-}\right)^2x_{i,0}^{-}\ldots x_{l,0}^{-}\ldots x_{i,0}^{-}\right)\ldots\\
   &  \left(\left(x_{2,0}^{-}\right)^2\ldots \left(x_{i-1,0}^{-}\right)^2x_{i,0}^{-}\ldots x_{l-1,0}^{-}x_{l,0}^{-}x_{l-1,0}^{-}\ldots x_{i,0}^{-}\right)\\
   &\left(\left(x_{1,0}^{-}\right)^2\ldots \left(x_{i-1,0}^{-}\right)^2x_{i,0}^{-}\ldots x_{l-1,0}^{-}x_{l,0}^{-}x_{l-1,0}^{-}\ldots x_{i,0}^{-}\right)v^{+}_1.
\end{align*}
\end{proof}
\section{On a lower bound for the dimension of the local Weyl module $W(\pi)$}
In this section, we construct a highest weight module which is a tensor product of fundamental representations of $\ysp$. By the maximality of the local Weyl modules, a lower bound for the local Weyl module $W(\pi)$ of $\ysp$ can be obtained.

Recall that $Y_l=span\{x_{l,r}^{\pm}, h_{l,r}|r\in\Z_{\geq 0}\}\cong \ysl$. However, $x_{l,r}^{\pm}, h_{l,r}$ do not satisfy the defining relations of $\ysl$. Therefore, we need to rescale these generators if we want to use the previous chapter results. Let $$\tilde{x}_{l,r}^{\pm}=\frac{\sqrt{2}}{2^{r+1}}x_{l,r}^{\pm},\qquad\tilde{h}_{l,r}=\frac{1}{2^{r+1}}h_{l,r}.$$ Then $span\{\tilde{x}_{l,r}^{\pm}, \tilde{h}_{l,r}|r\in\Z_{\geq 0}\}\cong \ysl$ and the new generators satisfy the defining relations of $\ysl$. $Y_i=span\{x_{i,r}^{\pm}, h_{i,r}|r\in\Z_{\geq 0}\}\cong \ysl$($1\leq i\leq l-1$) and $x_{i,r}^{\pm}, h_{i,r}$ satisfy the defining relations automatically.

In the defining relations of $\ysp$, $d_1=\ldots=d_{l-1}=1$ and $d_{l}=2$.
%\begin{lemma} Let $v^{+}$ be the highest weight vector of $V_a(\omega_i)$.
%\begin{enumerate}
%  \item if $i\neq l$, then $Y_i\left(v^{+}\right)\cong W_1\left(a\right)$ as $\ysl$-module.
%  \item if $i=l$, then $Y_l\left(v^{+}\right)\cong W_1\left(\frac{a}{2}\right)$ as $\ysl$-module.
%\end{enumerate}
%\end{lemma}
%\begin{proof}
%Recall that the associated polynomial of $V_a(\omega_i)$ is $\pi_i\left(u\right)=u-a$ and $\pi_j\left(u\right)=1\left(j\neq i\right)$. The first item follows easily from the Theorem \ref{cfdihwr}. For the second item, It will be shown in Corollary \ref{spc3c1}.
%\end{proof}
\begin{proposition}\label{mtoyspl}
Let $L=V_{a_1}(\omega_{b_1})\otimes V_{a_2}(\omega_{b_2})\otimes\ldots\otimes V_{a_k}(\omega_{b_k})$ be a $\ysp$-module, where $\operatorname{Re}\left(a_1\right)\geq \operatorname{Re}\left(a_2\right)\geq \ldots \geq \operatorname{Re}\left(a_k\right)$ and $b_{i}\in\{1,\ldots, l\}$.  Then $L$ is a highest weight representation.
\end{proposition}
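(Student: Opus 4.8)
The plan is to prove Proposition \ref{mtoyspl} by induction on $k$, following exactly the strategy already carried out for type $A$ in Proposition \ref{Lhwr} and for type $D$ in Proposition \ref{mtoysl2l}. For the base case $k=1$, the module $L=V_{a_1}(\omega_{b_1})$ is irreducible and hence highest weight. For the inductive step I would assume $k\geq 2$ and that $V_{a_2}(\omega_{b_2})\otimes\cdots\otimes V_{a_k}(\omega_{b_k})$ is a highest weight representation of $\ysp$ with highest weight vector $v^{+}=v_2^{+}\otimes\cdots\otimes v_k^{+}$. By Proposition \ref{Delta} the vector $v_1^{+}\otimes v^{+}$ is a maximal vector on which each $h_{i,k}$ acts by a scalar, so the whole problem reduces to showing that $v_1^{+}\otimes v^{+}$ generates $L$. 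By Corollary \ref{v-w+gvtw} it suffices to prove that
\begin{equation*}
v_1^{-}\otimes v^{+}\in\ysp\left(v_1^{+}\otimes v^{+}\right).
\end{equation*}

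The mechanism for this is the same ``path from highest to lowest weight'' argument. Using the reduced expression for $w_0$ and the vectors $v_{\sigma_k(\omega_{b_1})}$ defined from it, together with Proposition \ref{v+=gv-2} which expresses $v_1^{-}$ as a product of simple lowering operators applied to $v_1^{+}$, I would walk along the chain $v_{\sigma_0(\omega_{b_1})}=v_1^{+},\,v_{\sigma_1(\omega_{b_1})},\dots,v_{\sigma_N(\omega_{b_1})}=v_1^{-}$. At each stage I would establish, exactly as in Steps 1--8 of Proposition \ref{mtoysl2l}, that (i) $\sigma_i^{-1}(\alpha_{i'})\in\Delta^{+}$, so that $v_{\sigma_i(\omega_{b_1})}$ is a maximal vector for $Y_{i'}$ and $Y_{i'}(v_{\sigma_i(\omega_{b_1})})$ is a highest weight $Y_{i'}$-module; (ii) by Proposition \ref{siok} the eigenvalue of $h_{i',0}$ is $1$ or $2$, so the associated Drinfeld polynomial has degree $1$ or $2$ and the module is one of $W_1(a)$, $W_2(a)$, or $W_1(b)\otimes W_1(a)$ with $\operatorname{Re}(b)>\operatorname{Re}(a)$; (iii) the factorization
\begin{equation*}
Y_{i'}\!\left(v_{\sigma_i(\omega_{b_1})}\otimes v_2^{+}\otimes\cdots\otimes v_k^{+}\right)=Y_{i'}\!\left(v_{\sigma_i(\omega_{b_1})}\right)\otimes Y_{i'}\!\left(v_2^{+}\right)\otimes\cdots\otimes Y_{i'}\!\left(v_k^{+}\right),
\end{equation*}
which follows from Proposition \ref{c1dgd2d} together with the fact that each $Y_{i'}(v_m^{+})$ is either trivial or isomorphic to some $W_1(a_m)$; and (iv) the ordering of real parts $\operatorname{Re}(a)\geq\operatorname{Re}(a_1)\geq\cdots$ lets me invoke Corollary \ref{tpihw} or Corollary \ref{tpihw2} to conclude that this tensor product is again highest weight for $Y_{i'}$. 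Step (iv) gives $v_{\sigma_{i+1}(\omega_{b_1})}\otimes v^{+}\in Y_{i'}(v_{\sigma_i(\omega_{b_1})}\otimes v^{+})$, and induction on $i$ then yields $v_1^{-}\otimes v^{+}\in\ysp(v_1^{+}\otimes v^{+})$, completing the proof.

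The main obstacle, and the reason this is genuinely a type-$C$ computation rather than a mechanical copy of the type-$D$ one, is the verification of the analogue of Step 5 of Proposition \ref{mtoysl2l}: namely the explicit determination of the Drinfeld polynomial roots of each $Y_{i'}(v_{\sigma_i(\omega_{b_1})})$ and in particular the inequality $\operatorname{Re}(a)\geq\operatorname{Re}(a_1)$. Here the node $l$ is a long root with $d_l=2$, so $Y_l$ sits inside $\ysp$ only after the rescaling $\tilde{x}_{l,r}^{\pm}=\tfrac{\sqrt{2}}{2^{r+1}}x_{l,r}^{\pm}$, $\tilde{h}_{l,r}=\tfrac{1}{2^{r+1}}h_{l,r}$, and the path through the $l$-th node involves the reflection $s_l(\omega_l)=2\omega_{l-1}-\omega_l$ producing the degree-$2$ situation. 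I would therefore defer the explicit eigenvalue computations to a separate supplementary section (paralleling Section \ref{mtoysl2ls}), computing the action of $h_{i',1}$ (and $h_{i',2}$ in the degree-$2$ cases) on the relevant products $v_{\sigma_i(\omega_{b_1})}$ by induction on the position along the path, using the defining relations of the Yangian and the $W_1(a)$ identities recorded in Corollary \ref{slw1a}. The rescaling at node $l$ is exactly where the constants differ from the orthogonal case, and getting those scalars right so that the positivity $\operatorname{Re}(a)\geq\operatorname{Re}(a_1)$ persists is where the real work lies; everything else transfers verbatim from Propositions \ref{Lhwr} and \ref{mtoysl2l}.
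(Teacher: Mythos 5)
Your proposal is correct and follows essentially the same route as the paper's own proof: induction on $k$, reduction via Corollary \ref{v-w+gvtw} to showing $v_1^{-}\otimes v^{+}\in\ysp(v_1^{+}\otimes v^{+})$, the eight-step path argument through the vectors $v_{\sigma_i(\omega_{b_1})}$ using Propositions \ref{siok}, \ref{c1dgd2d} and Corollaries \ref{tpihw}, \ref{tpihw2}, with the explicit Drinfeld-polynomial computations deferred to a supplementary section. You also correctly identify the genuinely type-$C$ content, namely the rescaling $\tilde{x}_{l,r}^{\pm}$, $\tilde{h}_{l,r}$ at the long node $l$ (where the paper indeed obtains $W_1\left(\frac{a}{2}\right)$ rather than $W_1(a)$) and the degree-$2$ cases arising from $s_l(\omega_l)=2\omega_{l-1}-\omega_l$, which is exactly where the paper's Section \ref{mtoyspls} does its work.
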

\begin{proof}
%There is a big difference to Proposition \ref{Lhwr} of Step 5. In there $Y_{i'}v_{\sigma_i\left(\omega_{b_1}\right)}$ is a 2-dimensional highest weight representation; but in the case of $\yo$, $Y_{i'}v_{\sigma_i\left(\omega_{b_1}\right)}$ could be dimension 2, 3 or 4, corresponding $W_1\left(a\right)$, $W_2\left(a\right)$, or $W_1\left(b\right)\otimes W_1\left(a\right)\left(\operatorname{Re}\left(b\right)>\operatorname{Re}\left(a\right)\right)$, respectively. But we will prove that $\operatorname{Re}\left(a\right)\geq \operatorname{Re}\left(a_1\right)$ in all three cases.
%Let $Y\left(2l\right)=Y\left(\mathfrak{so}\left(2l,\C\right)\right)$, and
Let $v_m^{+}$ be the highest weight vectors of $V_{a_m}\left(\omega_{b_m}\right)$. Let $v_1^{-}$ be the lowest weight vector of $V_{a_1}\left(\omega_{b_1}\right)$.

We prove this proposition by induction on $k$.
Without loss of generality, we may assume that $k\geq 2$ and $V_{a_2}(\omega_{b_2})\otimes V_{a_3}(\omega_{b_3})\otimes\ldots\otimes V_{a_k}(\omega_{b_k})$ is a highest weight representation of $\ysp$, and its highest weight vector is $v^{+}=v_2^{+}\otimes \ldots\otimes v_k^{+}$. To show that $L$ is a highest weight representation, it follows from Corollary \ref{v-w+gvtw} that it suffices to show that $$v^{-}_{1}\otimes v^{+}\in \ysp\left(v^{+}_1\otimes v^{+}\right).$$ We divide the proof into the following steps.

Step 1: $\sigma_i^{-1}\left(\alpha_{i'}\right)\in \Delta^{+}$.

Proof: It is easy to check by the definition of $\sigma_i$ and the way we choose $i'$.

%Step 1: $l\left(s_{i'}\sigma_i\right)=l\left(\sigma_i\right)+1$.

%Proof: First off, $l\left(s_{i'}\sigma_i\right)=l\left(\sigma_i\right)\pm 1$ because $s_{i'}$ keeps all positive roots but $\alpha_{i'}$. The rest should follow from the reduced expression of longest element $w_0$ of Weyl group of $\nysp$. %Let $T_i=\{\alpha\in \Phi^{+}| \sigma_i\left(\alpha\right)\in -\Phi^{+}\}$. Then $|T_i|=l\left(\sigma_i\right)$. Easy to see that
%$T_{1}=\{\mu_{i}-\mu_{i+1}\}\subseteq T_{2}=T_{1}\bigcup \{\mu_{i}-\mu_{i+2}\}\subseteq \ldots \subseteq T_{l-1}=T_{l-2}\bigcup \{\mu_{i}-\mu_{l}\}\subseteq T_{l}=T_{l-1}\bigcup \{\mu_i+\mu_{l}\}\subseteq T_{l+1}=T_{l}\bigcup \{\mu_i+\mu_{l-1}\}$.  Similarly we can show that $T_i\subseteq T_{i+1}$ in general. $l\left(s_{i'}\sigma_i\right)\geq l\left(\sigma_i\right)$. Then $l\left(s_{i'}\sigma_i\right)=l\left(\sigma_i\right)+1$.

%Proof: First off, $l\left(s_{i'}\sigma_i\right)=l\left(\sigma_i\right)\pm 1$ because $s_{i'}$ keeps all positive roots but $\alpha_{i'}$. Let $T_i=\{\alpha\in \Phi^{+}| \sigma_i\left(\alpha\right)\in -\Phi^{+}\}$. Then $|T_i|=l\left(\sigma_i\right)$. Easy to see that
%$T_1=\{\mu_{b_1}-\mu_{b_1+1}\}$, and $T_2=\{\mu_{b_1}-\mu_{b_1+1}\}$. It follows from the definition of $\sigma_i$ that $T_i\subseteq T_{i+1}$, which gives $l\left(s_{i'}\sigma_i\right)]\geq l\left(\sigma_i\right)$. Then $l\left(s_{i'}\sigma_i\right)=l\left(\sigma_i\right)+1$.

Step 2: $Y_{i'}\left(v_{\sigma_i\left(\omega_{b_1}\right)}\right)$ is a highest weight module of $Y_{i'}$.
%For example, $V_{i}^{\left(k\right)}=x_{2,0}^{-}\ldots \left(x_{n+1,0}^{-}\ldots x_{b_1,0}^{-}\right)v_1^{+}$, then $Y_{k+1}$ means $Y_{3}$.

Proof: Since the weight $\sigma_i\left(\omega_{b_1}\right)$ is on the Weyl group orbit of the highest weight and the representation $V_{a_1}(\omega_{b_1})$ is finite-dimensional, the weight space of weight $\sigma_i\left(\omega_{b_1}\right)$ is 1-dimensional. The elements $h_{j,s}$ form a commutative subalgebra, so $v_{\sigma_i\left(\omega_{b_1}\right)}$ is an eigenvector of $h_{i',r}$. Therefore we only have to show that $v_{\sigma_i\left(\omega_{b_1}\right)}$ is a maximal vector. Suppose to the contrary that $x_{i',k}^{+}v_{\sigma_i\left(\omega_{b_1}\right)}\neq 0$.  Then $x_{i',k}^{+}v_{\sigma_i\left(\omega_{b_1}\right)}$ is a weight vector of weight $\sigma_i(\omega_{b_1})+\alpha_{i'}$, so $\omega_{b_1}+\sigma_i^{-1}\left(\alpha_{i'}\right)$ is a weight. Because $\sigma_i^{-1}\left(\alpha_{i'}\right)\in \Delta^{+}$,  $\omega_{b_1}$ is a weight preceding the weight $\omega_{b_1}+\sigma_i^{-1}\left(\alpha_{i'}\right)$, which contradicts the maximality of $\omega_{b_1}$ in the representation $L\left(\omega_{b_1}\right)$.

%Step 2: [Remark, Section 3.1, \cite{ChPr4}] Let $\sigma$ be any element of the Weyl group of a finite-dimensional complex simple Lie algebra $\mathfrak{g}$ and set $\Delta_{\sigma}^{+}=\{\alpha\in \Delta^{+}: \sigma^{-1}\left(\alpha\right)\in -\Delta^{+}\}$. Since $V_{\sigma_i\left(\omega_{b_1}\right)}$ is 1-dimensional, there exists scalars $d_{i'k,\sigma_i}^{+}$ such that $h_{i',k} v_{\sigma_i\left(\omega_{b_1}\right)}=d_{i'k,\sigma_i}^{+}$. Then
%\begin{equation*}
%1+\sum_{k\geq 0}\frac{d_{i'k,\sigma_i}^{+}}{u^{k+1}}=\begin{cases}
%\frac{P_{i',\sigma_i}^{+}\left(u+\frac{1}{2}\left(\alpha_i,\alpha_i\right)\right)}{P\left(u\right)}\qquad \mathrm{if}\qquad \alpha_i\notin \Delta_{\sigma}^{+}, \\
%\frac{P\left(u\right)}{P_{i',\sigma_i}^{+}\left(u+\frac{1}{2}\left(\alpha_i,\alpha_i\right)\right)}\qquad \mathrm{if}\qquad \alpha_i\in \Delta_{\sigma}^{+}.
%\end{cases}
%\end{equation*}

%Moreover we may assume that $P\left(u\right)$ is monic.

Step 3: Let $P\left(u\right)$ be the associated polynomial of  $Y_{i'}\left(v_{\sigma_i\left(\omega_{b_1}\right)}\right)$. Then as a highest weight $Y_{i'}$-module $Y_{i'}\left(v_{\sigma_i\left(\omega_{b_1}\right)}\right)$ has highest weight $\frac{P\left(u+1\right)}{P\left(u\right)}$.%$\frac{P_{i',\sigma_i}^{+}\left(u+1\right)}{P\left(u\right)}$.%$P\left(u\right)$

Proof: It follows from the representation theory of $\ysl$.

Step 4: $P\left(u\right)$ has of degree 1 or 2.

Proof: The degree of $P\left(u\right)$ equals the eigenvalue of $v_{\sigma_i\left(\omega_{b_1}\right)}$ under $h_{i',0}$. Note that $$h_{i',0} v_{\sigma_i\left(\omega_{b_1}\right)}=\Big(\sigma_{i}\left(\omega_{b_1}\right)\left(h_{i',0}\right)\Big)v_{\sigma_i\left(\omega_{b_1}\right)}.$$
It follows from Proposition \ref{siok} that the degree of $P\left(u\right)$ equals 1 or 2.

%Proof: It follows from Steps 1, 2 and 3.

Step 5: If $\operatorname{Deg}\Big(P\left(u\right)\Big)=1$, then for $i'\neq l$, $Y_{i'}\left(v_{\sigma_i\left(\omega_{b_1}\right)}\right)$ is 2-dimensional and isomorphic to $W_1\left(a\right)$; for $i'=l$, $Y_{i'}\left(v_{\sigma_i\left(\omega_{b_1}\right)}\right)$ is 2-dimensional and isomorphic to $W_1\left(\frac{a}{2}\right)$;  If $\operatorname{Deg}\Big(P\left(u\right)\Big)=2$, then $Y_{i'}\left(v_{\sigma_i\left(\omega_{b_1}\right)}\right)$ is either 3-dimensional or 4-dimensional and isomorphic to $W_2\left(a\right)$ or $W_1\left(b\right)\otimes W_1\left(a\right)\left(\operatorname{Re}\left(b\right)>\operatorname{Re}\left(a\right)\right)$, respectively. Moreover $\operatorname{Re}\left(a\right)\geq \operatorname{Re}\left(a_1\right)$. The values of both $b$ and $a$ will be explicitly computed in the next section.

Proof: Let us assume for the moment that this is done (the proof will be given in the next section), and let us proceed to the next step.

Step 6:  $Y_{i'}\left(v_{\sigma_i\left(\omega_{b_1}\right)}\otimes v_2^{+}\otimes\ldots \otimes v_k^{+}\right)=Y_{i'}\left(v_{\sigma_i\left(\omega_{b_1}\right)}\right)\otimes Y_{i'}\left(v_2^{+}\right)\otimes\ldots\otimes Y_{i'}\left(v_k^{+}\right)$.

Proof: For $1\leq i\leq l-1$, let $W$ be one of the modules $W_1\left(a\right)$, $W_2\left(a\right)$ or $W_1\left(b\right)\otimes W_1\left(a\right)$; for $i=l$, let $W$ be the module $W_1\left(\frac{a}{2}\right)$. $Y_{i'}\left(v_m^{+}\right)$ is nontrivial if and only if $b_m=i'$. %Say $b_{j_1}=i'$ and if $l<j_1$, then $b_l\neq i'$.
Suppose that in $\{b_1,\ldots,b_k\}$, $b_{j_1}=\ldots=b_{j_{m}}=i'$ with $j_1<\ldots<j_{m}$; moreover, if $s\notin\{j_1,\ldots, j_m\}$, then $b_s\neq i'$.
Note in Step 5 that $Y_{i'}\left(v_{\sigma_i\left(\omega_{b_1}\right)}\right)\cong W$. If $i'\neq l$,
\begin{align*}
% \nonumber to remove numbering (before each equation)
   Y_{i'}\left(v_{\sigma_i\left(\omega_{b_1}\right)}\right)\otimes Y_{i'}\left(v_2^{+}\right)&\otimes\ldots\otimes Y_{i'}\left(v_k^{+}\right) \\
   &\cong \begin{cases}
 W\otimes W_1\left(a_{j_1}\right)\otimes \ldots\otimes W_{1}\left(a_{j_m}\right)\qquad \mathrm{if}\qquad b_1\neq i' \\
 W\otimes W_1\left(a_{j_2}\right)\otimes \ldots\otimes W_{1}\left(a_{j_m}\right) \qquad \mathrm{if}\qquad b_1=i';
\end{cases}
\end{align*}
if $i'=l$,
\begin{align*}
% \nonumber to remove numbering (before each equation)
    Y_{l}\left(v_{\sigma_i\left(\omega_{b_1}\right)}\right)\otimes Y_{l}\left(v_2^{+}\right)&\otimes\ldots\otimes Y_{l}\left(v_k^{+}\right)\\
   &\cong \begin{cases}
 W\otimes W_1\left(\frac{a_{j_1}}{2}\right)\otimes \ldots\otimes W_{1}\left(\frac{a_{j_m}}{2}\right)\qquad \mathrm{if}\qquad b_1\neq l \\
 W\otimes W_1\left(\frac{a_{j_2}}{2}\right)\otimes \ldots\otimes W_{1}\left(\frac{a_{j_m}}{2}\right) \qquad \mathrm{if}\qquad b_1=l.
\end{cases}
\end{align*}

Since $\operatorname{Re}\left(a\right)\geq \operatorname{Re}\left(a_1\right)\geq \operatorname{Re}\left(a_{j_1}\right)\geq \ldots\geq \operatorname{Re}\left(a_{j_m}\right)$ by Step 5, it follows from either Corollary \ref{tpihw} or Corollary \ref{tpihw2} that $Y_{i'}\left(v_{\sigma_i\left(\omega_{b_1}\right)}\right)\otimes Y_{i'}\left(v_2^{+}\right)\otimes\ldots\otimes Y_{i'}\left(v_k^{+}\right)$ is a highest weight module of $Y_{i'}$ with highest weight vector $v_{\sigma_i\left(\omega_{b_1}\right)}\otimes v_2^{+}\otimes\ldots \otimes v_k^{+}$. Thus $$Y_{i'}\left(v_{\sigma_i\left(\omega_{b_1}\right)}\otimes v_2^{+}\otimes\ldots \otimes v_k^{+}\right)\supseteq Y_{i'}\left(v_{\sigma_i\left(\omega_{b_1}\right)}\right)\otimes Y_{i'}\left(v_2^{+}\right)\otimes\ldots\otimes Y_{i'}\left(v_k^{+}\right).$$ By the coproduct of Yangians and Proposition \ref{c1dgd2d}, it is obvious that $$Y_{i'}\left(v_{\sigma_i\left(\omega_{b_1}\right)}\otimes v_2^{+}\otimes\ldots \otimes v_k^{+}\right)\subseteq Y_{i'}\left(v_{\sigma_i\left(\omega_{b_1}\right)}\right)\otimes Y_{i'}\left(v_2^{+}\right)\otimes\ldots\otimes Y_{i'}\left(v_k^{+}\right).$$ Therefore the claim is true.

Step 7: $v_{\sigma_{i+1}(\omega_{b_1})}\otimes v^{+}\in Y_{i'}\left(v_{\sigma_i\left(\omega_{b_1}\right)}\otimes v^{+}\right)$.

Proof: $v_{\sigma_{i+1}(\omega_{b_1})}\otimes v^{+}\in Y_{i'}\left(v_{\sigma_{i}\omega_{b_1}}\right)\otimes Y_{i'}\left(v^{+}\right)=Y_{i'}\left(v_{\sigma_i\left(\omega_{b_1}\right)}\otimes v^{+}\right)$ by Step 6.

Step 8: $v_1^{-}\otimes v^{+}\in \ysp \left(v_1^{+}\otimes v^{+}\right)$.

Proof: It follows immediately from Step 7 by induction on the subscript $i$ of $v_{\sigma_{i}(\omega_{b_1})}$.

It follows from Step 8 and Corollary \ref{v-w+gvtw} that  $L=\ysp \left(v_1^{+}\otimes v^{+}\right)$.
\end{proof}

%\begin{corollary}\label{spdL} Let $L$ as the module above.
%$$Dim{L}=\prod\limits_{j\in I} \Bigg(Dim\Big(L\left(\omega_j\right)\Big)\Bigg)^{m_j}.$$
%\end{corollary}
%\begin{proof}
%It follows from Proposition \ref{tati} that
%$$V_{a_i}\left(\omega_{b_i}\right)\cong_{\nysp}L\left(\omega_{b_i}\right).$$

%Thus
%$$\operatorname{Dim}{L}=\prod\limits_{i=1}^{k} \operatorname{Dim}\Big(V_{a_i}\left(\omega_{b_i}\right)\Big)=\prod\limits_{i=1}^{k} \operatorname{Dim}\Big(L\left(\omega_{b_i}\right)\Big)=\prod\limits_{j\in I} \Bigg(\operatorname{Dim}\Big(L(\omega_i)\Big)\Bigg)^{m_j}.$$
%\end{proof}
%\newpage
\begin{remark}
The following is the record of the root (s) of the associated polynomials of $Y_{i'}\left(v_{\sigma_i\left(\omega_{b_1}\right)}\right)$ for the possible $i$ values. We refer to Remark \ref{yocap1} for the notation used below.

When $1\leq b_1\leq l-1$,

\noindent $a_1\xlongrightarrow{x_{b_1,0}^{-}}a_1+\frac{1}{2}\xlongrightarrow{x_{b_1+1,0}^{-}}a_1+1\xlongrightarrow{x_{b_1+2,0}^{-}}\ldots \xlongrightarrow{x_{l-1,0}^{-}}\frac{1}{2}\left(a_1+\frac{l-b_1+1}{2}\right) \xlongrightarrow{x_{l,0}^{-}}a_1+\frac{l-b_1+3}{2} \xlongrightarrow{x_{l-1,0}^{-}}a_1+\frac{l-b_1+4}{2}\xlongrightarrow{x_{l-2,0}^{-}}\ldots \xlongrightarrow{x_{b_1+1,0}^{-}}a_1+\frac{l-b_1+3}{2}+\frac{l-b_1-1}{2}=a_1+l-b_1+1 \xlongrightarrow{x_{b_1,0}^{-}}\left(a_1+l-b_1+\frac{3}{2},a_1+\frac{1}{2}\right)\xlongrightarrow{\left(x_{b_1-1,0}^{-}\right)^2}\left(a_1+l-b_1+2, a_1+1\right)
\xlongrightarrow{\left(x_{b_1-2,0}^{-}\right)^2}\ldots \xlongrightarrow{\left(x_{2,0}^{-}\right)^2}\left(a_1+l-b_1+1+\frac{b_1-1}{2}=a_1+l-\frac{b_1}{2}+\frac{1}{2}, a_1+\frac{b_1-1}{2}\right)\xlongrightarrow{\left(x_{1,0}^{-}\right)^2}$

\noindent(The second parenthesis) $\left(a_1+1\right)\xlongrightarrow{x_{b_1,0}^{-}}\left(a_1+1\right)+\frac{1}{2}\xlongrightarrow{x_{b_1+1,0}^{-}}\ldots \xlongrightarrow{\left(x_{3,0}^{-}\right)^2}$\newline $\left(a_1+l-\frac{b_1}{2}+1, a_1+\frac{b_1}{2}\right)\xlongrightarrow{\left(x_{2,0}^{-}\right)^2}$

%\left(The third parenthesis) $\left(a_1+2\right)\xlongrightarrow{x_{b_1,0}^{-}}\left(a_1+2\right)+\frac{1}{2}\xlongrightarrow{x_{b_1+1,0}^{-}}\ldots \xlongrightarrow{\left(x_{4,0}^{-}\right)^2}a_1+l-\frac{b_1}{2}+\frac{3}{2}\xlongrightarrow{\left(x_{3,0}^{-}\right)^2}$

$\ldots$

\noindent(The last parenthesis) $\left(a_1+b_1-1\right)\xlongrightarrow{x_{b_1,0}^{-}}\left(a_1+b_1-1\right)+\frac{1}{2}\xlongrightarrow{x_{b_1+1,0}^{-}}\ldots \xlongrightarrow{x_{b_1+1,0}^{-}}a_1+l\xlongrightarrow{x_{b_1,0}^{-}}\text{the lowest weight vector reached}$.

When $b_1=l$,

\noindent$\frac{a_1}{2}\xlongrightarrow{x_{l,0}^{-}}\left(a_1+1,a_1\right)\xlongrightarrow{\left(x_{l-1,0}^{-}\right)^2} \left(a_1+\frac{3}{2},a_1+\frac{1}{2}\right) \xlongrightarrow{\left(x_{l-2,0}^{-}\right)^2}\ldots \xlongrightarrow{\left(x_{2,0}^{-}\right)^2}\\ \left(a_1+\frac{l}{2}, a_1+\frac{l-2}{2}\right)\xlongrightarrow{\left(x_{1,0}^{-}\right)^2}$

\noindent(The second parenthesis) $\frac{a_1+1}{2}\xlongrightarrow{x_{l,0}^{-}}\Big(\left(a_1+1\right)+1, a_1+1\Big)\xlongrightarrow{\left(x_{l-1,0}^{-}\right)^2}\\ \left(a_1+\frac{5}{2},a_1+\frac{3}{2}\right) \xlongrightarrow{\left(x_{l-2,0}^{-}\right)^2}\ldots \xlongrightarrow{\left(x_{3,0}^{-}\right)^2}\left(a_1+\frac{l+1}{2}, a_1+\frac{l-1}{2}\right)\xlongrightarrow{\left(x_{2,0}^{-}\right)^2}$

$\ldots$

\noindent(The last two parentheses) $\frac{a_1+l-2}{2}\xlongrightarrow{x_{l,0}^{-}}\left(a_1+l-1, a_1+l-2\right)\xlongrightarrow{\left(x_{l-1,0}^{-}\right)^2}\\ \frac{a_1+l-1 }{2}\xlongrightarrow{x_{l,0}^{-}}\text{the lowest weight vector reached}$.
%\end{proof}
\end{remark}
%\begin{proof}
%The proof is similar to the one of Corollary \ref{yocap1}, so we omit the proof.
%In the next section, we will compute explicitly the root(s) of associated polynomial of $Y_{i'}\left(v_{\sigma_i\left(\omega_{b_1}\right)}\right)$ for the possible $i$ value% is $\{a_1, a_1+\frac{1}{2},\ldots, a_1+l-\frac{1}{2}, a_1+l\}$.
%The fact that $a_j-a_i\notin A$ for $1\leq i<j\leq k$ guarantees the difference of root(s) of the associated polynomials of $Y_{i'}\left(v_{\sigma_i\left(\omega_{b_1}\right)}\right)$ for any possible value $i$ and $W_1(a_{j_r})$ does not equal 1. It follows from Lemma \ref{beautiful} that $Y_{i'}\left(v_{\sigma_i\left(\omega_{b_1}\right)}\right)\otimes Y_{i'}\left(v_2^{+}\right)\otimes\ldots\otimes Y_{i'}\left(v_k^{+}\right)$ is a highest weight module.

A precise condition for the cyclicity of the tensor product $L$ can be obtained in the next Theorem. We first show the following lemma. Since the proof is similar to the proof of Lemma \ref{C3l317ss}, we omit the proof.
\begin{lemma}
$V_{a_m}\left(\omega_{b_m}\right)\otimes V_{a_n}\left(\omega_{b_n}\right)$ is a highest weight representation if $a_n-a_m\notin S\left(b_m,b_n\right)$, where the set $S\left(b_m,b_n\right)$ is defined as follows:
\begin{enumerate}
  \item $S\left(b_m,b_n\right)=\left\{\frac{|b_m-b_n|}{2}+1+r, l+2+r-\frac{b_m+b_n}{2}|0\leq r<\text{min}\{b_m, b_n\}\right\}$,\newline where $1\leq b_m, b_n\leq l-1$;
  \item  $S\left(l,b_n\right)=\left\{\frac{l-b_n+1}{2}+1+r, \frac{l-b_n-1}{2}+1+r|0\leq r<b_n, 1\leq b_n\leq l-1\right\}$;
  \item $S\left(b_m,l\right)=\left\{\frac{l-b_m+1}{2}+2+r|0\leq r< b_m, 1\leq b_m\leq l-1\right\};$
   \item $S\left(l,l\right)=\left\{2,3,\ldots,l+1\right\}$.
\end{enumerate}
\end{lemma}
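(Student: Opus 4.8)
The plan is to follow the proof of Lemma~\ref{C3l317ss} verbatim in spirit, replacing the type-$A$ root data by the symplectic data recorded in Proposition~\ref{mtoyspl} and the Remark following it. The starting observation is that, by Proposition~\ref{v+=gv-2} together with the path argument in Steps~1--8 of Proposition~\ref{mtoyspl}, $V_{a_m}(\omega_{b_m})\otimes V_{a_n}(\omega_{b_n})$ is a highest weight module as soon as, at every step $i$ of the path from $v_1^+$ to $v_1^-$ in the first tensor factor $V_{a_m}(\omega_{b_m})$, the $Y_{i'}$-module $Y_{i'}\big(v_{\sigma_i(\omega_{b_m})}\big)\otimes Y_{i'}(v_n^+)$ is itself a highest weight module. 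If $i'\neq b_n$ then $Y_{i'}(v_n^+)$ is one-dimensional and this is automatic, so only the steps with $i'=b_n$ impose conditions. First I would record, from Proposition~\ref{mtoyspl} (Step~5) and its supplement, that at each such step $Y_{i'}\big(v_{\sigma_i(\omega_{b_m})}\big)$ is isomorphic to $W_1(a)$, to $W_2(a)$, or to $W_1(b)\otimes W_1(a)$ with $\operatorname{Re}(b)>\operatorname{Re}(a)$, where the roots $a$ (and $b$) are the explicit shifts of $a_m$ listed in the Remark after Proposition~\ref{mtoyspl}.

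Next I would reduce each step $i'=b_n$ to an $\ysl$ cyclicity condition. Since $Y_{i'}(v_n^+)\cong W_1(a_n)$ when $b_n\neq l$ and $Y_l(v_n^+)\cong W_1(a_n/2)$ when $b_n=l$ (because of the rescaling $\tilde x_{l,r}^{\pm}=\tfrac{\sqrt2}{2^{r+1}}x_{l,r}^{\pm}$, $\tilde h_{l,r}=\tfrac{1}{2^{r+1}}h_{l,r}$), Lemma~\ref{beautiful} together with Corollaries~\ref{tpihw} and~\ref{tpihw2} shows that $Y_{i'}\big(v_{\sigma_i(\omega_{b_m})}\big)\otimes Y_{i'}(v_n^+)$ is highest weight unless some root $c$ of the associated polynomial of $Y_{i'}\big(v_{\sigma_i(\omega_{b_m})}\big)$ satisfies $a_n-c=1$ (resp. $a_n/2-c/2=1$, i.e. $a_n-c=2$, when $b_n=l$). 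Substituting $c=a_m+(\text{shift})$ turns each such forbidden equality into a forbidden value of $a_n-a_m$, namely $(\text{shift})+1$ (resp. $(\text{shift})+2$). Collecting these over all steps $i$ with $i'=b_n$ and over both roots $a$, $b$ when the module is four-dimensional yields exactly the set $S(b_m,b_n)$; conversely, if $a_n-a_m\notin S(b_m,b_n)$ none of these equalities holds and the tensor product is highest weight at every step.

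The only remaining work is the case-by-case bookkeeping that identifies the shifts and therefore produces the four displayed formulas. Here I would split according to the relative positions of $b_m$, $b_n$ and the distinguished node $l$, exactly as in the four cases of Lemma~\ref{C3l317ss}, using Proposition~\ref{siok} to locate which steps carry $r_{i'}=1$ (degree one, two-dimensional) versus $r_{i'}=2$ (degree two, three- or four-dimensional). The \textbf{main obstacle} is the degree-two steps: when $Y_{i'}\big(v_{\sigma_i(\omega_{b_m})}\big)\cong W_1(b)\otimes W_1(a)$ both roots must be tested against $a_n$, which is what produces the two families $\frac{|b_m-b_n|}{2}+1+r$ and $l+2+r-\frac{b_m+b_n}{2}$ in case~(1), whereas the type-$A$ argument only ever met two-dimensional modules and hence a single family. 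One must also verify that at the boundary steps where the module degenerates to the three-dimensional $W_2(a)$ the single root reproduces the same forbidden value, so that Corollary~\ref{tpihw2} gives the identical constraint, and track the node-$l$ rescaling carefully, since it is responsible for the shift by $2$ in the definitions of $S(b_m,l)$ and $S(l,l)$ and for the paired families in $S(l,b_n)$. Once these enumerations are carried out they match the stated sets, completing the proof.
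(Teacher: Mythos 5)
Your proposal is correct and is essentially the proof the paper intends: the paper itself omits the argument, stating only that it is ``similar to the proof of Lemma \ref{C3l317ss}'', and your adaptation — reducing each step of the path to an $\ysl$ cyclicity condition via Lemma \ref{beautiful}, reading off the roots from the Remark after Proposition \ref{mtoyspl}, and turning each equality $a_n-c=1$ (or $a_n-c=2$ after the node-$l$ rescaling) into a forbidden value of $a_n-a_m$ — is exactly that adaptation. You also correctly identify the two genuinely new features relative to type $A$ (the degree-two steps contributing both roots, hence the two families in case (1), and the rescaling at node $l$ producing the shift by $2$), and a spot-check of the resulting forbidden values against the four displayed sets confirms the bookkeeping.
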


Similar to the proof of Theorem \ref{3mt2icl}, we can prove the following Theorem.
\begin{theorem}\label{yspmt2}
Let $L=V_{a_1}(\omega_{b_1})\otimes V_{a_2}(\omega_{b_2})\otimes\ldots\otimes V_{a_k}(\omega_{b_k})$, and $S(b_i, b_j)$ be defined as the above lemma.
\begin{enumerate}
  \item If $a_j-a_i\notin S(b_i, b_j)$ for $1\leq i<j\leq k$, then $L$ is a highest weight representation of $\ysp$.
  \item If $a_j-a_i\notin S(b_i, b_j)$ for $1\leq i\neq j\leq k$, then $L$ is an irreducible representation of $\ysp$.
\end{enumerate}
\end{theorem}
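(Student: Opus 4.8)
The plan is to establish both parts by transplanting, almost verbatim, the inductive machinery already set up for the real-part-ordered case in Proposition \ref{mtoyspl} and for type $A$ in Theorem \ref{3mt2icl}. For part (1), I would argue by induction on $k$. As in Proposition \ref{mtoyspl}, after checking that $v_1^+\otimes v^+$ is a maximal weight vector on which the $h_{i,k}$ act by scalars, it suffices by Corollary \ref{v-w+gvtw} to show $v_1^-\otimes v^+\in\ysp(v_1^+\otimes v^+)$. This is done by walking from the highest weight vector $v_1^+$ of $V_{a_1}(\omega_{b_1})$ down to its lowest weight vector $v_1^-$ along the reduced word recorded in Proposition \ref{v+=gv-2}, at each step $i$ invoking the $Y_{i'}$-subalgebra and the identity $Y_{i'}(v_{\sigma_i(\omega_{b_1})}\otimes v^+)=Y_{i'}(v_{\sigma_i(\omega_{b_1})})\otimes Y_{i'}(v_2^+)\otimes\cdots\otimes Y_{i'}(v_k^+)$, whose nontrivial inclusion rests on the coproduct computation of Proposition \ref{c1dgd2d}. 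Steps $1$ through $5$, $7$ and $8$ of Proposition \ref{mtoyspl} carry over word for word, including the node-$l$ rescaling $\tilde{x}_{l,r}^\pm,\tilde{h}_{l,r}$ and the explicit roots of the associated $Y_{i'}$-polynomials computed in the supplement section.

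The only genuine change is in Step $6$. In the ordered case one deduced that $Y_{i'}(v_{\sigma_i(\omega_{b_1})})\otimes Y_{i'}(v_2^+)\otimes\cdots\otimes Y_{i'}(v_k^+)$ is a highest weight $Y_{i'}$-module from $\operatorname{Re}(a)\geq\operatorname{Re}(a_1)\geq\cdots$ via Corollary \ref{tpihw} or Corollary \ref{tpihw2}. Here instead I would feed the hypothesis $a_j-a_i\notin S(b_i,b_j)$ for $i<j$ directly into Lemma \ref{beautiful} (equivalently Proposition \ref{ctpihw}): the set $S(b_m,b_n)$ of the preceding lemma is engineered precisely so that $a_n-a_m\notin S(b_m,b_n)$ is equivalent to the statement that no root of the associated polynomial of a later tensor factor exceeds a root of an earlier one by exactly $1$. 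Since those roots are exactly the values tabulated in the Remark following the statement, the difference-by-one collisions that would obstruct cyclicity under $Y_{i'}\cong\ysl$ are ruled out for every admissible $i$, and Lemma \ref{beautiful} yields the highest weight property. The remaining steps then close the induction and give $L=\ysp(v_1^+\otimes v^+)$.

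For part (2) I would apply Proposition \ref{VoWWoVhi}, by which $L$ is irreducible if and only if both $L$ and its left dual ${}^tL$ are highest weight $\ysp$-modules. Part (1) makes $L$ highest weight using the pairs $i<j$ of the hypothesis. For ${}^tL$, the key simplification special to type $C$ is that $w_0=-1$, so $-w_0$ is the identity on nodes and Lemma \ref{dualfrc1} gives ${}^tV_a(\omega_b)\cong V_{a-\kappa}(\omega_b)$ with no flip of the fundamental index; combined with ${}^t(V\otimes W)={}^tW\otimes{}^tV$ this yields ${}^tL\cong V_{a_k-\kappa}(\omega_{b_k})\otimes\cdots\otimes V_{a_1-\kappa}(\omega_{b_1})$. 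Applying part (1) to ${}^tL$, the $\kappa$-shifts cancel in every difference and the required conditions become $a_i-a_j\notin S(b_j,b_i)$ for $i<j$, which are exactly the $(j,i)$ instances of the blanket hypothesis $a_j-a_i\notin S(b_i,b_j)$ for all $i\neq j$. Hence ${}^tL$ is highest weight and $L$ is irreducible. I expect the main obstacle to be the bookkeeping in Step $6$: one must verify that the tabulated roots, including the halved spectral parameters $\tfrac{a}{2}$ produced by the node-$l$ rescaling (which is exactly why $S(l,b_n)$ and $S(b_m,l)$ come out asymmetric), match the defining set $S(b_m,b_n)$ on the nose, so that the cyclicity criterion fires in every case rather than only generically.
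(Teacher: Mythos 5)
Your proposal is correct and is essentially the paper's own argument: the paper proves this theorem by a one-line reference to the type $A$ case (Theorem \ref{3mt2icl}), which is exactly what you spell out — replace Step 6 of Proposition \ref{mtoyspl} by the no-difference-one-collision condition built into $S(b_i,b_j)$ and invoke Lemma \ref{beautiful}, then get irreducibility from Proposition \ref{VoWWoVhi} and Lemma \ref{dualfrc1} as in Theorem \ref{cp8c3tng2}. Your observation that $w_0=-1$ for type $C$ means the dual involves no flip of fundamental indices (so the ordered-pair hypothesis covers $\ ^tL$ directly, with no need for the symmetry check $S(l+1-b_j,l+1-b_i)=S(b_i,b_j)$ used in type $A$) is precisely the simplification implicit in the paper's citation.
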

\begin{remark}\label{c5yspris1}
It can be seen from Theorem \ref{yspmt2} that the irreducibility of $V_{a_{i}}\left(\omega_{b_{i}}\right)\otimes V_{a_{j}}\left(\omega_{b_{j}}\right)$ is closely related to the difference of $a_j-a_i$. However, it is not known what is the precise necessary and sufficient condition for the irreducibility of the tensor product $V_{a_{i}}\left(\omega_{b_{i}}\right)\otimes V_{a_{j}}\left(\omega_{b_{j}}\right)$. Our result on the cyclicity condition for $L$ is an analogue of a special case $m_1=m_2=1$ of the results in \cite{Ch3}. We now give a detail interpretation. Note that there is a different labeling on the nodes of the Dynkin Diagram and we transfer it to our labeling. The following come from case (iii) of Corollary 6.2 \cite{Ch3}.
\begin{align*}
\mathcal{S}(l,l)&=q^6\{q^0, q^2, \ldots, q^{2l-2}\}=\{q^6, q^8, \ldots, q^{2l+4}\}.\\
\mathcal{S}(l-i+1,l)&=q^4\{q^{i-1}, q^{i+1}, \ldots, q^{2l-1-i}\}=\{q^{i+3}, q^{i+5}, \ldots, q^{2l+3-i}\}.\\
\mathcal{S}(l,l-i+1)&=q^4\{q^{i-1}, q^{i+1}, \ldots, q^{2l+1-i}\}=\{q^{i+3}, q^{i+5}, \ldots, q^{2l+5-i}\}.\\
\mathcal{S}(l-i_1+1,l-i_2+1)&=\mathcal{S}(l-i_2+1,l-i_1+1)\left(i_1\leq i_2\right)\\
&=q^2\{q^{i_2-i_1}, q^{i_2+i_1}, \ldots, q^{2l-i_1-i_2}, q^{2l-i_2+i_1}\}\\
&=\{q^{i_2-i_1+2}, q^{i_2+i_1+2}, \ldots, q^{2l-i_1-i_2+2}, q^{2l-i_2+i_1+2}\}.\\
\mathcal{S}(l-i_1+1,l-i_2+1)&=\mathcal{S}(l-i_2+1,l-i_1+1)\left(i_1\geq i_2\right)\\
&=q^2\{q^{i_1-i_2}, q^{i_1+i_2}, \ldots, q^{2l-i_1-i_2}, q^{2l-i_1+i_2}\}\\
&=\{q^{i_1-i_2+2}, q^{i_1+i_2+2}, \ldots, q^{2l-i_1-i_2+2}, q^{2l-i_1+i_2+2}\}.
\end{align*}
For the reader's convenience we give a parallel result in the Yangian case by making the sets $S(b_i,b_j)$ more explicit.
\begin{align*}
S\left(l,l\right)&=\left\{2,3,\ldots,l+1\right\}\\
S\left(l-i+1,l\right)&=\left\{\frac{i}{2}+2,\ldots, l+2-\frac{i}{2}\right\}.\\
S\left(l,l-i+1\right)&=\left\{\frac{i}{2},\frac{i}{2}+1,\ldots, l+1-\frac{i}{2}\right\}.\\
S(l-i_1+1&,\ l-i_2+1)=S(l-i_2+1,l-i_1+1)\left(i_1\leq i_2\right)\\
&=\{\frac{i_2-i_1}{2}+1, \frac{i_2+i_1}{2}+1, \ldots,l-\frac{i_2+i_1}{2}+1, l-\frac{i_2-i_1}{2}+1 \}.\\
S(l-i_1+1&,\ l-i_2+1)=S(l-i_2+1,l-i_1+1)\left(i_1\geq i_2\right)\\
&=\{\frac{i_1-i_2}{2}+1, \frac{i_1+i_2}{2}+1, \ldots,l-\frac{i_1+i_2}{2}+1, l-\frac{i_1-i_2}{2}+1\}.
\end{align*}
Note that the numbers in the sets $S(l-i_1+1,l-i_2+1)$ are the exponents of $q$ in $\mathcal{S}(l-i_1+1,l-i_2+1)$ up to a factor of 2 and a constant.
%Since there is no braid groups action available in the representation theory of $\yg$ and the complexity of the defining relations of $\yg$, it is a hard question to consider the analogous question that the cyclicity condition for the tensor product of irreducible representations associated to $\pi$ such that the roots of $\pi_i(u)$ form a string and $\pi_j(u)=1$ for all $j\neq i$.
\end{remark}
%It follows immediately from the above corollary and Proposition \ref{VoWWoVhi} that
%\begin{corollary}
%Let $L$ and $A$ as above. If $a_j-a_i\notin A$ for $1\leq i\neq j\leq k$, then $L$ is an irreducible representation of $\ysp$.
%\end{corollary}
\section{Supplement Step 5 of Proposition \ref{mtoyspl}}\label{mtoyspls}

%We summarize Step 5 of Proposition \ref{mtoyspl} into the following proposition.
%\begin{proposition}\label{yspjiw12}
%$Y_{j'}\left(v_{\sigma_j\left(\omega_{b_1}\right)}\right)$ is isomorphic to $W_1\left(a\right)$, $W_1\left(\frac{a}{2}\right)$, $W_2\left(a\right)$ or $W_1\left(b\right)\otimes W_1\left(a\right)$, where $\operatorname{Re}\left(b\right)\geq \operatorname{Re}\left(a\right)$. Moreover $\operatorname{Re}\left(a\right)\geq \operatorname{Re}\left(a_1\right)$.
%\end{proposition}

We prove Step 5 by three cases: $2\leq b_1\leq l-1$, $b_1=1$ and $b_1=l$. For the simplicity of the notation, let $b_1=i$.

\noindent\textbf{Case 1:  $2\leq i\leq l-1$.}
\begin{proposition}\label{spc1il-1p1} Let $i\leq k\leq l-1$ and $1\leq m\leq i-2$.
\begin{enumerate}
  \item $Y_{k}\Big( {\left(x_{k-1,0}^{-}x_{k-2,0}^{-}\ldots x_{i,0}^{-}\right)}v^{+}_1\Big)\cong W_{1}\left(a_1+\frac{k-i}{2}\right)$ as a $Y_{k}$-module for.
  \item $Y_{l}\Big( {x_{l-1,0}^{-}x_{l-2,0}^{-}\ldots x_{i,0}^{-}}v^{+}_1\Big)\cong W_1\Big(\frac{1}{2}\left(a_1+\frac{l-i+1}{2}\right)\Big)$ as a $Y_{l}$-module.
  \item $Y_{k}\left(\overline{x_{k+1,0}^{-}\ldots x_{i,0}^{-}}v^{+}_1\right)\cong W_1\left(a_1+\frac{2l-i-k+2}{2}\right)$ as a $Y_{k}$-module.
  \item
  \begin{enumerate}
    \item $Y_{i-1}\Big(\overline{\left(x_{i,0}^{-}\right)\ldots x_{i,0}^{-}}v^{+}_1\Big)\cong W_1\left(a_1+l-i+\frac{3}{2}\right)\otimes W_1\left(a_1+\frac{1}{2}\right)$.
    \item $Y_m\Big(\overline{\left(x_{m+1,0}^{-}\right)^2\ldots x_{i,0}^{-}}v^{+}_1\Big)\cong W_1\left(a_1+\frac{2l-i-m+2}{2}\right)\otimes W_1\left(a_1+\frac{i-m}{2}\right)$.
  \end{enumerate}
 \item $Y_i\Big(\overline{\left(x_{1,0}^{-}\right)^2\ldots x_{i,0}^{-}}v^{+}_1\Big)\cong W_1\left(a_1+1\right)$.
\end{enumerate}
\end{proposition}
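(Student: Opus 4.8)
The plan is to establish Proposition \ref{spc1il-1p1} by the same mechanism used for the type $D$ analogues Proposition \ref{i=1yo2n}, Proposition \ref{yoc3p1} and Lemmas \ref{l-2is34d}--\ref{i=l-2v2}. Along the path from $v_1^{+}$ to $v_1^{-}$ recorded in Proposition \ref{v+=gv-2}, each intermediate vector $v_{\sigma_k(\omega_i)}$ is a maximal vector for the subalgebra $Y_{i'}\cong\ysl$, so $Y_{i'}\left(v_{\sigma_k(\omega_i)}\right)$ is a highest weight $\ysl$-module governed by a single monic polynomial $P(u)$ through $\frac{P(u+1)}{P(u)}$. I would pin down $P$ in two stages: its degree equals the $h_{i',0}$-eigenvalue, which by Proposition \ref{siok} is $1$ or $2$; then its roots are determined by the eigenvalues of $h_{i',1}$ (and, when $\operatorname{Deg}(P)=2$, also $h_{i',2}$), read off via Lemma \ref{g2puisd3} and the elementary identities (\ref{Cor3.3}) of Corollary \ref{slw1a}. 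The one structural novelty relative to type $D$ is the long root at node $l$: since $d_l=2$, I must work with the rescaled generators $\tilde{x}_{l,r}^{\pm},\tilde{h}_{l,r}$ so that $Y_l$ genuinely satisfies the $\ysl$-relations, and the factor $\frac{1}{2}$ appearing in item (ii) is precisely the effect of this rescaling on the $h_{l,1}$-eigenvalue.

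Items (i), (ii), (iii) and (v) are the degree-$1$ cases. For each I would first verify maximality by the weight argument of Step 2 in Proposition \ref{mtoyspl}: if $x_{i',k}^{+}v\neq0$ then $\omega_i+\sigma_i^{-1}(\alpha_{i'})$ would be a weight of $V_{a_1}(\omega_i)$ strictly above $\omega_i$, a contradiction. Reading $\operatorname{Deg}(P)=1$ off Proposition \ref{siok}, I then run a single induction along the chain computing the $h_{i',1}$-eigenvalue with the relevant instance of the defining relations, namely $[h_{i',1},x_{j,0}^{-}]=x_{j,1}^{-}+\frac{1}{2}\left(h_{i',0}x_{j,0}^{-}+x_{j,0}^{-}h_{i',0}\right)$ for adjacent short nodes, combined with (\ref{Cor3.3}). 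These inductions are verbatim those of Proposition \ref{i=1yo2n}, so I would only exhibit the base case and the inductive step; the resulting parameters $a_1+\frac{k-i}{2}$, $a_1+\frac{2l-i-k+2}{2}$, and $a_1+1$ have real part at least $\operatorname{Re}(a_1)$, as needed.

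The main obstacle is item (iv), the degree-$2$ case. Here $h_{i',0}$ acts by $2$, so $P(u)=(u-a)(u-b)$, and recovering both roots requires computing both the $h_{i',1}$- and $h_{i',2}$-eigenvalues of $\overline{\left(x_{m+1,0}^{-}\right)^2\ldots x_{i,0}^{-}}v_1^{+}$. This is the nested commutator computation of Lemmas \ref{l-2is34d} and \ref{l-2i34d23}: I expand $h_{i',2}$ through $[h_{i',2},\left(x_{j,0}^{-}\right)^2]$, reduce with $[h_{i',2},x_{j,0}^{-}]=[h_{i',1},x_{j,1}^{-}]+\frac{1}{2}\left(h_{i',1}x_{j,0}^{-}+x_{j,0}^{-}h_{i',1}\right)$ and the degree-$1$ eigenvalues already found, and match against the expansion of $\frac{P(u+1)}{P(u)}$ to obtain the symmetric functions $a+b+1$ and $a^2+b^2+a+b$; solving the quadratic gives $a=a_1+\frac{i-m}{2}$ and $b=a_1+\frac{2l-i-m+2}{2}$ (with the $m=i-1$ specialization in (iv)(a)). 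The simplification peculiar to type $C$ is that $b-a=l-i+1\geq2$ throughout the range $2\leq i\leq l-1$, so $W_1(b)\otimes W_1(a)$ is never reducible; unlike the type $D$ lemmas, no $W_2$ alternative can arise, and a dimension count (the module has dimension at least $3$ and is a quotient of the $4$-dimensional local Weyl module $W(P)\cong W_1(b)\otimes W_1(a)$, irreducible here) forces the stated isomorphism. I expect this step to be the genuine labor, purely because of the bookkeeping in the $h_{i',2}$ computation rather than any conceptual difficulty.

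Finally, item (v) returns to degree $1$ and mirrors Lemma \ref{i=l-2v2}: a short computation of the $h_{i,1}$-eigenvalue of $\overline{\left(x_{1,0}^{-}\right)^2\ldots x_{i,0}^{-}}v_1^{+}$, feeding in the value already obtained in (iii), yields $W_1(a_1+1)$. Assembling the four degree-$1$ identifications together with the degree-$2$ one completes the proposition and, in particular, supplies exactly the module structures and the inequality $\operatorname{Re}(a)\geq\operatorname{Re}(a_1)$ required by Step 5 of Proposition \ref{mtoyspl}, the latter being read off directly from the computed parameters.
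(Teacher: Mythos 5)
Your proposal is correct and follows essentially the same route as the paper's own proof: maximality of $v_{\sigma_k(\omega_i)}$ via the weight argument, degree of $P$ read off the $h_{i',0}$-eigenvalue through Proposition \ref{siok}, roots recovered from the $h_{i',1}$- (and, in the degree-two case, $h_{i',2}$-) eigenvalues matched against $\frac{P(u+1)}{P(u)}$ and the quadratic in the symmetric functions, the rescaled generators $\tilde{x}_{l,r}^{\pm},\tilde{h}_{l,r}$ accounting for the factor $\frac{1}{2}$ in item (ii), and the type-$C$ observation that $b-a=l-i+1\geq 2$ makes $W_1(b)\otimes W_1(a)$ irreducible so that, unlike type $D$, no $W_2$ alternative arises (this is exactly how Lemmas \ref{yspc1i5}--\ref{yspc1i7} and \ref{spv2} conclude).
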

\begin{proof}
The first item can be proved similarly as item (i) of Proposition \ref{yoc3p1}, so we omit the proof. Lemma \ref{ylviw2a0} is devoted to prove the second item, and Lemmas \ref{ylviw2a} and \ref{yspc1il4} is for the third item. The fourth is proved in Lemmas \ref{yspc1i5}, \ref{yspc1i6} and \ref{yspc1i7}. The item (v) will be showed in Lemma \ref{spv2}.
\end{proof}
%The following lemma should follow from Lemma \ref{i=l-2v1} in the case $\yo$ because the Cartan matrices and the defining relations are identical with respect to the first $k\times k$ block.
%\begin{lemma}\label{sp1il-1cb}
%$Y_{k}\Big( {\left(x_{k-1,0}^{-}x_{k-2,0}^{-}\ldots x_{i,0}^{-}\right)}v^{+}_1\Big)$, $i\leq k\leq l-1$, is a $Y_{k}$-module which is isomorphic to %$W_{1}\left(a_1+\frac{k-i}{2}\right)$.
%\end{lemma}
%\begin{proof}
%The proof show follows from the following basic calculations. \\ $h_{k,0} {\left(x_{k-1,0}^{-}\ldots x_{i,0}^{-}\right)}v^{+}_1= {\left(x_{k-1,0}^{-}\ldots x_{i,0}^{-}\right)}v^{+}_1$ and
%$h_{k,1} {\left(x_{k-1,0}^{-}\ldots x_{i,0}^{-}\right)}v^{+}_1=\left(a_1+\frac{k-i}{2}\right) {\left(x_{k-1,0}^{-}\ldots x_{i,0}^{-}\right)}v^{+}_1$.
%\end{proof}

\begin{lemma}\label{ylviw2a0}
$Y_{l}\Big( {x_{l-1,0}^{-}x_{l-2,0}^{-}\ldots x_{i,0}^{-}}v^{+}_1\Big)\cong W_1\Big(\frac{1}{2}\left(a_1+\frac{l-i+1}{2}\right)\Big)$.
\end{lemma}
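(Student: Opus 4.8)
The plan is to follow the template already used for the degree-one cases (e.g. Proposition \ref{i=1yo2n} and item (i) of Proposition \ref{spc1il-1p1}): set $v := x_{l-1,0}^{-}\cdots x_{i,0}^{-}v_1^{+}$ and $w := x_{l-2,0}^{-}\cdots x_{i,0}^{-}v_1^{+}$, show that $v$ is a highest weight vector for the rescaled subalgebra $Y_l\cong\ysl$, read off from the action of $\tilde h_{l,0}$ that its associated polynomial has degree one (so $Y_l(v)\cong W_1(\tilde a)$), and finally pin down $\tilde a$ by computing the eigenvalue of $\tilde h_{l,1}=\tfrac14 h_{l,1}$ on $v$. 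Throughout I would use the rescaled generators $\tilde x_{l,r}^{\pm},\tilde h_{l,r}$ so that Corollary \ref{slw1a} is directly applicable, and the key structural fact that $a_{l,j}=0$ for $j\le l-2$.

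First I would observe that $v$ is precisely the weight vector reached just before applying $x_{l,0}^{-}$ in the first parenthesis of the expression for $v_1^{-}$ in Proposition \ref{v+=gv-2}; hence the maximality argument of Step 2 of Proposition \ref{mtoyspl} gives $x_{l,k}^{+}v=0$ for all $k$, so $Y_l(v)$ is a highest weight $Y_l$-module. Since $b_1=i\neq l$, the $l$-th Drinfeld polynomial of $V_{a_1}(\omega_i)$ is trivial, whence $h_{l,k}v_1^{+}=0$ for all $k$. Because $a_{l,j}=0$ for $j\le l-2$, both $h_{l,0}$ and $h_{l,1}$ commute with each factor $x_{l-2,0}^{-},\dots,x_{i,0}^{-}$, so $h_{l,0}w=0$; combined with $[h_{l,0},x_{l-1,0}^{-}]=2x_{l-1,0}^{-}$ this yields $h_{l,0}v=2v$, i.e. $\tilde h_{l,0}v=v$. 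Thus the associated polynomial has degree one and $Y_l(v)\cong W_1(\tilde a)$.

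The heart of the matter is the computation of $h_{l,1}v$. Using $h_{l,1}w=0$ and the same vanishing of the lower commutators, only the leading factor contributes, so $h_{l,1}v=[h_{l,1},x_{l-1,0}^{-}]\,w$. The defining relation with $d_l=2,\ a_{l,l-1}=-1$ gives $[h_{l,1},x_{l-1,0}^{-}]=2x_{l-1,1}^{-}+h_{l,0}x_{l-1,0}^{-}+x_{l-1,0}^{-}h_{l,0}$, and after applying it to $w$ (again using $h_{l,0}w=0$ and $[h_{l,0},x_{l-1,0}^{-}]=2x_{l-1,0}^{-}$) one reduces to
$$h_{l,1}v=2\,x_{l-1,1}^{-}w+2\,x_{l-1,0}^{-}w.$$
Now $w$ is the highest weight vector of $Y_{l-1}(w)\cong W_1\!\big(a_1+\tfrac{l-1-i}{2}\big)$ furnished by item (i) of Proposition \ref{spc1il-1p1}, so Corollary \ref{slw1a} gives $x_{l-1,1}^{-}w=\big(a_1+\tfrac{l-1-i}{2}\big)x_{l-1,0}^{-}w$. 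Since $x_{l-1,0}^{-}w=v$ and $\tfrac{l-1-i}{2}+1=\tfrac{l-i+1}{2}$, this produces $h_{l,1}v=2\big(a_1+\tfrac{l-i+1}{2}\big)v$, hence $\tilde h_{l,1}v=\tfrac12\big(a_1+\tfrac{l-i+1}{2}\big)v$ and $Y_l(v)\cong W_1\!\big(\tfrac12(a_1+\tfrac{l-i+1}{2})\big)$, as claimed.

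The only genuinely delicate point is the bookkeeping forced by the long simple root $\alpha_l$ with $d_l=2$: one must consistently pass through the rescaled generators so that $Y_l$ honestly obeys the $\ysl$ relations underlying Corollary \ref{slw1a}, and keep the factors $\tfrac12$ and $\tfrac14$ straight when translating between $h_{l,r}$ and $\tilde h_{l,r}$. Once this is handled, the rest is a routine combination of the vanishing $a_{l,j}=0$ for $j\le l-2$ with the degree-one $\ysl$-string relation, so no further obstruction is expected.
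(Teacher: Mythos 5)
Your proposal is correct and follows essentially the same route as the paper's proof: degree one is read off from the $\tilde h_{l,0}$-eigenvalue, and the value of the root is obtained by computing $\tilde h_{l,1}v=\tfrac14 h_{l,1}v$ via the relation $[h_{l,1},x_{l-1,0}^{-}]=2x_{l-1,1}^{-}+h_{l,0}x_{l-1,0}^{-}+x_{l-1,0}^{-}h_{l,0}$ together with $x_{l-1,1}^{-}w=\left(a_1+\tfrac{l-i-1}{2}\right)x_{l-1,0}^{-}w$ from item (i) and Corollary \ref{slw1a}. The only cosmetic difference is that the paper gets $\tilde h_{l,0}v=v$ from the weight $\omega_{i-1}-\omega_{l-1}+\omega_l$ (Proposition \ref{siok}) while you derive it, along with $h_{l,1}w=0$, by direct commutator bookkeeping; both are valid.
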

\begin{proof}

%Memo: We begin to drop the polynomial $P_{l, \sigma_{l-1}}$. We try to calculate the Drinfeld polynomial without it.

  By item (i) of this proposition, $$h_{l-1,1} {x_{l-2,0}^{-}\ldots x_{i,0}^{-}}v^{+}_1=\left(a_1+\frac{l-i-1}{2}\right) {x_{l-2,0}^{-}\ldots x_{i,0}^{-}}v^{+}_1,$$ and by the (\ref{Cor3.3}) then $$x_{l-1,1}^{-} {x_{l-2,0}^{-}\ldots x_{i,0}^{-}}v^{+}_1=\left(a_1+\frac{l-i-1}{2}\right) {x_{l-1,0}^{-}\ldots x_{i,0}^{-}}v^{+}_1.$$
It follows from Proposition \ref{siok} that $wt(x_{l-1,0}^{-}x_{l-2,0}^{-}\ldots x_{i,0}^{-}v^{+}_1)=\omega_{i-1}-\omega_{l-1}+\omega_{l}.$
Thus
\begin{align*}
% \nonumber to remove numbering \left(before each equation\right)
\tilde{h}_{l,0}x_{l-1,0}^{-}x_{l-2,0}^{-}\ldots x_{i,0}^{-}v^{+}_1 &=x_{l-1,0}^{-}x_{l-2,0}^{-}\ldots x_{i,0}^{-}v^{+}_1.
\end{align*}
Therefore the degree of associated polynomial equals 1. Let $P\left(u\right)=u-a$ be the associated polynomial. The eigenvalue of $x_{l-1,0}^{-}x_{l-2,0}^{-}\ldots x_{i,0}^{-}v^{+}_1$ under $h_{l,1}$ will tell the value of $a$.
\begin{align*}
% \nonumber to remove numbering \left(before each equation\right)
  \tilde{h}_{l,1}&\left(x_{l-1,0}^{-}x_{l-2,0}^{-}\ldots x_{i,0}^{-}\right)v^{+}_1\\
 &= \frac{1}{4}h_{l,1}\left(x_{l-1,0}^{-}x_{l-2,0}^{-}\ldots x_{i,0}^{-}\right)v^{+}_1\\
 &=  \frac{1}{4}\left(2x_{l-1,1}^{-}+h_{l,0}x_{l-1,0}^{-}+x_{l-1,0}^{-}h_{l,0}\right)x_{l-2,0}^{-}\ldots x_{i,0}^{-}v^{+}_1 \\
 &=  \frac{1}{4}\left(2x_{l-1,1}^{-}+h_{l,0}x_{l-1,0}^{-}\right)x_{l-2,0}^{-}\ldots x_{i,0}^{-}v^{+}_1 \\
 &=  \frac{1}{2}\left(a_1+\frac{l-i-1}{2}+1\right)x_{l-1,0}^{-}x_{l-2,0}^{-}\ldots x_{i,0}^{-}v^{+}_1\\
  &=  \frac{1}{2}\left(a_1+\frac{l-i+1}{2}\right)x_{l-1,0}^{-}x_{l-2,0}^{-}\ldots x_{i,0}^{-}v^{+}_1.
\end{align*}
%Keep $x_{l,0}^{-}$ in the expression for $v_{\sigma_{i+1}(\omega_{b_1})}$ for the sake of defining relations of $\ysp$.
%We need next to calculate $x_{l,1}^{-}x_{l-1,0}^{-}\ldots x_{i,0}^{-}v^{+}_1$.
Note that $\tilde{x}_{l,0}^{-}=\frac{\sqrt{2}}{2}x_{l,0}^{-}$.
\begin{align*}
 %\nonumber to remove numbering \left(before each equation\right)
    x_{l,1}^{-}x_{l-1,0}^{-}\ldots x_{i,0}^{-}v^{+}_1
   &= 2\sqrt{2}\tilde{x}_{l,1}^{-}x_{l-1,0}^{-}\ldots x_{i,0}^{-}v^{+}_1\\
   &= \sqrt{2}\left(a_1+\frac{l-i+1}{2}\right)\tilde{x}_{l,0}^{-}x_{l-1,0}^{-}\ldots x_{i,0}^{-}v^{+}_1\\
   &= \left(a_1+\frac{l-i+1}{2}\right)x_{l,0}^{-}x_{l-1,0}^{-}\ldots x_{i,0}^{-}v^{+}_1.
\end{align*}
\end{proof}

\begin{lemma}\label{ylviw2a}
$Y_{l-1}\left(x_{l,0}^{-}x_{l-1,0}^{-}\ldots x_{i,0}^{-}v^{+}_1\right)$ is a $Y_{l-1}$-modules, which is isomorphic to the module $W_{1}\left(a_1+\frac{l-i+3}{2}\right)$.
\end{lemma}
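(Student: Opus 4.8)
The plan is to treat $v:=x_{l,0}^{-}x_{l-1,0}^{-}\ldots x_{i,0}^{-}v_1^{+}$ exactly as the vectors $v_{\sigma_i(\omega_{b_1})}$ are treated in Steps 2--5 of Proposition \ref{mtoyspl}, specialized to the node $i'=l-1$. First I would verify that $v$ is a maximal vector for the subalgebra $Y_{l-1}\cong\ysl$, i.e. $x_{l-1,k}^{+}v=0$ for all $k\geq 0$. As in Step 2 of Proposition \ref{mtoyspl}, this reduces to checking that $\sigma^{-1}(\alpha_{l-1})\in\Delta^{+}$, where $\sigma\in\W$ is the Weyl element for which $v$ has weight $\sigma(\omega_i)$; were $x_{l-1,k}^{+}v\neq 0$, its weight $\sigma(\omega_i)+\alpha_{l-1}$ would force $\omega_i+\sigma^{-1}(\alpha_{l-1})$ to be a weight of $V_{a_1}(\omega_i)$ strictly above $\omega_i$, a contradiction. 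Since the $h_{l-1,r}$ commute, $v$ is then a highest weight vector for $Y_{l-1}$, so $Y_{l-1}(v)$ is a highest weight $\ysl$-module.

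Next I would pin down the degree of its Drinfeld polynomial $P(u)$, which equals the $h_{l-1,0}$-eigenvalue of $v$. Each of $x_{i,0}^{-},\ldots,x_{l,0}^{-}$ occurs once in $v$, so its weight is $\omega_i-\sum_{j=i}^{l}\alpha_j=\mu_1+\cdots+\mu_{i-1}-\mu_l=\omega_{i-1}+\omega_{l-1}-\omega_l$ (using $\sum_{j=i}^{l}\alpha_j=\mu_i+\mu_l$ and $-\mu_l=\omega_{l-1}-\omega_l$), in agreement with Proposition \ref{siok}. The coefficient of $\omega_{l-1}$ is $1$, hence $h_{l-1,0}v=v$ and $\deg P=1$. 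As in Step 5 of Proposition \ref{mtoyspl}, $Y_{l-1}(v)$ is then a quotient of the degree-one local Weyl module $W_1(a)$ with $P(u)=u-a$, and since the next vector $x_{l-1,0}^{-}v$ on the path is nonzero it is exactly the $2$-dimensional module $W_1(a)$. It remains only to compute $a$, the eigenvalue of $v$ under $h_{l-1,1}$.

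The heart of the argument is this last eigenvalue computation. Writing $w:=x_{l-1,0}^{-}\cdots x_{i,0}^{-}v_1^{+}$ so that $v=x_{l,0}^{-}w$, I would commute $h_{l-1,1}$ through $x_{l,0}^{-}$ using the defining relation of $\ysp$ with $(i,j)=(l-1,l)$, where $a_{l-1,l}=-2$ and $d_{l-1}=1$; this gives $[h_{l-1,1},x_{l,0}^{-}]=2x_{l,1}^{-}+h_{l-1,0}x_{l,0}^{-}+x_{l,0}^{-}h_{l-1,0}$. Since $h_{l-1,0}w=-w$ (the coefficient of $\omega_{l-1}$ in $\mathrm{wt}(w)=\omega_{i-1}-\omega_{l-1}+\omega_l$ is $-1$) while $h_{l-1,0}v=v$, the two anticommutator terms cancel on $w$. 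For the surviving term I would insert the identity $x_{l,1}^{-}w=\big(a_1+\tfrac{l-i+1}{2}\big)v$ established inside Lemma \ref{ylviw2a0}. For $x_{l,0}^{-}h_{l-1,1}w$ I would use $w=x_{l-1,0}^{-}u$ with $u=x_{l-2,0}^{-}\cdots x_{i,0}^{-}v_1^{+}$ and that $Y_{l-1}(u)\cong W_1\big(a_1+\tfrac{l-i-1}{2}\big)$ by item (i) of Proposition \ref{spc1il-1p1}; then equation (\ref{Cor3.3}) yields $h_{l-1,1}w=-\big(a_1+\tfrac{l-i-1}{2}\big)w$. Combining the pieces,
\[
h_{l-1,1}v=2\Big(a_1+\tfrac{l-i+1}{2}\Big)v-\Big(a_1+\tfrac{l-i-1}{2}\Big)v=\Big(a_1+\tfrac{l-i+3}{2}\Big)v,
\]
which identifies $a=a_1+\tfrac{l-i+3}{2}$ and proves the lemma.

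The main obstacle I anticipate is bookkeeping of signs and scalars rather than any conceptual difficulty: because node $l$ carries $d_l=2$ and the Cartan matrix of type $C_l$ is asymmetric at $(a_{l-1,l},a_{l,l-1})=(-2,-1)$, one must handle the factor $-\tfrac12 d_{l-1}a_{l-1,l}=1$ and the sign $[h_{l-1,0},x_{l,1}^{-}]=-d_{l-1}a_{l-1,l}\,x_{l,1}^{-}=2x_{l,1}^{-}$ correctly; a single sign slip here inverts the $x_{l,1}^{-}$ contribution and destroys the answer. The convenient cancellation of the anticommutator terms, forced by $\mathrm{wt}(v)$ and $\mathrm{wt}(w)$ differing by $\alpha_l$, is what keeps the computation short, and I would take care to invoke the raw, unrescaled form of the $x_{l,1}^{-}$-identity from Lemma \ref{ylviw2a0} rather than the rescaled $\tilde{x}_{l,r}^{-}$ generators used to identify $Y_l(w)$ with a copy of $W_1$.
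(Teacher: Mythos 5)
Your proposal is correct and follows essentially the same route as the paper: both reduce the claim to the weight computation $\mathrm{wt}(v)=\omega_{i-1}+\omega_{l-1}-\omega_l$ (giving $\deg P=1$), then evaluate $h_{l-1,1}v$ by commuting through $x_{l,0}^{-}$ with the type-$C_l$ relation, cancelling the anticommutator terms via $h_{l-1,0}w=-w$, and inserting $x_{l,1}^{-}w=\left(a_1+\frac{l-i+1}{2}\right)v$ from Lemma \ref{ylviw2a0} together with $h_{l-1,1}w=-\left(a_1+\frac{l-i-1}{2}\right)w$ from Proposition \ref{spc1il-1p1}(i) and (\ref{Cor3.3}), arriving at $2\left(a_1+\frac{l-i+1}{2}\right)-\left(a_1+\frac{l-i-1}{2}\right)=a_1+\frac{l-i+3}{2}$ exactly as the paper does. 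The only cosmetic differences are that you make the maximality check and the cancellation structure explicit, whereas the paper leaves the former to the framework of Proposition \ref{mtoyspl} and simply expands the latter.
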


%Memo: We begin to drop the polynomial $P_{l, \sigma_{l-1}}$. We try to calculate the Drinfeld polynomial without it.
\begin{proof} It follows from Proposition \ref{siok} that $wt\left(x_{l,0}^{-}x_{l-1,0}^{-}\ldots x_{i,0}^{-}\right)v^{+}_1=\omega_{i-1}+\omega_{l-1}-\omega_{l}.$ Then $$h_{l-1,0}\left(x_{l,0}^{-}x_{l-1,0}^{-}\ldots x_{i,0}^{-}\right)v^{+}_1=\left(x_{l,0}^{-}x_{l-1,0}^{-}\ldots x_{i,0}^{-}\right)v^{+}_1.$$
Therefore the degree of associated polynomial $P\left(u\right)$ equals 1. Let $P\left(u\right)=u-a$ be the associated polynomial. By the theory of the local Weyl modules of $\ysl$ that the eigenvalue of $x_{l,0}^{-}x_{l-1,0}^{-}\ldots x_{i,0}^{-}v^{+}_1$ under $h_{l-1,1}$ will tell the value of $a$.
\begin{align*}
% \nonumber to remove numbering \left(before each equation\right)
   h_{l-1,1}\big(x_{l,0}^{-}x_{l-1,0}^{-}&\ldots x_{i,0}^{-}\big)v^{+}_1 \\
   &= [h_{l-1,1}x_{l,0}^{-}]x_{l-1,0}^{-}\ldots x_{i,0}^{-}v^{+}_1+x_{l,0}^{-}h_{l-1,1}x_{l-1,0}^{-}\ldots x_{i,0}^{-}v^{+}_1 \\
   &= \left(2x_{l,1}^{-}+2x_{l,0}^{-}+2x_{l,0}^{-}h_{l-1,0}\right)x_{l-1,0}^{-}\ldots x_{i,0}^{-}v^{+}_1\\
   &-\left(a_1+\frac{l-i-1}{2}\right)x_{l,0}^{-}x_{l-1,0}^{-}\ldots x_{i,0}^{-}v^{+}_1\\
   &= 2x_{l,1}^{-}x_{l-1,0}^{-}\ldots x_{i,0}^{-}v^{+}_1-\left(a_1+\frac{l-i-1}{2}\right)x_{l,0}^{-}x_{l-1,0}^{-}\ldots x_{i,0}^{-}v^{+}_1\\
   &= 2\left(a_1+\frac{l-i+1}{2}\right)x_{l,0}^{-}x_{l-1,0}^{-}\ldots x_{i,0}^{-}v^{+}_1\\ &-\left(a_1+\frac{l-i-1}{2}\right)x_{l,0}^{-}x_{l-1,0}^{-}\ldots x_{i,0}^{-}v^{+}_1\\
   &=\left(a_1+\frac{l-i+3}{2}\right)x_{l,0}^{-}x_{l-1,0}^{-}\ldots x_{i,0}^{-}v^{+}_1.
\end{align*}
Thus $Y_{l-1}\left(x_{l,0}^{-}x_{l-1,0}^{-}\ldots x_{i+1,0}^{-}x_{i,0}^{-}v^{+}_1\right)\cong W_1\left(a_1+\frac{l-i+3}{2}\right)$.
\end{proof}

The first item of next lemma does not show up at the case of $\yo$, we provide a proof.
\begin{lemma}\label{yspc1il4}
Suppose $i\leq k\leq l-2$. $Y_{k}\left(x_{k+1,0}^{-}\ldots x_{l-1,0}^{-}x_{l,0}^{-}x_{l-1,0}^{-}\ldots x_{i,0}^{-}v^{+}_1\right)\cong W_1\left(a_1+\frac{2l-i-k+2}{2}\right)$ as a $Y_{k}$-module.

\end{lemma}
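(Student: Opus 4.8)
The plan is to proceed exactly as in the earlier computations of this section: show first that the module in question is a two-dimensional highest weight $Y_k$-module, and then pin down the root of its associated polynomial by a downward induction on $k$. Write $v^{(k)}=x_{k+1,0}^{-}\ldots x_{l-1,0}^{-}x_{l,0}^{-}x_{l-1,0}^{-}\ldots x_{i,0}^{-}v^{+}_1$, so that $v^{(k)}=x_{k+1,0}^{-}v^{(k+1)}$ for $i\leq k\leq l-3$, with base vector $v^{(l-2)}=x_{l-1,0}^{-}\big(x_{l,0}^{-}x_{l-1,0}^{-}\ldots x_{i,0}^{-}v^{+}_1\big)$. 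By Step 2 of Proposition \ref{mtoyspl}, $v^{(k)}$ is a maximal vector for $Y_k$, hence $Y_k(v^{(k)})$ is a highest weight $Y_k\cong\ysl$-module. Using Proposition \ref{siok} to compute the weight of $v^{(k)}$, one checks $h_{k,0}v^{(k)}=v^{(k)}$ for $i\leq k\leq l-2$, so the associated polynomial $P(u)$ has degree $1$; writing $P(u)=u-a$, we conclude $Y_k(v^{(k)})\cong W_1(a)$ and it remains only to compute $a$ as the eigenvalue of $h_{k,1}$ on $v^{(k)}$.

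First I would record the two identities that drive the induction. From the defining relation $[h_{k,r+1},x_{k+1,s}^{-}]-[h_{k,r},x_{k+1,s+1}^{-}]=-\tfrac12 d_k a_{k,k+1}\big(h_{k,r}x_{k+1,s}^{-}+x_{k+1,s}^{-}h_{k,r}\big)$, together with $d_k a_{k,k+1}=-1$ for $i\leq k\leq l-2$ (type $C$ Cartan matrix), one obtains $[h_{k,1},x_{k+1,0}^{-}]=x_{k+1,1}^{-}+\tfrac12 x_{k+1,0}^{-}+x_{k+1,0}^{-}h_{k,0}$. Second, since $v^{(k+1)}$ is a maximal vector whose weight pairs trivially with $\alpha_k^{\vee}$ (again by Proposition \ref{siok}), the module $Y_k(v^{(k+1)})$ is one-dimensional and trivial, whence $h_{k,0}v^{(k+1)}=0$ and $h_{k,1}v^{(k+1)}=0$.

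For the base case $k=l-2$ I would set $u=x_{l,0}^{-}x_{l-1,0}^{-}\ldots x_{i,0}^{-}v^{+}_1$ and invoke Lemma \ref{ylviw2a}, which gives $h_{l-1,1}u=\big(a_1+\tfrac{l-i+3}{2}\big)u$ and hence, by (\ref{Cor3.3}), $x_{l-1,1}^{-}u=\big(a_1+\tfrac{l-i+3}{2}\big)v^{(l-2)}$. Expanding $h_{l-2,1}v^{(l-2)}=[h_{l-2,1},x_{l-1,0}^{-}]u+x_{l-1,0}^{-}h_{l-2,1}u$ with the two identities above (and $h_{l-2,0}u=0$, $h_{l-2,1}u=0$) yields the eigenvalue $a_1+\tfrac{l-i+4}{2}$, which is exactly $a_1+\tfrac{2l-i-k+2}{2}$ at $k=l-2$. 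The inductive step is identical in shape: assuming $h_{k+1,1}v^{(k+1)}=\big(a_1+\tfrac{2l-i-k+1}{2}\big)v^{(k+1)}$, relation (\ref{Cor3.3}) gives $x_{k+1,1}^{-}v^{(k+1)}=\big(a_1+\tfrac{2l-i-k+1}{2}\big)v^{(k)}$, and then $h_{k,1}v^{(k)}=[h_{k,1},x_{k+1,0}^{-}]v^{(k+1)}+x_{k+1,0}^{-}h_{k,1}v^{(k+1)}$ collapses, the second summand vanishing, to $\big(a_1+\tfrac{2l-i-k+1}{2}+\tfrac12\big)v^{(k)}=\big(a_1+\tfrac{2l-i-k+2}{2}\big)v^{(k)}$.

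The step I expect to be the only genuine subtlety is the weight bookkeeping via Proposition \ref{siok}: one must confirm that for every $k$ in the range $i\leq k\leq l-2$ the vector $v^{(k+1)}$ indeed has $h_{k,0}$-eigenvalue $0$ (so that $Y_k(v^{(k+1)})$ is trivial and the pass-through term drops) while $v^{(k)}$ has $h_{k,0}$-eigenvalue $1$ (so that $P(u)$ has degree $1$). Everything else is a routine repetition of the commutator manipulations already carried out in Lemma \ref{ylviw2a} and in the items of Proposition \ref{i=1yo2n}, so I would either state it as entirely parallel or reproduce only the single inductive computation above.
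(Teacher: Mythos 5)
Your proof is correct and takes essentially the same route as the paper: the same weight computation (via Proposition \ref{siok}) showing the Drinfeld polynomial of $Y_k\left(v^{(k)}\right)$ has degree one, the same commutator identity $[h_{k,1},x_{k+1,0}^{-}]=x_{k+1,1}^{-}+\frac{1}{2}x_{k+1,0}^{-}+x_{k+1,0}^{-}h_{k,0}$, and the same key vanishing $h_{k,0}v^{(k+1)}=h_{k,1}v^{(k+1)}=0$ obtained from the triviality of $Y_k\left(v^{(k+1)}\right)$ (the paper gets this by citing Lemma \ref{so1c1lf1}; your degree-zero argument is the same one). The only difference is presentational: the paper computes only the base case $k=l-2$ explicitly and dismisses the downward induction as ``similar to (iii) of Proposition \ref{yoc3p1},'' whereas you write that induction out in full, with results agreeing with the paper's.
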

\begin{proof} We first consider $k=l-2$.
It follows from Proposition \ref{siok} that $$wt\left(x_{l-1,0}^{-}x_{l,0}^{-}x_{l-1,0}^{-}\ldots x_{i,0}^{-}\right)v^{+}_1=\omega_{i-1}+\omega_{l-2}-\omega_{l-1}.$$ Then
$$h_{l-2,0}x_{l-1,0}^{-}x_{l,0}^{-}x_{l-1,0}^{-}\ldots x_{i,0}^{-}v^{+}_1=x_{l-1,0}^{-}x_{l,0}^{-}x_{l-1,0}^{-}\ldots x_{i,0}^{-}v^{+}_1.$$
Therefore the degree of associated polynomial $P\left(u\right)$ equals 1. Let $P\left(u\right)=u-a$ be the associated polynomial. By the theory of the local Weyl modules of $\ysl$ that the eigenvalue of $x_{l-1,0}^{-}x_{l,0}^{-}x_{l-1,0}^{-}\ldots x_{i,0}^{-}v^{+}_1$ under $h_{l-2,1}$ will tell the value of $a$.

It follows from Lemma \ref{so1c1lf1} that $$h_{l-2,1}x_{l-1,0}^{-}\ldots x_{i,0}^{-}v^{+}_1=0.$$
\begin{align*}
% \nonumber to remove numbering \left(before each equation\right)
   h_{l-2,1}&x_{l-1,0}^{-}x_{l,0}^{-}x_{l-1,0}^{-}\ldots x_{i,0}^{-}v^{+}_1 \\
   &=[h_{l-2,1},x_{l-1,0}^{-}]x_{l,0}^{-}x_{l-1,0}^{-}\ldots x_{i,0}^{-}v^{+}_1+x_{l-1,0}^{-}x_{l,0}^{-}h_{l-2,1}x_{l-1,0}^{-}x_{l-2,0}^{-}\ldots x_{i,0}^{-}v^{+}_1   \\
   &= \left(x_{l-1,1}^{-}+\frac{1}{2}x_{l-1,0}^{-}+x_{l-1,0}^{-}h_{l-2,0}\right)x_{l,0}^{-}x_{l-1,0}^{-}\ldots x_{i,0}^{-}v^{+}_1+0\\
   &= \Bigg(\left(a_1+\frac{l-i+3}{2}\right)+\frac{1}{2}\Bigg)x_{l-1,0}^{-}x_{l,0}^{-}x_{l-1,0}^{-}\ldots x_{i,0}^{-}v^{+}_1\\
   &= \left(a_1+\frac{l-i+4}{2}\right)x_{l-1,0}^{-}x_{l,0}^{-}x_{l-1,0}^{-}\ldots x_{i,0}^{-}v^{+}_1.
\end{align*}
%A similar interesting fact still is true, that is, $h_{k,1}x_{k+1,0}^{-}x_{k,0}^{-}\ldots x_{i,0}^{-}v^{+}_1=0$, which follows from the defining relations and proved results.

For $i\leq k\leq l-3$, we omit the proof since it is similar to the proof of (iii) of Proposition \ref{yoc3p1}.
\end{proof}

We introduce some notations to simplify the expression of $v_{\sigma_j(\omega_i)}$. Denote $x_{k,0}^{-}x_{k+1,0}^{-}\ldots x_{l-1,0}^{-}x_{l,0}^{-}x_{l-1,0}^{-}\ldots x_{i+1,0}^{-}x_{i,0}^{-}$ by $\overline{x_{k,0}^{-}\ldots x_{i,0}^{-}}$ for $i\leq k<l$. The overline means that the expression $\overline{x_{k,0}^{-}\ldots x_{i,0}^{-}}$ contains the term $x_{l,0}^{-}$ and is a piece of $x_{i,0}^{-}x_{i+1,0}^{-}\ldots x_{l-1,0}^{-}x_{l,0}^{-}x_{l-1,0}^{-}\ldots x_{i+1,0}^{-}x_{i,0}^{-}$ without break. Similarly, we denote $\left(x_{k,0}^{-}\right)^2\left(x_{k+1,0}^{-}\right)^2\ldots \left(x_{i-1,0}^{-}\right)^2x_{i,0}^{-}$ $x_{i+1,0}^{-}\ldots x_{l-1,0}^{-}x_{l,0}^{-}x_{l-1,0}^{-}\ldots x_{i+1,0}^{-}x_{i,0}^{-}$ for $k<i$ by $\overline{\left(x_{k,0}^{-}\right)^2\ldots x_{i,0}^{-}}$.

\begin{lemma}\label{yspc1i5}
$Y_{i-1}\left(\overline{x_{i,0}^{-}\ldots x_{i,0}^{-}}v^{+}_1\right)\cong W_1\left(a_1+l-i+\frac{3}{2}\right)\otimes W_1\left(a_1+\frac{1}{2}\right)$.
\end{lemma}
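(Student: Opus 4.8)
The plan is to mirror the proof of the orthogonal analogue, Lemma~\ref{l-2is34d}, while observing that the long node of $\nysp$ pushes the two roots of the associated polynomial far enough apart that the reducibility phenomenon which forced a case split in the orthogonal setting cannot occur here.

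First I would determine the degree of the associated polynomial $P(u)$ of the highest weight $Y_{i-1}$-module $Y_{i-1}\big(\overline{x_{i,0}^{-}\ldots x_{i,0}^{-}}v_1^{+}\big)$. By Proposition~\ref{siok} the weight of $\overline{x_{i,0}^{-}\ldots x_{i,0}^{-}}v_1^{+}$ is $2\omega_{i-1}-\omega_i$, so $h_{i-1,0}$ acts by the scalar $2$ and hence $\deg P=2$. Writing $P(u)=(u-a)(u-b)$ with $\operatorname{Re}(a)\le\operatorname{Re}(b)$, the same expansion as in Lemma~\ref{l-2is34d} gives
\[
\frac{P(u+1)}{P(u)}=1+2u^{-1}+(a+b+1)u^{-2}+(a^2+b^2+a+b)u^{-3}+\cdots,
\]
so it suffices to read off $a+b$ and $a^2+b^2$ from the eigenvalues of $h_{i-1,1}$ and $h_{i-1,2}$ on this vector.

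Next I would compute the level-one eigenvalue. Set $A=a_1+l-i+1$, which by item~(iii) of Proposition~\ref{spc1il-1p1} (at $k=i$) is the eigenvalue of $h_{i,1}$ on $\overline{x_{i+1,0}^{-}\ldots x_{i,0}^{-}}v_1^{+}$. Splitting $\overline{x_{i,0}^{-}\ldots x_{i,0}^{-}}=x_{i,0}^{-}\,\overline{x_{i+1,0}^{-}\ldots x_{i,0}^{-}}$ and also peeling off the last factor $x_{i,0}^{-}$, then using the defining relation $[h_{i-1,1},x_{i,0}^{-}]=x_{i,1}^{-}+\tfrac12 x_{i,0}^{-}+x_{i,0}^{-}h_{i-1,0}$ together with $(\ref{Cor3.3})$ and the weight facts $h_{i-1,0}\,\overline{x_{i+1,0}^{-}\ldots x_{i,0}^{-}}v_1^{+}=\overline{x_{i+1,0}^{-}\ldots x_{i,0}^{-}}v_1^{+}$ and $h_{i-1,0}v_1^{+}=0$, I expect
\[
h_{i-1,1}\,\overline{x_{i,0}^{-}\ldots x_{i,0}^{-}}v_1^{+}=(A+a_1+2)\,\overline{x_{i,0}^{-}\ldots x_{i,0}^{-}}v_1^{+}=(2a_1+l-i+3)\,\overline{x_{i,0}^{-}\ldots x_{i,0}^{-}}v_1^{+},
\]
whence $a+b=2a_1+l-i+2$.

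The main work, and the main obstacle, is the $h_{i-1,2}$ computation. As in Lemma~\ref{l-2is34d} I would expand $[h_{i-1,2},x_{i,0}^{-}]$ through the defining relations into $[h_{i-1,1},x_{i,1}^{-}]+\tfrac12\big(h_{i-1,1}x_{i,0}^{-}+x_{i,0}^{-}h_{i-1,1}\big)$, commute $h_{i-1,1}$ past the displacement operators using the level-one eigenvalues already established, and apply $(\ref{Cor3.3})$ repeatedly; the delicate part is the bookkeeping of the several resulting terms. Solving the resulting quadratic exactly as in Lemma~\ref{l-2is34d} (discarding the root with $\operatorname{Re}(a)>\operatorname{Re}(b)$) should give $a=a_1+\tfrac12$ and $b=a_1+l-i+\tfrac32$. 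Finally, since $b-a=l-i+1\ge 2$ for $2\le i\le l-1$, this difference is never equal to $1$, so the $4$-dimensional local Weyl module $W(P)$ of $\ysl$ is irreducible and isomorphic to $W_1(b)\otimes W_1(a)$; as $Y_{i-1}\big(\overline{x_{i,0}^{-}\ldots x_{i,0}^{-}}v_1^{+}\big)$ is a subquotient of $W(P)$ of dimension at least $3$, it must be all of it, giving $Y_{i-1}\big(\overline{x_{i,0}^{-}\ldots x_{i,0}^{-}}v_1^{+}\big)\cong W_1\big(a_1+l-i+\tfrac32\big)\otimes W_1\big(a_1+\tfrac12\big)$. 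This is where the symplectic case is cleaner than the orthogonal one: there the gap $b-a=l-i-1$ can equal $1$ (when $i=l-2$), forcing the split into a $W_2$ and a $W_1\otimes W_1$ alternative, whereas the long root here shifts the gap up by $2$ and removes that degeneracy entirely.
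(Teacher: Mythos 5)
Your proposal is correct and takes essentially the same route as the paper's own proof: read off $\operatorname{Deg}\big(P(u)\big)=2$ from the weight $2\omega_{i-1}-\omega_i$, extract $a+b$ and $a^2+b^2$ from the eigenvalues of $h_{i-1,1}$ and $h_{i-1,2}$ (your value $2a_1+l-i+3$ for the level-one eigenvalue matches the paper's), solve the quadratic to get $a=a_1+\frac{1}{2}$ and $b=a_1+l-i+\frac{3}{2}$, and conclude because the $\ysl$ local Weyl module $W_1\left(a_1+l-i+\frac{3}{2}\right)\otimes W_1\left(a_1+\frac{1}{2}\right)$ is irreducible, so its nonzero highest weight quotient is all of it. Your closing observation that $b-a=l-i+1\geq 2$ here, versus $b-a=l-i-1$ in the orthogonal case, is precisely why this lemma needs no case split, unlike Lemma~\ref{l-2is34d}.
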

\begin{proof}
It follows from Proposition \ref{siok} that $wt\left(x_{i,0}^{-}\ldots x_{l-1,0}^{-}x_{l,0}^{-}x_{l-1,0}^{-}\ldots x_{i,0}^{-}\right)v^{+}_1=2\omega_{i-1}-\omega_{i}.$ Then $$h_{i-1,0}\overline{x_{i,0}^{-}\ldots x_{i,0}^{-}}v^{+}_1= 2\overline{x_{i,0}^{-}\ldots x_{i,0}^{-}}v^{+}_1.$$ Thus the associated polynomial $P\left(u\right)$ to $Y_{i-1}\left(\overline{x_{i,0}^{-}\ldots x_{i,0}^{-}}v^{+}_1\right)$ has of degree 2. Suppose $P\left(u\right)=\left(u-a\right)\left(u-b\right)$ with $\operatorname{Re}\left(a\right)\leq \operatorname{Re}\left(b\right)$.
%Thus we have
%\begin{align*}
% \nonumber to remove numbering \left(before each equation\right)
%  \frac{P\left(u+1\right)}{P\left(u\right)} &= \frac{u-\left(a-1\right)}{u-a}\frac{u-\left(b-1\right)}{u-b} \\
%   &=  \left(1+u^{-1}+au^{-2}+a^2u^{-3}+\ldots\right)\left(1+u^{-1}+bu^{-2}+b^2u^{-3}+\ldots\right)\\
% &=  1+2u^{-1}+\left(a+b+1\right)u^{-2}+\left(a^2+b^2+a+b\right)u^{-3}+\ldots.
%\end{align*}
The eigenvalues of $\overline{x_{i,0}^{-}\ldots x_{i,0}^{-}}v^{+}_1$ under $h_{i-1,1}$ and $h_{i-1,2}$ will tell the values of $a$ and $b$.
%Note that the weight of $\overline{x_{i+1,0}^{-}\ldots x_{i,0}^{-}}v^{+}_1$ is $\omega_{i-1}+\omega_{i}-\omega_{i+1}$. Thus $h_{i-1,0}\overline{x_{i+1,0}^{-}\ldots x_{i,0}^{-}}v^{+}_1=\overline{x_{i+1,0}^{-}\ldots x_{i,0}^{-}}v^{+}_1.$
\begin{align*}
% \nonumber to remove numbering \left(before each equation\right)
   h_{i-1,1}&\overline{x_{i,0}^{-}\ldots x_{i,0}^{-}}v^{+}_1\\
   &= [h_{i-1,1},x_{i,0}^{-}]\overline{x_{i+1,0}^{-}\ldots x_{i,0}^{-}}v^{+}_1 +\overline{x_{i,0}^{-}\ldots x_{i+1,0}^{-}}[h_{i-1,1},x_{i,0}^{-}]v^{+}_1\\
    &= \left(x_{i,1}^{-}+\frac{1}{2}x_{i,0}^{-}+x_{i,0}^{-}h_{i-1,0}\right)\overline{x_{i+1,0}^{-}\ldots x_{i,0}^{-}}v^{+}_1\\
     &+\overline{x_{i,0}^{-}\ldots x_{i+1,0}^{-}}\left(x_{i,1}^{-}+\frac{1}{2}x_{i,0}^{-}+x_{i,0}^{-}h_{i-1,0}\right)v^{+}_1 \\
     &= \Bigg(\left(a_1+\frac{2l-i-i+2}{2}+\frac{1}{2}+1\right)+\left(a_1+\frac{1}{2}\right)\Bigg)\overline{x_{i,0}^{-}\ldots x_{i,0}^{-}}v^{+}_1\\
      &= \Big(2a_1+\left(l-i\right)+3\Big)\overline{x_{i,0}^{-}\ldots x_{i,0}^{-}}v^{+}_1.
      %&=2\left(a+1\right)\overline{x_{i,0}^{-}\ldots x_{i,0}^{-}}v^{+}_1,
\end{align*}
\begin{align*}
% \nonumber to remove numbering \left(before each equation\right)
    h_{i-1,2}&\overline{x_{i,0}^{-}\ldots x_{i,0}^{-}}v^{+}_1 \\
   &= [h_{i-1,2},x_{i,0}^{-}]\overline{x_{i+1,0}^{-}\ldots x_{i,0}^{-}}v^{+}_1+\overline{x_{i,0}^{-}\ldots x_{i+1,0}^{-}}[h_{i-1,2},x_{i,0}^{-}]v^{+}_1\\
    &= \left([h_{i-1,1}, x_{i,1}^{-}]+\frac{1}{2}\left(h_{i-1,1}x_{i,0}^{-}+x_{i,0}^{-}h_{i-1,1}\right)\right)\overline{x_{i+1,0}^{-}\ldots x_{i,0}^{-}}v^{+}_1 \\
     &+\overline{x_{i,0}^{-}\ldots x_{i+1,0}^{-}}\left([h_{i-1,1}, x_{i,1}^{-}]+\frac{1}{2}\left(h_{i-1,1}x_{i,0}^{-}+x_{i,0}^{-}h_{i-1,1}\right)\right)v^{+}_1 \\
     &= [h_{i-1,1}, x_{i,1}^{-}]\overline{x_{i+1,0}^{-}\ldots x_{i,0}^{-}}v^{+}_1+\frac{1}{2}h_{i-1,1}\overline{x_{i,0}^{-}\ldots x_{i,0}^{-}}v^{+}_1\\
      &+ \overline{x_{i,0}^{-}\ldots x_{i+1,0}^{-}}h_{i-1,1}x_{i,0}^{-}v^{+}_1+\overline{x_{i,0}^{-}\ldots x_{i+1,0}^{-}}h_{i-1,1}x_{i,1}^{-}v^{+}_1\\
       &= h_{i-1,1}x_{i,1}^{-}\overline{x_{i+1,0}^{-}\ldots x_{i,0}^{-}}v^{+}_1-x_{i,1}^{-}h_{i-1,1}\overline{x_{i+1,0}^{-}\ldots x_{i,0}^{-}}v^{+}_1+\frac{1}{2}h_{i-1,1}\overline{x_{i,0}^{-}\ldots x_{i,0}^{-}}v^{+}_1\\
      &+ \overline{x_{i,0}^{-}\ldots x_{i+1,0}^{-}}h_{i-1,1}x_{i,0}^{-}v^{+}_1+\overline{x_{i,0}^{-}\ldots x_{i+1,0}^{-}}h_{i-1,1}x_{i,1}^{-}v^{+}_1\\
     &= \left(a_1+l-i+1\right)h_{i-1,1}\overline{x_{i,0}^{-}\ldots x_{i,0}^{-}}v^{+}_1-\overline{x_{i,1}^{-}\ldots x_{i+1,0}^{-}} h_{i-1,1}x_{i,0}^{-}v^{+}_1\\
      &+ \frac{1}{2}h_{i-1,1}\overline{x_{i,0}^{-}\ldots x_{i,0}^{-}}v^{+}_1+\overline{x_{i,0}^{-}\ldots x_{i+1,0}^{-}}h_{i-1,1}x_{i,0}^{-}v^{+}_1+\overline{x_{i,0}^{-}\ldots x_{i+1,0}^{-}}h_{i-1,1}x_{i,1}^{-}v^{+}_1\\
       &= 2\left(a_1+l-i+1\right)\left(a_1+\frac{l-i+3}{2}\right)\overline{x_{i,0}^{-}\ldots x_{i,0}^{-}}v^{+}_1\\ &-\left(a_1+\frac{1}{2}\right)\left(a_1+l-i+1\right)\overline{x_{i,0}^{-}\ldots x_{i,0}^{-}}v^{+}_1+\left(a_1+\frac{l-i+3}{2}\right)\overline{x_{i,0}^{-}\ldots x_{i,0}^{-}}v^{+}_1\\
       &+\left(a_1+\frac{1}{2}\right)\overline{x_{i,0}^{-}\ldots x_{i,0}^{-}}v^{+}_1+ a_1\left(a_1+\frac{1}{2}\right)\overline{x_{i,0}^{-}\ldots x_{i,0}^{-}}v^{+}_1\\
      &= \Bigg(2\left(a_1+l-i+\frac{3}{2}\right)\left(a_1+\frac{l-i+3}{2}\right)\Bigg)\overline{x_{i,0}^{-}\ldots x_{i,0}^{-}}v^{+}_1\\
       &- \Bigg(\left(a_1+\frac{1}{2}\right)\left(a_1+l-i+1-a_1-1\right)\Bigg)\overline{x_{i,0}^{-}\ldots x_{i,0}^{-}}v^{+}_1\\
%      &= \Big(2\left(a_1+\frac{l-i+3}{2}+\frac{l-i}{2}\right)\left(a_1+\frac{l-i+3}{2}\right)-\left(a_1+\frac{1}{2}\right)\left(l-i\right)\Big)\overline{x_{i,0}^{-}\ldots x_{i,0}^{-}}v^{+}_1\\
      &= \Bigg(2\left(a_1+\frac{l-i+3}{2}\right)^2+\frac{\left(l-i+2\right)\left(l-i\right)}{2}\Bigg)\overline{x_{i,0}^{-}\ldots x_{i,0}^{-}}v^{+}_1.
\end{align*}
We have $a+b+1=\Big(2a_1+\left(l-i\right)+3\Big)$ and $\left(a^2+b^2+a+b\right)=2\left(a_1+\frac{l-i+3}{2}\right)^2+\frac{\left(l-i+2\right)\left(l-i\right)}{2}.$
We claim that $\operatorname{Re}(a)\geq \operatorname{Re}(a_1)$. Suppose $a=a_1+\frac{l-i+3}{2}-x$, and then $b=a_1+\frac{l-i+3}{2}+x-1$. Thus $\operatorname{Re}\left(x\right)>0$ \big($\operatorname{Re}\left(x\right)\neq 0$ because we require $\operatorname{Re}\left(b\right)\geq \operatorname{Re}\left(a\right)$\big). Substituting them to $\left(a^2+b^2+a+b\right)=2\left(a_1+\frac{l-i+3}{2}\right)^2+\frac{\left(l-i+2\right)\left(l-i\right)}{2}$, we have
$$x^2-x=\frac{\left(l-i\right)\left(l-i+2\right)}{4}.$$
Thus $x=\frac{1}{2}+\sqrt{\frac{\left(l-i\right)\left(l-i+2\right)+1}{4}}=\frac{l-i+2}{2}$. Then $a=a_1+\frac{1}{2}$ and $b=a_1+l-i+\frac{3}{2}$. By the representation theory of the local Weyl modules of $\ysl$, the local Weyl modules of $\ysl $ associated to $P\left(u\right)$ is $W_1\left(a_1+l-i+\frac{3}{2}\right)\otimes W_1\left(a_1+\frac{1}{2}\right)$, which is irreducible. Thus $$Y_{i-1}\left(\overline{x_{i,0}^{-}\ldots x_{i,0}^{-}}v^{+}_1\right)\cong W_1\left(a_1+l-i+\frac{3}{2}\right)\otimes W_1\left(a_1+\frac{1}{2}\right).$$ %Especially when $i=l-2$, $x=\frac{1}{2}$, and then $b-a=1$. This is the only possible case that $Y_{i-1}\left(\overline{x_{i,0}^{-}\ldots x_{i,0}^{-}}v^{+}_1\right)\cong W_2\left(a\right)$, where $a=a_1+\frac{l-i}{2}$.
\end{proof}
It follows from Corollary \ref{w1bw1a} that\begin{corollary}
$$x_{i-1,1}^{-}x_{i-1,0}^{-}\overline{x_{i,0}^{-}\ldots x_{i,0}^{-}}v^{+}_1=\left(a_1+\frac{l-i+1}{2}\right)\overline{\left(x_{i-1,0}^{-}\right)^2\ldots x_{i,0}^{-}}v^{+}_1.$$
$$x_{i-1,0}^{-}x_{i-1,1}^{-}\overline{x_{i,0}^{-}\ldots x_{i,0}^{-}}v^{+}_1=\left(a_1+\frac{l-i+3}{2}\right)\overline{\left(x_{i-1,0}^{-}\right)^2\ldots x_{i,0}^{-}}v^{+}_1.$$
$$\left(x_{i-1,1}^{-}x_{i-1,0}^{-}+x_{i-1,0}^{-}x_{i-1,1}^{-}\right)\overline{x_{i,0}^{-}\ldots x_{i,0}^{-}}v^{+}_1=\left(2a_1+{l-i+2}\right)\overline{\left(x_{i-1,0}^{-}\right)^2\ldots x_{i,0}^{-}}v^{+}_1.$$
\begin{align*}
\left(x_{i-1,1}^{-}\right)^2&\overline{x_{i,0}^{-}\ldots x_{i,0}^{-}}v^{+}_1\\
&=\Bigg(\left(a_1+\frac{l-i+2}{2}\right)^2-\frac{\left(l-i+2\right)\left(l-i\right)+1}{4}\Bigg)\overline{\left(x_{i-1,0}^{-}\right)^2\ldots x_{i,0}^{-}}v^{+}_1.
\end{align*}
\begin{align*}
x_{i-1,0}^{-}& x_{i-1,2}^{-}\overline{x_{i,0}^{-}\ldots x_{i,0}^{-}}v^{+}_1\\
&=\Bigg(\left(a_1+\frac{l-i+3}{2}\right)^2+\frac{\left(l-i+2\right)\left(l-i\right)}{4}\Bigg)\overline{\left(x_{i-1,0}^{-}\right)^2\ldots x_{i,0}^{-}}v^{+}_1.
\end{align*}
\end{corollary}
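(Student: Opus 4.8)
The plan is to read off all five identities directly from Lemma \ref{yspc1i5} together with the explicit action recorded in Corollary \ref{w1bw1a}. By Lemma \ref{yspc1i5} the subalgebra $Y_{i-1}$ acts on the cyclic vector $\overline{x_{i,0}^{-}\ldots x_{i,0}^{-}}v_1^{+}$ so as to generate a copy of $W_1(b)\otimes W_1(a)$ with $b=a_1+l-i+\tfrac{3}{2}$ and $a=a_1+\tfrac12$. Here $i-1\neq l$ (since $2\leq i\leq l-1$), so $x_{i-1,r}^{\pm},h_{i-1,r}$ already satisfy the defining relations of $\ysl$ with no rescaling, and $Y_{i-1}\cong\ysl$ via $x_{i-1,r}^{\pm}\mapsto x_r^{\pm}$, $h_{i-1,r}\mapsto h_r$. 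Under this isomorphism the highest weight vector $\overline{x_{i,0}^{-}\ldots x_{i,0}^{-}}v_1^{+}$ corresponds to $v^{+}\otimes w^{+}$, and $\overline{(x_{i-1,0}^{-})^2\ldots x_{i,0}^{-}}v_1^{+}$ corresponds to $(x_0^-)^2(v^+\otimes w^+)$; applying $\Delta(x_0^-)$ twice shows this last vector is a nonzero multiple of the lowest weight vector $v^-\otimes w^-$, so it may safely be used as the common right-hand denominator.

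With this dictionary in hand, each identity is the image of the corresponding formula in Corollary \ref{w1bw1a}. I would apply each operator to $\overline{x_{i,0}^{-}\ldots x_{i,0}^{-}}v_1^{+}$ and match: the first equation is part (iii), $x_1^-x_0^-(v^+\otimes w^+)=\tfrac{a+b-1}{2}(x_0^-)^2(v^+\otimes w^+)$; the second is part (ii), $x_0^-x_1^-=\tfrac{a+b+1}{2}(x_0^-)^2$; the third is their sum, part (iv), with coefficient $a+b$; the fourth is part (vi), $(x_1^-)^2=ab\,(x_0^-)^2$; and the fifth is part (v), $x_0^-x_2^-=\tfrac{a^2+b^2+a+b}{2}(x_0^-)^2$. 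The only point requiring care is matching the two noncommuting products $x_{i-1,1}^{-}x_{i-1,0}^{-}$ and $x_{i-1,0}^{-}x_{i-1,1}^{-}$ with parts (iii) and (ii) in the right order, but both orderings are supplied explicitly in \ref{w1bw1a}.

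It then remains only to substitute $a=a_1+\tfrac12$ and $b=a_1+l-i+\tfrac32$ and simplify. The three linear coefficients are immediate:
\[
\tfrac{a+b-1}{2}=a_1+\tfrac{l-i+1}{2},\qquad \tfrac{a+b+1}{2}=a_1+\tfrac{l-i+3}{2},\qquad a+b=2a_1+l-i+2.
\]
The only genuine computation is the two quadratic coefficients. Writing $m=l-i$, a short expansion gives $ab=(a_1+\tfrac{m+2}{2})^2-\tfrac{(m+2)m+1}{4}$ and $\tfrac{a^2+b^2+a+b}{2}=(a_1+\tfrac{m+3}{2})^2+\tfrac{(m+2)m}{4}$, which are exactly the stated right-hand sides. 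Since there is no structural obstacle beyond this bookkeeping, the main (minor) thing to verify carefully is that the two degree-two scalars really do collapse to the claimed completed-square forms after the substitution; everything else is a direct transcription of Corollary \ref{w1bw1a}.
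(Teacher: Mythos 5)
Your proposal is correct and is exactly the paper's argument: the paper derives this corollary by citing Corollary \ref{w1bw1a} applied to the identification $Y_{i-1}\left(\overline{x_{i,0}^{-}\ldots x_{i,0}^{-}}v^{+}_1\right)\cong W_1\left(a_1+l-i+\tfrac{3}{2}\right)\otimes W_1\left(a_1+\tfrac{1}{2}\right)$ from Lemma \ref{yspc1i5}, which is precisely your dictionary. You have simply made explicit the matching of parts (ii), (iii), (iv), (v), (vi) of Corollary \ref{w1bw1a} and the substitution $a=a_1+\tfrac{1}{2}$, $b=a_1+l-i+\tfrac{3}{2}$, and your arithmetic for the two quadratic coefficients checks out.
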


%We abbreviate $x_{i,0}^{-}x_{i+1,0}^{-}\ldots x_{l-1,0}^{-}x_{l,0}^{-}x_{l-1,0}^{-}\ldots x_{i+1,0}^{-}x_{i,0}^{-}$ to $\overline{x_{i,0}^{-}\ldots x_{i,0}^{-}}$.

For $i=2$, jump to Lemma \ref{spv2}. Now we suppose $i\geq 3$.

\begin{lemma}\label{yspc1i6}
$Y_{i-2}\Big(\overline{\left(x_{i-1,0}^{-}\right)^2\ldots x_{i,0}^{-}}v^{+}_1\Big)\cong W_1\left(a_1+l-i+2\right)\otimes W_1\left(a_1+1\right)$.
\end{lemma}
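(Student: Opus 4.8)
The plan is to follow exactly the pattern established in Lemma \ref{yspc1i5} (the case $m=i-1$), now applied to the vector $\overline{\left(x_{i-1,0}^{-}\right)^2\ldots x_{i,0}^{-}}v^{+}_1$ with $Y_{i-2}$ acting on it. First I would identify the associated polynomial $P(u)$ of the highest weight $Y_{i-2}$-module $Y_{i-2}\Big(\overline{\left(x_{i-1,0}^{-}\right)^2\ldots x_{i,0}^{-}}v^{+}_1\Big)$. By Proposition \ref{siok} the weight of this vector is $2\omega_{i-2}-2\omega_{i-1}+\omega_i$, so that $h_{i-2,0}$ acts by the scalar $2$; hence $\operatorname{Deg}\big(P(u)\big)=2$ and I may write $P(u)=(u-a)(u-b)$ with $\operatorname{Re}(a)\le\operatorname{Re}(b)$. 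As in the earlier lemmas, the key Laurent expansion gives
\begin{align*}
\frac{P(u+1)}{P(u)}=1+2u^{-1}+(a+b+1)u^{-2}+(a^2+b^2+a+b)u^{-3}+\cdots,
\end{align*}
so the values of $a$ and $b$ are determined once I compute the eigenvalues of $\overline{\left(x_{i-1,0}^{-}\right)^2\ldots x_{i,0}^{-}}v^{+}_1$ under $h_{i-2,1}$ and $h_{i-2,2}$.

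The heart of the computation is to evaluate these two eigenvalues. For $h_{i-2,1}$ I would expand $[h_{i-2,1},(x_{i-1,0}^{-})^2]$ acting on $\overline{x_{i,0}^{-}\ldots x_{i,0}^{-}}v^{+}_1$ using the defining relation $[h_{i-2,1},x_{i-1,0}^{-}]=x_{i-1,1}^{-}+\tfrac12 x_{i-1,0}^{-}+x_{i-1,0}^{-}h_{i-2,0}$, reducing everything to the quantity $\left(x_{i-1,1}^{-}x_{i-1,0}^{-}+x_{i-1,0}^{-}x_{i-1,1}^{-}\right)\overline{x_{i,0}^{-}\ldots x_{i,0}^{-}}v^{+}_1$, which is already recorded in the corollary following Lemma \ref{yspc1i5} as $(2a_1+l-i+2)\overline{(x_{i-1,0}^{-})^2\ldots x_{i,0}^{-}}v^{+}_1$; together with the $h_{i-2,0}$-term this should yield $h_{i-2,1}$-eigenvalue $2a_1+(l-i)+4$, i.e. $a+b+1=2a_1+(l-i)+4$. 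For $h_{i-2,2}$ the same bracket strategy applies, using $[h_{i-2,2},x_{i-1,0}^{-}]=[h_{i-2,1},x_{i-1,1}^{-}]+\tfrac12(h_{i-2,1}x_{i-1,0}^{-}+x_{i-1,0}^{-}h_{i-2,1})$ and then substituting the four quadratic identities $\big(x_{i-1,1}^{-}x_{i-1,0}^{-},\ x_{i-1,0}^{-}x_{i-1,1}^{-},\ (x_{i-1,1}^{-})^2,\ x_{i-1,0}^{-}x_{i-1,2}^{-}\big)$ from that same corollary. I expect the $h_{i-2,2}$-eigenvalue to collapse to $2\left(a_1+\frac{l-i+4}{2}\right)^2+\frac{(l-i+2)(l-i)}{2}$, giving $a^2+b^2+a+b$.

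Once both symmetric functions are known, I would set $a=a_1+\frac{l-i+4}{2}-x$ and $b=a_1+\frac{l-i+4}{2}+x-1$ (so that $a+b+1$ matches), substitute into the cubic coefficient, and solve the resulting equation $x^2-x=\frac{(l-i)(l-i+2)}{4}$, exactly as in Lemma \ref{yspc1i5}; the positive root is $x=\frac{l-i+2}{2}$, yielding $a=a_1+1$ and $b=a_1+l-i+2$. Finally, since the dimension of $Y_{i-2}\Big(\overline{\left(x_{i-1,0}^{-}\right)^2\ldots x_{i,0}^{-}}v^{+}_1\Big)$ is at least $3$ and the local Weyl module $W(P)$ of $\ysl$ associated to $P(u)=(u-a)(u-b)$ has dimension $4$, I would invoke Theorem \ref{wmfsl2}: here $b-a=l-i+1\ge 2$ (as $i\le l-2$ in the interesting range), so $W(P)\cong W_1(a_1+l-i+2)\otimes W_1(a_1+1)$ is irreducible, forcing $Y_{i-2}\Big(\overline{\left(x_{i-1,0}^{-}\right)^2\ldots x_{i,0}^{-}}v^{+}_1\Big)$ to equal it. The main obstacle is purely the bookkeeping of the $h_{i-2,2}$ commutator expansion: keeping track of the ten-or-so summands, correctly identifying when terms like $h_{i-2,0}$ annihilate intermediate vectors (by the weight computation $wt(\overline{x_{i,0}^{-}\ldots x_{i+1,0}^{-}}v^{+}_1)$), and matching everything against the precomputed quadratic identities without sign or index slips. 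No genuinely new idea is required beyond the template already executed for the $\yo$ case.
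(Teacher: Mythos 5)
Your proposal is correct and follows essentially the same route as the paper's proof: weight computation via Proposition \ref{siok} to get a degree-$2$ polynomial, the $h_{i-2,1}$ and $h_{i-2,2}$ eigenvalue computations via the commutator expansions and the quadratic identities from the corollary after Lemma \ref{yspc1i5} (yielding $2a_1+(l-i)+4$ and $2\left(a_1+\frac{l-i+4}{2}\right)^2+\frac{(l-i+2)(l-i)}{2}$, exactly as in the paper), the same substitution solving $x^2-x=\frac{(l-i)(l-i+2)}{4}$, and the same irreducibility conclusion. One small slip: in this chapter the range is $2\leq i\leq l-1$ (with $i\geq 3$ for this lemma), not $i\leq l-2$, but since $b-a=l-i+1\geq 2$ still holds there, your irreducibility argument is unaffected.
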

\begin{proof} %whose weight is $2\omega_{i-2}-2\omega_{i-1}+\omega_i$, and we abbreviate\\ $x_{i-1,0}^{-}x_{i,0}^{-}x_{i+1,0}^{-}\ldots x_{l-1,0}^{-}x_{l,0}^{-}x_{l-1,0}^{-}\ldots x_{i+1,0}^{-}x_{i,0}^{-}=\overline{x_{i-1,0}^{-}\ldots x_{i,0}^{-}}$ whose weight is $\omega_{i-2}$.
%Note that $\overline{x_{i,0}^{-}\ldots x_{i,0}^{-}}$ has weight $2\omega_{i-1}-\omega_i$.
It follows from Proposition \ref{siok} that
$$h_{i-2,0}\overline{\left(x_{i-1,0}^{-}\right)^2\ldots x_{i,0}^{-}}v^{+}_1=2\overline{\left(x_{i-1,0}^{-}\right)^2\ldots x_{i,0}^{-}}v^{+}_1.$$
Thus the associated polynomial $P\left(u\right)$ has of degree 2. Let $P\left(u\right)=\left(u-a\right)\left(u-b\right)$ with $\operatorname{Re}\left(a\right)\leq \operatorname{Re}\left(b\right)$.
The eigenvalues of both $\overline{\left(x_{i-1,0}^{-}\right)^2\ldots x_{i,0}^{-}}v^{+}_1$ under $h_{i-2,1}$ and $\overline{\left(x_{i-1,0}^{-}\right)^2\ldots x_{i,0}^{-}}v^{+}_1$ under $h_{i-2,2}$ will tell the values of $a$ and $b$.
\begin{align*}
% \nonumber to remove numbering \left(before each equation\right)
h_{i-2,1}&\overline{\left(x_{i-1,0}^{-}\right)^2\ldots x_{i,0}^{-}}v^{+}_1 \\
   &= [h_{i-2,1}, \left(x_{i-1,0}^{-}\right)^2]\overline{x_{i,0}^{-}\ldots x_{i,0}^{-}}v^{+}_1\\
   &= [h_{i-2,1}, x_{i-1,0}^{-}]\overline{x_{i-1,0}^{-}\ldots x_{i,0}^{-}}v^{+}_1+ x_{i-1,0}^{-}[h_{i-2,1}, x_{i-1,0}^{-}]\overline{x_{i,0}^{-}\ldots x_{i,0}^{-}}v^{+}_1\\
   &=\left(x_{i-1,1}^{-}+\frac{1}{2}x_{i-1,0}^{-}+x_{i-1,0}^{-}h_{i-2,0}\right)\overline{x_{i-1,0}^{-}\ldots x_{i,0}^{-}}v^{+}_1\\
   &+x_{i-1,0}^{-}\left(x_{i-1,1}^{-}+\frac{1}{2}x_{i-1,0}^{-}+x_{i-1,0}^{-}h_{i-2,0}\right)\overline{x_{i,0}^{-}\ldots x_{i,0}^{-}}v^{+}_1\\
   &= \left(x_{i-1,1}^{-}x_{i-1,0}^{-}+x_{i-1,0}^{-}x_{i-1,1}^{-}\right)\overline{x_{i,0}^{-}\ldots x_{i,0}^{-}}v^{+}_1+2\overline{\left(x_{i-1,0}^{-}\right)^2\ldots x_{i,0}^{-}}v^{+}_1\\
   &= \left(2a_1+\left(l-i\right)+4\right)\overline{\left(x_{i-1,0}^{-}\right)^2\ldots x_{i,0}^{-}}v^{+}_1.
\end{align*}
\begin{align*}
% \nonumber to remove numbering \left(before each equation\right)
   h_{i-2,2}&\overline{\left(x_{i-1,0}^{-}\right)^2\ldots x_{i,0}^{-}}v^{+}_1 \\
   &= [h_{i-2,2}, \left(x_{i-1,0}^{-}\right)^2]\overline{x_{i,0}^{-}\ldots x_{i,0}^{-}}v^{+}_1 \\
   &= [h_{i-2,2}, x_{i-1,0}^{-}]\overline{x_{i-1,0}^{-}\ldots x_{i,0}^{-}}v^{+}_1+ x_{i-1,0}^{-}[h_{i-2,2}, x_{i-1,0}^{-}]\overline{x_{i,0}^{-}\ldots x_{i,0}^{-}}v^{+}_1\\
   &=\left([h_{i-2,1}, x_{i-1,1}^{-}]+\frac{1}{2}\left(h_{i-2,1}x_{i-1,0}^{-}+x_{i-1,0}^{-}h_{i-2,1}\right)\right)\overline{x_{i-1,0}^{-}\ldots x_{i,0}^{-}}v^{+}_1\\
   &+x_{i-1,0}^{-}\left([h_{i-2,1}, x_{i-1,1}^{-}]+\frac{1}{2}\left(h_{i-2,1}x_{i-1,0}^{-}+x_{i-1,0}^{-}h_{i-2,1}\right)\right)\overline{x_{i,0}^{-}\ldots x_{i,0}^{-}}v^{+}_1\\
   &=h_{i-2,1}x_{i-1,1}^{-}\overline{x_{i-1,0}^{-}\ldots x_{i,0}^{-}}v^{+}_1-x_{i-1,1}^{-}h_{i-2,1}\overline{x_{i-1,0}^{-}\ldots x_{i,0}^{-}}v^{+}_1\\
   &+\frac{1}{2}h_{i-2,1}x_{i-1,0}^{-}\overline{x_{i-1,0}^{-}\ldots x_{i,0}^{-}}v^{+}_1+x_{i-1,0}^{-}h_{i-2,1}\overline{x_{i-1,0}^{-}\ldots x_{i,0}^{-}}v^{+}_1\\
   &+x_{i-1,0}^{-}h_{i-2,1}x_{i-1,1}^{-}\overline{x_{i,0}^{-}\ldots x_{i,0}^{-}}v^{+}_1\\
   &=h_{i-2,1}x_{i-1,1}^{-}x_{i-1,0}^{-}\overline{x_{i,0}^{-}\ldots x_{i,0}^{-}}v^{+}_1-x_{i-1,1}^{-}[h_{i-2,1},x_{i-1,0}^{-}]\overline{x_{i,0}^{-}\ldots x_{i,0}^{-}}v^{+}_1\\
   &+\frac{1}{2}h_{i-2,1}\overline{\left(x_{i-1,0}^{-}\right)^2\ldots x_{i,0}^{-}}v^{+}_1\\
   &+x_{i-1,0}^{-}[h_{i-2,1},x_{i-1,1}^{-}]\overline{x_{i,0}^{-}\ldots x_{i,0}^{-}}v^{+}_1+x_{i-1,0}^{-}[h_{i-2,1},x_{i-1,0}^{-}]\overline{x_{i,0}^{-}\ldots x_{i,0}^{-}}v^{+}_1\\
   &=\left(a_1+\frac{l-i+1}{2}\right)h_{i-2,1}\overline{\left(x_{i-1,0}^{-}\right)^2\ldots x_{i,0}^{-}}v^{+}_1\\
   &-x_{i-1,1}^{-}\left(x_{i-1,1}^{-}+\frac{1}{2}x_{i-1,0}^{-}+x_{i-1,0}^{-}h_{i-2,0}\right)\overline{x_{i,0}^{-}\ldots x_{i,0}^{-}}v^{+}_1\\
   &+\frac{1}{2}h_{i-2,1}\overline{\left(x_{i-1,0}^{-}\right)^2\ldots x_{i,0}^{-}}v^{+}_1 \\ &+x_{i-1,0}^{-}\left(x_{i-1,2}^{-}+\frac{1}{2}x_{i-1,1}^{-}+x_{i-1,1}^{-}h_{i-2,0}\right)\overline{x_{i,0}^{-}\ldots x_{i,0}^{-}}v^{+}_1\\
   &+x_{i-1,0}^{-}\left(x_{i-1,1}^{-}+\frac{1}{2}x_{i-1,0}^{-}+x_{i-1,0}^{-}h_{i-2,0}\right)\overline{x_{i,0}^{-}\ldots x_{i,0}^{-}}v^{+}_1\\
%   \end{align*}
%\begin{align*}
   &=\left(a_1+\frac{l-i+2}{2}\right)\left(2a_1+\left(l-i\right)+4\right)\overline{\left(x_{i-1,0}^{-}\right)^2\ldots x_{i,0}^{-}}v^{+}_1\\
   &-\Big(\left(a_1+\frac{l-i+2}{2}\right)^2-\frac{\left(l-i\right)\left(l-i+2\right)+1}{4}\Big)\overline{\left(x_{i-1,0}^{-}\right)^2\ldots x_{i,0}^{-}}v^{+}_1\\
   &-\frac{1}{2}\left(a_1+\frac{l-i+1}{2}\right)\overline{\left(x_{i-1,0}^{-}\right)^2\ldots x_{i,0}^{-}}v^{+}_1\\
   &+\Big(\left(a_1+\frac{l-i+3}{2}\right)^2+\frac{\left(l-i\right)\left(l-i+2\right)}{4}\Big)\overline{\left(x_{i-1,0}^{-}\right)^2\ldots x_{i,0}^{-}}v^{+}_1\\
   &+\frac{3}{2}\left(a_1+\frac{l-i+3}{2}\right)\overline{\left(x_{i-1,0}^{-}\right)^2\ldots x_{i,0}^{-}}v^{+}_1+\frac{1}{2}\overline{\left(x_{i-1,0}^{-}\right)^2\ldots x_{i,0}^{-}}v^{+}_1\\
   &= \Big(2\left(a_1+\frac{l-i+4}{2}\right)^2+\frac{\left(l-i+2\right)\left(l-i\right)}{2}\Big)\overline{\left(x_{i-1,0}^{-}\right)^2\ldots x_{i,0}^{-}}v^{+}_1.
\end{align*}

Similar to the above lemma, we have $a=a_1+1$ and $b=a_1+l-i+2$. Then
$$Y_{i-2}\Big(\overline{\left(x_{i-1,0}^{-}\right)^2\ldots x_{i,0}^{-}v^{+}_1}\Big)\cong W_1\left(a_1+l-i+2\right)\otimes W_1\left(a_1+1\right).$$
\end{proof}

Similar to Lemma \ref{l-2i34d23}, using induction on $m$ downward, we have

\begin{lemma}\label{yspc1i7}
 Let $1\leq m\leq i-3$.
$Y_m\Big(\overline{\left(x_{m+1,0}^{-}\right)^2\ldots x_{i,0}^{-}}v^{+}_1\Big)$ is 4-dimensional, which is isomorphic to $W_1\left(a_1+\frac{2l-i-m+2}{2}\right)\otimes W_1\left(a_1+\frac{i-m}{2}\right)$.
\end{lemma}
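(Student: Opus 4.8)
The final statement to prove is Lemma \ref{yspc1i7}, which computes the $Y_m$-module structure of $\overline{\left(x_{m+1,0}^{-}\right)^2\ldots x_{i,0}^{-}}v^{+}_1$ for $1\leq m\leq i-3$ in the symplectic case. The plan is to imitate exactly the strategy already used in Lemmas \ref{yspc1i5} and \ref{yspc1i6}, but now to package it as a clean downward induction on $m$, precisely paralleling the orthogonal-case Lemma \ref{l-2i34d23}. First I would observe, via Proposition \ref{siok}, that the weight of $\overline{\left(x_{m+1,0}^{-}\right)^2\ldots x_{i,0}^{-}}v^{+}_1$ is $2\omega_m-2\omega_{m+1}+\omega_i$, so that $h_{m,0}$ acts by the scalar $2$; hence the associated polynomial $P(u)$ of the highest weight $Y_m$-module $Y_m\big(\overline{\left(x_{m+1,0}^{-}\right)^2\ldots x_{i,0}^{-}}v^{+}_1\big)$ has degree $2$, and we may write $P(u)=(u-a)(u-b)$ with $\operatorname{Re}(a)\leq \operatorname{Re}(b)$. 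As in the earlier lemmas, the two unknown roots are recovered from the eigenvalues of $h_{m,1}$ and $h_{m,2}$ on this vector, using the Laurent expansion $\frac{P(u+1)}{P(u)}=1+2u^{-1}+(a+b+1)u^{-2}+(a^2+b^2+a+b)u^{-3}+\cdots$.

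The heart of the argument is the downward induction. The inductive claim I would set up is a pair of formulas:
\begin{align*}
h_{m,1}\,\overline{\left(x_{m+1,0}^{-}\right)^2\ldots x_{i,0}^{-}}v^{+}_1 &= \big(2a_1+(l-m)+2\big)\,\overline{\left(x_{m+1,0}^{-}\right)^2\ldots x_{i,0}^{-}}v^{+}_1,\\
h_{m,2}\,\overline{\left(x_{m+1,0}^{-}\right)^2\ldots x_{i,0}^{-}}v^{+}_1 &= \Big(2\big(a_1+\tfrac{2l-i-m+2}{4}\big)^2+\tfrac{(l-i+2)(l-i)}{2}\Big)\,\overline{\left(x_{m+1,0}^{-}\right)^2\ldots x_{i,0}^{-}}v^{+}_1,
\end{align*}
whose base case $m=i-2$ is exactly Lemma \ref{yspc1i6}. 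For the inductive step I would write $\overline{\left(x_{m+1,0}^{-}\right)^2\ldots x_{i,0}^{-}}v^{+}_1=\left(x_{m+1,0}^{-}\right)^2 v$ with $v=\overline{\left(x_{m+2,0}^{-}\right)^2\ldots x_{i,0}^{-}}v^{+}_1$, expand the commutators $[h_{m,1},(x_{m+1,0}^{-})^2]$ and $[h_{m,2},(x_{m+1,0}^{-})^2]$ using the defining relation $[h_{i,r+1},x_{j,s}^\pm]-[h_{i,r},x_{j,s+1}^\pm]=\pm\frac12 d_i a_{ij}(h_{i,r}x_{j,s}^\pm+x_{j,s}^\pm h_{i,r})$ with $d_m=1$ and $a_{m,m+1}=-1$, and repeatedly push the $h_{m,\ast}$ factors to the right, invoking the induction hypothesis on $v$ together with the relation that $h_{m,0}$ kills $v$ (since the weight of $v$ has zero $\omega_m$-coefficient). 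This is purely the same bookkeeping already carried out in Lemma \ref{yspc1i7}'s orthogonal twin, Lemma \ref{l-2i34d23}; the only numerical adjustments come from the shift $l-i\rightsquigarrow l-i+2$ that distinguishes the $C_l$ spectra from the $D_l$ spectra (compare Lemma \ref{yspc1i5} against Lemma \ref{l-2is34d}).

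Once both eigenvalues are established, solving $a+b+1=2a_1+(l-m)+2$ together with the $h_{m,2}$-equation reduces, after substituting $a=a_1+\frac{i-m}{2}-x+\text{(shift)}$ as in the previous lemmas, to a quadratic of the form $x^2-x=\frac{(l-i)(l-i+2)}{4}$, whose relevant root is the real number $x=\frac{l-i+2}{2}$; this yields $a=a_1+\frac{i-m}{2}$ and $b=a_1+\frac{2l-i-m+2}{2}$. Finally, since $\operatorname{Re}(b)-\operatorname{Re}(a)=\frac{l-i+1}{2}>1$ whenever $1\le m\le i-3$ and $i\le l-1$, the difference of the two roots is never $1$, so by Proposition \ref{ctpihw} (equivalently Lemma \ref{beautiful}) the local Weyl module $W(P)$ for $\ysl$ is irreducible and $4$-dimensional, forcing $Y_m\big(\overline{\left(x_{m+1,0}^{-}\right)^2\ldots x_{i,0}^{-}}v^{+}_1\big)\cong W_1\big(a_1+\frac{2l-i-m+2}{2}\big)\otimes W_1\big(a_1+\frac{i-m}{2}\big)$ by maximality, which is the assertion. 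The main obstacle is purely computational stamina: correctly tracking the coefficients through the nested commutators in the $h_{m,2}$ computation, where several terms of the shape $h_{m,1}(x_{m+1,0}^{-})^2 v$, $(x_{m+1,1}^{-})^2 v$, and $x_{m+1,0}^{-}x_{m+1,2}^{-} v$ must be re-expressed and cancelled. Since the structure is identical to the already-verified $D_l$ case, I expect no conceptual difficulty beyond ensuring the symplectic normalization and the $l-i+2$ shift are propagated consistently at every line.
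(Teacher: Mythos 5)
Your strategy coincides with the paper's: the paper disposes of this lemma precisely by downward induction on $m$ in the style of the $D_l$-case Lemma \ref{l-2i34d23}, with Lemma \ref{yspc1i6} as base case, and your endgame (degree-two associated polynomial, the substitution leading to $x^2-x=\frac{(l-i)(l-i+2)}{4}$ with root $x=\frac{l-i+2}{2}$, and irreducibility of the resulting $4$-dimensional local Weyl module of $\ysl$) is the right one. However, the inductive hypothesis you set up for $h_{m,2}$ is incorrect, and incorrect in a way that breaks the induction rather than being a harmless typo. You claim the eigenvalue
\begin{equation*}
2\left(a_1+\tfrac{2l-i-m+2}{4}\right)^2+\tfrac{(l-i+2)(l-i)}{2},
\end{equation*}
but the value forced by your own claimed roots $a=a_1+\frac{i-m}{2}$, $b=a_1+\frac{2l-i-m+2}{2}$ (via $a^2+b^2+a+b$) is
\begin{equation*}
2\left(a_1+\tfrac{l-m+2}{2}\right)^2+\tfrac{(l-i+2)(l-i)}{2}.
\end{equation*}
These agree only when $m=i+2$, which never occurs in the range $1\leq m\leq i-3$. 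In particular your claimed base case fails: at $m=i-2$ your formula gives $2\left(a_1+\frac{l-i+2}{2}\right)^2+\cdots$, whereas Lemma \ref{yspc1i6} gives $2\left(a_1+\frac{l-i+4}{2}\right)^2+\cdots$. Moreover, if one actually solves your two displayed eigenvalue equations, the quantity $(b-a)^2$ comes out depending linearly on $a_1$, so they cannot produce the translation-consistent roots you assert at the end; the proof as written is internally inconsistent.

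The fix is simply to state the correct inductive claim: the $C_l$ formulas are obtained from the $D_l$ ones of Lemma \ref{l-2i34d23} by the shifts $\frac{l-m}{2}\rightsquigarrow\frac{l-m+2}{2}$ inside the square \emph{and} $(l-i-2)(l-i)\rightsquigarrow(l-i+2)(l-i)$ in the constant term (you performed the second shift but not the first). With that correction, the commutator bookkeeping and the quadratic in $x$ go through exactly as in Lemmas \ref{yspc1i5}, \ref{yspc1i6} and \ref{l-2i34d23}, yielding $a=a_1+\frac{i-m}{2}$ and $b=a_1+\frac{2l-i-m+2}{2}$. One further small slip: $\operatorname{Re}(b)-\operatorname{Re}(a)=l-i+1$, not $\frac{l-i+1}{2}$; this is inconsequential, since $l-i+1\geq 2$ for $i\leq l-1$ still guarantees the difference of roots is never $\pm 1$, hence the $4$-dimensionality.
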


\begin{lemma}\label{spv2}
$Y_i\Big(\overline{\left(x_{1,0}^{-}\right)^2\ldots x_{i,0}^{-}}v^{+}_1\Big)\cong W_1\left(a_1+1\right)$.
\end{lemma}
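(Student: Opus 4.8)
The plan is to follow the proof of Lemma~\ref{i=l-2v2} from the $\yo$ chapter, adjusting the numerical inputs to the symplectic setting. Set $w := \overline{(x_{1,0}^{-})^2\ldots x_{i,0}^{-}}v^{+}_1$. First I would record its weight: by Proposition~\ref{siok} (Case~2) it is $-2\omega_1+\omega_i$, so that $\langle -2\omega_1+\omega_i,\alpha_i\rangle = 1$ and $h_{i,0}w = w$. Because $Y_i\cong\ysl$ for $i\le l-1$ with the generators already normalized, $Y_i(w)$ is a highest weight $Y_i$-module whose Drinfeld polynomial $P(u)$ has degree $1$, where $w$ is maximal for $Y_i$ by the argument of Step~2 of Proposition~\ref{mtoyspl} (using $\sigma^{-1}(\alpha_i)\in\Delta^{+}$). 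Since $x_{i,0}^{-}w\neq 0$, the module $Y_i(w)$ is at least two-dimensional, and being a quotient of the two-dimensional local Weyl module of $\ysl$ attached to $P$ it must equal $W_1(a)$ with $P(u)=u-a$. Everything then reduces to computing $a$, the eigenvalue of $h_{i,1}$ on $w$.

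For that eigenvalue I would write $w = (x_{1,0}^{-})^2\cdots(x_{i-1,0}^{-})^2 U$ with $U := \overline{x_{i,0}^{-}\ldots x_{i,0}^{-}}v^{+}_1$ and move $h_{i,1}$ to the right. The factors $(x_{j,0}^{-})^2$ with $j\le i-2$ commute with $h_{i,1}$ since $a_{ij}=0$, so the only work is in $h_{i,1}(x_{i-1,0}^{-})^2U$, which I expand by Leibniz using the Yangian relation $[h_{i,1},x_{i-1,0}^{-}] = x_{i-1,1}^{-}+\frac{1}{2}x_{i-1,0}^{-}+x_{i-1,0}^{-}h_{i,0}$. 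The transport term $(x_{i-1,0}^{-})^2h_{i,1}U$ is handled by item~(iii) of Proposition~\ref{spc1il-1p1} with $k=i$: since $U = x_{i,0}^{-}\,\overline{x_{i+1,0}^{-}\ldots x_{i,0}^{-}}v^{+}_1$ and $h_{i,1}\overline{x_{i+1,0}^{-}\ldots x_{i,0}^{-}}v^{+}_1 = (a_1+l-i+1)\overline{x_{i+1,0}^{-}\ldots x_{i,0}^{-}}v^{+}_1$, the sign-flip in $(\ref{Cor3.3})$ gives $h_{i,1}U = -(a_1+l-i+1)U$. The two genuine commutator terms reassemble into $(x_{i-1,1}^{-}x_{i-1,0}^{-}+x_{i-1,0}^{-}x_{i-1,1}^{-})U$ once the $h_{i,0}$-contributions are evaluated from the weights of $U$ and of $x_{i-1,0}^{-}U$ furnished by Proposition~\ref{siok}, and this anticommutator is exactly one of the identities in the corollary following Lemma~\ref{yspc1i5}, equal to $(2a_1+l-i+2)(x_{i-1,0}^{-})^2U$.

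Collecting the pieces, the commutator terms contribute $(2a_1+l-i+2)w$ and the transport term $-(a_1+l-i+1)w$, whence $h_{i,1}w = (a_1+1)w$ and $Y_i(w)\cong W_1(a_1+1)$; the case $i=2$ is the base of this computation, with no prefactor to commute past. The main obstacle is exactly this final reconciliation: one must track with care the sign produced by the outermost $x_{i,0}^{-}$ of $U$ via $(\ref{Cor3.3})$, the vanishing of $h_{i,0}$ on $x_{i-1,0}^{-}U$, and the precise degree-two eigenvalue recorded in Lemma~\ref{yspc1i5}, and verify that all of the $(l-i)$-dependent terms cancel so that the clean value $a_1+1$ survives. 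Apart from this, the argument is a direct transcription of the $\yo$ computation in Lemma~\ref{i=l-2v2}.
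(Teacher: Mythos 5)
Your proposal is correct and takes essentially the paper's own route: the paper disposes of this lemma with the single line ``The proof is similar to Lemma~\ref{i=l-2v2}'' (the $D_l$ analogue), and your argument is precisely that transcription carried out with the right $C_l$ inputs. In particular, your two key constants --- the transport eigenvalue $-(a_1+l-i+1)$ obtained from Proposition~\ref{spc1il-1p1}(iii) with $k=i$ together with the sign flip of $(\ref{Cor3.3})$, and the anticommutator value $2a_1+l-i+2$ from the corollary following Lemma~\ref{yspc1i5} --- combine exactly as in the $D_l$ computation to give $h_{i,1}w=(a_1+1)w$, matching the paper.
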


The proof is similar to Lemma \ref{i=l-2v2}.

\begin{lemma}\label{sps1v2p}
Denote $\mathbf{s}_1=s_1\ldots s_{l-1}s_ls_{l-1}\ldots s_i$. Then $\mathbf{s}_1^{-1}\left(\alpha_j\right)\in \Delta^{+}$, where $j=2,3,\ldots, l$, and $\alpha_j$ are the simple roots of $\nysp$.
\end{lemma}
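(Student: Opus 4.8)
The plan is to reduce the statement to an explicit description of $\mathbf{s}_1^{-1}$ as a signed permutation of the coordinate functions $\mu_1,\ldots,\mu_l$, and then to evaluate it on each simple root and read off positivity. Recall from the description of the Weyl group of $\nysp$ that, for $1\le k\le l-1$, the reflection $s_k$ transposes $\mu_k$ and $\mu_{k+1}$ and fixes the remaining $\mu_j$, while $s_l$ sends $\mu_l\mapsto-\mu_l$ and fixes $\mu_j$ for $j\neq l$. The simple roots are $\alpha_j=\mu_j-\mu_{j+1}$ for $1\le j\le l-1$ and $\alpha_l=2\mu_l$, and the positive roots are $\{\mu_a\pm\mu_b\mid a<b\}\cup\{2\mu_a\}$. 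Throughout I assume $2\le i\le l-1$, as in the case under consideration in this section.

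First I would factor the word as $\mathbf{s}_1=PQ$ with $P=s_1s_2\cdots s_{l-1}s_l$ and $Q=s_{l-1}s_{l-2}\cdots s_i$, and compute the action of $P$ and of $Q$ on each $\mu_k$ separately. A direct bookkeeping shows that $Q$ realizes $\mu_i\mapsto\mu_l$, $\mu_k\mapsto\mu_{k-1}$ for $i+1\le k\le l$, and fixes $\mu_k$ for $k<i$; while $P$ realizes $\mu_k\mapsto\mu_{k+1}$ for $1\le k\le l-1$ and $\mu_l\mapsto-\mu_1$, the single sign change being produced by the letter $s_l$. Composing $\mathbf{s}_1(\mu_k)=P\big(Q(\mu_k)\big)$ and then inverting gives the closed form
\begin{equation*}
\mathbf{s}_1^{-1}(\mu_m)=\begin{cases}
-\mu_i & \text{if } m=1,\\
\mu_{m-1} & \text{if } 2\le m\le i,\\
\mu_m & \text{if } i<m\le l.
\end{cases}
\end{equation*}

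With this formula in hand, the verification is a short case analysis on $j$, using linearity of $\mathbf{s}_1^{-1}$. For $j=l$ one gets $\mathbf{s}_1^{-1}(2\mu_l)=2\mu_l=\alpha_l$; for $i+1\le j\le l-1$ both $\mu_j,\mu_{j+1}$ are fixed, so $\mathbf{s}_1^{-1}(\alpha_j)=\alpha_j$; for $2\le j\le i-1$ one obtains $\mathbf{s}_1^{-1}(\alpha_j)=\mu_{j-1}-\mu_j=\alpha_{j-1}$; and the boundary index $j=i$ gives $\mathbf{s}_1^{-1}(\alpha_i)=\mu_{i-1}-\mu_{i+1}=\alpha_{i-1}+\alpha_i$. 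In every case the image lies in $\Delta^+$, which proves the lemma.

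I do not expect a serious obstacle; the entire content is careful bookkeeping of a signed permutation. The two points that demand attention are the tracking of the lone sign flip at position $i$ — which is precisely what makes $\mathbf{s}_1^{-1}(\alpha_1)=-\mu_i-\mu_1=-(\mu_1+\mu_i)$ negative, thereby explaining why the index $j=1$ must be excluded from the statement — and the boundary case $j=i$, where the image is the non-simple positive root $\alpha_{i-1}+\alpha_i$ rather than a simple root. Since the claim is purely about the root system of $\nysp$, all needed data are supplied by the subsection on simple Lie algebras of type $C_l$, and no representation-theoretic input is required.
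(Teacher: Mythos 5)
Your proof is correct and follows essentially the same route as the paper: both compute $\mathbf{s}_1^{-1}$ explicitly as a signed permutation of the coordinate functions $\mu_m$ and then check case by case that each $\alpha_j$ ($j\geq 2$) lands on a positive root, with the same resulting formulas ($\alpha_j\mapsto\alpha_j$ for $j>i$, $\alpha_j\mapsto\alpha_{j-1}$ for $2\le j<i$, $\alpha_i\mapsto\mu_{i-1}-\mu_{i+1}$, $\alpha_l\mapsto\alpha_l$). Your write-up merely fills in the bookkeeping that the paper dismisses as trivial, and your remark on why $j=1$ fails is a correct and useful addition.
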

\begin{proof}
$\mathbf{s}_1^{-1}=s_i\ldots s_{l-1}s_ls_{l-1}\ldots s_1$. We claim that \begin{equation*}
\mathbf{s}_1^{-1}\left(\mu_j-\mu_{j+1}\right)=\begin{cases}
\mu_j-\mu_{j+1}\qquad \mathrm{if}\qquad i< j\leq l-1 \\
\mu_{j-1}-\mu_{j+\delta_{ij}} \qquad \mathrm{if}\qquad 2\leq j\leq i.
\end{cases}
\end{equation*}
and $$\mathbf{s}_1^{-1}\left(2\mu_{l}\right)=2\mu_{l}.$$
The proof of the claim is trivial.
\end{proof}

Let $v_2=\left(x_{1,0}^{-}\right)^2\ldots \left(x_{i-1,0}^{-}\right)^2x_{i,0}^{-}x_{i+1,0}^{-}\ldots x_{l-1,0}^{-}x_{l,0}^{-}x_{l-1,0}^{-}\ldots x_{i+1,0}^{-}x_{i,0}^{-}v^{+}_1$.

\begin{proposition}\label{spc1il-1p2} Let $i\leq k\leq l-1$ and $2\leq m\leq i-2$.
\begin{enumerate}
  \item $Y_{k}\Big( {\left(x_{k-1,0}^{-}x_{k-2,0}^{-}\ldots x_{i,0}^{-}\right)}v_2\Big)\cong W_{1}\left(a_1+1+\frac{k-i}{2}\right)$ as a $Y_{k}$-module.
  \item $Y_{l}\Big( {x_{l-1,0}^{-}x_{l-2,0}^{-}\ldots x_{i,0}^{-}}v_2\Big)\cong W_1\Big(\frac{1}{2}\left(a_1+1+\frac{l-i+1}{2}\right)\Big)$ as a $Y_{l}$-module.
  \item $Y_{k}\left(\overline{x_{k+1,0}^{-}\ldots x_{i,0}^{-}}v_2\right)\cong W_1\left(a_1+1+\frac{2l-i-k+2}{2}\right)$ as a $Y_{k}$-module.
  \item
\begin{enumerate}
    \item $Y_{i-1}\Big(\overline{\left(x_{i,0}^{-}\right)\ldots x_{i,0}^{-}}v_2\Big)\cong W_1\left(a_1+1+l-i+\frac{3}{2}\right)\otimes W_1\left(a_1+1+\frac{1}{2}\right)$.
    \item $Y_m\Big(\overline{\left(x_{m+1,0}^{-}\right)^2\ldots x_{i,0}^{-}}v_2\Big)\cong W_1\left(a_1+1+\frac{2l-i-m+2}{2}\right)\otimes W_1\left(a_1+1+\frac{i-m}{2}\right)$.
  \end{enumerate}
%$Y_m\Big(\overline{\left(x_{m+1,0}^{-}\right)^2\ldots x_{i,0}^{-}}v_2\Big)\cong W_1\left(a_1+1+\frac{2l-i-m+2}{2}\right)\otimes W_1\left(a_1+1+\frac{i-m}{2}\right)$.
\item $Y_i\Big(\overline{\left(x_{2,0}^{-}\right)^2\ldots x_{i,0}^{-}}v_2\Big)\cong W_1\left(a_1+2\right)$.
\end{enumerate}
\end{proposition}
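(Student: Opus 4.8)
The plan is to recognize $v_2$ as the highest weight vector of a fundamental representation of a symplectic Yangian of smaller rank, thereby reducing Proposition \ref{spc1il-1p2} to Proposition \ref{spc1il-1p1} with the single substitution $a_1\mapsto a_1+1$. This mirrors the strategy of Propositions \ref{v2aga1} and \ref{vmaga1} in the orthogonal case. First I would delete the first node from the Dynkin diagram of $C_l$: the result is a Dynkin diagram of type $C_{l-1}$, so the subalgebra $Y^{(1)}$ of $\ysp$ generated by all $x_{j,r}^{\pm}$ and $h_{j,r}$ with $j\geq 2$ and $r\in\Z_{\geq 0}$ is isomorphic to $Y(\mathfrak{sp}(2(l-1),\C))$. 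The distinguished long-root node $l$ is retained, so the isomorphism is type-preserving and the rescaling $\tilde{x}_{l,r}^{\pm}=\frac{\sqrt{2}}{2^{r+1}}x_{l,r}^{\pm}$, $\tilde{h}_{l,r}=\frac{1}{2^{r+1}}h_{l,r}$ carries over unchanged.

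Next I would show that $v_2$ is a highest weight vector for $Y^{(1)}$. By Proposition \ref{siok} the weight of $v_2$ is $\mathbf{s}_1(\omega_i)=-2\omega_1+\omega_i$, whence $h_{j,0}v_2=\delta_{ij}v_2$ for every $j\geq 2$; in particular the associated Drinfeld polynomial of $Y^{(1)}(v_2)$ has degree one at node $i$ and degree zero elsewhere. For maximality, suppose $x_{j,r}^{+}v_2\neq 0$ for some $j\geq 2$; this vector would have weight $\mathbf{s}_1(\omega_i)+\alpha_j$, forcing $\omega_i+\mathbf{s}_1^{-1}(\alpha_j)$ to be a weight of $V_{a_1}(\omega_i)$. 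But Lemma \ref{sps1v2p} gives $\mathbf{s}_1^{-1}(\alpha_j)\in\Delta^{+}$, so $\omega_i+\mathbf{s}_1^{-1}(\alpha_j)$ strictly exceeds $\omega_i$, contradicting the maximality of the highest weight $\omega_i$. Hence $x_{j,r}^{+}v_2=0$ for all $j\geq 2$ and all $r$, so $v_2$ is maximal for the positive subalgebra of $Y^{(1)}$. Finally, Lemma \ref{spv2} computes the eigenvalue of $h_{i,1}$ on $v_2$ and yields $Y_i(v_2)\cong W_1(a_1+1)$, so the root of the node-$i$ Drinfeld polynomial is exactly $a_1+1$. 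Thus $v_2$ is the highest weight vector of the fundamental $Y(\mathfrak{sp}(2(l-1),\C))$-module with parameter $a_1+1$ attached to node $i$.

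With this identification, every statement in Proposition \ref{spc1il-1p2} is an instance of the corresponding statement in Proposition \ref{spc1il-1p1}, read inside $Y^{(1)}\cong Y(\mathfrak{sp}(2(l-1),\C))$ with $v_2$ in place of $v^{+}_1$ and initial eigenvalue $a_1+1$ in place of $a_1$. Indeed, the computations in Proposition \ref{spc1il-1p1} are internal to the Weyl-group orbit of $\omega_i$ and depend only on the weight combinatorics supplied by Proposition \ref{siok} together with the $\mathfrak{sl}_2$-eigenvalue recursion; both transfer verbatim to $C_{l-1}$ once node $1$ is removed, exactly as Proposition \ref{v2aga1} reduces to Proposition \ref{yoc3p1}. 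The index ranges shift accordingly: node $2$ now plays the role of the first node of $C_{l-1}$, so the parameter $m$ in part (iv)(b) runs over $2\leq m\leq i-2$, and part (v) involves $(x_{2,0}^{-})^2$ and returns $W_1((a_1+1)+1)=W_1(a_1+2)$.

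The main obstacle is not computational—all eigenvalue recursions are already carried out in Proposition \ref{spc1il-1p1}—but lies in justifying the reduction cleanly: one must confirm that $v_2$ is annihilated by the entire positive subalgebra of $Y^{(1)}$ rather than merely by the degree-zero generators $x_{j,0}^{+}$, which is precisely what the weight argument via Lemma \ref{sps1v2p} delivers, and one must check that the node removal produces $Y(\mathfrak{sp}(2(l-1),\C))$ with the correct normalization at the long-root node $l$. After these points are settled, invoking Proposition \ref{spc1il-1p1} with $a_1$ replaced by $a_1+1$ completes the proof; when $i=2$ the vector $v_2$ sits at the first node of $C_{l-1}$, so only part (i) is nonvacuous, exactly as in the orthogonal analogue.
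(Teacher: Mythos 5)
Your proposal is correct and follows essentially the same route as the paper: remove the first node of the $C_l$ diagram to obtain $Y^{(1)}\cong Y\big(\mathfrak{sp}(2(l-1),\C)\big)$, verify via Proposition \ref{siok} and Lemma \ref{sps1v2p} that $v_2$ is a maximal vector whose only nontrivial Drinfeld polynomial sits at node $i$ and equals $u-(a_1+1)$ by Lemma \ref{spv2}, and then invoke Proposition \ref{spc1il-1p1} with $a_1$ replaced by $a_1+1$. One peripheral slip: for $i=2$ the paper retains parts (i), (ii) and (iii) as true and necessary, not just part (i) — the single-item collapse you describe is borrowed from the numbering of the orthogonal analogue (Proposition \ref{v2aga1}), where those statements are bundled into one item.
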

\begin{proof}
Removing the first node in the Dynkin diagram of the Lie algebra of type $C_l$, we get the Lie algebra which is isomorphic to the simple Lie algebra of type $C_{l-1}$. Denote $I'=\{2,3,\ldots, l\}$.  Let $Y^{\left(1\right)}$ be the Yangian generated by all $x_{i,k}^{\pm}$ and $h_{i,k}$ for $i\in I'$ and $k\in \mathbb{Z}_{\geq 0}$. $Y^{\left(1\right)}\cong Y\big(\mathfrak{sp}\left(2\left(l-1\right),\C\right)\big)$.

From Proposition \ref{siok}, the weight of $v_2$ is $\mathbf{s}_1(\omega_i)$, and then $x_{j,0}^{+}v_2$ has weight $\mathbf{s}_1(\omega_i)+\alpha_j$, where $j\geq 2$. If $x_{j,0}^{+}v_2\neq 0$, $\mathbf{s}_1(\omega_i)+\alpha_j$ is a weight of $V_{a_1}(\omega_i)$, so is $\omega_i+\mathbf{s}_1^{-1}\left(\alpha_j\right)$. By the above Lemma, $\omega_i$ precedes $\omega_i+\mathbf{s}_1^{-1}\left(\alpha_j\right)$, which contradicts to the fact that $\omega_i$ is the highest weight. Thus $x_{j,0}^{+}v_2=0$. Therefore $v_2$ is a maximal vector of $Y^{\left(1\right)}$, and
$Y^{\left(1\right)}\left(v_2\right)$ is a highest weight representation. Since $v_2$ has weight $-2\omega_1+\omega_i$, the degree of the associated polynomial $P_j$ equals 0 if $j\neq i$; and $\operatorname{Deg}\Big(P_i\left(u\right)\Big)=1$. Therefore $P_j\left(u\right)=1$ if $j\neq i$ and $P_i\left(u\right)=\left(u-a\right)$. It follows from Lemma \ref{spv2} that $a=a_1+1$. The rest of the proof of this proposition is similar to the proofs of Proposition \ref{spc1il-1p1}, just replacing $a_1$ by $a_1+1$.
\end{proof}

Note that if $i=2$, only $\left(i\right), \left(ii\right)$ and $\left(iii\right)$ of the above proposition is true and necessary. Now we assume that $i\geq 3$.

Define inductively $v_{m+1}=\overline{\left(x_{m,0}^{-}\right)^2\ldots x_{i,0}^{-}}v_{m}$ for $2\leq m\leq i-1$, and let $Y^{\left(m\right)}$ be the Yangian generated by all $x_{r,k}^{\pm}$ and $h_{r,k}$ for $r>m$ and $k\in \mathbb{Z}_{\geq 0}$. Since $m\leq i-1\leq l-2$, $Y^{\left(m\right)}$ is a Yangian of type $C_{l-m}$. Then we have no problem to transfer all proof of above proposition to the following.
\begin{proposition} Let $i\leq k\leq l-1$ and $m+1\leq n\leq i-2$.
\begin{enumerate}
  \item $Y_{k}\Big( {\left(x_{k-1,0}^{-}x_{k-2,0}^{-}\ldots x_{i,0}^{-}\right)}v_{m+1}\Big)\cong W_{1}\left(a_1+m+\frac{k-i}{2}\right)$ as a $Y_{k}$-module.
  \item $Y_{l}\Big( {x_{l-1,0}^{-}x_{l-2,0}^{-}\ldots x_{i,0}^{-}}v_{m+1}\Big)\cong W_1\Big(\frac{1}{2}\left(a_1+m+\frac{l-i+1}{2}\right)\Big)$ as a     $Y_{l}$-module.
  \item $Y_{k}\left(\overline{x_{k+1,0}^{-}\ldots x_{i,0}^{-}}v_{m+1}\right)\cong W_1\left(a_1+m+\frac{2l-i-k+2}{2}\right)$ as a $Y_{k}$-module.
  \item
\begin{enumerate}
    \item $Y_{i-1}\Big(\overline{\left(x_{i,0}^{-}\right)\ldots x_{i,0}^{-}}v_{m+1}\Big)\cong W_1\left(a_1+m+l-i+\frac{3}{2}\right)\otimes W_1\left(a_1+m+\frac{1}{2}\right)$.
    \item $Y_n\Big(\overline{\left(x_{n+1,0}^{-}\right)^2\ldots x_{i,0}^{-}}v_2\Big)\cong W_1\left(a_1+1+\frac{2l-i-m+2}{2}\right)\otimes W_1\left(a_1+m+\frac{i-n}{2}\right)$.
  \end{enumerate}
%$Y_{n}\Big(\overline{\left(x_{n+1,0}^{-}\right)^2\ldots x_{i,0}^{-}}v_{m+1}\Big)\cong W_1\left(a_1+m+\frac{2l-i-m+2}{2}\right)\otimes W_1\left(a_1+m+\frac{i-m}{2}\right)$.
\item $Y_i\Big(\overline{\left(x_{m+1,0}^{-}\right)^2\ldots x_{i,0}^{-}}v_{m+1}\Big)\cong W_1\left(a_1+m+1\right)$.
\end{enumerate}
\end{proposition}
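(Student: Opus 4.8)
The plan is to mirror the inductive strategy already developed for $\yn$, $\yo$, and $\ysp$ in the earlier sections, since the final proposition is simply the $C_{l-m}$-analogue of Proposition \ref{spc1il-1p1} after passing to the sub-Yangian $Y^{(m)}$. First I would invoke Proposition \ref{siok} to fix the weights of the intermediate vectors $v_{\sigma_k(\omega_i)}$ obtained from $v_{m+1}$ by successive applications of lowering operators, and to confirm that each $\sigma_k(\omega_i)$ has coefficient $r_{k'}\in\{1,2\}$ at the active node. This controls whether the associated polynomial of each $Y_{k'}$-subrepresentation has degree $1$ or $2$, exactly as in Step 4 of Proposition \ref{mtoyspl}. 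The maximality argument of Step 2 (that $v_{\sigma_i(\omega_{b_1})}$ is a maximal vector because $\sigma_i^{-1}(\alpha_{i'})\in\Delta^+$, cf. Lemma \ref{sps1v2p}) then shows each such subrepresentation is highest weight, hence isomorphic to $W_1(a)$, $W_2(a)$, or $W_1(b)\otimes W_1(a)$ by the classification in Proposition \ref{wra} and Corollaries \ref{slw1a}, \ref{w2ihw}.

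The core of the argument is the observation already recorded in Proposition \ref{vmaga1} and in the sentence preceding the statement: removing the first $m$ nodes of the $C_l$ Dynkin diagram yields a copy of $C_{l-m}$, and the Yangian $Y^{(m)}$ generated by $x_{r,k}^{\pm},h_{r,k}$ for $r>m$ is isomorphic to $Y\big(\mathfrak{sp}(2(l-m),\C)\big)$. Since $m\leq i-1\leq l-2$ we have $l-m\geq 2$, so $Y^{(m)}$ is a genuine symplectic Yangian and every lemma of Section \ref{mtoyspls} applies verbatim to $Y^{(m)}(v_{m+1})$. I would first check that $v_{m+1}$ is a maximal vector for $Y^{(m)}$: using Lemma \ref{sps1v2p} (the $C_{l-m}$-version, whose statement and trivial proof transfer since the reflection word $\mathbf{s}_1$ has the same combinatorial shape) one sees $x_{j,0}^+v_{m+1}=0$ for all $j>m$, and its weight $-2\omega_m+\omega_i$ gives $h_{j,0}v_{m+1}=\delta_{ij}v_{m+1}$, so its associated polynomials are trivial except $P_i(u)=u-a$ with $a=a_1+m$ by the degree-one computation of Lemma \ref{spv2}.

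Granting this, items (i)--(v) follow by applying Proposition \ref{spc1il-1p1} inside $Y^{(m)}\cong Y\big(\mathfrak{sp}(2(l-m),\C)\big)$ with the starting scalar $a_1$ replaced by $a_1+m$; the rescaling $\tilde x_{l,r}^\pm=\tfrac{\sqrt2}{2^{r+1}}x_{l,r}^\pm$ of the long-root generators is carried over unchanged, which is why item (ii) acquires the factor $\tfrac12$. The fractional shifts $\tfrac{k-i}{2}$, $\tfrac{l-i+1}{2}$, $\tfrac{2l-i-k+2}{2}$, and $\tfrac{i-n}{2}$ are precisely those of Proposition \ref{spc1il-1p1} with $a_1\mapsto a_1+m$, and the degree-$2$ cases in item (iv) split as an irreducible $W_1(b)\otimes W_1(a)$ because $\operatorname{Re}(b)>\operatorname{Re}(a)$ keeps the tensor product away from the bad difference $1$, exactly as in Lemmas \ref{yspc1i5}--\ref{yspc1i7}. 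The main obstacle, and the only point needing genuine care, is verifying that the reduction to $Y^{(m)}$ is legitimate for the \emph{largest} admissible $m$: when $l-m+1=3$ (that is $m=i=l-2$) the algebra $Y^{(m)}$ degenerates to rank $2$ and one must confirm that the structural lemmas still hold rather than collapsing, much as the remark following Proposition \ref{vmaga1} had to treat $l-m+1=3$ separately in the $D$-case. I expect this boundary verification to be routine but it is where the uniform induction is genuinely tested.
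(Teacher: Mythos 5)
Your proposal matches the paper's own argument: the paper likewise defines $v_{m+1}$ inductively, observes that $Y^{(m)}$ (generated by the $x_{r,k}^{\pm}$, $h_{r,k}$ with $r>m$) is a Yangian of type $C_{l-m}$ because $m\leq i-1\leq l-2$, checks via the analogue of Lemma \ref{sps1v2p} and Lemma \ref{spv2} that $v_{m+1}$ is a maximal vector whose only nontrivial associated polynomial is $u-(a_1+m)$ at node $i$, and then transfers the proof of Proposition \ref{spc1il-1p2} (hence of Proposition \ref{spc1il-1p1}) with $a_1$ replaced by $a_1+m$. The one small slip is your boundary labelling: in this $C$-type setting the extreme case is $m=i-1=l-2$ (i.e. $i=l-1$, where $Y^{(m)}\cong Y\big(\mathfrak{sp}(4,\C)\big)$ of type $C_2$), not $m=i=l-2$ as in the $D_l$ chapter, and the paper's remarks confirm nothing degenerates there.
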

Note that $v_{1}^{-}=\overline{x_{i,0}^{-}\ldots x_{i,0}^{-}}v_{i}^{+}$. When $m=i-1$, only $\left(i\right), \left(ii\right)$ and $\left(iii\right)$ of the above proposition is true and necessary.

This completes the proof of Step 5 of Proposition 5.4 for the case $2\leq i\leq l-1$.

\noindent \textbf{Case 2: $i=1$.}

Due to the previous case $2\leq i\leq l-1$, it is no hard to prove the following proposition. Because the similarity of the proof to the one of Proposition \ref{spc1il-1p1}, we omit the proof here.

 \begin{proposition} Let $1\leq k\leq l-1$.
\begin{enumerate}
  \item $Y_{k}\left(x_{k-1,0}^{-}\ldots x_{2,0}^{-}x_{1,0}^{-}v^{+}_1\right)\cong W_{1}\left(a_1+\frac{k-1}{2}\right)$ as a $Y_{k}$-module.
  \item $Y_{l}\left(x_{l-1}^{-}\ldots x_{2,0}^{-}x_{1,0}^{-}v^{+}_1\right)\cong W_{1}\left(\frac{1}{2}\left(a_1+\frac{l}{2}\right)\right)$ as a $Y_{l}$-module.
%  \item $Y_{l-1}\left(x_{l}^{-}x_{l-1}^{-}\ldots x_{2,0}^{-}x_{1,0}^{-}v^{+}_1\right)$ is a $Y_{l-1}$-modules which is isomorphic to $W_{1}\left(a_1+\frac{l+2}{2}\right)$.
  \item $Y_{k}\left(x_{k+1,0}^{-}\ldots x_{l-1,0}^{-}x_{l,0}^{-}\ldots x_{2,0}^{-}x_{1,0}^{-}v^{+}_1\right)\cong W_{1}\left(a_1+\frac{2l-k+1}{2}\right)$,as a $Y_{k}$-modules.
\end{enumerate}
 \end{proposition}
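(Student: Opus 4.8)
The final statement to prove is the proposition concerning the case $i=1$ (the $b_1=1$ case) of Step 5 in Proposition \ref{mtoyspl} for $\ysp$. It asserts three isomorphisms identifying the $Y_k$-submodules generated along the path from the highest weight vector $v_1^{+}$ to the lowest weight vector in $V_a(\omega_1)$ as two-dimensional $W_1(\cdot)$ modules with explicitly computed parameters.

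The plan is to follow exactly the same inductive strategy used in Proposition \ref{spc1il-1p1}, Lemma \ref{ylviw2a0} and Lemma \ref{yspc1il4}, specializing $i=1$. First I would observe that since $b_1=1$ the fundamental representation $V_{a_1}(\omega_1)$ is the natural $2l$-dimensional module $V\cong\C^{2l}$ (Theorem \ref{fmsimlieC}), so every relevant weight space is one-dimensional and each $Y_{k}$-submodule $Y_{k}(v_{\sigma_j(\omega_1)})$ is automatically a highest weight module by the argument of Step 2 of Proposition \ref{mtoyspl}. The degree of the associated polynomial $P(u)$ is the eigenvalue of $h_{k,0}$, which by Proposition \ref{siok} equals $1$ in each of the three cases since for $i=1$ no weight ever acquires a coefficient $2$ on $\omega_{k'}$ (this is visible from Case 1 of the proof of Proposition \ref{siok}). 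Hence $P(u)$ has degree $1$ throughout, so each submodule is $2$-dimensional and isomorphic to some $W_1(a)$ by the maximality of $W_1(a)$, and the only remaining task is to pin down the value of $a$ via the eigenvalue of $h_{k,1}$, using Lemma \ref{g2puisd3} to read off $a$ from $P(u+1)/P(u)$.

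The core of the proof is then three eigenvalue computations, each by induction on $k$. For item (i) I would induct on $k$ exactly as in item (i) of Proposition \ref{yoc3p1}: using the defining relations to rewrite $h_{k,1}x_{k-1,0}^{-}\cdots x_{1,0}^{-}v_1^{+}$ as $[h_{k,1},x_{k-1,0}^{-}]x_{k-2,0}^{-}\cdots x_{1,0}^{-}v_1^{+}$, expanding the commutator to $x_{k-1,1}^{-}+\tfrac12 x_{k-1,0}^{-}+x_{k-1,0}^{-}h_{k,0}$, and feeding in the inductive hypothesis together with relation (\ref{Cor3.3}) to produce the increment of $\tfrac12$ at each step, yielding $a_1+\tfrac{k-1}{2}$. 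For item (ii), the node $l$ requires the rescaled generators $\tilde h_{l,r},\tilde x_{l,r}^{\pm}$; I would mimic Lemma \ref{ylviw2a0} verbatim with $i=1$, applying $\tilde h_{l,1}=\tfrac14 h_{l,1}$ and the relation $[h_{l,0},x_{l-1,0}^{-}]$ to obtain the factor $\tfrac12$ and the value $\tfrac12\bigl(a_1+\tfrac{l}{2}\bigr)$. For item (iii) I would induct downward on $k$ as in item (iv) of Proposition \ref{i=1yo2n} and in Lemma \ref{yspc1il4}, carefully tracking the passage through node $l$ (where the extra $x_{l,0}^{-}$ sits) so that the accumulated shift gives $a_1+\tfrac{2l-k+1}{2}$.

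The main obstacle I anticipate is purely bookkeeping rather than conceptual: keeping the indices of the long monomials $x_{k+1,0}^{-}\cdots x_{l-1,0}^{-}x_{l,0}^{-}\cdots x_{1,0}^{-}$ straight across the turning point at node $l$, and correctly propagating the rescaling of the $Y_l$ generators into the surrounding commutators in item (iii). Since every individual commutator manipulation is dictated by the defining relations and every inductive step is identical in form to computations already carried out for the $\yo$ case and for Case 1 ($2\le i\le l-1$) of the present chapter, I would simply remark that the proofs of (i) and (iii) are entirely analogous to item (i) of Proposition \ref{yoc3p1} and to Lemma \ref{yspc1il4} respectively, give the full induction only for item (ii) where the node-$l$ rescaling genuinely intervenes, and omit the routine repetitions.
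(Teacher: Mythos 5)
Your proposal is correct and matches the paper's approach exactly: the paper itself omits this proof, remarking only that it follows from the case $2\leq i\leq l-1$ by the same argument as Proposition \ref{spc1il-1p1}, and your plan of specializing Proposition \ref{spc1il-1p1}, Lemma \ref{ylviw2a0} and Lemma \ref{yspc1il4} to $i=1$ (with the degree-one observation from Case 1 of Proposition \ref{siok} and the rescaled generators $\tilde{h}_{l,r}$ at node $l$) reproduces precisely the intended computations, with the claimed parameters $a_1+\frac{k-1}{2}$, $\frac{1}{2}\left(a_1+\frac{l}{2}\right)$ and $a_1+\frac{2l-k+1}{2}$ agreeing with the $i=1$ specialization of those results.
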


\noindent\textbf{ Case 3: $i=l$.}
%%%%%%%%%%%%%%%%%%%%%%%%%%%%%%%%%%%%%%%%%%%%%%%%%%%%%%%%%%%%%%%%%%%%%%%%%%%%%%%%%%%%%%%%%%%%%%%%%%%%%%%%%%%%%%%%%%%%%%%%%%%%%%%%%%%%%%%%%%%%%%%%%%%
 \begin{proposition}\label{yspc3p1}\
\begin{enumerate}
  \item $Y_l\left(v_{1}^{+}\right)=W_1\left(\frac{a_1}{2}\right)$.
  \item $Y_{l-1}\left(x_{l,0}^{-}v^{+}_1\right)\cong W_2\left(a_1\right)$.
  \item For $2\leq k\leq l-1$, $Y_{k-1}\Big(\left(x_{k,0}^{-}\right)^2\ldots \left(x_{l-2,0}^{-}\right)^2\left(x_{l-1,0}^{-}\right)^2x_{l,0}^{-}v^{+}_1\Big)$ is isomorphic to either $W_2\left(a_1+\frac{l-k}{2}\right)$ or $W_1\left(a_1+\frac{l-k}{2}+1\right)\otimes W_1\left(a_1+\frac{l-k}{2}\right)$.
  \item $Y_l\Big(\left(x_{1,0}^{-}\right)^2\ldots \left(x_{l-2,0}^{-}\right)^2\left(x_{l-1,0}^{-}\right)^2x_{l,0}^{-}v^{+}_1\Big)$ is isomorphic to $ W_1\left(\frac{a_1+1}{2}\right)$.
\end{enumerate}
 \end{proposition}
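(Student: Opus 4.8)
The plan is to prove all four items by the single recipe that drives every computation in Step~5: read the weight of the relevant vector off Proposition~\ref{siok}, deduce the degree of the associated Drinfeld polynomial $P(u)$ from the eigenvalue of $h_{i',0}$, and then determine the root(s) of $P(u)$ from the eigenvalues of $h_{i',1}$ (and, when $\deg P=2$, of $h_{i',2}$) using the defining relations of $\ysp$, Corollary~\ref{slw1a}, and an induction. The one feature special to $b_1=l$ is that the node $l$ recurs, and there $Y_l\cong\ysl$ only after the rescaling $\tilde{x}_{l,r}^{\pm}=\frac{\sqrt2}{2^{r+1}}x_{l,r}^{\pm}$, $\tilde{h}_{l,r}=\frac1{2^{r+1}}h_{l,r}$; all node-$l$ eigenvalue computations must be done with these rescaled generators, and this is precisely what halves the roots in items (i) and (iv).

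For item (i): $v_1^+$ has weight $\omega_l$, and since $\pi_l(u)=u-a_1$ we have $\mu_l(u)=\pi_l(u+2)/\pi_l(u)=1+2u^{-1}+2a_1u^{-2}+\cdots$, so $h_{l,0}v_1^+=2v_1^+$ and $h_{l,1}v_1^+=2a_1v_1^+$. Passing to $\tilde h_{l,0}=\tfrac12 h_{l,0}$ and $\tilde h_{l,1}=\tfrac14 h_{l,1}$ shows the rescaled polynomial has degree $1$ and root $\tfrac{a_1}2$, so $Y_l(v_1^+)\cong W_1(\tfrac{a_1}2)$. Item (ii) is the first degree-two case: $x_{l,0}^-v_1^+$ has weight $s_l(\omega_l)=2\omega_{l-1}-\omega_l$, so $h_{l-1,0}$ acts by $2$ and $\deg P=2$. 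Here the short-root asymmetry $a_{l-1,l}=-2$ enters through $[h_{l-1,1},x_{l,0}^-]=2x_{l,1}^-+h_{l-1,0}x_{l,0}^-+x_{l,0}^-h_{l-1,0}$; feeding in $x_{l,1}^-v_1^+=a_1x_{l,0}^-v_1^+$ from item (i) gives $h_{l-1,1}(x_{l,0}^-v_1^+)=(2a_1+2)x_{l,0}^-v_1^+$, and the parallel $h_{l-1,2}$-computation yields the two roots $a_1$ and $a_1+1$. To pin down $W_2(a_1)$ rather than the reducible Weyl module $W_1(a_1+1)\otimes W_1(a_1)$ (both carry this Drinfeld polynomial, since the roots differ by $1$), I would exhibit $x_{l-1,1}^-(x_{l,0}^-v_1^+)=(a_1+1)\,x_{l-1,0}^-x_{l,0}^-v_1^+$ together with $(x_{l-1,0}^-)^3x_{l,0}^-v_1^+=0$, so that the node-$(l-1)$ string is three-dimensional and the generated module is the irreducible $W_2(a_1)$.

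Item (iii) is carried out by downward induction on $k$, following the pattern of Lemmas~\ref{yspc1i6} and~\ref{yspc1i7} (whose $\yo$-analogues are Lemmas~\ref{l-2i34d2} and~\ref{l-2i34d23}): the vector $\left(x_{k,0}^-\right)^2\cdots\left(x_{l-1,0}^-\right)^2x_{l,0}^-v_1^+$ has $h_{k-1,0}$-eigenvalue $2$, and propagating the degree-one eigenvalues of the inner factors through $[h_{k-1,1},(x_{k,0}^-)^2]$ and its $h_{k-1,2}$-analogue produces the roots $a_1+\tfrac{l-k}2$ and $a_1+\tfrac{l-k}2+1$; since these differ by $1$ the module is either the irreducible $W_2(a_1+\tfrac{l-k}2)$ or the $4$-dimensional $W_1(a_1+\tfrac{l-k}2+1)\otimes W_1(a_1+\tfrac{l-k}2)$, the two possibilities left open by the root computation alone. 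Item (iv) returns to node $l$ after the full descent: the resulting vector has $\tilde h_{l,0}$-eigenvalue $1$, so $\deg P=1$, and the rescaled $\tilde h_{l,1}$-eigenvalue, now built on the shifted base $a_1+1$ in the manner of Proposition~\ref{spc1il-1p2} and Lemma~\ref{spv2}, comes out to $\tfrac{a_1+1}2$, giving $Y_l(\cdots)\cong W_1(\tfrac{a_1+1}2)$.

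The main obstacle is the degree-two eigenvalue bookkeeping in items (ii) and (iii): evaluating $h_{i',2}$ forces one to expand $[h_{i',2},(x_{k,0}^-)^2]$, commute $h_{i',1}$ past several lowering operators, and repeatedly apply the identities of Corollary~\ref{w1bw1a}, with the constant terms threaded correctly through the downward induction. Running alongside this is the ever-present short-root subtlety: because $d_l=2$ while $d_1=\cdots=d_{l-1}=1$, the relations between node $l$ and node $l-1$ are asymmetric ($a_{l-1,l}=-2$, $a_{l,l-1}=-1$), and one must consistently switch to the rescaled generators $\tilde x_{l,r}^\pm,\tilde h_{l,r}$ at node $l$ so that Corollary~\ref{slw1a} and Proposition~\ref{wra} apply unchanged. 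These two points, rather than any conceptual difficulty, are where the proof demands care.
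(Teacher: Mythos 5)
Your proposal is correct and follows essentially the same route as the paper: item (i) via the Drinfeld-polynomial ratio with $d_l=2$ and the rescaled generators $\tilde h_{l,r}$ (the paper's Corollary \ref{spc3c1}), item (ii) via the weight/degree argument, the $h_{l-1,1},h_{l-1,2}$ eigenvalues giving roots $a_1,a_1+1$, and the key relation $x_{l-1,1}^-x_{l,0}^-v_1^+=(a_1+1)x_{l-1,0}^-x_{l,0}^-v_1^+$ to exclude $W_1(a_1+1)\otimes W_1(a_1)$ (the paper's Lemma \ref{spi=l2}, which rules out the tensor product by Corollary \ref{w1bw1a}(i) rather than by your auxiliary check $(x_{l-1,0}^-)^3x_{l,0}^-v_1^+=0$, a negligible variation), item (iii) by the same downward induction with both possibilities left open (Lemmas \ref{spl-2i34d}, \ref{spi=l4}), and item (iv) by the degree-one computation of the $\tilde h_{l,1}$-eigenvalue $\tfrac{a_1+1}{2}$ (Lemma \ref{spi=l3}). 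No gaps.
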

\begin{proof}
The first item will be proved in Lemma \ref{spc3c1}. Lemma \ref{spi=l2} is devoted to prove the second item. The third item will be showed in Lemmas \ref{spl-2i34d} and \ref{spi=l4}. Lemma \ref{spi=l3} is reserved to prove the fourth item.
\end{proof}
%It follows from Lemma \ref{ylviw2a} that
\begin{corollary}\label{spc3c1}
$Y_l\left(v_{1}^{+}\right)=W_1\left(\frac{a_1}{2}\right)$.
\end{corollary}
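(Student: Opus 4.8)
The plan is to identify $Y_l(v_1^{+})$ as a highest weight module over the copy of $\ysl$ sitting inside $\ysp$ at node $l$, and then to read off its Drinfeld polynomial from that of the ambient fundamental representation. First I would observe that $v_1^{+}$ is automatically a maximal vector for $Y_l$: since $v_1^{+}$ is the highest weight vector of $V_{a_1}(\omega_l)$ we have $x_{l,k}^{+}v_1^{+}=0$ for all $k$, hence $\tilde{x}_{l,k}^{+}v_1^{+}=0$, while the $h_{l,k}$ act on $v_1^{+}$ by scalars. Therefore $Y_l(v_1^{+})$, viewed through the rescaled generators $\tilde{x}_{l,r}^{\pm},\tilde{h}_{l,r}$ which genuinely satisfy the $\ysl$ relations, is a highest weight $\ysl$-module generated by $v_1^{+}$.

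The heart of the computation is to extract the eigenvalues of $\tilde{h}_{l,0}$ and $\tilde{h}_{l,1}$. Since $V_{a_1}(\omega_l)$ is the fundamental representation with Drinfeld polynomials $\pi_l(u)=u-a_1$ and $\pi_j(u)=1$ for $j\neq l$, and since $d_l=2$, the generating series satisfies
\[
h_l(u)\,v_1^{+}=\frac{\pi_l(u+2)}{\pi_l(u)}\,v_1^{+}=\Big(1+2u^{-1}+2a_1u^{-2}+2a_1^2u^{-3}+\cdots\Big)v_1^{+},
\]
so that $h_{l,0}v_1^{+}=2v_1^{+}$ and $h_{l,1}v_1^{+}=2a_1v_1^{+}$. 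Applying the rescaling $\tilde{h}_{l,r}=2^{-(r+1)}h_{l,r}$ gives $\tilde{h}_{l,0}v_1^{+}=v_1^{+}$ and $\tilde{h}_{l,1}v_1^{+}=\tfrac{a_1}{2}v_1^{+}$; more generally $\tilde{h}_{l,k}v_1^{+}=(a_1/2)^kv_1^{+}$. The key point to emphasize is that the factor $d_l=2$ inflates the $h_{l,0}$-eigenvalue to $2$, which would naively suggest a degree-$2$ polynomial, but the normalization built into $\tilde{h}_{l,0}$ divides this back down to $1$; this is exactly why node $l$ behaves differently from the others and produces $W_1(a_1/2)$ rather than a larger module.

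From here I would conclude as follows. The $\tilde{h}_{l,0}$-eigenvalue being $1$ shows the associated $\ysl$-Drinfeld polynomial $P$ has degree $1$, so $Y_l(v_1^{+})$ is a highest weight $\ysl$-module whose Drinfeld polynomial, matched against the value of $\tilde{h}_{l,1}$ via Corollary \ref{slw1a} (equivalently Proposition \ref{wra}), is $P(u)=u-\tfrac{a_1}{2}$. Because $v_1^{+}$ has $\tilde{h}_{l,0}$-weight $1$ inside a finite-dimensional module we also have $(\tilde{x}_{l,0}^{-})^2v_1^{+}=0$, so $Y_l(v_1^{+})$ is a quotient of the degree-one local Weyl module $W_1(a_1/2)$, which by the $\ysl$ theory (Theorem \ref{wmfsl2}) is two-dimensional and irreducible. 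Since $Y_l(v_1^{+})$ is nonzero, this forces $Y_l(v_1^{+})\cong W_1(a_1/2)$.

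I do not anticipate a serious obstacle: the only place demanding care is the bookkeeping of the two scalings — the factor $d_l=2$ in the defining relation versus the factor $2^{-(r+1)}$ in the passage to $\tilde{h}_{l,r}$ — and confirming that, after both are applied, the series $\tilde{h}_l(u)v_1^{+}$ is exactly the highest weight series of $W_1(a_1/2)$. One should also double-check the elementary expansion of $\pi_l(u+2)/\pi_l(u)$ about $u=\infty$, which is a direct geometric-series computation analogous to Lemma \ref{g2puisd3}.
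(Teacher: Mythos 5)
Your proposal is correct and follows essentially the same route as the paper: both compute $h_l(u)v_1^{+}=\frac{\pi_l(u+2)}{\pi_l(u)}v_1^{+}$ via Drinfeld's classification, rescale by $\tilde{h}_{l,r}=2^{-(r+1)}h_{l,r}$ to read off the $\ysl$-Drinfeld polynomial $P(u)=u-\frac{a_1}{2}$, and then invoke the $\ysl$ local Weyl module theory. Your extra remarks (maximality of $v_1^{+}$, the relation $(\tilde{x}_{l,0}^{-})^2v_1^{+}=0$, and the nonzero-quotient-of-an-irreducible argument) simply make explicit what the paper compresses into its final sentence.
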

\begin{proof} It follows from (ii) of Theorem \ref{cfdihwr} that
\begin{align*}
% \nonumber to remove numbering \left(before each equation\right)
  h_l\left(u\right)v_1^{+} &= \frac{P\left(u+d_l\right)}{P\left(u\right)}\\
  &= \frac{u-\left(a_1-2\right)}{u-a_1}v_1^{+} \\
   &= \left(1+2u^{-1}+2a_1u^{-2}+2a_1^2u^{-2}+\ldots\right)v_1^{+}.
\end{align*}
Since $\tilde{h}_{lr}=\frac{1}{2^{r+1}}h_{lr}$, $\tilde{h}_{lr}v_1^{+}=\frac{1}{2^{r+1}}h_{lr}v_1^{+}=\frac{1}{2^{r+1}}2a_1^{r}v_1^{+}=\frac{a_1^r}{2^{r}}v_1^{+}=\left(\frac{a_1}{2}\right)^rv_1^{+}$. Thus $$\tilde{h}_{1}\left(u\right)v_1^{+}=\left(1+u^{-1}+\frac{a_1}{2}u^{-2}+\left(\frac{a_1}{2}\right)^2u^{-3}+\ldots\right)v_1^{+}=\frac{u-\left(\frac{a_1}{2}-1\right)}{u-\frac{a_1}{2}}.$$ The associated polynomial for $Y_l\left(v_1^{+}\right)$ is $P\left(u\right)=u-\frac{a_1}{2}$. By the theory of the local Weyl modules of $\ysl$, $Y_l\left(v_{1}^{+}\right)\cong W_1\left(\frac{a_1}{2}\right)$ as desired.
\end{proof}

\begin{lemma}\label{spi=l2}
$Y_{l-1}\left(x_{l,0}^{-}v^{+}_1\right)\cong W_2\left(a_1\right)$ as $\ysl$-module.
\end{lemma}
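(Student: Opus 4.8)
The plan is to identify $M:=Y_{l-1}\!\left(x_{l,0}^{-}v_1^{+}\right)$ as a highest weight $\ysl$-module with a degree-two Drinfeld polynomial and then to pin down both the polynomial and the dimension, forcing $M\cong W_2(a_1)$. Write $w_2=x_{l,0}^{-}v_1^{+}$ and $w_1=x_{l-1,0}^{-}w_2$. First I would record that $w_2$ is a maximal vector for $Y_{l-1}$: its weight is $\omega_l-\alpha_l=2\omega_{l-1}-\omega_l=s_l(\omega_l)$, which lies on the Weyl orbit of $\omega_l$, and since $s_l^{-1}(\alpha_{l-1})=s_l(\alpha_{l-1})\in\Delta^{+}$ the argument of Steps 1--2 of Proposition \ref{mtoyspl} gives $x_{l-1,k}^{+}w_2=0$ for all $k$. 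Evaluating $(2\omega_{l-1}-\omega_l)(h_{l-1,0})=2$ shows the associated polynomial $P(u)$ of $M$ is monic of degree $2$, say $P(u)=(u-a)(u-b)$ with $\operatorname{Re}(a)\le\operatorname{Re}(b)$; moreover $w_2$, $w_1$, $(x_{l-1,0}^{-})^2w_2$ are nonzero vectors of $h_{l-1,0}$-weights $2,0,-2$, so $\dim M\ge 3$.

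The heart of the argument is a single commutator computation in $V_{a_1}(\omega_l)$. Since $\pi_{l-1}=1$, the vector $v_1^{+}$ spans the trivial $Y_{l-1}$-module, so $x_{l-1,k}^{-}v_1^{+}=0$ for all $k$; and because $Y_l(v_1^{+})\cong W_1\!\left(\tfrac{a_1}{2}\right)$ by Corollary \ref{spc3c1}, rescaling the generators and using Corollary \ref{slw1a} gives $x_{l,1}^{-}v_1^{+}=a_1\,x_{l,0}^{-}v_1^{+}$. Next I would apply the Yangian relation $[x_{l-1,1}^{-},x_{l,0}^{-}]-[x_{l-1,0}^{-},x_{l,1}^{-}]=-\tfrac12 d_{l-1}a_{l-1,l}\bigl(x_{l-1,0}^{-}x_{l,0}^{-}+x_{l,0}^{-}x_{l-1,0}^{-}\bigr)$, with $d_{l-1}=1$ and $a_{l-1,l}=-2$, to $v_1^{+}$. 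Using $x_{l-1,0}^{-}v_1^{+}=x_{l-1,1}^{-}v_1^{+}=0$ together with $x_{l,1}^{-}v_1^{+}=a_1x_{l,0}^{-}v_1^{+}$, all but two terms vanish and one obtains
\[
x_{l-1,1}^{-}w_2=\bigl(a_1+1\bigr)\,x_{l-1,0}^{-}w_2=(a_1+1)\,w_1 .
\]
The key point is that $x_{l-1,1}^{-}w_2$ comes out \emph{proportional} to $x_{l-1,0}^{-}w_2$. (As a consistency check, computing the $h_{l-1,1}$-eigenvalue on $w_2$ by the same relations should give $2a_1+2=a+b+1$, i.e.\ $a+b=2a_1+1$.)

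Finally I would compare $M$ with the local Weyl module $W(P)$. Because $w_2$ is an $\mathfrak{sl}_2$-highest weight vector of weight $2$ we have $(x_{l-1,0}^{-})^3w_2=0$, so $M$ is a quotient of $W(P)$; by Theorem \ref{wmfsl2}, $W(P)\cong W_1(b)\otimes W_1(a)$, a $4$-dimensional module in which a direct computation with the coproduct (Proposition \ref{Delta}) shows $x_{l-1,1}^{-}$ applied to the highest weight vector equals $(b+1)(v^{-}\otimes w^{+})+a(v^{+}\otimes w^{-})$, which is \emph{not} proportional to $x_{l-1,0}^{-}(v^{+}\otimes w^{+})=v^{-}\otimes w^{+}+v^{+}\otimes w^{-}$ when $\operatorname{Re}(b)\ge\operatorname{Re}(a)$. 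Hence the proportionality established above shows the surjection $W(P)\twoheadrightarrow M$ has nonzero kernel; combined with $\dim M\ge 3$ and $\dim W(P)=4$ this gives $\dim M=3$, and since $W_1(b)\otimes W_1(a)$ is irreducible unless $b-a=1$, it forces $b=a+1$, so that $M$ is the unique nontrivial proper quotient $W_2(a)$ of $W(P)$. Reading off the proportionality constant in $W_2(a)$, where $x_{l-1,1}^{-}w_2=(a+1)x_{l-1,0}^{-}w_2$ by Corollary \ref{w2ihw}, against our value $a_1+1$ yields $a=a_1$, whence $M\cong W_2(a_1)$. I expect the delicate step to be this last one: ruling out the $4$-dimensional module $W_1(a_1+1)\otimes W_1(a_1)$, which shares the Drinfeld polynomial $(u-a_1)(u-(a_1+1))$ with $W_2(a_1)$ and is therefore distinguishable only through the explicit proportionality computation above.
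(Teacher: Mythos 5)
Your proof is correct, and it rests on the same two essential ingredients as the paper's: the defining relation $[x_{l-1,1}^{-},x_{l,0}^{-}]-[x_{l-1,0}^{-},x_{l,1}^{-}]=x_{l-1,0}^{-}x_{l,0}^{-}+x_{l,0}^{-}x_{l-1,0}^{-}$ applied to $v_1^{+}$, giving the proportionality $x_{l-1,1}^{-}x_{l,0}^{-}v_1^{+}=(a_1+1)\,x_{l-1,0}^{-}x_{l,0}^{-}v_1^{+}$, and the linear independence of $x_{1}^{-}$ and $x_{0}^{-}$ applied to the highest weight vector of $W_1(b)\otimes W_1(a)$ (item (i) of Corollary \ref{w1bw1a}, which you rederive directly from the coproduct). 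Where you genuinely diverge is in how the Drinfeld polynomial gets identified. The paper first computes the eigenvalues of both $h_{l-1,1}$ and $h_{l-1,2}$ on $x_{l,0}^{-}v_1^{+}$ (obtaining $2(a_1+1)$ and $2(a_1+1)^2$) and invokes Lemma \ref{g2puisd3} to conclude $P(u)=(u-a_1)\bigl(u-(a_1+1)\bigr)$, and only then uses the proportionality to exclude the $4$-dimensional candidate $W_1(a_1+1)\otimes W_1(a_1)$. You skip the $h_{l-1,2}$ computation entirely: the proportionality forces the surjection $W(P)\twoheadrightarrow Y_{l-1}\left(x_{l,0}^{-}v_1^{+}\right)$ to have nonzero kernel, your $\mathfrak{sl}_2$-theoretic bound $\dim\geq 3$ then pins the dimension at exactly $3$, reducibility of $W_1(b)\otimes W_1(a)$ forces $b=a+1$ (this criterion is available, e.g., as the $l=1$ case of Theorem \ref{cp8c3t}), and matching your proportionality constant $a_1+1$ against Corollary \ref{w2ihw} gives $a=a_1$; alternatively your $h_{l-1,1}$ consistency check $a+b=2a_1+1$ does the same. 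The trade-off: the paper's route is a self-contained eigenvalue computation, while yours replaces its longest display with a structural quotient-and-dimension argument, at the cost of invoking the irreducibility criterion for tensor products of two-dimensional evaluation modules. Both arguments are sound.
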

\begin{proof}
It follows from Proposition \ref{siok} that $wt(x_{l,0}^{-}v^{+}_1)=2\omega_{l-1}-\omega_{l}$. Then $$h_{l-1,0} x_{l,0}^{-}v^{+}_1=2x_{l,0}^{-}v^{+}_1.$$
Thus the associated polynomial $P\left(u\right)$ has of degree 2. Let $P\left(u\right)=\left(u-a\right)\left(u-b\right)$ with $\operatorname{Re}\left(a\right)\leq \operatorname{Re}\left(b\right)$.
The eigenvalues of both $x_{l,0}^{-}v^{+}_1$ under $h_{i-2,1}$ and under $h_{i-2,2}$ will tell the values of $a$ and $b$.
%We next calculate $h_{l-1,1} x_{l,0}^{-}v^{+}_1$ and $h_{l-1,2} x_{l,0}^{-}v^{+}_1$.
\begin{align*}
% \nonumber to remove numbering \left(before each equation\right)
  h_{l-1,1} x_{l,0}^{-}v^{+}_1 &= [h_{l-1,1} x_{l,0}^{-}]v^{+}_1 \\
   &= \left(2x_{l,1}^{-}+2x_{l,0}^{-}+2x_{l,0}^{-}h_{l-1,0}\right)v^{+}_1 \\
   &= \left(2a_1+2\right)x_{l,0}^{-}v^{+}_1\\
   &= 2\left(a_1+1\right)x_{l,0}^{-}v^{+}_1.
\end{align*}
\begin{align*}
% \nonumber to remove numbering \left(before each equation\right)
  h_{l-1,2} x_{l,0}^{-}v^{+}_1 &= [h_{l-1,2} x_{l,0}^{-}]v^{+}_1 \\
   &= \Big([h_{l-1,1}, x_{l,1}^{-}]+h_{l-1,1}x_{l,0}^{-}+x_{l,0}^{-}h_{l-1,1}\Big)v^{+}_1 \\
   &= \left(h_{l-1,1}x_{l,1}^{-}+h_{l-1,1}x_{l,0}^{-}\right)v^{+}_1 \\
   &= h_{l-1,1}\left(x_{l,1}^{-}+x_{l,0}^{-}\right)v^{+}_1\\
   &= 2\left(a_1+1\right)^2x_{l,0}^{-}v^{+}_1.
\end{align*}
It follows from Lemma \ref{g2puisd3} that the associated polynomial is $\big(u-a_1\big)\big(u-(a_1+1)\big)$.

We claim that $Y_{l-1}\left(x_{l,0}^{-}v^{+}_1\right)\cong W_2\left(a_1\right)$.  It is clear from (i) of Corollary \ref{w1bw1a} that we only have to show that $x_{l-1,1}^{-}\left(x_{l,0}^{-}v^{+}_1\right)$ is a scalar multiple of $x_{l-1,0}^{-}\left(x_{l,0}^{-}v^{+}_1\right)$.
It follows from the defining relations of Yangians that
\begin{equation}\label{yl-1isw2a}
[x_{l-1,1}^{-}, x_{l,0}^{-}]v^{+}_1-[x_{l-1,0}^{-}, x_{l,1}^{-}]v^{+}_1=x_{l-1,0}^{-}x_{l,0}^{-}v^{+}_1+x_{l,0}^{-}x_{l-1,0}^{-}v^{+}_1.
\end{equation}

Note that $x_{l-1,1}^{-}v^{+}_1=0$ and $x_{l,1}^{-}v^{+}_1=a_1x_{l,0}^{-}v^{+}_1$. Equation (\ref{yl-1isw2a}) implies $$x_{l-1,1}^{-}x_{l,0}^{-}v^{+}_1=\left(a_1+1\right)x_{l-1,0}^{-}x_{l,0}^{-}v^{+}_1.$$
By $\left(i\right)$ of Corollary \ref{w1bw1a}, $Y_{l-1}\left(x_{l,0}^{-}v^{+}_1\right)\ncong W_{1}\left(a_1+1\right)\otimes W_{1}\left(a_1\right)$. By the theory of the local Weyl modules of $\ysl$, $Y_{l-1}\left(x_{l,0}^{-}v^{+}_1\right)\cong W_2\left(a_1\right)$.
\end{proof}

%\begin{remark}
%It is reasonable to ask if we should revise the proof of Lemma \ref{l-2is34d}? In other words, if the new idea can apply to the Lemma? The answer is no. $x_{i+1,0}^{-}\ldots x_{l-3,0}^{-}x_{l-2,0}^{-}x_{l,0}^{-}\ldots x_{i+1,0}^{-}x_{i,0}^{-}$ has weight $\omega_{i-1}+\omega_{i}-\omega_{i+1}$, $x_{i,0}^{-}x_{i+1,0}^{-}\ldots x_{l-3,0}^{-}x_{l-2,0}^{-}x_{l,0}^{-}\ldots x_{i+1,0}^{-}x_{i,0}^{-}$ has weight $2\omega_{i-1}+\omega_{i}$ and
%$x_{i-1,0}^{-}x_{i+1,0}^{-}\ldots x_{l-3,0}^{-}x_{l-2,0}^{-}x_{l,0}^{-}\ldots x_{i+1,0}^{-}x_{i,0}^{-}$ has weight $\omega_{i-2}-\omega_{i-1}+2\omega_{i}-\omega_{i+1}$.

%The main result to the answer is $Y_{i}\left(x_{i-1,0}^{-}x_{i+1,0}^{-}\ldots x_{l-3,0}^{-}x_{l-2,0}^{-}x_{l,0}^{-}\ldots x_{i+1,0}^{-}x_{i,0}^{-}v^{+}_1\right)$ is again 3- or 4-dimensional, and then no extra information can be obtained. So we can't tell if  $x_{i-1,1}^{-}\left(x_{i,0}^{-}x_{i+1,0}^{-}\ldots x_{l-3,0}^{-}x_{l-2,0}^{-}x_{l,0}^{-}\ldots x_{i+1,0}^{-}x_{i,0}^{-}v^{+}_1\right)$ if a scalar of $x_{i-1,0}^{-}\left(x_{i,0}^{-}x_{i+1,0}^{-}\ldots x_{l-3,0}^{-}x_{l-2,0}^{-}x_{l,0}^{-}\ldots x_{i+1,0}^{-}x_{i,0}^{-}v^{+}_1\right)$.
%\end{remark}

\begin{lemma}\label{spl-2i34d}
$Y_{l-2}\Big(\left(x_{l-1,0}^{-}\right)^2x_{l,0}^{-}v^{+}_1\Big)$ is isomorphic to either $W_2\left(a_1+\frac{1}{2}\right)$ or\\ $W_1\left(a_1+\frac{3}{2}\right)\otimes W_1\left(a_1+\frac{1}{2}\right)$.
\end{lemma}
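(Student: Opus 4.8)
The plan is to follow exactly the template established in Lemma \ref{yspc1i5} and Lemma \ref{spi=l2}, since the vector $\left(x_{l-1,0}^{-}\right)^2x_{l,0}^{-}v^{+}_1$ sits at a weight whose $\omega_{l-2}$-coefficient is $2$ by Proposition \ref{siok}. First I would compute $wt\big(\left(x_{l-1,0}^{-}\right)^2x_{l,0}^{-}v^{+}_1\big)$ using Proposition \ref{siok} (Case 3 of the weight chain) and check that $h_{l-2,0}\left(x_{l-1,0}^{-}\right)^2x_{l,0}^{-}v^{+}_1 = 2\left(x_{l-1,0}^{-}\right)^2x_{l,0}^{-}v^{+}_1$, so that the associated polynomial $P(u)$ has degree $2$; write $P(u)=(u-a)(u-b)$ with $\operatorname{Re}(a)\leq\operatorname{Re}(b)$. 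As in the earlier lemmas, the roots $a,b$ are then pinned down by the eigenvalues of $h_{l-2,1}$ and $h_{l-2,2}$ on the vector, using the expansion
\begin{align*}
\frac{P(u+1)}{P(u)}=1+2u^{-1}+(a+b+1)u^{-2}+(a^2+b^2+a+b)u^{-3}+\cdots.
\end{align*}

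The actual computation of $h_{l-2,1}$ and $h_{l-2,2}$ on $\left(x_{l-1,0}^{-}\right)^2x_{l,0}^{-}v^{+}_1$ I would carry out by commuting $h_{l-2,r}$ past the two copies of $x_{l-1,0}^{-}$, using the defining relation $[h_{i,r+1},x_{j,s}^{-}]-[h_{i,r},x_{j,s+1}^{-}]=-\tfrac12 d_i a_{ij}(h_{i,r}x_{j,s}^{-}+x_{j,s}^{-}h_{i,r})$ with $i=l-2$, $j=l-1$, together with the already-established value $Y_{l-1}(x_{l,0}^{-}v^{+}_1)\cong W_2(a_1)$ from Lemma \ref{spi=l2} (which supplies the action of $h_{l-1,r}$ and the needed intermediate eigenvalues, via Corollary \ref{w2ihw}). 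This is structurally identical to the computation in Lemma \ref{yspc1i6}, with $i$ specialized so that the ``$x_{l,0}^{-}$'' tail is the short $W_2$-piece rather than the long $W_1\otimes W_1$-piece; I expect to obtain $a+b+1 = 2a_1+2$ and $a^2+b^2+a+b = 2\big(a_1+\tfrac32\big)^2 - \tfrac12$, giving after the substitution $a=a_1+\tfrac32-x$, $b=a_1+\tfrac12+x$ a quadratic $x^2-x=0$, hence $x=1$ and $a=a_1+\tfrac12$, $b=a_1+\tfrac32$.

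Finally, degree and dimension considerations settle the isomorphism type exactly as in Lemma \ref{spi=l2}: the module $Y_{l-2}\big(\left(x_{l-1,0}^{-}\right)^2x_{l,0}^{-}v^{+}_1\big)$ is a highest weight $Y_{l-2}\cong\ysl$-module with associated polynomial $\big(u-(a_1+\tfrac12)\big)\big(u-(a_1+\tfrac32)\big)$, so it is a quotient of $W_1(a_1+\tfrac32)\otimes W_1(a_1+\tfrac12)$; it is at least $3$-dimensional, and hence is either the $3$-dimensional $W_2(a_1+\tfrac12)$ or the full $4$-dimensional $W_1(a_1+\tfrac32)\otimes W_1(a_1+\tfrac12)$. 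Since here the two roots differ by exactly $1$, both outcomes are genuinely possible (the tensor product is reducible, with $W_2$ as a sub-quotient), which is precisely why the statement asserts the disjunction rather than a single module. The main obstacle will be the bookkeeping in the second-order computation $h_{l-2,2}$: one must carefully track the commutators $[x_{l-1,1}^{-},x_{l-1,0}^{-}]$-type terms and invoke the correct scalar actions coming from the $W_2(a_1)$-structure on $Y_{l-1}(x_{l,0}^{-}v^{+}_1)$ rather than from a $W_1\otimes W_1$, since using the wrong intermediate module would shift the value of $b$; everything else is routine once the weight computation and the two eigenvalue identities are in place.
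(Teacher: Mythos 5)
Your proposal follows essentially the same route as the paper: use Proposition \ref{siok} to get $h_{l-2,0}$-eigenvalue $2$, hence $\operatorname{Deg}(P)=2$; compute the $h_{l-2,1}$ and $h_{l-2,2}$ eigenvalues by commuting past the two factors $x_{l-1,0}^{-}$, feeding in the $W_2(a_1)$-structure of $Y_{l-1}\left(x_{l,0}^{-}v_1^{+}\right)$ from Lemma \ref{spi=l2}; then conclude the disjunction from the theory of local Weyl modules of $\ysl$. You also correctly identify the one genuine pitfall, namely that the intermediate scalars must come from $W_2(a_1)$ and not from $W_1\otimes W_1$.

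However, your predicted numerical identities are internally inconsistent and do not match what the computation actually yields. The correct eigenvalues are $h_{l-2,1}\mapsto 2\left(a_1+\frac{3}{2}\right)$ and $h_{l-2,2}\mapsto 2\left(a_1+\frac{3}{2}\right)^2$, i.e.
\begin{align*}
a+b+1=2a_1+3,\qquad a^2+b^2+a+b=2\left(a_1+\tfrac{3}{2}\right)^2,
\end{align*}
whereas you wrote $a+b+1=2a_1+2$ and $a^2+b^2+a+b=2\left(a_1+\tfrac{3}{2}\right)^2-\tfrac{1}{2}$. Your own substitution $a=a_1+\tfrac{3}{2}-x$, $b=a_1+\tfrac{1}{2}+x$ forces $a+b=2a_1+2$, contradicting your first identity; and plugging the substitution into your second identity gives $x^2-x=-\tfrac{1}{4}$ (so $x=\tfrac{1}{2}$ and $a=b=a_1+1$), not $x^2-x=0$. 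The quadratic $x^2-x=0$ and the roots $a=a_1+\tfrac{1}{2}$, $b=a_1+\tfrac{3}{2}$ follow only from the corrected eigenvalues. A minor stylistic difference: since the eigenvalues come out in the geometric pattern $2,\,2c,\,2c^2$ with $c=a_1+\tfrac{3}{2}$, the paper simply invokes Lemma \ref{g2puisd3} to read off $P(u)=\bigl(u-(a_1+\tfrac{1}{2})\bigr)\bigl(u-(a_1+\tfrac{3}{2})\bigr)$, which avoids the symmetric-function quadratic altogether; your route works too once the arithmetic is repaired.
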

\begin{proof}
%The proof is similar to the proof as in Lemma \ref{l-2i34d2}. But I provide a proof here anyway.

We will need $wt\left(x_{l,0}^{-}v^{+}_1\right)=2\omega_{l-1}-\omega_{l}$, $wt\left(x_{l-1,0}^{-}x_{l,0}^{-}v^{+}_1\right)=\omega_{l-2}$, and $wt\Big(\left(x_{l-1,0}^{-}\right)^2x_{l,0}^{-}v^{+}_1\Big)=2\omega_{l-2}-2\omega_{l-1}+\omega_{l}$.

By Proposition \ref{siok} that $wt\Big(\left(x_{l-1,0}^{-}\right)^2x_{l,0}^{-}v^{+}_1\Big)=2\omega_{l-2}-2\omega_{l-1}+\omega_{l}$, and then $$h_{l-2,0}\left(x_{l-1,0}^{-}\right)^2x_{l,0}^{-}v^{+}_1=2\left(x_{l-1,0}^{-}\right)^2x_{l,0}^{-}v^{+}_1.$$
Thus the associated polynomial $P\left(u\right)$ has of degree 2. Let $P\left(u\right)=\left(u-a\right)\left(u-b\right)$ with $\operatorname{Re}\left(a\right)\leq \operatorname{Re}\left(b\right)$.
The eigenvalues of $\left(x_{l-1,0}^{-}\right)^2x_{l,0}^{-}v^{+}_1$ under $h_{l-2,1}$ and under $h_{l-2,2}$ will tell the values of $a$ and $b$.
\begin{align*}
% \nonumber to remove numbering \left(before each equation\right)
   h_{l-2,1}\left(x_{l-1,0}^{-}\right)^2&x_{l,0}^{-}v^{+}_1\\
   &=\left[h_{l-2,1},x_{l-1,0}^{-}\right]x_{l-1,0}^{-}x_{l,0}^{-}v^{+}_1+x_{l-1,0}^{-}\left[h_{l-2,1},x_{l-1,0}^{-}\right]x_{l,0}^{-}v^{+}_1\\ &+\left(x_{l-1,0}^{-}\right)^2 h_{l-2,1}x_{l,0}^{-}v^{+}_1 \\
   &=\left(x_{l-1,1}^{-}+\frac{1}{2}x_{l-1,0}^{-}+x_{l-1,0}^{-}h_{l-2,0}\right)x_{l-1,0}^{-}x_{l,0}^{-}v^{+}_1\\
   &+x_{l-1,0}^{-}\left(x_{l-1,1}^{-}+\frac{1}{2}x_{l-1,0}^{-}+x_{l-1,0}^{-}h_{l-2,0}\right)x_{l,0}^{-}v^{+}_1\\
   &= \left(x_{l-1,1}^{-}x_{l-1,0}^{-}+x_{l-1,0}^{-}x_{l,1}^{-}\right)x_{l,0}^{-}v^{+}_1+2\left(x_{l-1,0}^{-}\right)^2x_{l,0}^{-}v^{+}_1\\
   &= 2\left(a_1+\frac{3}{2}\right)\left(x_{l-1,0}^{-}\right)^2x_{l,0}^{-}v^{+}_1.
\end{align*}
\begin{align*}
% \nonumber to remove numbering \left(before each equation\right)
   h_{l-2,2}&\left(x_{l-1,0}^{-}\right)^2x_{l,0}^{-}v^{+}_1\\
   &= \left[h_{l-2,2},x_{l-1,0}^{-}\right]x_{l-1,0}^{-}x_{l,0}^{-}v^{+}_1+x_{l-1,0}^{-}\left[h_{l-2,2},x_{l-1,0}^{-}\right]x_{l,0}^{-}v^{+}_1\\
   &=\Big(\left[h_{l-2,1}, x_{l-1,1}^{-}\right]+\frac{1}{2}\left(h_{l-2,1}x_{l-1,0}^{-}+x_{l-1,0}^{-}h_{l-2,1}^{-}\right)\Big)x_{l-1,0}^{-}x_{l,0}^{-}v^{+}_1\\
   &+x_{l-1,0}^{-}\Big(\left[h_{l-2,1}, x_{l-1,1}^{-}\right]+\frac{1}{2}\left(h_{l-2,1}x_{l-1,0}^{-}+x_{l-1,0}^{-}h_{l-2,1}^{-}\right)\Big)x_{l,0}^{-}v^{+}_1\\
   &=h_{l-2,1}x_{l-1,1}^{-}x_{l-1,0}^{-}x_{l,0}^{-}v^{+}_1-x_{l-1,1}^{-}h_{l-2,1}x_{l-1,0}^{-}x_{l,0}^{-}v^{+}_1+\frac{1}{2}h_{l-2,1} \left(x_{l-1,0}^{-}\right)^2x_{l,0}^{-}v^{+}_1\\
   &+x_{l-1,0}^{-}h_{l-2,1}x_{l-1,0}^{-}x_{l,0}^{-}v^{+}_1
   +x_{l-1,0}^{-}h_{l-2,1}x_{l-1,1}^{-}x_{l,0}^{-}v^{+}_1\\
   &=\left(a_1+\frac{1}{2}\right)h_{l-2,1}\left(x_{l-1,0}^{-}\right)^2x_{l,0}^{-}v^{+}_1-x_{l-1,1}^{-} \left[h_{l-2,1},x_{l-1,0}^{-}\right]x_{l,0}^{-}v^{+}_1\\
   &+x_{l-1,0}^{-}\left[h_{l-2,1},x_{l-1,0}^{-}\right]x_{l,0}^{-}v^{+}_1
   +\left(a_1+1\right)x_{l-1,0}^{-}\left[h_{l-2,1},x_{l-1,0}^{-}\right]x_{l,0}^{-}v^{+}_1\\
   &=h_{l-2,1}\left(a_1+\frac{1}{2}\right)\left(x_{l-1,0}^{-}\right)^2x_{l,0}^{-}v^{+}_1-x_{l-1,1}^{-}\left[h_{l-2,1},x_{l-1,0}^{-}\right]x_{l,0}^{-}v^{+}_1\\
   &+\left(a_1+2\right)x_{l-1,0}^{-}\left[h_{l-2,1},x_{l-1,0}^{-}\right]x_{l,0}^{-}v^{+}_1\\
   &=2\left(a_1+\frac{3}{2}\right)\left(a_1+\frac{1}{2}\right)\left(x_{l-1,0}^{-}\right)^2x_{l,0}^{-}v^{+}_1-x_{l-1,1}^{-}\left(x_{l-1,1}^{-}+\frac{1}{2}x_{l-1,0}^{-}\right)x_{l,0}^{-}v^{+}_1\\
   &+\left(a_1+2\right)x_{l-1,0}^{-}\left(x_{l-1,1}^{-}+\frac{1}{2}x_{l-1,0}^{-}\right)x_{l,0}^{-}v^{+}_1\\
   &=\Bigg(2\left(a_1+\frac{3}{2}\right)\left(a_1+\frac{1}{2}\right)-a_1\left(a_1+\frac{3}{2}\right)+\left(a_1+2\right) \left(a_1+\frac{3}{2}\right)\Bigg)\\
   &\qquad\quad\cdot\left(x_{l-1,0}^{-}\right)^2x_{l,0}^{-}v^{+}_1\\
   &= 2\left(a_1+\frac{3}{2}\right)^2\left(x_{l-1,0}^{-}\right)^2x_{l,0}^{-}v^{+}_1.\\
\end{align*}
It follows from Lemma \ref{g2puisd3} that  the associated polynomial is
$$\Bigg(u-\left(a_1+\frac{1}{2}\right)\Bigg)\Bigg(u-\left(a_1+\frac{3}{2}\right)\Bigg).$$ By the theory of the local Weyl modules of $\ysl$, $Y_{l-2}\Big(\left(x_{l-1,0}^{-}\right)^2x_{l,0}^{-}v^{+}_1\Big)$ is isomorphic to either $W_2\left(a_1+\frac{1}{2}\right)$ or $W_1\left(a_1+\frac{3}{2}\right)\otimes W_1\left(a_1+\frac{1}{2}\right)$.
\end{proof}

\begin{lemma}\label{spi=l4} Let $2\leq k\leq l-1$.
%$Y_{l-3}\left(\left(x_{l-2,0}^{-}\right)^2\left(x_{l-1,0}^{-}\right)^2x_{l,0}^{-}v^{+}_1\right)$ is isomorphic to either $W_2\left(a_1+1\right)$ or $W_1\left(a_1+2\right)\otimes W_1\left(a_1+1\right)$. In general
$Y_{k-1}\Big(\left(x_{k,0}^{-}\right)^2\ldots \left(x_{l-2,0}^{-}\right)^2\left(x_{l-1,0}^{-}\right)^2x_{l,0}^{-}v^{+}_1\Big)$ is isomorphic to either $W_2\left(a_1+\frac{l-k}{2}\right)$ or $W_1\left(a_1+\frac{l-k}{2}+1\right)\otimes W_1\left(a_1+\frac{l-k}{2}\right)$.
\end{lemma}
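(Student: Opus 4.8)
The goal is to compute the highest weight of the highest-weight $Y_{k-1}$-module
$$
Y_{k-1}\Big(\left(x_{k,0}^{-}\right)^2\ldots \left(x_{l-2,0}^{-}\right)^2\left(x_{l-1,0}^{-}\right)^2x_{l,0}^{-}v^{+}_1\Big),
$$
identify its associated polynomial, and then invoke the classification of $2$-generated $\ysl$-modules to conclude it is $W_2\left(a_1+\frac{l-k}{2}\right)$ or $W_1\left(a_1+\frac{l-k}{2}+1\right)\otimes W_1\left(a_1+\frac{l-k}{2}\right)$. The natural approach is induction on $k$ downward, starting from Lemma \ref{spl-2i34d}, which is exactly the base case $k=l-1$ (there the vector is $\left(x_{l-1,0}^{-}\right)^2x_{l,0}^{-}v^{+}_1$ and the answer is $W_2\left(a_1+\frac{1}{2}\right)$ or $W_1\left(a_1+\frac{3}{2}\right)\otimes W_1\left(a_1+\frac{1}{2}\right)$, matching the formula at $l-k=1$). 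The structure mirrors Lemma \ref{l-2i34d23} in the $\yo$ chapter, so I would model the argument on that proof.

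First I would record that by Proposition \ref{siok} the weight of $v:=\left(x_{k,0}^{-}\right)^2\ldots\left(x_{l-1,0}^{-}\right)^2x_{l,0}^{-}v^{+}_1$ has $\langle \mathrm{wt}(v),\alpha_{k-1}^\vee\rangle=2$, so $h_{k-1,0}v=2v$ and the associated polynomial $P(u)$ has degree $2$; write $P(u)=(u-a)(u-b)$ with $\operatorname{Re}(a)\le\operatorname{Re}(b)$. The heart of the matter is then to compute the eigenvalues of $v$ under $h_{k-1,1}$ and $h_{k-1,2}$, which by the Laurent expansion of $\frac{P(u+1)}{P(u)}$ determine $a+b+1$ and $a^2+b^2+a+b$. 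To organize this I would set up the downward induction hypothesis asserting, for the shifted generator $h_{k,1}$ and $h_{k,2}$ acting on the "one step shorter" vector, the two eigenvalue formulas
$$
h_{k,1}v'=\big(2a_1+(l-k)\big)v',\qquad
h_{k,2}v'=\Big(2\big(a_1+\tfrac{l-k}{2}\big)^2+c\Big)v',
$$
where $v'=\left(x_{k+1,0}^{-}\right)^2\ldots x_{l,0}^{-}v^{+}_1$ and $c$ is the constant $\frac{(l-k)(l-k+2)}{2}$ type term appearing in Lemma \ref{spl-2i34d}. The inductive step then expands $h_{k-1,1}\left(x_{k,0}^{-}\right)^2v'$ and $h_{k-1,2}\left(x_{k,0}^{-}\right)^2v'$ by commuting the $h$'s past $\left(x_{k,0}^{-}\right)^2$ using the Yangian defining relations $[h_{i,r+1},x_{j,s}^-]-[h_{i,r},x_{j,s+1}^-]=\tfrac12 d_i a_{ij}(h_{i,r}x_{j,s}^-+x_{j,s}^-h_{i,r})$, exactly as done explicitly in Lemma \ref{l-2i34d23}.

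The main obstacle, and the only genuinely new ingredient relative to the $\yo$ case, is the interface with the short root $\alpha_l$: the vector $v$ contains a single factor $x_{l,0}^{-}$ (not squared), and the rescaling $\tilde x_{l,r}^{\pm}=\frac{\sqrt2}{2^{r+1}}x_{l,r}^{\pm}$, $\tilde h_{l,r}=\frac{1}{2^{r+1}}h_{l,r}$ with $d_l=2$ must be threaded correctly through the base of the induction. Since the inductive step only ever commutes long-root generators $h_{k-1,r}$ ($k-1\le l-2$) past long-root generators $x_{k,0}^{-}$, the short-root subtleties are entirely confined to the base case $k=l-1$, which is already supplied by Lemma \ref{spl-2i34d}; thus the induction proper is a purely "type $A$" computation identical in form to Lemma \ref{l-2i34d23}, and I would simply carry it over mutatis mutandis, tracking the shift $a_1\mapsto a_1+\tfrac{l-k}{2}$. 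Once both eigenvalues are in hand, Lemma \ref{g2puisd3} pins down $P(u)=\big(u-(a_1+\tfrac{l-k}{2})\big)\big(u-(a_1+\tfrac{l-k}{2}+1)\big)$, and the dimension dichotomy (at least $3$-dimensional, with the local Weyl module $W(P)$ of $\ysl$ being $4$-dimensional) forces the module to be one of the two stated options, completing the proof.
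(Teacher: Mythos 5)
Your overall skeleton matches the paper's proof: downward induction on $k$ with Lemma \ref{spl-2i34d} as base case, the weight computation via Proposition \ref{siok} giving a degree-$2$ associated polynomial, eigenvalue computations for $h_{k-1,1}$ and $h_{k-1,2}$, and the classification of highest-weight quotients of local Weyl modules of $\ysl$ at the end. But your inductive hypothesis is wrong, and wrong in a way that changes the conclusion. You imported the eigenvalue pattern of Lemma \ref{l-2i34d23} (the type $D$, middle-node case), namely $h_{k,1}v'=\big(2a_1+(l-k)\big)v'$ and $h_{k,2}v'=\Big(2\big(a_1+\frac{l-k}{2}\big)^2+c\Big)v'$ with a nonzero constant $c$ of the shape $\frac{(l-k)(l-k+2)}{2}$, asserting that this constant "appears in Lemma \ref{spl-2i34d}". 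It does not: in Lemma \ref{spl-2i34d} the eigenvalues are exactly $h_{l-2,1}\mapsto 2\left(a_1+\frac{3}{2}\right)$ and $h_{l-2,2}\mapsto 2\left(a_1+\frac{3}{2}\right)^2$, with no additive constant; the constant you describe belongs to Case 1 ($2\leq b_1\leq l-1$, Lemmas \ref{yspc1i5}--\ref{yspc1i7}), not to the present Case 3 ($b_1=l$). The correct inductive claim, which is what the paper proves, is $h_{k,1}u=\left(2a_1+l-k+1\right)u$ and $h_{k,2}u=2\big(a_1+\frac{l-k+1}{2}\big)^2u$ for $u=\left(x_{k+1,0}^{-}\right)^2\ldots x_{l,0}^{-}v_1^{+}$; note your first formula is also off by one in the shift, as you can check against Lemma \ref{spi=l2}, which gives $h_{l-1,1}x_{l,0}^{-}v_1^{+}=2\left(a_1+1\right)x_{l,0}^{-}v_1^{+}$, not $\left(2a_1+1\right)x_{l,0}^{-}v_1^{+}$.

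This is not a cosmetic slip, because the presence or absence of the constant is precisely what distinguishes the two possible answers. With eigenvalues of the geometric form $2,\,2\alpha,\,2\alpha^2$ (i.e.\ $c=0$), Lemma \ref{g2puisd3} applies and yields roots $\alpha-1,\alpha$ differing by $1$, which is what the statement of Lemma \ref{spi=l4} requires (and what makes the $W_2$ option possible). With your hypothesized $c=\frac{(l-k)(l-k+2)}{2}$, Lemma \ref{g2puisd3} does not apply; solving $a+b+1=\lambda_1$ and $a^2+b^2+a+b=\lambda_2$ gives $\left(a-b\right)^2=1+2c=\left(l-k+1\right)^2$, so the roots would differ by $l-k+1$ and the module would be forced to be $W_1(b)\otimes W_1(a)$ with $b-a=l-k+1$ --- contradicting both the lemma you are proving and your own final step, where you invoke Lemma \ref{g2puisd3}. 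So the proposal is internally inconsistent: the induction as you set it up cannot close against the base case Lemma \ref{spl-2i34d}, and if forced through it would produce the wrong associated polynomial. Once you replace your formulas by the constant-free ones above (and supplement the $h$-eigenvalue hypothesis by the resulting module identification, so that the actions of $x_{k,1}^{-}$ and $x_{k,2}^{-}$ on $u$ needed in the commutator expansion are available via Corollaries \ref{w2ihw} and \ref{w1bw1a}), your argument becomes the paper's proof.
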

\begin{proof}
We prove this proposition by induction on $k$ downward. The basis of induction is proved in the above lemma. Suppose that claims are true for all $m\geq k+1$. Therefore, we may assume that $Y_{k}\left(\left(x_{k+1,0}^{-}\right)^2\ldots \left(x_{l-2,0}^{-}\right)^2\left(x_{l-1,0}^{-}\right)^2x_{l,0}^{-}v^{+}_1\right)$ is isomorphic to either $W_2\left(a_1+\frac{l-k-1}{2}\right)$ or $W_1\left(a_1+\frac{l-k-1}{2}+1\right)\otimes W_1\left(a_1+\frac{l-k-1}{2}\right)$.

We next show they are also true for $k$.
For the simplicity of the expression, define $$v=\left(x_{k+1,0}^{-}\right)^2\ldots \left(x_{l-2,0}^{-}\right)^2\left(x_{l-1,0}^{-}\right)^2x_{l,0}^{-}v^{+}_1.$$
It follows from Proposition \ref{siok} that $ wt\left(\left(x_{k,0}^{-}\right)^2v\right)=2\omega_{k-1}-2\omega_{k}+\omega_l$. Then
$$h_{k-1,0}\left(x_{k,0}^{-}\right)^2v=2\left(x_{k,0}^{-}\right)^2v.$$
We are going to show the following claims are true:
$$h_{k-1,1}\left(x_{k,0}^{-}\right)^2v=2\left(a_1+\frac{l-k+1}{2}\right)\left(x_{k,0}^{-}\right)^2v;$$and
$$h_{k-1,2}\left(x_{k,0}^{-}\right)^2v=2\left(a_1+\frac{l-k+1}{2}\right)^2\left(x_{k,0}^{-}\right)^2v.
$$
Note that $wt\left(v\right)=2\omega_k-2\omega_{k+1}+\omega_l$ and $wt\left(x_{k,0}^{-}v\right)=\omega_{k-1}-\omega_{k+1}+\omega_l$.
\begin{align*}
% \nonumber to remove numbering \left(before each equation\right)
 h_{k-1,1}\left(x_{k,0}^{-}\right)^2v
&= [h_{k-1,1},\left(x_{k,0}^{-}\right)^2]v\\
&= [h_{k-1,1},x_{k,0}^{-}]x_{k,0}^{-}v+x_{k,0}^{-}[h_{k-1,1},x_{k,0}^{-}]v\\
&=\left(x_{k,1}^{-}+\frac{1}{2}x_{k,0}^{-}+x_{k,0}^{-}h_{k-1,0}^{-}\right)x_{k,0}^{-}v \\
&+ x_{k,0}^{-}\left(x_{k,1}^{-}+\frac{1}{2}x_{k,0}^{-}+x_{k,0}^{-}h_{k-1,0}^{-}\right)v \\
%&\mathrm{the weight of $\left(x_{k+1,0}^{-}\right)^2\ldots\left(x_{l-1,0}^{-}\right)^2x_{l,0}^{-}v^{+}_1$ is $2\okega_{k}-2\okega_{k+1}+\okega_i$}\\
&=\left(x_{k,1}^{-}x_{k,0}^{-}+x_{k,0}^{-}x_{k,1}^{-}\right)v+2\left(x_{k,0}^{-}\right)^2v\\
&=\left(2a_1+l-k+2\right)\left(x_{k,0}^{-}\right)^2v\\
&=2\left(a_1+\frac{l-k+2}{2}\right)\left(x_{k,0}^{-}\right)^2v.
\end{align*}
Set $c=\left(2a_1+l-k\right)$.
\begin{align*}
% \nonumber to remove numbering \left(before each equation\right)
 h_{k-1,2}&\left(x_{k,0}^{-}\right)^2v\\
&= [h_{k-1,2},\left(x_{k,0}^{-}\right)^2]v\\
&= [h_{k-1,2},x_{k,0}^{-}]x_{k,0}^{-}v+x_{k,0}^{-}[h_{k-1,2},x_{k,0}^{-}]v\\
&=\left([h_{k-1,1}, x_{k,1}^{-}]+\frac{1}{2}\left(h_{k-1,1}x_{k,0}^{-}+x_{k,0}^{-}h_{k-1,1}^{-}\right)\right)x_{k,0}^{-}v \\
&+ x_{k,0}^{-}\left([h_{k-1,1}, x_{k,1}^{-}]+\frac{1}{2}\left(h_{k-1,1}x_{k,0}^{-}+x_{k,0}^{-}h_{k-1,1}^{-}\right)\right)v\\
&=[h_{k-1,1}, x_{k,1}^{-}]x_{k,0}^{-}v+\frac{1}{2}h_{k-1,1}\left(x_{k,0}^{-}\right)^2v+x_{k,0}^{-}h_{k-1,1}^{-}x_{k,0}^{-}v+
x_{k,0}^{-}h_{k-1,1} x_{k,1}^{-}v\\
&=h_{k-1,1}x_{k,1}^{-}x_{k,0}^{-}v-x_{k,1}^{-}h_{k-1,1}x_{k,0}^{-}v
+ \frac{1}{2}h_{k-1,1}\left(x_{k,0}^{-}\right)^2v\\
&+ x_{k,0}^{-}[h_{k-1,1}^{-},x_{k,0}^{-}]v
+ x_{k,0}^{-}[h_{k-1,1}, x_{k,1}^{-}]v \\
&=\frac{1}{2}\left(c-1\right)h_{k-1,1}\left(x_{k,0}^{-}\right)^2v-x_{k,1}^{-}[h_{k-1,1},x_{k,0}^{-}]v
+ \frac{1}{2}h_{k-1,1}\left(x_{k,0}^{-}\right)^2v\\
&+ x_{k,0}^{-}[h_{k-1,1}^{-},x_{k,0}^{-}]v
+ x_{k,0}^{-}[h_{k-1,1}, x_{k,1}^{-}]v \\
&=\frac{c}{2}h_{k-1,1}\left(x_{k,0}^{-}\right)^2v-x_{k,1}^{-}\left(x_{k,1}^{-}+\frac{1}{2}x_{k,0}^{-}+x_{k,0}^{-}h_{k-1,0}^{-}\right)v \\
&+x_{k,0}^{-}\left(x_{k,1}^{-}+\frac{1}{2}x_{k,0}^{-}+x_{k,0}^{-}h_{k-1,0}^{-}\right)v+ x_{k,0}^{-}\left(x_{k,2}^{-}+\frac{1}{2}x_{k,1}^{-}+x_{k,1}^{-}h_{k-1,0}\right)v \\
&=\frac{c}{2}h_{k-1,1}\left(x_{k,0}^{-}\right)^2v-x_{k,1}^{-}x_{k,1}^{-}v-\frac{1}{2}x_{k,1}^{-}x_{k,0}^{-}v \\
&+\frac{3}{2}x_{k,0}^{-}x_{k,1}^{-}v+\frac{1}{2}\left(x_{k,0}^{-}\right)^2v+x_{k,0}^{-}x_{k,2}^{-}v\\
&=\frac{c}{2}\left(c+2\right)\left(x_{k,0}^{-}\right)^2v-\Bigg(\left(\frac{c}{2}+\frac{1}{2}\right)\left(\frac{c}{2}-\frac{1}{2}\right)\Bigg) \left(x_{k,0}^{-}\right)^2v \\
&-\frac{c-1}{4}\left(x_{k,0}^{-}\right)^2v+\frac{3}{2}\frac{c+1}{2}\left(x_{k,0}^{-}\right)^2v+\frac{1}{2}\left(x_{k,0}^{-}\right)^2v +\left(\frac{c+1}{2}\right)^2\left(x_{k,0}^{-}\right)^2v \\
&=\left(\frac{c^2}{2}+c-\frac{c^2}{4}+\frac{1}{4}-\frac{c}{4}+\frac{1}{4}+\frac{3c}{4}+\frac{3}{4}+\frac{1}{2}+\frac{c^2}{4}+ \frac{c}{2}+\frac{1}{4}\right)\left(x_{k,0}^{-}\right)^2v\\
%&=\left(\frac{c^2}{2}+2c+2\right)\left(x_{k,0}^{-}\right)^2v\\
&= \frac{1}{2}\left(c+2\right)^2\left(x_{k,0}^{-}\right)^2v\\
&= 2\left(a_1+\frac{l-k+2}{2}\right)^2\left(x_{k,0}^{-}\right)^2\left(x_{k+1,0}^{-}\right)^2\ldots \left(x_{l-2,0}^{-}\right)^2\left(x_{l-1,0}^{-}\right)^2x_{l,0}^{-}v^{+}_1.
\end{align*}
Thus the associated polynomial is $\Big(u-\left(a_1+\frac{l-k}{2}\right)\Big)\Big(u-\left(a_1+\frac{l-k+2}{2}\right)\Big)$. Then we know $Y_{k-1}\Big(\left(x_{k,0}^{-}\right)^2\ldots \left(x_{l-1,0}^{-}\right)^2x_{l,0}^{-}v^{+}_1\Big)$ is isomorphic to either $W_2\left(a_1+\frac{l-k}{2}\right)$ or $W_1\left(a_1+\frac{l-k}{2}+1\right)\otimes W_1\left(a_1+\frac{l-k}{2}\right)$ by the theory of the local Weyl modules of $\ysl$.

By induction, the lemma is proved.
\end{proof}
%Note:  $\left(x_{l-1,0}^{-}\right)^2x_{l,0}^{-}v^{+}_1$ has weight $2\omega_{l-2}-2\omega_{l-1}+\omega_{l}$ and $x_{l-2,0}^{-}\left(x_{l-1,0}^{-}\right)^2x_{l,0}^{-}v^{+}_1$ has weight $\omega_{l-3}-\omega_{l-1}+\omega_{l}$.

\begin{lemma}\label{spi=l3}
$Y_l\Big(\left(x_{1,0}^{-}\right)^2\ldots \left(x_{l-2,0}^{-}\right)^2\left(x_{l-1,0}^{-}\right)^2x_{l,0}^{-}v^{+}_1\Big)\cong W_1\left(\frac{a_1+1}{2}\right)$.
\end{lemma}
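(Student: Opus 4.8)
The plan is to follow the template of Corollary \ref{spc3c1} and Lemma \ref{spv2}: identify the degree of the associated polynomial of the $Y_l$-module generated by $v := \left(x_{1,0}^{-}\right)^2\ldots\left(x_{l-1,0}^{-}\right)^2 x_{l,0}^{-}v_1^{+}$, and then pin down its unique root by a single eigenvalue computation. By Case 3 of Proposition \ref{siok}, the weight of $v$ is $-2\omega_1+\omega_l$, so the coefficient of $\omega_l$ equals $1$; since the $h_{j,s}$ commute and $v$ is a maximal vector for $Y_l$ (arguing exactly as in Step 2 of Proposition \ref{mtoyspl}), this gives $\tilde h_{l,0}v=v$, so the rescaled associated polynomial $P$ has degree $1$ and $Y_l(v)\cong W_1(a/2)$ for some $a\in\C$, where $P(u)=u-a$. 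It then remains only to determine $a$, and I claim $a=a_1+1$.

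To find $a$ I would use the expansion $h_l(u)v=\frac{P(u+2)}{P(u)}v$ together with $d_l=2$, which reduces the problem to showing $h_{l,1}v = 2(a_1+1)v$, equivalently $\tilde h_{l,1}v=\frac{a_1+1}{2}v$. The crucial simplification is that $a_{l,j}=0$ for $1\le j\le l-2$, so by the defining relations $h_{l,1}$ commutes with every $x_{j,0}^{-}$ with $j\le l-2$; hence $h_{l,1}v = \left(x_{1,0}^{-}\right)^2\ldots\left(x_{l-2,0}^{-}\right)^2\, h_{l,1}\left(x_{l-1,0}^{-}\right)^2 x_{l,0}^{-}v_1^{+}$, and it suffices to evaluate $h_{l,1}\left(x_{l-1,0}^{-}\right)^2 x_{l,0}^{-}v_1^{+}$.

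For this local computation I would expand using $[h_{l,1},x_{l-1,0}^{-}] = 2x_{l-1,1}^{-}+h_{l,0}x_{l-1,0}^{-}+x_{l-1,0}^{-}h_{l,0}$ (the instance of the defining relation with $i=l$, $j=l-1$, $d_l=2$, $a_{l,l-1}=-1$, already used in Lemma \ref{ylviw2a0}), together with the Leibniz rule $[h_{l,1},(x_{l-1,0}^-)^2]=[h_{l,1},x_{l-1,0}^-]x_{l-1,0}^-+x_{l-1,0}^-[h_{l,1},x_{l-1,0}^-]$. All the required inputs are available: Corollary \ref{spc3c1} gives $Y_l(v_1^+)\cong W_1(a_1/2)$, whence, via Corollary \ref{slw1a} and the rescaling, $h_{l,1}x_{l,0}^-v_1^+=-2a_1\,x_{l,0}^-v_1^+$ and $h_{l,0}x_{l,0}^-v_1^+=-2\,x_{l,0}^-v_1^+$; while Lemma \ref{spi=l2} gives $Y_{l-1}(x_{l,0}^-v_1^+)\cong W_2(a_1)$, from which $x_{l-1,1}^- x_{l,0}^- v_1^+=(a_1+1)x_{l-1,0}^- x_{l,0}^- v_1^+$ and $x_{l-1,1}^- x_{l-1,0}^- x_{l,0}^- v_1^+=a_1(x_{l-1,0}^-)^2 x_{l,0}^- v_1^+$ by the $W_2$-action of Corollary \ref{w2ihw}. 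Collecting these contributions yields $h_{l,1}(x_{l-1,0}^-)^2 x_{l,0}^- v_1^+=2(a_1+1)(x_{l-1,0}^-)^2 x_{l,0}^- v_1^+$, hence $a=a_1+1$ and $Y_l(v)\cong W_1\!\left(\frac{a_1+1}{2}\right)$, as desired.

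The main obstacle is the bookkeeping in this local computation: one must correctly propagate the factor $d_l=2$ through the rescaling $\tilde h_{l,r}=\frac{1}{2^{r+1}}h_{l,r}$, $\tilde x_{l,r}^{\pm}=\frac{\sqrt 2}{2^{r+1}}x_{l,r}^{\pm}$, and track the several weight-dependent eigenvalues of $h_{l,0}$, which vanishes on the weight-$\omega_{l-2}$ intermediate vector $x_{l-1,0}^-x_{l,0}^-v_1^+$ but equals $-2$ on $x_{l,0}^-v_1^+$ and $+2$ on $(x_{l-1,0}^-)^2 x_{l,0}^- v_1^+$. No genuinely new idea is needed beyond those already in Lemmas \ref{spi=l2}, \ref{spi=l4} and \ref{spv2}; the delicate point is merely ensuring that the scalar $2(a_1+1)$ survives after the cancellations.
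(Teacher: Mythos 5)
Your proposal is correct and follows essentially the same route as the paper's proof: both get a degree-one associated polynomial from the weight $-2\omega_1+\omega_l$ (Proposition \ref{siok}), then compute the $h_{l,1}$-eigenvalue by letting $h_{l,1}$ commute past the $\left(x_{j,0}^{-}\right)^2$ with $j\leq l-2$ and expanding $[h_{l,1},x_{l-1,0}^{-}]$, using exactly the inputs you cite (Corollary \ref{spc3c1} for $h_{l,1}x_{l,0}^{-}v_1^{+}=-2a_1x_{l,0}^{-}v_1^{+}$, Lemma \ref{spi=l2} with Corollary \ref{w2ihw} for the $W_2\left(a_1\right)$-relations, and the weight-dependent $h_{l,0}$-eigenvalues), arriving at $h_{l,1}v=2\left(a_1+1\right)v$, i.e. $\tilde{h}_{l,1}v=\frac{a_1+1}{2}v$, hence $W_1\left(\frac{a_1+1}{2}\right)$. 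The only difference is organizational: the paper carries the prefix $\left(x_{1,0}^{-}\right)^2\cdots\left(x_{l-2,0}^{-}\right)^2$ through the whole computation and uses the symmetrized relation $\left(x_{l-1,1}^{-}x_{l-1,0}^{-}+x_{l-1,0}^{-}x_{l-1,1}^{-}\right)x_{l,0}^{-}v_1^{+}=\left(2a_1+1\right)\left(x_{l-1,0}^{-}\right)^2x_{l,0}^{-}v_1^{+}$ instead of your two separate $W_2$-relations.
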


\begin{proof}
By Proposition \ref{siok}, $wt\Big(\left(x_{1,0}^{-}\right)^2\ldots \left(x_{l-2,0}^{-}\right)^2\left(x_{l-1,0}^{-}\right)^2x_{l,0}^{-}v^{+}_1\Big)=-2\omega_1+\omega_{l}$, and then
 $$\tilde{h}_{l,0}\left(x_{1,0}^{-}\right)^2\ldots \left(x_{l-2,0}^{-}\right)^2\left(x_{l-1,0}^{-}\right)^2x_{l,0}^{-}v^{+}_1=\left(x_{1,0}^{-}\right)^2\ldots \left(x_{l-2,0}^{-}\right)^2\left(x_{l-1,0}^{-}\right)^2x_{l,0}^{-}v^{+}_1,$$
which tells that the associated polynomial has of degree 1. Thus we have $$Y_l\Big(\left(x_{1,0}^{-}\right)^2\ldots \left(x_{l-2,0}^{-}\right)^2\left(x_{l-1,0}^{-}\right)^2x_{l,0}^{-}v^{+}_1\Big)\cong W_1\left(a\right).$$ Then the eigenvalue of $\left(x_{1,0}^{-}\right)^2\ldots \left(x_{l-2,0}^{-}\right)^2\left(x_{l-1,0}^{-}\right)^2x_{l,0}^{-}v^{+}_1$ under $\tilde{h}_{l,1}$ will tell us the value of $a$. %Thus we have $Y_{i}\left(\overline{\left(x_{1,0}^{-}\right)^2\ldots x_{i,0}^{-}}v^{+}_1\right)\cong W_1\left(a\right)$.
%The eigenvalue of $h_{i,1}$ on $\overline{\left(x_{1,0}^{-}\right)^2\ldots x_{i,0}^{-}}v^{+}_1$ tells the value of $a$.
%$x_{l,0}^{-}v^{+}_1$ has weight $2\omega_{l-1}-\omega_{l}$, $x_{l-1,0}^{-}x_{l,0}^{-}v^{+}_1$ has weight $\omega_{l-2}$, and $\left(x_{l-1,0}^{-}\right)^2x_{l,0}^{-}v^{+}_1$ has weight $2\omega_{l-2}-2\omega_{l-1}+\omega_{l}$. $\left(x_{l-1,1}^{-}x_{l-1,0}^{-}+x_{l-1,0}^{-}x_{l-1,1}^{-}\right)x_{l,0}^{-}v^{+}_1=\left(2a_1+1\right)\left(x_{l-1,0}^{-}\right)^2x_{l,0}^{-}v^{+}_1$,  $h_{l,0}x_{l,0}^{-}v^{+}_1=-2x_{l,0}^{-}v^{+}_1$ and $h_{l,0}x_{l,0}^{-}v^{+}_1=-2a_1x_{l,0}^{-}v^{+}_1$.
\begin{align*}
% \nonumber to remove numbering \left(before each equation\right)
    4\tilde{h}_{l,1}\left(x_{1,0}^{-}\right)^2&\ldots \left(x_{l-2,0}^{-}\right)^2\left(x_{l-1,0}^{-}\right)^2x_{l,0}^{-}v^{+}_1 \\
   &= h_{l,1}\left(x_{1,0}^{-}\right)^2\ldots \left(x_{l-2,0}^{-}\right)^2\left(x_{l-1,0}^{-}\right)^2x_{l,0}^{-}v^{+}_1 \\
   &=\left(x_{1,0}^{-}\right)^2\ldots \left(x_{l-2,0}^{-}\right)^2[h_{l,1}, x_{l-1,0}^{-}]x_{l-1,0}^{-}x_{l,0}^{-}v^{+}_1\\
   &+\left(x_{1,0}^{-}\right)^2\ldots  \left(x_{l-2,0}^{-}\right)^2x_{l-1,0}^{-}[h_{l,1}, x_{l-1,0}^{-}]x_{l,0}^{-}v^{+}_1 \\
   &+\left(x_{1,0}^{-}\right)^2\ldots  \left(x_{l-2,0}^{-}\right)^2\left(x_{l-1,0}^{-}\right)^2 h_{l,1}x_{l,0}^{-}v^{+}_1\\
   &=\left(x_{1,0}^{-}\right)^2\ldots  \left(2x_{l-1,1}^{-}+2x_{l-1,0}^{-}+2x_{l-1,0}^{-}h_{l,0}^{-}\right)x_{l-1,0}^{-}x_{l,0}^{-}v^{+}_1\\
   &+\left(x_{1,0}^{-}\right)^2\ldots  x_{l-1,0}^{-}\left(2x_{l-1,1}^{-}+2x_{l-1,0}^{-}+2x_{l-1,0}^{-}h_{l,0}^{-}\right)x_{l,0}^{-}v^{+}_1\\
   &-2a_1\left(x_{1,0}^{-}\right)^2\ldots  \left(x_{l-1,0}^{-}\right)^2 x_{l,0}^{-}v^{+}_1\\
   &= \left(x_{1,0}^{-}\right)^2\ldots  \left(2\left(x_{l-1,1}^{-}x_{l-1,0}^{-}+x_{l-1,0}^{-}x_{l-1,1}^{-}\right)\right)x_{l,0}^{-}v^{+}_1\\
   &+\left(4-2\cdot 2-2a_1\right)\left(x_{1,0}^{-}\right)^2\ldots  \left(x_{l-1,0}^{-}\right)^2 x_{l,0}^{-}v^{+}_1\\
   &= \left(2\left(2a_1+1\right)-2a_1\right)\left(x_{1,0}^{-}\right)^2\ldots  \left(x_{l-1,0}^{-}\right)^2 x_{l,0}^{-}v^{+}_1\\
   &= 2\left(a_1+1\right)\left(x_{1,0}^{-}\right)^2\ldots \left(x_{l-2,0}^{-}\right)^2\left(x_{l-1,0}^{-}\right)^2 x_{l,0}^{-}v^{+}_1.
\end{align*}
Therefore $$\tilde{h}_{l,1}\left(x_{1,0}^{-}\right)^2\ldots \left(x_{l-1,0}^{-}\right)^2x_{l,0}^{-}v^{+}_1=\frac{1}{2}\left(a_1+1\right)\left(x_{1,0}^{-}\right)^2\ldots \left(x_{l-1,0}^{-}\right)^2 x_{l,0}^{-}v^{+}_1.$$
So the claim is proved.\end{proof}

%The proof is similar to Lemma \ref{i=l-2v2}. I will provide the proof if necessary. For this moment, I want to write out a sketch of the whole proofs for Weyl module of Yangian of classical Lie algebras of type C ASAP. I %knew it is hard to read and follow, but it will become better and better.

\begin{proposition}\label{spv2aga12}Let $v_{2}=\left(x_{1,0}^{-}\right)^2\ldots \left(x_{l-2,0}^{-}\right)^2\left(x_{l-1,0}^{-}\right)^2x_{l,0}^{-}v^{+}_1$.
\begin{enumerate}
 % \item $Y_{i}\left(v^{+}_2\right)$ is 2-dimensional, which are isomorphic to $W_1\left(\frac{a_1+1}{2}\right)$.
      \item $Y_{l-1}\left(x_{l,0}^{-}v_{2}\right)$, $Y_{l-2}\left(\left(x_{l-1,0}^{-}\right)^2x_{l,0}^{-}v_{2}\right)$, $Y_{l-3}\left(\left(x_{l-2,0}^{-}\right)^2\left(x_{l-1,0}^{-}\right)^2x_{l,0}^{-}v_{2}\right)$, $\ldots$,\newline $Y_{2}\left(\left(x_{3,0}^{-}\right)^2\ldots \left(x_{l-2,0}^{-}\right)^2\left(x_{l-1,0}^{-}\right)^2x_{l,0}^{-}v_{2}\right)$ are isomorphic to $W_2\left(a\right)$ or $W_1\left(a+1\right)\otimes W_1\left(a\right)$ with $\operatorname{Re}\left(a\right)\geq \operatorname{Re}\left(a_1\right)$.
  \item $Y_{l}\Big(\left(x_{2,0}^{-}\right)^2\left(x_{3,0}^{-}\right)^2\ldots \left(x_{l-1,0}^{-}\right)^2x_{l,0}^{-}v_{2}\Big)\cong W_1\left(\frac{a_1+2}{2}\right)$.
\end{enumerate}
\end{proposition}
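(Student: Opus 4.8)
The plan is to run the same rank-reduction argument used for Proposition \ref{spc1il-1p2}, now in the case $i=l$, and then import wholesale the computations of Proposition \ref{yspc3p1}. Deleting the first node of the Dynkin diagram of type $C_l$ produces a diagram of type $C_{l-1}$, so I would set $Y^{\left(1\right)}$ to be the subalgebra of $\ysp$ generated by all $x_{j,r}^{\pm}$ and $h_{j,r}$ with $j\in I'=\{2,\ldots,l\}$ and $r\in\Z_{\geq 0}$; after the usual rescaling $\tilde{x}_{l,r}^{\pm}=\frac{\sqrt{2}}{2^{r+1}}x_{l,r}^{\pm}$, $\tilde{h}_{l,r}=\frac{1}{2^{r+1}}h_{l,r}$ of the node-$l$ generators one has $Y^{\left(1\right)}\cong Y\big(\mathfrak{sp}(2(l-1),\C)\big)$. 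The vector $v_2$ is designed to be the highest weight vector of a fundamental representation of $Y^{\left(1\right)}$.

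First I would check that $v_2$ is a maximal vector for $Y^{\left(1\right)}$. By Proposition \ref{siok} the weight of $v_2$ is $\mathbf{s}_1(\omega_l)=-2\omega_1+\omega_l$ with $\mathbf{s}_1=s_1s_2\cdots s_{l-1}s_l$. If $x_{j,0}^{+}v_2\neq 0$ for some $j\geq 2$, then $\omega_l+\mathbf{s}_1^{-1}(\alpha_j)$ would be a weight of $V_{a_1}(\omega_l)$; since Lemma \ref{sps1v2p} (read with $i=l$) gives $\mathbf{s}_1^{-1}(\alpha_j)\in\Delta^{+}$, this weight strictly dominates $\omega_l$, contradicting the maximality of the highest weight. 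Hence $x_{j,0}^{+}v_2=0$ for all $j\in I'$, so $Y^{\left(1\right)}(v_2)$ is a highest weight $Y^{\left(1\right)}$-module. The weight $-2\omega_1+\omega_l$ forces $h_{j,0}v_2=\delta_{lj}v_2$, so the Drinfeld polynomials of $Y^{\left(1\right)}(v_2)$ are $P_j=1$ for $j\neq l$ and $\deg P_l=1$; Lemma \ref{spi=l3}, which yields $Y_l(v_2)\cong W_1\!\left(\frac{a_1+1}{2}\right)$, then pins down $P_l(u)=u-(a_1+1)$. Therefore $Y^{\left(1\right)}(v_2)\cong V_{a_1+1}(\omega_l)$, the $\omega_l$-fundamental representation of the rank-$(l-1)$ symplectic Yangian.

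With $v_2$ identified as such a highest weight vector, the proposition follows by transcribing the Case~3 analysis of Proposition \ref{yspc3p1} to $V_{a_1+1}(\omega_l)$ over $Y^{\left(1\right)}$, with $a_1$ replaced by $a_1+1$ and the rank lowered from $l$ to $l-1$. The vectors in part~(i) are precisely the successive weight vectors obtained by applying the lowering strings $x_{l,0}^{-}$, $(x_{l-1,0}^{-})^2x_{l,0}^{-}$, $\ldots$, $(x_{3,0}^{-})^2\cdots(x_{l-1,0}^{-})^2x_{l,0}^{-}$ to $v_2$ inside $Y^{\left(1\right)}$, so the shifted analogues of Lemmas \ref{spi=l2}, \ref{spl-2i34d} and \ref{spi=l4} show each associated $Y_k$-module is isomorphic to $W_2(a)$ or $W_1(a+1)\otimes W_1(a)$, and the explicit values of $a$ recovered there satisfy $\operatorname{Re}(a)\geq\operatorname{Re}(a_1)$. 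Part~(ii) is the analogue of item~(iv) of Proposition \ref{yspc3p1}: since the first node of $Y^{\left(1\right)}$ is node $2$, the terminal string starts with $(x_{2,0}^{-})^2$, and the shift $a_1\mapsto a_1+1$ in Lemma \ref{spi=l3} gives $W_1\!\left(\frac{(a_1+1)+1}{2}\right)=W_1\!\left(\frac{a_1+2}{2}\right)$.

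The only genuine obstacle is the structural step of the second paragraph: confirming that $Y^{\left(1\right)}(v_2)$ carries exactly the Drinfeld data of $V_{a_1+1}(\omega_l)$, which relies on the positivity statement of Lemma \ref{sps1v2p} at $i=l$ together with the degree-one computation of Lemma \ref{spi=l3}. Once this reduction is in place, no new computation is needed --- the rest is a verbatim reading of the rank-$(l-1)$ results in Proposition \ref{yspc3p1}. I would also record the small-rank degeneration (when $l-1$ is so small that only the final $Y_l$-statement of part~(ii) survives), paralleling the remark that for $i=2$ only the first three items of Proposition \ref{spc1il-1p2} are needed.
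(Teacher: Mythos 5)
Your proposal is correct and matches the paper's intended argument: the paper's own (omitted) proof is exactly the rank-reduction "use $Y^{(1)}$ as in Proposition \ref{spc1il-1p2}" device, with $v_2$ shown to be a maximal vector of weight $-2\omega_1+\omega_l$ whose Drinfeld datum is pinned down by Lemma \ref{spi=l3}, followed by rereading the Case 3 computations (Lemmas \ref{spi=l2}, \ref{spl-2i34d}, \ref{spi=l4}, \ref{spi=l3}) with $a_1\mapsto a_1+1$ and rank $l-1$. Your write-up merely supplies the details the paper omits, so no further comment is needed.
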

\begin{proof}
The main idea of the proof is to use $Y^{(1)}$ as in the proof for Proposition \ref{spc1il-1p2}. We omit the proof.
\end{proof}

Similarly, we have

\begin{proposition}\label{spvmaga13} Let $2\leq m\leq l-2$. Let $v_{m+1}=\left(x_{m,0}^{-}\right)^2\ldots \left(x_{l-1,0}^{-}\right)^2x_{l,0}^{-}v_{m}$.
\begin{enumerate}
  %\item $Y_{l}\left(v_{m+1}\right)$ is 2-dimensional, which are isomorphic to $W_1\left(\frac{a_1+m}{2}\right)$.
      \item $Y_{l-1}\Big(x_{l,0}^{-}v_{m+1} \Big)$,$Y_{l-2}\Big(\left(x_{l-1,0}^{-}\right)^2x_{l,0}^{-}v_{m+1}\Big)$, $\ldots$, \newline $Y_{m}\Big(\left(x_{m+1,0}^{-}\right)^2\ldots \left(x_{l-1,0}^{-}\right)^2x_{l,0}^{-}v_{m+1}\Big)$ are isomorphic to $W_2\left(a\right)$ or $W_1\left(a+1\right)\otimes W_1\left(a\right)$ with $\operatorname{Re}\left(a\right)\geq \operatorname{Re}\left(a_1\right)$.
  \item $Y_{l}\Big(\left(x_{m+1,0}^{-}\right)^2\ldots \left(x_{l-1,0}^{-}\right)^2x_{l,0}^{-}v_{m+1}\Big)\cong W_1\left(\frac{a_1+m+1}{2}\right)$.
\end{enumerate}
\end{proposition}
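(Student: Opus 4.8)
The plan is to prove Proposition \ref{spvmaga13} by induction on $m$, reducing each step to the base case Proposition \ref{yspc3p1} applied inside a sub-Yangian of smaller rank. The crucial structural observation is that deleting the nodes $1,2,\ldots,m$ from the Dynkin diagram of type $C_l$ leaves the diagram of type $C_{l-m}$; since the hypothesis $2\le m\le l-2$ guarantees $l-m\ge 2$, this is a genuine symplectic diagram. Accordingly I would let $Y^{(m)}$ denote the subalgebra of $\ysp$ generated by $x_{r,k}^{\pm}$ and $h_{r,k}$ for all $r>m$ and $k\in\Z_{\geq 0}$, and record (exactly as in the passage preceding the proof of Proposition \ref{spc1il-1p2}) that $Y^{(m)}\cong Y\big(\mathfrak{sp}(2(l-m),\C)\big)$. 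Node $l$ remains the long-root node of the smaller diagram, so the rescaling $\tilde{x}_{l,r}^{\pm}=\frac{\sqrt{2}}{2^{r+1}}x_{l,r}^{\pm}$, $\tilde{h}_{l,r}=\frac{1}{2^{r+1}}h_{l,r}$ used throughout this chapter is unchanged. The induction starts at $m=2$, fed by the $v_2$-results of Proposition \ref{spv2aga12}.

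The first genuine step is to show that $v_{m+1}$ is a maximal vector for $Y^{(m)}$. Following the argument used for $v_2$, and relying on the analogue of Lemma \ref{sps1v2p} for the reduced Weyl word $\mathbf{s}_m=s_m\ldots s_{l-1}s_ls_{l-1}\ldots s_m$ (whose inverse sends each simple root $\alpha_j$ with $j>m$ into $\Delta^{+}$), I would argue that if $x_{j,0}^{+}v_{m+1}\neq 0$ for some $j>m$, then $\omega_l+\mathbf{s}_m^{-1}(\alpha_j)$ would be a weight of $V_{a_1}(\omega_l)$ preceding the highest weight $\omega_l$, a contradiction. Hence $v_{m+1}$ is maximal and $Y^{(m)}(v_{m+1})$ is a highest weight $Y^{(m)}$-module. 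By Proposition \ref{siok} the weight of $v_{m+1}$ is $-2\omega_m+\omega_l$, so $h_{j,0}v_{m+1}=\delta_{jl}v_{m+1}$ for $j>m$; this forces the associated $(l-m)$-tuple of Drinfeld polynomials to be trivial away from node $l$ and of degree one at node $l$, say $P_l(u)=u-a$.

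Next I would pin down $a=a_1+m$. This value is already implicit in the induction: part (ii) of the proposition at the previous level gives $Y_l(\cdots v_m)\cong W_1\big(\tfrac{a_1+m}{2}\big)$, and the defining relation between $P_l$ and $h_l(u)$ (Theorem \ref{cfdihwr}(b)) together with the rescaling computation of Lemma \ref{spc3c1} identifies the degree-one polynomial as $P_l(u)=u-(a_1+m)$; alternatively one computes the eigenvalue of $\tilde{h}_{l,1}$ on $v_{m+1}$ directly as in Lemmas \ref{spi=l3} and \ref{spv2}. Once the highest weight of $Y^{(m)}(v_{m+1})$ is known, every assertion of the proposition is obtained by quoting Proposition \ref{yspc3p1} verbatim inside $Y^{(m)}\cong Y\big(\mathfrak{sp}(2(l-m),\C)\big)$, with the base point $a_1$ replaced by $a_1+m$: item (i) is the family of three- or four-dimensional modules from parts (ii)--(iii) of Proposition \ref{yspc3p1}, and item (ii) is part (iv) of that proposition, which reads $W_1\big(\tfrac{(a_1+m)+1}{2}\big)=W_1\big(\tfrac{a_1+m+1}{2}\big)$ as claimed.

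I expect the main obstacle to be the bookkeeping in the reduction $Y^{(m)}\cong Y\big(\mathfrak{sp}(2(l-m),\C)\big)$: one must check that the Drinfeld presentation of the full Yangian restricts cleanly to the sub-Yangian on the nodes $>m$, and in particular that the long-node rescaling and the resulting eigenvalue shifts compose correctly so that ``replacing $a_1$ by $a_1+m$'' is legitimate at the level of associated polynomials rather than merely numerically. Since the identical reduction has already been justified for $v_2$ in Proposition \ref{spv2aga12} and for the type-$D$ analogue in Proposition \ref{vmaga1}, the substantive work here is to confirm that the inductive hypothesis feeds the correct shift $a_1+m$ into the base case; the remaining computations are the routine $\ysl$-eigenvalue evaluations already carried out in Lemmas \ref{spi=l2}--\ref{spi=l3}.
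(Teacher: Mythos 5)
Your proposal is correct and is essentially the paper's own (omitted) argument: the paper gives no proof of this proposition beyond the phrase ``Similarly, we have'' following Proposition \ref{spv2aga12}, whose proof is itself just the $Y^{(1)}$-reduction of Proposition \ref{spc1il-1p2}, and your induction through the sub-Yangians $Y^{(m)}\cong Y\big(\mathfrak{sp}(2(l-m),\C)\big)$, identifying the degree-one Drinfeld polynomial at node $l$ from part (ii) of the previous level and then quoting Proposition \ref{yspc3p1} with $a_1$ replaced by $a_1+m$, is exactly that intended reduction. One small correction: since this is the case $b_1=l$, the block carrying $v_m$ to $v_{m+1}$ is $\left(x_{m,0}^{-}\right)^2\ldots\left(x_{l-1,0}^{-}\right)^2x_{l,0}^{-}$, so the relevant Weyl word is $\mathbf{s}_m=s_ms_{m+1}\ldots s_{l-1}s_l$ rather than the there-and-back word $s_m\ldots s_{l-1}s_ls_{l-1}\ldots s_m$ of the case $2\leq b_1\leq l-1$; the positivity $\mathbf{s}_m^{-1}\left(\alpha_j\right)\in\Delta^{+}$ for $j>m$ still holds for the correct word, so your maximality argument goes through unchanged.
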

\begin{remark} Suppose $m=l-2$.
\begin{enumerate}
  \item $Y^{\left(l-2\right)}$ is isomorphic to the Yangian of simple Lie algebra of type $C_2$. The condition of Proposition \ref{spc1il-1p2} is still true.
  \item $v^{-}_{1}=x_{l,0}^{-}\left(x_{l-1,0}^{-}\right)^2x_{l,0}^{-}v_{l-1}$. In Proposition \ref{spvmaga13}, we only have to compute $Y_{l-1}\Big(x_{l,0}^{-}v_{l-1}\Big)$ and $Y_{l}\Big(\left(x_{l-1,0}^{-}\right)^2\Big)x_{l,0}^{-}v_{l-1}$.
\end{enumerate}

\end{remark}
%Until now, we have finished all proofs of Proposition \ref{yspjiw12}.
\section{On the local Weyl modules of $\ysp$}
Let $\lambda=\sum\limits_{i\in I} m_i\omega_i$. In \cite{Na}, the dimension of the local Weyl module $W(\lambda)$ of the current algebra $\nysp[t]$ is given.
\begin{proposition}[Corollary 9.5, \cite{Na}]\label{dwmocsp}
Let $\lambda=\sum\limits_{i\in I} m_i\omega_i$. Then
$$\operatorname{Dim}\Big(W(\lambda)\Big)=\prod\limits_{i\in I} \bigg(\operatorname{Dim}\Big(W(\omega_i)\Big)\bigg)^{m_i}.$$
\end{proposition}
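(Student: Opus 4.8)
The plan is to obtain the dimension formula as a consequence of the fusion product decomposition of $W(\lambda)$, rather than by a direct computation. First I would invoke the decomposition of Naoi quoted earlier (Corollary B of \cite{Na}), which asserts that, as a $\nysp[t]$-module, $W(\lambda)$ is isomorphic to the fusion product of $m_i$ copies of each fundamental local Weyl module $W(\omega_i)$, $i\in I$, taken with respect to generic pairwise-distinct localisation parameters. This reduces the problem to computing the dimension of a fusion product of fundamental factors.

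Second, I would exploit the defining feature of the fusion product: it is the associated graded module of a natural filtration placed on the ordinary tensor product of the corresponding evaluation modules, and hence has the same underlying vector-space dimension as that tensor product. This is exactly the content of the earlier Remark, that the fusion product is isomorphic to the tensor product as a $\g$-module; since dimension is invariant under passing to the associated graded, one obtains
$$\operatorname{Dim}\big(W(\lambda)\big)=\operatorname{Dim}\Big(\bigotimes_{i\in I} W(\omega_i)^{\otimes m_i}\Big)=\prod_{i\in I}\big(\operatorname{Dim}(W(\omega_i))\big)^{m_i},$$
which is the claimed formula.

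The real work, and hence the main obstacle, lies entirely in establishing the fusion decomposition --- equivalently, in proving the lower bound $\operatorname{Dim}(W(\lambda))\geq \prod_{i\in I}(\operatorname{Dim} W(\omega_i))^{m_i}$, the reverse inequality being the easy surjection of the fusion product onto $W(\lambda)$ afforded by the universal property in Theorem \ref{mpocac1}. Were I to prove this from scratch rather than cite it, I would identify each $W(\omega_i)$ with the Kirillov--Reshetikhin module $\operatorname{KR}(\omega_i)$ and realise the latter as a Demazure module inside an integrable highest-weight module of the affine Kac--Moody algebra; I would then use the theorem that a fusion product of such Demazure modules is again a Demazure module, whose dimension can be read off from the Demazure character formula. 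The genuinely delicate point for $\g=\nysp$ is that, because the algebra is not simply-laced, the relevant Demazure modules need not all sit at level one, so the clean simply-laced argument must be replaced by Naoi's refinement, which treats the short and long root directions separately. This is precisely where the $\nysp$-specific input enters and where I would expect to spend most of the effort.
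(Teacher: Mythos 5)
Your first two paragraphs are correct and in substance coincide with the thesis, which offers no proof of this proposition at all but simply imports it as Corollary 9.5 of \cite{Na}; your argument re-derives that corollary from two other facts the thesis also quotes in Chapter 1, namely Naoi's fusion-product decomposition of $W(\lambda)$ (Corollary B of \cite{Na}) and the remark that a fusion product is isomorphic to the corresponding tensor product as a $\g$-module, hence has the same dimension. That chain of citations is legitimate, and it makes explicit a dependence that the thesis leaves implicit.

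Your third paragraph, however, has the logic of the two inequalities exactly backwards, and the error would be fatal if you tried to carry out the sketch. Theorem \ref{mpocac1} gives a surjection from $W(\lambda)$ \emph{onto} the fusion product, not from the fusion product onto $W(\lambda)$: the fusion product is cyclic on the image $\bar{v}$ of the tensor product of the cyclic vectors, and in the associated graded module $\bar{v}$ is annihilated by $\mathfrak{n}^{+}\otimes\C[t]$, by $\h\otimes t\C[t]$ (the offending terms $(h\otimes t^{r})v$ with $r\geq 1$ lie in filtration degree $0$ and hence vanish in degree $r$ of the graded module), and by $\left(x_{\alpha_i}^{-}\otimes 1\right)^{m_i+1}$, so $\bar{v}$ satisfies the defining relations of $W(\lambda)$ and the fusion product is a quotient of $W(\lambda)$. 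The easy inequality is therefore the lower bound $\operatorname{Dim}\big(W(\lambda)\big)\geq \prod_{i\in I}\big(\operatorname{Dim}\,W(\omega_i)\big)^{m_i}$, and the genuinely hard content of \cite{Na} --- the place where the Demazure-module theory for non-simply-laced $\g$ enters --- is the \emph{upper} bound, equivalently the statement that this surjection is an isomorphism. There is no ``easy surjection of the fusion product onto $W(\lambda)$,'' and establishing the fusion decomposition is equivalent to the upper bound, not the lower one.
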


The next theorem follows from Propositions \ref{mtoyspl}, \ref{vtv'hwv}.
\begin{theorem}\label{wmiatpsp}
Let $\pi=\Big(\pi_1\left(u\right),\ldots, \pi_{l}\left(u\right)\Big)$, where $\pi_i\left(u\right)=\prod\limits_{j=1}^{m_i}\left(u-a_{i,j}\right)$. Let $S=\{a_{1,1},\ldots, a_{1,m_1},\ldots, a_{l,1}\ldots, a_{l,m_l}\}$ be the multiset of roots of these polynomials. Let $a_1=a_{m,n}$ be one of the numbers in $S$ with the maximal real part, and let $b_1=m$. Similarly, let $a_r=a_{s,t}\left(r\geq 2\right)$ be one of the numbers in $S\setminus\{a_1, \ldots, a_{r-1}\}$ ($r\geq 2$) with the maximal real part, and $b_r=s$. Let $k=m_1+\ldots+m_l$. Then $L=V_{a_1}(\omega_{b_1})\otimes V_{a_2}(\omega_{b_2})\otimes\ldots\otimes V_{a_k}(\omega_{b_k})$ is a highest weight representation of $\ysp$, and its associated polynomial is $\pi$.
\end{theorem}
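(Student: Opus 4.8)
The plan is to assemble the statement from two facts already in hand: the cyclicity result of Proposition \ref{mtoyspl} and the multiplicativity of associated polynomials under tensor products encoded in Proposition \ref{vtv'hwv}. First I would observe that the inductive selection of the $a_r$ — choosing at each stage one of the remaining elements of $S$ with maximal real part — forces $\operatorname{Re}(a_1)\geq\operatorname{Re}(a_2)\geq\cdots\geq\operatorname{Re}(a_k)$. This is exactly the hypothesis of Proposition \ref{mtoyspl}, so $L=V_{a_1}(\omega_{b_1})\otimes\cdots\otimes V_{a_k}(\omega_{b_k})$ is a highest weight representation of $\ysp$ with highest weight vector $v^{+}=v_1^{+}\otimes\cdots\otimes v_k^{+}$, where $v_r^{+}$ denotes the highest weight vector of $V_{a_r}(\omega_{b_r})$.

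It then remains to identify the associated $l$-tuple of polynomials of $L$ with $\pi$. Since $L$ is highest weight, its associated polynomial is read off from the action of the generating series $h_i(u)=1+\sum_{k\geq 0}h_{i,k}u^{-k-1}$ on $v^{+}$, which by Theorem \ref{cfdihwr}(b) determines the Drinfeld polynomials. Each fundamental factor $V_{a_r}(\omega_{b_r})$ is irreducible and has Drinfeld polynomial equal to $u-a_r$ in slot $b_r$ and $1$ in every other slot. Applying Proposition \ref{vtv'hwv} to $v_r^{+}\otimes v_{r+1}^{+}$ and iterating shows that $v^{+}$ is a maximal vector whose associated $l$-tuple is the slot-by-slot product of the fundamental ones; concretely, the coproduct formula for $h_{i,k}$ in Proposition \ref{Delta} makes the series $h_i(u)$ multiplicative on a tensor product of highest weight vectors, so the eigenvalue of $h_i(u)$ on $v^{+}$ is the product over $r$ of the eigenvalues on each $v_r^{+}$.

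Taking this product slot by slot, in the $i$-th position the accumulated (monic) polynomial is $\prod_{r:\,b_r=i}(u-a_r)$, which by the definition of the ordering is precisely $\prod_{j=1}^{m_i}(u-a_{i,j})=\pi_i(u)$. Hence the associated $l$-tuple of $L$ is $\pi$, which finishes the argument once the cyclicity of Step one is granted.

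I expect the only point requiring care to be the iteration of Proposition \ref{vtv'hwv}: that proposition is stated for a tensor product of two \emph{irreducible} modules, whereas here the tail $V_{a_2}(\omega_{b_2})\otimes\cdots\otimes V_{a_k}(\omega_{b_k})$ need not be irreducible. This is harmless, since the associated polynomial of a highest weight module depends only on the $h_i(u)$-eigenvalues of its cyclic generator, and those eigenvalues are governed purely by the coproduct of Proposition \ref{Delta} independently of irreducibility. I would therefore phrase the induction directly as the multiplicativity of $h_i(u)$ on highest weight vectors, rather than appealing to irreducibility of the tail factor.
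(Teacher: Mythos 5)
Your proposal is correct and follows exactly the paper's route: the paper derives this theorem by citing Proposition \ref{mtoyspl} (cyclicity of the ordered tensor product, whose hypothesis is met since the greedy choice of the $a_r$ forces $\operatorname{Re}(a_1)\geq\cdots\geq\operatorname{Re}(a_k)$) together with Proposition \ref{vtv'hwv} for identifying the associated polynomial as $\pi$. Your added care about iterating Proposition \ref{vtv'hwv} beyond the irreducible case, via the coproduct formula of Proposition \ref{Delta}, is a sound filling-in of a detail the paper leaves implicit.
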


The structure of the local Weyl modules is obtained, so are the dimensions.
\begin{theorem}The local Weyl module $W(\pi)$ of $\ysp$ associated to $\pi$ is isomorphic to $L=V_{a_1}(\omega_{b_1})\otimes V_{a_2}(\omega_{b_2})\otimes\ldots\otimes V_{a_k}(\omega_{b_k})$.
\end{theorem}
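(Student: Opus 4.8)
The plan is to identify $W(\pi)$ with $L$ by a dimension-squeezing argument, since the hard structural input — that $L$ is itself a highest weight module with the correct Drinfeld data — is already supplied by Theorem \ref{wmiatpsp}. First I would invoke Theorem \ref{wmiatpsp} to record that the ordered tensor product $L=V_{a_1}(\omega_{b_1})\otimes\cdots\otimes V_{a_k}(\omega_{b_k})$, built from the roots of $\pi$ arranged in decreasing order of real part, is a finite-dimensional highest weight representation of $\ysp$ whose associated $l$-tuple of polynomials is exactly $\pi$. By the universal property of the local Weyl module recorded in Remark \ref{mi=lambdahi}, every finite-dimensional highest weight representation with associated polynomial $\pi$ is a quotient of $W(\pi)$; applying this to $L$ yields a surjection $W(\pi)\twoheadrightarrow L$ and hence the lower bound $\operatorname{Dim}(L)\leq \operatorname{Dim}\big(W(\pi)\big)$.

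Next I would produce the matching upper bound. Theorem \ref{ubodowm} gives $\operatorname{Dim}\big(W(\pi)\big)\leq \operatorname{Dim}\big(W(\lambda)\big)$, where $\lambda=\sum_{i\in I} m_i\omega_i$ and $m_i=\deg \pi_i$ is the dominant integral weight of $\nysp$ attached to $\pi$. It then remains to show that the two outer terms of the resulting chain agree, that is, $\operatorname{Dim}(L)=\operatorname{Dim}\big(W(\lambda)\big)$. For this I would compute both sides against the fundamental data: Proposition \ref{dwmocsp} yields $\operatorname{Dim}\big(W(\lambda)\big)=\prod_{i\in I}\big(\operatorname{Dim}(W(\omega_i))\big)^{m_i}$ for the current algebra Weyl module, while Corollary \ref{dkrvawocsp} identifies each fundamental factor through $\operatorname{Dim}\big(V_a(\omega_i)\big)=\operatorname{Dim}\big(W(\omega_i)\big)$.

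Putting these together, and using that $L$ contains exactly $m_i$ tensor factors of the form $V_{a}(\omega_i)$ by the construction of the multiset $S$ in Theorem \ref{wmiatpsp}, I obtain
\begin{equation*}
\operatorname{Dim}(L)=\prod_{i\in I}\big(\operatorname{Dim}(V_a(\omega_i))\big)^{m_i}=\prod_{i\in I}\big(\operatorname{Dim}(W(\omega_i))\big)^{m_i}=\operatorname{Dim}\big(W(\lambda)\big).
\end{equation*}
Hence the chain $\operatorname{Dim}(L)\leq \operatorname{Dim}\big(W(\pi)\big)\leq \operatorname{Dim}\big(W(\lambda)\big)=\operatorname{Dim}(L)$ collapses, forcing $\operatorname{Dim}\big(W(\pi)\big)=\operatorname{Dim}(L)$. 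A surjection of finite-dimensional vector spaces of equal dimension is an isomorphism, so the surjection $W(\pi)\twoheadrightarrow L$ is an isomorphism of $\ysp$-modules, giving $W(\pi)\cong L$.

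Given how much is supplied by the earlier results, the genuine difficulty in this theorem is not the final squeeze but the inputs it rests on: the cyclicity statement of Theorem \ref{wmiatpsp} (whose proof is Proposition \ref{mtoyspl} together with the lengthy Step~5 eigenvalue computations of Section \ref{mtoyspls}), and the precise dimension match of Corollary \ref{dkrvawocsp}. The one point I would verify carefully is the bookkeeping that the number of fundamental factors $V_a(\omega_i)$ appearing in $L$ equals $m_i=\deg\pi_i$, so that the product formula for $\operatorname{Dim}(L)$ lines up index-by-index with the product formula for $\operatorname{Dim}\big(W(\lambda)\big)$; this is where a mismatch would most plausibly creep in, and it is the step that ties the symplectic computation back to the general upper bound of Chapter~2.
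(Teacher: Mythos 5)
Your proposal is correct and follows exactly the paper's own argument: the squeeze $\operatorname{Dim}(L)\leq \operatorname{Dim}\big(W(\pi)\big)\leq \operatorname{Dim}\big(W(\lambda)\big)$ via the universal property of $W(\pi)$ applied to $L$ (using Theorem \ref{wmiatpsp}) and Theorem \ref{ubodowm}, closed by the equality $\operatorname{Dim}\big(W(\lambda)\big)=\operatorname{Dim}(L)$ coming from Proposition \ref{dwmocsp} and Corollary \ref{dkrvawocsp}. Your extra care about matching the multiplicities $m_i=\deg\pi_i$ of the tensor factors and about the surjection of equal-dimensional spaces being an isomorphism merely makes explicit what the paper leaves implicit.
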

\begin{proof}
On the one hand, by Theorem \ref{ubodowm} $\operatorname{Dim}\big(W(\pi)\big)\leq \operatorname{Dim}\big(W(\lambda)\big)$; on the other hand, $L$ is a quotient of $W(\pi)$, and then $\operatorname{Dim}\Big(W\left(\pi\right)\Big)\geq \operatorname{Dim}\left(L\right)$. By Corollary \ref{dkrvawocsp}, we have $\operatorname{Dim}\big(W(\lambda)\big)=\operatorname{Dim}\left(L\right)$, and then this implies that $\operatorname{Dim}\big(W(\pi)\big)=\operatorname{Dim}\left(L\right)$. Therefore
$W\left(\pi\right)\cong L$.

The dimension of the local Weyl module $W(\pi)$ can be recovered by Theorem \ref{fmsimlieC} and Remark \ref{rofry}.\end{proof}

\chapter{Local Weyl modules of $\yso$}
In this chapter, the local Weyl modules of $\yso$ are studied. The structure of the local Weyl modules is determined, and the dimensions of the local Weyl modules are obtained.  In the process of characterizing the local Weyl modules, a sufficient condition for a tensor product of fundamental representations of Yangians to be a highest weight representation is obtained, which shall lead to an irreducibility criterion for the tensor product.

Let $\pi=\big(\pi_1(u),\pi_2(u),\ldots, \pi_l(u)\big)$ be a generic $l$-tuple of monic polynomials in $u$, and $\pi_i\left(u\right)=\prod\limits_{j=1}^{m_i}\left(u-a_{i,j}\right)$. Let $k=m_1+m_2+\ldots+m_l$, $S=\{a_{i,j}|i=1,\ldots,l; j=1,\ldots,m_i\}$, and $\lambda=\sum\limits_{i=1}^{l}m_i\omega_i$.
Let $a_{m,n}$ be one of the numbers in $S$ with the maximal real part. Then define $a_1=a_{m,n}$ and $b_1=m$. Inductively, let $a_{s,t}$ be one of the numbers in $S-\{a_1,\ldots, a_{r-1}\}$ with the maximal real part. Then define $a_r=a_{s,t}$ and $b_r=s$. We construct a tensor product $L=V_{a_1}\left(\omega_{b_1}\right)\otimes V_{a_2}\left(\omega_{b_2}\right)\otimes \ldots \otimes V_{a_k}\left(\omega_{b_k}\right),$ where $V_{a_i}(\omega_{b_i})$ are fundamental representations of $\yso$. We prove that $L$ is a highest weight representation. A standard discussion shows that $L$ is a highest weight representation associated to $\pi$.
The dimension of $V_{a_i}\left(\omega_{b_i}\right)$ is known, so is the one of $L$. Then a lower bound on the dimension of the local Weyl module $W(\pi)$ is obtained.

Let $W(\lambda)$ be the Weyl module associated to $\lambda$ of the current algebra $\nyso[t]$. In \cite{Na}, the author proved $\operatorname{Dim}\Big(W(\lambda)\Big)=\prod\limits_{i\in I} \Big(\operatorname{Dim}\big(W(\omega_i)\big)\Big)^{m_i}.$ It follows from Corollary \ref{dkrvawocsp} that $\operatorname{Dim}(W(\lambda))=\operatorname{Dim}(L)$. Since $\operatorname{Dim}\big(W(\lambda)\big)\geq \operatorname{Dim}(W(\pi))\geq \operatorname{Dim}(L)$,  $$W\left(\pi\right)\cong V_{a_1}(\omega_{b_1})\otimes V_{a_2}(\omega_{b_2})\otimes\ldots\otimes V_{a_k}(\omega_{b_k}).$$
The dimension of $W(\pi)$ can be recovered from Theorem \ref{fmotbc1} and Proposition \ref{dcofbdl}.
%P.S. I do not know if there is a notation, Kirillov-ResHetikhin modules for $\yg$, but there are notations for $\g[t]$ and the paper, On the Fermionic Formula and the Kirillov-Reshetikhin
%Conjecture, might be an interesting paper.

Similar to the proof that $L$ is a highest weight representation (Proposition \ref{mtoysob}), we can obtain a sufficient condition for a tensor product of fundamental representations of the form $V_{a_1}(\omega_{b_1})\otimes V_{a_2}(\omega_{b_2})\otimes\ldots\otimes V_{a_k}(\omega_{b_k})$ to be a highest weight representation. If $a_j-a_i\notin S(b_i, b_j)$ for $1\leq i<j\leq k$, then $L$ is a highest weight representation, where $S(b_i, b_j)$ is a finite set of positive rational numbers. By Proposition \ref{VoWWoVhi} and Lemma \ref{dualfrc1}, an irreducible criterion for a tensor product of fundamental representations of $\yso$ is obtained: if $a_j-a_i\notin S(b_i, b_j)$ for $1\leq i\neq j\leq k$, then $L$ is irreducible.

\section{From the highest weight vector to the lowest one of $V_a(\omega_i)$}
%In this section, we only consider $\mathfrak{g}=\mathfrak{so}\left(2l+1,\C\right)$.
In this section, we would like to find a path from the highest weight vector to the lowest one of a fundamental representation $V_a(\omega_i)$ of $\ysp$.
Let $s_i$ be the fundamental reflections of the Weyl group of $\mathfrak{so}\left(2l+1,\C\right)$ for $i=1,\ldots, l$. Let $\{\mu_1,\mu_2,\ldots,\mu_{l}\}$ be the coordinate functions on the Cartan subalgebra of $\nyso$. For $1\leq i\leq l-1$, $s_i\left(\mu_i\right)=\mu_{i+1}$, $s_i\left(\mu_{i+1}\right)=\mu_{i}$ and $s_i\left(\mu_j\right)=\mu_j$ for $j\neq i, i+1$.
For $i=l$, $s_l\left(\mu_{l}\right)=-\mu_{l}$ and $s_l\left(\mu_j\right)=\mu_j$ for $j\neq l$.
The fundamental weights of $\mathfrak{so}\left(2l+1,\C\right)$ are given by $\omega_i=\mu_1+\ldots+\mu_i$ for $1\leq i\leq l-1$ and $\omega_{l}=\frac{1}{2} \left(\mu_1+\ldots+\mu_{l-1}+\mu_l\right)$. It follows that $\mu_1=\omega_1$, $\mu_2=-\omega_1+\omega_2$, $\ldots$, $\mu_{l-1}=-\omega_{l-2}+\omega_{l-1}$, and $\mu_{l}=-\omega_{l-1}+2\omega_{l}$.
$\alpha_1=2\omega_1-\omega_2$, $\alpha_i=-\omega_{i-1}+2\omega_{i}-\omega_{i+1}$($2\leq i\leq l-2$), $\alpha_{l-1}=-\omega_{l-2}+2\omega_{l-1}-2\omega_{l}$, and $\alpha_{l}=-\omega_{l-1}+2\omega_{l}$.
\begin{proposition}
$s_i\left(\omega_j\right)=\omega_j$ for $i\neq j$. $s_1\left(\omega_1\right)=-\omega_1+\omega_2$, $s_2\left(\omega_2\right)=\omega_1-\omega_2+\omega_3,\ldots,$ $s_{l-3}\left(\omega_{l-3}\right)=\omega_{l-4}-\omega_{l-3}+\omega_{l-2}$, $s_{l-2}\left(\omega_{l-2}\right)=\omega_{l-3}-\omega_{l-2}+\omega_{l-1}$, $s_{l-1}\left(\omega_{l-1}\right)=\omega_{l-2}-\omega_{l-1}+2\omega_{l}$ and $s_{l}\left(\omega_{l}\right)=\omega_{l-1}-\omega_{l}$.
\end{proposition}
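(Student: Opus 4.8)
The plan is to derive every assertion from the single reflection identity $s_i(\omega) = \omega - \frac{2(\omega,\alpha_i)}{(\alpha_i,\alpha_i)}\alpha_i$ together with the defining property of the fundamental weights, namely $\langle \omega_j, \alpha_i\rangle = \frac{2(\omega_j,\alpha_i)}{(\alpha_i,\alpha_i)} = \delta_{ij}$, which is recorded at the start of the chapter. Substituting the second relation into the first immediately yields, for all $i,j \in I$, the uniform formula $s_i(\omega_j) = \omega_j - \delta_{ij}\alpha_i$. This already settles the first assertion: when $i \neq j$ the coefficient $\delta_{ij}$ vanishes, so $s_i(\omega_j) = \omega_j$.

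It then remains only to treat the diagonal cases $s_j(\omega_j) = \omega_j - \alpha_j$, which I would dispatch by substituting the expansions of the simple roots $\alpha_j$ in the basis of fundamental weights listed immediately above the statement. For the long-root nodes this gives $s_1(\omega_1) = \omega_1 - (2\omega_1 - \omega_2) = -\omega_1 + \omega_2$ and, for $2 \le j \le l-2$, $s_j(\omega_j) = \omega_j - (-\omega_{j-1} + 2\omega_j - \omega_{j+1}) = \omega_{j-1} - \omega_j + \omega_{j+1}$, matching the displayed chain. The two remaining nodes are handled identically: $s_{l-1}(\omega_{l-1}) = \omega_{l-1} - (-\omega_{l-2} + 2\omega_{l-1} - 2\omega_l) = \omega_{l-2} - \omega_{l-1} + 2\omega_l$ and $s_l(\omega_l) = \omega_l - (-\omega_{l-1} + 2\omega_l) = \omega_{l-1} - \omega_l$.

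Alternatively one may argue entirely on the coordinate side, exactly as in the proof of the $D_l$ analogue (Proposition \ref{soomegasio}): expand $\omega_j = \mu_1 + \cdots + \mu_j$ (or $\omega_l = \frac{1}{2}(\mu_1 + \cdots + \mu_l)$), apply the transposition-type action of $s_j$ on the $\mu$'s, and re-express through $\mu_1 = \omega_1$, $\mu_k = -\omega_{k-1} + \omega_k$, and $\mu_l = -\omega_{l-1} + 2\omega_l$. There is no genuine obstacle in either route; the only point demanding attention is the factor of $2$ attached to $\omega_l$ in $\mu_l$ and in the short simple roots $\alpha_{l-1},\alpha_l$, which is precisely what produces the coefficient $2\omega_l$ in $s_{l-1}(\omega_{l-1})$ and separates the type $B$ computation from its type $A$ counterpart. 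I would therefore present the reflection-formula argument as the main proof and note that the short-root factor intervenes only in these two boundary cases.
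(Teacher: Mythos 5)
Your proposal is correct. Note first that the paper offers no proof at all for this proposition in the $B_l$ chapter: it is asserted as an immediate consequence of the data listed just above it, and the only model proof in the paper is the $D_l$ analogue (Proposition \ref{soomegasio}), which is done by the coordinate method you mention as your alternative route: expand $\omega_j$ in the $\mu_i$, apply the permutation/sign action of $s_j$ on coordinates, and convert back via $\mu_1=\omega_1$, $\mu_k=-\omega_{k-1}+\omega_k$, $\mu_l=-\omega_{l-1}+2\omega_l$. Your primary argument is genuinely different and more uniform: the single identity $s_i(\omega_j)=\omega_j-\langle\omega_j,\alpha_i\rangle\,\alpha_i=\omega_j-\delta_{ij}\alpha_i$ disposes of all the off-diagonal cases at once (something the coordinate method must verify case by case), and reduces the diagonal cases to reading off the expansions $\alpha_1=2\omega_1-\omega_2$, $\alpha_j=-\omega_{j-1}+2\omega_j-\omega_{j+1}$ ($2\le j\le l-2$), $\alpha_{l-1}=-\omega_{l-2}+2\omega_{l-1}-2\omega_l$, $\alpha_l=-\omega_{l-1}+2\omega_l$ that the chapter records immediately before the statement; this also makes transparent where the coefficient $2\omega_l$ in $s_{l-1}(\omega_{l-1})$ comes from, namely the column of the non-symmetric Cartan matrix attached to the short root. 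The reflection formula itself is available inside the paper (it is invoked explicitly in the $C_l$ and $D_l$ chapters), so nothing external is needed; the only point worth stating if you wanted to be fully self-contained is that the coordinate action of $s_i$ defined in this section is indeed the orthogonal reflection in $\alpha_i$, which is standard and implicitly assumed throughout the paper. What the paper's coordinate route buys is independence from the $\omega$-expansions of the simple roots; what yours buys is brevity and a uniform treatment of all $i,j$.
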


Let $w_0=-1$ be the longest element of the Weyl group $\W$ of $\nyso$. One reduced expression of the longest element $w_0$  is given by:
\begin{align*}
% \nonumber to remove numbering \left(before each equation\right)
   w_0&=  \left(s_l\right)\left(s_{l-1}s_{l}s_{l-1}\right)\left(s_{l-2}s_{l-1}s_ls_{l-1}s_{l-2}\right)\left(s_{l-3}s_{l-2}s_{l-1}s_ls_{l-1}s_{l-2}s_{l-3}\right)\ldots\\
   & \left(s_{2}\ldots s_{l-2}s_{l-1}s_ls_{l-1}s_{l-2}\ldots s_{2}\right)\left(s_{1}\ldots s_{l-2}s_{l-1}s_ls_{l-1}s_{l-2}\ldots s_{1}\right).
\end{align*}
According to the reduce expression of $w_0$, define
\begin{align*}
% \nonumber to remove numbering \left(before each equation\right)
   w_i&=\left(s_i\ldots s_{l-1}s_ls_{l-1}\ldots s_{i}\right)\ldots\left(s_{2}\ldots s_{i-1}s_i\ldots s_{l-1}s_ls_{l-1}\ldots s_{i}\right)\\
   &\left(s_{1}\ldots s_{i-1}s_i\ldots s_{l-1}s_ls_{l-1}\ldots s_{i}\right).
\end{align*}
Let $\sigma_k\in \W$ be the product of the last $k$ terms in $w_i$ and keep the same order as in $w_i$. For every suitable value $j$, there exists a $k'\in I$ such that $\sigma_{k+1}=s_{k'}\sigma_{k}$. %Note that there are $\big(2\left(l-i\right)+1\big)+\big(2\left(l-i\right)+2\big)+\ldots+\big(2\left(l-i\right)+i\big)=\frac{i\big(4\left(l-i\right)+i+1\big)}{2}$ simple refections in $w_i$. For every $j\in \{0,1,\ldots, \frac{i\left(4\left(l-i\right)+i+1\right)}{2}-1\}$, there exists $j'\in \{1,2,\ldots, l\}$ such that $\sigma_{j+1}=s_{j'}\sigma_{j}$.
Let $v_{\sigma_k(\omega_i)}$ be a vector in the weight space of weight $\sigma_k(\omega_i)$. Since the weight space of weight $\omega_i$ is 1-dimensional, the weight space of weight $\sigma_k(\omega_i)$ is 1-dimensional, and then $v_{\sigma_k(\omega_i)}$ is unique, up to a scalar.

\begin{proposition}\label{rileq2b}
Let $\sigma_k(\omega_i)=r_{k'}\omega_{k'}+\sum\limits_{k'\neq j} r_j\omega_j$. Then $r_{i'}\in\{1,2\}$.
\end{proposition}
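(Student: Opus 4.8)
The plan is to prove Proposition \ref{rileq2b} by the same strategy used for the analogous statements \ref{rileq2} and \ref{siok} in the $D_l$ and $C_l$ cases. The key observation is that $\sigma_k(\omega_i)$ always lies in the Weyl orbit of the fundamental weight $\omega_i$, so the coefficient $r_{k'}$ of $\omega_{k'}$ records how the simple reflection $s_{k'}$ will next act: writing $\sigma_{k+1}=s_{k'}\sigma_k$ and using $s_{k'}(\omega)=\omega-\langle\omega,\alpha_{k'}^{\vee}\rangle\alpha_{k'}$, the integer $r_{k'}=\langle\sigma_k(\omega_i),\alpha_{k'}^{\vee}\rangle$ is exactly the quantity that controls whether one application of $x_{k',0}^{-}$ or two applications $(x_{k',0}^{-})^2$ are needed to pass from $v_{\sigma_k(\omega_i)}$ to $v_{\sigma_{k+1}(\omega_i)}$. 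So the whole content of the proposition is that this pairing never exceeds $2$ along the chosen reduced word $w_i$.

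First I would reduce the claim to a finite explicit computation of the full orbit $w_0(\omega_i)$ for each $i\in\{1,\ldots,l\}$, exactly as in Proposition \ref{rileq2}: since every $\sigma_k(\omega_i)$ appears as an intermediate weight in the step-by-step reduction of $\omega_i$ to $w_0(\omega_i)=-\omega_i$ (up to the node relabelling $-w_0$), tracking the coefficients along that single descending chain exhibits every $r_{k'}$ that can occur. I would organize the computation into the cases $i=1$, $2\le i\le l-1$, and $i=l$, displaying the sequence of arrows $\omega_i\xlongrightarrow{s_{k'}}\cdots$ using the reflection formulas from the preceding proposition ($s_1(\omega_1)=-\omega_1+\omega_2$, $s_i(\omega_i)=\omega_{i-1}-\omega_i+\omega_{i+1}$, $s_{l-1}(\omega_{l-1})=\omega_{l-2}-\omega_{l-1}+2\omega_l$, $s_l(\omega_l)=\omega_{l-1}-\omega_l$). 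The only structural difference from the $D_l$ and $C_l$ cases is the short-root node $l$: the relation $\alpha_{l-1}=-\omega_{l-2}+2\omega_{l-1}-2\omega_l$ produces a coefficient $2$ in front of $\omega_l$ at the moment $s_{l-1}$ acts, and I must check that this is the unique source of a coefficient equal to $2$ and that it never grows to $3$.

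The main obstacle, such as it is, will be bookkeeping rather than conceptual: I need to verify that the weight coefficients stay bounded by $2$ uniformly along the long reduced word, paying particular attention to the passages through the node $l-1$ and $l$ where the $B_l$ Cartan integers differ from the simply-laced case. Concretely, the delicate point is the step $\omega\xrightarrow{s_{l-1}}\omega-2\omega_l+\cdots$, where the doubling occurs; I would confirm that immediately afterward an application of $s_l$ (giving coefficient $1$ in $\omega_l$ again) or subsequent reflections reabsorb the factor so that no coefficient reaches $3$. Once the three explicit orbit computations are displayed, the proposition follows, precisely as the closing sentences of Propositions \ref{rileq2} and \ref{siok} assert, and I would end with the remark that these calculations also pin down the reduced expressions $w_i$ and guarantee $v_{\sigma_k(\omega_i)}\neq v_{\sigma_{k+1}(\omega_i)}$, which is what subsequent sections need.
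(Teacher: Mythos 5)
Your proposal is correct and follows essentially the same route as the paper: the paper likewise reduces the claim to the explicit computation of $w_0(\omega_i)$ along the chosen reduced word, split into the cases $i=1$, $1<i\leq l-1$, and $i=l$, using the reflection formulas $s_{l-1}(\omega_{l-1})=\omega_{l-2}-\omega_{l-1}+2\omega_l$, $s_l(\omega_l)=\omega_{l-1}-\omega_l$, etc., and reads off that the coefficient at the reflecting node never leaves $\{1,2\}$. Your extra attention to the short-root node $l$ as the unique source of the coefficient $2$ is exactly the bookkeeping the paper's displayed arrow chains carry out.
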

\begin{proof}
%We prove it by explicit calculations on $w_i\left(\omega_k\right)$ for $1\leq k,i\leq l$.
%It is enough to compute $w_0\left(\omega_k\right)$. %Delete all $s_m$ in the calculations $w_0(\omega_i)$ such that $\omega\xlongrightarrow{s_m}\omega$, and keep the rest in order. What left is just $w_i$.
The proof of this proposition is similar to Proposition \ref{rileq2}. We only provides the detail of the computation of $w_0(\omega_i)$.

Case 1: $i=1$.
\begin{flushleft}
$\omega_1\xlongrightarrow{s_1}-\omega_1+\omega_2\xlongrightarrow{s_2}-\omega_2+\omega_3\xlongrightarrow{s_3}-\omega_3+\omega_4\xlongrightarrow{s_4} \ldots\xlongrightarrow{s_{l-3}}-\omega_{l-3}+\omega_{l-2}\xlongrightarrow{s_{l-2}}-\omega_{l-2}+\omega_{l-1}\xlongrightarrow{s_{l-1}}-\omega_{l-1}+2\omega_{l} \xlongrightarrow{s_{l}}\omega_{l-1}-2\omega_{l}\xlongrightarrow{s_{l-1}}\omega_{l-2}-\omega_{l-1}\xlongrightarrow{s_2\ldots s_{l-3}s_{l-2}}\omega_{1}-\omega_{2}\xlongrightarrow{s_{1}}-\omega_{1}\xlongrightarrow{s_{l}\left(s_{l-1}s_ls_{l-1}\right)\ldots\left(s_{2}\ldots s_ls_{l-1}\ldots s_2\right)}-\omega_{1}$.
\end{flushleft}

Case 2: $1<i\leq l-1$.
\begin{flushleft}
$\omega_i\xlongrightarrow{s_{i-1}\ldots s_1}\omega_i\xlongrightarrow{s_i}\omega_{i-1}-\omega_{i}+\omega_{i+1}\xlongrightarrow{s_{l-3}\ldots s_{i+1}}\omega_{i-1}-\omega_{l-3}+\omega_{l-2}\xlongrightarrow{s_{l-2}}\omega_{i-1}-\omega_{l-2}+\omega_{l-1}\xlongrightarrow{s_{l-1}} \omega_{i-1}-\omega_{l-1}+2\omega_{l}\xlongrightarrow{s_{l}}\omega_{i-1}+\omega_{l-1}-2\omega_{l}\xlongrightarrow{s_{l-1}}\omega_{i-1}+
\omega_{l-2}-\omega_{l-1}\xlongrightarrow{s_{i}\ldots s_{l-3}s_{l-2}}2\omega_{i-1}-\omega_{i}\xlongrightarrow{s_{i-1}}2\omega_{i-2}-2\omega_{i-1}+\omega_{i}\xlongrightarrow{s_2\ldots s_{i-2}}2\omega_{1}-2\omega_{2}+\omega_{i}\xlongrightarrow{s_1}-2\omega_1+\omega_{i}\xlongrightarrow{s_{2}\ldots s_{l-1}s_ls_{l-1}\ldots s_2}-2\omega_2+\omega_{i}\xlongrightarrow{s_{3}\ldots s_{l-1}s_ls_{l-1}\ldots s_3}-2\omega_3+\omega_{i}\xlongrightarrow{\left(s_{i}\ldots s_ls_{l-1}\ldots s_{i}\right)\ldots\left(s_{2}\ldots s_ls_{l-1}\ldots s_2\right)}-\omega_{i}\xlongrightarrow{s_{l}\left(s_{l-1}s_ls_{l-1}\right)\ldots\left(s_{i+1}\ldots s_{l-1}s_ls_{l-1}\ldots s_{i+1}\right)}-\omega_{i}$.
\end{flushleft}

Case 3:  $i=l$.
\begin{flushleft}
$\omega_{l}\xlongrightarrow{s_{l-1}\ldots s_1}\omega_{l}\xlongrightarrow{s_{l}}\omega_{l-1}-\omega_{l}\xlongrightarrow{s_{l-1}}\omega_{l-2}-\omega_{l-1}+\omega_{l}
\xlongrightarrow{s_2\ldots s_{l-3}s_{l-2}}\omega_{1}-\omega_{2}+\omega_{l}\xlongrightarrow{s_1}-\omega_{1}+\omega_{l}\xlongrightarrow{s_{2}\ldots s_{l-1}s_{l}s_{l-1}\ldots s_2}-\omega_{2}+\omega_{l}\xlongrightarrow {\left(s_{l-2}s_{l-1}s_{l}s_{l-1}s_{l-2}\right)\ldots\left(s_{3}\ldots s_{l-1}s_{l}s_{l-1}\ldots s_3\right)}-\omega_{l-2}+\omega_{l}\xlongrightarrow{s_{l-1}}-\omega_{l-2}+\omega_{l}\xlongrightarrow{s_{l}}-\omega_{l-2}+\omega_{l-1}-\omega_{l} \xlongrightarrow{s_{l-1}}-\omega_{l-1}+\omega_{l}\xlongrightarrow{s_{l}}-\omega_{l}.$
\end{flushleft}
\end{proof}

\begin{proposition}\label{v+=gv-b}
Let $v_{1}^{+}$ and $v_{1}^{-}$ be highest and lowest weight vectors in the fundamental representation $V_{a_1}(\omega_i)$ of $Y\left(\mathfrak{so}\left(2l,\C\right)\right)$. There exists $y\in \nyso$ such that $$v_{1}^{-}=y.v_{1}^{+}.$$
\end{proposition}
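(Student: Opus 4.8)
The plan is to follow exactly the strategy already used for $D_l$ (Proposition \ref{v+=gv-}) and $C_l$ (Proposition \ref{v+=gv-2}): to realize $v_1^-$ as an explicit string of Chevalley generators $x_{j,0}^-$ applied to $v_1^+$, reading the string off from the reduced expression $w_i$ that carries $\omega_i$ to $w_0(\omega_i)=-\omega_i$ (recall $w_0=-1$ for type $B_l$). First I would record the relevant $\g$-module structure. By Remark \ref{rofry} and Proposition \ref{dcofbdl}, as a $\nyso$-module $V_{a_1}(\omega_i)=L(\omega_i)\oplus M$, where $M$ is a (possibly zero) direct sum of fundamental modules $L(\omega_{i-2j})$ of strictly smaller highest weight; the cases $i=1$ and $i=l$ give $M=0$. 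Both $v_1^+$ and $v_1^-$ are extreme weight vectors of the top summand $L(\omega_i)$, and every weight on the $\W$-orbit of $\omega_i$ occurs only in $L(\omega_i)$ and with one-dimensional weight space. Hence it suffices to produce $v_1^-$ inside $L(\omega_i)\subseteq V_{a_1}(\omega_i)$ by the action of $U(\nyso)$.

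Next I would build the chain of extreme weight vectors along $w_i$. For each admissible $k$ we have $\sigma_{k+1}=s_{k'}\sigma_k$, and by Proposition \ref{rileq2b} the coefficient $r_{k'}$ of $\omega_{k'}$ in $\sigma_k(\omega_i)$ lies in $\{1,2\}$. Combining this with $s_{k'}(\omega)=\omega-\frac{2(\omega,\alpha_{k'})}{(\alpha_{k'},\alpha_{k'})}\alpha_{k'}$ yields $\sigma_{k+1}(\omega_i)=\sigma_k(\omega_i)-r_{k'}\alpha_{k'}$, since the integer $\frac{2(\sigma_k(\omega_i),\alpha_{k'})}{(\alpha_{k'},\alpha_{k'})}$ is precisely $r_{k'}$. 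Thus the one-dimensional weight space of weight $\sigma_{k+1}(\omega_i)$ is reached from that of weight $\sigma_k(\omega_i)$ by applying $\left(x_{k',0}^-\right)^{r_{k'}}$, and because $\sigma_{k+1}(\omega_i)$ is again a weight of $L(\omega_i)$ this vector is nonzero. Setting $v_{\sigma_0(\omega_i)}=v_1^+$ and iterating, I obtain $v_{\sigma_{k+1}(\omega_i)}=\left(x_{k',0}^-\right)^{r_{k'}}v_{\sigma_k(\omega_i)}$; running the index to the full length of $w_i$ reaches the lowest weight $w_0(\omega_i)=-\omega_i$, so the resulting vector is a nonzero scalar multiple of $v_1^-$.

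Finally, I would make the product explicit in the three cases $i=1$, $2\leq i\leq l-1$, and $i=l$, matching the structure of $w_i$ and of the computation in Proposition \ref{rileq2b}. Reading the simple reflections of $w_i$ from right to left and replacing each $s_{k'}$ by the factor $\left(x_{k',0}^-\right)^{r_{k'}}$ (with exponent $2$ exactly at the positions where $r_{k'}=2$ was recorded in \ref{rileq2b}) produces the desired expression $v_1^-=y\cdot v_1^+$ with $y\in U(\nyso)$. I expect the only real difficulty to be bookkeeping rather than conceptual: I must check in each case that the exponents $r_{k'}$ agree with the entries obtained in Proposition \ref{rileq2b}, in particular at the places where the short root $\alpha_l=-\omega_{l-1}+2\omega_l$ and the root $\alpha_{l-1}=-\omega_{l-2}+2\omega_{l-1}-2\omega_l$ interact and force an exponent $2$, and that no intermediate factor annihilates the vector. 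The latter is automatic, since each intermediate weight lies on the $\W$-orbit of $\omega_i$, so $\sigma_k^{-1}(\alpha_{k'})\in\Delta^+$ and the corresponding weight space of $L(\omega_i)$ is one-dimensional and nonzero.
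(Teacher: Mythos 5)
Your proposal is correct and matches the paper's own proof essentially step for step: the paper likewise invokes Proposition \ref{dcofbdl} to place $v_1^{\pm}$ inside the summand $L(\omega_i)$, uses Proposition \ref{rileq2b} together with the reflection formula to obtain $\sigma_{k+1}(\omega_i)=\sigma_k(\omega_i)-r_{k'}\alpha_{k'}$, and then writes $v_1^-$ as the explicit product of factors $\left(x_{k',0}^{-}\right)^{r_{k'}}$ read off the reduced word $w_i$ (organized into two cases, $1\leq i\leq l-1$ and $i=l$, rather than your three). The only additions on your side are the explicit justifications of non-vanishing and of multiplicity one along the Weyl orbit, which the paper leaves implicit.
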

\begin{proof}
It follows from Proposition \ref{dcofbdl} that $V_a(\omega_i)=L(\omega_i)\oplus M$ as a $\nyso$-module, and both $v_{1}^{+}$ and $v_{1}^{-}$ are in $L(\omega_i)$. Note that $s_{j'}\Big(\sigma_j(\omega_i)\Big)=\sigma_j(\omega_i)-\frac{2\Big(\sigma_j(\omega_i),\alpha_{j'}\Big)}{\Big(\alpha_{j'},\alpha_{j'}\Big)}\alpha_{j'}.$  %It is well known that $$v_{1}^{-}=\prod_{i=1}^{l^2-2}\left(x_{i',0}^{-}\right)^{r_{i'}}v_{1}^{+}$$
By Proposition \ref{rileq2b},
$s_{j'}\Big(\sigma_j(\omega_i)\Big)=\sigma_j(\omega_i)-r_{j'}\alpha_{j'}.$

%Case 1: $i=1$.

%\footnotesize{
%$$v^{-}_1=x_{1,0}^{-}x_{2,0}^{-}\ldots x_{l-1,0}^{-}\left(x_{l,0}^{-}\right)^2x_{l-1,0}^{-}\ldots x_{1,0}^{-}v^{+}_1.$$%}

Case 1: $1\leq i\leq l-1$.
\begin{align*}
% \nonumber to remove numbering \left(before each equation\right)
  v^{-}_1 &= \left(x_{i,0}^{-}\ldots x_{l-1,0}^{-}\left(x_{l,0}^{-}\right)^2x_{l-1,0}^{-}\ldots x_{i,0}^{-}\right)\\
  &\Big(\left(x_{i-1,0}^{-}\right)^2x_{i,0}^{-}\ldots x_{l-1,0}^{-}\left(x_{l,0}^{-}\right)^2x_{l-1,0}^{-}\ldots x_{i,0}^{-}\Big) \\
  &\qquad\qquad\qquad\vdots\\
   &\Big(\left(x_{2,0}^{-}\right)^2\ldots \left(x_{i-1,0}^{-}\right)^2x_{i,0}^{-}\ldots x_{l-1,0}^{-}\left(x_{l,0}^{-}\right)^2x_{l-1,0}^{-}\ldots x_{i,0}^{-}\Big)\\
   &\Big(\left(x_{1,0}^{-}\right)^2\ldots \left(x_{i-1,0}^{-}\right)^2x_{i,0}^{-}\ldots x_{l-1,0}^{-}\left(x_{l,0}^{-}\right)^2x_{l-1,0}^{-}\ldots x_{i,0}^{-}\Big)v^{+}_1.
\end{align*}

Case 2: $i=l$.
$$v^{-}_1=
x_{l,0}^{-}\left(x_{l-1,0}^{-}x_{l,0}^{-}\right)\ldots\left(x_{2,0}^{-}x_{3,0}^{-}\ldots x_{l-1,0}^{-} x_{l,0}^{-}\right)\left(x_{1,0}^{-}x_{2,0}^{-}\ldots  x_{l-1,0}^{-}x_{l,0}^{-}\right)v^{+}_1.$$
\end{proof}
\section{On a lower bound for the local Weyl modules of $\yso$}

In this section, we would like to obtain a lower bound for the local Weyl module $W(\pi)$ of $\yso$. To achieve this, we construct a highest weight representation whose associated polynomial is $\pi$. It follows from the universal property of the local Weyl modules of the Yangian that a lower bound can be obtained.

\begin{proposition}\label{mtoysob}
Let $L=V_{a_1}(\omega_{b_1})\otimes V_{a_2}(\omega_{b_2})\otimes\ldots\otimes V_{a_k}(\omega_{b_k})$, where $b_{i}\in\{1,\ldots, l\}$. If $\operatorname{Re}\left(a_1\right)\geq \operatorname{Re}\left(a_2\right)\geq \ldots \geq \operatorname{Re}\left(a_k\right)$, then $L$ is a highest weight representation.
\end{proposition}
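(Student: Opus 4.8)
The plan is to prove Proposition \ref{mtoysob} by induction on $k$, following exactly the scheme already established for the $A_l$ case in Proposition \ref{Lhwr}, the $D_l$ case in Proposition \ref{mtoysl2l}, and the $C_l$ case in Proposition \ref{mtoyspl}. The base case $k=1$ is immediate since $V_{a_1}(\omega_{b_1})$ is irreducible and hence a highest weight representation. For the inductive step, assume $V_{a_2}(\omega_{b_2})\otimes\ldots\otimes V_{a_k}(\omega_{b_k})$ is a highest weight representation with highest weight vector $v^{+}=v_2^{+}\otimes\ldots\otimes v_k^{+}$. Writing $v_1^{+}$ and $v_1^{-}$ for the highest and lowest weight vectors of $V_{a_1}(\omega_{b_1})$, the vector $v_1^{+}\otimes v^{+}$ is maximal and is an eigenvector for all $h_{i,k}$ by Proposition \ref{Delta}, so the entire problem reduces, via Corollary \ref{v-w+gvtw}, to proving the single containment
$$v_1^{-}\otimes v^{+}\in \yso\left(v_1^{+}\otimes v^{+}\right).$$

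To establish this containment I would reuse the eight-step argument verbatim in structure. Using Proposition \ref{v+=gv-b}, the lowest weight vector $v_1^{-}$ is obtained from $v_1^{+}$ by applying an explicit word in the $x_{j,0}^{-}$ dictated by the reduced expression of $w_0$; this gives a chain of weight vectors $v_{\sigma_i(\omega_{b_1})}$ interpolating between $v_1^{+}$ and $v_1^{-}$, where $\sigma_{i+1}=s_{i'}\sigma_i$. The key local claim, analogous to Step 2 in the earlier propositions, is that each $Y_{i'}\left(v_{\sigma_i(\omega_{b_1})}\right)$ is a highest weight $Y_{i'}$-module: this follows because the relevant weight space is one-dimensional (being on the Weyl orbit of $\omega_{b_1}$) and because $\sigma_i^{-1}(\alpha_{i'})\in\Delta^{+}$ forces $v_{\sigma_i(\omega_{b_1})}$ to be maximal. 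Then, exactly as in Step 4, Proposition \ref{rileq2b} guarantees that the associated polynomial of $Y_{i'}\left(v_{\sigma_i(\omega_{b_1})}\right)$ has degree $1$ or $2$, so each building block is isomorphic to $W_1(a)$, $W_2(a)$, or $W_1(b)\otimes W_1(a)$. The crucial factorization identity
$$Y_{i'}\left(v_{\sigma_i(\omega_{b_1})}\otimes v_2^{+}\otimes\ldots\otimes v_k^{+}\right)=Y_{i'}\left(v_{\sigma_i(\omega_{b_1})}\right)\otimes Y_{i'}\left(v_2^{+}\right)\otimes\ldots\otimes Y_{i'}\left(v_k^{+}\right)$$
then follows from Corollaries \ref{tpihw} and \ref{tpihw2} together with the coproduct control in Proposition \ref{c1dgd2d}, provided the real parts of the roots are suitably ordered. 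Finally, Step 7 gives $v_{\sigma_{i+1}(\omega_{b_1})}\otimes v^{+}\in Y_{i'}\left(v_{\sigma_i(\omega_{b_1})}\otimes v^{+}\right)$, and downward induction on the subscript $i$ yields the desired containment.

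The main obstacle, and the reason this proposition requires its own treatment rather than being a formal consequence of the $B$, $C$, $D$ cases, is the verification of Step 5: that for each vector $v_{\sigma_i(\omega_{b_1})}$ the value $a$ (or the pair $b,a$) has $\operatorname{Re}(a)\geq\operatorname{Re}(a_1)$, so that the ordering hypothesis $\operatorname{Re}(a_1)\geq\ldots\geq\operatorname{Re}(a_k)$ feeds correctly into Corollaries \ref{tpihw} and \ref{tpihw2}. Determining these eigenvalues requires explicit computations with $h_{i',1}$ and $h_{i',2}$ acting on the chain vectors, and the $B_l$ root system introduces the short root $\alpha_l$ with $d_l$ differing from the other $d_i$, which forces a rescaling of the generators $x_{l,r}^{\pm},h_{l,r}$ (as was done in the $C_l$ case before Proposition \ref{mtoyspl}) and changes the arithmetic of the doubled steps $\left(x_{l,0}^{-}\right)^2$ appearing in Proposition \ref{v+=gv-b}. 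I would therefore defer the proof of Step 5 to a dedicated supplementary section paralleling Section \ref{mtoyspls}, handling separately the cases $b_1=1$, $2\leq b_1\leq l-1$, and $b_1=l$, and invoking Lemma \ref{g2puisd3} to read off the degree-two associated polynomials from the computed $h_{i',1}$- and $h_{i',2}$-eigenvalues. Granting Step 5, the remaining steps are routine and identical in spirit to the classical cases already completed.
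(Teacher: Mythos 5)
Your proposal matches the paper's own proof essentially step for step: the same induction on $k$, the reduction via Corollary \ref{v-w+gvtw} to showing $v_1^{-}\otimes v^{+}\in \yso\left(v_1^{+}\otimes v^{+}\right)$, the same eight-step chain argument through the vectors $v_{\sigma_i(\omega_{b_1})}$ using Propositions \ref{v+=gv-b}, \ref{rileq2b}, \ref{c1dgd2d} and Corollaries \ref{tpihw}, \ref{tpihw2}, and the same deferral of the Step 5 eigenvalue computations to a supplementary section split into the cases $b_1=1$, $2\leq b_1\leq l-1$, $b_1=l$ with rescaled generators. The only slip is cosmetic: for type $B_l$ the paper has $d_1=\ldots=d_{l-1}=2$ and $d_l=1$, so it is the generators $x_{i,r}^{\pm}, h_{i,r}$ with $i\leq l-1$ (rather than $x_{l,r}^{\pm}, h_{l,r}$) that are rescaled.
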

\begin{proof}

%There is a big difference to Proposition \ref{Lhwr} of Step 5. In there $Y_{i'}v_{\sigma_i\left(\omega_{b_1}\right)}$ is a 2-dimensional highest weight representation; but in the case of $\yo$, $Y_{i'}v_{\sigma_i\left(\omega_{b_1}\right)}$ could be dimension 2, 3 or 4, corresponding $W_1\left(a\right)$, $W_2\left(a\right)$, or $W_1\left(b\right)\otimes W_1\left(a\right)\left(\operatorname{Re}\left(b\right)>\operatorname{Re}\left(a\right)\right)$, respectively. But we will prove that $\operatorname{Re}\left(a\right)\geq \operatorname{Re}\left(a_1\right)$ in all three cases.

%Let $Y\left(2l\right)=Y\left(\mathfrak{so}\left(2l,\C\right)\right)$, and
Let $v_m^{+}$ be the highest weight vector of $V_{a_m}\left(\omega_{b_m}\right)$. Let $v_1^{-}$ be the lowest weight vector of $V_{a_1}\left(\omega_{b_1}\right)$.

We prove this proposition by induction on $k$.
Without loss of generality, we may assume that $k\geq 2$ and assume that $V_{a_2}(\omega_{b_2})\otimes V_{a_3}(\omega_{b_3})\otimes\ldots\otimes V_{a_k}(\omega_{b_k})$ is a highest weight representation of $\yso$. %Denote $Y_i=span\{x_{i,r}^{\pm}, h_{i,r}|r\in\Z_{\geq 0}\}$ for $1\leq i\leq l-1$, and $Y_l=span\{\tilde{x}_{l,r}^{\pm}, \tilde{h}_{l,r}|r\in\Z_{\geq 0}\}$.
Thus the highest weight vector of $V_{a_2}(\omega_{b_2})\otimes V_{a_3}(\omega_{b_3})\otimes\ldots\otimes V_{a_k}(\omega_{b_k})$ is $v^{+}=v_2^{+}\otimes \ldots\otimes v_k^{+}$. To show that $L$ is a highest weight representation, by Corollary \ref{v-w+gvtw}, it suffices to show that $$v^{-}_{1}\otimes v^{+}\in \yso\left(v^{+}_1\otimes v^{+}\right).$$ We divide the proof into the following steps.

Step 1: $\sigma_i^{-1}\left(\alpha_{i'}\right)\in \Delta^{+}$.

Proof: It is easy to check by the definition of $\sigma_i$ and the way we choose $i'$.

%Step 1: $l\left(s_{i'}\sigma_i\right)=l\left(\sigma_i\right)+1$.

%Proof: First off, $l\left(s_{i'}\sigma_i\right)=l\left(\sigma_i\right)\pm 1$ because $s_{i'}$ keeps all positive roots but $\alpha_{i'}$. The rest should follow from the reduced expression of the longest element $w_0$ of Weyl group of $\nysp$. %Let $T_i=\{\alpha\in \Phi^{+}| \sigma_i\left(\alpha\right)\in -\Phi^{+}\}$. Then $|T_i|=l\left(\sigma_i\right)$. Easy to see that
%$T_{1}=\{\mu_{i}-\mu_{i+1}\}\subseteq T_{2}=T_{1}\bigcup \{\mu_{i}-\mu_{i+2}\}\subseteq \ldots \subseteq T_{l-1}=T_{l-2}\bigcup \{\mu_{i}-\mu_{l}\}\subseteq T_{l}=T_{l-1}\bigcup \{\mu_i+\mu_{l}\}\subseteq T_{l+1}=T_{l}\bigcup \{\mu_i+\mu_{l-1}\}$.  Similarly we can show that $T_i\subseteq T_{i+1}$ in general. $l\left(s_{i'}\sigma_i\right)\geq l\left(\sigma_i\right)$. Then $l\left(s_{i'}\sigma_i\right)=l\left(\sigma_i\right)+1$.

%Proof: First off, $l\left(s_{i'}\sigma_i\right)=l\left(\sigma_i\right)\pm 1$ because $s_{i'}$ keeps all positive roots but $\alpha_{i'}$. Let $T_i=\{\alpha\in \Phi^{+}| \sigma_i\left(\alpha\right)\in -\Phi^{+}\}$. Then $|T_i|=l\left(\sigma_i\right)$. Easy to see that
%$T_1=\{\mu_{b_1}-\mu_{b_1+1}\}$, and $T_2=\{\mu_{b_1}-\mu_{b_1+1}\}$. It follows from the definition of $\sigma_i$ that $T_i\subseteq T_{i+1}$, which gives $l\left(s_{i'}\sigma_i\right)]\geq l\left(\sigma_i\right)$. Then $l\left(s_{i'}\sigma_i\right)=l\left(\sigma_i\right)+1$.

Step 2: $Y_{i'}\left(v_{\sigma_i\left(\omega_{b_1}\right)}\right)$ is a highest weight module of $Y_{i'}$.
%For example, $V_{i}^{\left(k\right)}=x_{2,0}^{-}\ldots \left(x_{n+1,0}^{-}\ldots x_{b_1,0}^{-}\right)v_1^{+}$, then $Y_{k+1}$ means $Y_{3}$.

Proof: Since the weight $\sigma_i\left(\omega_{b_1}\right)$ is on the Weyl group orbit of the highest weight and the representation $V_{a_1}(\omega_{b_1})$ is finite-dimensional, the weight space of weight $\sigma_i\left(\omega_{b_1}\right)$ is 1-dimensional. The elements $h_{j,s}$ form a commutative subalgebra, so $v_{\sigma_i\left(\omega_{b_1}\right)}$ is an eigenvector of $h_{i',r}$. Therefore we only have to show that $v_{\sigma_i\left(\omega_{b_1}\right)}$ is a maximal vector. Suppose to the contrary that $x_{i',k}^{+}v_{\sigma_i\left(\omega_{b_1}\right)}\neq 0$.  Then $x_{i',k}^{+}v_{\sigma_i\left(\omega_{b_1}\right)}$ is a weight vector of weight $\sigma_i(\omega_{b_1})+\alpha_{i'}$, so $\omega_{b_1}+\sigma_i^{-1}\left(\alpha_{i'}\right)$ is a weight. Because $\sigma_i^{-1}\left(\alpha_{i'}\right)\in \Delta^{+}$,  $\omega_{b_1}$ is a weight preceding the weight $\omega_{b_1}+\sigma_i^{-1}\left(\alpha_{i'}\right)$, which contradicts the maximality of $\omega_{b_1}$ in the representation $L\left(\omega_{b_1}\right)$.
%Proof: By the defining relations of Yangian, $v_{\sigma_i\left(\omega_{b_1}\right)}$ is an eigenvector of $\tilde{h}_{i',r}$. We only have to show that $v_{\sigma_i\left(\omega_{b_1}\right)}$ is a maximal vector. Suppose to the contrary that $x_{i',k}^{+}v_{\sigma_i\left(\omega_{b_1}\right)}\neq 0$.  Then $x_{i',k}^{+}v_{\sigma_i\left(\omega_{b_1}\right)}$ is a weight vector of $\sigma_i\omega_{b_1}+\alpha_{i'}$. Therefore $\omega_{b_1}$ precedes weight $\omega_{b_1}+\sigma_i^{-1}\left(\alpha_{i'}\right)$, which contradicts to the maximality of $\omega_{b_1}$ in the representation $L\left(\omega_{b_1}\right)$.

%Step 2: [Remark, Section 3.1, \cite{ChPr4}] Let $\sigma$ be any element of the Weyl group of a finite-dimensional complex simple Lie algebra $\mathfrak{g}$ and set $\Delta_{\sigma}^{+}=\{\alpha\in \Delta^{+}: \sigma^{-1}\left(\alpha\right)\in -\Delta^{+}\}$. Since $V_{\sigma_i\left(\omega_{b_1}\right)}$ is 1-dimensional, there exists scalars $d_{i'k,\sigma_i}^{+}$ such that $h_{i',k} v_{\sigma_i\left(\omega_{b_1}\right)}=d_{i'k,\sigma_i}^{+}$. Then
%\begin{equation*}
%1+\sum_{k\geq 0}\frac{d_{i'k,\sigma_i}^{+}}{u^{k+1}}=\begin{cases}
%\frac{P_{i',\sigma_i}^{+}\left(u+\frac{1}{2}\left(\alpha_i,\alpha_i\right)\right)}{P\left(u\right)}\qquad \mathrm{if}\qquad \alpha_i\notin \Delta_{\sigma}^{+}, \\
%\frac{P\left(u\right)}{P_{i',\sigma_i}^{+}\left(u+\frac{1}{2}\left(\alpha_i,\alpha_i\right)\right)}\qquad \mathrm{if}\qquad \alpha_i\in \Delta_{\sigma}^{+}.
%\end{cases}
%\end{equation*}

%Moreover we may assume that $P\left(u\right)$ is monic.

Step 3: Let $P\left(u\right)$ be the associated polynomial of $Y_{i'}\left(v_{\sigma_i\left(\omega_{b_1}\right)}\right)$. Then as a highest weight $Y_{i'}$-module, $Y_{i'}\left(v_{\sigma_i\left(\omega_{b_1}\right)}\right)$ has highest weight $\frac{P\left(u+1\right)}{P\left(u\right)}$.%$\frac{P_{i',\sigma_i}^{+}\left(u+1\right)}{P\left(u\right)}$.%$P\left(u\right)$

Proof: It follows from the representation theory of $\ysl$.

Step 4: $P\left(u\right)$ has of degree 1 or 2.

Proof: The degree of $P\left(u\right)$ equals the eigenvalue of $v_{\sigma_i\left(\omega_{b_1}\right)}$ under $\tilde{h}_{i',0}$. Note that $$\tilde{h}_{i',0} v_{\sigma_i\left(\omega_{b_1}\right)}=\Bigg(\sigma_{i}\left(\omega_{b_1}\right)\left(\tilde{h}_{i',0}\right)\Bigg)v_{\sigma_i\left(\omega_{b_1}\right)}.$$
It follows from Proposition \ref{rileq2b} that the degree of $P\left(u\right)$ equals 1 or 2.

%Proof: It follows from Steps 1, 2 and 3.

Step 5: If $\operatorname{Deg}\Big(P\left(u\right)\Big)=1$, $Y_{i'}\left(v_{\sigma_i\left(\omega_{b_1}\right)}\right)$ is 2-dimensional and isomorphic to $W_1\left(a\right)$. If $\operatorname{Deg}\Big(P\left(u\right)\Big)=2$, then $Y_{i'}\left(v_{\sigma_i\left(\omega_{b_1}\right)}\right)$ is either 3-dimensional or 4-dimensional and isomorphic to $W_2\left(a\right)$ or $W_1\left(b\right)\otimes W_1\left(a\right)\left(\operatorname{Re}\left(b\right)>\operatorname{Re}\left(a\right)\right)$, respectively. Moreover, for any $i\in I$, $\operatorname{Re}\left(a\right)\geq \operatorname{Re}\left(\frac{a_1}{d_i}\right)$. The values of both $b$ and $a$ will be explicitly computed in the next section.

Proof: We suppose that this step is true for this moment. This step will be showed in Section \ref{mtoysols}.

Step 6:  $Y_{i'}\left(v_{\sigma_i\left(\omega_{b_1}\right)}\otimes v_2^{+}\otimes\ldots \otimes v_k^{+}\right)=Y_{i'}\left(v_{\sigma_i\left(\omega_{b_1}\right)}\right)\otimes Y_{i'}\left(v_2^{+}\right)\otimes\ldots\otimes Y_{i'}\left(v_k^{+}\right)$.

Proof: For $i\neq l$, let $W$ be one of the modules $W_1\left(a\right)$, $W_2\left(a\right)$ or $W_1\left(b\right)\otimes W_1\left(a\right)$; for $i=l$, let $W$ be one of the modules $W_1\left(a\right)$, $W_2\left(a\right)$ or $W_1\left(b\right)\otimes W_1\left(a\right)$. $Y_{i'}\left(v_m^{+}\right)$ is nontrivial if and only if $b_m=i'$.    Suppose in $\{b_1,\ldots,b_k\}$ that $b_{j_1}=\ldots=b_{j_{m}}=b_j=i'$ with $j_1<\ldots<j_{m}$; moreover, if $s\notin\{j_1,\ldots, j_m\}$, then $b_s\neq i'$.
Note in Step 5 that $Y_{i'}\left(v_{\sigma_i\left(\omega_{b_1}\right)}\right)\cong W$. Thus if $i'=l$,
\begin{align*}
% \nonumber to remove numbering (before each equation)
   Y_{i'}\left(v_{\sigma_i\left(\omega_{b_1}\right)}\right)&\otimes Y_{i'}\left(v_2^{+}\right)\otimes\ldots\otimes Y_{i'}\left(v_k^{+}\right) \\
   &= \begin{cases}
 W\otimes W_1\left(a_{j_1}\right)\otimes \ldots\otimes W_{1}\left(a_{j_m}\right)\text{\ if $b_1\neq i'$} \\
 W\otimes W_1\left(a_{j_2}\right)\otimes \ldots\otimes W_{1}\left(a_{j_m}\right)\text{\ if $b_1= i'$};
\end{cases}
\end{align*}
if $i'\neq l$,
\begin{align*}
% \nonumber to remove numbering (before each equation)
  Y_{i'}\left(v_{\sigma_i\left(\omega_{b_1}\right)}\right) &\otimes Y_{i'}\left(v_2^{+}\right)\otimes\ldots\otimes Y_{i'}\left(v_k^{+}\right) \\
   &=\begin{cases}
 W\otimes W_1\left(\frac{a_{j_1}}{2}\right)\otimes \ldots\otimes W_{1}\left(\frac{a_{j_m}}{2}\right)\text{\ if $b_1\neq i'$} \\
 W\otimes W_1\left(\frac{a_{j_2}}{2}\right)\otimes \ldots\otimes W_{1}\left(\frac{a_{j_m}}{2}\right)\text{\ if $b_1= i'$}.
\end{cases}
\end{align*}

Since $\operatorname{Re}\left(a\right)\geq \operatorname{Re}\left(a_1\right)\geq \operatorname{Re}\left(a_{j_1}\right)\geq \ldots\geq \operatorname{Re}\left(a_{j_m}\right)$, it follows from either Corollary \ref{tpihw} or Corollary \ref{tpihw2} that $Y_{i'}\left(v_{\sigma_i\left(\omega_{b_1}\right)}\right)\otimes Y_{i'}\left(v_2^{+}\right)\otimes\ldots\otimes Y_{i'}\left(v_k^{+}\right)$ is a highest weight $Y_{i'}$-module with highest weight vector $v_{\sigma_i\left(\omega_{b_1}\right)}\otimes v_2^{+}\otimes\ldots \otimes v_k^{+}$. Thus
$$Y_{i'}\left(v_{\sigma_i\left(\omega_{b_1}\right)}\otimes v_2^{+}\otimes\ldots \otimes v_k^{+}\right)\supseteq Y_{i'}\left(v_{\sigma_i\left(\omega_{b_1}\right)}\right)\otimes Y_{i'}\left(v_2^{+}\right)\otimes\ldots\otimes Y_{i'}\left(v_k^{+}\right).$$

By the coproduct of the Yangians and Proposition \ref{c1dgd2d} that $$Y_{i'}\left(v_{\sigma_i\left(\omega_{b_1}\right)}\otimes v_2^{+}\otimes\ldots \otimes v_k^{+}\right)\subseteq Y_{i'}\left(v_{\sigma_i\left(\omega_{b_1}\right)}\right)\otimes Y_{i'}\left(v_2^{+}\right)\otimes\ldots\otimes Y_{i'}\left(v_k^{+}\right).$$
Therefore the claim is true.

Step 7: $v_{\sigma_{i+1}(\omega_{b_1})}\otimes v^{+}\in Y_{i'}\left(v_{\sigma_i\left(\omega_{b_1}\right)}\otimes v^{+}\right)$.

Proof: $v_{\sigma_{i+1}(\omega_{b_1})}\otimes v^{+}\in Y_{i'}\left(v_{\sigma_{i}\omega_{b_1}}\right)\otimes Y_{i'}\left(v^{+}\right)=Y_{i'}\left(v_{\sigma_i\left(\omega_{b_1}\right)}\otimes v^{+}\right)$ by Step 6.

Step 8: $v_1^{-}\otimes v^{+}\in \yso \left(v_1^{+}\otimes v^{+}\right)$.

Proof: It follows from Step 7 immediately by induction.

It follows from Step 8 and Corollary \ref{v-w+gvtw} that $L=\yso \left(v_1^{+}\otimes v^{+}\right)$.
\end{proof}
The proof above leads us to an irreducibility criterion for the tensor product of fundamental representations of the Yangian.
%\begin{proposition}
%Let $A=\{1,2,\dots, 2l-2,2l-1\}$. If $a_j-a_i\notin A$ for $1\leq i<j\leq k$, then $L$ is a highest weight representation of $\yso$.
%\end{proposition}
%\begin{proof}
%The proof is similar to the one of Corollary \ref{yocap1}, so we omit the proof. We list the root(s) of associated polynomial of $Y_{i'}\left(v_{\sigma_i\left(\omega_{b_1}\right)}\right)$. The explicit computations will be given in the next section.
%In the next section, we will show that the set of the roots of associated polynomials of $Y_{i'}\left(v_{\sigma_i\left(\omega_{b_1}\right)}\right)$ for all possible $i$ values is $\{\frac{a_1}{2}, \frac{a_1}{2}+\frac{1}{2},\ldots, \frac{a_1}{2}+l-2, \frac{a_1}{2}+l-\frac{3}{2}\}\bigcup \{a_1+l-b_1-1,a_1+l-b_1,\ldots, a_1+l+b_1-3,a_1+l+b_1-2\}$. The fact that $a_j-a_i\notin A$ for $1\leq i<j\leq k$ guarantees the difference of root(s) of the associated polynomials of $Y_{i'}\left(v_{\sigma_i\left(\omega_{b_1}\right)}\right)$ for any possible value $i$ and $W_1\left(\frac{a_{j}}{d_{b_j}}\right)$ does not equal 1. It follows from Lemma \ref{beautiful} that $Y_{i'}\left(v_{\sigma_i\left(\omega_{b_1}\right)}\right)\otimes Y_{i'}\left(v_2^{+}\right)\otimes\ldots\otimes Y_{i'}\left(v_k^{+}\right)$ is a highest weight module.

\begin{remark}
The following is the record of the root (s) of the associated polynomials of $Y_{i'}\left(v_{\sigma_i\left(\omega_{b_1}\right)}\right)$ for the possible $i$ values. We refer to Remark \ref{yocap1} for the notation used below.

When $1\leq b_1\leq l-1$,

\noindent$\frac{a_1}{2}\xlongrightarrow{x_{b_1,0}^{-}}\frac{a_1}{2}+\frac{1}{2}\xlongrightarrow{x_{b_1+1,0}^{-}}\frac{a_1}{2}+1 \xlongrightarrow{x_{b_1+2,0}^{-}}\ldots \xlongrightarrow{x_{l-2,0}^{-}}\frac{a_1}{2}+\frac{l-b_1-1}{2}\xlongrightarrow{x_{l-1,0}^{-}}$\newline $\left(a_1+l-b_1, a_1+l-b_1-1\right) \xlongrightarrow{\left(x_{l,0}^{-}\right)^2}\frac{a_1}{2}+\frac{l-b_1}{2} \xlongrightarrow{x_{l-1,0}^{-}}\frac{a_1}{2}+\frac{l-b_1+1}{2}\xlongrightarrow{x_{l-2,0}^{-}}\ldots \xlongrightarrow{x_{b_1+1,0}^{-}}\frac{a_1}{2}+\frac{l-b_1}{2}+\frac{l-b_1-1}{2}=\frac{a_1}{2}+l-b_1-\frac{1}{2} \xlongrightarrow{x_{b_1,0}^{-}}\left(\frac{a_1}{2}+l-b_1, \frac{a_1}{2}+\frac{1}{2}\right) \xlongrightarrow{\left(x_{b_1-1,0}^{-}\right)^2}\left(\frac{a_1}{2}+l-b_1+\frac{1}{2}, \frac{a_1}{2}+1\right)
\xlongrightarrow{\left(x_{b_1-2,0}^{-}\right)^2}\ldots \xlongrightarrow{\left(x_{2,0}^{-}\right)^2}\\\left(\frac{a_1}{2}+l-b_1+\frac{b_1-2 }{2}=\frac{a_1}{2}+l-\frac{b_1}{2}-1, \frac{a_1}{2}+\frac{b_1-1 }{2}\right)\xlongrightarrow{\left(x_{1,0}^{-}\right)^2}$

\noindent(The second parenthesis) $\left(\frac{a_1}{2}+1\right)\xlongrightarrow{x_{b_1,0}^{-}}\left(\frac{a_1}{2}+1\right)+\frac{1}{2}\xlongrightarrow{x_{b_1+1,0}^{-}}\ldots \xlongrightarrow{\left(x_{3,0}^{-}\right)^2}$\newline$\left(\frac{a_1}{2}+l-\frac{b_1-1}{2}, \frac{a_1}{2}+\frac{b_1}{2}\right)\xlongrightarrow{\left(x_{2,0}^{-}\right)^2}$

$\ldots$

\noindent(The last parenthesis) $\left(\frac{a_1}{2}+b_1-1\right)\xlongrightarrow{x_{b_1,0}^{-}}\left(\frac{a_1}{2}+b_1-1\right)+\frac{1}{2}\xlongrightarrow{x_{b_1+1,0}^{-}}\ldots \xlongrightarrow{x_{b_1+1,0}^{-}}\frac{a_1}{2}+b_1-1+l-b_1-\frac{1}{2}=\frac{a_1}{2}+l-\frac{3}{2} \xlongrightarrow{x_{b_1,0}^{-}}\text{the lowest weight vector reached}$.

When $b_1=l$,

\noindent$a_1\xlongrightarrow{x_{l,0}^{-}}\frac{a_1}{2}+\frac{1}{2}\xlongrightarrow{x_{l-1,0}^{-}}\ldots\xlongrightarrow{x_{3,0}^{-}} \frac{a_1}{2}+\frac{l-2}{2}\xlongrightarrow{x_{2,0}^{-}} \frac{a_1}{2}+\frac{l-1}{2}\xlongrightarrow{x_{1,0}^{-}}$

\noindent(The second parenthesis) $a_1+2\xlongrightarrow{x_{l,0}^{-}}\frac{a_1+2}{2}+\frac{1}{2}\xlongrightarrow{x_{l-1,0}^{-}}\ldots\xlongrightarrow{x_{3,0}^{-}} \frac{a_1+2}{2}+\frac{l-2}{2}=\frac{a_1}{2}+\frac{l}{2}\xlongrightarrow{x_{2,0}^{-}}$

$\ldots$

\noindent(The last two parentheses) $a_1+2\left(l-2\right)\xlongrightarrow{x_{l,0}^{-}}\frac{a_1+2\left(l-2\right)}{2}+\frac{1}{2}\xlongrightarrow{x_{l-1,0}^{-}}a_1+2\left(l-1\right)\xlongrightarrow{x_{l,0}^{-}}\text{the lowest weight vector reached}$.
\end{remark}
%\end{proof}
A precise condition for the cyclicity of the tensor product $L$ can be obtained in the next Theorem. We first show the following lemma. Since the proof is similar to the proof of Lemma \ref{C3l317ss}, we omit the proof.
\begin{lemma}
$V_{a_m}\left(\omega_{b_m}\right)\otimes V_{a_n}\left(\omega_{b_n}\right)$ is a highest weight representation if $a_n-a_m\notin S\left(b_m,b_n\right)$, where the set $S\left(b_m,b_n\right)$ is defined as follows:
\begin{enumerate}
  \item $S\left(b_m,b_n\right)=\left\{|b_m-b_n|+2+2r, 2l-(b_m+b_n)+1+2r|0\leq r<\text{min}\{b_m, b_n\}\right\}$, where $1\leq b_m, b_n\leq l-1$;
  \item  $S\left(l,b_n\right)=\left\{l-b_n+2+2r|0\leq r<b_n, 1\leq b_n\leq l-1\right\}$;
  \item $S\left(b_m,l\right)=\left\{l-b_m+1+r, l-b_m+r|0\leq r< b_m, 1\leq b_m\leq l-1\right\}$;
   \item $S\left(l,l\right)=\left\{1,3,\ldots,2l-1\right\}$.
\end{enumerate}
\end{lemma}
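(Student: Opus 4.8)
The plan is to follow the proof of Lemma~\ref{C3l317ss} in outline, substituting the type-$B$ path data of Proposition~\ref{v+=gv-b} and the root records in the Remark following Proposition~\ref{mtoysob} for the type-$A$ data used there. By Proposition~\ref{mtoysob}, to prove that $V_{a_m}(\omega_{b_m})\otimes V_{a_n}(\omega_{b_n})$ is a highest weight representation it suffices to check, at each step $i$ of the path from the highest weight vector $v_1^{+}$ to the lowest weight vector $v_1^{-}$ of $V_{a_m}(\omega_{b_m})$, that the $Y_{i'}\cong\ysl$-module $Y_{i'}(v_{\sigma_i(\omega_{b_m})})\otimes Y_{i'}(v_n^{+})$ is highest weight. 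Whenever $i'\neq b_n$ the factor $Y_{i'}(v_n^{+})$ is one-dimensional and the condition is automatic, so the only constraints come from the steps with $i'=b_n$, and the task reduces to locating the occurrences of $x_{b_n,0}^{-}$ in the expression for $v_1^{-}$ and reading off the associated roots.

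First I would record the roots of the two $\ysl$-factors. Since the long nodes $1,\dots,l-1$ carry $d_i=2$, their generators must be rescaled via $\tilde h_{i,r}=\frac{1}{2^{r+1}}h_{i,r}$, so $Y_{b_n}(v_n^{+})\cong W_1(a_n/2)$ when $b_n\neq l$; at the short node $l$ no rescaling occurs and $Y_l(v_n^{+})\cong W_1(a_n)$. Dually, along the path the root values attached to the nodes $\neq l$ appear halved, as $\tfrac{a_m}{2}+c$, while those attached to node $l$ stay integral, exactly as tabulated in the Remark after Proposition~\ref{mtoysob}. The factor $Y_{b_n}(v_{\sigma_i(\omega_{b_m})})$ is $W_1(a)$ when its associated polynomial has degree one and a quotient of $W_1(b)\otimes W_1(a)$ at the degree-two positions predicted by Proposition~\ref{rileq2b}. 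By the $\ysl$-cyclicity criteria of Proposition~\ref{ctpihw} and Lemma~\ref{beautiful}, cyclicity at such a step fails precisely when $a_n$ minus one of these roots equals $1$; collecting the excluded values of $a_n-a_m$ over all steps with $i'=b_n$ yields the set $S(b_m,b_n)$.

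The remaining work is the case-by-case bookkeeping matching the four items of the statement. For $1\le b_m,b_n\le l-1$ (item~1) I use Case~1 of Proposition~\ref{v+=gv-b}: within each parenthesised block the index sweeps up from $b_m$ to $l-1$, passes through $(x_{l,0}^{-})^2$, and sweeps back down, so $x_{b_n,0}^{-}$ occurs in two runs per block; tracking the halved root value in each run across the successive blocks produces the two families $|b_m-b_n|+2+2r$ and $2l-(b_m+b_n)+1+2r$ with $0\le r<\min\{b_m,b_n\}$. Item~2 ($b_m=l$) uses Case~2 of Proposition~\ref{v+=gv-b}, where node $b_n\le l-1$ is met only by single descents, so its degree-one halved roots give the single family $l-b_n+2+2r$. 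Item~3 ($b_n=l$) is the case $i'=l$: here $Y_l(v_n^{+})\cong W_1(a_n)$ is unrescaled, and the path meets node $l$ only through the doubled steps $(x_{l,0}^{-})^2$, whose degree-two integral roots come in consecutive pairs and hence give the two interleaved families $l-b_m+1+r$ and $l-b_m+r$; this halving-versus-no-halving asymmetry is exactly why $S(l,b_n)$ and $S(b_m,l)$ must be listed separately. Item~4 ($b_m=b_n=l$) uses Case~2 of Proposition~\ref{v+=gv-b} and the node-$l$ values $a_m,a_m+2,\dots,a_m+2(l-1)$, giving the odd integers $1,3,\dots,2l-1$.

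I expect the main obstacle to be handling the short node $l$ correctly: one must keep straight both the absence of rescaling at $l$, which makes its roots integral, and the doubling $(x_{l,0}^{-})^2$ along the path, which makes the node-$l$ contributions degree-two and paired, since it is the interaction of these two features that shifts the excluded differences by integers rather than half-integers and accounts for the asymmetry between items~2 and~3. Once the spin-node contributions are pinned down, the argument at the long nodes is a direct transcription of the position counting in Lemma~\ref{C3l317ss}.
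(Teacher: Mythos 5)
Your overall strategy coincides with the paper's: the paper in fact omits the proof of this lemma, saying only that it is ``similar to the proof of Lemma \ref{C3l317ss}'', and your proposal is exactly that transcription, using the type-$B$ path of Proposition \ref{v+=gv-b}, the rescaling $\tilde h_{i,r}=\frac{1}{2^{r+1}}h_{i,r}$ at the long nodes, and the root records following Proposition \ref{mtoysob}. Your bookkeeping for items (1), (2) and (4) is correct: checking the up-sweep and down-sweep roots against the Remark after Proposition \ref{mtoysob} and against the propositions of Section \ref{mtoysols} reproduces the families $|b_m-b_n|+2+2r$ and $2l-(b_m+b_n)+1+2r$ (with $0\leq r<\min\{b_m,b_n\}$ occurrences, via single sweeps when $b_n\geq b_m$ and via the squared steps when $b_n<b_m$), the family $l-b_n+2+2r$, and the odd integers $\{1,3,\ldots,2l-1\}$, exactly as you describe.

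Item (3), however, contains a genuine gap: the step ``whose degree-two integral roots come in consecutive pairs and hence give the two interleaved families $l-b_m+1+r$ and $l-b_m+r$'' does not follow from the data you cite. Each block does contribute a consecutive pair of integral roots at node $l$, but completing one full block replaces $a_m$ by $a_m+2$, not by $a_m+1$: this is Lemma \ref{c3l3b} together with the subsequent propositions of Section \ref{mtoysols}, where the block containing $v_{m+1}$ carries the base value $a_1+2m$. Hence the $r$-th block has node-$l$ roots $\big(a_m+2r+l-b_m-1,\; a_m+2r+l-b_m\big)$, and applying Lemma \ref{beautiful} with $Y_l(v_n^{+})\cong W_1(a_n)$ excludes the pair $\{\,l-b_m+2r,\; l-b_m+1+2r\,\}$ from each of the $b_m$ blocks. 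The union is the set of all $2b_m$ integers $\{l-b_m,\,l-b_m+1,\,\ldots,\,l+b_m-1\}$, whereas the families you (and the statement) write down union only to the $b_m+1$ integers $\{l-b_m,\ldots,l\}$; the two agree only when $b_m=1$. So either item (3) of the statement carries a typo ($+2r$ degraded to $+r$), which your sketch silently reproduces rather than proves or flags, or your spin-node bookkeeping loses the per-block shift of $2$. The paper's own cross-check in Remark \ref{c5ysoris1} supports the $2r$ version: Chari's set $\mathcal{S}(l-i+1,l)$ there has $2(l-i+1)=2b_m$ exponents, which after halving and shifting by a constant give precisely $\{l-b_m,\ldots,l+b_m-1\}$, not the $b_m+1$ values of item (3) as stated. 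To make your proof of item (3) sound you must carry the $+2$ per-block shift through and state the resulting set explicitly.
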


Similar to the proof of Theorem \ref{3mt2icl}, we can prove the following Theorem.
\begin{theorem}
Let $L=V_{a_1}(\omega_{b_1})\otimes V_{a_2}(\omega_{b_2})\otimes\ldots\otimes V_{a_k}(\omega_{b_k})$, and $S(b_i, b_j)$ be defined as the above lemma.
\begin{enumerate}
  \item If $a_j-a_i\notin S(b_i, b_j)$ for $1\leq i<j\leq k$, then $L$ is a highest weight representation of $\yso$.
  \item If $a_j-a_i\notin S(b_i, b_j)$ for $1\leq i\neq j\leq k$, then $L$ is an irreducible representation of $\yso$.
\end{enumerate}
\end{theorem}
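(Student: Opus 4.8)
The plan is to run the proof of Proposition \ref{mtoysob} essentially verbatim, changing only the single ingredient that forces cyclicity of the rank-one subproblems (Step 6), and then to deduce irreducibility from the cyclicity of $L$ together with that of its left dual. For part (1) I would keep all the notation of Proposition \ref{mtoysob}: the Weyl-group elements $\sigma_i$, the nodes $i'$ determined by $\sigma_{i+1}=s_{i'}\sigma_i$, the weight vectors $v_{\sigma_i(\omega_{b_1})}$ along the path from $v_1^+$ to $v_1^-$ given in Proposition \ref{v+=gv-b}, and the subalgebras $Y_{i'}\cong\ysl$. The point is that Steps 1--5 of that proof never use the ordering $\operatorname{Re}(a_1)\geq\cdots\geq\operatorname{Re}(a_k)$; they use only the rank-one representation theory of $\ysl$ and the explicit computations of Section \ref{mtoysols}, which identify each $Y_{i'}(v_{\sigma_i(\omega_{b_1})})$ with one of $W_1(a)$, $W_2(a)$, or $W_1(b)\otimes W_1(a)$ and record the roots of its associated polynomial. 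Likewise $Y_{i'}(v_m^+)$ is trivial, or a copy of $W_1(a_m)$ when $i'=l$, or a copy of $W_1(a_m/2)$ when $i'\neq l$, the rescaling by $d_{i'}$ being as in the preceding chapters. Steps 7 and 8 are purely formal. Hence everything reduces to Step 6.

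In Step 6 the only property of the subscripts that is used is that $Y_{i'}(v_{\sigma_i(\omega_{b_1})})\otimes Y_{i'}(v_2^+)\otimes\cdots\otimes Y_{i'}(v_k^+)$ is a highest weight $Y_{i'}$-module. Instead of invoking decreasing real parts through Corollaries \ref{tpihw} and \ref{tpihw2}, I would invoke Lemma \ref{beautiful}: it suffices that, listing the nontrivial factors in the order induced by the tensor order on $L$, no root of a later associated polynomial exceeds a root of an earlier one by exactly $1$ in the $\ysl$-normalization, that is, after rescaling by $d_{i'}$. The content of the lemma preceding the theorem, which defines $S(b_m,b_n)$, is precisely that this fails for the pair of factors coming from $V_{a_m}(\omega_{b_m})$ and $V_{a_n}(\omega_{b_n})$ exactly when $a_n-a_m\in S(b_m,b_n)$; this is read off from the explicit root lists of Section \ref{mtoysols}, summarized in the Remark following Proposition \ref{mtoysob}. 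Therefore, under the hypothesis $a_j-a_i\notin S(b_i,b_j)$ for all $i<j$, the hypothesis of Lemma \ref{beautiful} holds at every node $i'$, Step 6 goes through, and $L$ is a highest weight representation.

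For part (2) I would use the criterion of Proposition \ref{VoWWoVhi}: $L$ is irreducible as soon as both $L$ and its left dual ${}^tL$ are highest weight. By Lemma \ref{dualfrc1} together with ${}^t(V\otimes W)={}^tW\otimes{}^tV$, and using that $w_0=-1$ for $\nyso$ so that $-w_0(i)=i$, the dual is ${}^tL=V_{a_k-\kappa}(\omega_{b_k})\otimes\cdots\otimes V_{a_1-\kappa}(\omega_{b_1})$, a tensor product of fundamental representations of the same family in reversed order. Applying part (1) to ${}^tL$ and using the symmetry $S(b_m,b_n)=S(b_n,b_m)$ recorded for these sets, the cyclicity condition for ${}^tL$ becomes $a_i-a_j\notin S(b_i,b_j)$ for $i<j$. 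Combined with the condition already used for $L$, the two together are exactly $a_j-a_i\notin S(b_i,b_j)$ for all $i\neq j$, which is the hypothesis. Hence both $L$ and ${}^tL$ are highest weight, and $L$ is irreducible.

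The main obstacle lies entirely inside Step 6: one must verify that the explicitly computed roots of the associated polynomials of the modules $Y_{i'}(v_{\sigma_i(\omega_{b_1})})$ mesh with the definition of $S(b_m,b_n)$, so that \emph{$a_n-a_m\in S(b_m,b_n)$} is genuinely equivalent to \emph{some later root exceeds an earlier root by $1$}. This bookkeeping is delicate because of the two normalizations $d_{i'}=1$ for $i'=l$ versus $d_{i'}=2$ for $i'\neq l$, and because the same node $i'$ may be visited several times along the path, so one must range over all admissible indices $i$ and compare against every nontrivial $Y_{i'}(v_m^+)$. This is, however, the same pattern already carried out for Theorem \ref{3mt2icl}; once the root lists of Section \ref{mtoysols} are in hand, the verification is mechanical.
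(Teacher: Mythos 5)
Your part (1) is, in substance, exactly the paper's argument: the paper proves this theorem by rerunning Proposition \ref{mtoysob} and replacing the one step that uses the real-part ordering, namely Step 6's appeal to Corollaries \ref{tpihw} and \ref{tpihw2}, with the root-difference criterion of Lemma \ref{beautiful}, which is precisely what the sets $S(b_i,b_j)$ are designed to encode (this is the content of the model proof, Theorem \ref{3mt2icl}, to which the paper defers). So that half of your proposal is correct and takes the same route.

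Part (2), however, contains a genuine error: the symmetry $S(b_m,b_n)=S(b_n,b_m)$ that you invoke is false for the sets defined in the lemma preceding this theorem. For example $S(l,1)=\{l+1\}$, while $S(1,l)=\{l-1,\,l\}$; more generally $S(l,b_n)$ is an arithmetic progression of step $2$, whereas $S(b_n,l)$ is a block of consecutive integers, so the two never coincide. (The symmetry does hold for the type $A$ sets, where the paper verifies it inside Theorem \ref{cp8c3tng2}, and for the type $D$ sets, where it is recorded before Theorem \ref{mt2c4}; you appear to have imported it from there, but the paper makes no such claim in this chapter.) With the false symmetry your chain of implications breaks: the hypothesis of part (2), read at the ordered pair $(j,i)$ with $i<j$, gives $a_i-a_j\notin S(b_j,b_i)$, not $a_i-a_j\notin S(b_i,b_j)$, so the condition you derive for ${}^tL$ does not follow from the hypothesis. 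The repair is immediate and shows the symmetry was never needed. Since $w_0=-1$ for $\nyso$, one has ${}^tL=V_{a_k-\kappa}(\omega_{b_k})\otimes\cdots\otimes V_{a_1-\kappa}(\omega_{b_1})$, the same nodes in reversed order; applying part (1) to ${}^tL$ demands exactly $a_i-a_j\notin S(b_j,b_i)$ for $i<j$, where the first argument $b_j$ is the node of the factor occurring earlier in ${}^tL$. That is literally the hypothesis of part (2) at the pair $(j,i)$, so both $L$ and ${}^tL$ are highest weight and Proposition \ref{VoWWoVhi} finishes the proof. This also explains why the paper's type $A$ argument needed the computation $S(l+1-b_j,l+1-b_i)=S(b_i,b_j)$ (forced by $-w_0(i)=l+1-i$ there), a step that simply has no counterpart in type $B$: keep the arguments of $S$ in the order dictated by the tensor order of ${}^tL$ and no identity between the sets is required.
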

\begin{remark}\label{c5ysoris1}
This remark is parallel to Remark \ref{c5yspris1}. It is not known what is the precise necessary and sufficient condition for the irreducibility of the tensor product $V_{a_{i}}\left(\omega_{b_{i}}\right)\otimes V_{a_{j}}\left(\omega_{b_{j}}\right)$. Our result on the cyclicity condition for $L$ is an analogue of a special case $m_1=m_2=1$ of the results in \cite{Ch3}. We now give a detail interpretation. Note that there is a different labeling on the nodes of the Dynkin Diagram and we transfer it to our labeling. The following come from case (ii) of Corollary 6.2 \cite{Ch3}. Our result on the cyclicity condition for $L$ is an analogue of a special case $m_1=m_2=1$ of the results in \cite{Ch3}. We now give a detail interpretation. Note that there is a different labeling on the nodes of the Dynkin Diagram and we transfer it to our labeling. The following come from case (ii) of Corollary 6.2 \cite{Ch3}.
\begin{align*}
\mathcal{S}(l,l)&=q^2\{q^{0}, q^4, \ldots, q^{4l-4}\}=\{q^2, q^6, \ldots, q^{4l-2}\}.\\
\mathcal{S}(l-i+1,l)&=q^4\{q^{2i-3}, q^{2i-1}, \ldots, q^{4l-2i-1}\}\\
&=\{q^{2i+1}, q^{2i+3}, \ldots, q^{4l-2i+3}\}.\\
\mathcal{S}(l,l-i+1)&=q^4\{q^{2i-3}, q^{2i-1}, \ldots, q^{4l-2i-3}\}\\
&=\{q^{2i+1}, q^{2i+3}, \ldots, q^{4l-2i+1}\}.\\
\mathcal{S}(l-i_1+1,l-i_2+1)&=\mathcal{S}(l-i_2+1,l-i_1+1)\left(i_1\leq i_2\right)\\
&=q^6\{q^{2i_2-2i_1}, q^{2i_2+2i_1-6}, \ldots, q^{4l-2i_1-2i_2}, q^{4l-2i_2+2i_1-6}\}\\
&=\{q^{2i_2-2i_1+6}, q^{2i_2+2i_1}, \ldots, q^{4l-2i_1-2i_2+6}, q^{4l-2i_2+2i_1}\}.\\
\mathcal{S}(l-i_1+1,l-i_2+1)&=\mathcal{S}(l-i_2+1,l-i_1+1)\left(i_1\geq i_2\right)\\
&=q^6\{q^{2i_1-2i_2}, q^{2i_1+2i_2-6}, \ldots, q^{4l-2i_1-2i_2}, q^{4l-2i_1+2i_2-6}\}\\
&=\{q^{2i_1-2i_2+6}, q^{2i_1+2i_2}, \ldots, q^{4l-2i_1-2i_2+6}, q^{4l-2i_1+2i_2}\}.
\end{align*}
For the reader's convenience we give a parallel result in the Yangian case by making the sets $S(b_i,b_j)$ more explicit.
\begin{align*}
S\left(l,l\right)&=\left\{1,3,\ldots,2l-1\right\}\\
S\left(l-i+1,l\right)&=\left\{i-1, i,\ldots, l\right\}.\\
S\left(l,l-i+1\right)&=\left\{i+1,i+3,\ldots, 2l-i+1\right\}.\\
S(l-i_1+1&,\ l-i_2+1)=S(l-i_2+1,l-i_1+1)\left(i_1\leq i_2\right)\\
&=\{{i_2-i_1}+2, i_2+i_1-1, \ldots, 2l-i_2-i_1+2, 2l-{i_2+i_1}-1 \}.\\
S(l-i_1+1&,\ l-i_2+1)=S(l-i_2+1,l-i_1+1)\left(i_1\geq i_2\right)\\
&=\{{i_1-i_2}+2, i_1+i_2-1, \ldots, 2l-i_1-i_2+2, 2l-{i_1+i_2}-1 \}.\\
\end{align*}
Note that the numbers in the sets $S(l-i_1+1,l-i_2+1)$ are the exponents of $q$ in $\mathcal{S}(l-i_1+1,l-i_2+1)$ up to a factor of 2 and a constant.
\end{remark}
\section{Supplement Step 5 of Proposition \ref{mtoysob}}\label{mtoysols}

We are going to prove the step 5 by dividing the proof into 3 cases: $b_1=1$, $b_1=l$, and $2\leq b_1\leq l-1$. Before we prove the step case by case, the following notices are important.

In the definition of Yangians of type $B_l$, $d_1=\ldots=d_{l-1}=2$ and $d_l=1$. Recall that the algebra spanned by all monomials in the generators  $x_{i,r}^{\pm}, h_{i,r}$  is isomorphic to $\ysl$. However, $x_{i,r}^{+}, h_{i,r}$ does not satisfy the defining relations of $\ysl$ for $i=1,2,\ldots, l-1$. In order to use all known results in Chapters 4 and 5, we need to re-scale the generators. Let $\tilde{x}_{i,r}^{\pm}=\frac{\sqrt{2}}{2^{r+1}}x_{i,r}^{\pm},\tilde{h}_{i,r}=\frac{1}{2^{r+1}}h_{i,r}$. Then the algebra spanned by all monomials in the generators  $\tilde{x}_{i,r}^{\pm}, \tilde{h}_{i,r}$  is isomorphic to $\ysl$ and the generators satisfy the defining relations of $\ysl$.
 Moreover, we need to calculate ``new" defining relations in terms of the new generators. For the thesis's purpose, we only list the formulas we are going to use.

The following proposition just follows from the defining relations of Yangians.
\begin{proposition}\label{newrelationsbl1}
Suppose that $1\leq i, j\leq l-1$.
\begin{equation*}\label{}
[\tilde{h}_{i,r},\tilde{h}_{j,s}]=0, \qquad [\tilde{h}_{i,0}\ \tilde{x}_{j,s}^{-}]={}-
a_{ij}\tilde{x}_{j,s}^{-}, %\qquad [\tilde{x}_{i,r}^+, \tilde{x}_{j,s}^-]=\delta_{i,j}\tilde{h}_{i,r+s},
\end{equation*}
\begin{equation*}\label{}
[\tilde{h}_{i,1}, \tilde{x}_{i\pm 1,0}^{-}]=\tilde{x}_{i\pm 1,1}^{-}
+\frac{1}{2}\tilde{x}_{i\pm 1,0}^{-}+\tilde{x}_{i\pm 1,0}^{-}\tilde{h}_{i,0}\left(i\neq l-1\right),
\end{equation*}
\begin{equation*}\label{}
[\tilde{h}_{l-1,1}, \tilde{x}_{l-2,0}^{-}]=\tilde{x}_{l-2,1}^{-}
+\frac{1}{2}\tilde{x}_{l-2,0}^{-}+\tilde{x}_{l-2,0}^{-}\tilde{h}_{l-1,0},
\end{equation*}
\begin{equation*}\label{}
 [h_{l,0},\tilde{x}_{l-1,0}^{-}]=
2\tilde{x}_{l-1,0}^{-}, \qquad [h_{l,0},\tilde{x}_{l-1,1}^{-}]=
2\tilde{x}_{l-1,1}^{-},
\end{equation*}
\begin{equation*}\label{}
[h_{l,1}, \tilde{x}_{l-1,0}^{-}]=4\tilde{x}_{l-1,1}^{-}+
2\tilde{x}_{l-1,0}^{-}+2\tilde{x}_{l-1,0}^{-}h_{l,0},
\end{equation*}
\begin{equation*}\label{}
[h_{l,2}, \tilde{x}_{l-1,0}^{-}]=2[h_{l,1}, \tilde{x}_{l-1,1}^{-}]+
h_{l,1}\tilde{x}_{l-1,0}^{-}+\tilde{x}_{l-1,0}^{-}h_{l,1},
\end{equation*}
\begin{equation*}\label{}
 [\tilde{h}_{l-1,0},x_{l,0}^{-}]=
x_{l,0}^{-}, \qquad [\tilde{h}_{l-1,0},x_{l,1}^{-}]=
x_{l,1}^{-},
\end{equation*}
\begin{equation*}\label{}
[\tilde{h}_{l-1,1}, x_{l,0}^{-}]=\frac{1}{2}\left(x_{l,1}^{-}+
x_{l,0}^{-}+2x_{l,0}^{-}\tilde{h}_{l-1,0}\right).
\end{equation*}
\end{proposition}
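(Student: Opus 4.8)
The plan is to derive each stated identity directly from the defining relations of the Yangian (specialized to type $B_l$, so $d_1=\cdots=d_{l-1}=2$, $d_l=1$, with Cartan entries $a_{i,i\pm1}=-1$ for $1\le i\le l-2$, $a_{l-1,l}=-1$ and $a_{l,l-1}=-2$), by substituting the rescaling $\tilde{x}_{i,r}^{\pm}=\frac{\sqrt{2}}{2^{r+1}}x_{i,r}^{\pm}$ and $\tilde{h}_{i,r}=\frac{1}{2^{r+1}}h_{i,r}$. No input beyond the axioms is needed; the whole content is careful bookkeeping of the scalar factors $\frac{\sqrt2}{2^{r+1}}$, $\frac{1}{2^{r+1}}$ and of the specialized structure constants $d_ia_{ij}$.

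First I would dispose of the two trivial identities: $[\tilde{h}_{i,r},\tilde{h}_{j,s}]=0$ is immediate from $[h_{i,r},h_{j,s}]=0$ and bilinearity, and $[\tilde{h}_{i,0},\tilde{x}_{j,s}^{-}]=-a_{ij}\tilde{x}_{j,s}^{-}$ follows from $[h_{i,0},x_{j,s}^{-}]=-d_ia_{ij}x_{j,s}^{-}$ once the factor $d_i=2$ is cancelled against the $\frac12$ carried by $\tilde h_{i,0}$. Next I would treat the degree-one relations among the long-root nodes ($1\le i\le l-1$, $i\neq l-1$): I specialize $[h_{i,r+1},x_{j,s}^{-}]-[h_{i,r},x_{j,s+1}^{-}]=-\frac12 d_ia_{ij}(h_{i,r}x_{j,s}^{-}+x_{j,s}^{-}h_{i,r})$ at $r=s=0$ with $j=i\pm1$, eliminate $[h_{i,0},x_{j,1}^{-}]$ via the weight relation, rescale, and finally reorder using the degree-zero commutator $[\tilde h_{i,0},\tilde x_{j,0}^{-}]$ just established. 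This last reordering is exactly what converts the symmetric combination $\frac12(\tilde h_{i,0}\tilde x_{j,0}^-+\tilde x_{j,0}^-\tilde h_{i,0})$ into the stated $\frac12\tilde x_{i\pm1,0}^{-}+\tilde x_{i\pm1,0}^{-}\tilde h_{i,0}$, the extra $\frac12\tilde x_{i\pm1,0}^-$ arising precisely because $a_{i,i\pm1}=-1$.

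The heart of the computation is the group of mixed relations coupling the short-root node $l$ (whose generators $h_{l,r}, x_{l,r}^{\pm}$ are deliberately left unscaled) to the long-root node $l-1$. Here I would proceed in order of increasing degree: first the weight relations $[h_{l,0},\tilde x_{l-1,0}^-]=2\tilde x_{l-1,0}^-$ and $[\tilde h_{l-1,0},x_{l,0}^-]=x_{l,0}^-$; then $[h_{l,1},\tilde x_{l-1,0}^-]$ and $[\tilde h_{l-1,1},x_{l,0}^-]$ by specializing the degree-raising axiom at $r=s=0$; and finally $[h_{l,2},\tilde x_{l-1,0}^-]$ by taking $r=1,s=0$. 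After each substitution I would normal-order using the degree-zero relations already proved to reach the precise right-hand sides. The one point requiring vigilance is the scaling asymmetry at the short/long junction: although the structure constant $d_ia_{ij}$ remains symmetric, $d_la_{l,l-1}=d_{l-1}a_{l-1,l}=-2$, by symmetry of $DA$, the factor $\frac{\sqrt2}{2^{r+1}}$ is attached only to the node-$(l-1)$ generators and not to the node-$l$ generators, so the powers of $\sqrt2$ no longer cancel uniformly and the plain integer coefficients (for instance the $2$ in $[h_{l,0},\tilde x_{l-1,0}^-]=2\tilde x_{l-1,0}^-$ versus the $1$ in $[\tilde h_{l-1,0},x_{l,0}^-]=x_{l,0}^-$) must be tracked individually.

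I expect this scaling asymmetry to be the main obstacle, not because it is deep but because it is where an arithmetic slip is most likely. I would guard against it by confirming that each resulting identity reduces, under the isomorphism between the node-$l$ subalgebra and $\ysl$, to the corresponding $\ysl$ relation already used in the preceding chapters, which provides an independent consistency check on all the numerical coefficients.
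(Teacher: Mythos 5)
Your proposal is correct and takes essentially the same route as the paper: the thesis gives no argument beyond the remark that the proposition ``just follows from the defining relations of Yangians,'' and your direct substitution of the rescaling $\tilde{x}_{i,r}^{\pm}=\frac{\sqrt{2}}{2^{r+1}}x_{i,r}^{\pm}$, $\tilde{h}_{i,r}=\frac{1}{2^{r+1}}h_{i,r}$ into the type-$B_l$ defining relations (with $d_1=\cdots=d_{l-1}=2$, $d_l=1$, $a_{l-1,l}=-1$, $a_{l,l-1}=-2$), followed by elimination of the mixed commutator via the weight relation and normal-ordering with the degree-zero commutators, is precisely that verification. Your accounting of the asymmetric scaling at the short/long junction (node-$l$ generators left unscaled) reproduces all the stated coefficients, so the proof is complete as outlined.
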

For the simplicity of some discussions in the future, we also denote $x_{l,0}^{\pm}$ and $h_{l,0}$ by $\tilde{x}_{l,0}^{\pm}$ and $\tilde{h}_{l,0}$, respectively.

\subsection{Case 1: $ b_1=1$}

This is the easiest case. We claim that
\begin{proposition}\
%$Y_{\left(i+1\right)'}v_{\sigma_i(\omega_1)}$ is isomorphic to one of the following: $W_1\left(\frac{a}{2}\right)$, $W_2\left(a\right)$, and $W_1\left(a+1\right)\otimes W_1\left(a\right)$, where $\operatorname{Re}\left(a\right)\geq \operatorname{Re}\left(a_1\right)$.
\begin{enumerate}
  \item For $1\leq k\leq l-1$, $Y_k\left(\tilde{x}_{k-1,0}^{-}\tilde{x}_{k-2,0}^{-}\ldots \tilde{x}_{1,0}^{-}v^{+}_1\right)\cong W_1\left(\frac{a_1}{2}+\frac{k-1}{2}\right)$.
  \item $Y_l\left(\tilde{x}_{l-1,0}^{-}\tilde{x}_{l-2,0}^{-}\ldots \tilde{x}_{1,0}^{-}v^{+}_1\right)$ is isomorphic to either $W_2\left(a_1+l-2\right)$ or\\ $W_1\left(a_1+l-1\right)\otimes W_1\left(a_1+l-2\right).$
  \item For $1\leq k\leq l-1$, $Y_{k}\left(\tilde{x}_{k+1,0}^{-}\tilde{x}_{k+2,0}^{-}\ldots \tilde{x}_{l-1,0}^{-}\left(x_{l,0}^{-}\right)^2\tilde{x}_{l-1,0}^{-}\ldots \tilde{x}_{1,0}^{-}v^{+}_1\right)$ is isomorphic to $W_1\left(\frac{a_1}{2}+\frac{2l-k-2}{2}\right)$.
\end{enumerate}
\end{proposition}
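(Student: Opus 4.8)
The plan is to prove the three isomorphisms by the same eigenvalue-computation technique used throughout Section \ref{mtoysols}, relying on the fact that the rescaled generators $\tilde{x}_{i,r}^{\pm}, \tilde{h}_{i,r}$ satisfy the defining relations of $\ysl$ (Proposition \ref{newrelationsbl1}). The backbone in every item is the same: first determine the degree of the associated polynomial $P(u)$ of the relevant $Y_k$-module by computing the eigenvalue of $\tilde{h}_{k,0}$ on the given weight vector, using Proposition \ref{rileq2b} to read off the weight $\sigma_i(\omega_{b_1})$; since $b_1=1$, the coefficient $r_{k'}$ is $1$ in items (i) and (iii) (degree $1$) and equals $2$ only at the last node reached in item (ii) (degree $2$). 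Once the degree is known, the value $a$ (or the pair $a,b$) is pinned down by computing the eigenvalues of $\tilde{h}_{k,1}$ (and $\tilde{h}_{k,2}$ in the degree-$2$ case) and invoking Lemma \ref{g2puisd3} together with the classification of local Weyl modules of $\ysl$ from Theorem \ref{wmfsl2}.

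For item (i), I would proceed by induction on $k$ exactly as in part (i) of Proposition \ref{i=1yo2n}. The base case $k=1$ gives $\tilde{h}_{1,1}v_1^{+}=\frac{a_1}{2}v_1^{+}$ because $d_1=2$ forces the rescaling $\tilde{h}_{1,r}=\frac{1}{2^{r+1}}h_{1,r}$, so the eigenvalue of $h_{1,0}$-weight $a_1$ becomes $\frac{a_1}{2}$. The inductive step uses the relation $[\tilde{h}_{k,1},\tilde{x}_{k-1,0}^{-}]=\tilde{x}_{k-1,1}^{-}+\frac{1}{2}\tilde{x}_{k-1,0}^{-}+\tilde{x}_{k-1,0}^{-}\tilde{h}_{k,0}$ from Proposition \ref{newrelationsbl1} together with the $\ysl$-identity (\ref{Cor3.3}), yielding the increment of $\frac{1}{2}$ at each step; since $\tilde{h}_{k,0}$ annihilates $\tilde{x}_{k-2,0}^{-}\ldots\tilde{x}_{1,0}^{-}v_1^{+}$, the computation collapses to $\left(\frac{a_1}{2}+\frac{k-2}{2}+\frac{1}{2}\right)=\frac{a_1}{2}+\frac{k-1}{2}$, as claimed. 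Item (iii) is handled the same way by downward induction on $k$ as in part (iv) of Proposition \ref{i=1yo2n}, the only new ingredient being that passing through the node $l$ involves $\left(x_{l,0}^{-}\right)^2$, whose contribution to the weight and to the $\tilde{h}_{k,1}$-eigenvalue I would control using the commutation relations $[h_{l,0},\tilde{x}_{l-1,0}^{-}]=2\tilde{x}_{l-1,0}^{-}$ and $[\tilde{h}_{l-1,0},x_{l,0}^{-}]=x_{l,0}^{-}$ from Proposition \ref{newrelationsbl1}.

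The main obstacle, and the only genuinely new computation, will be item (ii), where the module has associated polynomial of degree $2$ because node $l-1$ is reached with multiplicity $2$ in the path of Proposition \ref{v+=gv-b}. Here I would compute both $\tilde{h}_{l-1,1}$ and $\tilde{h}_{l-1,2}$ eigenvalues on $\tilde{x}_{l-1,0}^{-}\ldots\tilde{x}_{1,0}^{-}v_1^{+}$, paralleling Lemma \ref{spi=l2} and Lemma \ref{spl-2i34d}. The delicate point is the interplay between the rescaled node-$l$ generators and the node-$(l-1)$ generators: the relations $[h_{l,1},\tilde{x}_{l-1,0}^{-}]=4\tilde{x}_{l-1,1}^{-}+2\tilde{x}_{l-1,0}^{-}+2\tilde{x}_{l-1,0}^{-}h_{l,0}$ and $[h_{l,2},\tilde{x}_{l-1,0}^{-}]=2[h_{l,1},\tilde{x}_{l-1,1}^{-}]+h_{l,1}\tilde{x}_{l-1,0}^{-}+\tilde{x}_{l-1,0}^{-}h_{l,1}$ from Proposition \ref{newrelationsbl1} carry the extra factors of $2$ that account for the integer (rather than half-integer) shifts $a_1+l-2$ and $a_1+l-1$ appearing in the statement. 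After extracting $a+b+1$ and $a^2+b^2+a+b$, I would solve the resulting quadratic for $a,b$ as in Lemma \ref{spl-2i34d}, obtaining $a=a_1+l-2$ and $b=a_1+l-1$; since the dimension of the $Y_{l-1}$-module is at least $3$, the classification of local Weyl modules of $\ysl$ forces it to be either $W_2(a_1+l-2)$ (dimension $3$) or $W_1(a_1+l-1)\otimes W_1(a_1+l-2)$ (dimension $4$), which is exactly the stated dichotomy.
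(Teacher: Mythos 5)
Your overall strategy --- determine the degree of the associated polynomial from the $\tilde{h}_{k,0}$-eigenvalue via Proposition \ref{rileq2b}, then pin down its roots from the $\tilde{h}_{k,1}$- (and level-two) eigenvalues --- is exactly the paper's: item (i) is proved there by repeating Proposition \ref{i=1yo2n} with $a_1$ replaced by $\frac{a_1}{2}$, and items (ii) and (iii) are Lemmas \ref{blyll}, \ref{bllyl-1} and \ref{bllym}. Your treatments of (i) and (ii) are essentially correct; in (ii) you write $\tilde{h}_{l-1,1},\tilde{h}_{l-1,2}$ where you mean the node-$l$ generators $h_{l,1},h_{l,2}$ (the module in question is a $Y_l$-module), but the relations you invoke from Proposition \ref{newrelationsbl1} are the correct node-$l$ ones, and solving the quadratic for $a,b$ is equivalent to the paper's shortcut of matching the eigenvalue pattern $2,\,2c,\,2c^2$ against Lemma \ref{g2puisd3}.

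The genuine gap is in item (iii). Its base case $k=l-1$ (the paper's Lemma \ref{bllyl-1}) requires computing $\tilde{h}_{l-1,1}$ on $\left(x_{l,0}^{-}\right)^2\tilde{x}_{l-1,0}^{-}\ldots\tilde{x}_{1,0}^{-}v_1^{+}$, and the two degree-zero relations you cite, $[h_{l,0},\tilde{x}_{l-1,0}^{-}]=2\tilde{x}_{l-1,0}^{-}$ and $[\tilde{h}_{l-1,0},x_{l,0}^{-}]=x_{l,0}^{-}$, cannot accomplish this. Commuting $\tilde{h}_{l-1,1}$ past $\left(x_{l,0}^{-}\right)^2$ uses the degree-one relation $[\tilde{h}_{l-1,1},x_{l,0}^{-}]=\frac{1}{2}\left(x_{l,1}^{-}+x_{l,0}^{-}+2x_{l,0}^{-}\tilde{h}_{l-1,0}\right)$ and, after the scalar terms cancel, leaves $\frac{1}{2}\left(x_{l,1}^{-}x_{l,0}^{-}+x_{l,0}^{-}x_{l,1}^{-}\right)\tilde{x}_{l-1,0}^{-}\ldots\tilde{x}_{1,0}^{-}v_1^{+}$, which you cannot evaluate without the conclusion of item (ii). Indeed $Y_l\left(\tilde{x}_{l-1,0}^{-}\ldots\tilde{x}_{1,0}^{-}v_1^{+}\right)$ is either $W_2\left(a_1+l-2\right)$ or $W_1\left(a_1+l-1\right)\otimes W_1\left(a_1+l-2\right)$, and in the tensor-product case $x_{l,1}^{-}$ does not act by a scalar on the generating vector (part (i) of Corollary \ref{w1bw1a}), so the eigenvalue-style bookkeeping you describe breaks down; what saves the argument, and what the paper uses, is that the anticommutator acts by the same scalar $2a_1+2l-3$ in both alternatives (part (iv) of Corollary \ref{w2ihw}, respectively part (iv) of Corollary \ref{w1bw1a}), which is precisely why the unresolved dichotomy in item (ii) is harmless. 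So item (iii) is not the same downward induction with two extra degree-zero relations: its base case depends logically on item (ii) and on these anticommutator identities, and without that input your plan stalls at the $\left(x_{l,0}^{-}\right)^2$ factor.
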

\begin{proof}
The proof of (i) is similar to the one of Proposition \ref{i=1yo2n} by replacing $a_1$ by $\frac{a_1}{2}$. We omit the proof.
The proofs of (ii) and (iii) are in Lemmas \ref{blyll}, \ref{bllyl-1} and \ref{bllym}.
\end{proof}

%\begin{remark}
%Why $1\leq i, j\leq l-1$? The first $\left(l-1\right)\times \left(l-1\right)$ sub-matrix\left(I do not know the terminology,sorry\right) is identical to Cartan matrix of $D_{l+1}$. We will have no problem to transfer the calculation in previous to the new topics.
%\end{remark}
%We next consider the $l$-node in the Dynkim diagram. Let $i=l$ in the defining relations of Yangians. The defining relations of Yangians are not trivial if and only if $j=l-1$. We would like to deduce some formulas with the re-scale generators.

\begin{lemma}\label{blyll}
$Y_l\left(\tilde{x}_{l-1,0}^{-}\tilde{x}_{l-2,0}^{-}\ldots \tilde{x}_{1,0}^{-}v^{+}_1\right)$ is isomorphic to either $W_2\left(a_1+l-2\right)$ or $W_1\left(a_1+l-1\right)\otimes W_1\left(a_1+l-2\right)$.
\end{lemma}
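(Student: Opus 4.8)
```latex
\noindent\textbf{Proof proposal.}
The plan is to identify the highest weight $Y_l$-module $Y_l(\tilde{x}_{l-1,0}^{-}\tilde{x}_{l-2,0}^{-}\ldots \tilde{x}_{1,0}^{-}v^{+}_1)$ by the same strategy already used repeatedly in the chapters on $\yo$ and $\ysp$: first pin down the degree of the associated polynomial $P(u)$ via the weight of the generating vector, then compute the eigenvalues of $\tilde{h}_{l,1}$ and $\tilde{h}_{l,2}$ (equivalently $h_{l,1}$ and $h_{l,2}$, using the rescaling $\tilde h_{l,r}=\frac{1}{2^{r+1}}h_{l,r}$, though here $d_l=1$ so $\tilde h_{l,r}=h_{l,r}$) on that vector, and finally invoke Lemma~\ref{g2puisd3} to read off the two roots of $P(u)$. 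Concretely, I would first use Proposition~\ref{rileq2b} to compute $\mathrm{wt}(\tilde{x}_{l-1,0}^{-}\ldots \tilde{x}_{1,0}^{-}v^{+}_1)$; the coefficient $r_l$ of $\omega_l$ in this weight should be $2$, which forces $\operatorname{Deg}(P(u))=2$ and thus tells us $Y_l$ of this vector is $3$- or $4$-dimensional. Write $P(u)=(u-a)(u-b)$ with $\operatorname{Re}(a)\le \operatorname{Re}(b)$.

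Next I would carry out the two eigenvalue computations. For $h_{l,1}$ I would commute $h_{l,1}$ past the string $\tilde x_{l-1,0}^{-}\ldots \tilde x_{1,0}^{-}$ using the relation $[h_{l,1},\tilde x_{l-1,0}^{-}]=4\tilde x_{l-1,1}^{-}+2\tilde x_{l-1,0}^{-}+2\tilde x_{l-1,0}^{-}h_{l,0}$ from Proposition~\ref{newrelationsbl1} (the generators $\tilde x_{j,0}^{-}$ for $j\le l-2$ commute with $h_{l,0}$ appropriately and are annihilated in the leading term), feeding in the inductive value $h_{l-1,1}\tilde x_{l-2,0}^{-}\ldots \tilde x_{1,0}^{-}v_1^{+}=(\tfrac{a_1}{2}+\tfrac{l-2}{2})(\cdots)$ from part (i) together with the $\ysl$ identity $(\ref{Cor3.3})$ to convert $\tilde x_{l-1,1}^{-}$ actions into scalar multiples of $\tilde x_{l-1,0}^{-}$ actions. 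For $h_{l,2}$ I would use $[h_{l,2},\tilde x_{l-1,0}^{-}]=2[h_{l,1},\tilde x_{l-1,1}^{-}]+h_{l,1}\tilde x_{l-1,0}^{-}+\tilde x_{l-1,0}^{-}h_{l,1}$, again from Proposition~\ref{newrelationsbl1}, reducing everything to the already-computed $h_{l,1}$-eigenvalue and lower data. These two numbers give the symmetric functions $a+b+1$ and $a^2+b^2+a+b$ of the roots, after which Lemma~\ref{g2puisd3} (or the elementary substitution $a=A-x$, $b=A-1+x$ and solving a quadratic $x^2-x=\text{const}$, exactly as in Lemma~\ref{l-2is34d}) yields $a=a_1+l-2$ and $b=a_1+l-1$.

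Finally, knowing the roots, I would conclude via the theory of local Weyl modules of $\ysl$: the local Weyl module $W(P)$ of $\ysl$ with $P(u)=\big(u-(a_1+l-2)\big)\big(u-(a_1+l-1)\big)$ is $4$-dimensional and isomorphic to $W_1(a_1+l-1)\otimes W_1(a_1+l-2)$, and since $Y_l$ of our vector is a highest weight quotient of $W(P)$ of dimension at least $3$, it is isomorphic either to that full tensor product or to its $3$-dimensional quotient $W_2(a_1+l-2)$, matching the statement. I expect the main obstacle to be the $h_{l,2}$ computation: because $d_l=1$ while the adjacent nodes carry $d_{l-1}=2$, the rescaled commutation relations mix $x_{l}$ and $\tilde x_{l-1}$ asymmetrically, so the bookkeeping of which terms survive (and the correct normalizing factors of $\tfrac12$ and $2$) is where an arithmetic slip is most likely; I would therefore double-check that step against the analogous $h_{i-1,2}$ calculation in Lemma~\ref{spl-2i34d}, whose structure it parallels.
```
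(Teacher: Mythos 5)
Your proposal follows essentially the same route as the paper's proof: weight computation via Proposition \ref{rileq2b} gives $h_{l,0}$-eigenvalue $2$ (hence $\operatorname{Deg} P=2$), the eigenvalues of $h_{l,1}$ and $h_{l,2}$ are computed with the rescaled relations of Proposition \ref{newrelationsbl1} together with part (i) and $(\ref{Cor3.3})$, the roots $a_1+l-2$ and $a_1+l-1$ are read off (the paper applies Lemma \ref{g2puisd3} directly, since the eigenvalues fall into the pattern $2,2c,2c^2$, rather than solving symmetric-function equations, but this is the same computation), and the conclusion follows from the local Weyl module theory of $\ysl$. The proposal is correct.
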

\begin{proof} By Proposition \ref{rileq2b}, $wt(\tilde{x}_{l-1,0}^{-}\tilde{x}_{l-2,0}^{-}\ldots \tilde{x}_{1,0}^{-}v^{+}_1)=-\omega_{l-1}+2\omega_{l}$, and then
\begin{align*}
% \nonumber to remove numbering \left(before each equation\right)
  h_{l,0}\tilde{x}_{l-1,0}^{-}\tilde{x}_{l-2,0}^{-}\ldots \tilde{x}_{1,0}^{-}v^{+}_1 &= 2\tilde{x}_{l-1,0}^{-}\tilde{x}_{l-2,0}^{-}\ldots \tilde{x}_{1,0}^{-}v^{+}_1.
\end{align*}
Thus the associated polynomial $P\left(u\right)$ of $Y_l\left(\tilde{x}_{l-1,0}^{-}\tilde{x}_{l-2,0}^{-}\ldots \tilde{x}_{1,0}^{-}v^{+}_1\right)$ has of degree 2. Suppose $P\left(u\right)=\left(u-a\right)\left(u-b\right)$ with $\operatorname{Re}\left(a\right)\leq \operatorname{Re}\left(b\right)$.
The eigenvalues of both $\tilde{x}_{l-1,0}^{-}\tilde{x}_{l-2,0}^{-}\ldots \tilde{x}_{1,0}^{-}v^{+}_1$ under $h_{l,1}$ and under $h_{l,2}$ will tell the values of $a$ and $b$.
\begin{align*}
% \nonumber to remove numbering \left(before each equation\right)
h_{l,1}\tilde{x}_{l-1,0}^{-}&\tilde{x}_{l-2,0}^{-}\ldots \tilde{x}_{1,0}^{-}v^{+}_1 \\
  &= [h_{l,1},\tilde{x}_{l-1,0}^{-}]\tilde{x}_{l-2,0}^{-}\ldots \tilde{x}_{1,0}^{-}v^{+}_1 \\
%   &= \left([h_{l,0},\tilde{x}_{l-1,1}^{-}]+\left(h_{l,0}\tilde{x}_{l-1,0}^{-}+\tilde{x}_{l-1,0}^{-}h_{l,0}\right)\right) \tilde{x}_{l-2,0}^{-}\ldots \tilde{x}_{1,0}^{-}v^{+}_1\\
   &= \left(4\tilde{x}_{l-1,1}^{-}+2\tilde{x}_{l-1,0}^{-}+2\tilde{x}_{l-1,0}^{-}h_{l,0}\right) \tilde{x}_{l-2,0}^{-}\ldots \tilde{x}_{1,0}^{-}v^{+}_1\\
   &= \left(4\tilde{x}_{l-1,1}^{-}+2\tilde{x}_{l-1,0}^{-}\right) \tilde{x}_{l-2,0}^{-}\ldots \tilde{x}_{1,0}^{-}v^{+}_1\\
      &= \left(2\left(a_1+l-2\right)+2\right)\tilde{x}_{l-1,0}^{-} \tilde{x}_{l-2,0}^{-}\ldots \tilde{x}_{1,0}^{-}v^{+}_1\\
      &= 2\left(a_1+l-1\right)\tilde{x}_{l-1,0}^{-} \tilde{x}_{l-2,0}^{-}\ldots \tilde{x}_{1,0}^{-}v^{+}_1.
\end{align*}
\begin{align*}
% \nonumber to remove numbering \left(before each equation\right)
h_{l,2}\tilde{x}_{l-1,0}^{-}&\tilde{x}_{l-2,0}^{-}\ldots \tilde{x}_{1,0}^{-}v^{+}_1 \\
  &= [h_{l,2},\tilde{x}_{l-1,0}^{-}]\tilde{x}_{l-2,0}^{-}\ldots \tilde{x}_{1,0}^{-}v^{+}_1 \\
   &= \Big(2[h_{l,1},\tilde{x}_{l-1,1}^{-}]+\left(h_{l,1}\tilde{x}_{l-1,0}^{-}+\tilde{x}_{l-1,0}^{-}h_{l,1}\right) \Big) \tilde{x}_{l-2,0}^{-}\ldots \tilde{x}_{1,0}^{-}v^{+}_1\\
   &= \big(2h_{l,1}\tilde{x}_{l-1,1}^{-}+h_{l,1}\tilde{x}_{l-1,0}^{-}\big) \tilde{x}_{l-2,0}^{-}\ldots \tilde{x}_{1,0}^{-}v^{+}_1\\
   &= h_{l,1}\left(2\tilde{x}_{l-1,1}^{-}+\tilde{x}_{l-1,0}^{-}\right) \tilde{x}_{l-2,0}^{-}\ldots \tilde{x}_{1,0}^{-}v^{+}_1\\
      &= h_{l,1}\left(a_1+l-1\right)\tilde{x}_{l-1,0}^{-} \tilde{x}_{l-2,0}^{-}\ldots \tilde{x}_{1,0}^{-}v^{+}_1\\
      &= 2\left(a_1+l-1\right)^2\tilde{x}_{l-1,0}^{-} \tilde{x}_{l-2,0}^{-}\ldots \tilde{x}_{1,0}^{-}v^{+}_1.
\end{align*}
%$\diamondsuit: $ Note that $h_{l,1}x_{l-2,0}^{-}\ldots x_{1,0}^{-}v^{+}_1=0$ since the weight of $x_{l-2,0}^{-}\ldots x_{1,0}^{-}v^{+}_1$ is $-\omega_{l-2}+\omega_{l-1}$.
By Lemma \ref{g2puisd3}, the associated polynomial of $Y_l\left(\tilde{x}_{l-1,0}^{-}\ldots \tilde{x}_{1,0}^{-}v^{+}_1\right)$ as $\ysl$-module is $P\left(u\right)=\Big(u-\left(a_1+l-2\right)\Big)\Big(u-\left(a_1+l-1\right)\Big)$. Then it follows from the theory of the local Weyl modules of $\ysl$ that $Y_l\left(\tilde{x}_{l-1,0}^{-}\tilde{x}_{l-2,0}^{-}\ldots \tilde{x}_{1,0}^{-}v^{+}_1\right)$ is isomorphic to either $W_2\left(a_1+l-2\right)$ or $W_1\left(a_1+{l-1}\right)\otimes W_1\left(a_1+{l-2}\right)$.
\end{proof}

\begin{lemma}\label{bllyl-1}
$Y_{l-1}\Big(\left(x_{l,0}^{-}\right)^2\tilde{x}_{l-1,0}^{-}\ldots \tilde{x}_{1,0}^{-}v^{+}_1\Big)\cong W_1\left(\frac{a_1}{2}+\frac{l-1}{2}\right)$. %or $W_1\left(a_1+\frac{l-1}{2}\right)\otimes W_1\left(a_1+\frac{l-2}{2}\right)$.
\end{lemma}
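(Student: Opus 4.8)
The plan is to follow the same two-stage template already used in Lemma \ref{blyll} and throughout the chapter: first identify that the $Y_{l-1}$-module generated by the vector is two-dimensional, and then pin down the parameter $a$ by computing a single eigenvalue. Write $v=\tilde{x}_{l-1,0}^{-}\cdots\tilde{x}_{1,0}^{-}v^{+}_1$, so that the vector in question is $(x_{l,0}^{-})^2v$. By Proposition \ref{rileq2b} its weight is $\omega_{l-1}-2\omega_{l}$, whence $\tilde{h}_{l-1,0}(x_{l,0}^{-})^2v=(x_{l,0}^{-})^2v$; thus the associated polynomial $P(u)$ has degree $1$, and the maximality of $(x_{l,0}^{-})^2v$ as a $Y_{l-1}$-vector follows exactly as in Step~2 of Proposition \ref{mtoysob}. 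Consequently $Y_{l-1}\big((x_{l,0}^{-})^2v\big)\cong W_1(a)$ for some $a$, and everything reduces to showing $a=\tfrac{a_1}{2}+\tfrac{l-1}{2}$, i.e. computing the eigenvalue of $\tilde{h}_{l-1,1}$.

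For that computation I would expand $\tilde{h}_{l-1,1}(x_{l,0}^{-})^2v$ by moving $\tilde{h}_{l-1,1}$ past the two factors $x_{l,0}^{-}$, using the rescaled relation $[\tilde{h}_{l-1,1},x_{l,0}^{-}]=\tfrac12\big(x_{l,1}^{-}+x_{l,0}^{-}+2x_{l,0}^{-}\tilde{h}_{l-1,0}\big)$ from Proposition \ref{newrelationsbl1}. This produces three kinds of terms: (a) $\tilde{h}_{l-1,1}v$, (b) $x_{l,1}^{-}$ acting on $v$ and on $x_{l,0}^{-}v$, and (c) $\tilde{h}_{l-1,0}$ acting on $v$ and on $x_{l,0}^{-}v$, the last two being read off from the weights ($\tilde{h}_{l-1,0}v=-v$ and $\tilde{h}_{l-1,0}x_{l,0}^{-}v=0$). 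Term (a) comes from part (i) of the proposition together with Corollary \ref{slw1a} (the rule $(\ref{Cor3.3})$): since $v=\tilde{x}_{l-1,0}^{-}\big(\tilde{x}_{l-2,0}^{-}\cdots\tilde{x}_{1,0}^{-}v^{+}_1\big)$ is the lowest weight vector of the $W_1\big(\tfrac{a_1}{2}+\tfrac{l-2}{2}\big)$ from part (i), one gets $\tilde{h}_{l-1,1}v=-\big(\tfrac{a_1}{2}+\tfrac{l-2}{2}\big)v$. The terms in (b) are governed by the $Y_l$-module $Y_l(v)$ determined in Lemma \ref{blyll}, via Corollary \ref{w2ihw} or Corollary \ref{w1bw1a}, yielding $x_{l,1}^{-}x_{l,0}^{-}v=(a_1+l-2)(x_{l,0}^{-})^2v$ and $x_{l,0}^{-}x_{l,1}^{-}v=(a_1+l-1)(x_{l,0}^{-})^2v$. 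Collecting everything, the three contributions combine to $\tfrac12(a_1+l-1)(x_{l,0}^{-})^2v=\big(\tfrac{a_1}{2}+\tfrac{l-1}{2}\big)(x_{l,0}^{-})^2v$, giving $a=\tfrac{a_1}{2}+\tfrac{l-1}{2}$ as claimed.

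The step I expect to require the most care is handling term (b), because Lemma \ref{blyll} leaves $Y_l(v)$ as \emph{either} $W_2(a_1+l-2)$ \emph{or} $W_1(a_1+l-1)\otimes W_1(a_1+l-2)$, so the action of $x_{l,1}^{-}$ is not determined by a single model. The key check — which I would verify explicitly using Corollary \ref{w2ihw} in the first case and Corollary \ref{w1bw1a} in the second — is that both models produce the \emph{same} scalars $a_1+l-2$ and $a_1+l-1$ for $x_{l,1}^{-}x_{l,0}^{-}v$ and $x_{l,0}^{-}x_{l,1}^{-}v$ respectively, so that the final eigenvalue is independent of the ambiguity. Beyond this, the remaining difficulty is purely bookkeeping: keeping the rescaled generators $\tilde{x},\tilde{h}$ straight (recall $x_{l,0}^{-}=\tilde{x}_{l,0}^{-}$ since $d_l=1$, while the $\tilde{h}_{l-1,r}$ carry the $2^{-r-1}$ factors) and tracking the weight eigenvalues of $\tilde{h}_{l-1,0}$ at each stage. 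No new phenomenon arises, so once term (b) is confirmed uniform the identity drops out by direct substitution.
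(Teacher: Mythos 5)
Your proposal is correct and follows essentially the same route as the paper: the weight computation via Proposition \ref{rileq2b} forces $\operatorname{Deg}\big(P(u)\big)=1$, and the eigenvalue of $\tilde{h}_{l-1,1}$ is then extracted by commuting it past the two factors $x_{l,0}^{-}$ with the rescaled relation from Proposition \ref{newrelationsbl1}, using part (i) of the proposition together with $(\ref{Cor3.3})$ for the $\tilde{h}_{l-1,1}v$ term and Lemma \ref{blyll} for the $x_{l,1}^{-}$ terms. The one point where you are more careful than the paper — checking that both possible models $W_2(a_1+l-2)$ and $W_1(a_1+l-1)\otimes W_1(a_1+l-2)$ for $Y_l(v)$ yield the same scalars (via Corollaries \ref{w2ihw} and \ref{w1bw1a}, whose values indeed coincide) — is a genuine gap-filling remark, since the paper simply asserts the anticommutator value $(2a_1+2l-3)$ without addressing the ambiguity.
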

\begin{proof}By Proposition \ref{rileq2b}, $wt\left(x_{l,0}^{-}\right)^2\tilde{x}_{l-1,0}^{-}\ldots \tilde{x}_{1,0}^{-}v^{+}_1=\omega_{l-1}-2\omega_{l}$, and then
$$\tilde{h}_{l-1,0}\left(x_{l,0}^{-}\right)^2\tilde{x}_{l-1,0}^{-}\ldots \tilde{x}_{1,0}^{-}v^{+}_1=\left(x_{l,0}^{-}\right)^2\tilde{x}_{l-1,0}^{-}\ldots \tilde{x}_{1,0}^{-}v^{+}_1.$$
%\begin{align*}
% \nonumber to remove numbering \left(before each equation\right)
%  &\tilde{h}_{l-1,0}\left(x_{l,0}^{-}\right)^2\tilde{x}_{l-1,0}^{-}\ldots \tilde{x}_{1,0}^{-}v^{+}_1\\
% &= [\tilde{h}_{l-1,0},x_{l,0}^{-}]x_{l,0}^{-}\tilde{x}_{l-1,0}^{-}\ldots \tilde{x}_{1,0}^{-}v^{+}_1+x_{l,0}^{-}[\tilde{h}_{l-1,0},x_{l,0}^{-}]\tilde{x}_{l-1,0}^{-}\ldots \tilde{x}_{1,0}^{-}v^{+}_1\\
% &+\left(x_{l,0}^{-}\right)^2 h_{l-1,0}\tilde{x}_{l-1,0}^{-}\ldots \tilde{x}_{1,0}^{-}v^{+}_1 \\
%   &= \left(x_{l,0}^{-}\right)^2\tilde{x}_{l-1,0}^{-}\ldots \tilde{x}_{1,0}^{-}v^{+}_1+\left(x_{l,0}^{-}\right)^2\tilde{x}_{l-1,0}^{-}\ldots \tilde{x}_{1,0}^{-}v^{+}_1-\left(x_{l,0}^{-}\right)^2\tilde{x}_{l-1,0}^{-}\ldots \tilde{x}_{1,0}^{-}v^{+}_1\\
%   &=\left(x_{l,0}^{-}\right)^2\tilde{x}_{l-1,0}^{-}\ldots \tilde{x}_{1,0}^{-}v^{+}_1.
%\end{align*}This one can also be shown by Proposition \ref{v+=gv-}.
Thus the associated polynomial $P\left(u\right)$ has of degree 1. Say $P(u)=u-a$. %to $Y_{l-1}\Big(\left(x_{l,0}^{-}\right)^2\tilde{x}_{l-1,0}^{-}\tilde{x}_{l-2,0}^{-}\ldots \tilde{x}_{1,0}^{-}v^{+}_1\Big)$ has of degree 1. Suppose $P\left(u\right)=u-a$.
The eigenvalue of $\left(x_{l,0}^{-}\right)^2\tilde{x}_{l-1,0}^{-}\tilde{x}_{l-2,0}^{-}\ldots \tilde{x}_{1,0}^{-}v^{+}_1$ under $\tilde{h}_{l-1,1}$ will tell the value of $a$.
\begin{align*}
% \nonumber to remove numbering \left(before each equation\right)
\tilde{h}_{l-1,1}&\left(x_{l,0}^{-}\right)^2\tilde{x}_{l-1,0}^{-}\tilde{x}_{l-2,0}^{-}\ldots \tilde{x}_{1,0}^{-}v^{+}_1 \\
  &= [\tilde{h}_{l-1,1},x_{l,0}^{-}]x_{l,0}^{-}\tilde{x}_{l-1,0}^{-}\ldots \tilde{x}_{1,0}^{-}v^{+}_1+x_{l,0}^{-}[\tilde{h}_{l-1,1},x_{l,0}^{-}]\tilde{x}_{l-1,0}^{-}\ldots \tilde{x}_{1,0}^{-}v^{+}_1\\
  &+\left(x_{l,0}^{-}\right)^2 \tilde{h}_{l-1,1}\tilde{x}_{l-1,0}^{-}\ldots \tilde{x}_{1,0}^{-}v^{+}_1 \\
   &= \frac{1}{2}\left(x_{l,1}^{-}+x_{l,0}^{-}+2x_{l,0}^{-}\tilde{h}_{l-1,0}\right)x_{l,0}^{-}\tilde{x}_{l-1,0}^{-}\ldots \tilde{x}_{1,0}^{-}v^{+}_1\\
 %  & \mathrm{$wt\left(x_{l,0}^{-}\tilde{x}_{l-1,0}^{-}\ldots \tilde{x}_{1,0}^{-}v^{+}_1\right)=wt\left(\tilde{x}_{l-1,0}^{-}\ldots \tilde{x}_{1,0}^{-}v^{+}_1\right)-\alpha_l=-\omega_{l-1}+2\omega_{l}-\left(-\omega_{l-1}+2\omega_{l}\right)=0.$}\\
   &+ \frac{1}{2}x_{l,0}^{-}\left(x_{l,1}^{-}+x_{l,0}^{-}+2x_{l,0}^{-}\tilde{h}_{l-1,0}\right)\tilde{x}_{l-1,0}^{-}\ldots \tilde{x}_{1,0}^{-}v^{+}_1\\
   &- \left(\frac{a_1}{2}+\frac{l-2}{2}\right)\left(x_{l,0}^{-}\right)^2\tilde{x}_{l-1,0}^{-}\tilde{x}_{l-2,0}^{-}\ldots \tilde{x}_{1,0}^{-}v^{+}_1\\
    &= \frac{1}{2}\left(x_{l,1}^{-}x_{l,0}^{-}+x_{l,0}^{-}x_{l,1}^{-}\right)\tilde{x}_{l-1,0}^{-}\tilde{x}_{l-2,0}^{-}\ldots \tilde{x}_{1,0}^{-}v^{+}_1\\
    %& \mathrm{Note that in second row $\left(x_{l,0}^{-}\right)^2\tilde{h}_{l-1,0}\tilde{x}_{l-1,0}^{-}\ldots \tilde{x}_{1,0}^{-}v^{+}_1=-\left(x_{l,0}^{-}\right)^2\tilde{x}_{l-1,0}^{-}\tilde{x}_{l-2,0}^{-}\ldots \tilde{x}_{1,0}^{-}v^{+}_1$}\\
    &- \left(\frac{a_1}{2}+\frac{l-2}{2}\right)\left(x_{l,0}^{-}\right)^2\tilde{x}_{l-1,0}^{-}\tilde{x}_{l-2,0}^{-}\ldots \tilde{x}_{1,0}^{-}v^{+}_1\\
     &= \frac{1}{2}\left(2a_1+2l-3\right)\left(x_{l,0}^{-}\right)^2\tilde{x}_{l-1,0}^{-}\tilde{x}_{l-2,0}^{-}\ldots \tilde{x}_{1,0}^{-}v^{+}_1\\
    &- \left(\frac{a_1}{2}+\frac{l-2}{2}\right)\left(x_{l,0}^{-}\right)^2\tilde{x}_{l-1,0}^{-}\tilde{x}_{l-2,0}^{-}\ldots \tilde{x}_{1,0}^{-}v^{+}_1\\
    &= \frac{1}{2} \left(a_1+l-1\right)\left(x_{l,0}^{-}\right)^2\tilde{x}_{l-1,0}^{-}\tilde{x}_{l-2,0}^{-}\ldots \tilde{x}_{1,0}^{-}v^{+}_1.
\end{align*}
\end{proof}
\begin{lemma}\label{bllym} Let $2\leq m\leq l-1$.
%\begin{enumerate}
 % \item $Y_{l-2}\left(\tilde{x}_{l-1,0}^{-}\left(x_{l,0}^{-}\right)^2\tilde{x}_{l-1,0}^{-}\ldots \tilde{x}_{1,0}^{-}v^{+}_1\right)$ is isomorphic to $W_1\left(\frac{a_1}{2}+\frac{l}{2}\right)$.
$Y_{m-1}\Big(\tilde{x}_{m,0}^{-}\ldots \tilde{x}_{l-1,0}^{-}\left(x_{l,0}^{-}\right)^2\tilde{x}_{l-1,0}^{-}\ldots \tilde{x}_{1,0}^{-}v^{+}_1\Big)$ is isomorphic to $W_1\left(\frac{a_1}{2}+\frac{2l-m-1}{2}\right)$.
%\end{enumerate}
\end{lemma}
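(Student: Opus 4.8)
The plan is to treat Lemma \ref{bllym} as the inductive step completing part (iii) of the preceding proposition, proceeding as in the proof of the $\yo$-analogue Proposition \ref{i=1yo2n}(iv) and of Lemma \ref{bllyl-1}, but with the rescaled generators $\tilde{x}_{i,r}^{\pm},\tilde{h}_{i,r}$ and the rescaled relations of Proposition \ref{newrelationsbl1}. Write $w_{m-1}=\tilde{x}_{m,0}^{-}\ldots \tilde{x}_{l-1,0}^{-}\left(x_{l,0}^{-}\right)^2\tilde{x}_{l-1,0}^{-}\ldots \tilde{x}_{1,0}^{-}v^{+}_1$, so that $w_{m-1}=\tilde{x}_{m,0}^{-}w_{m}$, where $w_{l-1}=\left(x_{l,0}^{-}\right)^2\tilde{x}_{l-1,0}^{-}\ldots \tilde{x}_{1,0}^{-}v^{+}_1$ is the vector handled by Lemma \ref{bllyl-1}.

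First I would establish that $Y_{m-1}(w_{m-1})$ is a highest weight $Y_{m-1}\cong\ysl$-module whose associated polynomial $P(u)$ has degree $1$. Maximality of $w_{m-1}$ follows verbatim from Steps 1--2 of Proposition \ref{mtoysob} (using $\sigma_i^{-1}(\alpha_{i'})\in\Delta^{+}$), and since $\sigma_i(\omega_1)$ lies in the Weyl orbit of $\omega_1$ its weight space is one-dimensional, so $w_{m-1}$ is an $\tilde{h}_{m-1,r}$-eigenvector. Computing $\mathrm{wt}(w_{m-1})$ by Proposition \ref{rileq2b} (Case $i=1$) gives $\tilde{h}_{m-1,0}w_{m-1}=w_{m-1}$, whence $\deg P=1$, $P(u)=u-a$, and $Y_{m-1}(w_{m-1})\cong W_1(a)$.

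The main work is to identify $a=\frac{a_1}{2}+\frac{2l-m-1}{2}$ by computing the $\tilde{h}_{m-1,1}$-eigenvalue, which I would do by downward induction on $m$, the first step starting from the value $\tilde{h}_{l-1,1}w_{l-1}=\left(\frac{a_1}{2}+\frac{l-1}{2}\right)w_{l-1}$ furnished by Lemma \ref{bllyl-1}. Expanding
\[
\tilde{h}_{m-1,1}w_{m-1}=[\tilde{h}_{m-1,1},\tilde{x}_{m,0}^{-}]w_{m}+\tilde{x}_{m,0}^{-}\tilde{h}_{m-1,1}w_{m},
\]
the relation $[\tilde{h}_{m-1,1},\tilde{x}_{m,0}^{-}]=\tilde{x}_{m,1}^{-}+\frac{1}{2}\tilde{x}_{m,0}^{-}+\tilde{x}_{m,0}^{-}\tilde{h}_{m-1,0}$ from Proposition \ref{newrelationsbl1} (valid since $m-1\leq l-2$) reduces the first summand, via the inductive value $\tilde{h}_{m,1}w_{m}=\left(\frac{a_1}{2}+\frac{2l-m-2}{2}\right)w_{m}$ and the rescaled identity $(\ref{Cor3.3})$, to $\left(\frac{a_1}{2}+\frac{2l-m-2}{2}+\frac{1}{2}\right)w_{m-1}=\left(\frac{a_1}{2}+\frac{2l-m-1}{2}\right)w_{m-1}$, provided $\tilde{h}_{m-1,0}w_{m}=0$; the latter is again a weight statement from Proposition \ref{rileq2b}. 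For the second summand I would push $\tilde{h}_{m-1,1}$ through the factors indexed by nodes $\geq m+1$ (which commute with it, node $m-1$ being non-adjacent to all of $m+1,\ldots,l$, so that in particular $\tilde{h}_{m-1,1}$ commutes with $\left(x_{l,0}^{-}\right)^2$) until it reaches the descending tail $\tilde{x}_{m,0}^{-}\tilde{x}_{m-1,0}^{-}\ldots\tilde{x}_{1,0}^{-}v^{+}_1$; this tail spans a trivial one-dimensional $Y_{m-1}$-module by the weight argument of the type-$B$ analogue of Lemma \ref{so1c1lf1}, so $\tilde{h}_{m-1,1}$ annihilates it and the summand vanishes.

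The main obstacle I anticipate is bookkeeping rather than conceptual: keeping the rescaling factors $\tfrac{\sqrt{2}}{2^{r+1}}$ and $\tfrac{1}{2^{r+1}}$ consistent so that $(\ref{Cor3.3})$ applies to the tilded copy $Y_{m-1}\cong\ysl$, and confirming that the central $\left(x_{l,0}^{-}\right)^2$ together with the two passages through node $m$ contribute exactly the weights predicted by Proposition \ref{rileq2b}, so that both vanishing claims $\tilde{h}_{m-1,0}w_{m}=0$ and $\tilde{h}_{m-1,1}(\text{tail})=0$ hold. Once these weight identities are in hand the eigenvalue recursion telescopes from $\frac{l-1}{2}$ down to $\frac{2l-m-1}{2}$, and the theory of local Weyl modules of $\ysl$ identifies $Y_{m-1}(w_{m-1})$ with $W_1\!\left(\frac{a_1}{2}+\frac{2l-m-1}{2}\right)$.
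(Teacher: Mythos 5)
Your proposal is correct and follows essentially the same route as the paper: a weight computation via Proposition \ref{rileq2b} shows the associated polynomial has degree $1$, and the eigenvalue $\frac{a_1}{2}+\frac{2l-m-1}{2}$ is then obtained by downward induction on $m$ starting from Lemma \ref{bllyl-1}, expanding $[\tilde{h}_{m-1,1},\tilde{x}_{m,0}^{-}]$ with the rescaled relation of Proposition \ref{newrelationsbl1}, invoking the inductive eigenvalue together with $(\ref{Cor3.3})$, and killing the remaining term by pushing $\tilde{h}_{m-1,1}$ past the non-adjacent factors onto the descending tail, where it vanishes. Your explicit justification of that vanishing (the tail generating a trivial one-dimensional $Y_{m-1}$-module, the type-$B$ analogue of Lemma \ref{so1c1lf1}) is precisely what the paper uses implicitly when the corresponding term is dropped.
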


\begin{proof}
By Proposition \ref{rileq2b}, $wt(\tilde{x}_{m,0}^{-}\ldots \tilde{x}_{l-1,0}^{-}\left(x_{l,0}^{-}\right)^2\tilde{x}_{l-1,0}^{-}\ldots \tilde{x}_{1,0}^{-}v^{+}_1)=\omega_{m-1}-\omega_m$, and then $$\tilde{h}_{m-1,0}\tilde{x}_{m,0}^{-}\ldots \tilde{x}_{l-1,0}^{-}\left(x_{l,0}^{-}\right)^2\tilde{x}_{l-1,0}^{-}\ldots \tilde{x}_{1,0}^{-}v^{+}_1=\tilde{x}_{m,0}^{-}\ldots \left(x_{l,0}^{-}\right)^2\ldots \tilde{x}_{1,0}^{-}v^{+}_1.$$
Thus we know that $$Y_{m-1}\Big(\tilde{x}_{m,0}^{-}\ldots \tilde{x}_{l-1,0}^{-}\left(x_{l,0}^{-}\right)^2\tilde{x}_{l-1,0}^{-}\ldots \tilde{x}_{1,0}^{-}v^{+}_1\Big)\cong W_1\left(a\right).$$
Thus the associated polynomial $P\left(u\right)$ to $$Y_{m-1}\Big(\tilde{x}_{m,0}^{-}\ldots \tilde{x}_{l-1,0}^{-}\left(x_{l,0}^{-}\right)^2\tilde{x}_{l-1,0}^{-}\tilde{x}_{l-2,0}^{-}\ldots \tilde{x}_{1,0}^{-}v^{+}_1\Big)$$ has of degree 1. Suppose $P\left(u\right)=u-a$.

We claim that $a=\frac{a_1}{2}+\frac{2l-m-1}{2}$.
We use induction on $m$ downward.

When $m=l-1$, this is the basis of induction.
\begin{align*}
\tilde{h}_{l-2,1}\tilde{x}_{l-1,0}^{-}&\left(x_{l,0}^{-}\right)^2\tilde{x}_{l-1,0}^{-}\ldots \tilde{x}_{1,0}^{-}v^{+}_1\\
&=[\tilde{h}_{l-2,1},\tilde{x}_{l-1,0}^{-}]\left(x_{l,0}^{-}\right)^2\tilde{x}_{l-1,0}^{-}\tilde{x}_{l-2,0}^{-}\ldots \tilde{x}_{1,0}^{-}v^{+}_1\\
&+\tilde{x}_{l-1,0}^{-}\left(x_{l,0}^{-}\right)^2\tilde{h}_{l-2,1}\tilde{x}_{l-1,0}^{-}\tilde{x}_{l-2,0}^{-}\ldots x_{1,0}^{-}v^{+}_1\\
%&+ \tilde{x}_{l-1,0}^{-}\left(x_{l,0}^{-}\right)^2\tilde{x}_{l-1,0}^{-}\tilde{h}_{l-2,1}\tilde{x}_{l-2,0}^{-}\ldots x_{1,0}^{-}v^{+}_1\\
&=\left(\tilde{x}_{l-1,1}^{-}+\frac{1}{2}\tilde{x}_{l-1,0}^{-}+\tilde{x}_{l-1,0}^{-}\tilde{h}_{l-2,0}\right)\left(x_{l,0}^{-}\right)^2\tilde{x}_{l-1,0}^{-}\tilde{x}_{l-2,0}^{-}\ldots \tilde{x}_{1,0}^{-}v^{+}_1+0\\
%&+\tilde{x}_{l-1,0}^{-}\left(x_{l,0}^{-}\right)^2\left(\tilde{x}_{l-1,1}^{-}+\frac{1}{2}\tilde{x}_{l-1,0}^{-}+\tilde{x}_{l-1,0}^{-}\tilde{h}_{l-2,0}\right)\tilde{x}_{l-2,0}^{-}\ldots x_{1,0}^{-}v^{+}_1\\
%&- \left(a_1+\frac{l-3}{2}\right)\tilde{x}_{l-1,0}^{-}\left(x_{l,0}^{-}\right)^2\tilde{x}_{l-1,0}^{-}x_{l-2,0}^{-}\ldots x_{1,0}^{-}v^{+}_1\\
&=\left(\frac{a_1}{2}+\frac{l-1}{2}+\frac{1}{2}\right)\tilde{x}_{l-1,0}^{-}\left(x_{l,0}^{-}\right)^2\tilde{x}_{l-1,0}^{-}\tilde{x}_{l-2,0}^{-}\ldots \tilde{x}_{1,0}^{-}v^{+}_1\\
%&+\left(\frac{a_1}{2}+\frac{l-2}{2}+\frac{1}{2}-1\right)\tilde{x}_{l-1,0}^{-}\left(x_{l,0}^{-}\right)^2\tilde{x}_{l-2,0}^{-}\ldots x_{1,0}^{-}v^{+}_1\\
%&- \left(a_1+\frac{l-3}{2}\right)\tilde{x}_{l-1,0}^{-}\left(x_{l,0}^{-}\right)^2\tilde{x}_{l-1,0}^{-}x_{l-2,0}^{-}\ldots x_{1,0}^{-}v^{+}_1\\
&=\left(\frac{a_1}{2}+\frac{l}{2}\right)\tilde{x}_{l-1,0}^{-}\left(x_{l,0}^{-}\right)^2\tilde{x}_{l-1,0}^{-}\tilde{x}_{l-2,0}^{-}\ldots \tilde{x}_{1,0}^{-}v^{+}_1.
\end{align*}
Supposed that claims are true for all value $k$ such that $k>m$. We next show the case when  $k=m$.
\begin{align*}
% \nonumber to remove numbering \left(before each equation\right)
\tilde{h}_{m-1,1}&\tilde{x}_{m,0}^{-}\ldots \tilde{x}_{l-1,0}^{-}\left(x_{l,0}^{-}\right)^2\tilde{x}_{l-1,0}^{-}\ldots \tilde{x}_{1,0}^{-}v^{+}_1  \\
   &= [\tilde{h}_{m-1,1}\tilde{x}_{m,0}^{-}]\tilde{x}_{m+1,0}^{-}\ldots \tilde{x}_{l-1,0}^{-}\left(x_{l,0}^{-}\right)^2\tilde{x}_{l-1,0}^{-}\ldots \tilde{x}_{1,0}^{-}v^{+}_1\\
   &+ \tilde{x}_{m,0}^{-}\ldots \tilde{x}_{l-1,0}^{-}\left(x_{l,0}^{-}\right)^2\tilde{x}_{l-1,0}^{-}\ldots\tilde{x}_{m+1,0}^{-}\tilde{h}_{m-1,1}\tilde{x}_{m,0}^{-}\tilde{x}_{m-1,0}^{-}\ldots \tilde{x}_{1,0}^{-}v^{+}_1\\
%   &+ \tilde{x}_{m,0}^{-}\ldots \tilde{x}_{l-1,0}^{-}\left(x_{l,0}^{-}\right)^2\tilde{x}_{l-1,0}^{-}\ldots\tilde{x}_{m+1,0}^{-}\tilde{x}_{m,0}^{-}\tilde{h}_{m-1,1}\tilde{x}_{m-1,0}^{-}\ldots \tilde{x}_{1,0}^{-}v^{+}_1\\
   &=\left(\tilde{x}_{m,1}^{-}+\frac{1}{2}\tilde{x}_{m,0}^{-}+\tilde{x}_{m,0}^{-}\tilde{h}_{m-1,0}\right)\tilde{x}_{m+1,0}^{-}\ldots \tilde{x}_{l-1,0}^{-}\left(x_{l,0}^{-}\right)^2\tilde{x}_{l-1,0}^{-}\ldots \tilde{x}_{1,0}^{-}v^{+}_1\\
%   &+ \tilde{x}_{m,0}^{-}\ldots \left(x_{l,0}^{-}\right)^2\ldots\tilde{x}_{m+1,0}^{-}\left(\tilde{x}_{m,1}^{-}+\frac{1}{2}\tilde{x}_{m,0}^{-}+\tilde{x}_{m,0}^{-}\tilde{h}_{m-1,0}\right)\tilde{x}_{m-1,0}^{-}\ldots \tilde{x}_{1,0}^{-}v^{+}_1\\
 %  &+ \tilde{x}_{m,0}^{-}\ldots \tilde{x}_{l-1,0}^{-}\left(x_{l,0}^{-}\right)^2\tilde{x}_{l-1,0}^{-}\ldots\tilde{x}_{m+1,0}^{-}\tilde{x}_{m,0}^{-}\tilde{h}_{m-1,0}\tilde{x}_{m-1,0}^{-}\ldots \tilde{x}_{1,0}^{-}v^{+}_1\\
   &= \left(\frac{a_1}{2}+\frac{2l-m-2}{2}+\frac{1}{2}\right)\tilde{x}_{m,0}^{-}\ldots \tilde{x}_{l-1,0}^{-}\left(x_{l,0}^{-}\right)^2\tilde{x}_{l-1,0}^{-}\ldots \tilde{x}_{1,0}^{-}v^{+}_1 \\
 %  &+ \left(\frac{a_1}{2}+\frac{m-1}{2}+\frac{1}{2}-1\right)\tilde{x}_{m,0}^{-}\ldots \tilde{x}_{l-1,0}^{-}\left(x_{l,0}^{-}\right)^2\tilde{x}_{l-1,0}^{-}\ldots \tilde{x}_{1,0}^{-}v^{+}_1\\
 %  &- \left(\frac{a_1}{2}+\frac{m-2}{2}\right)\tilde{x}_{m,0}^{-}\ldots \tilde{x}_{l-1,0}^{-}\left(x_{l,0}^{-}\right)^2\tilde{x}_{l-1,0}^{-}\ldots \tilde{x}_{1,0}^{-}v^{+}_1\\
   &= \left(\frac{a_1}{2}+\frac{2l-m-1}{2}\right)\tilde{x}_{m,0}^{-}\ldots \tilde{x}_{l-1,0}^{-}\left(x_{l,0}^{-}\right)^2\tilde{x}_{l-1,0}^{-}\ldots \tilde{x}_{1,0}^{-}v^{+}_1.
\end{align*}

%By Proposition \ref{rileq2b}, $$\tilde{h}_{l-2,0}\tilde{x}_{l-1,0}^{-}\left(x_{l,0}^{-}\right)^2\tilde{x}_{l-1,0}^{-}\ldots \tilde{x}_{1,0}^{-}v^{+}_1=\tilde{x}_{l-1,0}^{-}\left(x_{l,0}^{-}\right)^2\tilde{x}_{l-1,0}^{-}\ldots \tilde{x}_{1,0}^{-}v^{+}_1.$$

By induction, the claim is true.
\end{proof}

\subsection{Case 2: $b_1=l$.}

\begin{proposition}\label{cosoblp1}\
\begin{enumerate}
  \item $Y_l\left(v_1^{+}\right)\cong W_1\left(a_1\right)$.
  \item $Y_k\left(\tilde{x}_{k+1,0}^{-}\ldots \tilde{x}_{l-1,0}^{-}x_{l,0}^{-}v^{+}_1\right)\cong W_1\left(\frac{a_1}{2}+\frac{l-k}{2}\right)$ for $1\leq k\leq l-1$.
  \item $Y_l\left(\tilde{x}_{1,0}^{-}\tilde{x}_{2,0}^{-}\ldots \tilde{x}_{l-1,0}^{-}x_{l,0}^{-}v^{+}_1\right)\cong W_1\left(a_1+2\right).$
\end{enumerate}
\end{proposition}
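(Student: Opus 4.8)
The plan is to establish the three assertions in order, deducing the root of each degree-one associated polynomial from eigenvalue data and then using the irreducibility of $W_1$ to promote a ``subquotient'' statement to an isomorphism. For part (i), since $V_{a_1}(\omega_l)$ has Drinfeld polynomials $\pi_l(u)=u-a_1$ and $\pi_j(u)=1$ for $j\neq l$, Theorem \ref{cfdihwr}(b) together with $d_l=1$ gives
$$h_l(u)v_1^{+}=\frac{\pi_l(u+1)}{\pi_l(u)}v_1^{+}=\frac{u-(a_1-1)}{u-a_1}v_1^{+}=\left(1+u^{-1}+a_1u^{-2}+\ldots\right)v_1^{+}.$$
The coefficient of $u^{-1}$ shows that the associated polynomial $P$ of $Y_l(v_1^{+})$ has degree $1$, so $Y_l(v_1^{+})$ is a quotient of the $2$-dimensional local Weyl module $W_1(a)$; matching $\frac{P(u+1)}{P(u)}$ against the series above forces $a=a_1$. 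As $W_1(a_1)$ is irreducible and $v_1^{+}\neq 0$, we conclude $Y_l(v_1^{+})\cong W_1(a_1)$.

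For part (ii) I would argue by downward induction on $k$, exactly as in the chain computations of Propositions \ref{i=1yo2n} and \ref{spc1il-1p1}. First, Proposition \ref{rileq2b} shows the weight of $\tilde x_{k+1,0}^{-}\ldots\tilde x_{l-1,0}^{-}x_{l,0}^{-}v_1^{+}$ has $\omega_{k}$-coefficient $1$, so the associated $Y_k$-polynomial has degree $1$ and $Y_k(\cdot)\cong W_1(a)$ for some $a$. The base case $k=l-1$ is $Y_{l-1}(x_{l,0}^{-}v_1^{+})$: using the rescaled relation $[\tilde h_{l-1,1},x_{l,0}^{-}]=\frac12(x_{l,1}^{-}+x_{l,0}^{-}+2x_{l,0}^{-}\tilde h_{l-1,0})$ from Proposition \ref{newrelationsbl1}, together with $\tilde h_{l-1,0}v_1^{+}=0$ and $x_{l,1}^{-}v_1^{+}=a_1x_{l,0}^{-}v_1^{+}$ (from part (i) and Corollary \ref{slw1a}), one finds $\tilde h_{l-1,1}x_{l,0}^{-}v_1^{+}=\frac{a_1+1}{2}x_{l,0}^{-}v_1^{+}$, i.e. $a=\frac{a_1}{2}+\frac12$. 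The inductive step propagates the root via $[\tilde h_{k,1},\tilde x_{k+1,0}^{-}]=\tilde x_{k+1,1}^{-}+\frac12\tilde x_{k+1,0}^{-}+\tilde x_{k+1,0}^{-}\tilde h_{k,0}$ and Corollary \ref{slw1a} applied to the module produced at stage $k+1$, raising the root by $\frac12$ at each step and yielding $\frac{a_1}{2}+\frac{l-k}{2}$.

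The genuinely new computation, and the main obstacle, is part (iii), which involves the junction between the short-root node $l$ (where $d_l=1$ but $a_{ll}=2$, so $[h_{l,0},x_{l,0}^{-}]=-2x_{l,0}^{-}$) and the rescaled long-root node $l-1$. Proposition \ref{rileq2b} again gives degree $1$, so $Y_l(\tilde x_{1,0}^{-}\ldots\tilde x_{l-1,0}^{-}x_{l,0}^{-}v_1^{+})\cong W_1(a)$, and since $d_l=1$ the eigenvalue of $h_{l,1}$ on the generating vector is exactly $a$. The key simplification is that $h_{l,r}$ commutes with $\tilde x_{j,0}^{-}$ for every $j\leq l-2$ (because $a_{lj}=0$), so $h_{l,1}$ slides to the right until it meets $\tilde x_{l-1,0}^{-}x_{l,0}^{-}v_1^{+}$ and the whole problem localizes to nodes $l-1,l$. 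The remaining step expands $h_{l,1}\tilde x_{l-1,0}^{-}x_{l,0}^{-}v_1^{+}$ using $[h_{l,1},\tilde x_{l-1,0}^{-}]=4\tilde x_{l-1,1}^{-}+2\tilde x_{l-1,0}^{-}+2\tilde x_{l-1,0}^{-}h_{l,0}$ from Proposition \ref{newrelationsbl1}, and substitutes the three ingredients $\tilde x_{l-1,1}^{-}x_{l,0}^{-}v_1^{+}=\frac{a_1+1}{2}\tilde x_{l-1,0}^{-}x_{l,0}^{-}v_1^{+}$ (part (ii) with $k=l-1$), $h_{l,0}x_{l,0}^{-}v_1^{+}=-x_{l,0}^{-}v_1^{+}$, and $h_{l,1}x_{l,0}^{-}v_1^{+}=-a_1x_{l,0}^{-}v_1^{+}$ (part (i) with Corollary \ref{slw1a}). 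The factors of $2$ and $\frac12$ introduced by the asymmetric rescaling must be tracked carefully, but they cancel to give $h_{l,1}\tilde x_{l-1,0}^{-}x_{l,0}^{-}v_1^{+}=(a_1+2)\tilde x_{l-1,0}^{-}x_{l,0}^{-}v_1^{+}$; since no further rescaling intervenes at node $l$, the irreducibility of $W_1(a_1+2)$ yields the claimed isomorphism.
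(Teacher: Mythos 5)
Your proposal is correct and follows essentially the same route as the paper: part (i) is the Drinfeld-polynomial/eigenvalue argument the paper invokes, part (ii) is the same chain computation (propagating the root by $\tfrac12$ via $[\tilde h_{k,1},\tilde x_{k+1,0}^{-}]$ and Corollary \ref{slw1a}) that the paper omits as ``similar to Proposition \ref{i=1yo2n}'', and part (iii) reproduces Lemma \ref{solv2+} exactly, sliding $h_{l,1}$ past the nodes with $a_{lj}=0$ and evaluating $[h_{l,1},\tilde x_{l-1,0}^{-}]=4\tilde x_{l-1,1}^{-}+2\tilde x_{l-1,0}^{-}+2\tilde x_{l-1,0}^{-}h_{l,0}$ to get $a_1+2$.
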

\begin{proof}
The proofs of the first and second items are similar to the one in Proposition \ref{i=1yo2n}, so we omit the proof. The third item is proved by Lemma \ref{solv2+}.
\end{proof}

%\begin{lemma}\label{solv1+b2}
%$Y_{l-1}\left(x_{l,0}^{-}v_1^{+}\right)\cong W_1\left(\frac{a_1}{2}+\frac{1}{2}\right)$.
%\end{lemma}
%\begin{proof}
%By Proposition \ref{rileq2b}, $\tilde{h}_{l-1,0}x_{l,0}^{-}v^{+}_1=x_{l,0}^{-}v^{+}_1$.
%\begin{align*}
% \nonumber to remove numbering \left(before each equation\right)
%\tilde{h}_{l-1,1}x_{l,0}^{-}v^{+}_1 %&= \frac{1}{4}h_{l-1,1}x_{l,0}^{-}v^{+}_1  \\
 %  &= [\tilde{h}_{l-1,1},x_{l,0}^{-}]v^{+}_1 \\
%   &=\frac{1}{2} \left(x_{l,1}^{-}+x_{l,0}^{-}+2x_{l,0}^{-}h_{l-1,0}\right)v^{+}_1\\
%   &= \left(\frac{a_1}{2}+\frac{1}{2}\right)x_{l,0}^{-}v^{+}_1.
%\end{align*}
%Thus the claim is true.\end{proof}
%\begin{lemma}\label{solv1+b3}
%$Y_k\left(\tilde{x}_{k+1,0}^{-}\ldots \tilde{x}_{l-1,0}^{-}x_{l,0}^{-}v^{+}_1\right)\cong W_1\left(\frac{a_1}{2}+\frac{l-k}{2}\right)$ for $1\leq %k\leq l-2$.
%\end{lemma}
%\begin{proof}
%By Proposition \ref{rileq2b}, $wt\left(\tilde{x}_{k+1,0}^{-}\ldots \tilde{x}_{l-1,0}^{-}x_{l,0}^{-}v^{+}_1\right)=\omega_{k}-\omega_{k+1}+\omega_{l}$, and then $$Y_k\left(\tilde{x}_{k+1,0}^{-}\ldots \tilde{x}_{l-1,0}^{-}x_{l,0}^{-}v^{+}_1\right)\cong W_1\left(a\right).$$
% The first $\left(l-1\right)\times \left(l-1\right)$ sub-matrix of the Cartan matrix of $B_{l}$ is identical to the first $\left(l-1\right)\times \left(l-1\right)$ sub-matrix of Cartan matrix of $D_{l+1}$. Thus we have
%$$h_{k,1}\tilde{x}_{k+1,0}^{-}\ldots x_{l-1,0}^{-}\tilde{x}_{l,0}^{-}v^{+}_1= \left(\frac{a_1}{2}+\frac{l-k}{2}\right)x_{k+1,0}^{-}\ldots \tilde{x}_{l-1,0}^{-}\tilde{x}_{l,0}^{-}v^{+}_1.$$ Then the claim follows.
%\end{proof}
\begin{lemma}\label{solv2+}
$Y_l\left(\tilde{x}_{1,0}^{-}\tilde{x}_{2,0}^{-}\ldots \tilde{x}_{l-1,0}^{-}x_{l,0}^{-}v^{+}_1\right)\cong W_1\left(a_1+2\right).$
\end{lemma}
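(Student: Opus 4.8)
The plan is to follow the template of the terminal degree-one computations already carried out for the short node, namely Lemma \ref{i=l-2v2} and Lemma \ref{spv2}: first identify the $Y_l$-module $Y_l(v)$ abstractly, then pin down the unique root of its associated polynomial by evaluating a single eigenvalue. Write $v=\tilde{x}_{1,0}^{-}\tilde{x}_{2,0}^{-}\cdots \tilde{x}_{l-1,0}^{-}x_{l,0}^{-}v_1^{+}$. Since $d_l=1$, no rescaling is needed at the node $l$, so $x_{l,r}^{\pm},h_{l,r}$ generate a copy of $\ysl$ on the nose and I may invoke the $\ysl$-results of the earlier chapters directly. By Proposition \ref{rileq2b} (Case 3) the vector $v$ has weight $-\omega_1+\omega_l$, whence $h_{l,0}v=\langle -\omega_1+\omega_l,\alpha_l\rangle v=v$; by the argument of Step 2 of Proposition \ref{mtoysob}, $v$ is a maximal vector for $Y_l$, so its associated polynomial $P(u)$ has degree one and $Y_l(v)\cong W_1(a)$ with $P(u)=u-a$. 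It then remains only to show $a=a_1+2$, which by Corollary \ref{slw1a} amounts to checking $h_{l,1}v=(a_1+2)v$.

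To compute $h_{l,1}v$ I would exploit that row $l$ of the Cartan matrix of $B_l$ has only $a_{ll}=2$ and $a_{l,l-1}=-2$ nonzero, so $h_{l,1}$ commutes with every $\tilde{x}_{j,0}^{-}$ for $j\le l-2$. Thus $h_{l,1}v=\tilde{x}_{1,0}^{-}\cdots \tilde{x}_{l-2,0}^{-}\,h_{l,1}\,\tilde{x}_{l-1,0}^{-}x_{l,0}^{-}v_1^{+}$, and the whole problem reduces to evaluating $h_{l,1}\,\tilde{x}_{l-1,0}^{-}x_{l,0}^{-}v_1^{+}$. Expanding the commutator and using the relation $[h_{l,1},\tilde{x}_{l-1,0}^{-}]=4\tilde{x}_{l-1,1}^{-}+2\tilde{x}_{l-1,0}^{-}+2\tilde{x}_{l-1,0}^{-}h_{l,0}$ from Proposition \ref{newrelationsbl1}, I obtain two pieces. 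The second piece is $\tilde{x}_{l-1,0}^{-}h_{l,1}x_{l,0}^{-}v_1^{+}$, and since $Y_l(v_1^{+})\cong W_1(a_1)$ by Proposition \ref{cosoblp1}(i), equation (\ref{Cor3.3}) gives $h_{l,1}x_{l,0}^{-}v_1^{+}=-a_1 x_{l,0}^{-}v_1^{+}$. In the first piece the terms $2\tilde{x}_{l-1,0}^{-}x_{l,0}^{-}v_1^{+}$ and $2\tilde{x}_{l-1,0}^{-}h_{l,0}x_{l,0}^{-}v_1^{+}$ cancel, because $x_{l,0}^{-}v_1^{+}$ has $h_{l,0}$-eigenvalue $-1$; what survives is $4\tilde{x}_{l-1,1}^{-}x_{l,0}^{-}v_1^{+}$.

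The crucial remaining input is the action of $\tilde{x}_{l-1,1}^{-}$ on $x_{l,0}^{-}v_1^{+}$, which is supplied by Proposition \ref{cosoblp1}(ii) with $k=l-1$: it says $Y_{l-1}(x_{l,0}^{-}v_1^{+})\cong W_1\!\big(\frac{a_1}{2}+\frac{1}{2}\big)$, and since $x_{l,0}^{-}v_1^{+}$ is the $Y_{l-1}$-highest weight vector there, (\ref{Cor3.3}) applied at the rescaled node $l-1$ yields $\tilde{x}_{l-1,1}^{-}x_{l,0}^{-}v_1^{+}=\big(\frac{a_1}{2}+\frac{1}{2}\big)\tilde{x}_{l-1,0}^{-}x_{l,0}^{-}v_1^{+}$. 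Substituting, the surviving term equals $(2a_1+2)\tilde{x}_{l-1,0}^{-}x_{l,0}^{-}v_1^{+}$, and adding the $-a_1$ contribution from the second piece gives $h_{l,1}\tilde{x}_{l-1,0}^{-}x_{l,0}^{-}v_1^{+}=(a_1+2)\tilde{x}_{l-1,0}^{-}x_{l,0}^{-}v_1^{+}$. Reinserting the commuting generators $\tilde{x}_{1,0}^{-}\cdots \tilde{x}_{l-2,0}^{-}$ then gives $h_{l,1}v=(a_1+2)v$, so $a=a_1+2$ and $Y_l(v)\cong W_1(a_1+2)$, as claimed. I expect the only genuine obstacle to be the bookkeeping: keeping straight which generators are rescaled (the tilded $\tilde{x}_i$, $h_i$ for $i\le l-1$) versus the bare node-$l$ generators, and feeding in the two previously computed module structures at nodes $l$ and $l-1$; once these are correctly in place the cancellation is automatic and the computation is short.
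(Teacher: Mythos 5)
Your proposal is correct and follows essentially the same route as the paper's proof: the weight computation via Proposition \ref{rileq2b} to get a degree-one associated polynomial, then the evaluation of $h_{l,1}$ on $v$ by commuting past $\tilde{x}_{1,0}^{-}\cdots\tilde{x}_{l-2,0}^{-}$, expanding $[h_{l,1},\tilde{x}_{l-1,0}^{-}]$ via Proposition \ref{newrelationsbl1}, and feeding in $Y_l(v_1^{+})\cong W_1(a_1)$ and $Y_{l-1}(x_{l,0}^{-}v_1^{+})\cong W_1\bigl(\frac{a_1}{2}+\frac{1}{2}\bigr)$ through equation (\ref{Cor3.3}). Your version is in fact slightly more explicit than the paper's, since it spells out the cancellation of $2\tilde{x}_{l-1,0}^{-}+2\tilde{x}_{l-1,0}^{-}h_{l,0}$ using the $h_{l,0}$-eigenvalue $-1$ of $x_{l,0}^{-}v_1^{+}$, which the paper absorbs silently into its displayed computation.
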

\begin{proof}
By Proposition \ref{rileq2b}, $wt(\tilde{x}_{1,0}^{-}\tilde{x}_{2,0}^{-}\ldots \tilde{x}_{l-1,0}^{-}x_{l,0}^{-}v^{+}_1)=-\omega_{1}+\omega_{l}$, and then  $$h_{l,0}\tilde{x}_{1,0}^{-}\tilde{x}_{2,0}^{-}\ldots \tilde{x}_{l-1,0}^{-}x_{l,0}^{-}v^{+}_1=\tilde{x}_{1,0}^{-}\tilde{x}_{2,0}^{-}\ldots \tilde{x}_{l-1,0}^{-}x_{l,0}^{-}v^{+}_1.$$
Thus $Y_l\left(\tilde{x}_{1,0}^{-}\tilde{x}_{2,0}^{-}\ldots \tilde{x}_{l-1,0}^{-}x_{l,0}^{-}v^{+}_1\right)\cong W_1\left(a\right).$ The eigenvalue of $\tilde{x}_{1,0}^{-}\tilde{x}_{2,0}^{-}\ldots \tilde{x}_{l-1,0}^{-}x_{l,0}^{-}v^{+}_1$ under $h_{l,1}$ will tell the value of $a$.
\begin{align*}
% \nonumber to remove numbering \left(before each equation\right)
   & h_{l,1}\tilde{x}_{1,0}^{-}\tilde{x}_{2,0}^{-}\ldots \tilde{x}_{l-1,0}^{-}x_{l,0}^{-}v^{+}_1 \\
   &= \tilde{x}_{1,0}^{-}\tilde{x}_{2,0}^{-}\ldots \tilde{x}_{l-2,0}^{-}[h_{l,1}\tilde{x}_{l-1,0}^{-}]x_{l,0}^{-}v^{+}_1+\tilde{x}_{1,0}^{-}\tilde{x}_{2,0}^{-}\ldots \tilde{x}_{l-2,0}^{-}x_{l-1,0}^{-}h_{l,1}\tilde{x}_{l,0}^{-}v^{+}_1\\
   &= \tilde{x}_{1,0}^{-}\tilde{x}_{2,0}^{-}\ldots \tilde{x}_{l-2,0}^{-}\left(4\tilde{x}_{l-1,1}^{-}+
2\tilde{x}_{l-1,0}^{-}+2\tilde{x}_{l-1,0}^{-}h_{l,0}\right)x_{l,0}^{-}v^{+}_1\\
   &-a_1\tilde{x}_{1,0}^{-}\tilde{x}_{2,0}^{-}\ldots \tilde{x}_{l-1,0}^{-}x_{l,0}^{-}v^{+}_1\\
   &= \left(4\left(\frac{a_1}{2}+\frac{1}{2}\right)-a_1\right)\tilde{x}_{1,0}^{-}\tilde{x}_{2,0}^{-}\ldots \tilde{x}_{l-1,0}^{-}x_{l,0}^{-}v^{+}_1\\
   &= \left(a_1+2\right)\tilde{x}_{1,0}^{-}\tilde{x}_{2,0}^{-}\ldots \tilde{x}_{l-1,0}^{-}x_{l,0}^{-}v^{+}_1.
\end{align*}\end{proof}
Let $v_2=\tilde{x}_{1,0}^{-}\tilde{x}_{2,0}^{-}\ldots \tilde{x}_{l-1,0}^{-}x_{l,0}^{-}v^{+}_1$, and $Y^{\left(1\right)}=span\{x_{i,r}^{\pm}, h_{i,r}|i>1\}$. It follows from the defining relations of Yangians that $Y^{\left(1\right)}\cong Y\big(\mathfrak{so}(2l-1,\C)\big)$.

\begin{proposition}\
\begin{enumerate}
  \item $Y_k\left(\tilde{x}_{k+1,0}^{-}\ldots \tilde{x}_{l-1,0}^{-}x_{l,0}^{-}v_2\right)\cong W_1\left(\frac{a_1+2}{2}+\frac{l-k}{2}\right)$ for $2\leq k\leq l-1$.
  \item $Y_l\left(\tilde{x}_{2,0}^{-}\ldots \tilde{x}_{l-1,0}^{-}x_{l,0}^{-}v_2\right)\cong W_1\left(a_1+4\right)$.
\end{enumerate}
\end{proposition}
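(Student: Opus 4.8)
The plan is to recognize that the subalgebra $Y^{(1)}=\mathrm{span}\{x_{i,r}^{\pm},h_{i,r}\mid i>1\}\cong Y\big(\mathfrak{so}(2l-1,\C)\big)$ is a Yangian of type $B_{l-1}$, and that $v_2$ plays, for $Y^{(1)}$, exactly the role that the highest weight vector $v_1^{+}$ plays for $\yso$ in Proposition \ref{cosoblp1}. Once this is established, both assertions become Proposition \ref{cosoblp1} read one rank lower and with $a_1$ shifted to $a_1+2$: part (i) has the same form as part (ii) of Proposition \ref{cosoblp1} under $a_1\mapsto a_1+2$ (now with $k$ ranging over $2\le k\le l-1$, since node $1$ is absent), and part (ii) is part (iii) of Proposition \ref{cosoblp1} under the same shift, with the left-most factor $\tilde{x}_{1,0}^{-}$ deleted.

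First I would check that $v_2$ is a maximal vector for $Y^{(1)}$, i.e. $x_{j,0}^{+}v_2=0$ for all $j=2,\dots,l$. By Proposition \ref{rileq2b} the weight of $v_2$ is $\mathbf{s}_1(\omega_l)=-\omega_1+\omega_l$, where $\mathbf{s}_1=s_1s_2\cdots s_{l-1}s_l$; hence a nonzero $x_{j,0}^{+}v_2$ would be a weight vector of weight $\mathbf{s}_1(\omega_l)+\alpha_j$, forcing $\omega_l+\mathbf{s}_1^{-1}(\alpha_j)$ to be a weight of $V_{a_1}(\omega_l)$. The key input here is an analogue of Lemma \ref{sps1v2p} for type $B_l$, namely $\mathbf{s}_1^{-1}(\alpha_j)\in\Delta^{+}$ for $2\le j\le l$, with $\mathbf{s}_1^{-1}=s_ls_{l-1}\cdots s_1$; this is a routine root-system computation. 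Given it, $\omega_l+\mathbf{s}_1^{-1}(\alpha_j)$ strictly dominates $\omega_l$, contradicting the maximality of the highest weight $\omega_l$ of $V_{a_1}(\omega_l)$, so indeed $x_{j,0}^{+}v_2=0$.

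Next I would identify the Drinfeld data of the $Y^{(1)}$-highest weight vector $v_2$. Since $\mathrm{wt}(v_2)=-\omega_1+\omega_l$, we get $\tilde{h}_{j,0}v_2=\delta_{jl}v_2$ for $j\ge 2$, so the associated polynomial $P_j$ equals $1$ for $2\le j\le l-1$ while $P_l$ has degree $1$; by Lemma \ref{solv2+} its root is $a_1+2$. Thus $v_2$ is a $Y^{(1)}$-highest weight vector whose Drinfeld polynomials match those of the fundamental (spin) module $V_{a_1+2}(\omega_l)$ of $Y\big(\mathfrak{so}(2l-1,\C)\big)$. Every step of the proof of Proposition \ref{cosoblp1} uses only the weight combinatorics of Proposition \ref{rileq2b}, the normalization identifying each $Y_k(\,\cdot\,)$ with a $W_1$ determined by a single root, and the rescaled defining relations of Proposition \ref{newrelationsbl1}; all of these transport verbatim to $Y^{(1)}$ acting on $v_2$, with $l$ replaced by $l-1$ and $a_1$ by $a_1+2$. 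This yields part (i) for $2\le k\le l-1$ and part (ii).

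The main obstacle is the maximality step, namely proving $x_{j,0}^{+}v_2=0$, which rests entirely on the positivity statement $\mathbf{s}_1^{-1}(\alpha_j)\in\Delta^{+}$. Once $v_2$ is known to be a highest weight vector for $Y^{(1)}$ with the correct Drinfeld polynomials, no genuinely new computation is required: the argument simply feeds Proposition \ref{cosoblp1} into itself at rank $l-1$. I would also note that this step sets up the downward recursion on $v_{m+1}=\tilde{x}_{m,0}^{-}\cdots x_{l,0}^{-}v_m$ needed to reach the lowest weight vector of $V_{a_1}(\omega_l)$, in complete analogy with the symplectic case treated in Proposition \ref{spvmaga13}.
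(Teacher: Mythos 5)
Your proposal is correct and follows essentially the same route as the paper: the paper likewise sets $\mathbf{s}_1=s_1s_2\ldots s_{l-1}s_l$, checks $\mathbf{s}_1^{-1}(\alpha_j)\in\Delta^{+}$ for $j\geq 2$ to get $x_{j,0}^{+}v_2=0$, reads off from $\mathrm{wt}(v_2)=-\omega_1+\omega_l$ that the Drinfeld polynomials of the $Y^{(1)}$-highest weight vector $v_2$ are $P_j=1$ for $j\neq l$ and $P_l(u)=u-(a_1+2)$ via Lemma \ref{solv2+}, and then repeats Proposition \ref{cosoblp1} with $a_1$ replaced by $a_1+2$. Your identification of $Y^{(1)}\cong Y\big(\mathfrak{so}(2l-1,\C)\big)$ and the rank-reduction recursion is exactly the paper's argument.
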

\begin{proof}
%Due to the defining relations we know $Y^{\left(1\right)}\cong Y\left(\mathfrak{so}\left(2l-1,\C\right)\right)$.
Denote $\mathbf{s}_1=s_1s_2\ldots s_{l-2}s_{l-1}s_l$. It is routine to check $\mathbf{s}_1^{-1}\left(\alpha_j\right)\in \Delta^{+}$ for $j=2,3,\ldots, l$. Similar to Step 2 of Proposition \ref{mtoysob}, we have $x_{j,0}^{+}v_2=0$. $\tilde{h}_{j,r}v_2$ is a scalar multiple of $v_2$. Thus $Y^{\left(1\right)}\left(v_2\right)$ is a highest weight representation. Since the weight of $v_2$ is $-\omega_1+\omega_l$, the degree of the associated polynomial $P_j$ equals 0 if $j\neq l$; and$\operatorname{Deg}\Big(P_l\left(u\right)\Big)=1$. Therefore $P_j\left(u\right)=1$ if $j\neq l$ and $P_l\left(u\right)=\left(u-a\right)$. It follows from Lemma \ref{solv2+} that $a=a_1+2$. The rest of the proof of this proposition is similar to the proofs of Proposition \ref{cosoblp1}, just replacing $a_1$ by $a_1+2$.
%\ref{cosoblp1} to this proposition. It follows from Lemma \ref{solv2+} that $a=a_1+2$.
\end{proof}

Similarly we define $v_{m+1}=\tilde{x}_{m,0}^{-}\tilde{x}_{m+1,0}^{-}\ldots \tilde{x}_{l-1,0}^{-}x_{l,0}^{-}v_m$ for $m\leq l-2$. Let $Y^{\left(m\right)}=span\{x_{i,r}^{\pm}, h_{i,r}|i>m\}$ with $2\leq m\leq l-2$. Due to the defining relations of Yangians, we know $Y^{\left(m\right)}\cong Y\big(\mathfrak{so}\left(2\left(l-m\right)+1,\C\right)\big)$. %We have no problem transfer the proof of above proposition to the following proposition. $Y^{\left(m\right)}\left(v_{m+1}\right)$ is a highest weight representation of $Y\left(\mathfrak{so}\left(2\left(l-m\right)+1,\C\right)\right)$. %Note that when $m\geq l-1$, $Y^{\left(m\right)}$ is not a Yangian of type $B$.
Similarly, we have
\begin{proposition} Let $m+1\leq k\leq l$.
\begin{enumerate}
  \item $Y_k\left(x_{k+1,0}^{-}\ldots \tilde{x}_{l-1,0}^{-}x_{l,0}^{-}v_{m+1}\right)\cong W_1\left(\frac{a_1+2m}{2}+\frac{l-k}{2}\right)$.
  \item $Y_l\left(\tilde{x}_{m+1,0}^{-}\ldots \tilde{x}_{l-1,0}^{-}x_{l,0}^{-}v_{m+1}\right)\cong W_1\Big(a_1+2\left(m+1\right)\Big)$.
\end{enumerate}
\end{proposition}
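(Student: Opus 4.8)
The plan is to argue by induction on $m$, reducing the claim for $v_{m+1}$ to the base case already settled for $v_2$ (which itself rests on Proposition \ref{cosoblp1} and Lemma \ref{solv2+}). The inductive step will realize $v_{m+1}$ as the highest weight vector of a fundamental spin representation of the smaller Yangian $Y^{(m)}\cong Y\big(\mathfrak{so}(2(l-m)+1,\C)\big)$, after which every computation becomes a verbatim copy of the base case with the spectral parameter shifted by $2m$.

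First I would verify that $v_{m+1}$ is a maximal vector for $Y^{(m)}$, i.e.\ that $x_{j,0}^{+}v_{m+1}=0$ for all $j>m$. Setting $\mathbf{s}_m=s_m s_{m+1}\cdots s_{l-1}s_l$, it is routine (exactly as in the $v_2$ case) to check $\mathbf{s}_m^{-1}(\alpha_j)\in\Delta^{+}$ for $m+1\le j\le l$; the argument of Step 2 of Proposition \ref{mtoysob} then forces $x_{j,0}^{+}v_{m+1}=0$, since otherwise $\omega_l+\mathbf{s}_m^{-1}(\alpha_j)$ would be a weight of $V_{a_1}(\omega_l)$ strictly above the highest weight $\omega_l$. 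Because the $\tilde{h}_{j,r}$ act on $v_{m+1}$ by scalars, $Y^{(m)}(v_{m+1})$ is then a highest weight $Y^{(m)}$-module.

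Next I would pin down its Drinfeld data. A direct computation using Proposition \ref{rileq2b} shows that the weight of $v_{m+1}$ equals $-\omega_m+\omega_l$; hence $\tilde{h}_{j,0}v_{m+1}=0$ for $m+1\le j\le l-1$ while the node-$l$ datum has degree one, so the associated polynomials satisfy $P_j(u)=1$ for $j\ne l$ and $P_l(u)=u-a$. The value $a=a_1+2m$ is supplied by the previous inductive stage: part (ii) at level $m-1$ is precisely $Y_l(v_{m+1})\cong W_1(a_1+2m)$, and for the base case $m=1$ this is Lemma \ref{solv2+}. Consequently $Y^{(m)}(v_{m+1})\cong V_{a_1+2m}(\omega_l)$, the $l$-th fundamental representation of $Y\big(\mathfrak{so}(2(l-m)+1,\C)\big)$.

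Finally, transporting Proposition \ref{cosoblp1} (and Lemma \ref{solv2+}) through the isomorphism $Y^{(m)}\cong Y\big(\mathfrak{so}(2(l-m)+1,\C)\big)$, with the node labels $m+1,\ldots,l$ now playing the roles of $1,\ldots,l-m$ and with $a_1$ replaced throughout by $a_1+2m$, yields both assertions: the chain $x_{k+1,0}^{-}\cdots x_{l,0}^{-}v_{m+1}$ generates copies of $W_1\big(\tfrac{a_1+2m}{2}+\tfrac{l-k}{2}\big)$ for $m+1\le k\le l-1$, and the final application $\tilde{x}_{m+1,0}^{-}\cdots x_{l,0}^{-}v_{m+1}=v_{m+2}$ produces $Y_l(v_{m+2})\cong W_1\big(a_1+2(m+1)\big)$, closing the induction. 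The main obstacle here is bookkeeping rather than conceptual: one must keep the rescaled short-root generators $\tilde{x}_{i,r}^{\pm},\tilde{h}_{i,r}$ (so that the relations of Proposition \ref{newrelationsbl1} and the $Y(\mathfrak{sl}_2)$-module formulas apply inside $Y^{(m)}$) synchronized with the shift $a_1\mapsto a_1+2m$, and must verify that the node-$l$ parameter is correctly propagated from the $k=l$ statement of the previous inductive stage, which is exactly what makes part (ii) feed the recursion.
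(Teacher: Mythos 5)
Your proposal is correct and is essentially identical to the paper's own (largely implicit) argument: the paper proves the $v_2$ case exactly this way --- maximality of $v_2$ for $Y^{(1)}$ via $\mathbf{s}_1^{-1}(\alpha_j)\in\Delta^{+}$, the weight $-\omega_1+\omega_l$ forcing $P_j(u)=1$ for $j\neq l$ and $\operatorname{Deg}(P_l)=1$, Lemma \ref{solv2+} supplying $a=a_1+2$, and then ``replacing $a_1$ by $a_1+2$'' in the computations of Proposition \ref{cosoblp1} --- and then introduces $Y^{(m)}\cong Y\big(\mathfrak{so}(2(l-m)+1,\C)\big)$ and states the general proposition with only the words ``Similarly, we have,'' which is precisely the induction on $m$ you spell out, with part (ii) of each stage feeding the spectral parameter $a_1+2m$ into the next. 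The one cosmetic caveat is that your identification $Y^{(m)}(v_{m+1})\cong V_{a_1+2m}(\omega_l)$ claims slightly more (irreducibility) than is needed or immediately available; what the transport actually uses, and what the paper records, is only that $Y^{(m)}(v_{m+1})$ is a highest weight representation with the stated Drinfeld data.
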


Note that $v_1^{-}=x_{l,0}^{-}\tilde{x}_{l-1,0}^{-}x_{l,0}^{-}v_{l-1}.$ The computations will finish when $m=l-2$. At the same time, $Y^{(m)}\cong Y\big(\mathfrak{so}\left(5,\C\right)\big)$.
%By part $\left(ii\right)$ of above lemma, we have $$Y_{l}v_{l-1}^{+}\cong W_1\left(a_1+2\left(l-1\right)\right).$$
%Similar to the calculations as in Lemmas \ref{solv1+b2} and \ref{solv1+b3}, we have
%\begin{lemma}\
%\begin{enumerate}
%  \item $Y_{l-1}\left(x_{l,0}^{-}v_{l-1}^{+}\right)\cong W_1\left(\frac{a_1+2\left(l-1\right)}{2}+\frac{1}{2}\right)$.
%  \item $Y_{l}\left(\tilde{x}_{l-1,0}^{-}x_{l,0}^{-}v_{l-1}^{+}\right)\cong W_1\left(\frac{a_1+2l}{2}\right)$.
%\end{enumerate}
%\end{lemma}

\subsection{Case 3: $2\leq b_1\leq l-1$.}

For the simplicity of the expression in this case, denote $b_1$ by $i$. We will denote
$\tilde{x}_{k+1,0}^{-}\ldots \tilde{x}_{l-1,0}^{-}\left(x_{l,0}^{-}\right)^2\tilde{x}_{l-1,0}^{-}\ldots \tilde{x}_{i,0}^{-}$ by $\overline{\tilde{x}_{k+1,0}^{-}\ldots \tilde{x}_{i,0}^{-}}$, and \\$\left(\tilde{x}_{m+1,0}^{-}\right)^2\ldots \left(\tilde{x}_{i-1,0}^{-}\right)^2\tilde{x}_{i,0}^{-}\ldots \tilde{x}_{l-1,0}^{-}\left(x_{l,0}^{-}\right)^2\tilde{x}_{l-1,0}^{-}\ldots \tilde{x}_{i,0}^{-}$ by $\overline{\left(\tilde{x}_{m+1,0}^{-}\right)^2\ldots \tilde{x}_{i,0}^{-}}.$
We summarize the main calculations in the following proposition.
\begin{proposition}\label{c3pb} Let $i\leq k\leq l-1$ and $1\leq m\leq i-2$.
\begin{enumerate}
  \item $Y_k\left(\tilde{x}_{k-1,0}^{-}\tilde{x}_{k-2,0}^{-}\ldots \tilde{x}_{i,0}^{-}v^{+}_1\right)\cong W_1\left(\frac{a_1}{2}+\frac{k-i}{2}\right)$.
  \item  $Y_l\left(\tilde{x}_{l-1,0}^{-}\tilde{x}_{l-2,0}^{-}\ldots \tilde{x}_{i,0}^{-}v^{+}_1\right)$ is isomorphic to either $W_2\left(a_1+l-i-1\right)$ or\\ $W_1\left(a_1+l-i\right)\otimes W_1\left(a_1+l-i-1\right)$.
  \item $Y_{k}\left(\overline{\tilde{x}_{k+1,0}^{-}\ldots \tilde{x}_{i,0}^{-}}v^{+}_1\right)\cong W_1\left(\frac{a_1}{2}+\frac{2l-k-i-1}{2}\right)$.
  \item
\begin{enumerate}
    \item $Y_{i-1}\Big(\overline{x_{i,0}^{-}\ldots x_{i,0}^{-}}v^{+}_1\Big)\cong W_1\left(\frac{a_1}{2}+l-i\right)\otimes W_1\left(\frac{a_1}{2}+\frac{1}{2}\right)$.
    \item $Y_{m}\Big(\overline{\left(\tilde{x}_{m+1,0}^{-}\right)^2\ldots \tilde{x}_{i,0}^{-}}v^{+}_1\Big)\cong W_1\left(\frac{a_1}{2}+\frac{2l-i-m-1}{2}\right)\otimes W_1\left(\frac{a_1}{2}+\frac{i-m}{2}\right)$.
  \end{enumerate}
%$Y_{m}\Big(\overline{\left(\tilde{x}_{m+1,0}^{-}\right)^2\ldots \tilde{x}_{i,0}^{-}}v^{+}_1\Big)\cong W_1\left(\frac{a_1}{2}+\frac{2l-i-m-1}{2}\right)\otimes W_1\left(\frac{a_1}{2}+\frac{b_1-m}{2}\right)$.
  \item  $Y_{i}\left(\overline{\left(\tilde{x}_{1,0}^{-}\right)^2\ldots \tilde{x}_{i,0}^{-}}v^{+}_1\right)\cong W_1\left(\frac{a_1}{2}+1\right)$.
\end{enumerate}
\end{proposition}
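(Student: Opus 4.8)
The plan is to establish the five isomorphism types by invoking the general machinery already set up in Steps~2--4 of Proposition~\ref{mtoysob}, which reduces each statement to a finite eigenvalue computation inside the rank-one subalgebra $Y_{i'}\cong\ysl$. For every weight vector $v$ occurring on the path from $v_1^{+}$ toward $v_1^{-}$, the relevant $Y_{i'}$ has $v$ as a maximal vector, the Cartan elements $\tilde h_{i',r}$ act by scalars, and the degree of the associated polynomial $P(u)$ equals the eigenvalue of $\tilde h_{i',0}$ on $v$, which by Proposition~\ref{rileq2b} is either $1$ or $2$. Thus each item decomposes into a degree determination (read off the weight via Proposition~\ref{rileq2b}), an eigenvalue computation for $\tilde h_{i',1}$ --- and for $\tilde h_{i',2}$ as well when the degree is $2$ --- an extraction of $P(u)$ through Lemma~\ref{g2puisd3}, and a final appeal to the classification of local Weyl modules of $\ysl$ in Theorem~\ref{wmfsl2}.

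For the degree-one items (i), (iii) and (v) I would argue exactly as in Proposition~\ref{i=1yo2n} and Lemma~\ref{bllym}. Once $P(u)=u-a$ is known to be linear, the scalar $a$ is pinned down by the $\tilde h_{i',1}$-eigenvalue, which I compute by induction along the path: each step rewrites the commutator $[\tilde h_{k,1},\tilde x_{k\pm1,0}^{-}]$ in terms of $\tilde x_{k\pm1,1}^{-}$ and $\tilde x_{k\pm1,0}^{-}$ using Proposition~\ref{newrelationsbl1}, and then applies the rule $\tilde x_{k,1}^{-}v=a\,\tilde x_{k,0}^{-}v$ valid on a $W_1(a)$-vector, which is equation~(\ref{Cor3.3}). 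The ubiquitous factor $\tfrac{a_1}{2}$ (in place of the full $a_1$ seen in the $D_l$ case) is precisely the effect of the type-$B$ normalization $d_1=\cdots=d_{l-1}=2$ propagated through the rescaling $\tilde h_{i,r}=2^{-(r+1)}h_{i,r}$.

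The genuinely laborious part, and the main obstacle, is items (ii) and (iv), where $\tilde h_{i',0}$ has eigenvalue $2$ and $P(u)=(u-a)(u-b)$. Here one must also compute the $\tilde h_{i',2}$-eigenvalue, expanding $[\tilde h_{i',2},(\tilde x_{i',0}^{-})^{2}]$ (or $[\tilde h_{i',2},\tilde x_{i',0}^{-}]$ for (ii)) via Proposition~\ref{newrelationsbl1}; this spawns many terms whose coefficients must be tracked together with the type-$B$ factors of $2$, exactly as in Lemmas~\ref{l-2is34d}, \ref{l-2i34d2}, \ref{l-2i34d23} and \ref{yspc1i5}. Writing $a=\tfrac{a_1}{2}+c-x$ for the appropriate constant $c$, the two eigenvalue equations collapse to a single quadratic of the form $x^{2}-x=\tfrac{(l-i)(l-i-2)}{4}$ whose discriminant is a perfect square, so $x$ is half-integral and $a,b$ are distinct with $\operatorname{Re}(b)\ge\operatorname{Re}(a)$; Lemma~\ref{g2puisd3} then identifies $P(u)$.

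Whether the resulting module is $W_2$ or $W_1\otimes W_1$ is governed entirely by the root spacing, which is why (ii) is stated as a dichotomy and (iv) is not. In (ii) the subalgebra is $Y_l$ with $d_l=1$, so the $h_{l,r}$-relations of Proposition~\ref{newrelationsbl1} carry the compensating factors $4$ and $2$; the two eigenvalue computations, patterned on Lemma~\ref{blyll}, give roots differing by exactly $1$, so the associated $\ysl$ local Weyl module may be reducible (yielding $W_2$) or irreducible (yielding $W_1\otimes W_1$), and both possibilities are recorded. In (iv), by contrast, every pair of roots differs by the half-integer $l-i-\tfrac12$, which can never equal $1$, so Theorem~\ref{wmfsl2} forces irreducibility and a clean $W_1\otimes W_1$. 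The general block (iv)(b) is finally obtained by a downward induction on $m$, identical in shape to Lemma~\ref{l-2i34d23}, carrying the eigenvalue formulas for $\tilde h_{m,1}$ and $\tilde h_{m,2}$ from $m+1$ to $m$; the remaining arithmetic is routine and mirrors the $C_l$ and $D_l$ computations already in place, so no new idea beyond careful bookkeeping of the type-$B$ scalars is needed.
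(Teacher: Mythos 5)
Your proposal is correct and follows essentially the same route as the paper: both reduce each item to rank-one eigenvalue computations in $Y_{i'}\cong\ysl$ (degree of $P(u)$ read off from Proposition \ref{rileq2b}, eigenvalues of $\tilde h_{i',1}$ and, in the degree-two cases, $\tilde h_{i',2}$ computed via the rescaled relations of Proposition \ref{newrelationsbl1} together with equation (\ref{Cor3.3}), then identification through Lemma \ref{g2puisd3} and the $\ysl$ local Weyl module theory), deferring the explicit arithmetic to the analogous lemmas in Case 1 and the type-$C$/$D$ chapters exactly as the paper does via Lemma \ref{ipfs4so}, the downward induction of Lemma \ref{l-2i34d23}, and Lemma \ref{c3l3b}. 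Your observation that (ii) must be stated as a dichotomy because its two roots differ by exactly $1$, while in (iv) the roots differ by the half-integer $l-i-\tfrac12$ so the tensor product $W_1\otimes W_1$ is forced to be irreducible, is precisely the reason implicit in the paper's statements.
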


%We divide the proof into steps, and the detailed proof of some steps, not every step, are listed latter as Lemmas.

\begin{proof}

The proofs of parts (i), (ii) and (iii) follow from similar calculations in the Case 1 of this section. We omit the proofs.

%Step 1: $Y_k\left(\tilde{x}_{k-1,0}^{-}\ldots \tilde{x}_{i,0}^{-}v^{+}_1\right)\cong W_1\left(\frac{a_1}{2}+\frac{k-i}{2}\right)$.

%Step 2: $Y_l\left(\tilde{x}_{l-1,0}^{-}\ldots \tilde{x}_{i,0}^{-}v^{+}_1\right)$ is isomorphic to either $W_2\left(a_1+l-i-1\right)$ or $W_1\left(a_1+l-i\right)\otimes W_1\left(a_1+l-i-1\right)$.

%Step 3: $Y_{k}\left(\tilde{x}_{k+1,0}^{-}\ldots \tilde{x}_{l-1,0}^{-}\left(x_{l,0}^{-}\right)^2\tilde{x}_{l-1,0}^{-}\ldots \tilde{x}_{i,0}^{-}v^{+}_1\right)\cong W_1\left(\frac{a_1}{2}+\frac{2l-k-i-1}{2}\right)$.

The Claim (iv) is shown in 3 steps.

Step 1: $Y_{i-1}\left(\overline{\tilde{x}_{i,0}^{-}\ldots \tilde{x}_{i,0}^{-}}v^{+}_1\right)\cong W_1\left(\frac{a_1}{2}+l-i\right)\otimes W_1\left(\frac{a_1}{2}+\frac{1}{2}\right)$.

Proof: The proof is provided in Lemma \ref{ipfs4so}.

Step 2: $Y_{i-2}\Big(\overline{\left(\tilde{x}_{i-1,0}^{-}\right)^2\ldots \tilde{x}_{i,0}^{-}}v^{+}_1\Big)\cong W_1\left(\frac{a_1}{2}+l-i+\frac{1}{2}\right)\otimes W_1\left(\frac{a_1}{2}+1\right)$.

Proof: The proof is similar to the proof of Lemma \ref{l-2i34d2}, so we omit the proof. %because there are a exactly the same defining relations of $\yo$ and $\yso$.
%The difference of the proofs is to replace $A$ by $\frac{a_1}{2}+\frac{2l-2i+1}{4}$ and $\frac{\left(l-i\right)\left(l-i-2\right)}{2}$ by $\frac{\left(2l-2i-3\right)\left(2l-2i+1\right)}{8}$.

Step 3: $Y_{m}\Big(\overline{\left(\tilde{x}_{m+1,0}^{-}\right)^2\ldots \tilde{x}_{i,0}^{-}}v^{+}_1\Big)\cong W_1\left(\frac{a_1}{2}+\frac{2l-i-m-1}{2}\right)\otimes W_1\left(\frac{a_1}{2}+\frac{i-m}{2}\right)$.

Proof: The proof of this step is similar to case $\left(i\right)$ of Lemma \ref{l-2i34d23}, by induction on $m$ downward. We omit the proof.

The proof of the claim $\left(v\right)$ is provided in Lemma \ref{c3l3b}.
\end{proof}

%For the simplicity of the following calculations denote $$\tilde{x}_{i,0}^{-}\ldots \tilde{x}_{l-1,0}^{-}\left(x_{l,0}^{-}\right)^2\tilde{x}_{l-1,0}^{-}\ldots \tilde{x}_{i,0}^{-}=\overline{\tilde{x}_{i,0}^{-}\ldots \tilde{x}_{i,0}^{-}}.$$
\begin{lemma}\label{ipfs4so}
$Y_{i-1}\left(\overline{\tilde{x}_{i,0}^{-}\ldots\tilde{x}_{i,0}^{-}}v^{+}_1\right)\cong W_1\left(\frac{a_1}{2}+l-i\right)\otimes W_1\left(\frac{a_1}{2}+\frac{1}{2}\right)$.
\end{lemma}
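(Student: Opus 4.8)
The plan is to follow the template already established for type $C$ in Lemma \ref{yspc1i5} and for type $D$ in Lemma \ref{l-2is34d}, adapted to the rescaled generators $\tilde{x}_{i-1,r}^{-},\tilde{h}_{i-1,r}$ of type $B$. Write $v=\overline{\tilde{x}_{i,0}^{-}\ldots\tilde{x}_{i,0}^{-}}v_1^{+}$. First I would record, via Proposition \ref{rileq2b}, that $\mathrm{wt}(v)=2\omega_{i-1}-\omega_{i}$, so that $\tilde{h}_{i-1,0}v=2v$ and the associated polynomial $P(u)$ of the highest-weight $Y_{i-1}$-module $Y_{i-1}(v)$ has degree $2$. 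Writing $P(u)=(u-a)(u-b)$ with $\operatorname{Re}(a)\le\operatorname{Re}(b)$, the expansion
\[
\frac{P(u+1)}{P(u)}=1+2u^{-1}+(a+b+1)u^{-2}+(a^2+b^2+a+b)u^{-3}+\cdots
\]
shows that the eigenvalues of $v$ under $\tilde{h}_{i-1,1}$ and $\tilde{h}_{i-1,2}$ determine $a+b+1$ and $a^2+b^2+a+b$, hence $a$ and $b$.

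Next I would compute those two eigenvalues. Since $\tilde{h}_{i-1,r}$ commutes with every factor in the interior of $\overline{\tilde{x}_{i,0}^{-}\ldots\tilde{x}_{i,0}^{-}}$ (all such indices $j$ satisfy $a_{i-1,j}=0$, including $j=l$ because $i-1\le l-2$), the operator passes through to act only on the leftmost and rightmost factors $\tilde{x}_{i,0}^{-}$. Peeling these off and applying the relation $[\tilde{h}_{i-1,1},\tilde{x}_{i,0}^{-}]=\tilde{x}_{i,1}^{-}+\tfrac12\tilde{x}_{i,0}^{-}+\tilde{x}_{i,0}^{-}\tilde{h}_{i-1,0}$ from Proposition \ref{newrelationsbl1}, together with the facts that $\tilde{x}_{i,1}^{-}$ acts by $\tfrac{a_1}{2}$ on $v_1^{+}$ and by $A:=\tfrac{a_1}{2}+l-i-\tfrac12$ on the inner vector $w=\overline{\tilde{x}_{i+1,0}^{-}\ldots\tilde{x}_{i,0}^{-}}v_1^{+}$ (from part (iii) of Proposition \ref{c3pb} with $k=i$, via the relation (\ref{Cor3.3}) inside $W_1(A)$), I expect $a+b+1=a_1+l-i+\tfrac32$. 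The $\tilde{h}_{i-1,2}$ eigenvalue is found the same way, now using $[\tilde{h}_{i-1,2},\tilde{x}_{i,0}^{-}]=[\tilde{h}_{i-1,1},\tilde{x}_{i,1}^{-}]+\tfrac12(\tilde{h}_{i-1,1}\tilde{x}_{i,0}^{-}+\tilde{x}_{i,0}^{-}\tilde{h}_{i-1,1})$ and the action of $\tilde{x}_{i,1}^{-}$ on $w$; after substituting the inner eigenvalues this reduces, exactly as in Lemma \ref{yspc1i5}, to a single quadratic for the gap whose discriminant is a perfect square, forcing $a=\tfrac{a_1}{2}+\tfrac12$ and $b=\tfrac{a_1}{2}+l-i$.

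Finally, I would invoke Lemma \ref{g2puisd3} to confirm that $P(u)=\bigl(u-(\tfrac{a_1}{2}+\tfrac12)\bigr)\bigl(u-(\tfrac{a_1}{2}+l-i)\bigr)$ is the monic polynomial carrying these data, so that $Y_{i-1}(v)$ is a quotient of the $\ysl$ local Weyl module $W(P)$, which is $4$-dimensional by Theorem \ref{wmfsl2}. Because the two roots of $P$ differ by $l-i-\tfrac12$, a half-integer and hence never equal to $1$, Proposition \ref{ctpihw} (equivalently Lemma \ref{beautiful}) shows $W(P)\cong W_1(\tfrac{a_1}{2}+l-i)\otimes W_1(\tfrac{a_1}{2}+\tfrac12)$ is irreducible. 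Since $Y_{i-1}(v)$ is nonzero (indeed at least $3$-dimensional, as $\tilde{x}_{i-1,0}^{-}v$ and $(\tilde{x}_{i-1,0}^{-})^2v$ are nonzero), it must coincide with all of $W(P)$, which gives the claim. The main obstacle will be the $\tilde{h}_{i-1,2}$ computation: it is the long, bookkeeping-heavy step, where the type-$B$ rescaling factors and the action of $\tilde{x}_{i,1}^{-}$ on $w$ must be tracked with care. A pleasant simplification relative to type $D$ is that the half-integer gap $l-i-\tfrac12$ eliminates the degenerate three-dimensional subcase altogether, so the answer is always the irreducible tensor product.
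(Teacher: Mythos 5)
Your proposal is correct and follows essentially the same route as the paper's proof: weight computation via Proposition \ref{rileq2b} gives a degree-2 associated polynomial, the $\tilde{h}_{i-1,1}$ and $\tilde{h}_{i-1,2}$ eigenvalues are computed by peeling off the two end factors $\tilde{x}_{i,0}^{-}$ using Proposition \ref{newrelationsbl1} and the inner eigenvalue $\frac{a_1}{2}+\frac{2l-2i-1}{2}$ from Proposition \ref{c3pb}(iii), the resulting quadratic yields $a=\frac{a_1}{2}+\frac{1}{2}$, $b=\frac{a_1}{2}+l-i$, and the conclusion follows from the $\ysl$ local Weyl module theory exactly as in the paper. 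Your explicit observation that the half-integer gap $l-i-\frac{1}{2}$ rules out the degenerate three-dimensional subcase (present in the type $D$ analogue, Lemma \ref{l-2is34d}) is a nice clarification of why the paper's statement needs no case distinction, but it is the same argument the paper implicitly relies on.
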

\begin{proof}

By Proposition \ref{rileq2b}, $wt(\overline{\tilde{x}_{i,0}^{-}\ldots\tilde{x}_{i,0}^{-}}v^{+}_1)=2\omega_{i-1}-\omega_{i}$, and then $$\tilde{h}_{i-1,0}\overline{\tilde{x}_{i,0}^{-}\ldots\tilde{x}_{i,0}^{-}}v^{+}_1=2\overline{\tilde{x}_{i,0}^{-}\ldots\tilde{x}_{i,0}^{-}}v^{+}_1,$$ which tells the associated polynomial $P\left(u\right)$ of $Y_i\left(v_2\right)$ has of degree two. Say $$P\left(u\right)=\left(u-a\right)\left(u-b\right).$$ The values of both $a$ and $b$  will follow from the calculations $$\tilde{h}_{i-1,1}\overline{\tilde{x}_{i,0}^{-}\ldots\tilde{x}_{i,0}^{-}}v^{+}_1=\left(a+b+1\right)\overline{\tilde{x}_{i,0}^{-}\ldots\tilde{x}_{i,0}^{-}}v^{+}_1,$$ and $$\tilde{h}_{i-1,2}\overline{\tilde{x}_{i,0}^{-}\ldots\tilde{x}_{i,0}^{-}}v^{+}_1=\left(a^2+b^2+a+b\right)\overline{\tilde{x}_{i,0}^{-}\ldots\tilde{x}_{i,0}^{-}}v^{+}_1.$$
\begin{align*}
% \nonumber to remove numbering \left(before each equation\right)
   &\tilde{h}_{i-1,1}\overline{\tilde{x}_{i,0}^{-}\ldots\tilde{x}_{i,0}^{-}}v^{+}_1\\
   &= [\tilde{h}_{i-1,1},\tilde{x}_{i,0}^{-}]\overline{\tilde{x}_{i+1,0}^{-}\ldots\tilde{x}_{i,0}^{-}}v^{+}_1+\overline{\tilde{x}_{i,0}^{-}\ldots\tilde{x}_{i+1,0}^{-}}[\tilde{h}_{i-1,1},\tilde{x}_{i,0}^{-}]v^{+}_1\\
   &= \left(\tilde{x}_{i,1}^{-}+\frac{1}{2}\tilde{x}_{i,0}^{-}+\tilde{x}_{i,0}^{-}\tilde{h}_{i-1,0}\right)\overline{\tilde{x}_{i+1,0}^{-}\ldots\tilde{x}_{i,0}^{-}}v^{+}_1\\
   &+\overline{\tilde{x}_{i,0}^{-}\ldots\tilde{x}_{i+1,0}^{-}}\left(\tilde{x}_{i,1}^{-}+\frac{1}{2}\tilde{x}_{i,0}^{-}+\tilde{x}_{i,0}^{-}\tilde{h}_{i-1,0}\right)v^{+}_1\\
   &=\left(\frac{a_1}{2}+\frac{2l-2i-1}{2}+\frac{1}{2}+1\right)\overline{\tilde{x}_{i,0}^{-}\ldots\tilde{x}_{i,0}^{-}}v^{+}_1\\
   &+\left(\frac{a_1}{2}+\frac{1}{2}\right)\overline{\tilde{x}_{i,0}^{-}\ldots\tilde{x}_{i,0}^{-}}v^{+}_1\\
   &= \left(a_1+\frac{2l-2i+3}{2}\right)\overline{\tilde{x}_{i,0}^{-}\ldots\tilde{x}_{i,0}^{-}}v^{+}_1\\
   &= 2\left(\frac{a_1}{2}+\frac{2l-2i+3}{4}\right)\overline{\tilde{x}_{i,0}^{-}\ldots\tilde{x}_{i,0}^{-}}v^{+}_1.
\end{align*}
Let $c=\frac{a_1}{2}+\frac{2l-2i-1}{2}$. Note that $2\left(\frac{a_1}{2}+\frac{2l-2i+3}{4}\right)=2c-\frac{2l-2i-5}{2}$.
\begin{align*}
% \nonumber to remove numbering \left(before each equation\right)
   &\tilde{h}_{i-1,2}\overline{\tilde{x}_{i,0}^{-}\ldots\tilde{x}_{i,0}^{-}}v^{+}_1\\
   &= [\tilde{h}_{i-1,2},\tilde{x}_{i,0}^{-}]\overline{\tilde{x}_{i+1,0}^{-}\ldots\tilde{x}_{i,0}^{-}}v^{+}_1+\overline{\tilde{x}_{i,0}^{-} \ldots\tilde{x}_{i+1,0}^{-}}[\tilde{h}_{i-1,2},\tilde{x}_{i,0}^{-}]v^{+}_1\\
   &= \left([\tilde{h}_{i-1,1},\tilde{x}_{i,1}^{-}]+\frac{1}{2}\left(\tilde{h}_{i-1,1}\tilde{x}_{i,0}^{-}+\tilde{x}_{i,0}^{-}\tilde{h}_{i-1,1}\right)\right) \overline{\tilde{x}_{i+1,0}^{-}\ldots\tilde{x}_{i,0}^{-}}v^{+}_1\\
   &+\overline{\tilde{x}_{i,0}^{-}\ldots\tilde{x}_{i+1,0}^{-}} \left([\tilde{h}_{i-1,1},\tilde{x}_{i,1}^{-}]+\frac{1}{2}\left(\tilde{h}_{i-1,1}\tilde{x}_{i,0}^{-}+\tilde{x}_{i,0}^{-}\tilde{h}_{i-1,1}\right)\right)v^{+}_1\\
   &=[\tilde{h}_{i-1,1},\tilde{x}_{i,1}^{-}]\overline{\tilde{x}_{i+1,0}^{-}\ldots\tilde{x}_{i,0}^{-}}v^{+}_1
   +\frac{1}{2}\tilde{h}_{i-1,1}\tilde{x}_{i,0}^{-}\overline{\tilde{x}_{i+1,0}^{-}\ldots\tilde{x}_{i,0}^{-}}v^{+}_1\\
   &+\overline{\tilde{x}_{i,0}^{-}\ldots\tilde{x}_{i+1,0}^{-}}\tilde{h}_{i-1,1}\tilde{x}_{i,0}^{-}v^{+}_1
   +\overline{\tilde{x}_{i,0}^{-}\ldots\tilde{x}_{i+1,0}^{-}}[\tilde{h}_{i-1,1},\tilde{x}_{i,1}^{-}]v^{+}_1\\
   &=\tilde{h}_{i-1,1}\tilde{x}_{i,1}^{-}\overline{\tilde{x}_{i+1,0}^{-}\ldots\tilde{x}_{i,0}^{-}}v^{+}_1
   -\tilde{x}_{i,1}^{-}\tilde{h}_{i-1,1}\overline{\tilde{x}_{i+1,0}^{-}\ldots\tilde{x}_{i,0}^{-}}v^{+}_1\\
   &+\frac{1}{2}\tilde{h}_{i-1,1}\tilde{x}_{i,0}^{-}\overline{\tilde{x}_{i+1,0}^{-}\ldots\tilde{x}_{i,0}^{-}}v^{+}_1
   +\overline{\tilde{x}_{i,0}^{-}\ldots\tilde{x}_{i+1,0}^{-}}\tilde{h}_{i-1,1}\tilde{x}_{i,1}^{-}v^{+}_1\\
   &+\overline{\tilde{x}_{i,0}^{-}\ldots\tilde{x}_{i+1,0}^{-}}\tilde{h}_{i-1,1}\tilde{x}_{i,0}^{-}v^{+}_1\\
   &=c\cdot\tilde{h}_{i-1,1}\tilde{x}_{i,0}^{-}\overline{\tilde{x}_{i+1,0}^{-}\ldots\tilde{x}_{i,0}^{-}}v^{+}_1
   -\tilde{x}_{i,1}^{-}\overline{\tilde{x}_{i+1,0}^{-}\ldots\tilde{x}_{i+1,0}^{-}}\tilde{h}_{i-1,1}\tilde{x}_{i,0}^{-}v^{+}_1\\
   &+\frac{1}{2}\tilde{h}_{i-1,1}\tilde{x}_{i,0}^{-}\overline{\tilde{x}_{i+1,0}^{-}\ldots\tilde{x}_{i,0}^{-}}v^{+}_1
   +\overline{\tilde{x}_{i,0}^{-}\ldots\tilde{x}_{i+1,0}^{-}}\tilde{h}_{i-1,1}\tilde{x}_{i,1}^{-}v^{+}_1\\
   &+\overline{\tilde{x}_{i,0}^{-}\ldots\tilde{x}_{i+1,0}^{-}}\tilde{h}_{i-1,1}\tilde{x}_{i,0}^{-}v^{+}_1\\
   &=\left(c+\frac{1}{2}\right)\left(2c-\frac{2l-2i-5}{2}\right)\overline{\tilde{x}_{i,0}^{-}\ldots\tilde{x}_{i,0}^{-}}v^{+}_1-
   c\left(\frac{a_1}{2}+\frac{1}{2}\right)\overline{\tilde{x}_{i,0}^{-}\ldots\tilde{x}_{i,0}^{-}}v^{+}_1\\
   &+\left(\frac{a_1}{2}+\frac{1}{2}\right)\frac{a_1}{2}\overline{\tilde{x}_{i,0}^{-}\ldots\tilde{x}_{i,0}^{-}}v^{+}_1+
   \left(\frac{a_1}{2}+\frac{1}{2}\right)\overline{\tilde{x}_{i,0}^{-}\ldots\tilde{x}_{i,0}^{-}}v^{+}_1\\
   &= \left(\frac{a_1}{2}+\frac{2l-2i-1}{2}+\frac{1}{2}\right)\left(a_1+\frac{2l-2i+3}{2}\right)\overline{\tilde{x}_{i,0}^{-}\ldots\tilde{x}_{i,0}^{-}}v^{+}_1\qquad\qquad\qquad\qquad\\
       &- \left(\frac{a_1}{2}+\frac{1}{2}\right)\left(\frac{a_1}{2}+\frac{2l-2i-1}{2}\right)\overline{\tilde{x}_{i,0}^{-} \ldots\tilde{x}_{i,0}^{-}}v^{+}_1\\
       &+ \frac{a_1}{2}\left(\frac{a_1}{2}+\frac{1}{2}\right)\overline{\tilde{x}_{i,0}^{-}\ldots\tilde{x}_{i,0}^{-}}v^{+}_1+ \left(\frac{a_1}{2}+\frac{1}{2}\right)\overline{\tilde{x}_{i,0}^{-}\ldots\tilde{x}_{i,0}^{-}}v^{+}_1\\
       &=  \left(2\left(\frac{a_1}{2}+\frac{2l-2i+3}{4}\right)^2+\frac{\left(2l-2i-3\right)\left(2l-2i+1\right)}{8}\right)\overline{\tilde{x}_{i,0}^{-}\ldots\tilde{x}_{i,0}^{-}}v^{+}_1.
\end{align*}
Similar to Lemma \ref{l-2is34d}, $a=\frac{a_1}{2}+\frac{1}{2}$ and $b=\frac{a_1}{2}+l-i$. By the local Weyl modules theory of $\ysl$, we have
$$Y_{i-1}\left(\overline{\tilde{x}_{i,0}^{-}\ldots\tilde{x}_{i,0}^{-}}v^{+}_1\right)\cong W_1\left(\frac{a_1}{2}+l-i\right)\otimes W_1\left(\frac{a_1}{2}+\frac{1}{2}\right).$$
\end{proof}
%\left(This should be deleted later.\right)It follows from Corollary \ref{w1bw1a} that
%\begin{align*}
% \nonumber to remove numbering \left(before each equation\right)
%  \left(\tilde{x}_{i-1,1}^{-}\tilde{x}_{i-1,0}^{-}\right)\overline{\tilde{x}_{i,0}^{-}\ldots\tilde{x}_{i,0}^{-}}v^{+}_1 &= \left(\frac{a_1}{2}+\frac{2l-2i-1}{4}\right)\overline{\left(\tilde{x}_{i-1,0}^{-}\right)^2\tilde{x}_{i,0}^{-}\ldots\tilde{x}_{i,0}^{-}}v^{+}_1 \\
%\left(\tilde{x}_{i-1,0}^{-}\tilde{x}_{i-1,1}^{-}\right)\overline{\tilde{x}_{i,0}^{-}\ldots\tilde{x}_{i,0}^{-}}v^{+}_1&= \left(\frac{a_1}{2}+\frac{2l-2i+3}{4}\right)\overline{\left(\tilde{x}_{i-1,0}^{-}\right)^2\tilde{x}_{i,0}^{-}\ldots\tilde{x}_{i,0}^{-}}v^{+}_1 \\
%\left(\tilde{x}_{i-1,0}^{-}\tilde{x}_{i-1,1}^{-}+\tilde{x}_{i-1,1}^{-}\tilde{x}_{i-1,0}^{-}\right)\overline{\tilde{x}_{i,0}^{-}\ldots\tilde{x}_{i,0}^{-}}v^{+}_1&= \left({a_1}+\frac{2l-2i+1}{2}\right)\overline{\left(\tilde{x}_{i-1,0}^{-}\right)^2\tilde{x}_{i,0}^{-}\ldots\tilde{x}_{i,0}^{-}}v^{+}_1 \\
%\end{align*}
%\begin{lemma}\label{c3l2b}
%$Y_{i-2}\left(\overline{\left(\tilde{x}_{i-1,0}^{-}\right)^2\ldots \tilde{x}_{i,0}^{-}}v^{+}_1\right)\cong W_1\left(b\right)\otimes W_1\left(a\right)$, where $\operatorname{Re}\left(b\right)>\operatorname{Re}\left(a\right)\geq \operatorname{Re}\left(\frac{a_1}{2}\right)$.
%\end{lemma}
%\begin{proof}
%Comparing with Lemma \ref{l-2i34d2}, by the exactly defining relations between $\yo$ and $\yso$, the
%\end{proof}
\begin{lemma}\label{c3l3b}
Let $v_2=\overline{\left(\tilde{x}_{1,0}^{-}\right)^2\ldots \tilde{x}_{i,0}^{-}}v^{+}_1$. $Y_{i}\left(v_2\right)\cong W_1\left(\frac{a_1+2}{2}\right).$
\end{lemma}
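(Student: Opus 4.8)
The plan is to mirror the arguments of Lemma~\ref{i=l-2v2} (type $D_l$) and Lemma~\ref{spv2} (type $C_l$), adapted to type $B_l$ through the rescaled generators $\tilde{x}_{i,r}^{\pm},\tilde{h}_{i,r}$, which satisfy the $\ysl$ relations for $1\le i\le l-1$. First I would pin down the $Y_i$-module structure abstractly. By Proposition~\ref{rileq2b} the weight of $v_2=\overline{\left(\tilde{x}_{1,0}^{-}\right)^2\ldots \tilde{x}_{i,0}^{-}}v^{+}_1$ is $-2\omega_1+\omega_i$, so the coefficient of $\omega_i$ is $1$ and hence $\tilde{h}_{i,0}v_2=v_2$. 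Since $v_2$ is a maximal vector for $Y_i\cong\ysl$ (checked exactly as in Step~2 of Proposition~\ref{mtoysob}), the associated polynomial $P(u)$ of the highest weight module $Y_i(v_2)$ has degree $1$, whence $Y_i(v_2)\cong W_1(a)$ for a unique $a$, which by Corollary~\ref{slw1a} is the eigenvalue of $\tilde{h}_{i,1}$ on $v_2$. The claim to verify is $a=\tfrac{a_1}{2}+1=\tfrac{a_1+2}{2}$.

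To compute $\tilde{h}_{i,1}v_2$ I would write $v_2=\left(\tilde{x}_{1,0}^{-}\right)^2\cdots\left(\tilde{x}_{i-1,0}^{-}\right)^2\,\overline{\tilde{x}_{i,0}^{-}\ldots \tilde{x}_{i,0}^{-}}v^{+}_1$ and push $\tilde{h}_{i,1}$ to the right. Because $a_{ij}=0$ for $j\le i-2$, the defining relations give $[\tilde{h}_{i,1},\tilde{x}_{j,0}^{-}]=0$ for those $j$, so only the block $\left(\tilde{x}_{i-1,0}^{-}\right)^2$ and the tail $W:=\overline{\tilde{x}_{i,0}^{-}\ldots \tilde{x}_{i,0}^{-}}v^{+}_1$ contribute. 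Applying the Leibniz rule to $\left(\tilde{x}_{i-1,0}^{-}\right)^2$ and substituting $[\tilde{h}_{i,1},\tilde{x}_{i-1,0}^{-}]=\tilde{x}_{i-1,1}^{-}+\tfrac12\tilde{x}_{i-1,0}^{-}+\tilde{x}_{i-1,0}^{-}\tilde{h}_{i,0}$ (Proposition~\ref{newrelationsbl1}, which holds verbatim for every $2\le i\le l-1$, including $i=l-1$) reduces the expression to the symmetric term $\bigl(\tilde{x}_{i-1,1}^{-}\tilde{x}_{i-1,0}^{-}+\tilde{x}_{i-1,0}^{-}\tilde{x}_{i-1,1}^{-}\bigr)W$, a collection of $\tfrac12\tilde{x}_{i-1,0}^{-}$ and $\tilde{x}_{i-1,0}^{-}\tilde{h}_{i,0}$ terms, and the tail term $\left(\tilde{x}_{i-1,0}^{-}\right)^2\tilde{h}_{i,1}W$.

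The numerical inputs are then read off from the previous parts of Proposition~\ref{c3pb}. Part~(iii) with $k=i$ gives $Y_i\bigl(\overline{\tilde{x}_{i+1,0}^{-}\ldots \tilde{x}_{i,0}^{-}}v^{+}_1\bigr)\cong W_1\!\bigl(\tfrac{a_1}{2}+\tfrac{2l-2i-1}{2}\bigr)$, so $W=\tilde{x}_{i,0}^{-}\bigl(\overline{\tilde{x}_{i+1,0}^{-}\ldots}v^{+}_1\bigr)$ and Corollary~\ref{slw1a} yields $\tilde{h}_{i,1}W=-\bigl(\tfrac{a_1}{2}+\tfrac{2l-2i-1}{2}\bigr)W$; part~(iv)(a) gives $Y_{i-1}(W)\cong W_1(\tfrac{a_1}{2}+l-i)\otimes W_1(\tfrac{a_1}{2}+\tfrac12)$, so part~(4) of Corollary~\ref{w1bw1a} gives $\bigl(\tilde{x}_{i-1,1}^{-}\tilde{x}_{i-1,0}^{-}+\tilde{x}_{i-1,0}^{-}\tilde{x}_{i-1,1}^{-}\bigr)W=\bigl(a_1+l-i+\tfrac12\bigr)\left(\tilde{x}_{i-1,0}^{-}\right)^2W$. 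Adding, $\bigl(a_1+l-i+\tfrac12\bigr)-\bigl(\tfrac{a_1}{2}+\tfrac{2l-2i-1}{2}\bigr)=\tfrac{a_1}{2}+1$, giving $a=\tfrac{a_1+2}{2}$. The main obstacle is not conceptual but the bookkeeping in the previous paragraph: one must check that the stray $\tfrac12\tilde{x}_{i-1,0}^{-}$ and $\tilde{x}_{i-1,0}^{-}\tilde{h}_{i,0}$ contributions cancel — this uses $\tilde{h}_{i,0}W=-W$ (the $\omega_i$-coefficient of the weight $2\omega_{i-1}-\omega_i$) together with $\tilde{h}_{i,0}\tilde{x}_{i-1,0}^{-}W=0$ (the vector $\tilde{x}_{i-1,0}^{-}W$ has weight $\omega_{i-2}$) — and that the scaling $d_i=2$ threads correctly through Proposition~\ref{newrelationsbl1} so that the constant is exactly $\tfrac{a_1}{2}+1$ rather than an off-by-$\tfrac12$ shift.
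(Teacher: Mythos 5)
Your proposal is correct and follows essentially the same route as the paper's proof: the weight $-2\omega_1+\omega_i$ forces $\operatorname{Deg}\big(P(u)\big)=1$, then $\tilde{h}_{i,1}$ is pushed past the $(\tilde{x}_{j,0}^{-})^2$ blocks (only $j=i-1$ interacting), and the eigenvalue is assembled from exactly the same inputs — the symmetric sum $\big(\tilde{x}_{i-1,1}^{-}\tilde{x}_{i-1,0}^{-}+\tilde{x}_{i-1,0}^{-}\tilde{x}_{i-1,1}^{-}\big)W=\big(a_1+l-i+\tfrac{1}{2}\big)\big(\tilde{x}_{i-1,0}^{-}\big)^2W$ via Lemma \ref{ipfs4so} and Corollary \ref{w1bw1a}(4), the tail $\tilde{h}_{i,1}W=-\big(\tfrac{a_1}{2}+\tfrac{2l-2i-1}{2}\big)W$ via Corollary \ref{slw1a}, and the cancellation $\tfrac12+0+\tfrac12-1=0$ of the stray terms using $\tilde{h}_{i,0}W=-W$ and $\tilde{h}_{i,0}\tilde{x}_{i-1,0}^{-}W=0$. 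Your bookkeeping reproduces the paper's computation term for term, so there is no gap.
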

\begin{proof}
By Proposition \ref{rileq2b}, $wt(v_2)=-2\omega_1+\omega_i$, and then $\tilde{h}_{i,0}v_2=v_2$, which tells us that the associated polynomial $P\left(u\right)$ of $Y_i\left(v_2\right)$ has of degree one. Say $P\left(u\right)=u-a$. The value of $a$ will follow from the calculation $\tilde{h}_{i,1}v_2=av_2$.

It follows from Proposition \ref{rileq2b} that $wt(\tilde{x}_{i,0}^{-}\ldots \tilde{x}_{l-1,0}^{-}\left(x_{l,0}^{-}\right)^2\tilde{x}_{l-1,0}^{-}\ldots \tilde{x}_{i,0}^{-}v^{+}_1)=2\omega_{i-1}-\omega_i$. $wt(\tilde{x}_{i-1,0}^{-}\overline{\tilde{x}_{i,0}^{-}\ldots \tilde{x}_{i,0}^{-}}v^{+}_1)=2\omega_{i-1}-\omega_i-\left(-\omega_{i-2}+2\omega_{i-1}-\omega_i\right)=\omega_{i-2}$.
\begin{align*}
% \nonumber to remove numbering \left(before each equation\right)
  \tilde{h}_{i,1} &v_2 \\
   &=\tilde{h}_{i,1}\left(\tilde{x}_{1,0}^{-}\right)^2\ldots \left(\tilde{x}_{i-1,0}^{-}\right)^2\tilde{x}_{i,0}^{-}\ldots \tilde{x}_{l-1,0}^{-}\left(x_{l,0}^{-}\right)^2\tilde{x}_{l-1,0}^{-}\ldots \tilde{x}_{i,0}^{-}v^{+}_1\\
   &= \left(\tilde{x}_{1,0}^{-}\right)^2\ldots \left(\tilde{x}_{i-2,0}^{-}\right)^2[\tilde{h}_{i,1},\tilde{x}_{i-1,0}^{-}]\tilde{x}_{i-1,0}^{-}\tilde{x}_{i,0}^{-}\ldots \tilde{x}_{l-1,0}^{-}\left(x_{l,0}^{-}\right)^2\tilde{x}_{l-1,0}^{-}\ldots \tilde{x}_{i,0}^{-}v^{+}_1\\
   &+ \left(\tilde{x}_{1,0}^{-}\right)^2\ldots \left(\tilde{x}_{i-2,0}^{-}\right)^2\tilde{x}_{i-1,0}^{-}[\tilde{h}_{i,1},\tilde{x}_{i-1,0}^{-}]\tilde{x}_{i,0}^{-}\ldots \tilde{x}_{l-1,0}^{-}\left(x_{l,0}^{-}\right)^2\tilde{x}_{l-1,0}^{-}\ldots \tilde{x}_{i,0}^{-}v^{+}_1\\
   &+\left(\tilde{x}_{1,0}^{-}\right)^2\ldots \left(\tilde{x}_{i-1,0}^{-}\right)^2 \tilde{h}_{i,1}\tilde{x}_{i,0}^{-}\ldots \tilde{x}_{l-1,0}^{-}\left(x_{l,0}^{-}\right)^2\tilde{x}_{l-1,0}^{-}\ldots \tilde{x}_{i,0}^{-}v^{+}_1\\
   &= \left(\tilde{x}_{1,0}^{-}\right)^2\ldots \left(\tilde{x}_{i-2,0}^{-}\right)^2\left(\tilde{x}_{i-1,1}^{-}+\frac{1}{2}\tilde{x}_{i-1,0}^{-}+\tilde{x}_{i-1,0}^{-}\tilde{h}_{i,0}\right)\tilde{x}_{i-1,0}^{-}\overline{\tilde{x}_{i,0}^{-}\ldots \tilde{x}_{i,0}^{-}}v^{+}_1\\
   &+ \left(\tilde{x}_{1,0}^{-}\right)^2\ldots \left(\tilde{x}_{i-2,0}^{-}\right)^2\tilde{x}_{i-1,0}^{-}\left(\tilde{x}_{i-1,1}^{-}+\frac{1}{2}\tilde{x}_{i-1,0}^{-}+\tilde{x}_{i-1,0}^{-}\tilde{h}_{i,0}\right)\overline{\tilde{x}_{i,0}^{-}\ldots \tilde{x}_{i,0}^{-}}v^{+}_1\\
   &+\left(\tilde{x}_{1,0}^{-}\right)^2\ldots \left(\tilde{x}_{i-1,0}^{-}\right)^2 \tilde{h}_{i,1}\tilde{x}_{i,0}^{-}\ldots \tilde{x}_{l-1,0}^{-}\left(x_{l,0}^{-}\right)^2\tilde{x}_{l-1,0}^{-}\ldots \tilde{x}_{i,0}^{-}v^{+}_1\\
   &= \Bigg({a_1}+\frac{2l-2i+1}{2}+\frac{1}{2}+0+\frac{1}{2}-1-\left(\frac{a_1}{2}+\frac{2l-2i-1}{2}\right)\Bigg)v_2\\
   &= \left(\frac{a_1}{2}+1\right)v_2.
\end{align*}
Therefore $a=\frac{a_1+2}{2}$.
\end{proof}

\begin{proposition} Let $i\leq k\leq l-1$ and $2\leq m\leq i-2$.
\begin{enumerate}
  \item $Y_k\left(\tilde{x}_{k-1,0}^{-}\tilde{x}_{k-2,0}^{-}\ldots \tilde{x}_{i,0}^{-}v_2\right)\cong W_1\left(\frac{a_1+2}{2}+\frac{k-i}{2}\right)$.
  \item  $Y_l\left(\tilde{x}_{l-1,0}^{-}\tilde{x}_{l-2,0}^{-}\ldots \tilde{x}_{i,0}^{-}v_2\right)$ is isomorphic to either $W_2\left(\left(a_1+2\right)+l-i-1\right)$ or $W_1\left(\left(a_1+2\right)+l-i\right)\otimes W_1\left(\left(a_1+2\right)+l-i-1\right)$.
  \item $Y_{k}\left(\overline{\tilde{x}_{k+1,0}^{-}\ldots \tilde{x}_{i,0}^{-}}v_2\right)\cong W_1\left(\frac{a_1+2}{2}+\frac{2l-k-i-1}{2}\right)$.
  \item
\begin{enumerate}
    \item $Y_{i-1}\Big(\overline{\left(x_{i,0}^{-}\right)\ldots x_{i,0}^{-}}v_2\Big)\cong W_1\left(\frac{a_1+2}{2}+l-i\right)\otimes W_1\left(\frac{a_1+2}{2}+\frac{1}{2}\right)$.
    \item $Y_{m}\Big(\overline{\left(\tilde{x}_{m+1,0}^{-}\right)^2\ldots \tilde{x}_{i,0}^{-}}v_2\Big)\cong W_1\left(\frac{a_1+2}{2}+\frac{2l-i-m-1}{2}\right)\otimes W_1\left(\frac{a_1+2}{2}+\frac{i-m}{2}\right)$.
  \end{enumerate}
%$Y_{m}\Big(\overline{\left(\tilde{x}_{m+1,0}^{-}\right)^2\ldots \tilde{x}_{i,0}^{-}}v_2\Big)\cong W_1\left(\frac{a_1+2}{2}+\frac{2l-i-m-1}{2}\right)\otimes W_1\left(\frac{a_1+2}{2}+\frac{b_1-m}{2}\right)$.
  \item $Y_{i}\Big(\overline{\left(\tilde{x}_{2,0}^{-}\right)^2\ldots \tilde{x}_{i,0}^{-}}v_2\Big)\cong W_1\left(\frac{a_1}{2}+2\right)$.
\end{enumerate}
\end{proposition}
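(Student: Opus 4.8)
The plan is to repeat, one rank lower, the reduction that already proved Proposition \ref{spc1il-1p2} in the symplectic case and Propositions \ref{v2aga1} and \ref{vmaga1} in the even-orthogonal case. Deleting the first node of the $B_l$ Dynkin diagram leaves a diagram of type $B_{l-1}$; accordingly I set $I'=\{2,3,\ldots,l\}$ and let $Y^{(1)}$ be the sub-Yangian generated by all $x_{j,r}^{\pm}$ and $h_{j,r}$ with $j\in I'$. By the defining relations $Y^{(1)}\cong Y\big(\mathfrak{so}(2l-1,\C)\big)$. The whole point is to recognise $v_2=\overline{(\tilde{x}_{1,0}^{-})^2\ldots \tilde{x}_{i,0}^{-}}v^{+}_1$ as the highest weight vector of a fundamental representation of $Y^{(1)}$ carrying spectral parameter $a_1+2$, after which items (i)--(v) are nothing but Proposition \ref{c3pb} read inside $Y^{(1)}$, with $\tfrac{a_1}{2}$ replaced everywhere by $\tfrac{a_1+2}{2}$.

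First I would record the Weyl-group fact, exactly parallel to Lemma \ref{sps1v2p}: writing $\mathbf{s}_1=s_1s_2\ldots s_{l-1}s_ls_{l-1}\ldots s_i$, one has $\mathbf{s}_1^{-1}(\alpha_j)\in\Delta^{+}$ for every $j=2,\ldots,l$. This is the same routine positivity check as in the earlier cases and follows from the explicit action of the $s_k$ on the $\mu$-coordinates recorded at the start of the section. Next, by Proposition \ref{rileq2b} the weight of $v_2$ is $\mathbf{s}_1(\omega_i)=-2\omega_1+\omega_i$; hence for $j\geq 2$ the vector $x_{j,0}^{+}v_2$, if nonzero, would have weight $\mathbf{s}_1(\omega_i)+\alpha_j$, so $\omega_i+\mathbf{s}_1^{-1}(\alpha_j)$ would be a weight of $V_{a_1}(\omega_i)$ strictly above $\omega_i$, contradicting the maximality of $\omega_i$. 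Thus $x_{j,0}^{+}v_2=0$ for all $j\in I'$, and since the $\tilde{h}_{j,r}$ commute and fix the line through $v_2$, the vector $v_2$ generates a highest weight $Y^{(1)}$-module.

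It then remains to pin down the spectral data. From the weight $-2\omega_1+\omega_i$ I read off $\tilde{h}_{j,0}v_2=\delta_{ij}v_2$ for $j\in I'$, so the associated polynomial is trivial at every node except $i$, where it has degree one. Lemma \ref{c3l3b} already computed the eigenvalue of $\tilde{h}_{i,1}$ on $v_2$ to be $\tfrac{a_1}{2}+1$; by the normalization of Proposition \ref{c3pb}, where the highest weight vector of $V_{a_1}(\omega_i)$ satisfies $\tilde{h}_{i,1}v^{+}_1=\tfrac{a_1}{2}v^{+}_1$, this identifies $v_2$ with the highest weight vector of the fundamental representation $V_{a_1+2}(\omega_i)$ of $Y\big(\mathfrak{so}(2l-1,\C)\big)$. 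Every lowering string appearing in (i)--(v) has active nodes lying in $I'$ (because $m\geq 2$ and, in (iv)(a), $i-1\geq 2$), so each acts entirely inside $Y^{(1)}$. Applying Proposition \ref{c3pb} verbatim to $V_{a_1+2}(\omega_i)$ then yields items (i)--(v) with each $\tfrac{a_1}{2}$ shifted to $\tfrac{a_1+2}{2}$; in particular the last item becomes $W_1\big(\tfrac{a_1+2}{2}+1\big)=W_1\big(\tfrac{a_1}{2}+2\big)$, as claimed.

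The genuinely new work, and the step I expect to be the main obstacle, is the maximality argument for $v_2$ together with the bookkeeping that keeps every factor inside the rank-$(l-1)$ sub-Yangian; once $v_2$ is identified as a $B_{l-1}$ highest weight vector with parameter $a_1+2$, items (i)--(v) are automatic copies of Proposition \ref{c3pb}. As in the symplectic case one should finally flag the degenerate range: when $i=2$ only parts (i), (ii) and (iii) are meaningful (the constraint $2\leq m\leq i-2$ is empty and the node $i-1=1$ lies outside $Y^{(1)}$), and the construction then iterates, the same reduction being applied to $v_3=\overline{(\tilde{x}_{2,0}^{-})^2\ldots\tilde{x}_{i,0}^{-}}v_2$ inside $Y^{(2)}\cong Y\big(\mathfrak{so}(2l-3,\C)\big)$ with parameter $a_1+4$, and so on down the short end of the diagram.
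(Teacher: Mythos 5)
Your proposal is correct and takes essentially the same route as the paper's own proof: delete the first node to obtain $Y^{(1)}\cong Y\big(\mathfrak{so}(2l-1,\C)\big)$, use Proposition \ref{rileq2b} and the positivity of $\mathbf{s}_1^{-1}(\alpha_j)$ to show $v_2$ is a maximal vector of weight $-2\omega_1+\omega_i$, identify its Drinfeld polynomial via Lemma \ref{c3l3b} as that of the fundamental representation with parameter $a_1+2$, and then rerun Proposition \ref{c3pb} with $a_1$ replaced by $a_1+2$. Your explicit treatment of the Weyl-group positivity check and of the degenerate case $i=2$ only spells out what the paper leaves implicit.
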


\begin{proof}
Removed the first node in the Dynkin diagram of the Lie algebra of type $B_l$,  we get a simple Lie algebra which is isomorphic to Lie algebra of type $B_{l-1}$. Denote $\{2,3,\ldots, l\}$ by $I'$.  Let $Y^{\left(1\right)}$ be the Yangian generated by all $x_{j,r}^{\pm}$ and $h_{j,r}$ for $j\in I'$ and $r\in \mathbb{Z}_{\geq 0}$. $Y^{1}\cong \yso$.

From Proposition \ref{rileq2b}, the weight of $v_2$ is $\mathbf{s}_1(\omega_i)$. Similar to Step 2 of Proposition \ref{mtoysob}, we have $x_{j,0}^{+}v_2=0$. Therefore $v_2$ is a maximal vector of $Y^{\left(1\right)}$. Note that $h_{j,r}v_2$ is a scalar multiple of $v_2$. $v_2$ has weight $-2\omega_1+\omega_i$ and then $h_{j,0}v_2=\delta_{ij}v_2$. Therefore
$Y^{\left(1\right)}\left(v_2\right)$ is a highest weight representation with highest weight $P_j=1$ if $j\neq i$ and $P_i=\left(u-\frac{a}{2}\right)$. It follows from the Lemma \ref{c3l3b} that $a=a_1+2$. The rest of the proof of this proposition is similar to the proofs of Proposition \ref{c3pb}, just replacing $a_1$ by $a_1+2$.
\end{proof}

Let $v_{m+1}=\overline{\left(\tilde{x}_{m,0}^{-}\right)^2\ldots\tilde{x}_{i,0}^{-}}v_{m}$, where $2\leq m\leq i-1$.
Define $Y^{\left(m\right)}$ be the Yangian spanned by $\{h_{j,r}, x_{j,r}^{\pm}\}$ for $j>m$. Note that when $i=l-1$ and $m=i-1$, $Y^{\left(m\right)}=Y^{\left(l-2\right)}\cong Y\big(\mathfrak{so}(5,\C)\big)$. Thus $Y^{\left(m\right)}$ is isomorphic to the Yangian of type $B_2$.

Similarly to the above proposition, we have
\begin{proposition} Let $i\leq k\leq l-1$ and $m+1\leq n\leq i-2$.
\begin{enumerate}
  \item $Y_k\left(\tilde{x}_{k-1,0}^{-}\tilde{x}_{k-2,0}^{-}\ldots \tilde{x}_{i,0}^{-}v_{m+1}\right)\cong W_1\left(\frac{a_1+2m}{2}+\frac{k-i}{2}\right)$.
  \item  $Y_l\left(\tilde{x}_{l-1,0}^{-}\tilde{x}_{l-2,0}^{-}\ldots \tilde{x}_{i,0}^{-}v_{m+1}\right)$ is isomorphic to either $W_2\Big(\left(a_1+2m\right)+l-i-1\Big)$ or $W_1\Big(\left(a_1+2m\right)+l-i\Big)\otimes W_1\Big(\left(a_1+2m\right)+l-i-1\Big)$.
  \item $Y_{k}\left(\overline{\tilde{x}_{k+1,0}^{-}\ldots \tilde{x}_{i,0}^{-}}v_{m+1}\right)\cong W_1\left(\frac{a_1+2m}{2}+\frac{2l-k-i-1}{2}\right)$.
  \item
\begin{enumerate}
    \item $Y_{i-1}\Big(\overline{\left(x_{i,0}^{-}\right)\ldots x_{i,0}^{-}}v_{m+1}\Big)\cong W_1\left(\frac{a_1+2m}{2}+l-i\right)\otimes W_1\left(\frac{a_1+2m}{2}+\frac{1}{2}\right)$.
    \item $Y_{n}\Big(\overline{\left(\tilde{x}_{n+1,0}^{-}\right)^2\ldots \tilde{x}_{i,0}^{-}}v_{m+1}\Big)\cong W_1\left(\frac{a_1+2m}{2}+\frac{2l-i-n-1}{2}\right)\otimes W_1\left(\frac{a_1+2m}{2}+\frac{i-n}{2}\right)$.
  \end{enumerate}

  \item $Y_{i}\Big(\overline{\left(\tilde{x}_{m+1,0}^{-}\right)^2\ldots \tilde{x}_{i,0}^{-}}v_{m+1}\Big)\cong W_1\left(\frac{a_1+2\left(m+1\right)}{2}\right)$.
\end{enumerate}
\end{proposition}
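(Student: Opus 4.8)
The plan is to prove this proposition by induction on $m$, running parallel to the argument for the preceding proposition (the $m=1$ case built on $v_2$) and reducing everything to Proposition~\ref{c3pb} with $a_1$ shifted by $2m$. The point is that the entire list (i)--(v) describes the passage from a highest weight vector to a lower weight vector inside a fundamental representation of a Yangian of type $B_{l-m}$; once the correct identification is made, no new computation is required beyond those already carried out in Proposition~\ref{c3pb}.

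First I would set up the reduction. Deleting the first $m$ nodes of the Dynkin diagram of $B_l$ leaves a diagram of type $B_{l-m}$, and the subalgebra $Y^{(m)}$ generated by the $x_{j,r}^{\pm}, h_{j,r}$ with $j>m$ is, by the defining relations of the Yangian, isomorphic to $\yso$ of rank $l-m$, i.e. $Y\big(\mathfrak{so}(2(l-m)+1,\C)\big)$. In this realization the vector $v_{m+1}=\overline{(\tilde{x}_{m,0}^{-})^2\ldots \tilde{x}_{i,0}^{-}}v_m$ will play exactly the role that the highest weight vector $v_1^{+}$ plays in Proposition~\ref{c3pb}.

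Next I would show that $v_{m+1}$ is a highest weight vector for $Y^{(m)}$. By Proposition~\ref{rileq2b} the weight of $v_{m+1}$ is $\mathbf{s}_m(\omega_i)$ for the corresponding Weyl group element $\mathbf{s}_m$, and $\mathbf{s}_m^{-1}(\alpha_j)\in\Delta^{+}$ for every $j>m$ (a routine check, analogous to the positivity statement used for $v_2$ in this chapter and to Lemma~\ref{sps1v2p}). The maximality argument of Step~2 of Proposition~\ref{mtoysob} then gives $x_{j,0}^{+}v_{m+1}=0$ for all $j>m$: otherwise $\omega_i+\mathbf{s}_m^{-1}(\alpha_j)$ would be a weight of $V_{a_1}(\omega_i)$ strictly above $\omega_i$. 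Since the $h_{j,s}$ commute, $v_{m+1}$ is a simultaneous eigenvector, hence a highest weight vector, and $Y^{(m)}(v_{m+1})$ is a highest weight $Y^{(m)}$-module. Its weight $-2\omega_m+\omega_i$, read relative to the subdiagram, forces the Drinfeld polynomials $P_j=1$ for $j\neq i$ and $\deg P_i=1$, say $P_i(u)=u-\tfrac{a}{2}$.

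The key step, and the one I expect to be the main obstacle, is pinning down the value $a=a_1+2m$. This is precisely where the induction enters: part~(v) of the proposition at level $m-1$ asserts $Y_i(v_{m+1})\cong W_1\big(\tfrac{a_1+2m}{2}\big)$, which identifies the eigenvalue of $v_{m+1}$ under $\tilde{h}_{i,1}$ as $\tfrac{a_1+2m}{2}$ and hence fixes $a=a_1+2m$ (the factor $d_i=2$ accounting for the rescaled generators); the base of the induction is Lemma~\ref{c3l3b} together with the $m=1$ proposition. Granting this, I would finish by transporting each of (i)--(v) from Proposition~\ref{c3pb} through the isomorphism $Y^{(m)}\cong Y(B_{l-m})$, replacing $a_1$ by $a_1+2m$ throughout; the arguments are then identical word for word to those already given there. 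Finally I would record the boundary case $i=l-1$, $m=i-1$, where $Y^{(m)}\cong Y(\mathfrak{so}(5,\C))$ is of type $B_2$ and only items (i)--(iii) are relevant.
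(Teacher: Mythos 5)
Your proposal is correct and follows essentially the same route as the paper: the paper proves the $v_2$ case by passing to the subalgebra $Y^{(1)}\cong Y\big(\mathfrak{so}(2l-1,\C)\big)$, checking $v_2$ is a maximal vector of weight $-2\omega_1+\omega_i$, fixing $a=a_1+2$ via Lemma \ref{c3l3b}, and then repeating Proposition \ref{c3pb} with $a_1$ replaced by $a_1+2$, and it disposes of the general $v_{m+1}$ case with the remark ``similarly,'' which is exactly the induction on $m$ through $Y^{(m)}$ (with part (v) at the previous level pinning down $a=a_1+2m$) that you spell out, including the boundary observation that $Y^{(l-2)}\cong Y\big(\mathfrak{so}(5,\C)\big)$. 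Your write-up is in fact more explicit than the paper's about where the induction hypothesis enters.
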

\begin{remark}\
\begin{enumerate}
  \item If $m=i-2$, the computations part $(b)$ of $(iv)$ is not necessary.
  \item If $m=i-1$, the computations stop at step $\left(iii\right)$ when $k=i$.
\end{enumerate}

\end{remark}

%$v_1^{-}=\overline{\tilde{x}_{i,0}^{-}\ldots \tilde{x}_{i,0}^{-}}v^{+}_i$.
\section{On the local Weyl modules of $\yso$}
Let $\lambda=\sum\limits_{i\in I} m_i\omega_i$. In \cite{Na}, the dimension of the local Weyl module $W(\lambda)$ is given.
\begin{proposition}[Corollary 9.5, \cite{Na}]\label{dwmocsp}
Let $\lambda=\sum\limits_{i\in I} m_i\omega_i$. Then
$$\operatorname{Dim}\Big(W(\lambda)\Big)=\prod\limits_{i\in I} \Big(\operatorname{Dim}\big(W(\omega_i)\big)\Big)^{m_i}.$$
\end{proposition}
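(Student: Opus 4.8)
The plan is to deduce the dimension formula from the fusion product decomposition of $W(\lambda)$, which is already available in this thesis as Corollary B of \cite{Na} and which I may assume. That result gives a $\nyso[t]$-module isomorphism
\[
W(\lambda)\cong W(\omega_1)_{a_1}\ast\cdots\ast W(\omega_l)_{l_{m_l}},
\]
a fusion product of $k=\sum_{i\in I}m_i$ fundamental local Weyl modules. Since a fusion product is, as a $\g$-module and in particular as a vector space, isomorphic to the ordinary tensor product of its factors, the dimension is multiplicative, so
\[
\operatorname{Dim}\big(W(\lambda)\big)=\prod_{j}\operatorname{Dim}\big(W(\omega_{i_j})\big)=\prod_{i\in I}\Big(\operatorname{Dim}\big(W(\omega_i)\big)\Big)^{m_i}.
\]
This is exactly the content of Corollary A of \cite{Na} recalled in Chapter 1, now specialized to $\g=\nyso$.

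For a proof that does not presuppose the full structure theorem, i.e.\ the route actually taken in \cite{Na}, I would instead establish the two inequalities separately. The lower bound $\operatorname{Dim}\big(W(\lambda)\big)\ge\prod_{i\in I}\big(\operatorname{Dim}(W(\omega_i))\big)^{m_i}$ is the soft direction. I would fix generic distinct spectral parameters and form the fusion product $F$ of the fundamental Weyl modules $W(\omega_i)$; it is a finite-dimensional graded $\nyso[t]$-module, cyclically generated by the image of the tensor product of the highest weight vectors, and this generator satisfies the defining relations $\mathfrak{n}^{+}\otimes\C[t]\,v=0$, $\mathfrak{h}\otimes t\C[t]\,v=0$, $hv=\lambda(h)v$. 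By the universal property in Theorem \ref{mpocac1}, $F$ is a quotient of $W(\lambda)$, and $\operatorname{Dim}(F)=\prod_{i\in I}\big(\operatorname{Dim}(W(\omega_i))\big)^{m_i}$ because fusion products and tensor products coincide as vector spaces.

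The upper bound $\operatorname{Dim}\big(W(\lambda)\big)\le\prod_{i\in I}\big(\operatorname{Dim}(W(\omega_i))\big)^{m_i}$ is the hard direction and the main obstacle. The efficient way is to realize $W(\lambda)$ as a Demazure module: one identifies it with the level-one Demazure module $D(1,\lambda)$ inside an integrable highest weight module of the affine Kac--Moody algebra attached to $\nyso$, and then uses the Demazure character formula together with the factorization of level-one Demazure modules into fundamental ones to read off the dimension. For $\g=\nyso$ (type $B_l$, non-simply-laced) this identification is delicate: the naive level-one factorization that holds in simply-laced types fails, so one must, following \cite{Na}, compare the untwisted affine Demazure modules with their twisted-type counterparts in order to recover the fundamental factorization. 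Carrying this comparison through the non-simply-laced combinatorics of $B_l$ is precisely where the real work lies; once the Demazure realization and its factorization are in place, the upper bound, and hence the asserted equality, follow by matching graded characters.
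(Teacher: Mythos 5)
Your main argument is correct and is essentially what the paper does: the paper offers no proof of this proposition at all --- it is quoted as Corollary 9.5 of \cite{Na}, and the same statement for general $\g$ already appears in Chapter 1 as Corollary A of \cite{Na} --- so deriving it from Corollary B of \cite{Na} (the fusion-product decomposition, also quoted in Chapter 1) together with the fact that a fusion product is isomorphic, as a $\g$-module and hence as a vector space, to the corresponding tensor product is a faithful rendering of the paper's treatment. Your lower-bound argument via the universal property in Theorem \ref{mpocac1} is likewise the standard one and is sound.

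One correction to your ``from scratch'' sketch of the upper bound: for non-simply-laced $\g$ (the case at hand, type $B_l$), the local Weyl module $W(\lambda)$ is \emph{not} in general isomorphic to the level-one Demazure module $D(1,\lambda)$; that identification is the simply-laced theorem of \cite{FoLi}. What \cite{Na} proves is weaker, and this is exactly what makes the non-simply-laced case hard: $W(\lambda)$ admits a filtration (a Demazure flag) whose successive quotients are level-one Demazure modules $D(1,\mu)$, established by comparing untwisted affine Demazure modules with twisted-type ones, and the dimension formula is extracted from that flag. So the delicacy you flag is real, but it sits one step earlier than where you place it: it is the identification $W(\lambda)\cong D(1,\lambda)$ itself, not merely the factorization of $D(1,\lambda)$ into fundamental pieces, that breaks down. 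Since that paragraph is offered only as a sketch of the internal argument of \cite{Na}, and your actual proof correctly defers to the results quoted in the thesis, this inaccuracy does not affect the validity of your submission.
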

The next theorem follows from Propositions \ref{mtoysob} and \ref{vtv'hwv}.
\begin{theorem}\label{wmiatpsob}
Let $\pi=\Big(\pi_1\left(u\right),\ldots, \pi_{l}\left(u\right)\Big)$, where $\pi_i\left(u\right)=\prod\limits_{j=1}^{m_i}\left(u-a_{i,j}\right)$. Let $S=\{a_{1,1},\ldots, a_{1,m_1},\ldots, a_{l,1}\ldots, a_{l,m_l}\}$ be the multiset of roots of these polynomials. Let $a_1=a_{m,n}$ be one of the numbers in $S$ with the maximal real part, and let $b_1=m$. Similarly, let $a_r=a_{s,t}\left(r\geq 2\right)$ be one of the numbers in $S\setminus\{a_1, \ldots, a_{r-1}\}$ ($r\geq 2$) with the maximal real part, and $b_r=s$. Let $k=m_1+\ldots+m_l$. Then $L=V_{a_1}(\omega_{b_1})\otimes V_{a_2}(\omega_{b_2})\otimes\ldots\otimes V_{a_k}(\omega_{b_k})$ is a highest weight representation of $\yso$, and its associated polynomial is $\pi$.
\end{theorem}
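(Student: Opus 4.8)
The plan is to deduce the theorem from the two ingredients already established: the cyclicity statement of Proposition \ref{mtoysob}, which guarantees that a suitably ordered tensor product of fundamental representations is a highest weight module, and the multiplicativity of Drinfeld polynomials under tensor products recorded in Proposition \ref{vtv'hwv}. The only real work is to check that the greedy construction of the tuples $(a_r)$ and $(b_r)$ meets the hypotheses of these results, and then to identify the resulting associated polynomial with $\pi$.

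First I would observe that the construction makes the ordering hypothesis of Proposition \ref{mtoysob} automatic. By definition $a_1$ is chosen among all elements of $S$ of maximal real part, and each subsequent $a_r$ is chosen of maximal real part among the elements of $S$ not yet selected; hence $\operatorname{Re}(a_1)\geq \operatorname{Re}(a_2)\geq\cdots\geq\operatorname{Re}(a_k)$. Applying Proposition \ref{mtoysob} to $L=V_{a_1}(\omega_{b_1})\otimes\cdots\otimes V_{a_k}(\omega_{b_k})$ shows at once that $L$ is a highest weight representation of $\yso$, cyclically generated by $v^{+}=v_1^{+}\otimes\cdots\otimes v_k^{+}$, where each $v_r^{+}$ is the highest weight vector of $V_{a_r}(\omega_{b_r})$.

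Next I would compute the associated polynomial of $L$. Each fundamental representation $V_{a_r}(\omega_{b_r})$ is finite-dimensional and irreducible, with Drinfeld polynomial $\pi^{(r)}$ whose $b_r$-th component is $u-a_r$ and whose other components are $1$. Proposition \ref{vtv'hwv} says that for two such factors the highest weight vector of the tensor product carries the product of the two Drinfeld polynomials. Iterating this statement by induction on the number of factors gives that $v^{+}$ has associated polynomial $\prod_{r=1}^{k}\pi^{(r)}$, whose $j$-th component is $\prod_{\{r:\,b_r=j\}}(u-a_r)$. Since the multiset $\{a_r:\ b_r=j\}$ is precisely $\{a_{j,1},\ldots,a_{j,m_j}\}$, the multiset of roots of $\pi_j$, this component equals $\pi_j(u)$. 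Therefore the associated polynomial of $L$ is $\pi$, as claimed.

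The one point requiring care is the induction used to iterate Proposition \ref{vtv'hwv}, since that proposition is phrased for a tensor product of two \emph{irreducible} modules, whereas the intermediate products $V_{a_1}(\omega_{b_1})\otimes\cdots\otimes V_{a_r}(\omega_{b_r})$ need not be irreducible. I expect this to be the main, though mild, obstacle. It is handled by noting that the property actually being iterated — that $v_1^{+}\otimes\cdots\otimes v_r^{+}$ is annihilated by $N^{+}$ and is a simultaneous eigenvector of the $h_{j,k}$ whose eigenvalue generating series multiply across the factors — depends only on the coproduct formulas of Proposition \ref{Delta}, which hold modulo an ideal that annihilates a tensor product of highest weight vectors. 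Thus the eigenvalue computation extends to arbitrarily many factors irrespective of irreducibility, while the cyclicity of $L$ is supplied independently by Proposition \ref{mtoysob}, so no irreducibility of the intermediate products is ever needed.
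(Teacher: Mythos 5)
Your proposal is correct and follows essentially the same route as the paper, which derives this theorem in one line from Proposition \ref{mtoysob} (cyclicity of the ordered tensor product) and Proposition \ref{vtv'hwv} (multiplicativity of associated polynomials). Your extra paragraph addressing the iteration of Proposition \ref{vtv'hwv} across non-irreducible intermediate products — via the coproduct formulas of Proposition \ref{Delta} rather than irreducibility — is a legitimate subtlety that the paper silently glosses over, and you resolve it correctly.
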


\begin{theorem}The local Weyl module $W(\pi)$ of $\yso$ associated to $\pi$ is isomorphic to the ordered tensor product $L$ as in the above theorem.
\end{theorem}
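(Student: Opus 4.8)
The plan is to obtain $W(\pi)\cong L$ by a dimension-squeezing argument, exactly in the spirit of the corresponding theorem for $\ysp$ at the end of Chapter 5; all of the genuinely difficult work has already been carried out in establishing Theorem \ref{wmiatpsob} and Proposition \ref{mtoysob}, so what remains is a short comparison of dimensions.

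First I would record that $L$ is a quotient of $W(\pi)$. By Theorem \ref{wmiatpsob}, $L$ is a finite-dimensional highest weight representation of $\yso$ whose associated $l$-tuple of polynomials is exactly $\pi$. By part (2) of Remark \ref{mi=lambdahi}, every finite-dimensional highest weight representation associated to $\pi$ is a quotient of the local Weyl module $W(\pi)$; applying this to $L$ produces a surjection $W(\pi)\twoheadrightarrow L$, and in particular $\operatorname{Dim}(W(\pi))\geq \operatorname{Dim}(L)$.

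Next I would bound $\operatorname{Dim}(W(\pi))$ from above and identify $\operatorname{Dim}(L)$ with that bound. Setting $\lambda=\sum_{i\in I}m_i\omega_i$, Theorem \ref{ubodowm} gives $\operatorname{Dim}(W(\pi))\leq \operatorname{Dim}(W(\lambda))$, where $W(\lambda)$ is the local Weyl module of the current algebra $\nyso[t]$. By Proposition \ref{dwmocsp} one has $\operatorname{Dim}(W(\lambda))=\prod_{i\in I}\big(\operatorname{Dim}(W(\omega_i))\big)^{m_i}$. On the other hand, by the construction of the multiset $S$ the node $i$ occurs among $b_1,\dots,b_k$ exactly $m_i$ times (the polynomial $\pi_i$ of degree $m_i$ contributes $m_i$ roots, each labeled by node $i$), and $\operatorname{Dim}\big(V_a(\omega_i)\big)$ is independent of $a$, so $\operatorname{Dim}(L)=\prod_{i\in I}\big(\operatorname{Dim}(V_a(\omega_i))\big)^{m_i}$; Corollary \ref{dkrvawocsp} then identifies $\operatorname{Dim}(V_a(\omega_i))=\operatorname{Dim}(W(\omega_i))$ for each $i$, whence $\operatorname{Dim}(L)=\operatorname{Dim}(W(\lambda))$.

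Combining the two inequalities, $\operatorname{Dim}(L)\leq \operatorname{Dim}(W(\pi))\leq \operatorname{Dim}(W(\lambda))=\operatorname{Dim}(L)$, which forces $\operatorname{Dim}(W(\pi))=\operatorname{Dim}(L)$. A surjection of finite-dimensional vector spaces of equal dimension is an isomorphism, so the map $W(\pi)\twoheadrightarrow L$ is an isomorphism of $\yso$-modules and $W(\pi)\cong L$. I do not expect any real obstacle inside this theorem itself: the entire difficulty of the chapter is concentrated upstream, in proving that the ordered tensor product $L$ is cyclic (Proposition \ref{mtoysob}) and in the explicit eigenvalue computations of Section \ref{mtoysols} that supply Step 5; once those are in hand, the present statement is purely a counting argument. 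The one point to keep honest is the multiplicity bookkeeping just described, since it is exactly what equates the product form of $\operatorname{Dim}(L)$ with that of $\operatorname{Dim}(W(\lambda))$ and thereby closes the squeeze.
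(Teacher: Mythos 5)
Your proposal is correct and follows exactly the paper's own argument: the squeeze $\operatorname{Dim}(L)\leq \operatorname{Dim}\big(W(\pi)\big)\leq \operatorname{Dim}\big(W(\lambda)\big)=\operatorname{Dim}(L)$, using Theorem \ref{ubodowm} for the upper bound, the quotient property for the lower bound, and Corollary \ref{dkrvawocsp} (with Naoi's dimension formula) to close the chain. Your added detail on the multiplicity bookkeeping and the explicit appeal to Remark \ref{mi=lambdahi} simply makes explicit what the paper leaves implicit.
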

\begin{proof}
On the one hand, by Theorem \ref{ubodowm}, $\operatorname{Dim}\big(W(\pi)\big)\leq \operatorname{Dim}\big(W(\lambda)\big)$; on the other hand, $L$ is a quotient of $W(\pi)$, and then $\operatorname{Dim}\big(W(\pi)\big)\geq \operatorname{Dim}\left(L\right)$. By Corollary \ref{dkrvawocsp},  we have $\operatorname{Dim}\big(W(\lambda)\big)=\operatorname{Dim}\left(L\right)$. Therefore
\begin{center}
$W\left(\pi\right)\cong L$.
\end{center}
\end{proof}
\chapter{The local Weyl modules of $\ygg$ when $\g$ is of type $G_2$}
In this chapter, $\g$ denotes the exceptional simple Lie algebra of type $G_2$. the local Weyl modules of $\ygg$ are studied. The structure of the local Weyl modules is determined, and the dimensions of the local Weyl modules are obtained. In the process of characterizing the local Weyl modules, a sufficient condition for the tensor product of fundamental representations of $\yg$ to be a highest weight representation is obtained, which shall lead to an irreducibility criterion for the tensor product.

%We first construct a representation $$L=V_{a_1}(\omega_{b_1})\otimes V_{a_2}(\omega_{b_2})\otimes\ldots\otimes V_{a_k}(\omega_{b_k}),$$ which is a tensor product of fundamental representations of $\ygg$. Rearranging the orders of $V_{a_i}\left(\omega_{b_i}\right)$ if necessary, we prove that $L$ is a highest weight representation if $\operatorname{Re}\left(a_1\right)\geq \operatorname{Re}\left(a_2\right)\geq\ldots\geq \operatorname{Re}\left(a_k\right)$.

Let $\pi=\big(\pi_1(u),\pi_2(u)\big)$ be a pair of monic polynomials in $u$, and $\pi_i\left(u\right)=\prod\limits_{j=1}^{m_i}\left(u-a_{i,j}\right)$ for $i\in I=\{1,2\}$.
 Let $\lambda=m_1\omega_1+m_2\omega_2$, $k=m_1+m_2$ and $S=\{a_{ij}|i=1,2; j=1,2,\ldots, m_i\}$. Let $a_{m,n}$ be one of the numbers in $S$ with the maximal real part. Then define $a_1=a_{m,n}$ and $b_1=m$. Inductively, let $a_{s,t}$ be one of the numbers in $S-\{a_1,\ldots, a_{i-1}\}$ with the maximal real part. Then define $a_i=a_{s,t}$ and $b_i=s$. We prove that $L=V_{a_1}(\omega_{b_1})\otimes V_{a_2}(\omega_{b_2})\otimes\ldots\otimes V_{a_k}(\omega_{b_k})$ is a highest weight representation.  A standard argument shows that the associated l-tuple of polynomials of $L$ is $\pi$.
Since $L$ is a quotient of $W(\pi)$, a lower bound on the dimension of $W(\pi)$ is obtained.

In \cite{Na}, the author proved $\operatorname{Dim}\Big(W(\lambda)\Big)=\prod\limits_{i\in I} \Big(\operatorname{Dim}\big(W(\omega_i)\big)\Big)^{m_i}.$
We can show that $\operatorname{Dim}\Big(W(\lambda)\Big)=\operatorname{Dim}(L)$. The dimension of $W(\lambda)$ is known, then an upper bound for the dimension of the local Weyl module $W(\pi)$ is obtained. Comparing this dimension and the dimension of $L$, the structure of the local Weyl module $W(\pi)$ of $\ygg$ is obtained, namely, $$W\left(\pi\right)\cong L.$$

Similar to the proof that $L$ is a highest weight representation (Proposition \ref{g2Lihw}), we can obtain a sufficient condition for a tensor product $\tilde{L}$ of fundamental representations to be a highest weight representation. If $a_j-a_i\notin S(b_i, b_j)$ for $1\leq i<j\leq k$, then $\tilde{L}$ is a highest weight representation, where $S(b_i, b_j)$ is a finite set of positive rational numbers. By Proposition \ref{VoWWoVhi} and Lemma \ref{dualfrc1}, an irreducible criterion for a tensor product of fundamental representations of $\ygg$ is obtained: if $a_j-a_i\notin S(b_i, b_j)$ for $1\leq i\neq j\leq k$, then $\tilde{L}$ is irreducible.

%We are about to consider the local Weyl modules of $\ygg$.
\section{Information on the simple Lie algebra of type $G_2$.}
%Question: move most of this section to Chapter 1?

Here is some useful information on the simple Lie algebras of type $G_2$.

Indecomposable Cartan matrix of $G_2$:
                                             $\begin{pmatrix}
                                               2 & -1 \\
                                               -3 & 2
                                             \end{pmatrix}.$

In our notation, $\alpha_1$ is the positive long root, and $\alpha_2$ is the positive short root. It is the opposite of the paper \cite{ChMo3}.

Root system: $\Phi=\{\alpha_1,\alpha_2, \alpha_1+\alpha_2,\alpha_1+2\alpha_2,\alpha_1+3\alpha_2,2\alpha_1+3\alpha_2,-\alpha_1,-\alpha_2,\\ -\alpha_1-\alpha_2,-\alpha_1-2\alpha_2,-\alpha_1-3\alpha_2,-2\alpha_1-3\alpha_2, \}$.

Positive roots: $\{\alpha_1,\alpha_2, \alpha_1+\alpha_2,\alpha_1+2\alpha_2,\alpha_1+3\alpha_2,2\alpha_1+3\alpha_2\}$.

Simple roots: $\{\alpha_1,\alpha_2\}$.

Longest root: $2\alpha_1+3\alpha_2$.

Weyl group: $\W=\langle s_1,s_2\rangle$, where
$$s_1\left(\alpha_1\right)=-\alpha_1,\quad s_2\left(\alpha_1\right)=\alpha_1+3\alpha_2$$
$$s_1\left(\alpha_2\right)=\alpha_1+\alpha_2, \quad s_2\left(\alpha_2\right) = -\alpha_2.$$\
One reduced expression of the longest element in Weyl group:
$w_0=s_1s_2s_1s_2s_1s_2$.

Fundamental weights: $\{\omega_1=2\alpha_1+3\alpha_2, \omega_2=\alpha_1+2\alpha_2\}$.

Fundamental representations: $\operatorname{Dim}\Big(L\left(\omega_1\right)\Big)=14$ and $\operatorname{Dim}\Big(L\left(\omega_2\right)\Big)=7$.
$$s_1\left(\omega_1\right)=-\omega_1+3\omega_2,\quad s_1\left(\omega_2\right)=\omega_2,\quad s_2\left(\omega_1\right)=\omega_1,\quad s_2\left(\omega_2\right)=\omega_1-\omega_2.$$
\begin{proposition}\label{g2fwfr} Denote $s_i(\mu_1) = \mu_2$ by $\mu_1 \stackrel{s_i}{\longrightarrow} \mu_2$. We have
\begin{flushleft}
$\omega_1\xlongrightarrow{s_2}\omega_1\xlongrightarrow{s_1}-\omega_1+3\omega_2\xlongrightarrow{s_2}2\omega_1-3\omega_2
\xlongrightarrow{s_1}-2\omega_1+3\omega_2\xlongrightarrow{s_2}\omega_1-3\omega_2\xlongrightarrow{s_1}-\omega_1$.
\end{flushleft}

\begin{flushleft}
$\omega_2\xlongrightarrow{s_2}\omega_1-\omega_2\xlongrightarrow{s_1}-\omega_1+2\omega_2\xlongrightarrow{s_2}\omega_1-2\omega_2
\xlongrightarrow{s_1}-\omega_1+\omega_2\xlongrightarrow{s_2}-\omega_2\xlongrightarrow{s_1}-\omega_2$.
\end{flushleft}
\end{proposition}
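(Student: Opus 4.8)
The plan is to verify both chains of Proposition~\ref{g2fwfr} by direct computation in the weight lattice, using only the action of the simple reflections $s_1,s_2$ on the fundamental weights together with linearity. First I would record the four elementary formulas already listed just above the statement: $s_2(\omega_1)=\omega_1$, $s_1(\omega_1)=-\omega_1+3\omega_2$, $s_1(\omega_2)=\omega_2$, and $s_2(\omega_2)=\omega_1-\omega_2$. If one does not wish simply to quote them, they follow from the general rule $s_r(\omega_i)=\omega_i-\delta_{ir}\alpha_r$ applied with the expansions $\alpha_1=2\omega_1-3\omega_2$ and $\alpha_2=-\omega_1+2\omega_2$, which are read off from the Cartan matrix of $G_2$ (or equivalently inverted from $\omega_1=2\alpha_1+3\alpha_2$, $\omega_2=\alpha_1+2\alpha_2$).

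Next I would apply these four base formulas arrow by arrow, reading the reduced expression $w_0=s_1s_2s_1s_2s_1s_2$ from right to left so that the reflections act in the order $s_2,s_1,s_2,s_1,s_2,s_1$ displayed in each chain. For the first chain, after $s_2(\omega_1)=\omega_1$ and $s_1(\omega_1)=-\omega_1+3\omega_2$ one computes $s_2(-\omega_1+3\omega_2)=-\omega_1+3(\omega_1-\omega_2)=2\omega_1-3\omega_2$, and continuing in this fashion — always rewriting each intermediate weight as a $\Z$-combination of $\omega_1,\omega_2$ and applying the four formulas termwise — produces the remaining entries $-2\omega_1+3\omega_2$, $\omega_1-3\omega_2$, and finally $-\omega_1$. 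The second chain, starting from $\omega_2$ with $s_2(\omega_2)=\omega_1-\omega_2$, is handled identically and terminates at $-\omega_2$.

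There is no genuine obstacle here: the computation is purely mechanical linear bookkeeping in $P=\Z\omega_1\oplus\Z\omega_2$. The only points requiring care are the signs when a reflection acts on a weight whose coefficients are negative, and the right-to-left convention for applying the reduced word. As a global consistency check I would confirm that the two chains terminate at $-\omega_1$ and $-\omega_2$, which is precisely $w_0(\omega_i)$ since the longest element of $\W$ for $G_2$ acts as $-1$; this simultaneously validates the individual steps and reconfirms that $s_1s_2s_1s_2s_1s_2$ is a reduced expression for $w_0$. This proposition then supplies, exactly as in the classical cases, the explicit path of simple reflections needed to pass from the highest weight vector to the lowest weight vector in each fundamental representation $V_a(\omega_i)$ of $\ygg$.
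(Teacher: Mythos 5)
Your proposal is correct and is essentially the paper's own approach: the paper records the four reflection formulas $s_1(\omega_1)=-\omega_1+3\omega_2$, $s_1(\omega_2)=\omega_2$, $s_2(\omega_1)=\omega_1$, $s_2(\omega_2)=\omega_1-\omega_2$ immediately before the proposition and the displayed chains are precisely the termwise, right-to-left application of these formulas along the reduced word $w_0=s_1s_2s_1s_2s_1s_2$, which the paper treats as routine and leaves unproved. Your extra touches — deriving the base formulas from $s_r(\omega_i)=\omega_i-\delta_{ir}\alpha_r$ with $\alpha_1=2\omega_1-3\omega_2$, $\alpha_2=-\omega_1+2\omega_2$, and checking that both chains end at $w_0(\omega_i)=-\omega_i$ — are correct and only reinforce the computation.
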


Define $w_1=s_1s_2s_1s_2s_1$ and $w_2=s_2s_1s_2s_1s_2$. According to the expression of $w_i$ for a fixed $i\in I$, we define $\sigma_k\left(0\leq k\leq 5\right)$ to be the product of the last $k$ simple reflections $s_j$ in $w_i$ and keep the same orders as in $w_i$.  There exists $k'\in\{1,2\}$ such that $\sigma_{k+1}=s_{k'}\sigma_k$.

\section{A lower bound for the dimension of the local Weyl module $W(\pi)$}
In this case, let $D=
                       \begin{pmatrix}
                         3 & 0 \\
                         0 & 1 \\
                       \end{pmatrix}
                     .$
Then $DA$ is symmetric. We next define $Y_1$ and $Y_2$.
Let $Y_1=span\{x_{1,r}^{\pm}, h_{1,r}|r\in\Z_{\geq 0}\}$. $Y_1\cong \ysl$, but $x_{1,r}^{\pm}, h_{1,r}$ do not satisfy the defining relations of $\ysl$. Therefore we need to re-scale the generators. Let $\tilde{x}_{1,r}^{\pm}=\frac{\sqrt{3}}{3^{r+1}}x_{1,r}^{\pm},\tilde{h}_{1,r}=\frac{1}{3^{r+1}}h_{1,r}$. Then $\tilde{x}_{1,r}^{\pm}, \tilde{h}_{1,s}$ satisfy the defining relations of $\ysl$.
$Y_2=span\{x_{2,r}^{\pm}, h_{2,r}|r\in\Z_{\geq 0}\}\cong \ysl$, and $x_{2,r}^{\pm}, h_{2,r}$ satisfy the defining relations of $\ysl$. Denote $h_{2,r}$ by $\tilde{h}_{2,r}$ for the simplicity of the following expressions.

\begin{proposition}\label{g2Lihw}
Let $L=V_{a_1}(\omega_{b_1})\otimes V_{a_2}(\omega_{b_2})\otimes\ldots\otimes V_{a_k}(\omega_{b_k})$. If $\operatorname{Re}\left(a_1\right)\geq \operatorname{Re}\left(a_2\right)\geq\ldots\geq \operatorname{Re}\left(a_k\right)$ and $b_i\in\{1,2\}$, then $L$ is a highest weight representation.
\end{proposition}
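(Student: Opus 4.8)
The plan is to follow verbatim the inductive scheme used for the classical types in Propositions \ref{Lhwr}, \ref{mtoysl2l}, \ref{mtoyspl} and \ref{mtoysob}, specializing the combinatorics to the rank-two root system of $G_2$ recorded in Proposition \ref{g2fwfr}. I induct on $k$. The base case $k=1$ is immediate, since $V_{a_1}(\omega_{b_1})$ is irreducible and hence highest weight. For $k \geq 2$ I assume $V_{a_2}(\omega_{b_2}) \otimes \cdots \otimes V_{a_k}(\omega_{b_k})$ is highest weight with highest weight vector $v^{+} = v_2^{+} \otimes \cdots \otimes v_k^{+}$. By Proposition \ref{Delta} the vector $v_1^{+} \otimes v^{+}$ is maximal and each $h_{i,r}$ acts on it by a scalar, so it suffices to prove that $v_1^{+} \otimes v^{+}$ generates $L$. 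By Corollary \ref{v-w+gvtw} the vector $v_1^{-} \otimes v^{+}$ already generates $L$, where $v_1^{-}$ is the lowest weight vector of $V_{a_1}(\omega_{b_1})$; hence it is enough to show $v_1^{-} \otimes v^{+} \in \ygg(v_1^{+} \otimes v^{+})$.

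To do this I travel from $v_1^{+}$ to $v_1^{-}$ along the path dictated by the reduced word $w_{b_1}$ (equal to $w_1 = s_1 s_2 s_1 s_2 s_1$ or $w_2 = s_2 s_1 s_2 s_1 s_2$), setting $v_{\sigma_{0}(\omega_{b_1})} = v_1^{+}$ and $v_{\sigma_{i+1}(\omega_{b_1})} = (x_{k',0}^{-})^{r_{k'}} v_{\sigma_i(\omega_{b_1})}$, where $r_{k'}$ is the coefficient of $\omega_{k'}$ in $\sigma_i(\omega_{b_1})$ read off from Proposition \ref{g2fwfr}. For each step I argue, exactly as in Step 2 of Proposition \ref{mtoysob}, that $Y_{k'}(v_{\sigma_i(\omega_{b_1})})$ is a highest weight module for $Y_{k'} \cong \ysl$ (after the rescaling $\tilde{x}_{1,r}^{\pm}, \tilde{h}_{1,r}$ needed because $d_1 = 3$), whose Drinfeld polynomial has degree $r_{k'}$. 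The crucial factorization $Y_{k'}(v_{\sigma_i(\omega_{b_1})} \otimes v_2^{+} \otimes \cdots \otimes v_k^{+}) = Y_{k'}(v_{\sigma_i(\omega_{b_1})}) \otimes Y_{k'}(v_2^{+}) \otimes \cdots \otimes Y_{k'}(v_k^{+})$ then follows from the coproduct of $\ygg$ together with Proposition \ref{c1dgd2d}, and the right-hand side is a highest weight $Y_{k'}$-module by Corollary \ref{tpihw}, Corollary \ref{tpihw2} or, when the leading factor has higher degree, by Lemma \ref{beautiful}, provided the roots of the leading factor all have real part at least $\operatorname{Re}(a_1)$ so that no root of a later, smaller-real-part factor exceeds an earlier one by exactly $1$. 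Consequently $v_{\sigma_{i+1}(\omega_{b_1})} \otimes v^{+} \in Y_{k'}(v_{\sigma_i(\omega_{b_1})} \otimes v^{+})$, and a downward induction on $i$ yields $v_1^{-} \otimes v^{+} \in \ygg(v_1^{+} \otimes v^{+})$, completing the induction on $k$.

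The hard part, which I would isolate into a ``Supplement Step 5'' section in parallel with Sections \ref{mtoysl2ls}, \ref{mtoyspls} and \ref{mtoysols}, is the explicit determination of the roots of these Drinfeld polynomials and the verification of the real-part inequality (of the form $\operatorname{Re}(a) \geq \operatorname{Re}(a_1/d_{k'})$ familiar from the type-$B$ case). The genuinely new feature for $G_2$, absent in the classical cases where the coefficient $r_{i'}$ never exceeds $2$, is that $r_{k'}$ attains the value $3$: along the $\omega_1$-path one passes through $-\omega_1 + 3\omega_2$ and $-2\omega_1 + 3\omega_2$. Thus $Y_{2}(v_{\sigma_i(\omega_1)})$ can be a degree-three highest weight $\ysl$-module, isomorphic to $W_3(a)$ or to a tensor product such as $W_2(b) \otimes W_1(a)$, and identifying which case occurs requires computing the eigenvalues of $\tilde{h}_{k',1}$, $\tilde{h}_{k',2}$ and $\tilde{h}_{k',3}$ on the path vectors and invoking Lemma \ref{g2puisd3}. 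These computations must use the $G_2$ structure constants and the rescaled relations, and the fractional shifts they produce in the roots are what make the bookkeeping delicate; everything else is a mechanical transcription of the classical arguments.
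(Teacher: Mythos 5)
Your plan reproduces the paper's proof structure step for step: the induction on $k$, the reduction via Corollary \ref{v-w+gvtw} to showing $v_1^{-}\otimes v^{+}\in\ygg\left(v_1^{+}\otimes v^{+}\right)$, the path $v_{\sigma_{i+1}(\omega_{b_1})}=\left(x_{k',0}^{-}\right)^{r_{k'}}v_{\sigma_i(\omega_{b_1})}$ read off from Proposition \ref{g2fwfr}, the factorization of $Y_{k'}\left(v_{\sigma_i\left(\omega_{b_1}\right)}\otimes v_2^{+}\otimes\cdots\otimes v_k^{+}\right)$ via Proposition \ref{c1dgd2d}, the appeal to the $\ysl$ cyclicity results, and a computational supplement; you also correctly identify the genuinely new feature, namely that $r_{k'}=3$ occurs, producing degree-three Drinfeld polynomials.

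There is, however, one concrete error, and it sits precisely at the point you defer to the supplement. You propose to verify that the roots of the leading factor all have real part at least $\operatorname{Re}\left(a_1\right)$ (a bound of the shape $\operatorname{Re}\left(a\right)\geq\operatorname{Re}\left(a_1/d_{k'}\right)$, as in type $B$), and you use this to conclude that no later root exceeds an earlier one by exactly $1$. This bound is false for $G_2$: in the case $b_1=1$, the paper's computation (Lemma \ref{c7c1l1}) shows that $Y_2\left(x_{1,0}^{-}v_1^{+}\right)$ has Drinfeld polynomial $\big(u-(a_1+\tfrac{3}{2})\big)\big(u-(a_1+\tfrac{1}{2})\big)\big(u-(a_1-\tfrac{1}{2})\big)$, whose smallest root $a_1-\tfrac{1}{2}$ has real part strictly smaller than $\operatorname{Re}\left(a_1\right)$. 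The bound that is actually provable, and that the paper's Step 5 asserts, is only $\operatorname{Re}\left(a\right)\geq\operatorname{Re}\left(a_1\right)-\tfrac{1}{2}$; the argument still closes, but for a finer reason that your write-up omits: Proposition \ref{ctpihw} forbids only differences exactly equal to $1$, and since any later root $a_{j_s}$ satisfies $\operatorname{Re}\left(a_{j_s}\right)\leq\operatorname{Re}\left(a_1\right)\leq\operatorname{Re}\left(a\right)+\tfrac{1}{2}$, one gets $\operatorname{Re}\left(a_{j_s}-a\right)\leq\tfrac{1}{2}<1$, hence $a_{j_s}-a\neq 1$. As written, your supplement would contradict its own stated goal once the eigenvalue computations were carried out; you must weaken the inequality and add this half-unit-slack observation. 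A secondary, smaller point: the leading factor $Y_{k'}\left(v_{\sigma_i\left(\omega_{b_1}\right)}\right)$ need not be irreducible, so Lemma \ref{beautiful} cannot be applied with it as a single tensor factor; as in the paper, one should regard it as a quotient of the ordered product $W_1\left(a+2\right)\otimes W_1\left(a+1\right)\otimes W_1\left(a\right)$ (there is no need to decide whether it is $W_3$, $W_2\otimes W_1$, or the full product --- only its roots matter) and apply the cyclicity criterion to the resulting string of $W_1$-factors.
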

\begin{proof}

%There is a big different to Proposition \ref{Lhwr} of Step 5. In there $Y_{i'}v_{\sigma_i\left(\omega_{b_1}\right)}$ is a 2-dimensional highest weight representation; but in the case of $\yo$, $Y_{i'}v_{\sigma_i\left(\omega_{b_1}\right)}$ could be dimension 2, 3 or 4, corresponding $W_1\left(a\right)$, $W_2\left(a\right)$, or $W_1\left(b\right)\otimes W_1\left(a\right)\left(\operatorname{Re}\left(b\right)>\operatorname{Re}\left(a\right)\right)$, respectively. But we will prove that $\operatorname{Re}\left(a\right)>\operatorname{Re}\left(a_1\right)$ in all three cases.

%Let $Y\left(2l\right)=Y\left(\mathfrak{so}\left(2l,\C\right)\right)$, and
Let $v_m^{+}$ be a highest weight vector of $V_{a_m}\left(\omega_{b_m}\right)$($1\leq m\leq k$), and let $v_1^{-}$ be a lowest weight vector of $V_{a_1}\left(\omega_{b_1}\right)$.

We prove this proposition by induction on $k$. Without loss of generality, we may assume that $k\geq 2$, and $V_{a_2}(\omega_{b_2})\otimes V_{a_3}(\omega_{b_3})\otimes\ldots\otimes V_{a_k}(\omega_{b_k})$ is a highest weight representation of $\ygg$ generated by the highest weight vector $v^{+}=v_2^{+}\otimes \ldots \otimes v_k^{+}$. To show that $L$ is a highest weight representation, it follows from Corollary \ref{v-w+gvtw} that it suffice to show that $$v^{-}_{1}\otimes v^{+}\in \ygg\left(v^{+}_1\otimes v^{+}\right).$$ We divide the proof into the following steps.

Step 1: $\sigma_i^{-1}\alpha_{i'}\in \Delta^{+}$.

Proof: It is routine to check.

%Proof: First off, $l\left(s_{i'}\sigma_i\right)=l\left(\sigma_i\right)\pm 1$ because $s_{i'}$ keeps all positive roots but $\alpha_{i'}$. Let $T_i=\{\alpha\in \Phi^{+}| \sigma_i\left(\alpha\right)\in -\Phi^{+}\}$. Then $|T_i|=l\left(\sigma_i\right)$. Easy to see that
%$T_1=\{\mu_{b_1}-\mu_{b_1+1}\}$, and $T_2=\{\mu_{b_1}-\mu_{b_1+1}\}$. It follows from the definition of $\sigma_i$ that $T_i\subseteq T_{i+1}$, which gives $l\left(s_{i'}\sigma_i\right)]\geq l\left(\sigma_i\right)$. Then $l\left(s_{i'}\sigma_i\right)=l\left(\sigma_i\right)+1$.

Step 2: $v_{\sigma_i\left(\omega_{b_1}\right)}$ is a highest weight vector of the $Y_{i'}$-module $Y_{i'}\left(v_{\sigma_i\left(\omega_{b_1}\right)}\right)$.%%\left(there is %a little confusion on the subscript $k+1$).
%For example, $V_{i}^{\left(k\right)}=x_{2,0}^{-}\ldots \left(x_{n+1,0}^{-}\ldots x_{b_1,0}^{-}\right)v_1^{+}$, then $Y_{k+1}$ means $Y_{3}$.

Proof: Since the weight $\sigma_i\left(\omega_{b_1}\right)$ is on the Weyl group orbit of the highest weight and the representation $V_{a_1}(\omega_{b_1})$ is finite-dimensional, the weight space of weight $\sigma_i\left(\omega_{b_1}\right)$ is 1-dimensional. The elements $h_{j,s}$ form a commutative subalgebra, so $v_{\sigma_i\left(\omega_{b_1}\right)}$ is an eigenvector of $h_{i',r}$. Therefore we only have to show that $v_{\sigma_i\left(\omega_{b_1}\right)}$ is a maximal vector. Suppose to the contrary that $x_{i',k}^{+}v_{\sigma_i\left(\omega_{b_1}\right)}\neq 0$.  Then $x_{i',k}^{+}v_{\sigma_i\left(\omega_{b_1}\right)}$ is a weight vector of weight $\sigma_i(\omega_{b_1})+\alpha_{i'}$, so $\omega_{b_1}+\sigma_i^{-1}\left(\alpha_{i'}\right)$ is a weight. Because $\sigma_i^{-1}\left(\alpha_{i'}\right)\in \Delta^{+}$,  $\omega_{b_1}$ is a weight preceding the weight $\omega_{b_1}+\sigma_i^{-1}\left(\alpha_{i'}\right)$, which contradicts the maximality of $\omega_{b_1}$ in the representation $L\left(\omega_{b_1}\right)$.

%Step 2: [Remark, Section 3.1, \cite{ChPr4}] Let $\sigma$ be any element of the Weyl group of a finite-dimensional complex simple Lie algebra $\mathfrak{g}$ and set $\Delta_{\sigma}^{+}=\{\alpha\in \Delta^{+}: \sigma^{-1}\left(\alpha\right)\in -\Delta^{+}\}$. Since $V_{\sigma_i\left(\omega_{b_1}\right)}$ is 1-dimensional, there exists scalars $d_{i'k,\sigma_i}^{+}$ such that $h_{i',k} v_{\sigma_i\left(\omega_{b_1}\right)}=d_{i'k,\sigma_i}^{+}$. Then
%\begin{equation*}
%1+\sum_{k\geq 0}\frac{d_{i'k,\sigma_i}^{+}}{u^{k+1}}=\begin{cases}
%\frac{P_{i',\sigma_i}^{+}\left(u+\frac{1}{2}\left(\alpha_i,\alpha_i\right)\right)}{P\left(u\right)}\qquad \text{if}\qquad \alpha_i\notin \Delta_{\sigma}^{+}, \\
%\frac{P\left(u\right)}{P_{i',\sigma_i}^{+}\left(u+\frac{1}{2}\left(\alpha_i,\alpha_i\right)\right)}\qquad \text{if}\qquad \alpha_i\in \Delta_{\sigma}^{+}.
%\end{cases}
%\end{equation*}

%Moreover we may assume that $P\left(u\right)$ is monic.

Step 3: Let $P\left(u\right)$ be the associated polynomial of $Y_{i'}\left(v_{\sigma_i\left(\omega_{b_1}\right)}\right)$. Then as a highest weight $Y_{i'}$-module, $Y_{i'}\left(v_{\sigma_i\left(\omega_{b_1}\right)}\right)$ has highest weight $\frac{P\left(u+1\right)}{P\left(u\right)}$.

Proof: This follows from the representation theory of $\ysl$.

Step 4: $P\left(u\right)$ has of degree 1, 2, or 3.

Proof: The degree of $P\left(u\right)$ equals to the eigenvalue of $\tilde{h}_{i',0}$ on $v_{\sigma_i\left(\omega_{b_1}\right)}$. Since $$\tilde{h}_{i',0} v_{\sigma_i\left(\omega_{b_1}\right)}=\Bigg(\sigma_{i}\left(\omega_{b_1}\right)\left(\tilde{h}_{i',0}\right)\Bigg)v_{\sigma_i\left(\omega_{b_1}\right)},$$ it follows from Proposition \ref{g2fwfr} that $\sigma_{i}\left(\omega_{b_1}\right)\left(\tilde{h}_{i',0}\right)=1, 2$ or 3. Therefore the degree of $P\left(u\right)$ equals to 1, 2, or 3.

%Proof: It is follows from Steps 1, 2 and 3.

Step 5: If $\operatorname{Deg}\big(P\left(u\right)\big)=1$, then $Y_{i'}\left(v_{\sigma_i\left(\omega_{b_1}\right)}\right)$ is 2-dimensional and is isomorphic to $W_1\left(\frac{a}{d_{i'}}\right)$;  If $\operatorname{Deg}\Big(P\left(u\right)\Big)=2$, then $Y_{i'}\left(v_{\sigma_i\left(\omega_{b_1}\right)}\right)$ is a quotient of $W_1\left(\frac{a+1}{d_{i'}}\right)\otimes W_1\left(\frac{a}{d_{i'}}\right)$; If $\operatorname{Deg}\Big(P\left(u\right)\Big)=3$, then $i'=2$ and $Y_{i'}\left(v_{\sigma_i\left(\omega_{b_1}\right)}\right)$ is a quotient of $W_1\left(a+2\right)\otimes W_1\left(a+1\right)\otimes W_1\left(a\right)$. Moreover, if $i\geq 1$, then $\operatorname{Re}\left(a\right)\geq \operatorname{Re}\left(a_1\right)-\frac{1}{2}$. The values of $a$ will be explicitly computed in the next section.

Proof: Let us assume for the moment that this is done (the proof will be given in the next section). We proceed to the next step.

Step 6:  $Y_{i'}\left(v_{\sigma_i\left(\omega_{b_1}\right)}\otimes v_2^{+}\otimes\ldots \otimes v_k^{+}\right)=Y_{i'}\left(v_{\sigma_i\left(\omega_{b_1}\right)}\right)\otimes Y_{i'}\left(v_2^{+}\right)\otimes\ldots\otimes Y_{i'}\left(v_k^{+}\right)$.

Proof: For $i'=1$, let $W$ be one of the modules of $W_1\left(\frac{a}{3}\right)$ or $W_1\left(\frac{a+1}{3}\right)\otimes W_1\left(\frac{a}{3}\right)$; for $i'=2$, let $W$ be one of the modules of $W_1\left(a\right)$, $W_1\left(a+1\right)\otimes W_1\left(a\right)$, or $W_1\left(a+2\right)\otimes W_1\left(a+1\right)\otimes W_1\left(a\right)$. $Y_{i'}\left(v_m^{+}\right)$ is nontrivial if and only if $b_m=i'$; moreover, $Y_{2}\left(v_m^{+}\right)\cong W_1\left(a_m\right)$ and $Y_{1}\left(v_m^{+}\right)\cong W_1\left(\frac{a_m}{3}\right)$.  Suppose in $\{b_1,\ldots,b_k\}$ that $b_{j_1}=\ldots=b_{j_{m}}=i'$ with $j_1<\ldots<j_{m}$; moreover, if $s\notin\{j_1,\ldots, j_m\}$, then $b_s\neq i'$.
%Note in Step 5 that $Y_{i'}\left(v_{\sigma_i(\omega_{2})}\right)\cong W$.
Thus for $i'=1$, $Y_{i'}\left(v_{\sigma_i\left(\omega_{b_1}\right)}\right)\otimes Y_{i'}\left(v_2^{+}\right)\otimes\ldots\otimes Y_{i'}\left(v_k^{+}\right)$ is a quotient of
\begin{equation*}
\begin{cases}
 W\otimes W_1\left(\frac{a_{j_1}}{3}\right)\otimes \ldots\otimes W_{1}\left(\frac{a_{j_m}}{3}\right)\qquad \text{if}\qquad b_1=2 \\
 W\otimes W_1\left(\frac{a_{j_2}}{3}\right)\otimes \ldots\otimes W_{1}\left(\frac{a_{j_m}}{3}\right) \qquad \text{if}\qquad b_1=1;
\end{cases}
\end{equation*}
for $i'=2$, $Y_{i'}\left(v_{\sigma_i\left(\omega_{b_1}\right)}\right)\otimes Y_{i'}\left(v_2^{+}\right)\otimes\ldots\otimes Y_{i'}\left(v_k^{+}\right)$ is a quotient of
\begin{equation*}
\begin{cases}
 W\otimes W_1\left(a_{j_1}\right)\otimes \ldots\otimes W_{1}\left(a_{j_m}\right)\qquad \text{if}\qquad b_1=1 \\
 W\otimes W_1\left(a_{j_2}\right)\otimes \ldots\otimes W_{1}\left(a_{j_m}\right) \qquad \text{if}\qquad b_1=2.
\end{cases}
\end{equation*}

Denote $a_{j_0}=\operatorname{Re}\left(a\right)+\frac{1}{2}$.
Since $\operatorname{Re}\left(a\right)+\frac{1}{2}\geq \operatorname{Re}\left(a_1\right)\geq \operatorname{Re}\left(a_{j_1}\right)\geq \ldots\geq \operatorname{Re}\left(a_{j_m}\right)$, $a_{j_k}-a_{j_l}\neq 1$ for $0\leq l<k\leq m$. Then it follows from Corollary \ref{ctpihw} that $W\otimes W_1\otimes \ldots\otimes W_1$ is a highest weight representation, so is $Y_{i'}\left(v_{\sigma_i\left(\omega_{b_1}\right)}\right)\otimes Y_{i'}\left(v_2^{+}\right)\otimes\ldots\otimes Y_{i'}\left(v_k^{+}\right)$. The highest weight vector of $Y_{i'}\left(v_{\sigma_i\left(\omega_{b_1}\right)}\right)\otimes Y_{i'}\left(v_2^{+}\right)\otimes\ldots\otimes Y_{i'}\left(v_k^{+}\right)$ is $v_{\sigma_i\left(\omega_{b_1}\right)}\otimes v_2^{+}\otimes\ldots \otimes v_k^{+}$. Therefore $$Y_{i'}\left(v_{\sigma_i\left(\omega_{b_1}\right)}\otimes v_2^{+}\otimes\ldots \otimes v_k^{+}\right)\supseteq Y_{i'}\left(v_{\sigma_i\left(\omega_{b_1}\right)}\right)\otimes Y_{i'}\left(v_2^{+}\right)\otimes\ldots\otimes Y_{i'}\left(v_k^{+}\right).$$ By the coproduct of Yangians  and Proposition \ref{c1dgd2d} that
$$Y_{i'}\left(v_{\sigma_i\left(\omega_{b_1}\right)}\otimes v_2^{+}\otimes\ldots \otimes v_k^{+}\right)\subseteq Y_{i'}\left(v_{\sigma_i\left(\omega_{b_1}\right)}\right)\otimes Y_{i'}\left(v_2^{+}\right)\otimes\ldots\otimes Y_{i'}\left(v_k^{+}\right).$$ Thus the claim is true.

Step 7: $v_{\sigma_{i+1}\left(\omega_{b_1}\right)}\otimes v^{+}\in Y_{i'}\left(v_{\sigma_i\left(\omega_{b_1}\right)}\otimes v^{+}\right)$.

Proof: $v_{\sigma_{i+1}\left(\omega_{b_1}\right)}\otimes v^{+}\in Y_{i'}\left(v_{\sigma_{i}\omega_{b_1}}\right)\otimes Y_{i'}\left(v^{+}\right)=Y_{i'}\left(v_{\sigma_i\left(\omega_{b_1}\right)}\otimes v^{+}\right)$.

Step 8: $v_1^{-}\otimes v^{+}\in \ygg\left(v_1^{+}\otimes v^{+}\right)$.

Proof: It follows from Step 7 immediately by induction on the subscripts of $\sigma_i$.

It follows from Step 8 and Corollary \ref{v-w+gvtw} that  $L=\ygg\left(v_1^{+}\otimes v^{+}\right)$.
\end{proof}
\begin{remark}
The following is the record of the root (s) of the associated polynomials of $Y_{i'}\left(v_{\sigma_i\left(\omega_{b_1}\right)}\right)$ as in Proposition \ref{g2Lihw} for the possible $i$ values. We refer to Remark \ref{yocap1} for the notation used below.

When $b_1=1$,

$\frac{a_1}{3}\xlongrightarrow{x_{1,0}^{-}}\left(a_1+\frac{3}{2},a_1+\frac{1}{2},a_1-\frac{1}{2}\right)\xlongrightarrow{x_{2,0}^{-}}\left(\frac{a_1+2}{3}, \frac{a_1+1}{3}\right)\xlongrightarrow{x_{1,0}^{-}}\\\left(a_1+\frac{7}{2},a_1+\frac{5}{2},a_1+\frac{3}{2}\right) \xlongrightarrow{x_{2,0}^{-}}\frac{a_1}{3}+1\xlongrightarrow{x_{1,0}^{-}}$.

When $b_1=2$,

$a_1\xlongrightarrow{x_{2,0}^{-}}\frac{a_1+\frac{3}{2}}{3}\xlongrightarrow{x_{1,0}^{-}}\left(a_1+3,a_1+2\right) \xlongrightarrow{x_{2,0}^{-}}\frac{a_1+\frac{7}{2}}{3}\xlongrightarrow{x_{1,0}^{-}}a_1+5\xlongrightarrow{x_{2,0}^{-}}$.
\end{remark}

\begin{lemma}
$V_{a_m}\left(\omega_{b_m}\right)\otimes V_{a_n}\left(\omega_{b_n}\right)$ is a highest weight representation if $a_n-a_m\notin S\left(b_m,b_n\right)$, where the set $S\left(b_m,b_n\right)$ is defined as follows:
$S(1,1)=\{3,4,5,6\}$, $S(1,2)=\{\frac{1}{2},\frac{3}{2},\frac{5}{2},\frac{7}{2},\frac{9}{2}\}$, $S(2,1)=\{\frac{9}{2},\frac{13}{2}\}$ and $S(2,2)=\{1,3,4,6\}$.
\end{lemma}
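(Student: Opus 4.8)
The plan is to run the same argument as in Proposition \ref{g2Lihw}, following the explicit path from the highest weight vector $v_m^+$ of the first tensor factor $V_{a_m}(\omega_{b_m})$ to its lowest weight vector $v_m^-$, but replacing the ordering hypothesis $\operatorname{Re}(a_m)\geq\operatorname{Re}(a_n)$ by the arithmetic hypothesis $a_n-a_m\notin S(b_m,b_n)$. Concretely, by Corollary \ref{v-w+gvtw} together with Propositions \ref{Delta} and \ref{vtv'hwv}, it suffices to prove the single containment $v_m^-\otimes v_n^+\in\ygg(v_m^+\otimes v_n^+)$; I would then establish this one reflection at a time along the reduced word $w_{b_m}$, exactly as in Steps 6--8 of Proposition \ref{g2Lihw}.

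The heart of the matter is Step 6: for each intermediate weight vector $v_{\sigma_i(\omega_{b_m})}$ I must show
\[
Y_{i'}\bigl(v_{\sigma_i(\omega_{b_m})}\otimes v_n^+\bigr)=Y_{i'}\bigl(v_{\sigma_i(\omega_{b_m})}\bigr)\otimes Y_{i'}\bigl(v_n^+\bigr),
\]
with the right-hand side a highest weight $Y_{i'}\cong\ysl$-module. If $i'\neq b_n$, then $Y_{i'}(v_n^+)$ is one-dimensional and the factorization is immediate with no condition imposed. If $i'=b_n$, then $Y_{i'}(v_n^+)\cong W_1(a_n/d_{i'})$, while $Y_{i'}(v_{\sigma_i(\omega_{b_m})})$ is, by Step 5 of Proposition \ref{g2Lihw} and the root records in the remark following it, a quotient of an ordered tensor product of modules $W_1(c)$ whose parameters $c$ are the listed roots of its associated polynomial. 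Hence $Y_{i'}(v_{\sigma_i(\omega_{b_m})})\otimes W_1(a_n/d_{i'})$ is a quotient of a tensor product of modules of the form $W_1$, and by Proposition \ref{ctpihw} (equivalently Lemma \ref{beautiful}) it is highest weight provided no root $c$ satisfies $a_n/d_{i'}-c=1$.

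It then remains to translate $a_n/d_{i'}-c\neq 1$ into $a_n-a_m\notin S(b_m,b_n)$ by running over all steps $i$ with $i'=b_n$ and reading off the roots $c$ from the records. For $b_m=b_n=1$ the node-$1$ roots are $a_m/3,\ (a_m+1)/3,\ (a_m+2)/3,\ (a_m+3)/3$, and $a_n/3-c=1$ forces $a_n-a_m\in\{3,4,5,6\}$; for $b_m=1,\,b_n=2$ the node-$2$ roots are $a_m-\tfrac12,\,a_m+\tfrac12,\,a_m+\tfrac32,\,a_m+\tfrac52,\,a_m+\tfrac72$, forcing $a_n-a_m\in\{\tfrac12,\tfrac32,\tfrac52,\tfrac72,\tfrac92\}$; for $b_m=2,\,b_n=1$ the two node-$1$ roots $(a_m+\tfrac32)/3,\,(a_m+\tfrac72)/3$ give $\{\tfrac92,\tfrac{13}{2}\}$; and for $b_m=b_n=2$ the node-$2$ roots $a_m,\,a_m+2,\,a_m+3,\,a_m+5$ give $\{1,3,4,6\}$. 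These are precisely the four sets $S(b_m,b_n)$, so the hypothesis $a_n-a_m\notin S(b_m,b_n)$ guarantees cyclicity at every step.

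The main obstacle, and the only part requiring genuine care rather than a direct appeal to the earlier template, is the bookkeeping of the preceding paragraph: I must verify that the records list \emph{all} roots $c$ arising at a node equal to $b_n$ --- including the root $a_m$ contributed by the starting vector $v_m^+$ at $i=0$ when $b_m=b_n=2$ --- and that the internal root differences within each $Y_{i'}(v_{\sigma_i(\omega_{b_m})})$ never equal $1$ in the chosen (decreasing real-part) order, so that Proposition \ref{ctpihw} applies to the full tensor product of $W_1$'s. Granting this, the argument closes verbatim as in Steps 7--8 of Proposition \ref{g2Lihw}: a downward induction on the subscript of $\sigma_i$ yields $v_m^-\otimes v_n^+\in\ygg(v_m^+\otimes v_n^+)$, and Corollary \ref{v-w+gvtw} then shows $V_{a_m}(\omega_{b_m})\otimes V_{a_n}(\omega_{b_n})$ is generated by the maximal $h$-eigenvector $v_m^+\otimes v_n^+$, i.e.\ is a highest weight representation.
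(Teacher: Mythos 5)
Your proposal is correct and is essentially the proof the paper intends: the paper omits the argument for this lemma, deferring (as it does in types $B$, $C$, $D$) to the template of Lemma \ref{C3l317ss}, which is exactly the path-plus-factorization argument you run --- reduce to $v_m^-\otimes v_n^+\in\ygg(v_m^+\otimes v_n^+)$ via Corollary \ref{v-w+gvtw}, handle $i'\neq b_n$ trivially, and for $i'=b_n$ apply Proposition \ref{ctpihw}/Lemma \ref{beautiful} to the roots recorded after Proposition \ref{g2Lihw}. Your bookkeeping of the roots (including the initial root $a_m$ at $i=0$ and the division by $d_{i'}=3$ at node $1$) reproduces exactly the four sets $S(b_m,b_n)$ stated in the lemma, so there is no gap.
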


Similar to the proof of Theorem \ref{3mt2icl}, we can prove the following Theorem.
\begin{theorem}
Let $L=V_{a_1}(\omega_{b_1})\otimes V_{a_2}(\omega_{b_2})\otimes\ldots\otimes V_{a_k}(\omega_{b_k})$, and $S(b_i, b_j)$ be defined as in the above lemma.
\begin{enumerate}
  \item If $a_j-a_i\notin S(b_i, b_j)$ for $1\leq i<j\leq k$, then $L$ is a highest weight representation of $\ygg$.
  \item If $a_j-a_i\notin S(b_i, b_j)$ for $1\leq i\neq j\leq k$, then $L$ is an irreducible representation of $\ygg$.
\end{enumerate}
\end{theorem}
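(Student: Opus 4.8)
The plan is to treat the two parts separately: part (1) by essentially re-running the inductive argument of Proposition~\ref{g2Lihw}, and part (2) by the duality criterion of Proposition~\ref{VoWWoVhi}, exactly as Theorem~\ref{3mt2icl} was deduced from Proposition~\ref{Lhwr} in the type $A$ case.

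For part (1), I would argue by induction on $k$, keeping every notation from Proposition~\ref{g2Lihw}. The base case $k=1$ is trivial since each $V_{a_1}(\omega_{b_1})$ is irreducible. For the inductive step I assume $V_{a_2}(\omega_{b_2})\otimes\ldots\otimes V_{a_k}(\omega_{b_k})$ is highest weight, which is legitimate because the conditions $a_j-a_i\notin S(b_i,b_j)$ for $2\leq i<j\leq k$ are among the hypotheses, with highest weight vector $v^+=v_2^+\otimes\ldots\otimes v_k^+$. By Corollary~\ref{v-w+gvtw} it suffices to show $v_1^-\otimes v^+\in\ygg(v_1^+\otimes v^+)$, and Steps 1--4, 7 and 8 of Proposition~\ref{g2Lihw} carry over verbatim. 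The only step that uses the ordering hypothesis $\operatorname{Re}(a_1)\geq\ldots\geq\operatorname{Re}(a_k)$ is Step 6, where one needs $Y_{i'}(v_{\sigma_i(\omega_{b_1})})\otimes Y_{i'}(v_2^+)\otimes\ldots\otimes Y_{i'}(v_k^+)$ to be a highest weight $Y_{i'}$-module. Here I would replace the ordering argument by Lemma~\ref{beautiful}: the roots of the associated polynomial of $Y_{i'}(v_{\sigma_i(\omega_{b_1})})$ are recorded in the Remark following Proposition~\ref{g2Lihw}, those of the trailing factors $Y_{i'}(v_m^+)$ are $a_m/d_{i'}$, and the set $S(b_1,b_m)$ was computed in the preceding Lemma precisely so that $a_m-a_1\notin S(b_1,b_m)$ forces every relevant cross-difference of roots to avoid the value $1$ in the correct order. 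Lemma~\ref{beautiful}, together with Proposition~\ref{c1dgd2d} for the reverse inclusion, then yields the highest weight property.

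For part (2), I would invoke Proposition~\ref{VoWWoVhi}: $L$ is irreducible iff both $L$ and its left dual $\ ^tL$ are highest weight. Part (1) already gives $L$ highest weight, since $a_j-a_i\notin S(b_i,b_j)$ holds in particular for $i<j$. For $\ ^tL$, I would use $\ ^t(V\otimes W)=\ ^tW\otimes\ ^tV$ together with Lemma~\ref{dualfrc1}; because $w_0=-1$ for $G_2$ the diagram automorphism $-w_0$ is the identity, so $\ ^tV_{a}(\omega_b)=V_{a-\kappa}(\omega_b)$ and $\ ^tL=V_{a_k-\kappa}(\omega_{b_k})\otimes\ldots\otimes V_{a_1-\kappa}(\omega_{b_1})$. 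Relabelling the factors and applying part (1) to $\ ^tL$, the requirement becomes $a_p-a_q\notin S(b_q,b_p)$ for all $p<q$, which is exactly the $i>j$ half of the hypothesis $a_j-a_i\notin S(b_i,b_j)$ for all $i\neq j$. Hence both $L$ and $\ ^tL$ are highest weight, so $L$ is irreducible. This is the same mechanism by which Theorem~\ref{cp8c3tng2} was obtained in type $A$; the novelty is that $S(b_i,b_j)$ need not be symmetric for $G_2$ (indeed $S(1,2)\neq S(2,1)$), but the two-sided hypothesis is self-dual in precisely the way needed, so no symmetry of $S$ is required.

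The main obstacle I anticipate is the book-keeping underlying Step 6: verifying that the explicitly tabulated roots of $Y_{i'}(v_{\sigma_i(\omega_{b_1})})$, when paired against the roots $a_m/d_{i'}$ of the trailing factors, produce exactly the forbidden differences accounted for by $S(b_1,b_m)$. Because $G_2$ has unequal root lengths ($d_1=3$, $d_2=1$), the associated polynomials can have degree up to $3$ and the roots are rescaled by $d_{i'}$, so confirming that the two-factor sets $S(b_i,b_j)$ really capture every bad cross-difference arising in the multi-factor product --- and in the left-to-right order demanded by Lemma~\ref{beautiful} --- is the delicate point on which the whole argument rests.
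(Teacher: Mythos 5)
Your proposal is correct and takes essentially the same route as the paper: the paper proves this theorem simply by citing the argument of Theorem \ref{3mt2icl}, which is exactly your plan of re-running the induction of Proposition \ref{g2Lihw} with only Step 6 changed (the no-bad-cross-difference condition fed into Lemma \ref{beautiful}, with Proposition \ref{c1dgd2d} for the reverse inclusion), and of deducing irreducibility from Proposition \ref{VoWWoVhi} together with Lemma \ref{dualfrc1}. Your observation that $-w_0=\mathrm{id}$ for $G_2$, so that duals preserve the fundamental weights and the two-sided hypothesis is self-dual without any symmetry of $S(b_i,b_j)$, correctly supplies the one point where the $G_2$ case differs from the type $A$ computation $S(l+1-b_j,l+1-b_i)=S(b_i,b_j)$.
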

\begin{remark}
In Section 6.2 \cite{Ch3}, Chari gave the set $S(i_1, i_2)$, $i_1\leq i_2$ of values of $a_1^{-1}a_2$ for which the tensor product $V(i_1, a_1)\otimes V(i_2, a_2)$ may fail irreducibility. Note that we interchange the nodes of the Dynkin diagram of $G_2$ in this paper in the following discussion.
In the paper, V. Chari found that
\begin{align*}
\mathcal{S}(1,1)&=\{q^6, q^8, q^{10}, q^{12}\}.\\
\mathcal{S}(2,1)&=\{q^7, q^{11}\}.\\
\mathcal{S}(1,2)&=\{q^3, q^7\}.\\
\mathcal{S}(2,2)&=\{q^2, q^6, q^{8}, q^{12}\}.
\end{align*}
%To the author's best knowledge, this may not be used to find the cyclicity condition for the tensor product of fundamental representations of $\ygg$ when the number of the tensor factors is greater than 2.
\end{remark}
\section{Supplement proof of the step 5 of Proposition \ref{g2Lihw}}\label{g2spos5}
In this section we complete the proof of step 5 of Proposition \ref{g2Lihw}. We divide the proof into two cases: $b_1=1$ and $b_1=2$.
The following two formulas will be used many times in this section, which are from the defining relations of Yangians.
\begin{enumerate}
  \item $[h_{2,1},x_{1,0}^{-}]=\Big(3x_{1,1}^{-}+\frac{3}{2}\left(3x_{1,0}^{-}+2x_{1,0}^{-}h_{2,0}\right)\Big)$.
  \item $[\tilde{h}_{1,1}, x_{2,0}^{-}]=\left(\frac{1}{3}x_{2,1}^{-}+\frac{1}{2}x_{2,0}^{-}+x_{2,0}^{-}\tilde{h}_{1,0}\right)$.
\end{enumerate}

\subsection{Case 1: $b_1=1$.}

In this case, $$v_1^{-}=x_{1,0}^{-}\left(x_{2,0}^{-}\right)^3\left(x_{1,0}^{-}\right)^2\left(x_{2,0}^{-}\right)^3x_{1,0}^{-}v_1^{+}.$$
We summarize all the computations into the following proposition.
\begin{proposition}As modules of $\ysl$, the following are true.
\begin{enumerate}
  \item $Y_1\left(v_{1}^{+}\right)\cong W_1\left(\frac{a_1}{3}\right).$
  \item $Y_2\left(x_{1,0}^{-}v_1^{+}\right)$ is a quotient of $W_1\left(a_1+\frac{3}{2}\right)\otimes W_1\left(a_1+\frac{1}{2}\right)\otimes W_1\left(a_1-\frac{1}{2}\right)$.
  \item $Y_1\Big(\left(x_{2,0}^{-}\right)^3x_{1,0}^{-}v_1^{+}\Big)\cong W_1\left(\frac{a_1+2}{3}\right)\otimes W_1\left(\frac{a_1+1}{3}\right)$.
  \item $Y_2\left(\left(x_{1,0}^{-}\right)^2\left(x_{2,0}^{-}\right)^3x_{1,0}^{-}v_1^{+}\right)$ is a quotient of $W_1\left(a_1+\frac{7}{2}\right)\otimes W_1\left(a_1+\frac{5}{2}\right)\otimes W_1\left(a_1+\frac{3}{2}\right)$.
  \item $Y_1\left(\left(x_{2,0}^{-}\right)^3\left(x_{1,0}^{-}\right)^2\left(x_{2,0}^{-}\right)^3x_{1,0}^{-}v_1^{+}\right)\cong W_1\left(\frac{a_1}{3}+1\right)$.
\end{enumerate}
\end{proposition}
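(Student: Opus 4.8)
The plan is to march along the five stages of the lowering path $v_1^{-}=x_{1,0}^{-}(x_{2,0}^{-})^{3}(x_{1,0}^{-})^{2}(x_{2,0}^{-})^{3}x_{1,0}^{-}v_1^{+}$ and, at each stage, identify the rank-one module generated by the current weight vector. Each of these five vectors is $Y_{i'}$-maximal by Step 2 of Proposition~\ref{g2Lihw}, so the corresponding $Y_{i'}\cong\ysl$-module is highest weight with a monic associated polynomial $P(u)$ whose degree equals the eigenvalue of $\tilde h_{i',0}$; Proposition~\ref{g2fwfr} reads this degree off from the weight (degree $1$ for items (i),(v); degree $2$ for (iii); degree $3$ for (ii),(iv)). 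Thus every item reduces to computing $P(u)$ and invoking the $\ysl$-theory. For the base case (i) I would use Theorem~\ref{cfdihwr}: since $\pi_1(u)=u-a_1$ and $d_1=3$, one has $h_1(u)v_1^{+}=\frac{u-(a_1-3)}{u-a_1}=1+3u^{-1}+3a_1u^{-2}+3a_1^2u^{-3}+\cdots$, so $h_{1,r}v_1^{+}=3a_1^{r}v_1^{+}$; after the rescaling $\tilde h_{1,r}=3^{-(r+1)}h_{1,r}$ this becomes $\tilde h_{1,r}v_1^{+}=(a_1/3)^{r}v_1^{+}$, whence $P(u)=u-\tfrac{a_1}{3}$ and $Y_1(v_1^{+})\cong W_1(a_1/3)$ by Lemma~\ref{g2puisd3} and Proposition~\ref{wra}.

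The computations are inherently sequential, the eigenvalue data at one stage feeding the next. From (i) together with the $W_1$-relations of Corollary~\ref{slw1a} I obtain (after undoing the rescaling) the scalar relation $x_{1,1}^{-}v_1^{+}=a_1\,x_{1,0}^{-}v_1^{+}$, which is exactly what launches (ii). There, using $h_{2,r}v_1^{+}=0$ (as $\pi_2=1$) and the $G_2$-relation $[h_{2,1},x_{1,0}^{-}]=3x_{1,1}^{-}+\tfrac{3}{2}(3x_{1,0}^{-}+2x_{1,0}^{-}h_{2,0})$, a short commutator computation gives $h_{2,1}\,x_{1,0}^{-}v_1^{+}=3\bigl(a_1+\tfrac32\bigr)x_{1,0}^{-}v_1^{+}$, matching the first symmetric function of the string $\{a_1-\tfrac12,a_1+\tfrac12,a_1+\tfrac32\}$. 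To pin the cubic down completely I would compute $h_{2,2}$ and $h_{2,3}$ on $x_{1,0}^{-}v_1^{+}$ by the same mechanism, iterating $[h_{2,r},x_{1,0}^{-}]$ and feeding in $x_{1,1}^{-}v_1^{+}=a_1x_{1,0}^{-}v_1^{+}$, then verify that the predicted string product yields the matching expansion of $\tfrac{P(u+1)}{P(u)}$ and conclude with the uniqueness argument in the proof of Lemma~\ref{g2puisd3}. Items (iii)--(v) follow the same template, now propagating the $W_1$-, $W_2$- and tensor-product relations of Corollaries~\ref{slw1a}, \ref{w2ihw} and \ref{w1bw1a} through the short root via $[\tilde h_{1,1},x_{2,0}^{-}]=\tfrac13 x_{2,1}^{-}+\tfrac12 x_{2,0}^{-}+x_{2,0}^{-}\tilde h_{1,0}$, and converting through the $Y_1$-rescaling whenever $i'=1$.

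With the polynomials in hand the identifications are immediate. For the degree-one items (i) and (v) the module is forced to be the two-dimensional $W_1$ carrying the computed root, giving $Y_1(v_1^{+})\cong W_1(a_1/3)$ and $Y_1\bigl((x_{2,0}^{-})^{3}(x_{1,0}^{-})^{2}(x_{2,0}^{-})^{3}x_{1,0}^{-}v_1^{+}\bigr)\cong W_1(\tfrac{a_1}{3}+1)$. For (iii) the two roots $\tfrac{a_1+1}{3},\tfrac{a_1+2}{3}$ differ by $\tfrac13\neq\pm1$, so the local Weyl module $W(P)\cong W_1(\tfrac{a_1+2}{3})\otimes W_1(\tfrac{a_1+1}{3})$ (Theorem~\ref{wmfsl2}) is irreducible; being a nonzero highest weight module with associated polynomial $P$, the module $Y_1$ of that vector must be all of $W(P)$, yielding the asserted isomorphism. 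For the degree-three items (ii) and (iv) the roots form a length-three string of step $1$, so Theorem~\ref{wmfsl2} identifies the relevant local Weyl module with the ordered tensor products $W_1(a_1+\tfrac32)\otimes W_1(a_1+\tfrac12)\otimes W_1(a_1-\tfrac12)$ and $W_1(a_1+\tfrac72)\otimes W_1(a_1+\tfrac52)\otimes W_1(a_1+\tfrac32)$ respectively, and the universal property of that local Weyl module supplies the surjection, i.e. the ``quotient of'' statements.

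The main obstacle is items (ii) and (iv): these are the only degree-three rank-one reductions in the thesis, so in contrast to the $\yso,\ysp,\yo$ chapters (where every reduction had degree at most two and Lemma~\ref{g2puisd3} plus a single second-order eigenvalue sufficed), here I must reach $h_{2,2}$ and $h_{2,3}$ through nested commutators, carefully collecting the lower-order terms the $G_2$-relations generate, and check that the three resulting symmetric functions force the roots to be exactly the predicted consecutive string. A secondary, pervasive hazard is the $d_1=3$ rescaling between $x_{1,r}^{\pm}$ and $\tilde x_{1,r}^{\pm}$: each relation imported from or exported to a $Y_1$-stage must be re-normalized, and it is easy to misplace a power of $3$. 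Establishing the inequality $\operatorname{Re}(a)\geq\operatorname{Re}(a_1)-\tfrac12$ visible in the roots, which is what Step 6 of Proposition~\ref{g2Lihw} ultimately needs, will then be a direct reading of the computed strings.
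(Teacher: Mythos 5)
Your proposal is correct and follows essentially the same route as the paper's proof: march along the lowering path, get $Y_{i'}$-maximality from Step 2 of Proposition~\ref{g2Lihw} and the degree of $P(u)$ from the weights in Proposition~\ref{g2fwfr}, compute enough $h_{i',k}$-eigenvalues to pin down the Drinfeld polynomial (the full geometric series for (ii), the first few coefficients for (iii)--(v)), and finish with Lemma~\ref{g2puisd3} and the $\ysl$ local Weyl module theory of Theorem~\ref{wmfsl2} (irreducibility of the tensor product for (iii) and the degree-one cases for (i), (v); the universal property for the ``quotient of'' statements in (ii), (iv)). The only substantive difference is computational bookkeeping: for items (iii)--(v) the paper switches to the logarithmic generators $H_{i,k}$ coming from $\ln\big(h_i(u)\big)$ (Lemma~\ref{c7lecor15}), whose commutators $[H_{i,k},x_{j,l}^{-}]$ are nearly diagonal and make passing through $\left(x_{2,0}^{-}\right)^3$ and $\left(x_{1,0}^{-}\right)^2$ manageable, whereas your plan of iterating the ordinary $[h_{i,k},x_{j,0}^{-}]$ relations is sound in principle but considerably more laborious.
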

\begin{proof}
We omit the proof of the item (i) since it is similar to the proof of Lemma \ref{spc3c1}. The second item is proved in Lemma \ref{c7c1l1}. Lemma \ref{c7c1l2} is devoted to prove the third item. The fourth is proved in Lemma \ref{c7c1l3}. The item (v) will be showed in Lemma \ref{c7c1l4}.
\end{proof}

\begin{remark}\label{c7c1r1}In $Y_1\left(v_{1}^{+}\right)$,
\begin{enumerate}
  \item $x_{1,k}^{-}v_{1}^{+}=a_1^kx_{1,0}^{-} v_{1}^{+}$;
  \item $h_{1,k}x_{1,0}^{-}v_{1}^{+}=-3a_1^k x_{1,0}^{-}v_{1}^{+}$.
\end{enumerate}
\end{remark}
\begin{lemma}\label{c7c1l1}
$Y_2\left(x_{1,0}^{-}v_1^{+}\right)$ is a quotient of $W_1\left(a_1+\frac{3}{2}\right)\otimes W_1\left(a_1+\frac{1}{2}\right)\otimes W_1\left(a_1-\frac{1}{2}\right)$.
\end{lemma}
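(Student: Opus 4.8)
The plan is to identify $Y_2\left(x_{1,0}^{-}v_1^{+}\right)$ as a highest weight $\ysl$-module and to pin down its Drinfeld polynomial $P(u)$; once $P(u)=\big(u-(a_1-\tfrac12)\big)\big(u-(a_1+\tfrac12)\big)\big(u-(a_1+\tfrac32)\big)$ is established, Theorem \ref{wmfsl2} identifies the local Weyl module $W(P)$ with $W_1\left(a_1+\tfrac32\right)\otimes W_1\left(a_1+\tfrac12\right)\otimes W_1\left(a_1-\tfrac12\right)$, and the universal property of the local Weyl module forces $Y_2\left(x_{1,0}^{-}v_1^{+}\right)$ to be a quotient of it. First I would record the setup. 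Since $\pi_2=1$ we have $h_2(u)v_1^{+}=v_1^{+}$, so $h_{2,k}v_1^{+}=0$ for all $k$; combined with $[h_{2,0},x_{1,0}^{-}]=-d_2a_{21}x_{1,0}^{-}=3x_{1,0}^{-}$ this gives $h_{2,0}x_{1,0}^{-}v_1^{+}=3x_{1,0}^{-}v_1^{+}$, so $P(u)$ is monic of degree $3$. Moreover $x_{1,0}^{-}v_1^{+}=v_{\sigma_1(\omega_1)}$ with $\sigma_1=s_1$ and $\sigma_2=s_2\sigma_1$, so by the maximality argument of Step~2 of Proposition \ref{g2Lihw} (applied with $i'=2$) the vector $x_{1,0}^{-}v_1^{+}$ is a maximal vector for $Y_2\cong\ysl$; being also a weight vector it is a highest weight vector, and $Y_2\left(x_{1,0}^{-}v_1^{+}\right)$ is a highest weight $\ysl$-module with Drinfeld polynomial $P$.

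The heart of the argument is a single eigenvalue recursion. Write $h_{2,k}x_{1,0}^{-}v_1^{+}=c_k\,x_{1,0}^{-}v_1^{+}$, which is legitimate because $x_{1,0}^{-}v_1^{+}$ is a weight vector. Using $d_2=1$ and $a_{21}=-3$, the defining relation gives $[h_{2,k+1},x_{1,0}^{-}]-[h_{2,k},x_{1,1}^{-}]=\tfrac32\big(h_{2,k}x_{1,0}^{-}+x_{1,0}^{-}h_{2,k}\big)$. Evaluating on $v_1^{+}$, the term $x_{1,0}^{-}h_{2,k}v_1^{+}$ vanishes, while $[h_{2,k},x_{1,1}^{-}]v_1^{+}=h_{2,k}x_{1,1}^{-}v_1^{+}=a_1\,h_{2,k}x_{1,0}^{-}v_1^{+}=a_1c_k\,x_{1,0}^{-}v_1^{+}$ by Remark \ref{c7c1r1}, so that
\begin{equation*}
c_{k+1}=a_1c_k+\tfrac32 c_k=\left(a_1+\tfrac32\right)c_k .
\end{equation*}
With $c_0=3$ this closes to $c_k=3\left(a_1+\tfrac32\right)^{k}$ for every $k\ge 0$, hence $h_2(u)x_{1,0}^{-}v_1^{+}=\big(1+3u^{-1}+3(a_1+\tfrac32)u^{-2}+3(a_1+\tfrac32)^2u^{-3}+\cdots\big)x_{1,0}^{-}v_1^{+}$. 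This is exactly the geometric pattern of Lemma \ref{g2puisd3} with $d=3$ and $a=a_1+\tfrac32$, and that lemma yields $P(u)=\big(u-(a_1-\tfrac12)\big)\big(u-(a_1+\tfrac12)\big)\big(u-(a_1+\tfrac32)\big)$.

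Finally, since $\operatorname{Re}(a_1+\tfrac32)\ge\operatorname{Re}(a_1+\tfrac12)\ge\operatorname{Re}(a_1-\tfrac12)$, Theorem \ref{wmfsl2} gives $W(P)\cong W_1\left(a_1+\tfrac32\right)\otimes W_1\left(a_1+\tfrac12\right)\otimes W_1\left(a_1-\tfrac12\right)$; as $Y_2\left(x_{1,0}^{-}v_1^{+}\right)$ is a highest weight $\ysl$-module with Drinfeld polynomial $P(u)$, the maximality of the local Weyl module makes it a quotient of $W(P)$, which is the assertion. I expect the only delicate point to be the bookkeeping in the recursion: one must check that the $G_2$ structure constants $d_2=1$, $a_{21}=-3$ produce the coefficient $\tfrac32$, and, crucially, that the stray commutator $[h_{2,k},x_{1,1}^{-}]$ collapses back onto $\C\,x_{1,0}^{-}v_1^{+}$ through $x_{1,1}^{-}v_1^{+}=a_1x_{1,0}^{-}v_1^{+}$ rather than escaping into higher weight vectors. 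It is exactly this collapse that makes the recursion close with constant ratio $a_1+\tfrac32$, so that only the facts $h_{2,k}v_1^{+}=0$ and $x_{1,1}^{-}v_1^{+}=a_1x_{1,0}^{-}v_1^{+}$ are needed, and no computation of $h_{2,k}$ beyond the recursion step is required; everything else is a direct appeal to the cited results.
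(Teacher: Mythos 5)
Your proof is correct and takes essentially the same route as the paper's: both establish that $x_{1,0}^{-}v_1^{+}$ generates a highest weight $Y_2$-module whose $h_{2,k}$-eigenvalues follow the geometric pattern $3\left(a_1+\tfrac{3}{2}\right)^k$, then read off the Drinfeld polynomial $\big(u-(a_1-\tfrac12)\big)\big(u-(a_1+\tfrac12)\big)\big(u-(a_1+\tfrac32)\big)$ and conclude by the universal property of the local Weyl module of $\ysl$. Your uniform recursion $c_{k+1}=\left(a_1+\tfrac{3}{2}\right)c_k$ is precisely the induction that the paper asserts ("In general, by induction on $k$") after computing the $h_{2,1}$ and $h_{2,2}$ cases explicitly, so the only difference is that you make the inductive step explicit.
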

\begin{proof}
It follows from Proposition \ref{g2fwfr} that $wt(x_{1,0}^{-}v_1^{+})=-\omega_1+3\omega_2$, then $$h_{2,0}x_{1,0}^{-}v_1^{+}=3x_{1,0}^{-}v_1^{+}.$$
\begin{align*}
% \nonumber to remove numbering \left(before each equation\right)
h_{2,1}x_{1,0}^{-}v_1^{+} &= [h_{2,1},x_{1,0}^{-}]v_1^{+} \\
  &= \left(3x_{1,1}^{-}+\frac{3}{2}\left(3x_{1,0}^{-}+2x_{1,0}^{-}h_{2,0}\right)\right)v_1^{+}\\
  &= \left(3x_{1,1}^{-}+\frac{9}{2}x_{1,0}^{-}\right)v_1^{+}\\
  &= 3\left(a_1+\frac{3}{2}\right)x_{1,0}^{-}v_1^{+}.
\end{align*}
\begin{align*}
% \nonumber to remove numbering \left(before each equation\right)
h_{2,2}x_{1,0}^{-}v_1^{+} &= [h_{2,2},x_{1,0}^{-}]v_1^{+} \\
  &= \left([h_{2,1},x_{1,1}^{-}]+\frac{3}{2}\left(h_{2,1}x_{1,0}^{-}+x_{1,0}^{-}h_{2,1}\right)\right)v_1^{+}\\
    &= \left(h_{2,1}x_{1,1}^{-}+\frac{3}{2}h_{2,1}x_{1,0}^{-}\right)v_1^{+}\\
&=h_{2,1}\left(3x_{1,1}^{-}+\frac{3}{2}x_{1,0}^{-}\right)v_1^{+}\\
&= \left(a_1+\frac{3}{2}\right)h_{2,1}x_{1,0}^{-}v_1^{+}\\
&=3\left(a_1+\frac{3}{2}\right)^2x_{1,0}^{-}v_1^{+}.
\end{align*}
In general, by induction on $k$, we have $$h_{2,k}x_{1,0}^{-}v_1^{+}=3\left(a_1+\frac{3}{2}\right)^kx_{1,0}^{-}v_1^{+}.$$ It is not hard to see that the associated polynomial is $$P\left(u\right)=\Bigg(u-\left(a_1+\frac{3}{2}\right)\Bigg)\Bigg(u-\left(a_1+\frac{1}{2}\right)\Bigg)\Bigg(u-\left(a_1-\frac{1}{2}\right)\Bigg).$$ By the representation theory of the local Weyl modules of $\ysl$, $Y_2\left(x_{1,0}^{-}v_1^{+}\right)$ is a quotient of $W_1\left(a+\frac{3}{2}\right)\otimes W_1\left(a+\frac{1}{2}\right)\otimes W_1\left(a-\frac{1}{2}\right)$.
\end{proof}
\begin{corollary}In $W_1\left(a+\frac{3}{2}\right)\otimes W_1\left(a+\frac{1}{2}\right)\otimes W_1\left(a-\frac{1}{2}\right)$,
\begin{enumerate}
  \item $x_{2,1}^{-}\left(x_{2,0}^{-}\right)^2x_{1,0}^{-}v_1^{+}=\left(a_1-\frac{1}{2}\right)\left(x_{2,0}^{-}\right)^3x_{1,0}^{-}v_1^{+}$.
  \item $x_{2,0}^{-}x_{2,1}^{-}x_{2,0}^{-}x_{1,0}^{-}v_1^{+}=\left(a_1+\frac{1}{2}\right)\left(x_{2,0}^{-}\right)^3x_{1,0}^{-}v_1^{+}$.
  \item $\left(x_{2,0}^{-}\right)^2 x_{2,1}^{-}x_{1,0}^{-}v_1^{+}=\left(a_1+\frac{3}{2}\right)\left(x_{2,0}^{-}\right)^3x_{1,0}^{-}v_1^{+}$.
  \item $x_{2,2}^{-}\left(x_{2,0}^{-}\right)^2x_{1,0}^{-}v_1^{+}=\left(a_1-\frac{1}{2}\right)^2\left(x_{2,0}^{-}\right)^3x_{1,0}^{-}v_1^{+}$.
  \item $x_{2,0}^{-}x_{2,2}^{-}x_{2,0}^{-}x_{1,0}^{-}v_1^{+}=\left(a_1+\frac{1}{2}\right)^2\left(x_{2,0}^{-}\right)^3x_{1,0}^{-}v_1^{+}$.
  \item $\left(x_{2,0}^{-}\right)^2 x_{2,2}^{-}x_{1,0}^{-}v_1^{+}=\left(a_1+\frac{3}{2}\right)^2\left(x_{2,0}^{-}\right)^3x_{1,0}^{-}v_1^{+}$.
\end{enumerate}
\end{corollary}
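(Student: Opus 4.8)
The plan is to carry out all six computations inside the tensor product $W=W_1(a_1+\tfrac32)\otimes W_1(a_1+\tfrac12)\otimes W_1(a_1-\tfrac12)$, using Lemma \ref{c7c1l1} to identify $x_{1,0}^{-}v_1^{+}$ with the highest weight vector $w^{+}=w_1^{+}\otimes w_2^{+}\otimes w_3^{+}$, where $w_m^{\pm}$ denote the highest and lowest weight vectors of the $m$-th factor and $c_1=a_1+\tfrac32,\ c_2=a_1+\tfrac12,\ c_3=a_1-\tfrac12$. First I would observe that every left-hand side has $h_{2,0}$-weight $-3$: each of $x_{2,0}^{-},x_{2,1}^{-},x_{2,2}^{-}$ lowers the $h_{2,0}$-eigenvalue by $2$, while $w^{+}$ has weight $3$. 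Since the weight $-3$ space of $W$ is one-dimensional, spanned by $(x_{2,0}^{-})^{3}w^{+}=6\,w_1^{-}\otimes w_2^{-}\otimes w_3^{-}$, each identity reduces to pinning down a single scalar.

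Next I would treat items (iii) and (vi) as the base cases, since there $x_{2,1}^{-}$ (resp. $x_{2,2}^{-}$) is applied directly to $w^{+}$. Because $w^{+}$ is annihilated by $\overline{Y}$, the coproduct formulas of Proposition \ref{Delta}, iterated to three factors, hold exactly on $w^{+}$; together with the explicit actions $x_{k}^{-}w_m^{+}=c_m^{k}w_m^{-}$ and $h_{k}w_m^{+}=c_m^{k}w_m^{+}$ from Proposition \ref{wra} and Corollary \ref{slw1a}, this expresses $x_{2,1}^{-}w^{+}$ and $x_{2,2}^{-}w^{+}$ as explicit combinations of the three weight-$1$ basis vectors. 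Since $x_{2,0}^{-}$ is primitive (the $k=0$ case of Proposition \ref{Delta}), $(x_{2,0}^{-})^{2}$ acts exactly on any vector and sends each singly-lowered basis vector to $2\,w_1^{-}\otimes w_2^{-}\otimes w_3^{-}$; collecting the resulting coefficient against $(x_{2,0}^{-})^{3}w^{+}$ then yields the scalars $a_1+\tfrac32$ and $(a_1+\tfrac32)^2$, which are exactly (iii) and (vi).

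Finally, for items (i), (ii), (iv), (v) the lowering operator acts on an intermediate, non-highest-weight vector, so the coproduct reduction is no longer legitimate. Here I would instead move $x_{2,1}^{-}$ and $x_{2,2}^{-}$ to the right past the copies of $x_{2,0}^{-}$ using the $\ysl$-relations $[x_{2,1}^{-},x_{2,0}^{-}]=-(x_{2,0}^{-})^{2}$ and $[x_{2,2}^{-},x_{2,0}^{-}]=-(x_{2,1}^{-}x_{2,0}^{-}+x_{2,0}^{-}x_{2,1}^{-})$, both immediate from the defining relations of the Yangian. This rewrites each left-hand side in terms of the base quantities $(x_{2,0}^{-})^{2}x_{2,1}^{-}w^{+}$ and $(x_{2,0}^{-})^{2}x_{2,2}^{-}w^{+}$ plus multiples of $(x_{2,0}^{-})^{3}w^{+}$; for example $x_{2,1}^{-}(x_{2,0}^{-})^{2}w^{+}=(x_{2,0}^{-})^{2}x_{2,1}^{-}w^{+}-2(x_{2,0}^{-})^{3}w^{+}=(a_1-\tfrac12)(x_{2,0}^{-})^{3}w^{+}$, giving (i), and similarly (ii), (iv), (v).

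The main obstacle I anticipate is precisely this point: the coproduct of Proposition \ref{Delta} is valid only modulo $\overline{Y}$, which is harmless on $w^{+}$ but genuinely fails on the intermediate vectors appearing in (i), (ii), (iv), (v). The device that circumvents it is the commutation reduction above, which always transports the degree-one and degree-two lowering operators back onto the highest weight vector before any coproduct is invoked. The rest is the routine but slightly lengthy bookkeeping of the iterated coproduct $\Delta^{(2)}$ and of the $h_0,h_1$ eigenvalue contributions in the two base computations.
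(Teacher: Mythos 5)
Your proof is correct and fills in the computation exactly as the paper intends: the corollary is stated there without proof as a routine consequence of Propositions \ref{Delta} and \ref{wra} (just as its two-factor analogue, Corollary \ref{w1bw1a}, is), and your weight-space reduction plus iterated-coproduct evaluation on $w^{+}$ reproduces the stated scalars. In particular, your observation that the truncated coproduct is only exact on tensor products of highest weight vectors, and your use of the exact algebra relations $[x_{2,1}^{-},x_{2,0}^{-}]=-\left(x_{2,0}^{-}\right)^{2}$ and $[x_{2,2}^{-},x_{2,0}^{-}]=-\left(x_{2,1}^{-}x_{2,0}^{-}+x_{2,0}^{-}x_{2,1}^{-}\right)$ to transport the degree-one and degree-two operators back onto $w^{+}$ in items (i), (ii), (iv), (v), is precisely the device the paper's computations rely on.
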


In the following two lemmas, the computations is tedious if we use $\tilde{h}_{i,k}$. We introduce a new series of generators of $H\subseteq \ygg$. Let $$H_{i}(u)=\sum\limits_{k=0}^{\infty} H_{i,k}u^{-k+1}:=ln\big(h_i(u)\big).$$
An explicit computation shows that
\begin{align*}
H_{i}\left(t\right)&=\left(h_{i,0}t^{-1}+h_{i,1}t^{-2}+\cdots\right)-\frac{\left(h_{i,0}t^{-1}+h_{i,1}t^{-2}+\cdots\right)^2}{2}\\
&\qquad +
\frac{\left(h_{i,0}t^{-1}+h_{i,1}t^{-2}+\cdots\right)^3}{3}-\cdots\\
&=\left(h_{i,0}\right)t^{-1}+\left(h_{i,1}-\frac{1}{2}\left(h_{i,0}\right)^2\right)t^{-2}+\left(h_{i,2}-h_{i,0}h_{i,1}+\frac{1}{3}\left(h_{i,0}\right)^3\right)t^{-3}\\
&\qquad +\left(h_{i,3}-h_{i,0}h_{i,2}-\frac{1}{2}\left(h_{i,1}\right)^2+
\left(h_{i,0}\right)^2h_{i,1}-\frac{1}{4}\left(h_{i,0}\right)^4\right)t^{-4}-\ldots
\end{align*}
\begin{lemma}[Corollary 1.5, \cite{Le}]\label{c7lecor15}
\begin{align*}
[H_{i,k}, x_{j,l}^{\pm}]=&\mp 3x_{j,l+k}^{\pm}\\
&\pm\sum_{\substack{0\leq s\leq k-2\\ k+s\ \text{even}}}2^{s-k}(-3)^{k+1-s}\frac{{ k+1\choose s}}{k+1}x_{j,l+s}^{\pm}.
\end{align*}
\end{lemma}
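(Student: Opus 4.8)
The plan is to prove this by translating the Drinfeld relations between the $h_i$ and the $x_j^\pm$ into a single generating-function identity, and then exploiting the fact that the logarithmic generators $H_{i,k}$ are designed precisely to linearize that identity. Throughout I take $i\neq j$, so that in $\ygg$ with $\g$ of type $G_2$ one has $d_ia_{ij}=-3$; writing $b=\tfrac12 d_ia_{ij}=-\tfrac32$ accounts for the coefficients $\mp 3$ and $-3$ in the statement. (Note that the two relations listed just before the statement are exactly the $(i,j)=(2,1)$ and $(1,2)$ instances, confirming $i\neq j$.)

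First I would assemble the defining relations $[h_{i,0},x_{j,s}^\pm]=\pm d_ia_{ij}x_{j,s}^\pm$ and $[h_{i,r+1},x_{j,s}^\pm]-[h_{i,r},x_{j,s+1}^\pm]=\pm b(h_{i,r}x_{j,s}^\pm+x_{j,s}^\pm h_{i,r})$ into generating-function form for $h_i(u)=1+\sum_r h_{i,r}u^{-r-1}$ and $x_j^\pm(v)=\sum_s x_{j,s}^\pm v^{-s-1}$. Summing the recursion against $u^{-r-1}v^{-s-1}$ gives $(u-v)[h_i(u),x_j^\pm(v)]=\pm b\{h_i(u),x_j^\pm(v)\}$ up to a $v$-independent defect coming from the index-zero boundary; on the bulk (coefficients of $v^{-l-1}$ with $l\ge 0$) this says that conjugation by $h_i(u)$ multiplies $x_j^\pm(v)$ by the scalar rational function $\frac{u-v\pm b}{u-v\mp b}$.

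The decisive step, and the reason the $H_{i,k}$ are introduced, is to pass to $H_i(u)=\ln h_i(u)$: since $\operatorname{ad}(H_i(u))=\ln\!\big(\operatorname{Ad}(h_i(u))\big)$ and $\operatorname{Ad}(h_i(u))$ acts on the bulk as multiplication by $\frac{u-v\pm b}{u-v\mp b}$, the commutator $[H_i(u),x_j^\pm(v)]$ equals $\ln\frac{u-v\pm b}{u-v\mp b}\cdot x_j^\pm(v)$ on the bulk. Expanding $\ln\frac{u-v\pm b}{u-v\mp b}=\pm 2\sum_{m\ge0}\frac{1}{2m+1}\big(\frac{b}{u-v}\big)^{2m+1}$, the appearance of only odd powers of $(u-v)^{-1}$ is exactly what forces the parity condition $k+s$ even, with $s=k-2m$. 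Re-expanding $(u-v)^{-(2m+1)}$ for $|u|>|v|$ produces the factor $\binom{k}{2m}=\binom{k}{s}$, so the coefficient of $x_{j,l+s}^\pm$ in $[H_{i,k},x_{j,l}^\pm]$ comes out as $\pm 2\frac{b^{2m+1}}{2m+1}\binom{k}{s}$. Substituting $b=-\tfrac32$ and using the elementary identity $\frac{\binom{k}{s}}{k-s+1}=\frac{\binom{k+1}{s}}{k+1}$ rewrites this as $\pm 2^{s-k}(-3)^{k+1-s}\frac{\binom{k+1}{s}}{k+1}$, with the $m=0$ term (i.e. $s=k$) giving the separated leading term $\mp 3\,x_{j,l+k}^\pm$.

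The main obstacle I anticipate is making the ``logarithm linearizes conjugation'' step rigorous while controlling the boundary defect: because the Drinfeld relations hold only for nonnegative indices, the generating-function identity carries a $v$-polynomial correction, and one must check that it does not contaminate the bulk coefficients of $u^{-k-1}v^{-l-1}$ with $l\ge0$. A safe alternative that sidesteps this analytic bookkeeping is a direct induction on $k$: express $H_{i,k}$ through its explicit polynomial formula in the $h_{i,m}$ displayed before the statement, compute $[h_{i,m},x_{j,l}^\pm]$ by an inner induction from the recursion, and verify that all anticommutator corrections $\{h_{i,\cdot},x_{j,\cdot}^\pm\}$ cancel telescopically, leaving precisely the stated pure combination of $x_{j,l+s}^\pm$; this cancellation is readily checked by hand for $k=0,1,2$. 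Since the result is quoted as Corollary 1.5 of \cite{Le}, I would ultimately either cite it directly or present whichever of these two routes reads most cleanly.
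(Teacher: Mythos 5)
The paper offers no internal proof of this lemma at all --- it is imported verbatim as Corollary 1.5 of \cite{Le} --- so there is no argument of the paper's to compare against; the only question is whether your derivation is sound, and it is. Your generating-function route is the standard one behind such formulas: summing the Drinfeld relations gives $(u-v)\,[h_i(u),x_j^{\pm}(v)]=\pm b\,\{h_i(u),x_j^{\pm}(v)\}-[h_i(u),x_{j,0}^{\pm}]$ with $b=\tfrac12 d_ia_{ij}=-\tfrac32$, and the one gap you flag (controlling the boundary defect) closes exactly as you hope: the defect is $v$-independent, and both multiplication by the scalar $f=\frac{u-v\pm b}{u-v\mp b}$ (expanded for $|u|>|v|$) and conjugation by $h_i(u)$ send series with only nonnegative powers of $v$ to series with only nonnegative powers of $v$, so by induction $\mathrm{Ad}(h_i(u))^n\,x_j^{\pm}(v)=f^n\,x_j^{\pm}(v)+(\text{nonnegative powers of }v)$, and the coefficientwise-convergent series $\ln\mathrm{Ad}(h_i(u))=\mathrm{ad}\,H_i(u)$ then acts on the bulk as multiplication by $\ln f$. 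The combinatorics also check out: the $m=0$ (i.e. $s=k$) term of $\ln f=\pm2\sum_{m\ge0}\frac{1}{2m+1}\bigl(\frac{b}{u-v}\bigr)^{2m+1}$ gives $\mp3\,x_{j,l+k}^{\pm}$, and for $k+s$ even with $s\le k-2$ the coefficient $\pm\,2\,b^{k-s+1}\binom{k}{s}/(k-s+1)$ equals $\pm\,2^{s-k}(-3)^{k+1-s}\binom{k+1}{s}/(k+1)$ by the identity $\binom{k}{s}/(k-s+1)=\binom{k+1}{s}/(k+1)$, matching the statement. You are also right to make explicit the hypothesis $i\neq j$, which the statement leaves implicit: for $i=j$ the leading coefficient would be $d_ia_{ii}\in\{6,2\}$ rather than $-3$, and indeed the thesis only ever invokes the lemma with $i\neq j$ (Corollary \ref{c7c1cor1}). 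Either of your two routes, or simply citing \cite{Le} as the thesis does, would be acceptable.
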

The next corollary follows from Lemma \ref{c7lecor15} immediately.
\begin{corollary}\label{c7c1cor1}\
\begin{enumerate}
  \item $[H_{1,0},x_{2,0}^{-}]=3x_{2,0}^{-}$.
  \item $[H_{1,1},x_{2,0}^{-}]=3x_{2,1}^{-}$.
  \item $[H_{1,2},x_{2,0}^{-}]=3x_{2,2}^{-}+\frac{9}{4}x_{2,0}^{-}$.
  \item $[H_{2,0},x_{1,0}^{-}]=3x_{1,0}^{-}$.
  \item $[H_{2,1},x_{1,0}^{-}]=3x_{1,1}^{-}$.
  \item $[H_{2,2},x_{1,0}^{-}]=3x_{1,2}^{-}+\frac{9}{4}x_{1,0}^{-}$.
  \item $[H_{2,3},x_{1,0}^{-}]=3x_{1,3}^{-}+\frac{27}{4}x_{1,1}^{-}$.
\end{enumerate}
\end{corollary}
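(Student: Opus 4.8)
The plan is to obtain each of the seven identities as a direct specialization of the general commutation formula in Lemma \ref{c7lecor15}, which already expresses $[H_{i,k},x_{j,l}^{\pm}]$ as a finite linear combination of the root vectors $x_{j,l+s}^{\pm}$. Since the statement concerns only the lower-sign ($x^{-}$) case, I would first fix the sign conventions: for $x^{-}$ the symbol $\mp$ becomes $+$ and $\pm$ becomes $-$, so that the formula reads
\begin{equation*}
[H_{i,k},x_{j,l}^{-}]=3\,x_{j,l+k}^{-}-\sum_{\substack{0\leq s\leq k-2\\ k+s\ \text{even}}}2^{s-k}(-3)^{k+1-s}\frac{\binom{k+1}{s}}{k+1}\,x_{j,l+s}^{-}.
\end{equation*}
I would then note that in type $G_2$ one has $d_1a_{12}=d_2a_{21}=-3$, which is precisely what produces the coefficient $3$ on the leading term $x_{j,l+k}^{-}$ for both admissible pairs $(i,j)=(1,2)$ and $(i,j)=(2,1)$; this explains why the same constant appears uniformly in every item.

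Next I would organize the computation by the value of $k$. When $k=0$ or $k=1$ (items (i), (ii), (iv), (v)) the summation range $0\leq s\leq k-2$ is empty, so only the leading term survives and the identities $[H_{i,k},x_{j,0}^{-}]=3\,x_{j,k}^{-}$ are immediate. When $k=2$ (items (iii), (vi)) the parity constraint that $k+s$ be even forces $s=0$, giving a single correction term with coefficient $2^{-2}(-3)^{3}\binom{3}{0}/3=-\tfrac{9}{4}$; after the overall minus sign this contributes $+\tfrac{9}{4}x_{j,0}^{-}$, yielding $[H_{i,2},x_{j,0}^{-}]=3\,x_{j,2}^{-}+\tfrac{9}{4}x_{j,0}^{-}$. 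Finally, for $k=3$ (item (vii)) the parity constraint forces $s=1$, and the coefficient $2^{-2}(-3)^{3}\binom{4}{1}/4=-\tfrac{27}{4}$ produces, after the minus sign, the term $+\tfrac{27}{4}x_{1,1}^{-}$, so that $[H_{2,3},x_{1,0}^{-}]=3\,x_{1,3}^{-}+\tfrac{27}{4}x_{1,1}^{-}$.

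Since every step reduces to a finite arithmetic evaluation of binomial coefficients and powers of $2$ and $-3$, there is no substantive obstacle here, and the corollary genuinely does follow immediately from the lemma. The only place demanding care is the bookkeeping of the two sign conventions ($\mp$ versus $\pm$) in passing from Lemma \ref{c7lecor15} to the $x^{-}$ specialization, since a sign slip there would flip the sign of the correction terms in items (iii), (vi) and (vii). I would guard against this by re-checking in each case that the parity condition $k+s$ even selects the correct single value of $s$, and by confirming that the residual coefficients $\tfrac{9}{4}$ and $\tfrac{27}{4}$ emerge with a plus sign after the external minus is applied.
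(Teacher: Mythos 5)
Your proposal is correct and takes essentially the same route as the paper, which simply observes that the corollary follows immediately from Lemma \ref{c7lecor15}; you have carried out exactly that specialization, with the right sign convention for the $x^{-}$ case and the right arithmetic (empty sum for $k=0,1$; $s=0$ with coefficient $-\tfrac{9}{4}$ for $k=2$; $s=1$ with coefficient $-\tfrac{27}{4}$ for $k=3$, each flipping to a plus after the external minus). Nothing is missing.
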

\begin{lemma}\label{c7c1l2}
$Y_1\Big(\left(x_{2,0}^{-}\right)^3x_{1,0}^{-}v_1^{+}\Big)\cong W_1\left(\frac{a_1+2}{3}\right)\otimes W_1\left(\frac{a_1+1}{3}\right)$.
\end{lemma}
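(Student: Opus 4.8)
The plan is to identify the vector $w := \left(x_{2,0}^{-}\right)^3 x_{1,0}^{-} v_1^{+}$ as a $Y_1$-highest weight vector and then pin down the degree-two Drinfeld polynomial of the resulting $\ysl$-module. First I would record the weight of $w$: tracing the path of Proposition \ref{g2fwfr} two steps, $w$ has weight $\sigma_2(\omega_1) = 2\omega_1 - 3\omega_2$, so that $\tilde h_{1,0} w = \langle 2\omega_1 - 3\omega_2, \alpha_1\rangle\, w = 2w$. By the argument of Step 2 of Proposition \ref{g2Lihw} (applied with $i' = 1$), $w$ is a maximal vector, hence $Y_1(w)$ is a highest weight $\ysl$-module whose associated polynomial $P(u)$ has degree equal to this eigenvalue, namely $2$. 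Write $P(u) = (u-a)(u-b)$ with $\operatorname{Re}(a) \le \operatorname{Re}(b)$.

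To determine $a$ and $b$ I would compute the eigenvalues of the logarithmic generators $H_{1,0}, H_{1,1}, H_{1,2}$ on $w$ rather than those of $\tilde h_{1,k}$; the point of passing to the $H$-generators is that Corollary \ref{c7c1cor1} gives clean commutators $[H_{1,0},x_{2,0}^{-}] = 3x_{2,0}^{-}$, $[H_{1,1},x_{2,0}^{-}] = 3x_{2,1}^{-}$ and $[H_{1,2},x_{2,0}^{-}] = 3x_{2,2}^{-} + \tfrac{9}{4}x_{2,0}^{-}$, so that $\operatorname{ad}(H_{1,k})$ sweeps through the three factors of $x_{2,0}^{-}$ as a derivation without the messy cross terms produced by $\tilde h_{1,k}$. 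Applying $\operatorname{ad}(H_{1,k})$ to $\left(x_{2,0}^{-}\right)^3$, then using the action of $x_{2,1}^{-}$ and $x_{2,2}^{-}$ on the vectors $\left(x_{2,0}^{-}\right)^{j} x_{1,0}^{-} v_1^{+}$ supplied by the corollary following Lemma \ref{c7c1l1}, together with the action of $H_{1,k}$ on $x_{1,0}^{-} v_1^{+}$ coming from Remark \ref{c7c1r1} and Lemma \ref{c7c1l1}, yields three scalars $\lambda_0, \lambda_1, \lambda_2$. Matching $\lambda_1, \lambda_2$ against the Taylor coefficients of $\ln \frac{P(u+1)}{P(u)}$ (equivalently, reading off the first two power sums of the Drinfeld roots, after accounting for the rescaling $\tilde h_{1,k} = 3^{-(k+1)} h_{1,k}$) forces $\{a,b\} = \{\tfrac{a_1+1}{3}, \tfrac{a_1+2}{3}\}$. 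I expect this eigenvalue computation---specifically the $H_{1,2}$ step, where the $\tfrac{9}{4}x_{2,0}^{-}$ correction and the second-order $x_{2,2}^{-}$ contributions must be tracked across the three copies of $x_{2,0}^{-}$---to be the main obstacle, and the introduction of the $H$-generators is precisely what tames it.

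Finally I would upgrade the highest weight statement to an isomorphism. Since $Y_1(w)$ is a highest weight $\ysl$-module with associated polynomial $P(u) = (u - \tfrac{a_1+2}{3})(u - \tfrac{a_1+1}{3})$, it is a quotient of the $\ysl$ local Weyl module $W(P)$, which by Theorem \ref{wmfsl2} is isomorphic to the ordered tensor product $W_1(\tfrac{a_1+2}{3}) \otimes W_1(\tfrac{a_1+1}{3})$. The two roots differ by $\tfrac13 \ne \pm 1$, so this tensor product is irreducible: it and its reversal are both highest weight by Corollary \ref{tpihw} and Proposition \ref{ctpihw}, whence irreducibility follows from Proposition \ref{VoWWoVhi} (alternatively this is the $\ysl$ case of Theorem \ref{cp8c3t}). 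A nonzero quotient of an irreducible module is the whole module, and $Y_1(w)$ is nonzero since it contains $w$; therefore $Y_1(w) \cong W_1(\tfrac{a_1+2}{3}) \otimes W_1(\tfrac{a_1+1}{3})$, as claimed. In particular the module does not collapse onto a $W_2$, which would require the two roots to differ by $1$.
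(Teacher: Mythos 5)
Your proposal is correct and follows essentially the same route as the paper's proof: identify the weight $2\omega_1-3\omega_2$ to get a degree-two Drinfeld polynomial, compute the eigenvalues of the logarithmic generators $H_{1,0},H_{1,1},H_{1,2}$ on $\left(x_{2,0}^{-}\right)^3x_{1,0}^{-}v_1^{+}$ via Corollary \ref{c7c1cor1} and the corollary following Lemma \ref{c7c1l1}, match them (after the $3$-rescaling) to pin down the roots $\frac{a_1+1}{3},\frac{a_1+2}{3}$, and conclude by noting that $Y_1(w)$ is a quotient of the corresponding $\ysl$ local Weyl module, which is irreducible since the roots differ by $\frac{1}{3}\neq\pm 1$. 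The only difference is cosmetic: you match eigenvalues against the Taylor coefficients of $\ln\frac{P(u+1)}{P(u)}$ and spell out the irreducibility of $W_1\left(\frac{a_1+2}{3}\right)\otimes W_1\left(\frac{a_1+1}{3}\right)$, whereas the paper converts back to $\tilde{h}_{1,k}$-eigenvalues and asserts the irreducibility without comment.
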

\begin{proof}Note $H_{1,0}=h_{1,0}$. Since $[h_{1,0}, x_{2,0}^{-}]=3x_{2,0}^{-}$ and $h_{1,0}x_{1,0}^{-}v_1^{+}=-3x_{1,0}^{-}v_1^{+}$,
$$H_{1,0}\left(x_{2,0}^{-}\right)^3x_{1,0}^{-}v_1^{+}=6\left(x_{2,0}^{-}\right)^3x_{1,0}^{-}v_1^{+}.$$
It follows from Remark \ref{c7c1r1} that $H_{1,1}x_{1,0}^{-}v_1^{+}=\left(-3a_1-\frac{9}{2}\right)x_{1,0}^{-}v_1^{+}$ and $H_{1,2}x_{1,0}^{-}v_1^{+}=\left(-3a_1^2-9a_1-9\right)x_{1,0}^{-}v_1^{+}$.
\begin{align*}
H_{1,1}&\left(x_{2,0}^{-}\right)^3x_{1,0}^{-}v_1^{+}\\
&=[H_{1,1},x_{2,0}^{-}]\left(x_{2,0}^{-}\right)^2x_{1,0}^{-}v_1^{+}+x_{2,0}^{-}[H_{1,1},x_{2,0}^{-}]x_{2,0}^{-}x_{1,0}^{-}v_1^{+}\\
&\qquad +\left(x_{2,0}^{-}\right)^2[H_{1,1},x_{2,0}^{-}]x_{1,0}^{-}v_1^{+}+\left(x_{2,0}^{-}\right)^3 H_{1,1}x_{1,0}^{-}v_1^{+}\\
&=3x_{2,1}^{-}\left(x_{2,0}^{-}\right)^2x_{1,0}^{-}v_1^{+}+3x_{2,0}^{-}x_{2,1}^{-}x_{2,0}^{-}x_{1,0}^{-}v_1^{+}\\
&\qquad +3\left(x_{2,0}^{-}\right)^2x_{2,1}^{-}x_{1,0}^{-}v_1^{+}-\left(3a_1+\frac{9}{2}\right)\left(x_{2,0}^{-}\right)^3 x_{1,0}^{-}v_1^{+}\\
&=3\left(\left(a_1-\frac{1}{2}\right)+\left(a_1+\frac{1}{2}\right)+\left(a_1+\frac{3}{2}\right)-\left(a_1+\frac{3}{2}\right)\right)
\left(x_{2,0}^{-}\right)^3x_{1,0}^{-}v_1^{+}\\
&\qquad \text{(this equality follows from Corollary \ref{c7c1cor1})}\\
&=6a_1\left(x_{2,0}^{-}\right)^3x_{1,0}^{-}v_1^{+}.
\end{align*}
\begin{align*}
&H_{1,2}\left(x_{2,0}^{-}\right)^3x_{1,0}^{-}v_1^{+}\\
&=[H_{1,2},x_{2,0}^{-}]\left(x_{2,0}^{-}\right)^2x_{1,0}^{-}v_1^{+}+x_{2,0}^{-}[H_{1,2},x_{2,0}^{-}]x_{2,0}^{-}x_{1,0}^{-}v_1^{+}\\
&\qquad +\left(x_{2,0}^{-}\right)^2[H_{1,2},x_{2,0}^{-}]x_{1,0}^{-}v_1^{+}+\left(x_{2,0}^{-}\right)^3 H_{1,2}x_{1,0}^{-}v_1^{+}\\
&=3x_{2,2}^{-}\left(x_{2,0}^{-}\right)^2x_{1,0}^{-}v_1^{+}+3x_{2,0}^{-}x_{2,2}^{-}x_{2,0}^{-}x_{1,0}^{-}v_1^{+}
+3\left(x_{2,0}^{-}\right)^2x_{2,2}^{-}x_{1,0}^{-}v_1^{+}\\
&\qquad +3\left(\frac{3}{2}\right)^2\left(x_{2,0}^{-}\right)^3x_{1,0}^{-}v_1^{+}-\left(3a_1^2+9a_1+9\right)\left(x_{2,0}^{-}\right)^3 x_{1,0}^{-}v_1^{+}\\
&=3\left(\left(a_1-\frac{1}{2}\right)^2+\left(a_1+\frac{1}{2}\right)^2+\left(a_1+\frac{3}{2}\right)^2+\frac{9}{4}-\left(a_1^2+3a_1+3\right)\right)\\
&\qquad\qquad \left(x_{2,0}^{-}\right)^3x_{1,0}^{-}v_1^{+}\ \text{(this equality follows from Corollary \ref{c7c1cor1})}\\
&=\left(6a_1^2+6\right)\left(x_{2,0}^{-}\right)^3x_{1,0}^{-}v_1^{+}.
\end{align*}

It follows from the above computations that
\begin{align*}
\tilde{h}_{1,0}\left(x_{2,0}^{-}\right)^3x_{1,0}^{-}v_1^{+}&=2\left(x_{2,0}^{-}\right)^3x_{1,0}^{-}v_1^{+};\\ \tilde{h}_{1,1}\left(x_{2,0}^{-}\right)^3x_{1,0}^{-}v_1^{+}&=\left(\frac{2a_1}{3}+2\right)\left(x_{2,0}^{-}\right)^3x_{1,0}^{-}v_1^{+};\\
\tilde{h}_{1,2}\left(x_{2,0}^{-}\right)^3x_{1,0}^{-}v_1^{+}&=\left(2\left(\frac{a_1}{3}\right)^2+4\frac{a_1}{3}+\frac{14}{9}\right)\left(x_{2,0}^{-}\right)^3x_{1,0}^{-}v_1^{+}.
\end{align*}
Thus the degree of the associated polynomial is 2, say $P\left(u\right)=\left(u-a\right)\left(u-b\right).$
Similar to Lemma \ref{l-2is34d}, $a+b=\frac{2a_1}{3}+1$ and $a^2+b^2+a+b=2\left(\frac{a_1}{3}\right)^2+4\frac{a_1}{3}+\frac{14}{9}$, and then $a=\frac{a_1+1}{3}$ and $b=\frac{a_1+2}{3}$.

It follows from representation theory of $\ysl$ that $Y_1\left(\left(x_{2,0}^{-}\right)^3x_{1,0}^{-}v_1^{+}\right)$ is a quotient of the Weyl module $W_1\left(\frac{a_1+2}{3}\right)\otimes W_1\left(\frac{a_1+1}{3}\right)$. Since $W_1\left(\frac{a_1+2}{3}\right)\otimes W_1\left(\frac{a_1+1}{3}\right)$ is irreducible, $$Y_1\left(\left(x_{2,0}^{-}\right)^3x_{1,0}^{-}v_1^{+}\right)\cong W_1\left(\frac{a_1+2}{3}\right)\otimes W_1\left(\frac{a_1+1}{3}\right).$$
\end{proof}
\begin{remark}\label{c7c1r2} Let $a=\frac{a_1+1}{3}$ and $b=\frac{a_1+2}{3}$. Similar to Corollary \ref{w1bw1a}, long computations show that
\begin{enumerate}
\item $\left(x_{1,1}^{-}x_{1,0}^{-}+x_{1,0}^{-}x_{1,1}^{-}\right)\left(x_{2,0}^{-}\right)^3x_{1,0}^{-}v_1^{+}=3(a+b)\left(x_{1,0}^{-}\right)^2\left(x_{2,0}^{-}\right)^3x_{1,0}^{-}v_1^{+}$.
\item $\left(x_{1,2}^{-}x_{1,0}^{-}+x_{1,0}^{-}x_{1,2}^{-}\right)\left(x_{2,0}^{-}\right)^3x_{1,0}^{-}v_1^{+}=3^2(a^2+b^2)\left(x_{1,0}^{-}\right)^2\left(x_{2,0}^{-}\right)^3x_{1,0}^{-}v_1^{+}$.
\item $\left(x_{1,3}^{-}x_{1,0}^{-}+x_{1,0}^{-}x_{1,3}^{-}\right)\left(x_{2,0}^{-}\right)^3x_{1,0}^{-}v_1^{+}=3^3(a^3+b^3)\left(x_{1,0}^{-}\right)^2\left(x_{2,0}^{-}\right)^3x_{1,0}^{-}v_1^{+}$.
\end{enumerate}
\end{remark}
\begin{lemma}\label{c7c1l3}
$Y_2\left(\left(x_{1,0}^{-}\right)^2\left(x_{2,0}^{-}\right)^3x_{1,0}^{-}v_1^{+}\right)$ is a quotient of $W_1\left(a_1+\frac{7}{2}\right)\otimes W_1\left(a_1+\frac{5}{2}\right)$ $\otimes W_1\left(a_1+\frac{3}{2}\right)$.
\end{lemma}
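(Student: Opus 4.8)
The plan is to follow verbatim the template established in Lemmas \ref{c7c1l1} and \ref{c7c1l2}: realize $Y_2\big((x_{1,0}^{-})^{2}(x_{2,0}^{-})^{3}x_{1,0}^{-}v_1^{+}\big)$ as a highest weight $Y_2\cong\ysl$-module, determine its associated polynomial $P(u)$ by computing the eigenvalues of $h_{2,0}, h_{2,1}, h_{2,2}$ on the generating vector, and then read off the module structure from the local Weyl module theory of $\ysl$. Write $w=(x_{1,0}^{-})^{2}y$ with $y=(x_{2,0}^{-})^{3}x_{1,0}^{-}v_1^{+}$. Exactly as in Step 2 of Proposition \ref{g2Lihw}, $w$ is a maximal vector, so $Y_2(w)$ is highest weight; and by Proposition \ref{g2fwfr} the weight of $w$ is $-2\omega_1+3\omega_2$, whence $h_{2,0}w=3w$ and $\deg P=3$.

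First I would assemble the eigenvalues via $h_{2,k}w=[h_{2,k},(x_{1,0}^{-})^{2}]y+(x_{1,0}^{-})^{2}h_{2,k}y$. Three ingredients feed this. First, the eigenvalues of $h_{2,k}$ on $y$: since $y$ is the lowest weight vector of the module $Y_2(x_{1,0}^{-}v_1^{+})$ of Lemma \ref{c7c1l1}, whose Drinfeld roots $a_1-\tfrac12,\,a_1+\tfrac12,\,a_1+\tfrac32$ form a string, the associated lowest-weight factor $\frac{P(u-1)}{P(u)}$ telescopes to $\frac{u-(a_1+5/2)}{u-(a_1-1/2)}$, giving $h_{2,k}y=-3\left(a_1-\tfrac12\right)^{k}y$. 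Second, the commutator $[h_{2,1},x_{1,0}^{-}]=3x_{1,1}^{-}+\tfrac92 x_{1,0}^{-}+3x_{1,0}^{-}h_{2,0}$ recorded at the start of Section \ref{g2spos5} (and its degree-$2$ analogue, to be derived the same way from the defining relations). Third, the mixed products $(x_{1,k}^{-}x_{1,0}^{-}+x_{1,0}^{-}x_{1,k}^{-})y$ supplied by Remark \ref{c7c1r2}, which convert the $x_{1,1}^{-}$- and $x_{1,2}^{-}$-terms produced by the commutators back into scalar multiples of $(x_{1,0}^{-})^{2}y=w$. The one point that must be handled with care is that $x_{1,0}^{-}y$ has weight $0$ (because $\alpha_1=2\omega_1-3\omega_2$), so $h_{2,0}(x_{1,0}^{-}y)=0$; this resonance kills precisely the correction term that would otherwise spoil the arithmetic, and it is what shifts the string upward from $a_1+\tfrac32$ to $a_1+\tfrac72$. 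Carrying this out for $k=1$ yields $h_{2,1}w=3(a_1+\tfrac72)w$, and the $k=2$ computation is expected to give $h_{2,2}w=3(a_1+\tfrac72)^{2}w$.

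Finally, with the geometric pattern $h_{2,k}w=3\left(a_1+\tfrac72\right)^{k}w$ in hand for $k=0,1,2$, Lemma \ref{g2puisd3} identifies $P(u)=\big(u-(a_1+\tfrac32)\big)\big(u-(a_1+\tfrac52)\big)\big(u-(a_1+\tfrac72)\big)$, and the maximality of the local Weyl module of $\ysl$ then forces $Y_2(w)$ to be a quotient of $W_1(a_1+\tfrac72)\otimes W_1(a_1+\tfrac52)\otimes W_1(a_1+\tfrac32)$, as claimed. I expect the main obstacle to be the $k=2$ bookkeeping: deriving $[h_{2,2},x_{1,0}^{-}]$ from the defining relations and tracking the several $h_{2,0}$-eigenvalues that appear ($-3$ on $y$, and $0$ on $x_{1,0}^{-}y$) as the commutators are pushed past the two copies of $x_{1,0}^{-}$, together with invoking the correct entries of Remark \ref{c7c1r2} at each order.
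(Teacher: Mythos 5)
Your setup and most of your ingredients are sound, and they mirror the paper's strategy: maximality of $w=(x_{1,0}^{-})^{2}(x_{2,0}^{-})^{3}x_{1,0}^{-}v_1^{+}$, the weight computation giving $h_{2,0}w=3w$ and $\deg P=3$, the conversion of the $x_{1,k}^{-}$-terms via Remark \ref{c7c1r2}, and the final appeal to the maximality of the local Weyl module of $\ysl$. Your telescoping derivation of $h_{2,k}\,y=-3\left(a_1-\frac{1}{2}\right)^{k}y$ for $y=(x_{2,0}^{-})^{3}x_{1,0}^{-}v_1^{+}$, from the lowest weight of $W_1\left(a_1+\frac{3}{2}\right)\otimes W_1\left(a_1+\frac{1}{2}\right)\otimes W_1\left(a_1-\frac{1}{2}\right)$, is correct and is in fact a cleaner justification than the paper's, which simply asserts these eigenvalues (in $H$-form). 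Your preference for working with $h_{2,k}$ directly rather than with the logarithmic generators $H_{2,k}$ of Corollary \ref{c7c1cor1} is only a bookkeeping difference --- the paper introduces the $H_{i,k}$ precisely to avoid the tedium you anticipate --- and your $k=1$ computation does yield $h_{2,1}w=3\left(a_1+\frac{7}{2}\right)w$.

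The gap is at the end: you stop at $k=2$ and invoke Lemma \ref{g2puisd3}, but for a degree-$3$ Drinfeld polynomial the eigenvalues of $h_{2,0},h_{2,1},h_{2,2}$ do not determine $P(u)$. Writing $P(u)=(u-r_1)(u-r_2)(u-r_3)$ and expanding $\frac{P(u+1)}{P(u)}=1+\sum_{k}\mu_k u^{-k-1}$, one finds $\mu_0=3$, $\mu_1=e_1+3$, $\mu_2=p_2+2e_1+1$, where $e_1=r_1+r_2+r_3$ and $p_2=r_1^2+r_2^2+r_3^2$; these pin down $e_1$ and $e_2$ but leave $e_3=r_1r_2r_3$ completely free, so infinitely many degree-$3$ polynomials share the data $\mu_0=3$, $\mu_1=3\left(a_1+\frac{7}{2}\right)$, $\mu_2=3\left(a_1+\frac{7}{2}\right)^2$. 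Lemma \ref{g2puisd3} applies only once you know the \emph{entire} series is geometric, i.e.\ $\mu_k=3\left(a_1+\frac{7}{2}\right)^k$ for all $k$. This is exactly why the paper pushes the computation one step further, to $H_{2,3}$ (equivalently $h_{2,3}$), and why Remark \ref{c7c1r2} contains a part (iii) for $x_{1,3}^{-}$ that your plan never uses. To close the gap you must either carry out the $k=3$ computation (using $[h_{2,3},x_{1,0}^{-}]$, or the cleaner $[H_{2,3},x_{1,0}^{-}]=3x_{1,3}^{-}+\frac{27}{4}x_{1,1}^{-}$, together with Remark \ref{c7c1r2}(iii)), or else prove the geometric pattern for all $k$ by induction, as is done in Lemma \ref{c7c1l1}.
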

\begin{proof} Note that $$H_{2,0}\left(x_{1,0}^{-}\right)^2\left(x_{2,0}^{-}\right)^3x_{1,0}^{-}v_1^{+}=
h_{2,0}\left(x_{1,0}^{-}\right)^2\left(x_{2,0}^{-}\right)^3x_{1,0}^{-}v_1^{+}=
3\left(x_{1,0}^{-}\right)^2\left(x_{2,0}^{-}\right)^3x_{1,0}^{-}v_1^{+}.$$
\begin{align*}
% \nonumber to remove numbering \left(before each equation\right)
& H_{2,1}\left(x_{1,0}^{-}\right)^2\left(x_{2,0}^{-}\right)^3x_{1,0}^{-}v_1^{+} \\
&=[H_{2,1},x_{1,0}^{-}]x_{1,0}^{-}\left(x_{2,0}^{-}\right)^3x_{1,0}^{-}v_1^{+}+
x_{1,0}^{-}[H_{2,1},x_{1,0}^{-}]\left(x_{2,0}^{-}\right)^3x_{1,0}^{-}v_1^{+}\\
&\qquad +\left(x_{1,0}^{-}\right)^2 H_{2,1}\left(x_{2,0}^{-}\right)^3x_{1,0}^{-}v_1^{+}\\
&=3x_{1,1}^{-}x_{1,0}^{-}\left(x_{2,0}^{-}\right)^3x_{1,0}^{-}v_1^{+}+3x_{1,0}^{-}x_{1,1}^{-}\left(x_{2,0}^{-}\right)^3x_{1,0}^{-}v_1^{+}\\
&\qquad +\left(-3\left(a_1-\frac{1}{2}\right)-\frac{9}{2}\right)\left(x_{1,0}^{-}\right)^2\left(x_{2,0}^{-}\right)^3x_{1,0}^{-}v_1^{+}\\
&=\big(\left(6a_1+9\right)-\left(3a_1+3\right)\big)\left(x_{1,0}^{-}\right)^2\left(x_{2,0}^{-}\right)^3x_{1,0}^{-}v_1^{+}\text{(by (i) of Remark \ref{c7c1r2})}\\
&=\left(3a_1+6\right)\left(x_{1,0}^{-}\right)^2\left(x_{2,0}^{-}\right)^3x_{1,0}^{-}v_1^{+}.
\end{align*}
\begin{align*}
% \nonumber to remove numbering \left(before each equation\right)
& H_{2,2}\left(x_{1,0}^{-}\right)^2\left(x_{2,0}^{-}\right)^3x_{1,0}^{-}v_1^{+} \\
&=[H_{2,2},x_{1,0}^{-}]x_{1,0}^{-}\left(x_{2,0}^{-}\right)^3x_{1,0}^{-}v_1^{+}+
x_{1,0}^{-}[H_{2,2},x_{1,0}^{-}]\left(x_{2,0}^{-}\right)^3x_{1,0}^{-}v_1^{+}\\
&\qquad +\left(x_{1,0}^{-}\right)^2 H_{2,2}\left(x_{2,0}^{-}\right)^3x_{1,0}^{-}v_1^{+}\\
&=\left(3x_{1,2}^{-}+\frac{9}{4}x_{1,0}^{-}\right)x_{1,0}^{-}\left(x_{2,0}^{-}\right)^3x_{1,0}^{-}v_1^{+}+x_{1,0}^{-}\left(3x_{1,2}^{-}+\frac{9}{4}x_{1,0}^{-}\right)\left(x_{2,0}^{-}\right)^3x_{1,0}^{-}v_1^{+}\\
&\qquad-\left(3\left(a_1-\frac{1}{2}\right)^2+9\left(a_1-\frac{1}{2}\right)+9\right)\left(x_{1,0}^{-}\right)^2
\left(x_{2,0}^{-}\right)^3x_{1,0}^{-}v_1^{+}\text{(by Remark \ref{c7c1r2})}\\
&=\left(3a_1^2+12a_1+\frac{57}{4}\right)\left(x_{1,0}^{-}\right)^2\left(x_{2,0}^{-}\right)^3x_{1,0}^{-}v_1^{+}.
\end{align*}
\begin{align*}
% \nonumber to remove numbering \left(before each equation\right)
& H_{2,3}\left(x_{1,0}^{-}\right)^2\left(x_{2,0}^{-}\right)^3x_{1,0}^{-}v_1^{+} \\
&=[H_{2,3},x_{1,0}^{-}]x_{1,0}^{-}\left(x_{2,0}^{-}\right)^3x_{1,0}^{-}v_1^{+}+
x_{1,0}^{-}[H_{2,3},x_{1,0}^{-}]\left(x_{2,0}^{-}\right)^3x_{1,0}^{-}v_1^{+}\\
&\qquad +\left(x_{1,0}^{-}\right)^2 H_{2,3}\left(x_{2,0}^{-}\right)^3x_{1,0}^{-}v_1^{+}\\
&=\left(3x_{1,3}^{-}+\frac{27}{4}x_{1,1}^{-}\right)x_{1,0}^{-}\left(x_{2,0}^{-}\right)^3x_{1,0}^{-}v_1^{+}
+x_{1,0}^{-}\left(3x_{1,3}^{-}+\frac{27}{4}x_{1,1}^{-}\right)\left(x_{2,0}^{-}\right)^3x_{1,0}^{-}v_1^{+}\\
&\qquad-\left(3\left(a_1-\frac{1}{2}\right)^3+\frac{27}{2}\left(a_1-\frac{1}{2}\right)^2+27\left(a_1-\frac{1}{2}\right)
+\frac{81}{4}\right)\\
&\qquad\qquad \left(x_{1,0}^{-}\right)^2\left(x_{2,0}^{-}\right)^3x_{1,0}^{-}v_1^{+}\text{(by Remark \ref{c7c1r2})}\\
&=\left(3a_1^3+18a_1^2+\frac{171}{4}a_1+\frac{75}{2}\right)\left(x_{1,0}^{-}\right)^2\left(x_{2,0}^{-}\right)^3x_{1,0}^{-}v_1^{+}.
\end{align*}
Recall that
\begin{align*}
H_{i}\left(t\right)&=\left(h_{i,0}\right)t^{-1}+\left(h_{i,1}-\frac{1}{2}\left(h_{i,0}\right)^2\right)t^{-2}+\left(h_{i,2}-h_{i,0}h_{i,1}+\frac{1}{3}\left(h_{i,0}\right)^3\right)t^{-3}\\
&\qquad +\left(h_{i,3}-h_{i,0}h_{i,2}-\frac{1}{2}\left(h_{i,1}\right)^2+
\left(h_{i,0}\right)^2h_{i,1}-\frac{1}{4}\left(h_{i,0}\right)^4\right)t^{-4}+\ldots
\end{align*}
It follows from the above calculations that
\begin{align*}
h_{2,0}\left(x_{1,0}^{-}\right)^2\left(x_{2,0}^{-}\right)^3x_{1,0}^{-}v_1^{+}&=
3\left(x_{1,0}^{-}\right)^2\left(x_{2,0}^{-}\right)^3x_{1,0}^{-}v_1^{+};\\
h_{2,1}\left(x_{1,0}^{-}\right)^2\left(x_{2,0}^{-}\right)^3x_{1,0}^{-}v_1^{+}&=
3\left(a_1+\frac{7}{2}\right)\left(x_{1,0}^{-}\right)^2\left(x_{2,0}^{-}\right)^3x_{1,0}^{-}v_1^{+};\\
h_{2,2}\left(x_{1,0}^{-}\right)^2\left(x_{2,0}^{-}\right)^3x_{1,0}^{-}v_1^{+}&=
3\left(a_1+\frac{7}{2}\right)^2\left(x_{1,0}^{-}\right)^2\left(x_{2,0}^{-}\right)^3x_{1,0}^{-}v_1^{+};\\
h_{2,3}\left(x_{1,0}^{-}\right)^2\left(x_{2,0}^{-}\right)^3x_{1,0}^{-}v_1^{+}&=
3\left(a_1+\frac{7}{2}\right)^3\left(x_{1,0}^{-}\right)^2\left(x_{2,0}^{-}\right)^3x_{1,0}^{-}v_1^{+}.\\
\end{align*}
Similarly as in Lemma \ref{l-2is34d}, we omit the proof that the associated polynomial of $Y_2\left(\left(x_{1,0}^{-}\right)^2\left(x_{2,0}^{-}\right)^3x_{1,0}^{-}v_1^{+}\right)$ is $\left(u-\left(a_1+\frac{3}{2}\right)\right)\left(u-\left(a_1+\frac{5}{2}\right)\right)\left(u-\left(a_1+\frac{7}{2}\right)\right)$. By the representation theory of $\ysl$,
 we know $Y_2\left(\left(x_{1,0}^{-}\right)^2\left(x_{2,0}^{-}\right)^3x_{1,0}^{-}v_1^{+}\right)$ is a quotient of $W_1\left(a_1+\frac{7}{2}\right)\otimes W_1\left(a_1+\frac{5}{2}\right)\otimes W_1\left(a_1+\frac{3}{2}\right)$.
\end{proof}
\begin{corollary}In $W_1\left(a_1+\frac{7}{2}\right)\otimes W_1\left(a_1+\frac{5}{2}\right)\otimes W_1\left(a_1+\frac{3}{2}\right)$,
\begin{enumerate}
  \item $x_{2,1}^{-}\left(x_{2,0}^{-}\right)^2\left(x_{1,0}^{-}\right)^2\left(x_{2,0}^{-}\right)^3x_{1,0}^{-}v_1^{+}
      =\left(a_1+\frac{3}{2}\right)\left(x_{2,0}^{-}\right)^3\left(x_{1,0}^{-}\right)^2\left(x_{2,0}^{-}\right)^3x_{1,0}^{-}v_1^{+}$;
  \item $x_{2,0}^{-}x_{2,1}^{-}x_{2,0}^{-}\left(x_{1,0}^{-}\right)^2\left(x_{2,0}^{-}\right)^3x_{1,0}^{-}v_1^{+}
      =\left(a_1+\frac{5}{2}\right)\left(x_{2,0}^{-}\right)^3\left(x_{1,0}^{-}\right)^2\left(x_{2,0}^{-}\right)^3x_{1,0}^{-}v_1^{+}$;
  \item $\left(x_{2,0}^{-}\right)^2 x_{2,1}^{-}\left(x_{1,0}^{-}\right)^2\left(x_{2,0}^{-}\right)^3x_{1,0}^{-}v_1^{+}
      =\left(a_1+\frac{7}{2}\right)\left(x_{2,0}^{-}\right)^3\left(x_{1,0}^{-}\right)^2\left(x_{2,0}^{-}\right)^3x_{1,0}^{-}v_1^{+}$.
\end{enumerate}
\end{corollary}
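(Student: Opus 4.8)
The plan is to reduce all three identities to a single computation inside the ordered tensor product
$$M = W_1\left(a_1+\tfrac{7}{2}\right)\otimes W_1\left(a_1+\tfrac{5}{2}\right)\otimes W_1\left(a_1+\tfrac{3}{2}\right)$$
of $\ysl\cong Y_2$-modules. By Lemma \ref{c7c1l3} there is a surjective $Y_2$-module homomorphism $\phi\colon M\to Y_2\left(w\right)$ sending the tensor product $v^{+}$ of the three highest weight vectors to $w=\left(x_{1,0}^{-}\right)^2\left(x_{2,0}^{-}\right)^3 x_{1,0}^{-}v_1^{+}$. Since each of the operators $x_{2,1}^{-}\left(x_{2,0}^{-}\right)^2$, $x_{2,0}^{-}x_{2,1}^{-}x_{2,0}^{-}$, $\left(x_{2,0}^{-}\right)^2 x_{2,1}^{-}$, and $\left(x_{2,0}^{-}\right)^3$ lies in $Y_2$, applying $\phi$ shows it suffices to establish, inside $M$,
$$x_{2,1}^{-}\left(x_{2,0}^{-}\right)^2 v^{+}=\left(a_1+\tfrac{3}{2}\right)\left(x_{2,0}^{-}\right)^3 v^{+},\qquad x_{2,0}^{-}x_{2,1}^{-}x_{2,0}^{-}v^{+}=\left(a_1+\tfrac{5}{2}\right)\left(x_{2,0}^{-}\right)^3 v^{+},$$
together with $\left(x_{2,0}^{-}\right)^2 x_{2,1}^{-}v^{+}=\left(a_1+\tfrac{7}{2}\right)\left(x_{2,0}^{-}\right)^3 v^{+}$.

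To carry this out I would work in the weight basis of $M$. Writing $c=a_1+\tfrac72$, $b=a_1+\tfrac52$, $a=a_1+\tfrac32$, each factor $W_1(\lambda)$ has basis $\{u_{\lambda},\,d_{\lambda}=x_{2,0}^{-}u_{\lambda}\}$, and Corollary \ref{slw1a} records the action $x_{2,0}^{-}u_{\lambda}=d_{\lambda}$, $x_{2,0}^{-}d_{\lambda}=0$, $x_{2,1}^{-}u_{\lambda}=\lambda d_{\lambda}$, $x_{2,1}^{-}d_{\lambda}=0$, $h_{2,0}u_{\lambda}=u_{\lambda}$, $h_{2,0}d_{\lambda}=-d_{\lambda}$. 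The $Y_2$-action on $M$ is computed from the iterated coproduct, which on tensor products of highest weight vectors may be taken to be the $\ysl$-coproduct by Proposition \ref{c1dgd2d}; explicitly, from Proposition \ref{Delta}(ii),
$$\Delta^{(2)}\left(x_{2,1}^{-}\right)=\sum_{i=1}^{3}1^{\otimes(i-1)}\otimes x_{2,1}^{-}\otimes 1^{\otimes(3-i)}+x_{2,0}^{-}\otimes h_{2,0}\otimes 1+x_{2,0}^{-}\otimes 1\otimes h_{2,0}+1\otimes x_{2,0}^{-}\otimes h_{2,0},$$
while $\Delta^{(2)}\left(x_{2,0}^{-}\right)$ is primitive. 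Since $x_{2,0}^{-}$ squares to zero on each factor, every factor must be lowered exactly once, giving $\left(x_{2,0}^{-}\right)^3 v^{+}=6\,d_{c}\otimes d_{b}\otimes d_{a}$; thus each identity reduces to extracting the coefficient of $d_{c}\otimes d_{b}\otimes d_{a}$ on the left hand side.

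Running the six-term operator $\Delta^{(2)}\left(x_{2,1}^{-}\right)$ against the appropriate partially lowered vectors and collecting terms, I expect coefficients $2\left[(a+b+c)-3\right]$, $2(a+b+c)$, and $2\left[(a+b+c)+3\right]$ for the three left hand sides; dividing by $6$ and substituting $a+b+c=3a_1+\tfrac{15}{2}$ returns exactly $a_1+\tfrac32$, $a_1+\tfrac52$, and $a_1+\tfrac72$. This is simply the three-factor analogue of Corollary \ref{w1bw1a}, and the computations are of the same elementary nature. The hard part will be purely organizational: keeping track of the contributions of the summands of the form $x_{2,0}^{-}\otimes h_{2,0}$, since it is precisely the $\pm 1$ eigenvalues of $h_{2,0}$ on the $u_{\lambda}$ and $d_{\lambda}$ vectors that split the three coefficients apart and single out the innermost, middle, and outermost roots of the Drinfeld polynomial. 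Once these sign contributions are bookkept carefully, the three equalities follow at once and push forward through $\phi$ to the stated identities in $Y_2(w)$.
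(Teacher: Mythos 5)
Your proposal is correct and follows essentially the same route the paper intends: the paper states this corollary without proof as an immediate consequence of Lemma \ref{c7c1l3} (giving the surjection from $W_1\left(a_1+\frac{7}{2}\right)\otimes W_1\left(a_1+\frac{5}{2}\right)\otimes W_1\left(a_1+\frac{3}{2}\right)$ onto $Y_2\big(\left(x_{1,0}^{-}\right)^2\left(x_{2,0}^{-}\right)^3x_{1,0}^{-}v_1^{+}\big)$) combined with the coproduct formula and the explicit action on $W_1(\lambda)$, exactly as in the earlier two- and three-factor corollaries following Corollary \ref{slw1a} and Lemma \ref{c7c1l1}. Your intermediate coefficients $2\left[(a+b+c)-3\right]$, $2(a+b+c)$, $2\left[(a+b+c)+3\right]$ against $\left(x_{2,0}^{-}\right)^3v^{+}=6\,d_c\otimes d_b\otimes d_a$ are correct and yield the stated eigenvalues; the only quibble is that the vanishing of the coproduct error terms on this triple tensor product of two-dimensional modules follows from the structure $HY^{-}x_p^{-}x_q^{-}\otimes Hx_r^{+}$ of the $\ysl$-coproduct remainder (two lowering operators annihilate any $W_1$), rather than from Proposition \ref{c1dgd2d}, which compares $\Delta_{\yg}$ with $\Delta_{Y_i}$.
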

\begin{lemma}\label{c7c1l4}
$Y_1\left(\left(x_{2,0}^{-}\right)^3\left(x_{1,0}^{-}\right)^2\left(x_{2,0}^{-}\right)^3x_{1,0}^{-}v_1^{+}\right)\cong W_1\left(\frac{a_1}{3}+1\right)$.
\end{lemma}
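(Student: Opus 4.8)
The plan is to follow the two-step template already used for Lemmas \ref{c7c1l1}--\ref{c7c1l3} in this case: first determine the degree of the Drinfeld polynomial $P(u)$ of the highest weight $Y_1$-module $Y_1(v)$, where $v=(x_{2,0}^{-})^3(x_{1,0}^{-})^2(x_{2,0}^{-})^3x_{1,0}^{-}v_1^{+}$, and then recover its unique root from the $\tilde{h}_{1,1}$-eigenvalue on $v$. By Step 2 of Proposition \ref{g2Lihw}, $v=v_{\sigma_5(\omega_1)}$ is a maximal vector for $Y_1$, so $Y_1(v)$ is a highest weight $Y_1$-module. Using Proposition \ref{g2fwfr} one checks that $\operatorname{wt}(v)=\omega_1-3\omega_2$, whence $\tilde{h}_{1,0}v=v$; thus $\deg P=1$ and $Y_1(v)\cong W_1(a)$ for the value $a$ fixed by $\tilde{h}_{1,1}v=a\,v$. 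It then remains only to show $a=\tfrac{a_1}{3}+1$.

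To compute the eigenvalue I would pass to the logarithmic Cartan generator $H_{1,1}=h_{1,1}-\tfrac12 h_{1,0}^2$, for which the clean cross-commutator $[H_{1,1},x_{2,0}^{-}]=3x_{2,1}^{-}$ of Corollary \ref{c7c1cor1} is available. Writing $v=(x_{2,0}^{-})^3 w$ with $w=(x_{1,0}^{-})^2(x_{2,0}^{-})^3x_{1,0}^{-}v_1^{+}$, I expand $H_{1,1}v=[H_{1,1},(x_{2,0}^{-})^3]\,w+(x_{2,0}^{-})^3H_{1,1}w$. The first summand is routine: distributing the commutator over the three factors $x_{2,0}^{-}$ and invoking the corollary following Lemma \ref{c7c1l3} (which evaluates $x_{2,1}^{-}(x_{2,0}^{-})^2w$, $x_{2,0}^{-}x_{2,1}^{-}x_{2,0}^{-}w$, $(x_{2,0}^{-})^2x_{2,1}^{-}w$ as $(a_1+\tfrac32),(a_1+\tfrac52),(a_1+\tfrac72)$ times $v$) gives $[H_{1,1},(x_{2,0}^{-})^3]w=(9a_1+\tfrac{45}{2})v$.

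The hard part will be the remaining term $(x_{2,0}^{-})^3H_{1,1}w$, i.e. the evaluation of $H_{1,1}w$. Here the clean cross-commutator is useless, because $w$ is obtained from $u=(x_{2,0}^{-})^3x_{1,0}^{-}v_1^{+}$ by the \emph{same-node} operator $(x_{1,0}^{-})^2$. I would instead use Lemma \ref{c7c1l2}, which gives $Y_1(u)\cong W_1(\tfrac{a_1+2}{3})\otimes W_1(\tfrac{a_1+1}{3})$, and note that $w=(x_{1,0}^{-})^2u$ spans the one-dimensional lowest weight space of this $Y(\mathfrak{sl}_2)$-module (its $\mathfrak{sl}_2$-weight is $-2$). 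The Cartan eigenvalue on that lowest weight vector I would read off from the $Y(\mathfrak{sl}_2)$-coproduct of $\bar{h}_1$, $\Delta_{\ysl}(\bar{h}_1)=\bar{h}_1\otimes 1+1\otimes\bar{h}_1-2x_0^{-}\otimes x_0^{+}$: on a tensor of two lowest weight vectors the correction term vanishes since $(x_0^{-})^2=0$ on each two-dimensional factor, so $\bar{h}_1$ acts by the sum of the two single-factor eigenvalues. With the single-factor value $\bar{h}_1(x_0^{-}w_1)=-(c+\tfrac12)(x_0^{-}w_1)$ from Corollary \ref{slw1a}, this yields $\tilde{h}_{1,1}w=-\tfrac{2a_1}{3}w$, equivalently $H_{1,1}w=(-6a_1-18)w$.

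Combining the two contributions gives $H_{1,1}v=(9a_1+\tfrac{45}{2})v+(-6a_1-18)v=(3a_1+\tfrac92)v$. Converting back through $h_{1,1}=H_{1,1}+\tfrac12 h_{1,0}^2$ with $h_{1,0}v=3v$ produces $h_{1,1}v=(3a_1+9)v$, and hence $\tilde{h}_{1,1}v=\tfrac19 h_{1,1}v=(\tfrac{a_1}{3}+1)v$. Since $\tfrac{P(u+1)}{P(u)}=1+u^{-1}+au^{-2}+\cdots$ for $P(u)=u-a$, this pins down $a=\tfrac{a_1}{3}+1$, and the representation theory of $Y(\mathfrak{sl}_2)$ gives $Y_1(v)\cong W_1(\tfrac{a_1}{3}+1)$, as claimed. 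The only genuinely delicate point is the determination of $H_{1,1}w$ through the auxiliary module of Lemma \ref{c7c1l2}; everything else is commutator bookkeeping already performed in the preceding lemmas.
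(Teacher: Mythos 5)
Your proposal is correct and follows essentially the same route as the paper's proof: determine that the Drinfeld polynomial has degree $1$ from the $\tilde{h}_{1,0}$-eigenvalue (weight $\omega_1-3\omega_2$), then compute the $H_{1,1}$-eigenvalue by commuting across the three $x_{2,0}^{-}$ factors via $[H_{1,1},x_{2,0}^{-}]=3x_{2,1}^{-}$ and the corollary following Lemma \ref{c7c1l3}, plus the residual term $\left(x_{2,0}^{-}\right)^3H_{1,1}\left(x_{1,0}^{-}\right)^2v=-\left(6a_1+18\right)\left(x_{2,0}^{-}\right)^3\left(x_{1,0}^{-}\right)^2v$, arriving at the same values $9a_1+\frac{45}{2}$, $3a_1+\frac{9}{2}$, and finally $\frac{a_1}{3}+1$. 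The only difference is that you make explicit the justification of the residual eigenvalue $-(6a_1+18)$ (via Lemma \ref{c7c1l2} and the $\ysl$-coproduct of $\bar{h}_{1}$ acting on the lowest weight vector of $W_1\left(\frac{a_1+2}{3}\right)\otimes W_1\left(\frac{a_1+1}{3}\right)$), a step the paper uses but asserts without detail.
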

\begin{proof}
It follows from Proposition \ref{g2fwfr} that $$\tilde{h}_{1,0}\left(x_{2,0}^{-}\right)^3\left(x_{1,0}^{-}\right)^2\left(x_{2,0}^{-}\right)^3x_{1,0}^{-}v_1^{+}=\left(x_{2,0}^{-}\right)^3\left(x_{1,0}^{-}\right)^2\left(x_{2,0}^{-}\right)^3x_{1,0}^{-}v_1^{+}.$$
Thus $Y_1\left(\left(x_{2,0}^{-}\right)^3\left(x_{1,0}^{-}\right)^2\left(x_{2,0}^{-}\right)^3x_{1,0}^{-}v_1^{+}\right)\cong W_1\left(a\right)$.
The eigenvalue of\\ $\left(x_{2,0}^{-}\right)^3\left(x_{1,0}^{-}\right)^2\left(x_{2,0}^{-}\right)^3x_{1,0}^{-}v_1^{+}$ under $\tilde{h}_{1,1}$ will tell the value of $a$.

For the simplicity of the following computations, let $v=\left(x_{2,0}^{-}\right)^3x_{1,0}^{-}v_1^{+}$.
\begin{align*}
% \nonumber to remove numbering \left(before each equation\right)
   & H_{1,1}\left(x_{2,0}^{-}\right)^3\left(x_{1,0}^{-}\right)^2\left(x_{2,0}^{-}\right)^3x_{1,0}^{-}v_1^{+}\\
   &=[H_{1,1},x_{2,0}^{-}]\left(x_{2,0}^{-}\right)^2\left(x_{1,0}^{-}\right)^2v+ x_{2,0}^{-}[H_{1,1},x_{2,0}^{-}]x_{2,0}^{-}\left(x_{1,0}^{-}\right)^2v\\
   &+ \left(x_{2,0}^{-}\right)^2[H_{1,1},x_{2,0}^{-}]\left(x_{1,0}^{-}\right)^2v+\left(x_{2,0}^{-}\right)^3 H_{1,1}\left(x_{1,0}^{-}\right)^2v\\
   &=3x_{2,1}^{-}\left(x_{2,0}^{-}\right)^2\left(x_{1,0}^{-}\right)^2v+3x_{2,0}^{-}x_{2,1}^{-}x_{2,0}^{-}\left(x_{1,0}^{-}\right)^2v\\
   &+3\left(x_{2,0}^{-}\right)^2 x_{2,1}^{-}\left(x_{1,0}^{-}\right)^2v-\left(6a_1+18\right)\left(x_{2,0}^{-}\right)^3\left(x_{1,0}^{-}\right)^2 v\\
   &=\Bigg(3\left(3a_1+\frac{15}{2}\right)-\left(6a_1+18\right)\Bigg) \left(x_{2,0}^{-}\right)^3\left(x_{1,0}^{-}\right)^2v\\
   &= \left(3a_1+\frac{9}{2}\right) \left(x_{2,0}^{-}\right)^3\left(x_{1,0}^{-}\right)^2\left(x_{2,0}^{-}\right)^3x_{1,0}^{-}v_1^{+}.
\end{align*}
Then $\tilde{h}_{1,1}\left(x_{2,0}^{-}\right)^3\left(x_{1,0}^{-}\right)^2\left(x_{2,0}^{-}\right)^3x_{1,0}^{-}v_1^{+}=\left(\frac{a_1}{3}+1\right) \left(x_{2,0}^{-}\right)^3\left(x_{1,0}^{-}\right)^2\left(x_{2,0}^{-}\right)^3x_{1,0}^{-}v_1^{+}$. $$Y_1\left(\left(x_{2,0}^{-}\right)^3\left(x_{1,0}^{-}\right)^2\left(x_{2,0}^{-}\right)^3x_{1,0}^{-}v_1^{+}\right)\cong W_1\left(\frac{a_1}{3}+1\right).$$
\end{proof}

\subsection{Case 2: $b_1=2$.} \ \\
In this case $$v_1^{-}=x_{2,0}^{-}x_{1,0}^{-}\left(x_{2,0}^{-}\right)^2x_{1,0}^{-}x_{2,0}^{-}v_1^{+}.$$
We summarizing all lemmas in this case in the following proposition. The proof of each lemma is provided.
\begin{proposition}\
\begin{enumerate}
  \item $Y_2\left(v_1^{+}\right)\cong W_1\left(a_1\right)$.
  \item $Y_1\left(x_{2,0}^{-}v_1^{+}\right)\cong W_1\left(\frac{a_1}{3}+\frac{1}{2}\right)$.
  \item $Y_2\left(x_{1,0}^{-}x_{2,0}^{-}v_1^{+}\right)$ is isomorphic to either $W_2\left(a_1+2\right)$ or $W_1\left(a_1+3\right)\otimes W_1\left(a_1+2\right)$.
  \item $Y_1\left(\left(x_{2,0}^{-}\right)^2x_{1,0}^{-}x_{2,0}^{-}v_1^{+}\right)\cong W_1\left(\frac{a_1}{3}+\frac{7}{6}\right)$.
  \item $Y_2\left(x_{1,0}^{-}\left(x_{2,0}^{-}\right)^2x_{1,0}^{-}x_{2,0}^{-}v_1^{+}\right)\cong W_1\left(a_1+5\right)$.
\end{enumerate}
\end{proposition}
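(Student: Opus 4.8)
The five vectors listed are precisely the intermediate weight vectors $v_{\sigma_0(\omega_2)},\ldots,v_{\sigma_4(\omega_2)}$ produced along the path from $v_1^{+}$ to $v_1^{-}$ determined by the reduced word $w_2=s_2s_1s_2s_1s_2$ together with Proposition \ref{g2fwfr}; indeed the successive exponents $1,1,2,1,1$ read off from that proposition reconstruct exactly $v_1^{-}=x_{2,0}^{-}x_{1,0}^{-}(x_{2,0}^{-})^2x_{1,0}^{-}x_{2,0}^{-}v_1^{+}$. The plan is therefore to prove the five items as a chain of lemmas, in the order listed, each feeding its eigenvalue data into the next. For every vector $v_{\sigma_j(\omega_2)}$ I would first invoke Steps 2 and 3 of Proposition \ref{g2Lihw} (the maximality argument, which only uses $\sigma_j^{-1}(\alpha_{i'})\in\Delta^{+}$) to conclude that $Y_{i'}(v_{\sigma_j(\omega_2)})$ is a highest weight $\ysl$-module whose highest weight has the form $\frac{P(u+1)}{P(u)}$ for a monic polynomial $P$.

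The second ingredient is the standard degree-then-root computation. The degree of $P$ equals the $\tilde h_{i',0}$-eigenvalue, i.e. the coefficient of $\omega_{i'}$ in the weight $\sigma_j(\omega_2)$, which Proposition \ref{g2fwfr} gives as $1,1,2,1,1$ for the five items respectively. To locate the root(s) I would compute the eigenvalue of $\tilde h_{i',1}$ (and, in the single degree-$2$ case, also of $\tilde h_{i',2}$) by commuting it through the string of lowering operators, using the two bracket identities $[h_{2,1},x_{1,0}^{-}]=3x_{1,1}^{-}+\tfrac{3}{2}(3x_{1,0}^{-}+2x_{1,0}^{-}h_{2,0})$ and $[\tilde h_{1,1},x_{2,0}^{-}]=\tfrac{1}{3} x_{2,1}^{-}+\tfrac{1}{2} x_{2,0}^{-}+x_{2,0}^{-}\tilde h_{1,0}$ recorded at the start of this section, together with the inductive relation $\tilde x_{i',1}^{-}v=a\,\tilde x_{i',0}^{-}v$ of \eqref{Cor3.3}, where $a$ is the root already found one step earlier. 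In the degree-$1$ items this pins down $P(u)=u-a$ directly; in the degree-$2$ item (item 3) I would recover $P$ from the two eigenvalues via Lemma \ref{g2puisd3}.

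Carrying this out: item 1 is immediate, since the Drinfeld data $\pi_2(u)=u-a_1$, $\pi_1(u)=1$ of $V_{a_1}(\omega_2)$ give $h_2(u)v_1^{+}=\frac{u+1-a_1}{u-a_1}v_1^{+}$ by Theorem \ref{cfdihwr}, so $Y_2(v_1^{+})\cong W_1(a_1)$. For item 2, $\tilde h_{1,0}$ annihilates $x_{2,0}^{-}v_1^{+}$ and the bracket identity together with $x_{2,1}^{-}v_1^{+}=a_1x_{2,0}^{-}v_1^{+}$ yields the $\tilde h_{1,1}$-eigenvalue $\tfrac{a_1}{3}+\tfrac{1}{2}$, giving $W_1(\tfrac{a_1}{3}+\tfrac{1}{2})$. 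Item 3 is the degree-$2$ step: iterating the first bracket identity to reach $h_{2,1}$ and $h_{2,2}$ produces, via Lemma \ref{g2puisd3}, the associated polynomial $(u-(a_1+2))(u-(a_1+3))$, whence the theory of the local Weyl modules of $\ysl$ forces $Y_2(x_{1,0}^{-}x_{2,0}^{-}v_1^{+})$ to be either the $3$-dimensional $W_2(a_1+2)$ or the $4$-dimensional $W_1(a_1+3)\otimes W_1(a_1+2)$. Items 4 and 5 then continue one more short-root and one more long-root step along the path, giving $W_1(\tfrac{a_1}{3}+\tfrac{7}{6})$ and $W_1(a_1+5)$ respectively.

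The main obstacle is item 3 and its effect on items 4 and 5. Since items 4 and 5 lie beyond the degree-$2$ node along the path, their eigenvalue computations must use only data that is insensitive to which of the two possible modules actually occurs at item 3. The key point, exactly mirroring Corollaries \ref{w2ihw} and \ref{w1bw1a}, is that the symmetric combinations of $\ysl$-lowering operators arising when $\tilde h_{i',1}$ is commuted through $(x_{2,0}^{-})^2$ depend only on the associated polynomial $(u-(a_1+2))(u-(a_1+3))$ and not on the finer module structure; hence items 4 and 5 are determined unambiguously. The remaining work is careful bookkeeping with the rescaled long-root generators $\tilde x_{1,r}^{\pm},\tilde h_{1,r}$ and the factor $d_1=3$, and a final check that each root satisfies $\operatorname{Re}(a)\geq\operatorname{Re}(a_1)$ (for $Y_2$) or $\operatorname{Re}(a)\geq\operatorname{Re}(\tfrac{a_1}{3})$ (for $Y_1$), which completes Step 5 of Proposition \ref{g2Lihw} in this case.
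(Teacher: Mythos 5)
Your proposal follows essentially the same route as the paper's own proof: the same chain of five lemmas along the path determined by $w_2$, maximality via Steps 2--3 of Proposition \ref{g2Lihw}, degrees read off Proposition \ref{g2fwfr}, eigenvalue computations with the two bracket identities, Lemma \ref{g2puisd3} at the degree-two node, and the key observation (exactly as in Corollaries \ref{w2ihw} and \ref{w1bw1a}) that the data needed beyond item 3 depends only on the associated polynomial $\big(u-(a_1+2)\big)\big(u-(a_1+3)\big)$ and not on which of the two possible modules occurs. One slip to correct: in item 2 what vanishes is $\tilde h_{1,0}v_1^{+}$ (indeed $\tilde h_{1,k}v_1^{+}=0$ for all $k$, since $\pi_1(u)=1$), not $\tilde h_{1,0}x_{2,0}^{-}v_1^{+}$ --- the latter would contradict your own degree count, because the $\tilde h_{1,0}$-eigenvalue on $x_{2,0}^{-}v_1^{+}$ is $1$.
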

\begin{proof}
The item (i) follows from the representations of $\ygg$, so we omit the proof. The second item is proved in Lemma \ref{c7c2l1}. Lemma \ref{c7c2l2} is devoted to prove the third item. The item (iv) will be showed in Lemma \ref{c7c2l3}. The item (v) is proved in Lemma \ref{c7c2l4}.
\end{proof}
\begin{lemma}\label{c7c2l1}
$Y_1\left(x_{2,0}^{-}v_1^{+}\right)\cong W_1\left(\frac{a_1}{3}+\frac{1}{2}\right)$.
\end{lemma}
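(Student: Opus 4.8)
The plan is to follow the template used in the preceding case-analysis lemmas (for instance Lemma~\ref{spc3c1} and Lemma~\ref{c7c1l4}): first pin down the degree of the associated polynomial of the $Y_1$-module through the action of $\tilde{h}_{1,0}$, conclude that the module is two-dimensional and of the form $W_1(a)$, and then read off $a$ from the eigenvalue of $\tilde{h}_{1,1}$ on the generating vector.

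First I would compute the weight of $x_{2,0}^{-}v_1^{+}$. Since $v_1^{+}$ has weight $\omega_2$ and $x_{2,0}^{-}$ lowers it by $\alpha_2=-\omega_1+2\omega_2$, its weight is $\omega_1-\omega_2=s_2(\omega_2)$, in agreement with Proposition~\ref{g2fwfr}. Because the rescaled generators $\tilde{x}_{1,r}^{\pm}=\frac{\sqrt{3}}{3^{r+1}}x_{1,r}^{\pm}$, $\tilde{h}_{1,r}=\frac{1}{3^{r+1}}h_{1,r}$ realize $Y_1$ as a genuine copy of $\ysl$, the element $\tilde{h}_{1,0}$ acts on a weight vector of weight $\mu$ by $\langle\mu,\alpha_1^{\vee}\rangle$; hence $\tilde{h}_{1,0}(x_{2,0}^{-}v_1^{+})=\langle\omega_1-\omega_2,\alpha_1^{\vee}\rangle\,(x_{2,0}^{-}v_1^{+})=x_{2,0}^{-}v_1^{+}$. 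Thus the associated polynomial has degree one, say $P(u)=u-a$, and by the $\ysl$-theory invoked in Step~5 of Proposition~\ref{g2Lihw} the module $Y_1(x_{2,0}^{-}v_1^{+})$ is two-dimensional and isomorphic to $W_1(a)$.

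It remains to determine $a$, which equals the eigenvalue of $\tilde{h}_{1,1}$ on $x_{2,0}^{-}v_1^{+}$. Using the commutation formula $[\tilde{h}_{1,1},x_{2,0}^{-}]=\frac{1}{3}x_{2,1}^{-}+\frac{1}{2}x_{2,0}^{-}+x_{2,0}^{-}\tilde{h}_{1,0}$ recorded at the start of this section, I would write $\tilde{h}_{1,1}(x_{2,0}^{-}v_1^{+})=[\tilde{h}_{1,1},x_{2,0}^{-}]v_1^{+}+x_{2,0}^{-}\tilde{h}_{1,1}v_1^{+}$. Two facts then collapse the right-hand side: since $V_{a_1}(\omega_2)$ has trivial first Drinfeld polynomial $\pi_1=1$, the series $h_1(u)$ acts on $v_1^{+}$ by the constant $1$, so $\tilde{h}_{1,k}v_1^{+}=0$ for every $k$ (in particular $\tilde{h}_{1,1}v_1^{+}=\tilde{h}_{1,0}v_1^{+}=0$); and because $Y_2(v_1^{+})\cong W_1(a_1)$, Corollary~\ref{slw1a} gives $x_{2,1}^{-}v_1^{+}=a_1\,x_{2,0}^{-}v_1^{+}$. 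Substituting yields $\tilde{h}_{1,1}(x_{2,0}^{-}v_1^{+})=\left(\frac{a_1}{3}+\frac{1}{2}\right)x_{2,0}^{-}v_1^{+}$, so $a=\frac{a_1}{3}+\frac{1}{2}$ and therefore $Y_1(x_{2,0}^{-}v_1^{+})\cong W_1\!\left(\frac{a_1}{3}+\frac{1}{2}\right)$.

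There is no serious obstacle here; the whole argument is a short eigenvalue computation. The only points requiring care are the bookkeeping of the rescaling factors that make $Y_1$ literally isomorphic to $\ysl$, and the observation that $v_1^{+}$ is annihilated by \emph{all} $h_{1,k}$ rather than merely by $h_{1,0}$, which is exactly what eliminates the term $x_{2,0}^{-}\tilde{h}_{1,1}v_1^{+}$ and keeps the answer clean.
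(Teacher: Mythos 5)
Your proposal is correct and follows essentially the same route as the paper's proof: use the weight of $x_{2,0}^{-}v_1^{+}$ (via Proposition \ref{g2fwfr}) to see that $\tilde{h}_{1,0}$ acts by $1$, hence the associated polynomial $P(u)=u-a$ has degree one, and then extract $a$ from the eigenvalue of $\tilde{h}_{1,1}$ using the commutator $[\tilde{h}_{1,1},x_{2,0}^{-}]=\frac{1}{3}x_{2,1}^{-}+\frac{1}{2}x_{2,0}^{-}+x_{2,0}^{-}\tilde{h}_{1,0}$ together with $x_{2,1}^{-}v_1^{+}=a_1x_{2,0}^{-}v_1^{+}$. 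The only difference is cosmetic: you spell out the facts the paper leaves implicit, namely that $\tilde{h}_{1,k}v_1^{+}=0$ for all $k$ because $\pi_1=1$, and that $x_{2,1}^{-}v_1^{+}=a_1x_{2,0}^{-}v_1^{+}$ comes from $Y_2(v_1^{+})\cong W_1(a_1)$ and Corollary \ref{slw1a}.
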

\begin{proof}
It follows from Proposition \ref{g2fwfr} that $$\tilde{h}_{1,0}x_{2,0}^{-}v_1^{+}=x_{2,0}^{-}v_1^{+}.$$ Thus the associated polynomial $P\left(u\right)$ is of degree 1. Suppose that $$P\left(u\right)=\left(u-a\right).$$
The eigenvalue of $x_{2,0}^{-}v_1^{+}$ under $\tilde{h}_{1,1}$ will tell the values of $a$.
\begin{align*}
% \nonumber to remove numbering \left(before each equation\right)
   \tilde{h}_{1,1} x_{2,0}^{-}v_1^{+}&= [\tilde{h}_{1,1}, x_{2,0}^{-}]v_1^{+}\\
   &=\left(\frac{1}{3}x_{2,1}^{-}+\frac{1}{2}x_{2,0}^{-}+x_{2,0}^{-}\tilde{h}_{1,0}\right)v_1^{+}\\
   &=\left(\frac{a_1}{3}+\frac{1}{2}\right)x_{2,0}^{-}v_1^{+}.
\end{align*}

By the representation theory of Weyl modules of $\ysl$, we have $$Y_1\left(x_{2,0}^{-}v_1^{+}\right)\cong W_1\left(\frac{a_1}{3}+\frac{1}{2}\right).$$
\end{proof}
\begin{remark}
It follows from Corollary \ref{slw1a} that $\tilde{x}_{1,1}^{-}x_{2,0}^{-}v_1^{+}=\left(\frac{a_1}{3}+\frac{1}{2}\right)x_{1,0}^{-}x_{2,0}^{-}v_1^{+}$. Then $x_{1,1}^{-}x_{2,0}^{-}v_1^{+}=\left(a_1+\frac{3}{2}\right)x_{1,0}^{-}x_{2,0}^{-}v_1^{+}$.
\end{remark}
\begin{lemma}\label{c7c2l2}
$Y_2\left(x_{1,0}^{-}x_{2,0}^{-}v_1^{+}\right)$ is isomorphic to either $W_2\left(a_1+2\right)$ or $W_1\left(a_1+3\right)\otimes W_1\left(a_1+2\right)$.
\end{lemma}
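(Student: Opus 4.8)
The plan is to follow the template already used for the degree-two computations in the symplectic and orthogonal cases (Lemmas \ref{l-2is34d}, \ref{spi=l2} and \ref{spl-2i34d}). First I would use Proposition \ref{g2fwfr} to record that $x_{1,0}^{-}x_{2,0}^{-}v_1^{+}$ spans the weight space of weight $s_1s_2(\omega_2)=-\omega_1+2\omega_2$. Since $d_2=1$ and the coefficient of $\omega_2$ in this weight is $2$, we get $h_{2,0}\left(x_{1,0}^{-}x_{2,0}^{-}v_1^{+}\right)=2\,x_{1,0}^{-}x_{2,0}^{-}v_1^{+}$, so the associated polynomial $P(u)$ of the highest weight $Y_2$-module $Y_2\left(x_{1,0}^{-}x_{2,0}^{-}v_1^{+}\right)$ has $\operatorname{Deg}\big(P(u)\big)=2$. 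Writing $P(u)=(u-a)(u-b)$, the Laurent expansion of $P(u+1)/P(u)$ shows that the eigenvalues of $h_{2,1}$ and $h_{2,2}$ on $x_{1,0}^{-}x_{2,0}^{-}v_1^{+}$ determine the pair $\{a,b\}$.

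Next I would assemble the input data. Because $Y_2(v_1^{+})\cong W_1(a_1)$ and $x_{2,0}^{-}v_1^{+}$ is its lowest weight vector, Corollary \ref{slw1a} gives $h_{2,0}\left(x_{2,0}^{-}v_1^{+}\right)=-x_{2,0}^{-}v_1^{+}$, $h_{2,1}\left(x_{2,0}^{-}v_1^{+}\right)=-a_1\,x_{2,0}^{-}v_1^{+}$ and $h_{2,2}\left(x_{2,0}^{-}v_1^{+}\right)=-a_1^2\,x_{2,0}^{-}v_1^{+}$; and from Lemma \ref{c7c2l1} together with the rescaling $x_{1,1}^{-}=3^{3/2}\tilde{x}_{1,1}^{-}$ I have $x_{1,1}^{-}x_{2,0}^{-}v_1^{+}=\left(a_1+\frac{3}{2}\right)x_{1,0}^{-}x_{2,0}^{-}v_1^{+}$. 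Using the relation $[h_{2,1},x_{1,0}^{-}]=3x_{1,1}^{-}+\frac{3}{2}\left(3x_{1,0}^{-}+2x_{1,0}^{-}h_{2,0}\right)$ recorded at the start of Section \ref{g2spos5}, the expansion $h_{2,1}\left(x_{1,0}^{-}x_{2,0}^{-}v_1^{+}\right)=[h_{2,1},x_{1,0}^{-}]x_{2,0}^{-}v_1^{+}+x_{1,0}^{-}h_{2,1}x_{2,0}^{-}v_1^{+}$ should collapse to the scalar $2a_1+6=2(a_1+3)$.

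The main computational step is $h_{2,2}$. From the defining relation of the Yangian, $[h_{2,2},x_{1,0}^{-}]-[h_{2,1},x_{1,1}^{-}]=\frac{3}{2}\left(h_{2,1}x_{1,0}^{-}+x_{1,0}^{-}h_{2,1}\right)$, I would expand $h_{2,2}\left(x_{1,0}^{-}x_{2,0}^{-}v_1^{+}\right)$, reducing each term to a multiple of $x_{1,0}^{-}x_{2,0}^{-}v_1^{+}$ via the data above (commuting $h_{2,1}$ through $x_{1,1}^{-}$ and reusing the $h_{2,1}$-eigenvalue just found), to reach the scalar $2a_1^2+12a_1+18=2(a_1+3)^2$. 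Having found that the eigenvalues of $h_{2,0},h_{2,1},h_{2,2}$ are $2,\ 2(a_1+3),\ 2(a_1+3)^2$, the expansion matches the hypothesis of Lemma \ref{g2puisd3} with $d=2$, which forces $P(u)=\big(u-(a_1+2)\big)\big(u-(a_1+3)\big)$. These are two consecutive roots, so by Theorem \ref{wmfsl2} the local Weyl module of $\ysl$ attached to $P$ is $W_1(a_1+3)\otimes W_1(a_1+2)$; since $Y_2\left(x_{1,0}^{-}x_{2,0}^{-}v_1^{+}\right)$ is a highest weight quotient of it with the same Drinfeld polynomial, and the only such quotients are the full $4$-dimensional $W_1(a_1+3)\otimes W_1(a_1+2)$ and its $3$-dimensional irreducible quotient $W_2(a_1+2)$, the lemma follows. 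The chief obstacle is the bookkeeping in the $h_{2,2}$ step: applying the degree-raising relation for $[h_{2,2},x_{1,0}^{-}]$ correctly and keeping the rescaled generators $\tilde{x}_{1,r}^{-}$ straight against the unscaled $x_{1,r}^{-}$; everything else is a direct transcription of the $B$, $C$ and $D$ arguments.
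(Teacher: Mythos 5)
Your proposal is correct and follows essentially the same route as the paper's proof: degree $2$ from the weight $-\omega_1+2\omega_2$, the eigenvalue data $h_{2,k}(x_{2,0}^{-}v_1^{+})=-a_1^{k}x_{2,0}^{-}v_1^{+}$ and $x_{1,1}^{-}x_{2,0}^{-}v_1^{+}=\left(a_1+\tfrac{3}{2}\right)x_{1,0}^{-}x_{2,0}^{-}v_1^{+}$, the two commutator relations to obtain eigenvalues $2(a_1+3)$ and $2(a_1+3)^2$ for $h_{2,1}$ and $h_{2,2}$, then Lemma \ref{g2puisd3} to pin down $P(u)=\big(u-(a_1+2)\big)\big(u-(a_1+3)\big)$ and the $\ysl$ local Weyl module theory to conclude. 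The scalars you predict match the paper's computations exactly.
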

\begin{proof}
It follows from Proposition \ref{g2fwfr} that $$h_{2,0}x_{1,0}^{-}x_{2,0}^{-}v_1^{+}=2x_{1,0}^{-}x_{2,0}^{-}v_1^{+}.$$ Thus the associated polynomial $P\left(u\right)$ is of degree 2. Suppose that $$P\left(u\right)=\left(u-a\right)\left(u-b\right),$$ where $\operatorname{Re}\left(a\right)\leq \operatorname{Re}\left(b\right)$.
The eigenvalues of both $x_{1,0}^{-}x_{2,0}^{-}v_1^{+}$ under $h_{2,1}$ and under $h_{2,2}$ will tell the values of both $a$ and $b$.
\begin{align*}
% \nonumber to remove numbering \left(before each equation\right)
h_{2,1} &x_{1,0}^{-}x_{2,0}^{-}v_1^{+}\\
   &= [h_{2,1}, x_{1,0}^{-}]x_{2,0}^{-}v_1^{+}+ x_{1,0}^{-}h_{2,1}x_{2,0}^{-}v_1^{+}\\
   &=\left(3x_{1,1}^{-}+\frac{3}{2}\left(3x_{1,0}^{-}+2x_{1,0}^{-}h_{2,0}\right)\right)x_{2,0}^{-}v_1^{+} -a_1x_{1,0}^{-}x_{2,0}^{-}v_1^{+}\\
   &=\Bigg(3\left(a_1+\frac{3}{2}\right)+\frac{3}{2}(3-2)-a_1\Bigg)x_{1,0}^{-}x_{2,0}^{-}v_1^{+}\\
   &= 2\left(a_1+3\right)x_{1,0}^{-}x_{2,0}^{-}v_1^{+}.
\end{align*}
\begin{align*}
% \nonumber to remove numbering \left(before each equation\right)
h_{2,2}& x_{1,0}^{-}x_{2,0}^{-}v_1^{+}\\
   &= [h_{2,2}, x_{1,0}^{-}]x_{2,0}^{-}v_1^{+}+ x_{1,0}^{-}h_{2,2}x_{2,0}^{-}v_1^{+}\\
   &= [h_{2,2}, x_{1,0}^{-}]x_{2,0}^{-}v_1^{+}-a_1^2x_{1,0}^{-}x_{2,0}^{-}v_1^{+}\\
   &=\left([h_{2,1},x_{1,1}^{-}]+\frac{3}{2}\left(h_{2,1}x_{1,0}^{-}+x_{1,0}^{-}h_{2,1}\right)\right)x_{2,0}^{-}v_1^{+} -a_1^2x_{1,0}^{-}x_{2,0}^{-}v_1^{+}\\
   &=\left(h_{2,1}x_{1,1}^{-}-x_{1,1}^{-}h_{2,1}+\frac{3}{2}\left(h_{2,1}x_{1,0}^{-}+x_{1,0}^{-}h_{2,1}\right)\right)x_{2,0}^{-}v_1^{+} -a_1^2x_{1,0}^{-}x_{2,0}^{-}v_1^{+}\\
      &=\left(h_{2,1}\big(x_{1,1}^{-}+\frac{3}{2}x_{1,0}^{-}\big)-\big(x_{1,1}^{-}-\frac{3}{2}x_{1,0}^{-}\big)h_{2,1}\right)x_{2,0}^{-}v_1^{+} -a_1^2x_{1,0}^{-}x_{2,0}^{-}v_1^{+}\\
   &=\left(a_1+\frac{3}{2}+\frac{3}{2}\right)h_{2,1}x_{1,0}^{-}x_{2,0}^{-}v_1^{+} +a_1\left(a_1+\frac{3}{2}\right)x_{1,0}^{-}x_{2,0}^{-}v_1^{+}\\
   &-\frac{3}{2}a_1x_{1,0}^{-}x_{2,0}^{-}v_1^{+}-a_1^2x_{1,0}^{-}x_{2,0}^{-}v_1^{+}\\
   &=\Big(2\left(a_1+3\right)\left(a_1+3\right)+a^2_1+\frac{3}{2}a_1-\frac{3}{2}a_1-a^2_1\Big)x_{1,0}^{-}x_{2,0}^{-}v_1^{+}\\
   &=2\left(a_1+3\right)^2x_{1,0}^{-}x_{2,0}^{-}v_1^{+}.
\end{align*}
The associated polynomial of $Y_2\left(x_{1,0}^{-}x_{2,0}^{-}v_1^{+}\right)$ is $\big(u-(a_1+2)\big)\big(u-(a_1+3)\big)$ by Lemma \ref{g2puisd3}. By the representation theory of the Weyl modules of $\ysl$, $Y_2\left(x_{1,0}^{-}x_{2,0}^{-}v_1^{+}\right)$ is isomorphic to either $W_2\left(a_1+2\right)$ or $W_1\left(a_1+3\right)\otimes W_1\left(a_1+2\right)$.
\end{proof}
\begin{lemma}\label{c7c2l3}
$Y_1\left(\left(x_{2,0}^{-}\right)^2x_{1,0}^{-}x_{2,0}^{-}v_1^{+}\right)\cong W_1\left(\frac{a_1}{3}+\frac{7}{6}\right)$.
\end{lemma}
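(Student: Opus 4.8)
The plan is to follow the template of the degree-one lemmas already established in this section, such as Lemmas~\ref{c7c2l1} and~\ref{c7c1l4}: first determine the degree of the associated polynomial $P(u)$ of the highest-weight $Y_1$-module $Y_1\big((x_{2,0}^{-})^2x_{1,0}^{-}x_{2,0}^{-}v_1^{+}\big)$, and then extract its single root from the eigenvalue of $\tilde{h}_{1,1}$. First I would observe that $(x_{2,0}^{-})^2x_{1,0}^{-}x_{2,0}^{-}v_1^{+}$ is the weight vector $v_{\sigma_3(\omega_2)}$ of the path $w_2=s_2s_1s_2s_1s_2$; by Proposition~\ref{g2fwfr} its weight is $\omega_1-2\omega_2$, so $\tilde{h}_{1,0}=\tfrac13 h_{1,0}$ acts by $\langle\omega_1-2\omega_2,\alpha_1\rangle=1$. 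Hence $\operatorname{Deg}\big(P(u)\big)=1$, the module is isomorphic to $W_1(a)$ for a single $a$, and it remains only to compute the $\tilde{h}_{1,1}$-eigenvalue on this vector.

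Writing $w=x_{1,0}^{-}x_{2,0}^{-}v_1^{+}=v_{\sigma_2(\omega_2)}$, I would expand
$$\tilde{h}_{1,1}(x_{2,0}^{-})^2 w=[\tilde{h}_{1,1},(x_{2,0}^{-})^2]w+(x_{2,0}^{-})^2\tilde{h}_{1,1}w$$
using the commutation relation $[\tilde{h}_{1,1},x_{2,0}^{-}]=\tfrac13 x_{2,1}^{-}+\tfrac12 x_{2,0}^{-}+x_{2,0}^{-}\tilde{h}_{1,0}$ recorded at the start of this section. Three facts drive the reduction. Since $\operatorname{wt}(w)=-\omega_1+2\omega_2$ we get $\tilde{h}_{1,0}w=-w$, and since $\alpha_2=-\omega_1+2\omega_2$ the vector $x_{2,0}^{-}w$ has weight $0$, so $\tilde{h}_{1,0}x_{2,0}^{-}w=0$. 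Moreover $w$ is the lowest weight vector of $Y_1(x_{2,0}^{-}v_1^{+})\cong W_1\big(\tfrac{a_1}{3}+\tfrac12\big)$ from Lemma~\ref{c7c2l1}, so equation~(\ref{Cor3.3}) gives $\tilde{h}_{1,1}w=-\big(\tfrac{a_1}{3}+\tfrac12\big)w$. With these substitutions the two $\tfrac12(x_{2,0}^{-})^2w$ terms cancel against the $\tilde{h}_{1,0}$-contributions and the commutator collapses to $\tfrac13\big(x_{2,1}^{-}x_{2,0}^{-}+x_{2,0}^{-}x_{2,1}^{-}\big)w$.

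The one non-routine point, which I expect to be the main obstacle, is evaluating this symmetric combination, since by Lemma~\ref{c7c2l2} the $Y_2$-module $Y_2(w)$ may be either $W_2(a_1+2)$ or $W_1(a_1+3)\otimes W_1(a_1+2)$. It resolves because the relevant scalar is the same in both cases: item~(iv) of Corollary~\ref{w2ihw} gives $(2(a_1+2)+1)(x_{2,0}^{-})^2w$, while item~(iv) of Corollary~\ref{w1bw1a} gives $((a_1+3)+(a_1+2))(x_{2,0}^{-})^2w$, and both equal $(2a_1+5)(x_{2,0}^{-})^2w$. Assembling the pieces yields $\tilde{h}_{1,1}(x_{2,0}^{-})^2w=\big(\tfrac{2a_1+5}{3}-\tfrac{a_1}{3}-\tfrac12\big)(x_{2,0}^{-})^2w=\big(\tfrac{a_1}{3}+\tfrac76\big)(x_{2,0}^{-})^2w$, so $a=\tfrac{a_1}{3}+\tfrac76$. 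Finally, since $W_1\big(\tfrac{a_1}{3}+\tfrac76\big)$ is the two-dimensional local Weyl module of $\ysl$ attached to $P(u)$, its maximality forces $Y_1\big((x_{2,0}^{-})^2x_{1,0}^{-}x_{2,0}^{-}v_1^{+}\big)\cong W_1\big(\tfrac{a_1}{3}+\tfrac76\big)$, which is the claim.
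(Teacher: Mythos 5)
Your proposal is correct and follows essentially the same route as the paper's proof: establish $\operatorname{Deg}\big(P(u)\big)=1$ from the weight $\omega_1-2\omega_2$, expand $\tilde{h}_{1,1}$ through the relation $[\tilde{h}_{1,1},x_{2,0}^{-}]=\frac{1}{3}x_{2,1}^{-}+\frac{1}{2}x_{2,0}^{-}+x_{2,0}^{-}\tilde{h}_{1,0}$, reduce to $\frac{1}{3}\left(x_{2,1}^{-}x_{2,0}^{-}+x_{2,0}^{-}x_{2,1}^{-}\right)x_{1,0}^{-}x_{2,0}^{-}v_1^{+}$ using the weights and Lemma \ref{c7c2l1}, and evaluate this to obtain $a=\frac{a_1}{3}+\frac{7}{6}$. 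The only difference is that you explicitly verify that both possible isomorphism types of $Y_2\left(x_{1,0}^{-}x_{2,0}^{-}v_1^{+}\right)$ from Lemma \ref{c7c2l2} give the same scalar $2a_1+5$ (via item (iv) of Corollary \ref{w2ihw} and item (iv) of Corollary \ref{w1bw1a}), whereas the paper cites only Corollary \ref{w1bw1a}; this is a minor refinement of the same argument, not a different one.
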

\begin{proof}
It follows from Proposition \ref{g2fwfr} that $$\tilde{h}_{1,0}\left(x_{2,0}^{-}\right)^2x_{1,0}^{-}x_{2,0}^{-}v_1^{+}=\left(x_{2,0}^{-}\right)^2x_{1,0}^{-}x_{2,0}^{-}v_1^{+}.$$ Thus the associated polynomial $P\left(u\right)$ is of degree 1. Suppose that $$P\left(u\right)=\left(u-a\right).$$
The eigenvalue of $\left(x_{2,0}^{-}\right)^2x_{1,0}^{-}x_{2,0}^{-}v_1^{+}$ under $\tilde{h}_{1,1}$ will tell the value of $a$.

Note that $wt\left(x_{1,0}^{-} x_{2,0}^{-}v_1^{+}\right)=-\omega_1+2\omega_2$ and $wt\left(x_{2,0}^{-}x_{1,0}^{-} x_{2,0}^{-}v_1^{+}\right)=0$.
\begin{align*}
% \nonumber to remove numbering \left(before each equation\right)
   &\tilde{h}_{1,1}\left(x_{2,0}^{-}\right)^2x_{1,0}^{-} x_{2,0}^{-}v_1^{+}\\
   &= [\tilde{h}_{1,1}, x_{2,0}^{-}]x_{2,0}^{-}x_{1,0}^{-} x_{2,0}^{-}v_1^{+}+x_{2,0}^{-}[\tilde{h}_{1,1}, x_{2,0}^{-}]x_{1,0}^{-} x_{2,0}^{-}v_1^{+}+\left(x_{2,0}^{-}\right)^2\tilde{h}_{1,1}x_{1,0}^{-} x_{2,0}^{-}v_1^{+}\\
   &=\left(\frac{1}{3}x_{2,1}^{-}+\frac{1}{2}x_{2,0}^{-}+x_{2,0}^{-}\tilde{h}_{1,0}\right)x_{2,0}^{-}x_{1,0}^{-} x_{2,0}^{-}v_1^{+}\\
   &+ x_{2,0}^{-}\left(\frac{1}{3}x_{2,1}^{-}+\frac{1}{2}x_{2,0}^{-}+x_{2,0}^{-}\tilde{h}_{1,0}\right)x_{1,0}^{-} x_{2,0}^{-}v_1^{+}v_1^{+}-\left(\frac{a_1}{3}+\frac{1}{2}\right)\left(x_{2,0}^{-}\right)^2x_{1,0}^{-} x_{2,0}^{-}v_1^{+}\\
   &=\frac{1}{3}\left(x_{2,1}^{-}x_{2,0}^{-}+x_{2,0}^{-}x_{2,1}^{-}\right)x_{1,0}^{-} x_{2,0}^{-}v_1^{+}- \left(\frac{a_1}{3}+\frac{1}{2}\right)\left(x_{2,0}^{-}\right)^2x_{1,0}^{-} x_{2,0}^{-}v_1^{+}\\
   &= \left(\frac{1}{3}\left(a_1+2+a_1+3\right)-\left(\frac{a_1}{3}+\frac{1}{2}\right)\right)\left(x_{2,0}^{-}\right)^2
   x_{1,0}^{-} x_{2,0}^{-}v_1^{+}\ \text{(by Corollary \ref{w1bw1a})}\\
   &= \left(\frac{a_1}{3}+\frac{7}{6}\right)\left(x_{2,0}^{-}\right)^2x_{1,0}^{-} x_{2,0}^{-}v_1^{+}.
\end{align*}
By the representation theory of the Weyl modules of $\ysl$, $$Y_1\left(\left(x_{2,0}^{-}\right)^2x_{1,0}^{-}x_{2,0}^{-}v_1^{+}\right)\cong W_1\left(\frac{a_1}{3}+\frac{7}{6}\right).$$
\end{proof}
\begin{lemma}\label{c7c2l4}
$Y_2\left(x_{1,0}^{-}\left(x_{2,0}^{-}\right)^2x_{1,0}^{-}x_{2,0}^{-}v_1^{+}\right)\cong W_1\left(a_1+5\right)$.
\end{lemma}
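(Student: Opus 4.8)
The plan is to mirror the degree-one arguments already used in Lemmas \ref{c7c2l1} and \ref{c7c2l3}: first pin down the degree of the associated polynomial from a weight computation, conclude that $Y_2$ acting on our vector is two-dimensional and hence isomorphic to some $W_1(a)$, and then recover $a$ from a single eigenvalue under $h_{2,1}$. Write $v=x_{1,0}^{-}\left(x_{2,0}^{-}\right)^2 x_{1,0}^{-} x_{2,0}^{-}v_1^{+}$ and $w=\left(x_{2,0}^{-}\right)^2 x_{1,0}^{-} x_{2,0}^{-}v_1^{+}$, so that $v=x_{1,0}^{-}w$. By the $\omega_2$ branch of Proposition \ref{g2fwfr}, $v=v_{\sigma_4(\omega_2)}$ has weight $-\omega_1+\omega_2$, whence $h_{2,0}v=v$; therefore $\operatorname{Deg}\big(P(u)\big)=1$ and $Y_2(v)\cong W_1(a)$ by the representation theory of $\ysl$. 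It then remains only to show $h_{2,1}v=(a_1+5)v$.

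I would expand $h_{2,1}v=[h_{2,1},x_{1,0}^{-}]w+x_{1,0}^{-}h_{2,1}w$ and substitute the defining relation $[h_{2,1},x_{1,0}^{-}]=3x_{1,1}^{-}+\tfrac{3}{2}\big(3x_{1,0}^{-}+2x_{1,0}^{-}h_{2,0}\big)$. The term $x_{1,1}^{-}w$ is controlled by Lemma \ref{c7c2l3}: since $Y_1(w)\cong W_1\!\left(\tfrac{a_1}{3}+\tfrac{7}{6}\right)$ and $w$ is its highest weight vector, Corollary \ref{slw1a}, combined with the rescaling $\tilde x_{1,1}^{-}=\tfrac{\sqrt3}{9}x_{1,1}^{-}$ and $\tilde x_{1,0}^{-}=\tfrac{\sqrt3}{3}x_{1,0}^{-}$, gives $x_{1,1}^{-}w=\big(a_1+\tfrac{7}{2}\big)x_{1,0}^{-}w$. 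The factor $h_{2,0}w$ is read off from the weight of $w=v_{\sigma_3(\omega_2)}$, which is $\omega_1-2\omega_2$, so $h_{2,0}w=-2w$. Assembling these, $[h_{2,1},x_{1,0}^{-}]w=(3a_1+9)\,x_{1,0}^{-}w=(3a_1+9)v$.

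The remaining and most delicate input is $h_{2,1}w$. Here $w$ is the lowest weight vector of the $Y_2$-module $Y_2(x_{1,0}^{-}x_{2,0}^{-}v_1^{+})$, which by Lemma \ref{c7c2l2} is \emph{either} $W_2(a_1+2)$ \emph{or} $W_1(a_1+3)\otimes W_1(a_1+2)$. The key point I would stress is that this ambiguity is harmless: both candidates share the Drinfeld polynomial $\big(u-(a_1+2)\big)\big(u-(a_1+3)\big)$, and the $h_2(u)$-eigenvalue on the one-dimensional lowest weight space is determined by that polynomial alone. Concretely, for $W_2(a_1+2)$ Proposition \ref{wra} gives $h_{2,1}w_0=-2(a_1+2)w_0$; for the four-dimensional module one uses the short exact sequence $0\to\ysl v_0\to W_1(a_1+3)\otimes W_1(a_1+2)\to W_2(a_1+2)\to 0$ recorded earlier for $\ysl$, observing that the lowest weight vector has weight $-2$ while $v_0$ has weight $0$, so it maps isomorphically onto $w_0$ and inherits the same eigenvalue. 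In either case $h_{2,1}w=-(2a_1+4)w$, hence $x_{1,0}^{-}h_{2,1}w=-(2a_1+4)v$. Combining, $h_{2,1}v=(3a_1+9)v-(2a_1+4)v=(a_1+5)v$, so $a=a_1+5$ and $Y_2(v)\cong W_1(a_1+5)$. I expect the only genuine points requiring care to be the bookkeeping of the rescaled $\ysl$-generators and the justification that the two possibilities in Lemma \ref{c7c2l2} produce identical eigenvalues.
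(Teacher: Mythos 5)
Your proposal is correct and follows essentially the same route as the paper's proof: the same weight argument pins $\operatorname{Deg}\big(P(u)\big)=1$, and the same expansion $h_{2,1}v=[h_{2,1},x_{1,0}^{-}]w+x_{1,0}^{-}h_{2,1}w$ with $w=\left(x_{2,0}^{-}\right)^2x_{1,0}^{-}x_{2,0}^{-}v_1^{+}$, using the inputs $x_{1,1}^{-}w=\left(a_1+\tfrac{7}{2}\right)x_{1,0}^{-}w$, $h_{2,0}w=-2w$, and $h_{2,1}w=-2\left(a_1+2\right)w$, gives the coefficient $(3a_1+9)-(2a_1+4)=a_1+5$ exactly as in the paper. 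The only difference is that you make explicit why both candidates in Lemma \ref{c7c2l2} force the same eigenvalue $-2(a_1+2)$ on the lowest weight vector (via Proposition \ref{wra} and the short exact sequence), a step the paper uses tacitly.
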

\begin{proof}
It follows from Proposition \ref{g2fwfr} that $$h_{2,0}x_{1,0}^{-}\left(x_{2,0}^{-}\right)^2x_{1,0}^{-}x_{2,0}^{-}v_1^{+}=x_{1,0}^{-}\left(x_{2,0}^{-}\right)^2x_{1,0}^{-}x_{2,0}^{-}v_1^{+}.$$ Thus the associated polynomial $P\left(u\right)$ is of degree 1. Suppose that $$P\left(u\right)=\left(u-a\right).$$
The eigenvalue of $x_{1,0}^{-}\left(x_{2,0}^{-}\right)^2x_{1,0}^{-}x_{2,0}^{-}v_1^{+}$ under $h_{2,1}$ will tell the value of $a$.

%Note that $wt\left(x_{2,0}^{-}x_{1,0}^{-} x_{2,0}^{-}v_1^{+}v_1^{+}\right)=0$.
\begin{align*}
% \nonumber to remove numbering \left(before each equation\right)
   &h_{2,1}x_{1,0}^{-}\left(x_{2,0}^{-}\right)^2x_{1,0}^{-} x_{2,0}^{-}v_1^{+}\\
   &= [h_{2,1},x_{1,0}^{-}]\left(x_{2,0}^{-}\right)^2x_{1,0}^{-} x_{2,0}^{-}v_1^{+}+ x_{1,0}^{-}h_{2,1}\left(x_{2,0}^{-}\right)^2x_{1,0}^{-} x_{2,0}^{-}v_1^{+}\\
   &= \Big(3x_{1,1}^{-}+\frac{3}{2}\left(3x_{1,0}^{-}+2x_{1,0}^{-}h_{2,0}\right)\Big)\left(x_{2,0}^{-}\right)^2x_{1,0}^{-} x_{2,0}^{-}v_1^{+}\\
   &-2\left(a_1+2\right)x_{1,0}^{-}\left(x_{2,0}^{-}\right)^2x_{1,0}^{-} x_{2,0}^{-}v_1^{+}\\
   &= \Bigg(3\left(a_1+\frac{7}{2}\right)-\frac{3}{2}-2\left(a_1+2\right)\Bigg)x_{1,0}^{-}\left(x_{2,0}^{-}\right)^2x_{1,0}^{-} x_{2,0}^{-}v_1^{+}\\
   &=\Big(a_1+5\Big)x_{1,0}^{-}\left(x_{2,0}^{-}\right)^2x_{1,0}^{-} x_{2,0}^{-}v_1^{+}
\end{align*}
By the representation theory of the Weyl modules of $\ysl$,
\begin{center}
$Y_2\left(x_{1,0}^{-}\left(x_{2,0}^{-}\right)^2x_{1,0}^{-} x_{2,0}^{-}v_1^{+}\right)\cong W_1\left(a_1+5\right).$
\end{center}
\end{proof}
\section{On the local Weyl modules of $\yg$ when $\g$ is of type $G_2$}
Recall $\pi=\big(\pi_1(u),\pi_2(u)\big)$ and $\pi_i\left(u\right)=\prod\limits_{j=1}^{m_i}\left(u-a_{i,j}\right)$ for $i\in I=\{1,2\}$. Let $\lambda=\sum\limits_{i\in I} m_i\omega_i$. In \cite{Na}, the dimension of the local Weyl module $W(\lambda)$ is given.
\begin{proposition}[Corollary 9.5, \cite{Na}]\label{dwmocsp}
Let $\lambda=\sum\limits_{i\in I} m_i\omega_i$. Then
$$\operatorname{Dim}\big(W(\lambda)\big)=\prod\limits_{i\in I} \Big(\operatorname{Dim}\big(W(\omega_i)\big)\Big)^{m_i}.$$
\end{proposition}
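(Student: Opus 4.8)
The plan is to read off the dimension formula from the structural description of the local Weyl module that is already available. By Corollary B of \cite{Na}, $W(\lambda)$ is isomorphic to an iterated fusion product of the fundamental Weyl modules $W(\omega_i)$, each occurring with multiplicity $m_i$ and with suitably generic localization parameters. Combining this with the Remark that a fusion product is isomorphic as a $\g$-module to the ordinary tensor product of its factors, and with the elementary facts that isomorphic modules have equal dimension and that the dimension of a tensor product is the product of the dimensions, the formula $\operatorname{Dim}\big(W(\lambda)\big)=\prod_i\big(\operatorname{Dim}(W(\omega_i))\big)^{m_i}$ follows at once.

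Since that route essentially repackages the cited structural result, a more self-contained argument would establish the two inequalities separately. For the lower bound I would form the fusion product $F=W(\omega_1)_{a_1}*\cdots*W(\omega_l)_{a_{m_l}}$ for generic parameters and verify that its cyclic generator — the product of the highest weight vectors — satisfies the defining relations of $W(\lambda)$ in Definition \ref{c1lwmotca}: it is annihilated by $\mathfrak{n}^+\otimes\C[t]$ and by $\h\otimes t\C[t]$, it has weight $\lambda$ under $\h$, and it is killed by $(x_{\alpha_i}^-\otimes 1)^{m_i+1}$, the latter because each tensor factor is individually annihilated by the corresponding power. Hence $F$ is a quotient of $W(\lambda)$, and as $\operatorname{Dim}(F)$ equals $\prod_i\big(\operatorname{Dim}(W(\omega_i))\big)^{m_i}$ (a fusion product carries the dimension of the underlying tensor product), this gives $\operatorname{Dim}\big(W(\lambda)\big)\geq\prod_i\big(\operatorname{Dim}(W(\omega_i))\big)^{m_i}$.

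The matching upper bound is where I expect the real obstacle to lie. The clean way to obtain it is to identify $W(\lambda)$ — or, in the non--simply-laced $G_2$ case, to filter it — by Demazure modules inside a level-one integrable highest weight module of the affine algebra $\widehat{\g}$, and then to extract the dimension from the Demazure character formula. For $G_2$ one must treat the short-root generator with care: the exponent in the relation $(x_{\alpha_2}^-\otimes 1)^{m_2+1}$ together with the twisting caused by the unequal root lengths means that the direct Demazure identification valid in the simply-laced setting does not transfer verbatim, and one needs the refinement producing a Demazure flag whose successive quotients account for precisely the tensor-product dimension. Matching this upper bound against the lower bound forces equality and yields the stated formula.
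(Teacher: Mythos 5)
This proposition is not proved in the thesis at all: it is imported verbatim from \cite{Na} (Corollary 9.5 there, already quoted in Chapter 1 as Corollary A alongside Corollary B and the remark on fusion products), so the only ``proof'' the paper offers is the citation chain you reproduce in your first paragraph. That paragraph — Corollary B of \cite{Na} gives $W(\lambda)$ as a fusion product of the $W(\omega_i)$ with multiplicities $m_i$, the remark identifies a fusion product with the ordinary tensor product as a $\g$-module, and dimensions multiply — is correct and is essentially the same route the thesis takes, so for the purposes of this paper it is complete.

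Your second and third paragraphs go beyond the thesis and sketch the internal argument of \cite{Na} itself. The lower-bound half is sound: the cyclic generator of the fusion product does satisfy the relations of Definition \ref{c1lwmotca} (the vanishing of $(x_{\alpha_i}^{-}\otimes 1)^{m_i+1}$ on the product of generators follows from the binomial expansion of the coproduct, since the exponents of the individual factors add, and $\h\otimes t\C[t]$ kills the generator because a degree-zero vector maps to zero in the positive-degree components of the associated graded), so the fusion product is a quotient of $W(\lambda)$ of the desired dimension. The upper-bound half, however, is a plan rather than a proof: identifying $W(\lambda)$ with a Demazure module (simply-laced case) or producing a Demazure flag (types $B$, $C$, $G_2$) is precisely the hard content of \cite{Na}, and your text defers to that machinery without carrying it out. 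So read as a self-contained argument your proposal has a gap at exactly that step; read as a derivation from the results the thesis already cites, your first paragraph alone suffices and matches the paper.
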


It follows from Proposition \ref{g2Lihw} and Corollary \ref{vtv'hwv} that
\begin{theorem}\label{c7mtotsolwm} Let $\pi=\big(\pi_1(u), \pi_{2}(u)\big)$ be a pair of monic polynomials in $u$, and let $\pi_i\left(u\right)=\prod\limits_{j=1}^{m_i} \left(u-a_{i,j}\right)$.  Let $S=\{a_{1,1},\ldots, a_{1,m_1}, a_{2,1}, \ldots, a_{2,m_2}\}$ be a multi-set of roots. Let  $a_1=a_{m,n}$ be one of the numbers in $S$ with the maximal real part, and let $b_1=m$. Similarly let $a_r=a_{s,t}\left(i\geq 2\right)$ be one of the number in $S\setminus\{a_1, \ldots, a_{r-1}\}$ with the maximal real part, and let $b_r=s$.  Let $L=V_{a_1}(\omega_{b_1})\otimes V_{a_2}(\omega_{b_2})\otimes\ldots\otimes V_{a_k}(\omega_{b_k})$, where $k=m_1+m_2$. Then $L$ is a highest weight representation, and its associated polynomials are $\pi$. %$A=\{a\in \C| \text{there exists $p_i$ such that $p_i\left(a\right)=0$}\}$
\end{theorem}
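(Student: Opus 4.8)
The plan is to obtain this statement as the $G_2$ instance of the same two-step argument used in Theorems~\ref{wmiatp}, \ref{wmiatpso}, \ref{wmiatpsp} and \ref{wmiatpsob}: first show that $L$ is a highest weight module, then identify its associated polynomials by multiplying the Drinfeld polynomials of the tensor factors. The point to notice at the outset is that the ordering prescribed in the statement is exactly the hypothesis of Proposition~\ref{g2Lihw}. Indeed, by construction $a_1$ is chosen with maximal real part in $S$, and each subsequent $a_r$ with maximal real part in $S\setminus\{a_1,\dots,a_{r-1}\}$, so that $\operatorname{Re}(a_1)\geq\operatorname{Re}(a_2)\geq\dots\geq\operatorname{Re}(a_k)$ holds automatically, with every $b_r\in\{1,2\}$.

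With this in hand the first step is immediate: Proposition~\ref{g2Lihw} applies verbatim and shows that $L=V_{a_1}(\omega_{b_1})\otimes\dots\otimes V_{a_k}(\omega_{b_k})$ is a highest weight representation of $\ygg$, generated by $w=v_1^{+}\otimes v_2^{+}\otimes\dots\otimes v_k^{+}$, where $v_r^{+}$ is the highest weight vector of $V_{a_r}(\omega_{b_r})$. The second step is to compute the associated polynomials of $L$. Since $L$ is highest weight with highest weight vector $w$, these are read off from the action of $h_i(u)$ on $w$. Each factor $V_{a_r}(\omega_{b_r})$ is fundamental, with Drinfeld polynomials $\pi^{(r)}$ given by $\pi^{(r)}_{b_r}(u)=u-a_r$ and $\pi^{(r)}_j(u)=1$ for $j\neq b_r$. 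I would invoke Proposition~\ref{vtv'hwv} iteratively, peeling off one tensor factor at a time, to conclude that $w$ is a maximal vector on which $h_i(u)$ acts by the product $\prod_{r=1}^{k}\frac{\pi^{(r)}_i(u+d_i)}{\pi^{(r)}_i(u)}$; equivalently, the coproduct formula of Proposition~\ref{Delta}(iii) makes $h_i(u)$ act on a tensor product of highest weight vectors by the product of its eigenvalues on the individual factors. Hence the associated polynomials of $L$ are $\pi^{(1)}\pi^{(2)}\cdots\pi^{(k)}$, and in the $i$-th slot this equals $\prod_{r:\,b_r=i}(u-a_r)$.

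Finally, matching this product against $\pi$ is pure bookkeeping: the pairs $(a_r,b_r)$ range over all roots in $S$, and those with $b_r=i$ are precisely the roots $a_{i,1},\dots,a_{i,m_i}$ of $\pi_i$. Therefore $\prod_{r:\,b_r=i}(u-a_r)=\prod_{j=1}^{m_i}(u-a_{i,j})=\pi_i(u)$ for $i=1,2$, so the associated polynomials of $L$ are exactly $\pi$.

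The genuinely hard work sits entirely inside Proposition~\ref{g2Lihw}, whose proof rests on the $G_2$-specific computations of Section~\ref{g2spos5} (Step~5), where one tracks the path $\sigma_i(\omega_{b_1})$ from the highest to the lowest weight vector and analyzes each $Y_{i'}\cong\ysl$-submodule $Y_{i'}(v_{\sigma_i(\omega_{b_1})})$ as a quotient of an ordered tensor product of two- or three-string $W_1$-modules. For the present theorem the only subtlety to guard against in the assembly is that Proposition~\ref{vtv'hwv} is literally stated for a pair of \emph{irreducible} modules: I would either iterate it factor by factor, keeping the first factor irreducible at each stage, or argue directly from the coproduct, so that it is legitimately the multiplicativity of $h_i(u)$ on $w$ that is being used, rather than irreducibility of the intermediate tensor products.
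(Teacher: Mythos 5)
Your proposal is correct and follows essentially the same route as the paper: the paper derives Theorem~\ref{c7mtotsolwm} precisely by combining Proposition~\ref{g2Lihw} (the ordering by decreasing real parts forces $L$ to be highest weight) with Proposition~\ref{vtv'hwv} (multiplicativity of the associated polynomials on the tensor product of highest weight vectors). Your added care about iterating Proposition~\ref{vtv'hwv} factor by factor, or arguing via the coproduct formula of Proposition~\ref{Delta}, is a sensible refinement of the same argument rather than a different proof.
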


\begin{theorem}The local Weyl module $W(\pi)$ of $\ygg$ associated to $\pi$ is isomorphic to $L$ as in Theorem \ref{c7mtotsolwm}.
\end{theorem}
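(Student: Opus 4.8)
The plan is to close this chapter exactly as Chapters 3 through 6 were closed: run a dimension-squeeze argument that traps $\operatorname{Dim}(W(\pi))$ between two equal quantities and thereby forces the canonical surjection onto $L$ to be an isomorphism. All the representation-theoretic content has already been extracted in Theorem \ref{c7mtotsolwm}, so what remains is a comparison of dimensions. First I would record the two inequalities bracketing $\operatorname{Dim}(W(\pi))$. By Theorem \ref{c7mtotsolwm} the ordered tensor product $L=V_{a_1}(\omega_{b_1})\otimes\cdots\otimes V_{a_k}(\omega_{b_k})$ is a highest weight representation of $\ygg$ whose associated polynomials are exactly $\pi$; being finite-dimensional and highest weight, part (2) of Remark \ref{mi=lambdahi} shows that $L$ is a quotient of $W(\pi)$, so $\operatorname{Dim}(L)\le\operatorname{Dim}(W(\pi))$. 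On the other side, Theorem \ref{ubodowm} gives $\operatorname{Dim}(W(\pi))\le\operatorname{Dim}(W(\lambda))$, where $\lambda=m_1\omega_1+m_2\omega_2$ and $W(\lambda)$ is the local Weyl module of the current algebra $\g[t]$.

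The crux is to verify the equality $\operatorname{Dim}(W(\lambda))=\operatorname{Dim}(L)$. By Proposition \ref{dwmocsp} one has $\operatorname{Dim}(W(\lambda))=\prod_{i\in I}\big(\operatorname{Dim}(W(\omega_i))\big)^{m_i}$, while grouping the tensor factors of $L$ according to which fundamental weight they carry gives $\operatorname{Dim}(L)=\prod_{i\in I}\big(\operatorname{Dim}(V_a(\omega_i))\big)^{m_i}$, since $\pi_i$ contributes precisely $m_i$ factors $V_a(\omega_i)$. Thus it suffices to check the $G_2$ analogue of Corollary \ref{dkrvawocsp}, namely $\operatorname{Dim}(V_a(\omega_i))=\operatorname{Dim}(W(\omega_i))$ for $i=1,2$. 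I would obtain this through the Kirillov--Reshetikhin identification $\operatorname{KR}(\omega_i)\cong_{\g[t]}W(\omega_i)$ together with $\operatorname{KR}(\omega_i)\cong_{\g}V_a(\omega_i)$. For the short node, Proposition \ref{tati} applies and $V_a(\omega_2)\cong_{\g}L(\omega_2)$ is $7$-dimensional; for the long node $i=1$ the hypothesis of Proposition \ref{tati} fails, so $V_a(\omega_1)$ is not $G_2$-irreducible and its dimension must instead be read off from the explicit decomposition of fundamental Yangian modules of exceptional type in \cite{ChPr4}, matched against $\operatorname{Dim}(W(\omega_1))$.

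Combining the three ingredients yields $\operatorname{Dim}(W(\lambda))=\operatorname{Dim}(L)\le\operatorname{Dim}(W(\pi))\le\operatorname{Dim}(W(\lambda))$, so every inequality is an equality. The surjective $\ygg$-module homomorphism $W(\pi)\twoheadrightarrow L$ from the first step is then a map between finite-dimensional modules of equal dimension, hence an isomorphism, giving $W(\pi)\cong L$. I expect the only genuine obstacle to be the dimension bookkeeping at the long node: because $V_a(\omega_1)$ decomposes nontrivially as a $G_2$-module, one cannot simply quote $\operatorname{Dim}(L(\omega_1))=14$, and must invoke the full structure of the fundamental Yangian module together with its identification with the corresponding current-algebra Weyl module to confirm $\operatorname{Dim}(V_a(\omega_1))=\operatorname{Dim}(W(\omega_1))$.
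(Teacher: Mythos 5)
Your proposal is correct and follows essentially the same route as the paper: the dimension squeeze $\operatorname{Dim}(L)\le\operatorname{Dim}(W(\pi))\le\operatorname{Dim}(W(\lambda))$ combined with the Kirillov--Reshetikhin identification $W(\omega_i)\cong \operatorname{KR}(\omega_i)\cong_{\g}V_a(\omega_i)$ (the paper cites the main theorem of Section 2.3 of \cite{ChMo3} and Theorem 6.3 of \cite{ChPr4} for exactly the point you flag at the long node, where $V_a(\omega_1)$ is not $G_2$-irreducible). Your extra care in separating the two nodes via Proposition \ref{tati} is sound but does not change the argument.
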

\begin{proof}
On the one hand, $\operatorname{Dim}\big(W(\pi)\big)\leq \operatorname{Dim}\big(W(\lambda)\big)$ by Theorem \ref{ubodowm}; on the other
hand, $\operatorname{Dim}\big(W(\pi)\big)\geq \operatorname{Dim}\left(L\right)$ by the maximality of the local Weyl modules of Yangians. Note that as $G_2$-modules, $W(\omega_i)\cong KR(\omega_i)\cong V_a(\omega_i)$, where the latter isomorphism follows easily from the main theorem of section 2.3 of \cite{ChMo3} and Theorem 6.3 \cite{ChPr4}. Especially, $\operatorname{Dim}\big(W(\omega_i)\big)=\operatorname{Dim}\big(V_{a}(\omega_{i})\big)$ for any $a\in \C$. Therefore,
$\operatorname{Dim}\big(W(\lambda)\big)=\operatorname{Dim}\left(L\right)$, and then this implies that
$\operatorname{Dim}\Big(W\left(\pi\right)\Big)=\operatorname{Dim}\left(L\right)$. Therefore $W\left(\pi\right)\cong L$.
\end{proof}

%\chapter{The Local Weyl modules of finite-dimensional exceptional simple Lie algebras over $\C$}
%\label{chap:intro}

\end{document}